\newtheorem{theorem}{Theorem}[section]
\newtheorem{prop}[theorem]{Proposition}
\newtheorem{lemma}[theorem]{Lemma}
\newtheorem{conjecture}[theorem]{Conjecture}
\newtheorem{cor}[theorem]{Corollary}
\theoremstyle{definition}
\newtheorem{definition}[theorem]{Definition}
\newtheorem{remark}[theorem]{Remark}
\newtheorem{question}[theorem]{Question}
\newtheorem{hypothesis}[theorem]{Hypothesis}
\newtheorem{notation}[theorem]{Notation}
\newtheorem{example}[theorem]{Example}
\numberwithin{equation}{theorem}
\newcommand{\Z}{\mathbb{Z}}
\newcommand{\Q}{\mathbb{Q}}
\newcommand{\I}{\mathbb{I}}
\newcommand{\A}{\mathbb{A}}
\newcommand{\N}{\mathbb{N}}
\newcommand{\bbP}{\mathbb{P}}
\newcommand{\bbO}{\mathbb{O}}
\newcommand{\OX}{\mathcal{O}_X}
\newcommand{\FF}{\mathcal{F}}
\newcommand{\II}{\mathcal{I}}
\newcommand{\JJ}{\mathcal{J}}
\newcommand{\MM}{\mathcal{M}}
\newcommand{\HH}{\mathcal{H}}
\newcommand{\NN}{\mathcal{N}}
\newcommand{\OO}{\mathcal{O}}
\newcommand{\LL}{\mathcal{L}}
\newcommand{\GK}{\mathrm{GK}}
\newcommand{\GKdim}{\mathrm{GKdim}}
\newcommand{\gldim}{\mathrm{gl.dim}}
\newcommand{\Ext}{\mathrm{Ext}}
\newcommand{\Hom}{\mathrm{Hom}}
\newcommand{\End}{\mathrm{End}}
\newcommand{\coh}{\mathrm{coh}}
\newcommand{\gr}{\mathrm{gr}}
\newcommand{\Gr}{\mathrm{Gr}}
\newcommand{\Gl}{\mathrm{Gl}}
\newcommand{\rann}{\mathrm{r.ann}}
\newcommand{\lann}{\ell\mathrm{.ann}}
\newcommand{\qgr}{\mathrm{qgr}}
\newcommand{\Qgr}{\mathrm{Qgr}}
\newcommand{\Sg}{S_{(g)}}
\newcommand{\bfd}{\mathbf{d}}
\newcommand{\bfe}{\mathbf{e}}
\newcommand{\bfx}{\mathbf{x}}
\newcommand{\bfy}{\mathbf{y}}
\newcommand{\mbf}{\mathbf}
\newcommand{\ovl}{\overline}
\newcommand{\ehd}{\overset{\bullet}=}
\title{Maximal Orders in the Sklyanin Algebra}
\author{Dominic Hipwood}
\date{\today}
\begin{document}
\maketitle
\begin{abstract}

A major current goal of noncommutative geometry is the classification of noncommutative projective surfaces. The generic case is to understand algebras birational to the Sklyanin algebra. In this thesis we complete a considerable component of this problem.\par

Let $S$ denote the 3-dimensional Sklyanin algebra over an algebraically closed field, and assume that $S$ is not a finite module over its centre. In earlier work Rogalski, Sierra and Stafford classified the maximal orders inside the 3-Veronese $S^{(3)}$ of $S$. We complete and extend their work and classify all maximal orders inside $S$. As in Rogalski, Sierra and Stafford's work, these can be viewed as blowups at (possibly non-effective) divisors. A consequence of this classification is that maximal orders are automatically noetherian among other desirable properties. \par

This work both relies upon, and lends back to, the work of Rogalski, Sierra and Stafford. In particular, we also provide a converse for their classification of maximal orders in the higher Veronese subrings $S^{(3n)}$ of $S^{(3)}$. 

\end{abstract}
\tableofcontents

\section{Introduction}

Roughly, the goal of noncommutative projective geometry is to use techniques and intuition from commutative geometry to study noncommutative algebras. A major current goal of noncommutative geometry is the classification of noncommutative projective surfaces; in the language of algebras, a classification of connected graded domains of GK-dimension 3. Within this, a major target is a classification of algebras birational to the Sklyanin algebra. This problem provides the main motivation of our work. In this introduction we will first present our main results and then describe the role they play in the problem mentioned above.

\subsection{First definitions}\label{1st defs}

First, as always, we need to get some basic definitions and formalities out of the way. We note that the terms appearing below are given a proper introduction in section~\ref{background}.\par

Fix an algebraically closed field $\Bbbk$. Let $A=\bigoplus_{n\in\N} A_i$  be an $\N$-graded  $\Bbbk$-algebra and a domain. We call $A$ \textit{connected graded (cg)} if $A_0=\Bbbk$ and $\dim_\Bbbk A_i<\infty$ for all $i$. Almost all algebras considered will be cg domains of finite Gelfand Kirillov dimension (defined in Notation~\ref{GK dimension}). In particular we assume this for the next few definitions.\par
The algebra $A$ sits inside its \textit{graded quotient ring}, $Q_\gr(A)$, formed by inverting the Ore set of all nonzero homogenous elements. The division ring $D_\gr(A)=Q_\gr(A)_0$, will be called the \textit{noncommutative function field of $A$}. If $B$ is another cg domain with $Q_\gr(A)=Q_\gr(B)=Q$, then $A$ and $B$ are called \textit{equivalent orders} if there exist nonzero elements $x_1,x_2,y_1,y_2\in Q$ such that $x_1Ax_2\subseteq B$ and $y_1By_2\subseteq A$. We call $A$ a \textit{maximal order} if there is no equivalent order $B$ to $A$ such that $A\subsetneq B$. Let $R$ be another algebra such that $A\subseteq R\subseteq Q$. We call $A$ a \textit{a maximal $R$-order} if there exists no graded equivalent order $B$ to $A$ such that $A\subsetneq B\subseteq R$. Given $d\in\N$, the \textit{$d$-Veronese of $A$} is the subring $A^{(d)}=\bigoplus_{i\in\N}A_{di}$. It is (usually) given the grading $A^{(d)}_n=A_{dn}$.\par

A construction that plays a fundamental role in our work, and in general noncommutative geometry, is that of a twisted homogenous coordinate ring. Fix a projective scheme $X$. Let $\LL$ be an invertible sheaf on $X$ with global sections $H^0(X,\LL)$, and let $\sigma:X \rightarrow X $ be an automorphism of $X$. Write $\LL^\sigma$ for the pullback sheaf $\sigma^*\LL$. Set $\LL_0=\OX$ and $\LL_n=\LL\otimes \LL^\sigma \otimes \dots \otimes \LL^{\sigma^{n-1}}$ for $n\geq 1$. We define $B(X,\LL,\sigma)=\bigoplus_{n\in\N}H^0(X,\LL_n)$. There is a natural ring structure on $B(X,\LL,\sigma)$ and we call it a \textit{twisted homogenous coordinate ring}.\par

Our results regard certain subalgebras of the Sklyanin algebra, which we now define. Let $a,b,c\in\Bbbk$, then we set
$$S=\frac{\Bbbk\langle x,y,z \rangle}{(azy+byz+cx^2, axz+bzx+cy^2, ayx+bxy+cz^2)}.$$
Provided $a,b,c$ are general enough (see Definition~\ref{sklyanin}), then $S$ has a central element $g\in S_3$, unique up to scalar multiplication and such that $S/gS \cong B(E, \LL, \sigma)$. Here $E$ is a nonsingular elliptic curve, $\LL$ is an invertible sheaf of degree 3, and $\sigma$ is an automorphism of $E$. We call such $S$ a \textit{Sklyanin algebra}. \par

\subsection{The main results}

The results of this thesis are analogous to those of \cite{Ro, RSS} described in Section~\ref{Rogs blowup review} and Section~\ref{RSS review}, where the authors tackled similar problems inside the 3-Veronese subring $T=S^{(3)}$ of $S$. Here, as in \cite{Ro, RSS}, our results concern certain \textit{blowup subalgebras} and \textit{virtual blowup subalgebras} of $S$ at effective and so-called \textit{virtually effective divisors} on $E$. Our assumption throughout is Hypothesis~\ref{standing assumption intro}.

\begin{hypothesis}[Standing Assumption]\label{standing assumption intro}
Fix an algebraically closed field $\Bbbk$. Fix a 3 dimensional Sklyanin algebra $S$. Let $g\in S_3$ be such that $S/gS\cong B(E,\LL,\sigma)$, where $E$ is a smooth elliptic curve, $\LL$ an invertible sheaf on $E$ with $\deg\LL=3$ and $\sigma:E\to E$ an automorphism of $E$. Assume that $\sigma$ is of infinite order.
\end{hypothesis}

Before presenting our results we set some notation as standard.

\begin{notation}\label{ovlX notation}\index[n]{x@$\ovl{X}$}
Given a subset $X\subseteq S$, we write $\ovl{X}=(X+gS)/gS \subseteq S/gS$. Similarly for $x\in S$, $\ovl{x}$ will denote its image in $\ovl{S}=B(E,\LL,\sigma)$.
\end{notation}

\begin{definition}[Definition~\ref{S(p)} and Definition~\ref{S(p+q)}]\label{S(d) def} \index[n]{sd@$S(\bfd)$}\index{blowup of $S$ at $\bfd$}
Let $\bfd$ be an effective divisor on $E$ with $\deg\bfd\leq 2$. For $i=1,2,3$, put
\begin{itemize}
\item $V_1=\{x\in S_1\,|\; \ovl{x}\in H^0(E,\LL(-\bfd))\}$,
\item $V_2=\{x\in S_2\,|\; \ovl{x}\in H^0(E,\LL_2(-\bfd-\sigma^{-1}(\bfd)))\}$,
\item $V_3=\{x\in S_3\,|\; \ovl{x}\in H^0(E,\LL_3(-\bfd-\sigma^{-1}(\bfd)-\sigma^{-2}(\bfd)))\}$.
\end{itemize}
We define the \textit{blowup of $S$ at $\bfd$} to be the subalgebra of $S$ generated by $V_1$, $V_2$ and $V_3$:
\begin{equation}\label{S(d)=k<V1,V2,V3>} S(\bfd)=\Bbbk\langle V_1,V_2,V_3\rangle.\end{equation}
\end{definition}

When $\bfd=p$ is a single point, (\ref{S(d)=k<V1,V2,V3>}) is an overkill and in fact $S(p)=\Bbbk\langle V_1\rangle$ is generated in degree 1. The ring $S(p)$ was studied in by Rogalski, and proved to be the only degree 1 generated maximal order inside $S$. When $\bfd=p+q$ is two points Definition~\ref{S(d) def} is both new, and harder to understand than $S(p)$. The $S(\bfd)$ are the correct analogue of the blowup subalgebras $S^{(3)}(\bfe)$ of $S^{(3)}$ defined by Rogalski in \cite[Section~1]{Ro}. The results of Rogalski are reviewed in section~\ref{background} along with the other undefined terms below.

\begin{theorem}[Theorem~\ref{S(d) thm}]\label{S(d) thm intro}
Let $\bfd$ be an effective divisor on $E$ of degree $d\leq 2$. Set $R=S(\bfd)$. Then:
\begin{enumerate}[(1)]
\item $R\cap gS=gR$ with $R/gR\cong B(E,\LL(-\bfd),\sigma)$. The Hilbert series of $R$ is
$$h_{R}(t)=\frac{t^2+(1-d)t+1}{(t-1)^2(1-t^3)}.$$
\item The 3-Veronese $R^{(3)}$ is a blowup subalgebra of $S^{(3)}$. More specifically
    $$R^{(3)}=S^{(3)}(\bfd+\sigma^{-1}(\bfd)+\sigma^{-2}(\bfd)).$$
\item $R$ is strongly noetherian meaning that $R\otimes_\Bbbk Z$ remains noetherian for all commutative noetherian $\Bbbk$-algebras $Z$; $R$ satisfies $\chi$ on the left and right (Definition~\ref{chi}), has cohomological dimension 2, and possesses a balanced dualizing complex;
\item $R$ is Auslander-Gorenstein and Cohen-Macaulay;
\item $R$ is a maximal order in $Q_\gr(R)=Q_\gr(S)$.
\end{enumerate}
\end{theorem}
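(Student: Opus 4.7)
The plan is to attack the five parts in order, using (1) and (2) to reduce the structure of $R$ to that of the twisted homogeneous coordinate ring $B(E,\LL(-\bfd),\sigma)$ and of Rogalski's blowup $S^{(3)}(\bfe)$ with $\bfe:=\bfd+\sigma^{-1}(\bfd)+\sigma^{-2}(\bfd)$, and then to transfer the homological and order-theoretic properties back to $R$. For part (1), first I would pin down $\ovl{R}\subseteq\ovl{S}=B(E,\LL,\sigma)$: by construction each $\ovl{V_i}\subseteq H^0(E,\LL_i(-\bfd-\sigma^{-1}(\bfd)-\dots-\sigma^{-(i-1)}(\bfd)))$, giving $\ovl{R}\subseteq B(E,\LL(-\bfd),\sigma)$. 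The reverse inclusion follows because $\deg\LL(-\bfd)=3-d\geq 1$ and $\sigma$ has infinite order, so $B(E,\LL(-\bfd),\sigma)$ is generated in degrees $\leq 3$ by the standard generation theorems for twisted coordinate rings on elliptic curves. Since $g\in V_3$ (because $\ovl{g}=0$) is central and regular in $S$, a Hilbert-series comparison combined with the inequality $\dim R_n\geq \dim R_{n-3}+\dim\ovl{R}_n$ forces $R\cap gS=gR$ and $R/gR\cong B(E,\LL(-\bfd),\sigma)$; the short exact sequence $0\to R(-3)\to R\to R/gR\to 0$ then produces the Hilbert series formula via Riemann-Roch.

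For part (2), $V_3$ coincides by construction with Rogalski's degree-$1$ generating space for $S^{(3)}(\bfe)$, so $S^{(3)}(\bfe)=\Bbbk\langle V_3\rangle\subseteq R^{(3)}$ is immediate. For the reverse inclusion, I pass modulo $g$: from part (1), $R^{(3)}/gR^{(3)}$ is the $3$-Veronese of $B(E,\LL(-\bfd),\sigma)$, which equals $B(E,\LL_3(-\bfe),\sigma^3)$, while Rogalski's theorem gives the same identification for $S^{(3)}(\bfe)/gS^{(3)}(\bfe)$. Since both sides are $g$-regular with coinciding quotients inside $\ovl{S}^{(3)}$, a graded lifting argument forces equality. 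Parts (3) and (4) are then a matter of inheritance through the central regular element $g$: the quotient $B(E,\LL(-\bfd),\sigma)$, as a twisted homogeneous coordinate ring on a smooth elliptic curve with infinite-order automorphism, is strongly noetherian, satisfies $\chi$ on both sides, has cohomological dimension $1$, admits a balanced dualizing complex, and is Auslander-Gorenstein and Cohen-Macaulay (by Artin-Van den Bergh, Artin-Zhang, Stafford-Van den Bergh). Each of these ascends from $R/gR$ to $R$ through a central regular element in positive degree by the standard machinery, bumping the cohomological dimension up by one.

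Part (5) is where I expect the real difficulty. Given an equivalent graded overorder $B\supseteq R$ in $Q_\gr(R)=Q_\gr(S)$, homogenizing the equivalence data and multiplying by suitable elements of $R_1\cup R_2$ shows that $B^{(3)}\supseteq R^{(3)}=S^{(3)}(\bfe)$ is an equivalent extension in $Q_\gr(S^{(3)})$, so Rogalski's maximality theorem forces $B^{(3)}=R^{(3)}$. The remaining challenge, and the main obstacle, is to deduce $B=R$ from coincidence of $3$-Veroneses, i.e., to rule out extra elements of $B$ in degrees not divisible by $3$. My plan is to work modulo $g$: since $R$ is noetherian by parts (3)-(4), $B$ is finitely generated as a left and right $R$-module, and part (1)'s identification $R/gR\cong B(E,\LL(-\bfd),\sigma)$ allows me to analyze the induced overring of $\ovl{R}$ inside the Goldie quotient of $\ovl{S}=B(E,\LL,\sigma)$. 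Maximality of $B(E,\LL(-\bfd),\sigma)$ as an order inside $B(E,\LL,\sigma)$ (in the Rogalski-Sierra-Stafford framework on curves) constrains this overring to equal $\ovl{R}$, and a graded $g$-adic lifting using $R\cap gS=gR$ from part (1) then yields $B=R$.
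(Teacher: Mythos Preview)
Your main gap is in part~(1), and it is exactly the heart of the theorem. The inequality $\dim R_n\geq\dim R_{n-3}+\dim\ovl{R}_n$ you invoke only gives the \emph{lower} bound $h_R(t)\geq h_{\ovl R}(t)/(1-t^3)$; equality is equivalent to $R\cap gS=gR$, so you cannot simply say the comparison ``forces'' $g$-divisibility without an \emph{upper} bound on $h_R(t)$ from some independent source. In the paper this upper bound is the substantial step (Theorem~\ref{S(p+q) g div}): when $p,q$ lie on distinct $\sigma$-orbits one proves directly that $S(p+q)=S(p)\cap S(q)$, which inherits $g$-divisibility from the one-point blowups; for general $p,q$ one must run a semi-continuity argument on the map $q\mapsto\dim_\Bbbk S(p+q)_n$, which---because $S(p+q)$ is not generated in degree~1---requires building a morphism $E\to\Gr(1,S(p)_1)\times\Gr(2,S(p)_2)\times\Gr(4,S(p)_3)$ and invoking a technical lemma on intersections in Grassmannians (Lemma~\ref{Grassmannian}). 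Your sketch contains no substitute for this.

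For part~(5) you are working too hard and in a fragile direction. Once part~(1) gives $R/gR\cong B(E,\LL(-\bfd),\sigma)$, the paper applies Proposition~\ref{RSS2 2.4} (a general lifting package through a central regular element), which delivers strongly noetherian, $\chi$, cohomological dimension~$2$, balanced dualizing complex, Auslander--Gorenstein, Cohen--Macaulay, \emph{and} maximal order all at once; maximality ultimately comes from Stafford's theorem that Auslander--Gorenstein plus Cohen--Macaulay implies maximal order. Your proposed route via $3$-Veroneses and reduction modulo $g$ runs into trouble: an arbitrary equivalent overorder $B\supseteq R$ need not sit inside $S$ or even $S_{(g)}$, so ``$B$ modulo $g$'' is not a priori meaningful, and the lifting from $\ovl B=\ovl R$ to $B=R$ requires $g$-divisibility of $B$, which you have not established.
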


Theorem~\ref{S(d) thm intro}(3)(4) can be summarised as saying $S(\bfd)$ satisfies some of the most useful homological properties. We give more details about why these are desirable properties in Remark~\ref{desirable properties}.  Missing from Theorem~\ref{S(d) thm intro} is finite global dimension: Corollary~\ref{infinite global dim} shows that this is never obtained when $\deg\bfd\geq 1$. Although not explicit below, obtaining Theorem~\ref{S(d) thm intro} is absolutely essential for the rest of our main results.\par

For a complete classification of maximal $S$-orders we need to introduce virtual blowups at virtually effective divisors. We only define a virtually effective divisor here. For the purposes of the introduction one may take the conclusions of Proposition~\ref{vblowup exist intro} as the definition of a virtual blowup.

\begin{definition}\label{veff div intro}
A divisor $\bfx$ is called \textit{virtually effective} if for all $n\gg0$, the divisor
$\bfx+\sigma^{-1}(\bfx)+\dots+\sigma^{-(n-1)}(\bfx)$
is effective.
\end{definition}

\begin{prop}[Proposition~\ref{RSS 7.4(3)} and Definition~\ref{virtual blowup}]\label{vblowup exist intro}
Let $\mbf{x}$ be a virtually effective divisor of degree at most 2. Then there exists a virtual blowup $S(\bfx)$. In particular:
\begin{enumerate}[(1)]
\item $S(\bfx)$ is a maximal order in $Q_\gr(S)$ and uniquely defines a maximal $S$-order\\ $V=S(\bfx)\cap S$.
\item $S(\bfx)\cap gS=gS(\bfx)$ and so $S(\bfx)/gS(\bfx)\cong \ovl{S(\bfx)}$. Moreover, in high degrees $n\gg 0$,
    $$\ovl{S(\bfx)}_{\geq n} =B(E,\LL(-\bfx),\sigma)_{\geq n}.$$
\end{enumerate}
\end{prop}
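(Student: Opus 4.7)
The plan is to reduce the existence of $S(\bfx)$ to the corresponding virtual blowups of the $3$-Veronese $T = S^{(3)}$, constructed by Rogalski--Sierra--Stafford in Proposition~\ref{RSS 7.4(3)}. The guiding principle is the effective case of Theorem~\ref{S(d) thm intro}(2): a virtual blowup of $S$ at $\bfx$ should agree under the $3$-Veronese with the virtual blowup of $T$ at
$$\bfy \;:=\; \bfx + \sigma^{-1}(\bfx) + \sigma^{-2}(\bfx).$$

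First I would check that $\bfy$ is virtually effective of degree at most $6$: the partial sums relevant to $T$ are
$$\bfy + \sigma^{-3}(\bfy) + \cdots + \sigma^{-3(m-1)}(\bfy) \;=\; \bfx + \sigma^{-1}(\bfx) + \cdots + \sigma^{-(3m-1)}(\bfx),$$
which are effective for $m \gg 0$ by virtual effectivity of $\bfx$. Proposition~\ref{RSS 7.4(3)} then supplies a virtual blowup $T(\bfy) \subseteq Q_\gr(T) = Q_\gr(S)$ satisfying the $T$-analogues of (1) and (2). I would then define $S(\bfx)$ inside $Q_\gr(S)$ by taking the largest graded subring whose $3$-Veronese equals $T(\bfy)$, subject to being equivalent to $S$ as an order: the pieces in degrees divisible by $3$ are prescribed by $T(\bfy)$, while the pieces in degrees $1$ and $2$ modulo $3$ are cut out by the requirement that they multiply into $T(\bfy)$ correctly.

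The stated properties would then be verified in turn. For (1), maximality of $S(\bfx)$ in $Q_\gr(S)$ is deduced from maximality of $T(\bfy)$: any equivalent overring $R \supsetneq S(\bfx)$ would force $R^{(3)} \supsetneq T(\bfy)$ inside $Q_\gr(T)$, contradicting RSS; uniqueness of $V = S(\bfx) \cap S$ as a maximal $S$-order is obtained similarly by passing to Veroneses. For (2), the identity $S(\bfx) \cap gS = gS(\bfx)$ is inherited from the corresponding $T$-statement by comparing graded components, and the high-degree description $\ovl{S(\bfx)}_{\geq n} = B(E, \LL(-\bfx), \sigma)_{\geq n}$ follows from the parallel description of $\ovl{T(\bfy)}$ combined with the Veronese identification $B(E, \LL(-\bfx), \sigma)^{(3)} = B(E, \LL_3(-\bfy), \sigma^3)$.

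The hard part will be controlling $S(\bfx)_n$ for $n \not\equiv 0 \pmod 3$: Proposition~\ref{RSS 7.4(3)} gives no direct information in those degrees, so both the maximality assertion (which must detect enlargements in every degree) and the mod-$g$ description require more than a formal Veronese lift. Concretely, one must show that the $\ovl{S}$-bimodule pieces $\ovl{S(\bfx)}_1$ and $\ovl{S(\bfx)}_2$ are uniquely determined by the $T(\bfy)$-data and coincide in high degree with $H^0(E, \LL(-\bfx))$ and $H^0(E, \LL_2(-\bfx-\sigma^{-1}(\bfx)))$ respectively. This is essentially an ampleness and generation argument on $E$, mirroring the analysis carried out for the effective case $S(\bfd)$ of Theorem~\ref{S(d) thm intro} but with $\bfx$ only virtually effective, and it is where the most delicate work is required.
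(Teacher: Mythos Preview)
Your proposed approach has a genuine gap, and it is precisely the difficulty you flag in your final paragraph. The paper takes a fundamentally different route that avoids this obstacle.

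The core problem is that your construction ``take the largest graded subring of $Q_\gr(S)$ whose $3$-Veronese equals $T(\bfy)$'' is not well-defined, and even if made precise, your maximality argument fails. You claim that an equivalent overring $R \supsetneq S(\bfx)$ would force $R^{(3)} \supsetneq T(\bfy)$, but this is false: an enlargement living purely in degrees $n \not\equiv 0 \pmod 3$ leaves the $3$-Veronese unchanged. Indeed, Proposition~\ref{g-div max orders up n down} in the paper only passes maximal-order status between $U$ and $U^{(d)}$ when $d$ is coprime to $3$ (for orders in $Q_\gr(S)$); the case $d=3$ is explicitly excluded there, and bridging $S$ to $S^{(3)}$ requires the full virtual-blowup machinery (Theorem~\ref{3 Veronese of virtual blowup}), which runs in the opposite direction.

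The paper's approach sidesteps this by never trying to lift directly from $T(\bfy)$. Instead, it first decomposes $\bfx = \mbf{u} - \mbf{v} + \mbf{v}^\sigma$ with $\mbf{u}$ effective (Lemma~\ref{RSS 7.3(2)}), then works over the \emph{effective} blowup $R = S(\mbf{u})$, whose existence and good properties (in particular $g$-divisibility and the Cohen--Macaulay condition) were the subject of the entire Chapter~\ref{The rings S(d)}. One constructs a suitable $g$-divisible right $R$-module $M$ with $R \subseteq M \subseteq S$ and the correct image in $\ovl{S}$ (Lemma~\ref{RSS2 5.10}; here the $T$-theory is used, but only to build the module, not the ring), and then defines $F = \End_R(M^{**})$. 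Maximality of $F$ then follows from Cozzens' theorem (Remark~\ref{RSS 6.2}) and the general machinery of Proposition~\ref{RSS 6.4}, not from any Veronese descent. The point is that the hard work of Chapter~\ref{The rings S(d)} --- constructing $S(\mbf{u})$ for effective $\mbf{u}$ --- is exactly what gives you control in all degrees, which is what your approach lacks.
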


Despite the current notation, it is unknown whether the algebra $S(\bfx)$ appearing in Proposition~\ref{vblowup exist intro} is unique for a fixed virtually effective divisor $\bfx$. Another unknown is when $S(\bfx)\subseteq S$ holds. We investigate these problems in Section~\ref{further vblowups}. \par

The classification of maximal $S$-orders then goes as follows.

\begin{theorem}[Theorem~\ref{RSS 8.11}]\label{main result 1}
Let $U$ be a connected graded maximal $S$-order such that $\ovl{U}\neq\Bbbk$. Then there exists a virtually effective divisor $\bfx$ with $0\leq\deg\bfx\leq 2$, and a virtual blowup $S(\bfx)$, such that $S(\bfx)$ is the unique maximal order containing $U=S(\bfx)\cap S$.
\end{theorem}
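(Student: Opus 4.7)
The strategy is to reduce to the analogous classification of maximal $T$-orders from \cite{RSS}, where $T=S^{(3)}$, by passing to the 3-Veronese $U^{(3)}$, and then to descend the resulting combinatorial data from the $\sigma^{3}$-dynamical system on $E$ back to the $\sigma$-dynamical system. A preliminary task is to show that $U^{(3)}$ is (equivalent to) a maximal $T$-order and that $\ovl{U^{(3)}}\neq\Bbbk$. The maximal order property should transfer by a standard Veronese argument: any $T$-order properly containing $U^{(3)}$ would generate an $S$-order equivalent to, but strictly larger than, $U$, contradicting the maximality of $U$ as an $S$-order.

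Once this is in place, the RSS classification of maximal $T$-orders produces a $\sigma^{3}$-virtually effective divisor $\bfe$ on $E$, of degree at most $6$, together with a virtual blowup $T(\bfe)$ which is the unique maximal order containing $U^{(3)}=T(\bfe)\cap T$.

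The heart of the argument, and what I anticipate to be the main obstacle, is the descent step: producing a $\sigma$-virtually effective divisor $\bfx$ on $E$ with $\deg\bfx\leq 2$ such that
\begin{equation*}
\bfe \;=\; \bfx + \sigma^{-1}(\bfx) + \sigma^{-2}(\bfx).
\end{equation*}
In the honest (non-virtual) case this decomposition is forced directly by Theorem~\ref{S(d) thm intro}(2), but virtually it must be extracted from the fact that $U$ itself is a graded subring of $S$, not merely $U^{(3)}$ a subring of $T$. The plan is to exploit the degree-one and degree-two graded pieces of $\ovl{U}\subseteq \ovl{S}=B(E,\LL,\sigma)$: in sufficiently high degrees Proposition~\ref{vblowup exist intro}(2) tells us that the pieces of any putative virtual blowup must coincide with the global sections of $\LL(-\bfx),\LL_{2}(-\bfx-\sigma^{-1}(\bfx)),\dots$, so one reads off a candidate $\bfx$ from the degree-one data. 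Using the multiplication $H^{0}(E,\LL_{m})\otimes H^{0}(E,\LL_{n})\to H^{0}(E,\LL_{m+n})$ in $\ovl{S}$ and the identity $\ovl{U}_{m}\cdot\ovl{U}_{n}\subseteq \ovl{U}_{m+n}$, one then forces the cyclic decomposition of $\bfe$ up to a bounded (irrelevant) correction, and checks that $\bfx$ is virtually effective under $\sigma$, not merely under $\sigma^{3}$.

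With $\bfx$ in hand, Proposition~\ref{vblowup exist intro} provides the virtual blowup $S(\bfx)$, and the equality $S(\bfx)^{(3)}=T(\bfe)$ follows from the virtual analogue of Theorem~\ref{S(d) thm intro}(2). The identities $U=S(\bfx)\cap S$ and the uniqueness of $S(\bfx)$ as the maximal order in $Q_{\gr}(S)$ containing $U$ then reduce, via the 3-Veronese, to the analogous statements for $U^{(3)}=T(\bfe)\cap T$ already delivered by the RSS classification.
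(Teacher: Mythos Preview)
Your strategy has two genuine gaps, and it diverges substantially from the paper's route.

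First, the claim that $U^{(3)}$ is a maximal $T$-order is not a ``standard Veronese argument.'' If $U^{(3)}\subsetneq W\subseteq T$ are equivalent orders, you want the subring of $S$ generated by $U$ and $W$ to be equivalent to $U$; but this subring contains arbitrary alternating products $UWUW\cdots$, and a single relation $aWb\subseteq U^{(3)}$ does not control those. The paper's Proposition~\ref{g-div max orders up n down} does give a Veronese transfer, but only for $d$ coprime to $3$ and only under $g$-divisibility, which you do not yet have. The passage between $S$ and $S^{(3)}$ for maximal orders is only established \emph{after} the classification, via Theorem~\ref{3 Veronese of virtual blowup}.

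Second, the descent step is circular. You propose to read off $\bfx$ from the low-degree pieces of $\ovl{U}$ using Proposition~\ref{vblowup exist intro}(2), but that proposition describes $\ovl{S(\bfx)}$ once you already know $U$ arises from a virtual blowup---precisely what you are trying to prove. Without that, there is no a priori reason $\ovl{U}_n$ should equal $H^0(E,\LL_n(-[\bfx]_n))$ for any $\bfx$, nor that the $\tau$-virtually effective divisor $\bfe$ produced by RSS decomposes as $\bfx+\sigma^{-1}(\bfx)+\sigma^{-2}(\bfx)$.

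The paper proceeds quite differently. It first shows (Proposition~\ref{RSS 8.10} and Proposition~\ref{C to C hat}) that $U$, $U\langle g\rangle$, and $\widehat{U\langle g\rangle}$ are all equivalent orders; this is where the hypothesis $\ovl{U}\neq\Bbbk$ is used, and where minimal sporadic ideals enter. Since $\widehat{U\langle g\rangle}$ is $g$-divisible, the already-established $g$-divisible classification (Theorem~\ref{RSS 7.4}) applies directly inside $S$ and produces the virtual blowup $F$ at a $\sigma$-virtually effective $\bfx$ with $\deg\bfx\leq 2$. Maximality of $U$ then forces $U=F\cap S$. The 3-Veronese and the RSS classification for $T$ are used only indirectly, inside the proofs of the supporting results (e.g.\ Proposition~\ref{RSS 5.25 lemma}), not as the organising principle.
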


The assumption $\ovl{U}\neq \Bbbk$ also appears in Rogalski, Sierra and Stafford's classification of maximal $S^{(3)}$-orders. It is necessary as shown in Example~\ref{RSS 10.8}.\par

Out of proving the above theorems we also obtain many nice properties for maximal orders. The most striking of these is that we get that maximal ($S$-)orders are noetherian for free.

\begin{cor}[Corollary~\ref{RSS 8.11'} and Corollary~\ref{RSS 8.12}]\label{blowup properties}
Let $U$ be a cg maximal $S$-order such that $\ovl{U}\neq \Bbbk$. Equivalently, let $U=S(\bfx)\cap S$ for some virtual blowup $S(\bfx)$ at a virtually effective divisor with $0\leq\deg \bfx\leq 2$. Then
\begin{enumerate}[(1)]
\item $S(\bfx)$ and $U$ are strongly noetherian, and are finitely generated as $\Bbbk$-algebras.
 \item $S(\bfx)$ and $U$ satisfy the Artin-Zhang $\chi$ conditions, have finite cohomological dimension, and possess balanced dualizing complexes.
\end{enumerate}
\end{cor}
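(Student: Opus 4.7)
The plan is to reduce to the classification of maximal $T$-orders in the 3-Veronese $T = S^{(3)}$ due to Rogalski, Sierra and Stafford (reviewed in Section~\ref{RSS review}). By Theorem~\ref{main result 1} we may fix a virtual blowup $S(\bfx)$ with $0\leq\deg\bfx\leq 2$ and set $U = S(\bfx)\cap S$. The strategy is to establish the listed properties on the 3-Veroneses using RSS, and then lift them independently to $S(\bfx)$ and to $U$ via module-finiteness over the Veronese.

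First I would identify the 3-Veroneses with known maximal $T$-orders. Combining Proposition~\ref{vblowup exist intro}(2) with the pattern of Theorem~\ref{S(d) thm intro}(2), the Veronese $S(\bfx)^{(3)}$ is a virtual blowup of $T$ at the virtually effective divisor $\bfx + \sigma^{-1}(\bfx) + \sigma^{-2}(\bfx)$, whose degree is at most $6$ and therefore lies within the RSS range. Since taking Veroneses commutes with intersection of graded subrings, $U^{(3)} = S(\bfx)^{(3)} \cap T$ is exactly the associated maximal $T$-order. The RSS theorems then deliver all the listed conclusions for both $S(\bfx)^{(3)}$ and $U^{(3)}$: strong noetherianity, finite $\Bbbk$-algebra generation, the $\chi$ conditions, finite cohomological dimension, and a balanced dualizing complex.

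Next I would lift each of these properties from a Veronese $R^{(3)}$ to the corresponding ring $R \in \{S(\bfx),U\}$. The key input is that $R$ is a finitely generated left and right module over $R^{(3)}$. Working modulo $g$, this reduces to the standard Veronese finiteness for the twisted homogenous coordinate ring $B(E,\LL(-\bfx),\sigma)$: indeed $\LL(-\bfx)$ has degree at least $1$ and is therefore $\sigma$-ample, and Proposition~\ref{vblowup exist intro}(2) identifies the high-degree tails of $\ovl{R}$ and $B(E,\LL(-\bfx),\sigma)$. Lifting through $g$, using $S(\bfx)\cap gS=gS(\bfx)$ and a parallel statement for $U$, then yields module-finiteness of $R$ over $R^{(3)}$.

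Once this module-finite structure is established, the remaining conclusions transfer by standard graded ring theory: strong noetherianity transfers up by tensoring a module-finite generating set with any commutative noetherian $Z$; finite $\Bbbk$-algebra generation follows by adjoining the module-generators to algebra generators of $R^{(3)}$; and the homological properties ($\chi$, finite cohomological dimension, balanced dualizing complex) descend from $R^{(3)}$ to $R$ by the finite-graded-extension machinery of Artin-Zhang and Van den Bergh. The main technical point, and the step requiring the most care, is the module-finite assertion $R$ over $R^{(3)}$; the identification of the Veroneses with RSS virtual blowups and the subsequent transfer results are largely applications of the standard toolkit once that finiteness is in place.
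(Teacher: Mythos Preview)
Your approach is correct in outline but takes a considerably longer route than the paper's, and one step is shakier than you present it.

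The paper never passes through $T=S^{(3)}$ or the RSS classification at all. Instead it exploits directly that $S(\bfx)$ and $U=S(\bfx)\cap S$ are $g$-divisible cg subalgebras of $S_{(g)}$ (this is part of the definition of a maximal order pair). For part~(1), Proposition~\ref{RSS 2.9} then gives strongly noetherian and finite generation immediately: $g$-divisibility forces $R/gR\cong\ovl{R}$ to sit inside $\Bbbk(E)[t,t^{-1};\sigma]$, hence is noetherian by Proposition~\ref{B just infinite}, and strong noetherianity lifts through the central regular element $g$ via \cite[Proposition~4.9(1)]{ASZ}. For part~(2), the paper uses that $\ovl{S(\bfx)}$ and $\ovl{U}$ agree in high degrees with a twisted homogeneous coordinate ring, applies \cite[Lemma~8.2(5)]{AZ.ncps} and \cite[Lemma~2.2]{Ro} to get $\chi$ and cohomological dimension~1 for the quotients, then lifts through $g$ using \cite[Theorem~8.8]{AZ.ncps}; the balanced dualizing complex comes from \cite[Theorem~6.3]{VdB.dualizing}. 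No Veronese, no RSS.

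Your route does work for part~(1): the module-finiteness of $R$ over $R^{(3)}$ can indeed be lifted from $\ovl{R}$ using $g$-divisibility and a graded Nakayama argument (note that you need $g$-divisibility of $U$ as well, which you assert as ``a parallel statement'' but do not justify; it is part of Definition~\ref{max order pair}). The weak point is part~(2): you invoke ``the finite-graded-extension machinery of Artin--Zhang and Van den Bergh'' to transfer $\chi$, finite cohomological dimension and balanced dualizing complexes \emph{upward} from $R^{(3)}$ to $R$. The standard reference \cite[Proposition~8.7(2)]{AZ.ncps} goes the other way (from the overring to the subring), and the upward direction is not an off-the-shelf citation. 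One can rescue this by arguing through an equivalence $\qgr(R)\sim\qgr(R^{(3)})$, but that requires its own justification since $R$ need not be generated in degree~1, so Theorem~\ref{VeA5} does not apply directly. The paper's lift via the central element $g$ sidesteps this entirely.
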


The properties Auslander-Gorenstein and Cohen-Macaulay properties are missing from Corollary~\ref{blowup properties}. It is shown in Corollary~\ref{S(p-p1+p2) bad homologically} that in general we cannot expect these homological properties. \par

The ultimate goal of this line of research is to understand connected graded algebras birational to the Sklyanin algebra. In other words, algebras $A$ satisfying $D_\gr(A)=D_\gr(S)$; or equivalently $Q_\gr(A)=Q_\gr(S)^{(d)}$ for some $d\geq 1$. Our results for higher Veroneses are as follows.

\begin{theorem}[Theorem~\ref{main thm converse}, Theorem~\ref{3 Veronese of virtual blowup} and Corollary~\ref{g-div max orders up n down generalised}]\label{vblowup Veroneses}
Let $U$ be a cg maximal $S$-order with $\ovl{U}\neq \Bbbk$. Equivalently let $U=S(\bfx)\cap S$ for a virtual blowup $S(\bfx)$ at a virtually effective divisor with $0\leq\deg \bfx\leq 2$. Then for any $d\geq 1$, $S(\bfx)^{(d)}$ is a maximal order in $Q_\gr(S)^{(d)}$ and $U^{(d)}=S(\bfx)\cap S^{(d)}$ is a maximal $S^{(d)}$-order.
\end{theorem}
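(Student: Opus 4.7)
The strategy is to reduce to the case $d=3n$, where Rogalski--Sierra--Stafford's theory inside the 3-Veronese $T=S^{(3)}$ applies, and then to promote the result to arbitrary $d$ via an up-and-down argument for $g$-divisible orders. The bridge between the $S$-theory of this thesis and the $T$-theory is the identification of $S(\bfx)^{(3)}$ as a virtual blowup of $T$.

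First I would prove Theorem~\ref{3 Veronese of virtual blowup}, asserting that $S(\bfx)^{(3)}$ is a virtual blowup of $T$ at the virtually effective divisor $\bfx+\sigma^{-1}(\bfx)+\sigma^{-2}(\bfx)$ (which has degree at most $6$). Proposition~\ref{vblowup exist intro}(2) gives $\overline{S(\bfx)}_{\geq m}=B(E,\LL(-\bfx),\sigma)_{\geq m}$ for $m\gg 0$; passing to the 3-Veronese on both sides recovers the TCR $B(E,\LL_3(-\bfx-\sigma^{-1}\bfx-\sigma^{-2}\bfx),\sigma^3)$ in all high degrees, which characterises $S(\bfx)^{(3)}$ as the predicted virtual blowup of $T$.

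With this identification in hand, I would invoke Theorem~\ref{main thm converse}, the converse to the RSS classification of maximal orders in $S^{(3n)}$ established earlier in the thesis, applied to the virtual blowup $S(\bfx)^{(3)}$ of $T$. This directly gives that $(S(\bfx)^{(3)})^{(n)}=S(\bfx)^{(3n)}$ is a maximal order in $Q_\gr(T)^{(n)}=Q_\gr(S)^{(3n)}$ for every $n\geq 1$, handling the case $3\mid d$. The parallel argument, with $U^{(3)}=S(\bfx)\cap T$ playing the role of $S(\bfx)^{(3)}$ and using RSS's classification to see that $U^{(3)}$ is a maximal $T$-order, shows $U^{(3n)}$ is a maximal $S^{(3n)}$-order.

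For general $d$, I would apply Corollary~\ref{g-div max orders up n down generalised}. Given an equivalent overring $B\supseteq S(\bfx)^{(d)}$ in $Q_\gr(S)^{(d)}$, its 3-Veronese $B^{(3)}$ is an equivalent overring of $S(\bfx)^{(3d)}$, so the previous step forces $B^{(3)}=S(\bfx)^{(3d)}$. The $g$-divisible up-and-down corollary then lifts this to $B=S(\bfx)^{(d)}$, controlling the degrees not visible after cubing via reduction modulo $g$ and sheaf cohomology on $E$. The same scheme, with $S^{(d)}$ as the ambient ring, handles the maximal $S^{(d)}$-order claim for $U^{(d)}$. The main obstacle is precisely this final lifting step: maximality of the 3-Veronese does not formally entail maximality in $Q_\gr(S)^{(d)}$, and one must exploit $g$-divisibility together with the concrete elliptic-curve description of $\overline{S(\bfx)}$ in high degree to rule out additional elements in graded pieces of $B$ whose index is divisible by $d$ but not by $3d$.
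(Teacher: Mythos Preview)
Your high-level architecture matches the paper's: prove Theorem~\ref{3 Veronese of virtual blowup} to connect to the $T$-theory, then combine with Theorem~\ref{main thm converse} and the $g$-divisible up/down machinery. Two points deserve attention.

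First, your sketch of Theorem~\ref{3 Veronese of virtual blowup} only verifies the easy half of the definition of a virtual blowup of $T$: that $\ovl{F^{(3)}}\ehd B(E,\MM(-[\bfx]_3),\tau)$. The substantive half is showing that $(F^{(3)}\cap T,\,F^{(3)})$ is a maximal order pair of $T$, and equality in high degrees with a twisted homogeneous coordinate ring does not by itself characterise this. The paper establishes it by the endomorphism-ring description (Lemma~\ref{max order pair equiv def}): writing $F=\End_{S(\bfd)}(M^{**})$ with $S(\bfd)\subseteq M\subseteq S$ $g$-divisible, one sets $R'=S(\bfd)^{(3)}=T([\bfd]_3)$ and $N=M^{(3)}$ and proves $F^{(3)}=\End_{R'}(N^{**})$. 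This uses two technical lemmas, $(M^{**})^{(3)}=(M^{(3)})^{**}$ and $(NR)^{**}=M^{**}$, both resting on the Cohen--Macaulay property of $S(\bfd)$ and $T([\bfd]_3)$. You should not treat this step as automatic.

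Second, for general $d$ the paper does \emph{not} funnel everything through the $3$-Veronese and then attempt to lift, as you propose. It splits on whether $3\mid d$. When $\gcd(d,3)=1$, Proposition~\ref{g-div max orders up n down}(1) gives the result directly: given an equivalent overring $A\supseteq F^{(d)}$, one forms $C=\widehat{A\langle g\rangle}$, checks $C\supseteq \widehat{F^{(d)}\langle g\rangle}=F$ (this equality uses precisely that $\deg g=3$ is coprime to $d$) and $xCy\subseteq F$, whence $C=F$ by maximality and $A\subseteq F\cap(\Sg)^{(d)}=F^{(d)}$. No passage to $T$ is needed. Only when $3\mid d$ does the paper invoke Theorem~\ref{3 Veronese of virtual blowup} to land in $T$ and then apply the analogous Proposition~\ref{g-div max orders up n down}(2). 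Your proposed lifting from $B^{(3)}=F^{(3d)}$ back to $B=F^{(d)}$ via ``reduction modulo $g$ and sheaf cohomology'' is vague at exactly the point you flag as the obstacle; the paper's adjoin-$g$ trick sidesteps it entirely.
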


and conversely . . .

\begin{theorem}[Theorem~\ref{RSS 8.11}]\label{RSS 8.11 intro}
Let $d\geq1$ be coprime to 3 and suppose that $U$ is a cg maximal $S^{(d)}$-order satisfying $\ovl{U}\neq \Bbbk$. Then there exists a virtually effective divisor $\bfx$ and virtual blowup $S(\bfx)$ such that $U=S(\bfx)\cap S^{(d)}$.
\end{theorem}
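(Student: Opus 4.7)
The strategy is to reduce the statement to the $d=1$ case, namely Theorem~\ref{main result 1}, by lifting the maximal $S^{(d)}$-order $U$ to a maximal $S$-order $U^{\sharp}$ whose $d$-th Veronese recovers $U$. Given $U$ a cg maximal $S^{(d)}$-order with $\ovl U \neq \Bbbk$, the plan is to construct a cg subring $U^{\sharp}\subseteq S$ containing $U$ and having the property that $(U^{\sharp})^{(d)}$ is equivalent to $U$ as orders in $Q_\gr(S^{(d)})$. The natural candidate is a maximal such extension: for instance, taken as a maximal element in the poset of cg subrings $V$ of $S$ with $U\subseteq V$ and $V^{(d)}$ equivalent to $U$. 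Such a maximum would be obtained via a standard Zorn/ascending-chain argument, the essential bound coming from the strong noetherian and $\chi$ properties of the ambient virtual blowups (Corollary~\ref{blowup properties}).

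Assuming $U^{\sharp}$ is in hand, the remainder is formal. A strictly larger equivalent $S$-order $W\supsetneq U^{\sharp}$ inside $Q_\gr(S)$ would produce a strictly larger equivalent $S^{(d)}$-order $W^{(d)}\supsetneq (U^{\sharp})^{(d)}$ still equivalent to $U$, contradicting the maximality of $U$; hence $U^{\sharp}$ is a cg maximal $S$-order. The hypothesis $\ovl{U^{\sharp}}\neq\Bbbk$ is inherited from $\ovl U\neq\Bbbk$ via $U\subseteq U^{\sharp}$. Theorem~\ref{main result 1} then yields a virtually effective divisor $\bfx$ with $\deg\bfx\leq 2$ and $U^{\sharp}=S(\bfx)\cap S$, and Theorem~\ref{vblowup Veroneses} identifies $V:=S(\bfx)\cap S^{(d)}=(U^{\sharp})^{(d)}$ as a maximal $S^{(d)}$-order containing $U$ and equivalent to it by the construction of $U^{\sharp}$. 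The maximality of $U$ then forces $U=V=S(\bfx)\cap S^{(d)}$, as required.

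The main obstacle is the construction of $U^{\sharp}$: one must show the candidate set is non-empty and Zornian (with appropriate upper bounds on ascending chains), and that the resulting maximal element is actually a subring of $S$ whose $d$-Veronese is genuinely equivalent to $U$ rather than merely contained in an equivalent order. This is where the coprimality hypothesis $\gcd(d,3)=1$ becomes essential. Since the central element $g\in S_3$ lies outside $S^{(d)}$ for $d>1$, with the minimal central power $g^{d}\in S^{(d)}_{3}$ emerging only at the $d$-th step, coprimality ensures that the $g$-torsion structure of $U$ inside $S^{(d)}$ determines, and is determined by, the $g$-torsion structure of $U^{\sharp}$ inside $S$ without intermediate obstructions. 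The detailed $g$-divisibility analysis needed to make this precise is where the technical heart of the proof lies; once it is established, the transport of the maximal-order structure from $U$ to $U^{\sharp}$ and the descent back via Theorem~\ref{vblowup Veroneses} follow cleanly.
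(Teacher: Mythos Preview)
Your reduction-to-$d=1$ strategy has the right shape, but the construction of $U^\sharp$ via Zorn's lemma has two genuine gaps. First, the chain bound is not justified: you appeal to noetherian and $\chi$ properties of ``the ambient virtual blowups'', but no such virtual blowup is available at this stage---producing one is precisely the goal. Without a uniform bound, an ascending union of rings $V_i$ with each $V_i^{(d)}$ equivalent to $U$ need not retain that property, since the witnessing elements $a_i,b_i$ with $a_iV_i^{(d)}b_i\subseteq U$ may vary with $i$. Second, even granting that $U^\sharp$ exists, you give no reason why $Q_\gr(U^\sharp)=Q_\gr(S)$ rather than $Q_\gr(S^{(d)})$; the poset contains $U$ itself, which has the wrong graded quotient ring for $d>1$, and nothing in the maximality argument forces $U^\sharp$ to acquire elements in degrees off the $d$-grid. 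Without this, the $d=1$ classification does not apply. (There is also a minor slip: $W\supsetneq U^\sharp$ does not imply $W^{(d)}\supsetneq (U^\sharp)^{(d)}$; you should contradict maximality of $U^\sharp$ in the poset directly rather than maximality of $U$.)

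The paper's proof is constructive and avoids Zorn entirely. It sets $C=U\langle g\rangle$ and proves (Proposition~\ref{RSS 8.10}, the technical heart) that $C$ has a nonzero ideal which is finitely generated on both sides as a $U$-module; the $d$-th Veronese of this ideal is then a common ideal, so $U$ and $C^{(d)}$ are equivalent orders. Coprimality of $d$ and $3$ is used exactly once, to check $Q_\gr(C)=Q_\gr(S)$ via Lemma~\ref{Qgr(S) or Qgr(T)}. Then Proposition~\ref{C to C hat} shows $C$ and $\widehat C$ share a common ideal, the $g$-divisible classification (Theorem~\ref{RSS 7.4}) produces the virtual blowup $F$ containing $\widehat C$, and $U=(F\cap S)^{(d)}$ follows from maximality of $U$ after passing to $d$-Veroneses. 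Proposition~\ref{RSS 8.10} is exactly the ingredient your Zorn argument was meant to supply: it manufactures the bridge between $U$ and an overring with the full graded quotient ring $Q_\gr(S)$.
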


Retain the notation of Theorem~\ref{RSS 8.11 intro}. When $d=3e$ is divisible by 3, $U=(F\cap S)^{(e)}$
for a virtual blowup $F$ of $S^{(3)}$. This is a result of Rogalski, Sierra and Stafford. In contrast, when we prove Theorem~\ref{vblowup Veroneses}, we also prove the analogous statement for maximal $S^{(3)}$-orders. This result is an improvement on \cite{RSS}. Out of these results we are able to obtain the best answer yet to \cite[Question~9.4]{RSS}.

\begin{cor}[Corollary~\ref{g-div max orders up n down generalised}]\label{g-div max orders up n down generalised intro}
Let $U$ be a cg graded subalgebra of $S$ satisfying $D_\gr(U)=D_\gr(S)$ and such that $\ovl{U}\neq \Bbbk$. If $U$ is a maximal order then $U^{(d)}$ is a maximal order for all $d\geq 1$.
\end{cor}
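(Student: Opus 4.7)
The plan is to reduce Corollary~\ref{g-div max orders up n down generalised intro} to a direct invocation of Theorem~\ref{vblowup Veroneses} by first forcing $U$ itself to be a virtual blowup $S(\bfx)$, not merely an intersection $S(\bfx)\cap S$. The route passes through Theorem~\ref{main result 1} to produce a candidate virtual blowup containing $U$, and then uses centrality of $g$ to upgrade containment to equality.

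First I would observe that since $U\subseteq S\subseteq Q_\gr(S)$ and $U$ is a maximal order, $U$ is \emph{a fortiori} a maximal $S$-order: any equivalent graded order properly containing $U$ and lying inside $S$ would already contradict maximality in the larger class. Combined with the assumption $\ovl{U}\neq \Bbbk$, Theorem~\ref{main result 1} supplies a virtually effective divisor $\bfx$ with $0\leq\deg\bfx\leq 2$ and a virtual blowup $S(\bfx)$, a maximal order in $Q_\gr(S)$, such that $U=S(\bfx)\cap S$ and $S(\bfx)\supseteq U$.

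The key step is to upgrade $U=S(\bfx)\cap S$ to $U=S(\bfx)$. For this I would invoke Proposition~\ref{vblowup exist intro}(2), which gives $S(\bfx)\cap gS=gS(\bfx)$, hence $gS(\bfx)\subseteq S(\bfx)\cap S=U$. Centrality of $g\in S$ turns this into the sandwich $1\cdot U\cdot 1\subseteq S(\bfx)$ and $g\cdot S(\bfx)\cdot 1\subseteq U$, exhibiting $U$ and $S(\bfx)$ as equivalent orders in $Q_\gr(S)$. Maximality of $U$ then forces $U=S(\bfx)$. This is the step I expect to be the main obstacle; everything else is bookkeeping, but this move depends crucially on the $g$-divisibility statement in Proposition~\ref{vblowup exist intro}(2) and on the centrality of $g$, and without it one cannot cross between the two flavours of maximality.

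Once $U=S(\bfx)$ is established, Theorem~\ref{vblowup Veroneses} applies verbatim: for every $d\geq 1$, $U^{(d)}=S(\bfx)^{(d)}$ is a maximal order in $Q_\gr(S)^{(d)}=Q_\gr(U)^{(d)}$, which is the desired conclusion. I do not expect any issue at this final stage since the result is quoted in precisely the form needed.
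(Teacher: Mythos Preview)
Your argument covers only the special case $Q_\gr(U)=Q_\gr(S)$, whereas the hypothesis $D_\gr(U)=D_\gr(S)$ allows $Q_\gr(U)=Q_\gr(S)^{(e)}$ for any $e\geq 1$. Theorem~\ref{main result 1} concerns maximal $S$-orders, and by definition a maximal $S$-order is an order in $Q_\gr(S)$; if $e>1$ then $U$ is not even an order in $Q_\gr(S)$, so your claim that ``$U$ is \emph{a fortiori} a maximal $S$-order'' fails and the theorem does not apply. The paper handles the general case by instead invoking Theorem~\ref{RSS 8.11}, which classifies maximal $S^{(e)}$-orders for every $e$: one obtains $U=(F\cap S)^{(e)}$ or $U=(F\cap T)^{(e/3)}$ for a virtual blowup $F$ of $S$ or of $T=S^{(3)}$, depending on whether $e$ is coprime to $3$. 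After showing $U=F^{(e)}$ (or $F^{(e/3)}$) as you do in the special case, the conclusion for $U^{(d)}$ then comes from Theorem~\ref{main thm converse}(1a) or (2a), and in the $3\mid e$ branch one needs the $T$-version, which Theorem~\ref{vblowup Veroneses} alone does not supply.

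Even in the case $e=1$ your equivalence step via $g$ is shakier than it looks. The identity in Proposition~\ref{vblowup exist intro}(2) is really the statement that $S(\bfx)$ is $g$-divisible inside $S_{(g)}$, i.e.\ $S(\bfx)\cap gS_{(g)}=gS(\bfx)$; since $S(\bfx)$ need not lie in $S$, you cannot deduce $gS(\bfx)\subseteq S$ from this. The cleaner route, and the one the paper takes, is to use that $(S(\bfx)\cap S,\,S(\bfx))$ is a maximal order pair: by Definition~\ref{max order pair} there is a nonzero ideal $K$ of $S(\bfx)$ contained in $U=S(\bfx)\cap S$, so $U$ and $S(\bfx)$ are equivalent orders directly, and maximality of $U$ forces $U=S(\bfx)$.
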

\par
A final achievement of this thesis is the explicit construction of a virtual blowup - a first of its kind.

\begin{theorem}[Theorem~\ref{S(p-p1+p2)}]\label{S(p-p1+p2) intro}
Let $p\in E$ and $\mbf{x}=p-\sigma^{-1}(p)+\sigma^{-2}(p)$. We set
\begin{itemize}
\item $X_1=S(p+\sigma^{-2}(p))_1=\{ u\in S_1\,|\; \ovl{u}\in H^0(E,\LL(-\bfx))\};$
\item $X_2=S(p)_1S(\sigma^{-2}(p))_1\subseteq \{ u\in S_2\,|\; \ovl{u}\in H^0(E,\LL_2(-\bfx-\sigma^{-1}(\bfx)))\};$
\item $X_3=\{ u\in S_3\,|\; \ovl{u}\in H^0(E,\LL_3(-\bfx-\sigma^{-1}(\bfx)-\sigma^{-2}(\bfx)))\}.$
\end{itemize}
Put $U=\Bbbk\langle X_1, X_2, X_3\rangle$. Then $U=S(\bfx)$ is a virtual blowup of $S$ at the virtually effective divisor $\bfx$.
\end{theorem}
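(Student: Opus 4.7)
The goal is to verify the three defining properties of a virtual blowup at the virtually effective divisor $\bfx$ from Proposition~\ref{vblowup exist intro} for the concrete algebra $U=\Bbbk\langle X_1,X_2,X_3\rangle$: that (i) $U$ is a maximal order in $Q_\gr(S)$; (ii) $U\cap gS=gU$; and (iii) $\ovl{U}_{\geq n}=B(E,\LL(-\bfx),\sigma)_{\geq n}$ for $n\gg 0$. I first carry out the divisor bookkeeping
\[
\bfx+\sigma^{-1}(\bfx)+\sigma^{-2}(\bfx)=p+\sigma^{-2}(p)+\sigma^{-4}(p)=:\bfd,
\]
an effective divisor of degree three; the same telescoping shows $\sum_{i=0}^{3k-1}\sigma^{-i}(\bfx)=\sum_{j=0}^{k-1}\sigma^{-3j}(\bfd)$ for every $k\geq 1$. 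Thus $\bfx$ is virtually effective and
\[
B(E,\LL(-\bfx),\sigma)_{3k}=B(E,\LL_3(-\bfd),\sigma^3)_k=\ovl{S^{(3)}(\bfd)}_k,
\]
the last equality being the classical blowup of $S^{(3)}$ at the effective divisor $\bfd$ recalled in Section~\ref{Rogs blowup review}.

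The central step is the 3-Veronese identification $U^{(3)}=S^{(3)}(\bfd)$. The containment $S^{(3)}(\bfd)\subseteq U^{(3)}$ is direct, since $S^{(3)}(\bfd)$ is generated in its degree one by $\{u\in S_3:\ovl{u}\in H^0(E,\LL_3(-\bfd))\}$, which is exactly $X_3\subseteq U$. For the reverse inclusion I would induct on $n$, showing that every monomial $X_{i_1}\cdots X_{i_k}$ with $i_1+\cdots+i_k=3n$ lies in $S^{(3)}(\bfd)_n$. Working modulo $g$ and using the twist rule $\ovl{s}\cdot\ovl{t}=\ovl{s}\otimes\ovl{t}^{\sigma^{\deg s}}$ in $B(E,\LL,\sigma)$ along with the vanishings
\[
\ovl{X_1}\subseteq H^0(E,\LL(-p-\sigma^{-2}(p))),\ \ovl{X_2}\subseteq H^0(E,\LL_2(-p-\sigma^{-3}(p))),\ \ovl{X_3}=H^0(E,\LL_3(-\bfd)),
\]
one checks that the image of any such monomial vanishes on $\sum_{j=0}^{n-1}\sigma^{-3j}(\bfd)$, which is exactly the vanishing defining $\ovl{S^{(3)}(\bfd)}_n$. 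The $g$-torsion contribution $U_{3n}\cap gS_{3n-3}$ is absorbed by the inductive hypothesis since $gU_{3n-3}\subseteq S^{(3)}(\bfd)_n$.

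With $U^{(3)}=S^{(3)}(\bfd)$ established, the virtual blowup properties of $U$ follow by lifting through the 3-Veronese. For (ii), the equality $U^{(3)}\cap gS^{(3)}=gU^{(3)}$ holds by the blowup structure of $S^{(3)}(\bfd)$ (see Section~\ref{Rogs blowup review}), and extends to $U\cap gS=gU$ via a graded-piece argument using centrality of $g\in S_3$ and the fact that $U$ is generated in degrees $\leq 3$. For (iii), the equality in degrees divisible by $3$ is immediate from the Veronese identification and the telescoping; in the residual degrees (for $n\gg 0$) I would multiply a nonzero element of $\ovl{X_1}$, respectively $\ovl{X_2}$, into $\ovl{U^{(3)}}$ and verify surjectivity onto $B(E,\LL(-\bfx),\sigma)_n$ by section multiplication on $E$. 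For (i), any cg equivalent overorder $V\supsetneq U$ produces $V^{(3)}\supseteq U^{(3)}$ equivalent to $S^{(3)}(\bfd)$; maximality of $S^{(3)}(\bfd)$ (Rogalski) forces $V^{(3)}=U^{(3)}$, and the case $V\neq U$ with equal 3-Veronese is excluded by examining $V_1$ and $V_2$ against the constraint that their products in degree $3$ must lie in $X_3$.

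The main obstacle is the inductive inclusion $U^{(3)}\subseteq S^{(3)}(\bfd)$: although the mod-$g$ divisor arithmetic is combinatorial, controlling the $g$-torsion across all graded pieces and the multiplication of mixed monomials involving $X_2$ (defined as a product of degree-one generators of two different effective-divisor blowups rather than directly by a vanishing condition) both require care. A secondary technical ingredient is the surjectivity $\ovl{S(p)_1}\cdot\ovl{S(\sigma^{-2}(p))_1}\twoheadrightarrow H^0(E,\LL_2(-p-\sigma^{-3}(p)))$, a standard generation-of-sections statement on $E$ which can be verified by a dimension count using the Hilbert series from Theorem~\ref{S(d) thm intro}(1) or by a Castelnuovo-type argument.
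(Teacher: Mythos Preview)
Your overall strategy---establish $U^{(3)}=T(\bfd)$ with $\bfd=p+\sigma^{-2}(p)+\sigma^{-4}(p)$ and then lift the required properties to $U$---mirrors part of the paper's approach. But the lifting step contains a genuine gap, and one of your auxiliary claims is false.

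First, the surjectivity $\ovl{S(p)_1}\cdot\ovl{S(\sigma^{-2}(p))_1}\twoheadrightarrow H^0(E,\LL_2(-p-\sigma^{-3}(p)))$ fails: this is precisely the exceptional case of the multiplication lemma (Lemma~\ref{Ro3.1} in the paper), since $\LL(-p)\cong\LL(-\sigma^{-2}(p))^\sigma$. The image has dimension $3$, not $4$, and this is exactly why $X_2$ is defined as a product rather than by the vanishing condition. This error is not fatal (only the inclusion is needed upstream), but it signals that the role of $X_2$ is more delicate than your sketch suggests.

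The real gap is in step~(i). Suppose $V\supsetneq U$ is an equivalent order with $V^{(3)}=U^{(3)}$; your proposed constraint ``products landing in degree $3$ must lie in $X_3$'' does \emph{not} force $V_2=X_2$. If one sets $X_2'=\{u\in S_2:\ovl u\in H^0(E,\LL_2(-p-\sigma^{-3}(p)))\}\supsetneq X_2$, then $X_2'X_1,\,X_1X_2'\subseteq X_3$ still holds (the image modulo $g$ lies in $B_3=\ovl{X_3}$, and the $g$-part is $g\Bbbk\subseteq X_3$). So your degree-$3$ constraint cannot distinguish $U$ from $U'=\Bbbk\langle X_1,X_2',X_3\rangle$. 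The paper handles this not by any product constraint but by a separate computation (Example~\ref{original S(p-p1+p2)}) showing $\widehat{U'}=S$, which forces any ring containing $X_2'$ to be equivalent to $S$ rather than to $U$---a quite different mechanism from what you propose. Relatedly, your lift of $g$-divisibility from $U^{(3)}$ to $U$ (``graded-piece argument using centrality of $g$'') does not go through: for $u=gs\in U_n$ with $n\not\equiv0\pmod 3$, knowing $U^{(3)}\cap gS=gU^{(3)}$ gives no control over $s$.

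The paper circumvents both difficulties by introducing the endomorphism ring structure: with $R=S(p)$ and $M=R+S(p)_1S_1R$, one has $U\subseteq H=\End_R(M)\subseteq F=\End_R(M^{**})$, where $F$ is a maximal order \emph{automatically} by general theory (Remark~\ref{RSS 6.2}). The identity $U^{(3)}=H^{(3)}=T(\bfd)$ is then obtained via equivalence of orders rather than direct monomial inclusion, and the delicate point $\ovl{F}_2=\ovl{X_2}\neq B_2$ is established by the $\widehat{U'}=S$ computation. Once $\ovl U=\ovl F$, $g$-divisibility of $U$ follows by a short induction, and then $U=F$ by comparing Hilbert series. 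Your outline lacks an analogue of the endomorphism-ring step, and without it the maximality of $U$ remains unproved.
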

What's curious about $U$ from Theorem~\ref{S(p-p1+p2) intro} is the summand $X_2$. It would be more natural to take $U=\Bbbk\langle X_1, X'_2, X_3\rangle$ where
$$X'_2=\{ u\in S_2\,|\; \ovl{u}\in H^0(E,\LL_2(-\bfx-\sigma^{-1}(\bfx)))\}$$
- a definition more in keeping with the definition of $S(\bfd)$. We show in Example~\ref{original S(p-p1+p2)} that this second ring is far from being a maximal order.

\subsection{History and motivation}\label{history}

In 1987, Artin and Schelter started a project to classify the noncommutative analogues of polynomial rings in 3 variables \cite{ASc}. These are now called AS-regular algebras (Definition~\ref{AS regular}). The subject of noncommutative projective geometry was born when Artin, Tate and Van den Bergh completed this classification in \cite{ATV1, ATV2} (see Theorem~\ref{ATV thm}). Their results can be thought of as a classification of noncommutative projective planes. \par

More generally, one would like to classify all so-called noncommutative curves and surfaces. Let $A$ be a cg noetherian domain, then we can associate a noncommutative projective scheme $\qgr(A)$ to $A$ (Definition~\ref{noncommutative projective scheme}). It can be thought of as the noncommutative analogue of coherent sheaves, $\coh(X)$, over (the non-existent) $X=\mathrm{Proj}(A)$. The classification for noncommutative projective curves, when the GK-dimension is 2, was completed by Artin and Stafford in \cite{AS}. They show that in this case $\qgr(A)\sim\coh(X)$ for a genuine integral projective curve $X$ (Theorem~\ref{AS thm} and Corollary~\ref{nc curves are comm}). The question of noncommutative surfaces (when $\GKdim(A)=3$) is still very much open. It is this ultimate goal that motivates this work.\par

Let $A$ be a cg domain with $\GKdim(A)=3$. Then $Q_\gr(A)=D_\gr(A)[t,t^{-1};\alpha]$; a skew Laurent polynomial ring over the division ring $D_\gr(A)$ which is of transcendence degree 2. A programme for the classification is to first classify the possible birational classes (the possible $D_\gr(A)$'s), and then classify the algebras in each birational equivalence class. Artin conjectures in \cite{Ar} that we know all the possible division rings. They are:
\begin{enumerate}[(1)]
\item A division algebra which is finite dimensional over a central commutative subfield of transcendence degree 2.
\item A division ring of factions of a skew polynomial extensions of $\Bbbk(X)$, for a commutative curve $X$.
\item A noncommutative function field $D_{\gr}(S)$ of a 3 dimensional Sklyanin algebra $S$.
\end{enumerate}
Whilst this conjecture is still a long way off, significant work has been, and is being, done on the classification of algebras in each birational class. Algebras with $D_\gr(A)$ commutative (plus a geometric condition) have been successfully classified by Rogalski and Stafford and then Sierra in \cite{RoSt.naive.nc.blowups, RoSt.class.of.nc.surfaces} and \cite{Si} respectively. This is a significant subclass of (1) above.  We are interested in case (3) when $D_\gr(A)=D_\gr(S)$. More specifically, we look at subalgebras $A$ of $S$ with $D_\gr(A)=D_\gr(S)$. Where would be a good place to start looking for such algebras? Inside $S$ of course! How about a target to aim for? Maximal orders are the noncommutative analogue of integrally closed domains, or geometrically, of normal varieties. They are therefore a natural target for such a classification. \par
The first major results in this direction were given by Rogalski. In \cite{Ro} Rogalski successfully classifies the degree 1 generated maximal orders of the 3-Veronese ring $T=S^{(3)}$ of $S$. These are classified as so-called blowup subalgebras $T(\bfd)$ of $T$ at effective divisors on $E$ of degree at most $7$. This is extended to include all maximal orders and maximal $T$-orders by Rogalski, Sierra and Stafford in \cite{RSS, RSS2}. A detailed review of their work can be found in Section~\ref{Rogs blowup review} and Section~\ref{RSS review}. In this thesis we ask the question, why work with $S^{(3)}$? Surely it is $S$ we are interested in?!

\subsection{The proofs}

In this thesis two proof strategies are prominent throughout. The first follows the idea applied in \cite{Ro, RSS, RSS2}: prove what we can in the factor $S/gS$ and then lift up to $S$. Indeed $S/gS=B(E,\LL,\sigma)$ is a very well understood ring. Hence, if $U$ is a subalgebra of $S$, then $\ovl{U}$ is a subalgebra $B(E,\LL,\sigma)$ and can be well understood. We can then try to pull back information to $U$. This is most easily achieved when $U$ is \textit{$g$-divisible} (i.e. $U\cap gS=gU$). Despite there being a subtle difference between Hypothesis~\ref{standing assumption intro} and the assumptions of \cite{Ro, RSS, RSS2}, the techniques of Rogalski, Sierra and Stafford are often applicable in the situations we find ourselves in.\par

The second strategy is to try to utilise the results of \cite{Ro, RSS, RSS2} directly. Since what we want to prove here is essentially what Rogalski, Sierra and Stafford proved in the 3-Veronese $S^{(3)}$ of $S$, we can hope that their results will shortcut a lot of hard work. Given a subalgebra $U$ of $S$, clearly $U^{(3)}$ is a subalgebra of $S^{(3)}$; what does \cite{Ro, RSS, RSS2} say about $U^{(3)}$? What does this then say about $U$? Playing this game allows us to bypass much of the technicalities that Rogalski, Sierra and Stafford required.\par

Key to being able to utilise the strategies above is getting an analogue of Rogalski blowup subalgebras $T(\bfd)$. These are our rings $S(\bfd)$. Difficulties here arise in connection with the ring $S(p+q)$. Unlike $S(p)$, the ring $S(p+q)$ is not generated in degree 1. This means that the techniques of Rogalski from \cite{Ro} are not directly applicable. Rogalski, Sierra and Stafford tackled a similar problem in constructing $T(\bfd)$ when $\deg\bfd=8$ which resulted in the highly technical paper \cite{RSS2}. A great strength of the present work is that we are able to avoid such technicalities.

\subsection{Thesis layout and intermediate results}

As with any thesis, we have a background section. Here we develop the essential theory of noncommutative projective geometry that we require. Also included is a detailed review of the work of Rogalski, Sierra and Stafford due to its significance to us. \par

It is section~3 where the thesis properly begins. In the first half of section~3, we look at some of the first properties about subrings of $S$ and introduce the vital property \textit{$g$-divisibility} (i.e. $R\cap gS=gR$). Many of these results have analogues for subalgebras of $T=S^{(3)}$. The proof techniques used in that setting carry over without much problem to $S$. The second half of section~3 looks at the interaction between maximal orders and Veronese rings. We obtain the following result.

\begin{prop}[Proposition~\ref{g-div max orders up n down}]\label{g-div max orders up n down intro}
Let $d\geq 1$ and let $U$ be a $g$-divisible cg subalgebra of $S$ with $D_\gr(U)=D_\gr(S)$.
\begin{enumerate}[(1)]
\item  If $Q_\gr(U)=Q_\gr(S)$ and $d$ is coprime to 3, then $U$ is a maximal order if and only if $U^{(d)}$ is a maximal order;
\item If $Q_\gr(U)=Q_\gr(S^{(3)})$, then $U$ is a maximal order if and only if $U^{(d)}$ is a maximal order.
\end{enumerate}
\end{prop}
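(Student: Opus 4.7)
The strategy is to prove each implication separately; in both parts the implication $U^{(d)}\text{ maximal}\Rightarrow U\text{ maximal}$ is the harder direction, and the argument is essentially uniform for (1) and (2) modulo the role of the coprimality hypothesis.

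For the easier direction $U\text{ maximal}\Rightarrow U^{(d)}\text{ maximal}$: given a cg equivalent overring $V\supseteq U^{(d)}$ inside $Q_\gr(U^{(d)})=Q_\gr(U)^{(d)}$, I would form the subalgebra $W\subseteq Q_\gr(U)$ generated by $U\cup V$. Using the inclusion $y_1Vy_2\subseteq U^{(d)}\subseteq U$ together with $U\cdot U\subseteq U$, one can induct on the length of alternating products of pieces of $U$ and $V$ to produce a relation $y_1^k W y_2^k\subseteq U$ for suitable $k$, exhibiting $W$ as an equivalent overring of $U$. Maximality of $U$ then forces $W=U$, and hence $V\subseteq W\cap Q_\gr(U)^{(d)}=U^{(d)}$.

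For the harder direction $U^{(d)}\text{ maximal}\Rightarrow U\text{ maximal}$, let $V\supseteq U$ be a cg equivalent overring in $Q_\gr(U)$. The first step is to extract an equivalence between $V^{(d)}$ and $U^{(d)}$: writing $y_1Vy_2\subseteq U$ with $y_1,y_2$ homogeneous of degrees $a$ and $b$, I would use that $U$ has GK-dimension $3$ and hence contains nonzero homogeneous elements in every sufficiently large degree to pick $u\in U_k$ with $a+b+k\equiv 0\pmod d$. Then $(uy_1)V^{(d)}y_2\subseteq U^{(d)}$, so maximality of $U^{(d)}$ forces $V^{(d)}=U^{(d)}$.

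The main obstacle is then passing from $V^{(d)}=U^{(d)}$ to $V=U$. For $v\in V_n\setminus U_n$ one must have $d\nmid n$, and for any $m\equiv -n\pmod d$ one has $vU_m\subseteq V_{n+m}=U_{n+m}$. I expect to exploit the $g$-divisibility of $U$ and the controlled structure of $\ovl U\subseteq B(E,\LL,\sigma)$ from the first half of Section~3 to pass mod $g$, conclude $\ovl v\in\ovl U$, lift, and iterate. The coprimality of $d$ and $3$ in part~(1) enters precisely here: since $\deg g=3$, multiplication by $g$ shifts the $\Z/d$-grading by $3$, and $\gcd(d,3)=1$ guarantees these shifts cycle through all residue classes, allowing information at degrees divisible by $d$ to be transferred to all degrees. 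In part~(2) the $S^{(3)}$-grading places $g$ in degree $1$, so this bookkeeping is automatic and no coprimality hypothesis is needed. A delicate point is that $V$ is not a priori $g$-divisible, so the mod-$g$ passage may first require replacing $V$ by a $g$-divisible hull inside $Q_\gr(U)$.
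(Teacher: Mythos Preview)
Your ($\Rightarrow$) direction has a real gap. In a noncommutative domain the relation $y_1Vy_2\subseteq U$ does not propagate to alternating products: for $v_1uv_2$ with $u\in U$, $v_i\in V$, there is no way to insert $y_1,y_2$ between the factors, and no induction as claimed. Worse, $W=\langle U,V\rangle$ contains alternating products of arbitrary length, so no single $k$ can give $y_1^kWy_2^k\subseteq U$.

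The paper's argument is organised around one identity, $\widehat{U^{(d)}\langle g\rangle}=U$ (Lemma~\ref{U'<g>}); coprimality of $d$ and $\deg g=3$ (or $\deg g=1$ in part~(2)) is precisely what lets powers of $g$ push any homogeneous element into the $d$th Veronese. Both implications then use $g$-divisible hulls together with the fact that $g$ is \emph{central}, so that $xA\langle g\rangle y=(xAy)\langle g\rangle$. For ($\Rightarrow$): given $A\supseteq U'=U^{(d)}$ equivalent with $xAy\subseteq U'$, set $C=\widehat{A\langle g\rangle}$; centrality of $g$ and $g$-divisibility of $U$ give $xCy\subseteq U$, while $C\supseteq\widehat{U'\langle g\rangle}=U$, so $C=U$ and hence $A\subseteq C\cap(\Sg)^{(d)}=U'$. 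For ($\Leftarrow$): given $A\supseteq U$ equivalent, Lemma~\ref{equiv orders go up} gives $A^{(d)}=U^{(d)}$ as you said; then for $a\in A$ coprimality supplies $n$ with $ag^n\in A^{(d)}=U^{(d)}$, so $a\in\widehat{U^{(d)}\langle g\rangle}=U$ directly, with no mod-$g$ iteration needed.

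There is also a preliminary step you are missing entirely. In both directions the equivalent overring $A$ lives a priori only in $Q_\gr(S)$, and the $g$-divisible hull is only defined for subsets of $\Sg$. The paper first invokes Lemma~\ref{max Sg-order is max order} to force $A\subseteq\Sg$; your proposed ``$g$-divisible hull inside $Q_\gr(U)$'' does not exist, so this reduction is essential, not merely delicate.
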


A limitation of this result is it does not pass between $S$ and $S^{(3)}$. It is not until later that we can bridge this gap.

\begin{theorem}[Theorem~\ref{3 Veronese of virtual blowup}]\label{3 Veronese of virtual blowup intro}
Let $F$ be a virtual blowup of $S$ at a $\sigma$-virtually effective divisor $\bfx$. Then $F^{(3)}$ is a virtual blowup of $T$ at the $\sigma^3$-virtually effective divisor $\bfy=\bfx+\bfx^\sigma+\bfx^{\sigma^2}$.
\end{theorem}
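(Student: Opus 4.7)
The plan is to verify the three defining conditions of $F^{(3)}$ being a virtual blowup of $T = S^{(3)}$ at $\bfy$: $g$-divisibility in $T$, the mod-$g$ description $\ovl{F^{(3)}}_{\geq n} = B(E, \LL_3(-\bfy), \sigma^3)_{\geq n}$ for $n \gg 0$, and maximality as an order in $Q_\gr(T)$. Along the way I will also check the $\sigma^3$-virtual effectivity of $\bfy$.

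For $g$-divisibility: since $g \in S_3 = T_1$, we have $F^{(3)} \cap gT \subseteq F \cap gS = gF$ by $g$-divisibility of $F$; if $z = gw$ with $w \in F$ lies in this intersection then $\deg w \equiv 0 \pmod 3$, forcing $w \in F^{(3)}$. For the mod-$g$ computation I combine $\ovl{F^{(3)}} = (\ovl F)^{(3)}$ with Proposition~\ref{vblowup exist intro}(2) (giving $\ovl F_{\geq N} = B(E, \LL(-\bfx), \sigma)_{\geq N}$) and the standard Veronese identity $B(E, \MM, \tau)^{(d)} = B(E, \MM_d, \tau^d)$. Setting $\MM = \LL(-\bfx)$, $\tau = \sigma$, $d = 3$, the twist $\MM_3$ unfolds to $\LL_3(-\bfx - \bfx^\sigma - \bfx^{\sigma^2}) = \LL_3(-\bfy)$. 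Virtual effectivity of $\bfy$ for $\sigma^3$ is telescoping: $\sum_{i=0}^{m-1}\bfy^{\sigma^{3i}} = \sum_{j=0}^{3m-1}\bfx^{\sigma^j}$, which is eventually effective by hypothesis on $\bfx$.

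The main obstacle is the maximality of $F^{(3)}$ in $Q_\gr(T)$. My plan uses the Rogalski-Sierra-Stafford virtual blowup theory as a black box: it produces a virtual blowup $T(\bfy)$ of $T$ at $\bfy$, characterised as the unique maximal $g$-divisible cg order in $Q_\gr(T)$ whose mod-$g$ reduction agrees with $B(E, \LL_3(-\bfy), \sigma^3)$ in high degrees. Since $F^{(3)}$ shares these data, $F^{(3)} \subseteq T(\bfy)$. For the reverse containment I leverage the maximality of $F$ in $Q_\gr(S)$: any $q \in T(\bfy)$ satisfies $qF^{(3)}_{\geq n} + F^{(3)}_{\geq n}q \subseteq F^{(3)}$ for some $n$ (by the equivalence of $T(\bfy)$ with $F^{(3)}$ in $Q_\gr(T)$). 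Using that $F$ is finitely generated as both a left and right $F^{(3)}$-module with generators of degree at most $2$ — analogous to the decomposition $S = S^{(3)} \oplus S^{(3)}S_1 \oplus S^{(3)}S_2$, and justified directly by lifting an $\ovl{F^{(3)}}$-generating set of $\ovl F$ through the $g$-divisibility of $F$ (without appealing to the downstream Corollary~\ref{blowup properties}) — the multiplier condition extends to $qF_{\geq n+M} + F_{\geq n+M}q \subseteq F$ for some fixed $M$. The Asano-type characterisation of graded maximal orders applied to $F$ then forces $q \in F$, and since $q \in Q_\gr(T)$ has degrees divisible by $3$, we get $q \in F \cap Q_\gr(T) = F^{(3)}$, as required.

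The most delicate bookkeeping I anticipate lies in establishing the finite generation of $F$ over $F^{(3)}$ with low-degree generators while avoiding circularity with Corollary~\ref{blowup properties}, together with nailing down the precise form of the Asano-type maximality characterisation for cg $g$-divisible algebras; both should be accessible from the explicit $g$-divisible structure of $F$ and the Hilbert series data of Theorem~\ref{S(d) thm intro}.
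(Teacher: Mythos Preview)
Your plan has a genuine gap at the heart of the maximality argument. You invoke a virtual blowup $T(\bfy)$ ``characterised as the \emph{unique} maximal $g$-divisible cg order in $Q_\gr(T)$ whose mod-$g$ reduction agrees with $B(E,\LL_3(-\bfy),\sigma^3)$ in high degrees.'' No such uniqueness is available: the paper explicitly leaves this open (Question~\ref{unique}) and remarks that in $T$ distinct virtual blowups at the same divisor can be constructed (cf.\ the discussion around \cite[Remark~10.7]{RSS}). Without uniqueness the inclusion $F^{(3)}\subseteq T(\bfy)$ is unjustified; and even if $T(\bfy)$ were unique among \emph{maximal} such orders, that would not force every $g$-divisible order with the same mod-$g$ data to sit inside it, since you have not yet shown $F^{(3)}$ is maximal. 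The subsequent step --- that every $q\in T(\bfy)$ satisfies $qF^{(3)}_{\geq n}+F^{(3)}_{\geq n}q\subseteq F^{(3)}$ --- is likewise not a consequence of mere equivalence of orders; equivalence only gives $a\,T(\bfy)\,b\subseteq F^{(3)}$ for some $a,b$, which is a much weaker relation. In effect you are attempting to prove directly that $F$ maximal in $Q_\gr(S)$ forces $F^{(3)}$ maximal in $Q_\gr(T)$; but this is precisely the $d=3$ case excluded from Proposition~\ref{g-div max orders up n down}, and the paper stresses that bridging this gap \emph{requires} the present theorem rather than the other way round.

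There is also a smaller omission: being a virtual blowup of $T$ means $(F^{(3)}\cap T,\,F^{(3)})$ is a maximal order \emph{pair} (Definition~\ref{virtual blowup}(2)), so you must also verify that $F^{(3)}\cap T$ is a maximal $T$-order and exhibit an ideal $K$ of $F^{(3)}$ inside $F^{(3)}\cap T$ with $\GK(F^{(3)}/K)\leq 1$. Your outline addresses neither.

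The paper sidesteps all of this by using the endomorphism-ring description of maximal order pairs (Lemma~\ref{max order pair equiv def}). Writing $F=\End_{S(\bfd)}(M^{**})$ with $R=S(\bfd)$, it sets $R'=R^{(3)}=T([\bfd]_3)$ and $N=M^{(3)}$ and proves $F^{(3)}=\End_{R'}(N^{**})$. The delicate point is commuting the third Veronese with the double dual, which is handled by Lemmas~\ref{NR**=M**} and~\ref{Veronese ** commute}, exploiting that both $R$ and $R'$ are Cohen--Macaulay (Theorem~\ref{S(d) thm}, Theorem~\ref{T(d) properties}) so that Lemma~\ref{RSS 4.11} controls $M^{**}/M$ and $N^{**}/N$. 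This realises $F^{(3)}$ directly in the form required by Lemma~\ref{max order pair equiv def}(2b), giving the maximal order pair structure without any appeal to uniqueness of virtual blowups.
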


Proposition~\ref{g-div max orders up n down intro} and Theorem~\ref{3 Veronese of virtual blowup intro} are the crucial preliminary results for Theorem~\ref{vblowup Veroneses} and Corollary~\ref{g-div max orders up n down generalised intro}.

section~4 is dedicated to proving Theorem~\ref{S(d) thm intro}. In other words, we prove that the $S(\bfd)$ are the correct analogues of Rogalski's blowups $T(\bfd)$. We remind the reader that the ring $S(p)$ was studied by Rogalski in earlier work, and proved to satisfy Theorem~\ref{S(d) thm intro}. The definition of $S(p+q)$ is more complicated. It is then correspondingly more difficult to understand $S(p+q)$. The biggest challenge here is to prove $S(p+q)$ is $g$-divisible: at a certain point in the proof the Grassmannians even make an unexpected appearance. Once $g$-divisibility is established, the rest follows quite quickly.\par

In section~5 we head towards a classification of $g$-divisible maximal $S$-orders (Theorem~\ref{RSS 7.4} and Proposition~\ref{RSS 7.4(3)}). This is essentially Theorem~\ref{main result 1} with the assumption $g$-divisibility replacing $\ovl{U}\neq\Bbbk$. We are heavily indebted to \cite[Sections~6-7]{RSS} for the overall strategy employed in the section. The first major obstacle, and most crucial ingredient, is to relate a $g$-divisible maximal $S$-order to one of the blowup subalgebras $S(\bfd)$.

\begin{theorem}[Theorem~\ref{RSS 5.25} and Corollary~\ref{RSS 6.6}]\label{RSS 5.25 intro}
Let $U\subseteq S$ be a $g$-divisible maximal $S$-order with $Q_\gr(U)=Q_\gr(S)$. Then there exists an effective divisor $\bfd$ on $E$ with $\deg\bfd\leq 2$, and such that $U$ and $S(\bfd)$ are equivalent orders.\par
More precisely, there exists a $g$-divisible $(U,S(\bfd))$-bimodule $M$ that is finitely generated on both sides, and such that $U=\End_{S(\bfd)}(M)$.
%In more detail, for this $\bfd$, the $(U,S(\bfd))$-bimodule $M=\widehat{US(\bfd)}$ is a finitely generated $g$-divisible right $S(\bfd)$-module with $S(\bfd)\subseteq M\subseteq S$. Moreover, put $W=\End_{S(\bfd)}(M)$. Then $U\subseteq W\subseteq S$, $_WM$ is finitely generated and $W$, $U$, and $S(\bfd)$ are equivalent orders. \par
%In particular, if $U$ is a $g$-divisible maximal $S$-order, $U=W=\End_{S(\bfd)}(M)$.
\end{theorem}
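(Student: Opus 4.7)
The plan is to follow the strategy from \cite[Sections 5--6]{RSS}, translated from $T=S^{(3)}$ to $S$: do as much work as possible in the factor $\ovl{U}\subseteq \ovl{S}=B(E,\LL,\sigma)$, where one can exploit the geometry of the elliptic curve $E$, and then lift the conclusion to $S$ using $g$-divisibility. Throughout, the assumption $Q_\gr(U)=Q_\gr(S)$ together with $g$-divisibility guarantees that $\ovl{U}$ is a cg subalgebra of $\ovl{S}$ of the same Goldie rank and GK-dimension, so the Artin--Stafford classification of (graded) subrings of twisted homogeneous coordinate rings is available.

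The first and most delicate step is to attach an effective divisor $\bfd$ to $\ovl{U}$. For each $n$, $\ovl{U}_n$ sits inside $H^0(E,\LL_n)$ and thus has an associated base-locus divisor $\bfd_n$ with $\ovl{U}_n\subseteq H^0(E,\LL_n(-\bfd_n))$. Using the techniques of \cite{RSS} one shows that the $\bfd_n$ stabilise in the sense that there is an effective divisor $\bfd$ on $E$ with
\[
\bfd_n \;=\; \bfd+\sigma^{-1}(\bfd)+\dots+\sigma^{-(n-1)}(\bfd)
\]
for all $n\gg 0$, so that $\ovl{U}_{\geq n}$ agrees with $B(E,\LL(-\bfd),\sigma)_{\geq n}$ up to a finite-codimensional subspace in each degree. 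That $\deg\bfd\leq 2$ comes from $\deg\LL=3$: if $\deg\bfd\geq 3$ the sections $H^0(E,\LL_n(-\bfd_n))$ would grow too slowly to match the GK-dimension of $\ovl{U}$, contradicting $Q_\gr(U)=Q_\gr(S)$. I expect this step --- existence and degree bound of $\bfd$ --- to be the main obstacle, since it needs the full strength of the asymptotic analysis of point-module data for subalgebras of $\ovl{S}$, essentially the analogue on $E$ of the work done on $E\times E\times E$ in \cite{RSS}.

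With $\bfd$ in hand, set $R=S(\bfd)$. By Theorem~\ref{S(d) thm intro}(1) $\ovl{R}=B(E,\LL(-\bfd),\sigma)$, so in high degree $\ovl{U}$ and $\ovl{R}$ agree, hence are equivalent orders inside $\ovl{S}$. Concretely, one can find homogeneous $\ovl{a},\ovl{b}\in \ovl{S}$ with $\ovl{a}\,\ovl{U}\,\ovl{b}\subseteq \ovl{R}$ and symmetrically. Using $g$-divisibility of both $U$ and $R$ (the latter from Theorem~\ref{S(d) thm intro}) one lifts this to honest elements of $S$, obtaining that $U$ and $R$ are equivalent orders in $Q_\gr(S)$. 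For the bimodule presentation, define
\[
M \;=\; \{\,q\in Q_\gr(S)\,:\,Uq\subseteq R\,\}
\]
(or the symmetric variant chosen as in \cite{RSS}). Then $M$ is a nonzero $(U,R)$-subbimodule of $Q_\gr(S)$, finitely generated on the right over $R$ because $R$ is noetherian (Theorem~\ref{S(d) thm intro}(3)) and on the left over $U$ by the equivalence of orders. The ring $\End_R(M)$ contains $U$ and is an equivalent order to $U$ in $Q_\gr(S)$; since $U$ is a maximal $S$-order and $\End_R(M)$ can be shown to be contained in $S$ (using $M\subseteq S$ after rescaling and the definition of $S(\bfd)\subseteq S$), maximality forces $U=\End_R(M)$.

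Finally, $g$-divisibility of $M$ is obtained as follows: $M':=(M+gS)\cap (g^{-1}M\cap S)$ produces the $g$-divisible hull of $M$, which is again a $(U,R)$-bimodule equivalent to $M$; replacing $M$ by $M'$ preserves all the properties above but now gives $M\cap gS=gM$. Combined with the noetherian property of $R$ and Corollary~\ref{blowup properties}, finite generation on both sides is preserved. This yields the refined conclusion of the theorem.
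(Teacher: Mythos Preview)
Your outline captures the broad shape of the argument (work in $\ovl{S}$, attach a divisor, lift), but there are two substantive gaps that would prevent the proof from going through as written.

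First, your description of the divisor $\bfd$ is not quite right and leads to a false claim. The geometric data for $\ovl{U}$ (Lemma~\ref{RSS 5.3}) has the form $\ovl{U}_n=H^0(E,\LL_n(-\bfy-[\bfx]_n))$ for $n\gg0$, with \emph{two} divisors $\bfx,\bfy$; the normalised divisor $\bfd$ is then built orbit-by-orbit from $\bfx$ as in Proposition~\ref{RSS 5.7}. It is \emph{not} true in general that $\ovl{U}$ and $B(E,\LL(-\bfd),\sigma)$ agree in high degrees; one only gets that they are equivalent orders and that $\ovl{U}_n\subseteq H^0(E,\LL_n(-\bfd^{\sigma^k}-\cdots-\bfd^{\sigma^{n-1}}))$. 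So your sentence ``$\ovl{U}_{\geq n}$ agrees with $B(E,\LL(-\bfd),\sigma)_{\geq n}$ up to finite codimension'' is false, and the subsequent ``lift equivalence of orders directly via $g$-divisibility'' step is too optimistic: this lifting is precisely Proposition~\ref{RSS 2.16}, which requires $U\subseteq R$ (or vice versa), something you do not have.

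Second, and more seriously, the finite generation of your bimodule $M$ over $R=S(\bfd)$ is asserted but not proved. ``$R$ is noetherian'' only helps once $M$ is known to sit inside a finitely generated $R$-module. In the paper this is exactly the content of Proposition~\ref{RSS 5.25 lemma}(2): one shows $US(\bfd)\subseteq S_{\leq k}S(\bfd)$ by passing to the $3$-Veronese, checking that $[\bfd]_3$ is a $\tau$-normalised divisor for $\ovl{U^{(3)}}$, and then invoking the already-established $T$-version (Proposition~\ref{RSS 5.20 bg}). With this in hand one takes $M=\widehat{US(\bfd)}$, which contains $1$, sits inside the finitely generated module $\widehat{S_{\leq k}S(\bfd)}$, and hence is finitely generated; then $W=\End_R(M)\subseteq S$ automatically (since $1\in M$), and Proposition~\ref{RSS 2.16} applied to $U\subseteq W$ gives the equivalence. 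Your choice $M=\{q:Uq\subseteq R\}$ does not obviously have any of these properties, and your $g$-divisible-hull construction at the end does not repair this. The Veronese trick is the genuine technical input you are missing.
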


This result is analogous to a result proved for orders in $T$ by Rogalski, Sierra and Stafford (see Theorem~\ref{real RSS 5.25} for more details). For their version, Rogalski, Sierra and Stafford require a detailed study of right ideals of their blowups $T(\bfd)$. On the other hand, we are able to utilise some of their results directly, and hence avoid many of their technicalities. Once we have Theorem~\ref{RSS 5.25 intro} many of Rogalski, Sierra and Stafford's ideas become available to us. We proceed to study in detail the endomorphism rings appearing in Theorem~\ref{RSS 5.25 intro}. It is this study that leads to the definition of a virtual blowup. With the language that we develop, and with Theorem~\ref{RSS 5.25 intro}, we are able to classify $g$-divisible maximal $S$-orders. We end section~5 with a further study of virtual blowups. The standout result is Theorem~\ref{3 Veronese of virtual blowup intro} above. In addition to this, we also look at the open problems mentioned after Proposition~\ref{vblowup exist intro}.

The job of section~6 is to take the classification of $g$-divisible maximal $S$-orders and turn it into Theorem~\ref{main result 1}. For this we need to replace the assumption of $g$-divisibility with the assumption $\ovl{U}\neq \Bbbk$. The idea here is to first prove $U$ (satisfying $\ovl{U}\neq \Bbbk$) is an equivalent order to the ring obtained by adding the element $g$, denoted $U\langle g\rangle$. Then one shows $U\langle g\rangle$ is equivalent to the unique smallest $g$-divisible overring, $\widehat{U\langle g\rangle}$. As a consequence, if $U$ is a maximal order then $U=\widehat{U\langle g\rangle}$, and therefore is $g$-divisible. In particular, $U$ then fits into the classification of $g$-divisible maximal $S$-orders from section~5. To end section~6, we apply a trick of Rogalski, Sierra and Stafford which allows us to disregard the assumption $\ovl{U}\neq\Bbbk$ too. For this one must allow the taking of Veronese subrings and the regrading of the algebra.

We finish this thesis with a section giving a few examples of the theory. The highlight is Theorem~\ref{S(p-p1+p2) intro} and its surrounding results. Let $U$ be as in Theorem~\ref{S(p-p1+p2) intro}. Central to understanding $U$ is realising it as an endomorphism ring, and studying its 3-Veronese.

\begin{prop}[Proposition~\ref{S(p-p1+p2) main prop} and Proposition~\ref{3 Veronese of H}]\label{S(p-p1+p2) endo intro}
Let $U$ be as in Theorem~\ref{S(p-p1+p2) intro}. Set $R=S(p)$ and $M=R+S(p)_1S_1R$. Then
$$U=\End_{R}(M) \; \text{ and } \;U^{(3)}=T(p+\sigma^{-2}(p)+\sigma^{-4}(p)).$$
\end{prop}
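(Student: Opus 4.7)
The plan is to address the two identifications in turn, with the 3-Veronese claim reducing to a divisor calculation plus Rogalski's degree-1 generation for $T(\bfd)$, and the endomorphism claim requiring a $\bmod\ g$ analysis via the Artin--Van den Bergh equivalence.

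For $U^{(3)} = T(\bfy)$ with $\bfy = p + \sigma^{-2}(p) + \sigma^{-4}(p)$, a direct computation shows
\begin{equation*}
\bfx + \sigma^{-1}(\bfx) + \sigma^{-2}(\bfx) = \bfy,
\end{equation*}
so comparing vanishing conditions gives $\ovl{X_3} = H^0(E,\LL_3(-\bfy)) = T(\bfy)_1$. Since $\deg\bfy = 3 \leq 7$, Rogalski's theorem furnishes $T(\bfy) = \Bbbk\langle T(\bfy)_1\rangle = \Bbbk\langle X_3\rangle \subseteq U^{(3)}$. The reverse inclusion reduces to checking $X_1 X_2, X_2 X_1 \subseteq X_3$, which I would verify by tracing vanishing conditions through the $\sigma$-twisted multiplication $\ovl{u}\cdot\ovl{v} = \ovl{u}\otimes\sigma^{m*}\ovl{v}$ in $B(E,\LL,\sigma)$; in particular, the factor $S(\sigma^{-2}(p))_1$ inside $X_2$ is exactly what forces the image to vanish at $\sigma^{-4}(p)$ after one $\sigma$-pullback.

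For $U = \End_R(M)$, the inclusion $U \subseteq \End_R(M)$ is a generator-by-generator verification that $X_i M \subseteq M$. The case $i=1$ reduces, using $X_1 \subseteq R_1$, to $X_1 \cdot S(p)_1 S_1 R \subseteq M$, which holds because $X_1 S(p)_1 \subseteq R_2$. The cases $i=2,3$ are similar vanishing checks in $\ovl{S}$ and use the decomposition $X_2 = S(p)_1 S(\sigma^{-2}(p))_1$ together with the explicit defining relations of $X_3$.

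The reverse containment $\End_R(M) \subseteq U$ is the main obstacle. My strategy is to pass mod $g$: first establish that $M$ is $g$-divisible, using that $R$ is $g$-divisible (Theorem~\ref{S(d) thm intro}) together with a direct verification that the summand $S(p)_1 S_1 R$ introduces no new $g$-torsion. Then $\End_R(M)$ is itself $g$-divisible and injects into $\End_{\ovl{R}}(\ovl{M})$. Via the Artin--Van den Bergh equivalence $\qgr(\ovl R) \simeq \coh(E)$, the module $\ovl{M}$ corresponds to a line bundle on $E$, from which $\End_{\ovl R}(\ovl M)$ can be identified with a twisted homogeneous coordinate ring on $E$ that I expect to agree with $B(E,\LL(-\bfx),\sigma)$ in all sufficiently high degrees. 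The delicate step --- and the reason the non-obvious generator $X_2 = S(p)_1 S(\sigma^{-2}(p))_1$ appears in place of the naive $X_2'$ from Example~\ref{original S(p-p1+p2)} --- is matching the low-degree pieces of $\End_R(M)$ with $\ovl{U}$; a Riemann--Roch dimension count on $E$ should close this gap and complete the identification, after which a short $g$-divisibility argument lifts everything back to $S$.
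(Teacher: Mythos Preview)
Your plan for the inclusion $U \subseteq \End_R(M)$ is essentially the paper's Lemma~\ref{U subset H}, and the high-degree identification $\ovl{H} \ehd B(E,\LL(-\bfx),\sigma)$ via the line-bundle description of $\ovl{M}$ matches Lemma~\ref{M H with bars}. But two steps are genuine gaps.

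For $U^{(3)} = T(\bfy)$, the reverse inclusion does \emph{not} reduce to $X_1X_2, X_2X_1 \subseteq X_3$. That only gives $U_3 = X_3$; it says nothing about $U_{3n}$ for $n \geq 2$, where products like $X_2^3$ sit. The paper instead sandwiches $T(\bfy) \subseteq U^{(3)} \subseteq H^{(3)}$ and proves $H^{(3)} = T(\bfy)$ by an equivalent-orders argument: $H$ is equivalent to $R=S(p)$, so $H^{(3)}$ is equivalent to $R^{(3)} = T(p+p^\sigma+p^{\sigma^2})$, which is in turn equivalent to $T(\bfy)$; since $T(\bfy)$ is a maximal order contained in $H^{(3)}$, equality follows.

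More seriously, a Riemann--Roch count is not what pins down the low degrees of $\End_R(M)$. Riemann--Roch tells you $\dim_\Bbbk B_2 = 4$ versus $\dim_\Bbbk \ovl{X_2} = 3$, but does not by itself rule out $\ovl{H}_2 = B_2$. The paper excludes this by contradiction: if $\ovl{F}_2 = B_2$ then the $g$-divisible ring $F = \End_R(M^{**})$ contains the ring $U' = \Bbbk\langle X_1, X_2', X_3\rangle$ of Example~\ref{original S(p-p1+p2)}, where $X_2' = \{x \in S_2 : \ovl{x} \in B_2\}$. A direct calculation there shows $g \in S(p_3)_1 S(p_{-1})_1 S(p_3)_1$ and symmetrically, so $(X_2')^2 \supseteq gS_1$ and hence $\widehat{U'} = S$; this forces $F \supseteq S$, which is absurd. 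This computation is precisely what singles out $X_2 = S(p)_1 S(p^{\sigma^2})_1$ over the naive $X_2'$, and cannot be replaced by a dimension count. Finally, you assert $M$ is $g$-divisible; the paper does not prove this and instead works with $F = \End_R(M^{**})$ and $\widehat{H} = \End_R(\widehat{M})$, closing the chain $U \subseteq H \subseteq \widehat{H} \subseteq F$ by first establishing $\ovl{U} = \ovl{F}$ as above, then proving $U$ itself is $g$-divisible by induction on degree, and finishing with a Hilbert-series comparison.
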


We in fact first prove that $\End_R(M^{**})$ above is the virtual blowup that we want. We then proceed to show $U=\End_R(M)=\End_R(M^{**})$. As mentioned after Theorem~\ref{S(p-p1+p2) intro}, the summand $X_2$ is intriguing. The more natural definition (in keeping with the definition of the $S(\bfd)$) would be $U'$ below. As we see, this definition leads to a ring which is far from being a maximal order.

\begin{example}[Example~\ref{original S(p-p1+p2)}]\label{original S(p-p1+p2) intro} \textit{Let $\bfx=p-\sigma^{-1}(p)+\sigma^{-2}(p)$.
Set
\begin{itemize}
\item $X'_1=\{ u\in S_1\,|\; \ovl{u}\in H^0(E,\LL(-\bfx))\};$
\item $X'_2= \{ u\in S_2\,|\; \ovl{u}\in H^0(E,\LL_2(-\bfx-\sigma^{-1}(\bfx)))\};$
\item $X'_3=\{ u\in S_3\,|\; \ovl{u}\in H^0(E,\LL_3(-\bfx-\sigma^{-1}(\bfx)-\sigma^{-2}(\bfx)))\};$
\end{itemize}
and put $U'=\Bbbk\langle X'_1,X'_2,X'_3\rangle$. Then $U'$ is neither left nor right noetherian; $U'$ is an equivalent order to $S$.}
\end{example}

Moving on, an observation regarding the definition $S(p+q)=\Bbbk\langle V_1,V_2,V_3\rangle$, is that we do not know whether the $V_3$ is always necessary. That is, can $S(p+q)$ ever be generated by its elements of degrees 1 and 2? We show in Example~\ref{S(p+p1)} that for specific choices of $p$ and $q$, $V_3$ is necessary. Our final example, Example~\ref{RSS 10.8}, is a modification of an example of Rogalski, Sierra and Stafford. It is an example of a maximal order $U$ satisfying $\ovl{U}= \Bbbk$.

At the end of this thesis there is an appendix. It consists of a result on Grassmannians which is required for Theorem~\ref{S(d) thm intro}. We also provide two indices, a notation index and a terminology index. These can be found at the back.

\section{Background and survey of the subject}\label{background}

\subsection{Connected graded domains}

Fix once and for good an algebraically closed field $\Bbbk$ of any characteristic.\index[n]{k@$\Bbbk$} We take the convention $0\in \N$ as it will be more convenient for our purposes. All algebras are associative and have a 1. \par
It is non-controversial to say that each algebraist has their favorite elementary algebraic structure to study. For us it will be connected graded domains.

\begin{definition}\label{cg domain}
Let $A=\bigoplus_{n\in\N} A_n$ be a $\N$-graded $\Bbbk$-algebra. The algebra $A$ is called \textit{connected graded (cg)}\index{connected graded (cg)} if $A_0=\Bbbk$ and $\dim_\Bbbk A_n<\infty$ for all $n\in\N$. If $A$ is also a domain we will call $A$ a \textit{cg domain}.
\end{definition}

The term ``connected" comes from geometry: if $A$ is a commutative connected graded domain, then $\mathrm{Proj}(A)$ is a connected topological space.\par
We now set out some basic notation.

\begin{notation}\label{graded ring notation}
Let $V=\bigoplus_{i\in\Z} V_i$ be a $\Z$-graded $\Bbbk$-vector space. For $k\in\Z$ we write
$$V_{\geq k}=\bigoplus_{i\geq k}V_i\;\text{ and } V_{\leq k}=\bigoplus_{i\leq k}V_i.$$ \index[n]{ak@$A_{\geq k}$, $A_{\leq k}$}
We also call $V_k$ the \textit{homogenous elements of $V$ of degree $k$}. \par
If $A$ is a cg $\Bbbk$-algebra we always consider $\Bbbk$ as a right (and left) $A$-module via
$$\Bbbk_A=A/A_{\geq 1}.\index[n]{ka@$\Bbbk_A$}$$
\end{notation}

\begin{notation}\label{generated in degree 1}
Given a $\Z$-graded ring $A=\bigoplus_{i\in\Z} A_i$, a graded subring $B\subseteq A$ and a subset $V\subseteq A$, we write $B\langle V\rangle$ to be the subalgebra of $A$ generated by $B$ and $V$. In particular $\Bbbk\langle V\rangle$\index[n]{kx@$\Bbbk\langle V\rangle$} is the subalgebra of $A$ generated by $V$.\par

If $A$ is generated as a $\Bbbk$-algebra by $A_{d_1},\dots,A_{d_n}$ for some $d_1,\dots,d_n\in \Z$ (i.e. $A=\Bbbk\langle A_{d_1},\dots,A_{d_n}\rangle$), then we say $A$ is \textit{generated in degrees $d_1$,\dots,$d_n$}.\index{generated in degrees $d_1$,\dots,$d_n$} A common assumption will be that $A$ is \textit{generated in degree 1}, i.e. $A=\Bbbk\langle A_1\rangle$.\index{generated in degree 1}
\end{notation}

We will often be concerned with behaviour in high degrees. Some important notation is therefore the following.

\begin{notation}\label{ehd}
Given two graded subspaces $V,W$ of a $\Z$-graded vector space, we will write $V\ehd W$\index[n]{=@$\ehd$} to mean $V_{\geq n}=W_{\geq n}$ for all $n\gg0$. If this occurs we say $V$ and $W$ are \textit{equal in high degrees}.\index{equal in high degrees}
\end{notation}

\begin{definition}
Let $A$ be a cg $\Bbbk$-algebra. A right $A$-module $M$ is a \textit{$\Z$-graded right $A$-module} if it has a $\Bbbk$-vector space decomposition $M=\bigoplus_{i\in\Z}$ such that $M_iA_n\subseteq M_{i+n}$ for all $i\in\Z$ and $n\in\N$.
\end{definition}

\begin{definition}
Let $A$ be a cg $\Bbbk$-algebra $A$. Given a $\Z$-graded right $A$-module $M$ and a $k\in\Z$, we can define the \textit{shift of $M$ by degree $k$} to be the new $\Z$-graded right $A$-module
$$M(k)=\bigoplus_{i\in\Z} M_{k+i}.$$
Thus the set of homogenous elements of $M(k)$ of degree $d$ is $M(k)_d=M_{k+d}$.
\end{definition}

\begin{notation}\label{Gr(A) and gr(A)}
Let $A$ be a cg $\Bbbk$-algebra. We define $\Gr(A)$\index[n]{Gra@$\Gr(A),\, \gr(A)$} to be the category of $\Z$-graded right $A$-modules. Given $M,N\in\Gr(A)$, $\Hom_{\Gr(A)}(M,N)$\index[n]{HomGr@$\Hom_{\Gr(A)}(M,N)$} consists of those homomorphisms of right $A$-modules $\theta:M\to N$ such that $\theta(M_i)\subseteq N_i$ for all $i\in\Z$. When $A$ is noetherian, we set $\gr(A)$ to be the subcategory of $\Gr(A)$ consisting of finitely generated modules.
\end{notation}

Typically the homomorphism set we want is $\Hom_A(M,N)\index[n]{homamn@$\Hom_A(M,N)$}$ - the set of all $A$-module homomorphisms. If $M$ from Notation~\ref{Gr(A) and gr(A)} is finitely generated, then the equation
$$\Hom_A(M,N)=\bigoplus_{j\in\Z}\Hom_{\Gr(A)}(M,N(j))$$
holds. In the later sections of this thesis, we will mostly be using $\Hom_A(M,N)$ in the form appearing in Notation~\ref{Hom inside Q}.\par

When one studies cg $\Bbbk$-algebras one of the first results one would come across is the graded version of Nakayama's Lemma. Below in Lemma~\ref{graded nakayama}, part (1) can be seen to be analogous its ungraded counterpart; whereas (2) is the form in which it will be regularly used. The proof is significantly easier than the ungraded case and is included for it demonstrates some basic techniques.

\begin{lemma}[Graded Nakayama's Lemma]\label{graded nakayama}\index{Graded Nakayama's Lemma}
Suppose $A$ is a finitely generated cg $\Bbbk$-algebra and let $M$ be a $\Z$-graded right $A$-module such that $M_n=0$ for all $n\ll 0$.
\begin{enumerate}[(1)]
\item If $MA_{\geq 1}=M$ then $M=0$.
\item A set of homogenous elements $\{x_i\}\subseteq M$ generates $M$ as an $A$-module if and only if the set $\{x_i+MA_{\geq 1}\}$ generates $M/MA_{\geq 1}$ as an $\Bbbk$-vector space.
\end{enumerate}
\end{lemma}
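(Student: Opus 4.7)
The plan is to prove (1) directly by degree considerations, and then to deduce (2) by applying (1) to a quotient module.

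For (1), I would pick an integer $n_0$ minimal with the property that $M_{n_0}\neq 0$; such $n_0$ exists because the hypothesis $M_n = 0$ for $n \ll 0$ ensures the grading is bounded below wherever $M$ is nonzero (assuming $M\neq 0$, which we are trying to refute). The key observation is then that the degree $n_0$ component of $MA_{\geq 1}$ is
$$
(MA_{\geq 1})_{n_0} \;=\; \sum_{k\geq 1} M_{n_0-k}\cdot A_k,
$$
and every summand vanishes by minimality of $n_0$. Since $MA_{\geq 1}=M$, this forces $M_{n_0}=0$, contradicting the choice of $n_0$. Hence $M=0$.

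For (2), the forward implication is immediate: if $\{x_i\}$ generates $M$ as an $A$-module, then the images $\{x_i+MA_{\geq 1}\}$ span $M/MA_{\geq 1}$ even as a $\Bbbk$-vector space, since all positive-degree $A$-action is killed modulo $MA_{\geq 1}$. For the converse, let $N=\sum_i x_i A\subseteq M$ be the submodule generated by the $\{x_i\}$. The hypothesis that $\{x_i+MA_{\geq 1}\}$ spans $M/MA_{\geq 1}$ translates into the identity $M = N + MA_{\geq 1}$. Passing to the quotient module $M/N$ yields
$$
(M/N)\cdot A_{\geq 1} \;=\; (MA_{\geq 1}+N)/N \;=\; M/N.
$$
Because $M/N$ inherits a $\Z$-grading from $M$ with $(M/N)_n=0$ for $n\ll 0$, part (1) applies to $M/N$ and gives $M/N=0$, i.e.\ $M=N$, as required.

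The argument is essentially formal and I do not expect any genuine obstacle; the only small point to verify carefully is that the quotient $M/N$ still satisfies the bounded-below hypothesis needed for (1), which is automatic from the grading on $M$. Note that the finite generation of $A$ over $\Bbbk$ is not actually used in the proof — only the connected grading and the bounded-below hypothesis on $M$ matter, which is why this graded form is much cleaner than the ungraded version.
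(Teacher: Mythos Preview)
Your proof is correct and follows essentially the same approach as the paper: for (1) you both pick the minimal nonzero degree and observe that $MA_{\geq 1}$ vanishes there, and for (2) you both apply (1) to the quotient $M/\sum_i x_iA$. Your observation that finite generation of $A$ is unnecessary is also correct.
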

\begin{proof}
Suppose $M\neq 0$ and let $d$ be minimal such that $M_d\neq 0$. Then we have $MA_{\geq 1}=M_{\geq d}A_{\geq 1}\subseteq M_{\geq d+1}$, and hence $MA_{\geq 1}\neq M$ proving (1). For (2) apply (1) to the module $M/(\sum x_iA)$.
\end{proof}

\begin{notation}\label{GK dimension}
Given a cg $\Bbbk$-algebra $A$, the \textit{Gelfand-Kirillov (GK) dimension} of $A$ can be defined as, and will be denoted by,
\begin{equation}\label{GK(A)}\GK(A)=\limsup_{n\geq 0}\log_n(\dim_\Bbbk A_{\leq n} ),\index{Gelfand-Kirillov (GK) dimension}\index[n]{gk@$\GK(A)$, $\GKdim(A)$}\end{equation}
while $\GK(A)=0$ if and only if $A$ is finite dimensional as a $\Bbbk$-vector space. We will also use the notation $\GKdim(A)$ in sections where GK-dimension is less prominent. This definition is cg $\Bbbk$-algebra specific (see \cite[Lemma~6.1]{KL}); in general GK-dimension can be defined for an arbitrary $\Bbbk$-algebra. If $M$ is a finitely generated right (or left) $\Z$-graded $A$-module, then similarly
\begin{equation}\label{GK(M)}\GK_A(M)=\limsup_{n\geq 0}\log_n(\dim_\Bbbk M_{\leq n}).\index[n]{gk@$\GK_A(M)$}\end{equation}
If $M$ is not necessarily finitely generated, then equation (\ref{GK(M)}) does not necessarily hold and its GK-dimension will depend on the ring it is being considered over (hence the subscript $A$ in (\ref{GK(M)})). When $M$ is finitely generated, and the ring it is being considered over is clear, we will often drop the subscript in (\ref{GK(M)}). \end{notation}

In general it is possible for $\GK(A)\not\in\Z$ to be true although it will not be the case for us. Bergman's Gap Theorem \cite[Theorem~2.5]{KL} at least shows there are no algebras $A$ with $1<\GK(A)<2$. The standard book on GK-dimension (which will be referenced throughout) is \cite{KL}.
\par
Related to GK-dimension is the Hilbert series of a cg $\Bbbk$-algebra.

\begin{definition}\label{hilbert series}\index{Hilbert series, $h_A(t)$, $h_M(t)$}\index[n]{ha@$h_A(t)$, $h_M(t)$}
Let $A=\bigoplus_{n\in\N}A_n$ be a cg $\Bbbk$-algebra. The \textit{Hilbert series} of $A$ is the power series
$$h_A(t)=\sum_{n\in\N} (\dim_\Bbbk A_n) t^n.$$
Given a $\Z$-graded right or left $A$-module $M=\bigoplus_{n\in\Z}M_n$, we similarly have
$$h_M(t)=\sum_{n\in\Z} (\dim_\Bbbk M_n) t^n.$$
\end{definition}

A Hilbert series $h_A(t)$ lives inside $\Z[[t]]$; however, it is often more useful to consider it in the overring of Laurent series, $\Q((t))$. For example the Hilbert series of the polynomial ring in one variable is $h_{\Bbbk[z]}(t)=\frac{1}{1-t}$.   \par

\begin{remark}\label{Q always exists}
All cg algebras considered in the main body of work will be domains of finite GK-dimension (unless obviously not). In particular, by \cite[Corollary~8.1.12]{MR}, all our algebras are Ore domains and Definition~\ref{Qgr and Dgr} applies.
\end{remark}

\begin{definition}\label{Qgr and Dgr}\index{graded quotient ring} \index[n]{qgr@$Q_\gr(A)$} \index[n]{dgr@$D_\gr(A)$}\index{Goldie quotient ring}\index[n]{qa@$Q(A)$}\index{noncommutative function field}\index{graded division ring}
Let $A$ be a cg domain. Assume that $A$ is an Ore domain (see Remark~\ref{Q always exists}). Then the set $\mathcal{C}_\gr$ consisting of all nonzero homogenous elements of $A$ is an Ore set (see \cite[C.I.1.6]{NV}). We can then obtain the \textit{graded quotient ring $Q_\gr(A)$ of $A$} by localising at the set $\mathcal{C}_\gr$. By \cite[A.14.3]{NV}, $Q_\gr(A)$ is always of the form $Q_\gr(A)=D[t,t^{-1};\sigma]$; a skew Laurent polynomial ring over the division ring $D=Q_\gr(A)_0=D_\gr(A)$. Here $t$ can be any nonzero homogenous element of minimal positive degree and $\sigma: D\to D$ is the automorphism $a\mapsto tat^{-1}$. We call the division ring $D_\gr(A)$ \textit{the noncommutative function field of $A$}. A ring of the form $Q_\gr(A)$ for some cg domain $A$ will be called a \textit{graded division ring}. In particular an element of a graded division ring is invertible if and only if it is homogenous. \par
Similarly one can invert all nonzero elements of $A$ obtain the \textit{full Goldie quotient ring $Q(A)$ of $A$}.
\end{definition}

If $A$ is a commutative cg domain, then $Q_\gr(A)=\Bbbk(X)[t,t^{-1}]$, where necessarily $\Bbbk(X)$\index[n]{kx@$\Bbbk(X)$} is the function field of the projective variety $X=\mathrm{Proj}(A)$. For this reason we consider $D_\gr(A)$ as the noncommutative analogue of the function field. We will see more on the connection between noncommutative algebra and geometry throughout this thesis.\par

Our main results concern a classification of maximal orders which we now define.

\begin{definition}\label{max orders def}
Let $Q$ be a graded division ring. A graded subring $A$ of $Q$ is called \textit{an order in $Q$}\index{order} if $Q_\gr(A)=Q$. Given two orders $A$ and $B$ in $Q$, we call $A$ and $B$ \textit{equivalent orders}\index{equivalent orders} if there exists nonzero homogenous $a,b,c,d\in Q$ such that $aAb\subseteq B$ and $cBd\subseteq A$. The relation ``equivalent orders" forms an equivalence class on the orders in $Q$. We call $A$ a \textit{maximal order}\index{maximal order} if $A$ is maximal (with respect to inclusion) in its equivalence class. Given a graded subring $R$ of $Q$ with $Q_\gr(R)=Q$, an order $A$ satisfying $A\subseteq R$ is called a \textit{maximal $R$-order}\index{maximal r@maximal $R$-order} if it is maximal its equivalence class among graded rings contained in $R$.
\end{definition}

The reason why maximal orders are a good choice for a classification theorem might at first be unclear. However, for a commutative integral domain $R$ with field of fractions $F$, we have that $R$ is a maximal order in $F$ if and only if $R$ is integrally closed in $F$ \cite[5.1.3]{MR}. In other words maximal orders are the noncommutative analogue to integrally closed domains, or geometrically, the noncommutative analogue of normal varieties. Maximal order are therefore a natural class of rings to study. Definition~\ref{max orders def} is (obviously) graded specific; traditionally it is the ungraded version that has been the subject of more study. The ungraded definition is Lemma~\ref{graded max orders are max orders}(2) below.

\begin{lemma}\label{graded max orders are max orders}\cite[Lemma~9.1]{Ro.gncs}
Let $A$ be an $\N$-graded noetherian domain with graded quotient ring $Q_\gr(A)$ and full Goldie quotient ring $Q(A)$. Then the following are equivalent:
\begin{enumerate}[(1)]
\item The ring $A$ is a maximal order in $Q_\gr(A)$ in the sense of Definition~\ref{max orders def}.
\item For all (not necessarily graded) rings $B$ satisfying $A\subseteq B\subseteq Q(A)$: if $aBb\subseteq A$ for some nonzero $a,b\in Q(A)$, then $A=B$.
\end{enumerate}
\end{lemma}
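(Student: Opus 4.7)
The direction $(2) \Rightarrow (1)$ is essentially immediate: if $B$ is a graded overring of $A$ in $Q_\gr(A)$ equivalent to $A$ as graded orders, then Definition~\ref{max orders def} supplies nonzero homogeneous $c, d \in Q_\gr(A) \subseteq Q(A)$ with $cBd \subseteq A$, and applying $(2)$ to this pair forces $A = B$.

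For $(1) \Rightarrow (2)$, the plan is to manufacture from the datum $(B, a, b)$ a graded overring $B^*$ of $A$ in $Q_\gr(A)$ which is equivalent to $A$ as graded orders and satisfies $B \subseteq B^*$; applying $(1)$ to $B^*$ will then force $B^* = A$, whence $B = A$. I proceed in two reductions. First, clear denominators: using that $A$ is Ore with $Q(A) = (A\setminus\{0\})^{-1}A$, write $a = s^{-1}\widetilde{a}$ with $s, \widetilde{a} \in A \setminus\{0\}$, giving $\widetilde{a} B b \subseteq sA \subseteq A$; similarly for $b$, so WLOG $a, b \in A \setminus \{0\}$. Second, homogenize: write $a = \sum_{i=m}^{n} a_i$ with $a_i \in A_i$ and $a_n \neq 0$, and let $\alpha := a_n$; similarly produce $\beta$ from $b$. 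Then I would define $B^*$ as the graded subring of $Q_\gr(A)$ generated by $A$ together with the $Q_\gr(A)$-homogeneous components of elements of $B$, and verify that $A \subseteq B^* \subseteq Q_\gr(A)$ and that $\alpha B^* \beta \subseteq A$, so that $(1)$ applies to $B^*$.

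The main obstacle lies in justifying the homogenize step: elements of $B$ a priori live only in $Q(A)$, which is strictly larger than $Q_\gr(A)$ in general (for instance, if $A = \Bbbk[x]$ then $Q_\gr(A) = \Bbbk[x, x^{-1}]$ sits properly inside $Q(A) = \Bbbk(x)$), so it is not obvious that elements of $B$ have well-defined homogeneous components in $Q_\gr(A)$ at all. The resolution exploits the $\mathbb{N}$-grading on $A$ and the noetherian hypothesis: for each $b' \in B$ the element $a b' b \in A$ decomposes uniquely as $\sum_k r_k$ with $r_k \in A_k$, and comparison of leading terms on both sides of $aBb \subseteq A$ should pin $b'$ down as a sum of homogeneous fractions in $Q_\gr(A)$, with leading homogeneous contribution controlled by $\alpha$ and $\beta$. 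Once this places $B$ inside $Q_\gr(A)$ and supplies the containment $\alpha B^* \beta \subseteq A$, the remainder of the argument consists of routine graded manipulations together with the appeal to $(1)$.
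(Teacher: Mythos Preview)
The paper does not supply its own proof of this lemma; it is cited from \cite[Lemma~9.1]{Ro.gncs}. So there is no in-paper argument to compare against, and I will assess your proposal on its own terms.

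Your strategy for $(1)\Rightarrow(2)$ is essentially the right one, but the execution has a genuine gap at exactly the point you flag. You propose to show that $B\subseteq Q_\gr(A)$ and then take ``homogeneous components'' of elements of $B$; however, elements of $Q(A)$ do not in general have finite Laurent expansions in $Q_\gr(A)=D[t,t^{-1};\sigma]$ (think of $1/(1-t)$ in $\Bbbk(t)$), so there is no reason to expect $B\subseteq Q_\gr(A)$ as an intermediate step, and ``homogeneous components of elements of $B$'' is not a well-defined notion. What does work is the following refinement of your leading-term idea. The $t$-degree on $D[t,t^{-1};\sigma]$ extends to a multiplicative degree function $\deg\colon Q(A)\setminus\{0\}\to\Z$, and there is a multiplicative \emph{leading term} map $\mathrm{lt}\colon Q(A)\setminus\{0\}\to Q_\gr(A)$ sending $ps^{-1}$ to $\mathrm{lt}(p)\,\mathrm{lt}(s)^{-1}$, landing in the homogeneous elements. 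Now set $B^*=\Bbbk\langle\,\mathrm{lt}(x):x\in B\setminus\{0\}\,\rangle$; since $A\subseteq B$ and $A$ is graded, $A\subseteq B^*\subseteq Q_\gr(A)$. Multiplicativity of $\mathrm{lt}$ gives $\mathrm{lt}(a)\,\mathrm{lt}(x_1)\cdots\mathrm{lt}(x_k)\,\mathrm{lt}(b)=\mathrm{lt}(ax_1\cdots x_k b)\in A$ for $x_i\in B$, hence $\mathrm{lt}(a)\,B^*\,\mathrm{lt}(b)\subseteq A$, and $(1)$ forces $B^*=A$. In particular $\mathrm{lt}(x)\in A$ for every nonzero $x\in B$, so $\deg(x)\geq 0$ (as $A$ is $\N$-graded), and $x-\mathrm{lt}(x)\in B$ has strictly smaller degree; induction on $\deg(x)$ now gives $x\in A$. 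This is precisely the ``comparison of leading terms'' you gesture at, but it bypasses the problematic claim $B\subseteq Q_\gr(A)$ entirely and proceeds directly to $B\subseteq A$. The noetherian hypothesis is used only to guarantee the Ore condition.
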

%\begin{proof}
%This is essentially a restatement of \cite[Lemma~9.1]{Ro.gncs}. It is well known that $A$ is an ungraded maximal order if and only if $\OO_{\mathrm{r}}(I)=\{q\in Q'| Iq\subseteq I\}=A$ and $\OO_{l}(I)=\{q\in Q'| qI\subseteq I\}=A$, for all (not necessarily homogenous) ideals $I$ of $A$ (see \cite[Proposition~5.1.4]{MR} for example). By \cite[Lemma~9.1]{Ro.gncs} this is equivalent to $\OO_{\mathrm{r}}^\gr(J)=\{q\in Q| Jq\subseteq J\}=A$ and $\OO_{l}^\gr (J)=\{q\in Q | qJ\subseteq J\}=A$, for all homogenous ideals $J$ of $R$. The proof that this property is equivalent to $A$ being a graded maximal order is exactly the same as in the ungraded case.
%\end{proof}
The significance of Lemma~\ref{graded max orders are max orders} is that in the literature, rings satisfying Definition~\ref{max orders def} are rather uncommon; on the contrary, there is a wealth of results on standard ungraded maximal orders. With Lemma~\ref{graded max orders are max orders}, these results become available to us. As with Lemma~\ref{graded max orders are max orders}, it is often the case that many ring theoretic properties have graded analogues. Typically results carry over smoothly between the two settings without too much effort. See for example \cite{GSt} for a graded version of Goldie's Theorem.\par

We now review some homological properties that we will encounter. Perhaps the most important of these for us is the Cohen-Macaulay property.

\begin{definition}\label{homological defs}
Let $A$ be a ring and $M$ a right $A$-module. The \textit{grade of $M$}\index{grade of a module} is defined as
$$j(M_A)=\inf\{i\,|\;\Ext_A^i(M,A)\neq 0\},$$\index[n]{jm@$j(M)$}
where we allow $j(M_A)=\infty$ if no such $i$ exists. We say $M$ satisfies the \textit{Auslander condition}\index{Auslander condition} if for every $i\geq 0$ and all left submodules $N\subseteq \Ext^i(M,A)$ we have $j(_AN)\geq i$. The grade and the Auslander condition have analogous definition for left $A$-modules. \par
The ring $A$ is \textit{Auslander-Gorenstein}\index{Auslander-Gorenstein} if the modules $A_A$ and $_AA$ have the same finite injective dimension, and every finitely generated left and right $A$-module satisfies the Auslander condition. Suppose that in addition that $\GK(A)\in\Z$. Then we say $A$ is \textit{Cohen-Macaulay}\index{Cohen-Macaulay} if
$$\GKdim(M)+j(M)=\GKdim(A)$$
for all finitely generated left and right modules $M$.
\end{definition}

The Cohen-Macaulay property above was extensively studied in \cite{Lev}. Through \cite{Lev}, Levasseur gives us ways of obtaining the Cohen-Macaulay property (which turns out to be very useful) for certain rings. The Cohen-Macaulay property is closely related to maximal orders, as shown in \cite{St.CMmaxorders}, and more specifically for us \cite[Theorem~6.7]{Ro}.

Finally we look at Veronese subrings and submodules.

\begin{definition}
Let $A=\bigoplus_{i\in\N}A_i$ be an $\N$-graded ring and fix an integer $d\geq 1$. The \textit{$d$th Veronese subring of $A$}  is
$$A^{(d)}=\bigoplus_{i\in\N} A_{di}.\index[n]{ad@$A^{(d)}$, $M^{(d)}$}\index{Veronese subring/submodule}$$
with gradation given by $(A^{(d)})_n=A_{dn}$. For a $\Z$-graded $A$-module we can define the $A^{(d)}$-module $M^{(d)}=\bigoplus_{i\in\Z}M_{di}$, again with the grading $(M^{(d)})_n=M_{dn}$.
\end{definition}

We review some known results about properties passing between Veronese rings.

\begin{lemma}\label{AZ 5.10} \cite[Proposition~5.10]{AZ.ncps}\cite[Lemma~4.10]{AS}
Let $A$ be an $\N$-graded $\Bbbk$-algebra. If $A$ is noetherian then $A^{(d)}$ is noetherian for all $d\geq 1$. If in addition $A$ is a domain then the converse is also true. \qed
\end{lemma}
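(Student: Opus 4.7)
The plan is to handle the two directions separately; for the forward implication only noetherianity is needed, while the converse relies essentially on the domain hypothesis.

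For the forward direction, suppose $A$ is right noetherian, and take a graded right ideal $J\triangleleft A^{(d)}$; the goal is to show $J$ is finitely generated. First form the right ideal $JA$ of $A$, which by hypothesis is finitely generated; because $A$ is $\N$-graded we can choose finitely many homogeneous generators $y_1,\dots,y_n\in J$ of $JA$. The key step is a degree-decomposition argument: any homogeneous $y\in J$ of degree $md$ has an expansion $y=\sum_i y_ia_i$ with $a_i\in A$, and projecting both sides onto the degree-$md$ component replaces each $a_i$ by its piece of degree $md-\deg y_i$. Since $\deg y_i$ is divisible by $d$, so is this degree, and the relevant pieces automatically lie in $A^{(d)}$; hence $J=\sum_i y_i A^{(d)}$. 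The left-noetherian side is symmetric.

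For the converse assume $A$ is a domain and $A^{(d)}$ is right noetherian. Set $B:=A^{(d)}$ and decompose $A=\bigoplus_{j=0}^{d-1}M_j$ as a graded right $B$-module, where $M_j:=\bigoplus_{n\equiv j\,(\mathrm{mod}\,d)}A_n$. The plan is to prove each $M_j$ is finitely generated as a right $B$-module; then $A$ is a noetherian right $B$-module, and since every right ideal of $A$ is in particular a right $B$-submodule, every chain of right ideals of $A$ stabilises and $A$ is right noetherian.

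The main obstacle is showing $M_j$ is finitely generated over $B$, and it is here that the domain hypothesis is used essentially (twice). If $M_j=0$ there is nothing to prove; otherwise pick a nonzero homogeneous $a\in M_j$ and set $c:=a^{d-1}$. Because $A$ is a domain, $c\neq 0$, and $\deg c\equiv -j\pmod d$, so left multiplication by $c$ maps $M_j$ injectively into $B$. The image $cM_j$ is closed under right multiplication by $B$, via $(cm)b=c(mb)$ with $mb\in M_j$, so it is a graded right ideal of $B$; by noetherianity of $B$ one has $cM_j=\sum_{i=1}^n(cm_i)B$ for some $m_i\in M_j$. Cancelling $c$ on the left, using again that $A$ is a domain, gives $M_j=\sum_i m_iB$, completing the argument.
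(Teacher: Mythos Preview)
Your proof is correct. The paper does not supply its own argument for this lemma; it simply cites \cite[Proposition~5.10]{AZ.ncps} and \cite[Lemma~4.10]{AS} and closes with a \qed, so there is nothing to compare directly. That said, your converse argument---embedding each residue-class piece $M_j$ into $A^{(d)}$ via left multiplication by a nonzero element of complementary degree and invoking noetherianity---is exactly the idea the paper itself uses later in Lemma~\ref{noeth up n down}(2) for the module version, so your approach is fully aligned with the paper's methods. One small remark: in the forward direction you only verify that \emph{graded} right ideals of $A^{(d)}$ are finitely generated; this suffices because for a $\Z$-graded ring, graded-noetherian implies noetherian, but it would not hurt to say so explicitly.
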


Given a cg $\Bbbk$-algebra $A$ and $d\geq 0$ there are naturally defined functors
\begin{equation}\label{calV}\mathcal{V}:\gr(A)\to\gr(A^{(d)});\;M\mapsto M^{(d)}\end{equation}
and
\begin{equation}\label{calT}\mathcal{T}:\gr(A^{(d)})\to \gr(A);\;N\mapsto N\otimes_{A^{(d)}} A.\end{equation}
Given $N\in\gr(A^{(d)})$ it is not hard to show $\mathcal{V}(\mathcal{T}(N))\cong N$. The converse is true if $A$ is generated in degree 1.
\begin{theorem}\label{VeA5}\cite[Theorem A-5]{Ve}
Let $A$ be a cg $\Bbbk$-algebra generated in degree 1, and fix $d\geq 1$. Then $\mathcal{V}$ and $\mathcal{T}$ as in (\ref{calV}) and (\ref{calT}) give an equivalence of categories
$$\gr(A)\sim \gr(A^{(d)}).$$
\end{theorem}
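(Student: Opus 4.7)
The plan is to exhibit $(\mathcal{T},\mathcal{V})$ as an adjoint equivalence, with $\mathcal{T}$ left adjoint to $\mathcal{V}$. The unit of this adjunction is $\eta_N:N\to\mathcal{V}\mathcal{T}(N)$, $n\mapsto n\otimes 1$, and the counit is $\epsilon_M:\mathcal{T}\mathcal{V}(M)\to M$, $m\otimes a\mapsto ma$; the aim is to show both are natural isomorphisms.

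The easier direction, that $\eta_N$ is an isomorphism, is noted by the paper just before the statement. The key formal point is that taking the $d$-Veronese of $N\otimes_{A^{(d)}}A$ selects precisely the graded pieces of total $A$-degree in $d\Z$, on which the tensor relation $nb\otimes a=n\otimes ba$ (for $b\in A^{(d)}$) can be used to absorb the entire $A$-factor into $A^{(d)}$; what remains is $N\otimes_{A^{(d)}}A^{(d)}\cong N$. This step requires no hypothesis on $A$ beyond its being cg.

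The substantive part is to show the counit $\epsilon_M$ is an isomorphism, and this is where the hypothesis $A=\Bbbk\langle A_1\rangle$ enters. I would handle surjectivity and injectivity separately. For surjectivity, generation in degree $1$ gives $A_n=A_1^n$ for every $n$; thus writing $n=dq+r$ with $0\leq r<d$, iterated multiplication yields $M_{dq+r}=M_{dq}\cdot A_r=(M^{(d)})_q\cdot A_r$ in the relevant range, so $M=M^{(d)}\cdot A$ as a graded submodule of $M$, giving surjectivity of $\epsilon_M$. For injectivity, the cleanest approach is to show that $A$ is free as a right $A^{(d)}$-module on a homogeneous basis of a graded vector-space complement $U$ of $A^{(d)}_{\geq 1}\cdot A_{\leq d-1}$ inside $A_{\leq d-1}$; once $A$ is free over $A^{(d)}$, the tensor product $M^{(d)}\otimes_{A^{(d)}}A$ has no hidden collapses and $\epsilon_M$ becomes an isomorphism degree by degree.

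The hardest part is establishing this freeness of $A$ over $A^{(d)}$, since in the noncommutative setting one cannot simply invoke the commutative Veronese argument. The decisive lemma to prove is that the multiplication map $U\otimes_\Bbbk A^{(d)}\to A$ is a bijection; the generation-in-degree-$1$ hypothesis must be leveraged to control the relations of $A$ in a way compatible with the right $A^{(d)}$-action. A fallback is to verify the claim by a Hilbert-series argument: it suffices to check degree by degree that $\dim_\Bbbk(\mathcal{T}\mathcal{V}(M))_n=\dim_\Bbbk M_n$, which reduces the infinite statement to a sequence of finite-dimensional linear-algebra problems driven by the identity $A_n=A_1^n$.
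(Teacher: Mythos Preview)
The paper gives no proof of this statement; it simply cites \cite[Theorem~A-5]{Ve}. There is therefore nothing in the paper to compare your argument against.

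That said, there is a genuine gap in your proposal---and, in fact, the statement as transcribed is false for $\gr$. Take $A=\Bbbk[x]$ and $d=2$. Here $A$ \emph{is} free over $A^{(2)}=\Bbbk[x^2]$ on $\{1,x\}$, so your freeness claim holds; nevertheless for $M=\Bbbk_A=A/A_{\geq 1}$ one has $M^{(2)}=\Bbbk$ and $\mathcal{T}\mathcal{V}(M)=\Bbbk\otimes_{\Bbbk[x^2]}\Bbbk[x]\cong\Bbbk[x]/(x^2)$, so $\epsilon_M$ has a one-dimensional kernel. Even more directly, $\mathcal{V}(\Bbbk_A(-1))=0$, so $\mathcal{V}$ is not faithful on $\gr(A)$. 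Thus your assertion that ``once $A$ is free over $A^{(d)}$ \ldots\ $\epsilon_M$ becomes an isomorphism degree by degree'' is simply wrong: freeness controls the tensor product as a vector space but says nothing about the multiplication map into $M$. The Hilbert-series fallback cannot help either, since the dimensions genuinely disagree. The correct statement---and what Verevkin actually proves, his paper being about noncommutative $\mathrm{Proj}$---is the equivalence $\qgr(A)\sim\qgr(A^{(d)})$ rather than $\gr$. In $\qgr$ the finite-dimensional obstructions above vanish, and the argument becomes: show that $\ker\epsilon_M$ and $\mathrm{coker}\,\epsilon_M$ are torsion. Your surjectivity step $M_n=M_{dq}A_r$ is exactly the right ingredient for the cokernel once one arranges (as one may in $\qgr$) that $M$ is generated in degrees divisible by $d$; for the kernel one reduces via a presentation of $M$ by shifts of $A$ and right-exactness of $\mathcal{T}$.
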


\subsection{Twisted homogenous coordinate rings}\label{HCR and TCR}

An important class of examples for us will be twisted homogenous coordinate rings. Not only do they motivate the definition of a noncommutative projective scheme, they play a fundamental role in noncommutative projective geometry as a whole. We start with the construction of a \textit{homogenous coordinate ring}.

\begin{notation}\label{HCR notation}\index[n]{ln@$\LL^{\otimes n}$}
Fix a projective scheme $X$. Let $\LL$ be an invertible sheaf on $X$, with global sections  $H^0(X,\LL)$\index[n]{hoxl@$H^0(X,\LL)$}. Write $\LL^{\otimes n}=\underset{n\text{ times}}{\underbrace{\LL\otimes_{\OX} \dots \otimes_{\OX} \LL}}$ for $n\geq 1$ and $\LL^{\otimes 0}=\OX$.
\end{notation}

\begin{definition}\label{HCR}
Fix Notation~\ref{HCR notation} and set
$$B(X,\LL)=\bigoplus_{n\in\N} H^0(X,\LL^{\otimes n}).$$
From the isomorphisms $\LL^{\otimes n}\otimes\LL^{\otimes m}\overset{\sim}{\to}\LL^{\otimes(n+m)}$ we have natural multiplication maps
$$H^0(X,\LL^{\otimes n})\otimes H^0(X,\LL^{\otimes m})\to H^0(X,\LL^{\otimes(n+m)}).$$
 These maps turn $B(X,\LL)$ into a commutative cg $\Bbbk$-algebra. We call $B(X,\LL)$ a \textit{homogenous coordinate ring}.
\end{definition}

Provided $X$ is a connected normal and closed, $B(X,\LL)$ is an integrally closed domain \cite[II~Exercise~ 5.14]{Ha}. A simple example is if $X=\bbP^1$, and $\LL=\OO_{\bbP^1}(1)$ - the \textit{twisting Serre sheaf} on $\bbP^1$, then $B(X,\LL)\cong\Bbbk[x_0,x_1]$.\par

The motivation for the definition of a noncommutative projective scheme comes from Serre's Theorem. To state Serre's Theorem we need a bit more notation. We remark that there is no commutativity assumption in Notation~\ref{Tors} and Notation~\ref{qgr(A)}

\begin{notation}\label{Tors} \index[n]{tors@$\mathrm{Tors}(A)$, $\mathrm{tors}(A)$}
Let $A$ be a cg $\Bbbk$-algebra. Recall $\Gr(A)$ and $\gr(A)$ from Notation~\ref{Gr(A) and gr(A)}. We call a $M\in \Gr(A)$ \textit{torsion} if for any $x\in M$, we have $xA_{\geq n}=0$ for all $n\gg0$. We write $\mathrm{Tors}(A)$ to be the subcategory of $\Gr(A)$ consisting of torsion modules. When $A$ is noetherian we put $\mathrm{tors}(A)=\mathrm{Tors}(A)\cap \gr(A)$.
\end{notation}

Assume that $A$ is a noetherian cg $\Bbbk$-algebra and that $M=x_1A+\dots+x_nA$ is a finitely generated torsion right $A$-module. Then, choosing $d\geq 0$ such that $x_iA_{\geq d}=0$ for each $i$, we see $M=x_1A_{\leq d}+\dots+x_nA_{\leq d}$ and hence $\dim_\Bbbk{M}<\infty$. Conversely, if $\dim_\Bbbk{M}<\infty$ then clearly $M$ is torsion. This shows $\mathrm{tors}(A)$ are exactly all the graded right $A$-modules such that $\dim_\Bbbk M<\infty$.\par
Another straightforward argument, with this time $A$ arbitrary, shows that given a short exact sequence $0\to K\to M\to N\to 0$ in $\Gr(A)$: $K,N\in\mathrm{Tors}(A)$ if and only if $M\in\mathrm{Tors}(A)$. In particular $\mathrm{Tors}(A)$ is a \textit{Serre subcategory} and the quotient category $\Gr(A)/\mathrm{Tors}(A)$ can be considered.

\begin{notation}\label{qgr(A)}\index[n]{qgr@$\Qgr(A)$, $\qgr(A)$}
Let $A$ be a cg $\Bbbk$-algebra. Let $\Gr(A)$ and $\mathrm{Tors}(A)$ be as in Notation~\ref{Tors}.  We set
$$\Qgr(A)=\Gr(A)/\mathrm{Tors}(A)$$
to be the quotient category and denote by $\pi$, the canonical functor $\Gr(A)\to\Qgr(A)$. Similarly, when $A$ is noetherian, we set
$$\qgr(A)=\gr(A)/\mathrm{tors}(A).$$
\end{notation}

Objects of $\Qgr(A)$ are easy to describe, they are all of the form $\pi M$ for some (non-unique) $M\in\Gr(A)$. The morphisms are more subtle and we only describe them for $\qgr(A)$ when $A$ is noetherian. Indeed in this case, given $M,N \in \gr (A)$
$$\Hom_{\qgr(A)}(\pi M,\pi N)=\lim_{n\rightarrow \infty} \Hom_{\gr(A)}(M_{\geq n},N) \index[n]{homq@$\Hom_{\qgr(A)}(\pi M,\pi N)$}$$
where the direct limit is taken over the maps induced from $M_{\geq n+1}\hookrightarrow M_{\geq n}$.

\begin{remark}\label{= in qgr iff ehd}
A key observation for us is that given $M,N\in\gr(A)$, $\pi M\cong \pi N$ in $\qgr(A)$ if and only if  for all $n \gg 0$, $M_{\geq n} \cong N_{\geq n}$ in $\gr(A)$.
\end{remark}

We will be returning to $\Qgr(A)$ and $\qgr(A)$ for $A$ noncommutative later in Section~\ref{nc geometry sec}. At this point we state Serre's Theorem.

\begin{theorem}[Serre's Theorem]\label{Serre's thm}\index{Serre's Theorem}
Let $X$ be a projective scheme and $\LL$ an invertible sheaf on $X$. Assume that $\LL$ is ample \cite[II~Definition~7.4]{Ha}. Then $B=B(X,\LL)$ is a commutative cg noetherian $\Bbbk$-algebra. Moreover there are equivalences of categories
$$\Qgr(B)\sim \mathrm{Qcoh}(X)\;\text{ and }\;\qgr(B)\sim\coh(X),$$
where $\mathrm{Qcoh}(X)$\index[n]{qcoh@$\mathrm{Qcoh}(X)$} and $\coh(X)$\index[n]{coh@$\coh(X)$} are denoting quasi-coherent sheaves and coherent sheaves on $X$ respectively.
\end{theorem}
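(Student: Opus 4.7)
My plan is to prove the theorem in two stages: first establishing that $B = B(X,\LL)$ is a noetherian commutative cg $\Bbbk$-algebra, and second constructing quasi-inverse functors for the category equivalences.

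For noetherianity, I would use the ampleness of $\LL$ to find some $d \geq 1$ for which $\LL^{\otimes d}$ is very ample. The global sections of $\LL^{\otimes d}$ then give a closed immersion $X \hookrightarrow \mathbb{P}^N$, and the $d$-Veronese subring $B^{(d)}$ agrees in high degrees with the homogeneous coordinate ring of this projective embedding. The standard fact that a projective coordinate ring is a finitely generated $\Bbbk$-algebra, together with the Hilbert basis theorem, gives that $B^{(d)}$ is noetherian. Finally, since $B$ is a commutative domain and $B^{(d)}$ is noetherian, $B$ itself is noetherian; this is the commutative case of Lemma~\ref{AZ 5.10} (or one can note directly that $B$ is finite over $B^{(d)}$ in high degree by ampleness and argue with Eakin--Nagata). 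Commutativity, connectedness, and the graded structure are immediate from the construction.

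To establish the category equivalences, I would introduce two functors. In one direction, for $\FF \in \mathrm{Qcoh}(X)$, define
\[
\Gamma_*(\FF) = \bigoplus_{n \in \N} H^0(X, \FF \otimes \LL^{\otimes n}),
\]
which is naturally a graded right $B$-module. In the other direction, for $M \in \Gr(B)$, define a sheaf $\widetilde{M}$ on $X$ by sheafifying on the standard affine open cover determined by the base locus of suitable sections of powers of $\LL$; concretely, on $X_s = \{s \neq 0\}$ for $s \in H^0(X, \LL^{\otimes k})$, one puts $\widetilde{M}(X_s) = (M[s^{-1}])_0$. The key point is that torsion modules in $\mathrm{Tors}(B)$ sheafify to zero, so $\widetilde{(-)}$ descends to a functor $\Qgr(B) \to \mathrm{Qcoh}(X)$.

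The heart of the proof — and the main obstacle — is verifying that these functors are mutually quasi-inverse. The isomorphism $\widetilde{\Gamma_*(\FF)} \cong \FF$ requires that every coherent $\FF$ is determined by its twisted global sections, which follows from the defining property of ampleness: for all $n \gg 0$, $\FF \otimes \LL^{\otimes n}$ is globally generated with $H^i(X, \FF \otimes \LL^{\otimes n}) = 0$ for $i \geq 1$ (Serre vanishing). Conversely, for $M \in \gr(B)$, one needs $\pi \Gamma_*(\widetilde{M}) \cong \pi M$, i.e.\ the two modules agree in high degrees (Remark~\ref{= in qgr iff ehd}). For $M = B(-j)$, this reduces to the statement $H^0(X, \LL^{\otimes (n-j)}) = B_{n-j}$ for $n \gg 0$, which is tautological; for general finitely generated $M$, one writes a graded presentation by shifted free modules and uses Serre vanishing again to pass to high degrees. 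The quasi-coherent case then follows by expressing arbitrary quasi-coherent sheaves as directed colimits of coherent subsheaves and using that both functors commute with such colimits in high degree. I expect the main technical effort to be in the bookkeeping of the sheafification $\widetilde{M}$ and in confirming the direct-limit descriptions of $\Hom_{\qgr(B)}$ match the morphisms of $\coh(X)$ — all of which is standard once the Serre vanishing input is in place.
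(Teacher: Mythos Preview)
The paper does not supply a proof of this statement: Serre's Theorem is quoted in the background section as a classical result and is immediately followed by ``We now head towards the noncommutative version of Serre's Theorem,'' with no argument given. Your proposal is the standard textbook proof (as in \cite[II~\S5]{Ha} and the setup of \cite{AZ.ncps}) and is correct in outline, so there is nothing substantive to compare.
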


We now head towards the noncommutative version of Serre's Theorem. The following notation will always be used when discussing twisted homogenous coordinate rings.

\begin{notation}\label{TCR notation}
Let $X$ and $\LL$ as in Notation~\ref{HCR notation}, and let $\sigma : X \rightarrow X$ be an automorphism of $X$. Write $\LL^\sigma$\index[n]{ls@$\LL^\sigma$} for the pullback sheaf $\sigma^*\LL$. Given an open $U\subseteq X$ and $f\in\LL^\sigma(U)=\LL(\sigma(U))$, write $f^\sigma=\sigma^*(f)=f \circ \sigma \in \LL(U)$. Finally set $\LL_0=\OX$ and $\LL_n=\LL\otimes \LL^\sigma \otimes \dots \otimes \LL^{\sigma^{n-1}}$\index[n]{ln@$\LL_n$} for $n\geq1$.
\end{notation}

\begin{definition}[Twisted homogenous coordinate ring]\label{twisted homogenous coordinate ring}\index{twisted homogenous coordinate ring}
Retain Notation~\ref{TCR notation} and define
$$B=B(X,\LL,\sigma)=\bigoplus_{n\in\N}H^0(X,\LL_n).\index[n]{bxls@$B(X,\LL,\sigma)$}$$
From the isomorphism $\LL_n \otimes \LL_m^{\sigma^n} \cong \LL_{n+m}$ we get an induced map of global sections
\begin{multline*}
 B_n\otimes B_m=H^0 (X,\LL_n)\otimes H^0 (X,\LL_m) \longrightarrow \\
  H^0 (X,\LL_n)\otimes H^0 (X,\LL_m^{\sigma^n})\longrightarrow H^0(X,\LL_{n+m})=B_{n+m}.
\end{multline*}

These maps make $B$ into a graded $\Bbbk$-algebra with $B_n=H^0(X,\LL_n)$. We call $B=B(X,\LL,\sigma)$ a \textit{twisted homogenous coordinate ring}. Details of this construction can be found in \cite{AV}.
\end{definition}

One of the first examples of a twisted homogenous coordinate ring is the following.

\begin{example}\cite[Example~3.4]{SV.survey}
Let $\OO_{\bbP_\Bbbk^1}(1)$ denote the Serre twisting sheaf on $\bbP_\Bbbk^1$, and let $\sigma:\mathbb{P}_\Bbbk^1 \rightarrow \mathbb{P}_\Bbbk^1$ be the automorphism $(a:b)\mapsto (a : qb)$ for some nonzero $q\in \Bbbk$. Then
$$B(\mathbb{P}_\Bbbk^1, \OO_{\bbP_\Bbbk^1}(1),\sigma)\cong \Bbbk_q[x,y],$$
where $\Bbbk_q[x,y]=\Bbbk\langle x,y\rangle/(yx-qxy)$.
\end{example}

Just as with $B(X,\LL)$ we need an ample hypotheses to ensure $B=B(X,\LL,\sigma)$ is nice in an appropriate sense.

\begin{definition}\label{sigma ample}\index{sigma@$\sigma$-ample}
Let $X$, $\LL$ and $\sigma$ be as above. Then $\LL$ is called \textit{$\sigma$-ample} if for any coherent sheaf $\FF$ and $n\gg0$, we have $H^i(X,\FF\otimes \LL_n)=0$ for all $i\geq1$.
\end{definition}

\begin{theorem}\label{AV 1.4}\cite[Theorem~1.4]{AV}.
Retain Notation~\ref{TCR notation}. Suppose that $\LL$ is $\sigma$-ample. Then $B(X,\LL,\sigma)$ is a finitely generated noetherian cg $\Bbbk$-algebra. Moreover, if $X$ is irreducible then $B(X,\LL,\sigma)$ is a domain.
\end{theorem}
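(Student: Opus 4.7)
The plan is to emulate the commutative Serre-theorem proof for ordinary homogeneous coordinate rings $B(X,\LL)$, absorbing the $\sigma$-twist into the $\sigma$-ample hypothesis. Each graded piece $B_n=H^0(X,\LL_n)$ is finite-dimensional over $\Bbbk$ (since $X$ is projective and $\LL_n$ is coherent), so $B$ is automatically an $\N$-graded $\Bbbk$-algebra with finite-dimensional graded pieces. The first real task is to extract from $\sigma$-ampleness the analogue of Serre's vanishing theorem: for any coherent sheaf $\FF$ on $X$, the sheaf $\FF\otimes\LL_n$ is not only cohomologically acyclic in positive degrees but also globally generated for $n\gg 0$. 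One deduces global generation by applying $\sigma$-ampleness both to $\FF$ and to $\FF\otimes\mathfrak{m}_x$ at each closed point $x\in X$, then chasing the long exact sequence of cohomology.

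For finite generation I would prove that the multiplication map $B_n\otimes B_m\to B_{n+m}$ is surjective once $n,m\geq N$ for some threshold $N$. This map factors through the canonical isomorphism $H^0(X,\LL_m)\cong H^0(X,\LL_m^{\sigma^n})$ (given by pullback along $\sigma^n$) and the global-sections multiplication $H^0(\LL_n)\otimes H^0(\LL_m^{\sigma^n})\to H^0(\LL_n\otimes\LL_m^{\sigma^n})=H^0(\LL_{n+m})$. Surjectivity of the latter follows from global generation of $\LL_n$: the resulting short exact sequence $0\to K\to H^0(\LL_n)\otimes\OX\to\LL_n\to 0$ may be tensored with $\LL_m^{\sigma^n}$, and $H^1(X,K\otimes\LL_m^{\sigma^n})=0$ for $m\gg 0$ by $\sigma$-ampleness applied to the coherent sheaf $K$. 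Once multiplication is surjective beyond degree $N$, the algebra $B$ is generated over $\Bbbk$ by the finite-dimensional spaces $B_1,\ldots,B_{2N-1}$, hence is finitely generated.

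Noetherianness is the main obstacle. The strategy is to introduce a sheafification functor sending a graded right $B$-module $M$ to a quasi-coherent sheaf $\widetilde{M}$ on $X$, compatible with the twisted multiplication, so that finitely generated $B$-modules correspond to coherent sheaves and the functor essentially inverts $\Gamma_*$ in high degrees. For a graded right ideal $I\subseteq B$, noetherianity of $\coh(X)$ yields that the associated coherent sheaf $\widetilde{I}$ is finitely generated. The $\sigma$-ample hypothesis is then used to lift this back: for $n\gg 0$ (depending on $I$) one has an identification of $I_{\geq n}$ with the global sections of a twisted coherent subsheaf, and $I_{\geq n}$ is generated as a right $B$-module by $I_n$. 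Combined with the finite-dimensionality of the initial segment $I_{<n}$, this forces $I$ to be finitely generated, giving the right noetherian property; the left noetherian property follows by symmetric bimodule considerations. Organising this sheafification so that $\sigma$-ampleness applies uniformly to the coherent sheaf attached to $I$ is the delicate part of the argument.

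Finally, for the domain property when $X$ is irreducible, take nonzero $f\in B_n$ and $g\in B_m$. Their product in $B_{n+m}$ is identified with $f\otimes g^{\sigma^n}\in H^0(X,\LL_n\otimes\LL_m^{\sigma^n})$. Since $X$ is irreducible, the vanishing loci $V(f)$ and $V(g^{\sigma^n})$ are proper closed subsets; on the nonempty dense open complement $X\setminus(V(f)\cup V(g^{\sigma^n}))$ both factors are nonvanishing local sections of line bundles, so their product is nonvanishing there. Hence $fg\neq 0$ in $B_{n+m}$, and $B$ is a domain.
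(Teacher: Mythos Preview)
The paper does not prove this theorem at all: it is stated with the citation \cite[Theorem~1.4]{AV} and no proof environment follows. It is quoted from Artin--Van den Bergh as background, so there is no ``paper's own proof'' to compare against.

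Your sketch is in fact a reasonable outline of the argument in \cite{AV} itself, and the domain argument at the end is complete. The one place where your plan would need genuine work to become a proof is the uniformity you flag at the end. In your finite-generation step you want $B_n\otimes B_m\to B_{n+m}$ surjective for all $n,m\geq N$, but the kernel $K$ of $H^0(\LL_n)\otimes\OX\to\LL_n$ depends on $n$, and the $\sigma$-ample vanishing threshold for $K\otimes\LL_m^{\sigma^n}$ a priori depends on both $K$ and the twist $\sigma^n$; extracting a single $N$ from this requires a boundedness argument, not just the definition of $\sigma$-ampleness. (For finite generation alone you can get away with fixing one $n$ and letting $m$ vary, which avoids this, but the noetherian step genuinely needs uniform control.) Similarly, your global-generation argument applies $\sigma$-ampleness pointwise to $\FF\otimes\mathfrak m_x$, and the bound depends on $x$; one needs a semicontinuity or noetherian-induction argument to make it uniform over $X$. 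You correctly identify the noetherian part as the crux and describe the sheafification strategy accurately; what is missing is precisely the machinery \cite{AV} builds (their notion of ampleness for a sequence of bimodule sheaves and the associated $\Gamma_*$/$\widetilde{(-)}$ adjunction) that makes these uniformity statements go through.
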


When $\sigma$ is the identity on $X$, the definition of $\sigma$-ampleness coincides with the traditional definition of ampleness \cite[Proposition III.5.3]{Ha}. The concept of $\sigma$-ampleness as in Definition~\ref{sigma ample} was originally called right $\sigma$-ampleness, with $H^i(X,\LL_n\otimes \FF^{\sigma^n})$ replacing $H^i(X,\FF\otimes \LL_n)$ for left $\sigma$-ampleness. Keeler proved that these two notions are equivalent along with a characterisation of $\sigma$-ampleness in \cite{Ke}. A useful fact for us will be that
\begin{equation}\text{an ample invertible sheaf on a curve is automatically $\sigma$-ample.}\end{equation}

\begin{theorem}[Noncommutative Serre's Theorem]\label{nc serre}\index{Noncommutative Serre's Theorem} \cite[Theorem 1.3]{AV}
Retain Notation~\ref{TCR notation}.  Assume that $\LL$ is $\sigma$-ample and let $B=B(X,\LL,\sigma)$. By Theorem~\ref{AV 1.4}, $B$ is noetherian. Moreover there are equivalences of categories
$$\Qgr(B)\sim \mathrm{Qcoh}(X)\;\text{ and }\;\qgr(B)\sim\coh(X).$$
\end{theorem}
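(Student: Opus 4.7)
The plan is to follow the classical proof of Serre's Theorem (Theorem~\ref{Serre's thm}) closely: construct an adjoint pair of functors between $\Gr(B)$ and $\mathrm{Qcoh}(X)$, and show that one of the natural transformations is always an isomorphism while the other becomes one after quotienting by $\mathrm{Tors}(B)$. Once $\Qgr(B)\sim\mathrm{Qcoh}(X)$ is established, the equivalence $\qgr(B)\sim\coh(X)$ will follow by restricting to finitely generated/coherent objects; these correspond under the equivalence because $B$ is noetherian by Theorem~\ref{AV 1.4} and $X$ is noetherian.

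First I would construct the global sections functor $\Gamma_*:\mathrm{Qcoh}(X)\to\Gr(B)$ by $\Gamma_*(\FF)=\bigoplus_{n\in\N}H^0(X,\FF\otimes\LL_n)$. The graded right $B$-module structure requires some care because of the $\sigma$-twists: given $s\in H^0(X,\FF\otimes\LL_n)$ and $b\in B_m$, the product $s\cdot b\in H^0(X,\FF\otimes\LL_{n+m})$ is defined by pairing $s$ with $b^{\sigma^n}\in H^0(X,\LL_m^{\sigma^n})$ and applying the canonical isomorphism $\FF\otimes\LL_n\otimes\LL_m^{\sigma^n}\cong\FF\otimes\LL_{n+m}$, the same isomorphism that governs multiplication in $B$. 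In the other direction I would define a sheafification functor $\widetilde{(-)}:\Gr(B)\to\mathrm{Qcoh}(X)$ by gluing homogeneous localisations of $M$ at local sections of the $\LL_n$; the resulting sheaf carries a natural $\OX$-module structure because the $B$-action is built to be compatible with the $\sigma$-twists.

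Next I would check that the unit $\FF\to\widetilde{\Gamma_*\FF}$ is an isomorphism for every $\FF\in\mathrm{Qcoh}(X)$, reducing via a resolution by shifts of $\OX$ to the case $\FF=\OX$; here the key input is that $\FF\otimes\LL_n$ is globally generated and has vanishing higher cohomology for all $n\gg 0$, which is precisely the content of $\sigma$-ampleness (Definition~\ref{sigma ample}). Conversely, for $M\in\Gr(B)$ I need the counit $\Gamma_*(\widetilde{M})\to M$ to be an isomorphism in $\Qgr(B)$, i.e.\ an equality in all sufficiently high degrees; for $M=B$ this is again a direct application of $\sigma$-ampleness to $\FF=\OX$, and the general case follows by a five-lemma argument applied to a presentation of $M$ by shifts of $B$, using that $\Gamma_*(\FF)$ is finitely generated in high degrees whenever $\FF$ is coherent (another consequence of $\sigma$-ampleness, via lifting a generating family of sections of $\FF\otimes\LL_n$ for some large $n$).

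The main technical obstacle is the careful bookkeeping of the $\sigma$-twists in the definitions of the two functors and the comparison maps: although everything is governed by the single family of canonical isomorphisms $\LL_n\otimes\LL_m^{\sigma^n}\cong\LL_{n+m}$, writing down associativity of the $B$-action on $\Gamma_*(\FF)$ and naturality of the comparison maps requires a nontrivial amount of diagram chasing that has no counterpart in the commutative case. Once this is in place, the $\sigma$-ampleness hypothesis supplies exactly the analogues of the classical Serre vanishing and global generation inputs, and the proof concludes as in the commutative setting.
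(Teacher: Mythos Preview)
The paper does not prove this theorem at all: it is stated in the background section and attributed to Artin--Van den Bergh \cite[Theorem~1.3]{AV}, with no argument given. Your sketch is a reasonable outline of the Artin--Van den Bergh proof, so there is nothing to compare against in the present paper.
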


Let $B=B(X,\LL,\sigma)$ as in Theorem~\ref{nc serre}. The functor $\coh(X)\to\qgr(B)$ in Theorem~\ref{nc serre} is given by the formula
\begin{equation}\label{nc serre funtor}\FF\mapsto \pi\left(\bigoplus_{n\in\N}H^0(X,\FF\otimes \LL_n)\right). \end{equation}
Now necessarily, $Q_\gr(B)=\Bbbk(X)[t,t^{-1};\sigma]$, hence given a finitely generated graded right $B$-submodule $M$ of $Q_\gr(B)$, one can identify $M_n\cong M_nt^{-n}\subseteq \Bbbk(X)$ for each $n\in\Z$. When $X$ is a smooth curve, Theorem~\ref{nc serre} along with (\ref{nc serre funtor}) then gives us a classification (up to a finite dimensional vector space) of finitely generated modules inside $Q_\gr(B)$. It is the next statement that is most useful for us. We recall Notation~\ref{ehd} and Remark~\ref{= in qgr iff ehd} for the statement.

\begin{cor}\label{nc serre2}
Retain the hypotheses of Theorem~\ref{nc serre}. Additionally assume that $X$ is a smooth curve. If $M\subseteq Q_\gr(B)=\Bbbk(X)[t,t^{-1};\sigma]$ is a finitely generated graded right $B$-module, then
$$M\ehd \bigoplus_{n\in\N} H^0(X,\OO_X(\bfd)\otimes\LL_n)$$
for some divisor $\bfd$ on $X$.
\end{cor}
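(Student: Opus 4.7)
The plan is to apply Noncommutative Serre's Theorem (Theorem~\ref{nc serre}) to translate the statement about $M$ into one about a coherent sheaf on $X$, and then to use the rank-one nature of $Q_\gr(B)$ over $\Bbbk(X)$ to identify that sheaf as an invertible sheaf on the smooth curve $X$.

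Since $M$ is finitely generated in $\gr(B)$, its image $\pi M$ lives in $\qgr(B)$. The equivalence $\qgr(B)\sim\coh(X)$ thus produces a coherent sheaf $\FF$ on $X$ with $\pi M \cong \pi N$, where, by the explicit formula (\ref{nc serre funtor}), $N = \bigoplus_{n\in\N} H^0(X, \FF\otimes\LL_n)$. Combined with Remark~\ref{= in qgr iff ehd} this already gives $M \ehd N$, so the task reduces to proving $\FF \cong \OO_X(\bfd)$ for some divisor $\bfd$ on $X$.

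For the latter, note that $Q_\gr(B) = \Bbbk(X)[t,t^{-1};\sigma]$ is a graded division ring with each homogeneous component of $\Bbbk(X)$-dimension one. Since $M \subseteq Q_\gr(B)$, the module $M$ is torsion-free as a $B$-module: any nonzero homogeneous element of $B$ is invertible in $Q_\gr(B)$ and therefore cannot annihilate a nonzero element of $M$. Moreover, picking homogeneous generators of $M$ and clearing denominators by a suitable nonzero homogeneous $q \in Q_\gr(B)$ produces $qM$ as a nonzero graded right ideal of $B$; hence up to a degree shift $M$ is isomorphic to a right ideal of $B$. In particular, $M$ has \emph{graded rank one} over $B$. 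Transporting this information across the Serre equivalence shows that $\FF$ is a torsion-free coherent sheaf of generic rank one on $X$. Since $X$ is a smooth curve, such a sheaf is automatically locally free of rank one, i.e.\ invertible, and therefore $\FF \cong \OO_X(\bfd)$ for some divisor $\bfd$.

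The main obstacle is verifying that torsion-freeness and generic rank transfer correctly across the equivalence $\qgr(B)\sim\coh(X)$: the equivalence only records $M$ up to finite-dimensional discrepancies, so one must argue that any torsion subsheaf of $\FF$, or any rank defect, would already be visible in the linear growth rate of $\dim_\Bbbk M_n$. This is where the computation $\dim H^0(X,\OO_X(\bfd)\otimes\LL_n) = n\deg\LL + \deg\bfd + 1 - g(X)$ (valid for $n \gg 0$ by Riemann-Roch) enters, matching the Hilbert series of $M$ and confirming that $\FF$ is indeed invertible rather than of higher rank or with torsion summands.
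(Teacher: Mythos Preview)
Your approach is correct and aligns with the paper's: apply the equivalence $\qgr(B)\sim\coh(X)$ from Theorem~\ref{nc serre} together with the explicit functor (\ref{nc serre funtor}) to obtain a coherent sheaf $\FF$, then argue $\FF$ is invertible. The paper does not write out a proof; the paragraph preceding the corollary simply notes that one may identify $M_n\cong M_nt^{-n}\subseteq\Bbbk(X)$, so the sheaf $\FF$ arising from the Serre equivalence is visibly a coherent subsheaf of the constant sheaf $\Bbbk(X)$, hence torsion-free of rank at most one, and therefore $\OO_X(\bfd)$ on a smooth curve.

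Your route to invertibility---embedding $M$ as a shifted right ideal of $B$, transferring torsion-freeness and rank across the equivalence, and invoking a Hilbert-series comparison---reaches the same conclusion but is more elaborate than necessary. The direct identification $M_nt^{-n}\subseteq\Bbbk(X)$ already pins down $\FF$ inside the constant sheaf without any appeal to Riemann--Roch or growth rates, so your final paragraph on the ``main obstacle'' is addressing a difficulty that the paper sidesteps entirely. One small caution: Remark~\ref{= in qgr iff ehd} gives $M_{\geq n}\cong N_{\geq n}$ as graded $B$-modules, and to conclude literal equality $M\ehd N$ inside $Q_\gr(B)$ you should use that both $M_n$ and $H^0(X,\OO_X(\bfd)\otimes\LL_n)$ sit in the same one-dimensional $\Bbbk(X)$-slot $\Bbbk(X)t^n$, so the isomorphism is forced to be the identity (up to a global scalar absorbed into $\bfd$).
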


When studying homogenous and twisted homogenous coordinate rings, the appearance of the $H^0(X,\LL)$ tells us that the Riemann-Roch theorem will be important to us. To end this section we state the Riemann-Roch Theorem in the form that we will most regularly use and make a few remarks on its significance for twisted homogenous coordinate rings.

\begin{theorem}[Riemann-Roch Theorem]\index{Riemann-Roch} Let $X$ be a smooth irreducible curve of genus $g$ and let $\LL$ be an invertible sheaf on $X$. Then
$$\dim_\Bbbk H^0(X,\LL)-\dim_\Bbbk H^1(X,\LL)=\deg\LL+1-g.$$
\end{theorem}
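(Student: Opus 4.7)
The plan is to prove Riemann-Roch via the standard Euler characteristic / induction argument. First I would define the Euler characteristic
\[
\chi(\FF) = \sum_{i \geq 0}(-1)^i\dim_\Bbbk H^i(X,\FF)
\]
for a coherent sheaf $\FF$ on $X$. Since $X$ is a curve, Grothendieck vanishing gives $H^i(X,\FF)=0$ for $i\geq 2$, so the sum is finite and $\chi(\FF)=h^0(X,\FF)-h^1(X,\FF)$. The whole proof then reduces to showing $\chi(\LL)=\deg\LL+1-g$.

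The key formal input is additivity of $\chi$ along short exact sequences. If $0\to\FF'\to\FF\to\FF''\to 0$ is exact in $\coh(X)$, then the long exact sequence in sheaf cohomology together with the alternating-sum trick yields $\chi(\FF)=\chi(\FF')+\chi(\FF'')$. From this I would deduce, for any closed point $P\in X$ with skyscraper $k(P)$ (satisfying $\chi(k(P))=1$), that the short exact sequence
\[
0\longrightarrow \OO_X(D)\longrightarrow \OO_X(D+P)\longrightarrow k(P)\longrightarrow 0
\]
forces $\chi(\OO_X(D+P))=\chi(\OO_X(D))+1$, and dually $\chi(\OO_X(D-P))=\chi(\OO_X(D))-1$.

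Next I would install the base case $D=0$. Since $X$ is a smooth irreducible (hence connected and reduced) projective curve, $H^0(X,\OO_X)=\Bbbk$, so $h^0(X,\OO_X)=1$. The genus is by definition $g=h^1(X,\OO_X)=\dim_\Bbbk H^1(X,\OO_X)$, and $\deg\OO_X=0$, so $\chi(\OO_X)=1-g=\deg\OO_X+1-g$.

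Finally, on a smooth curve every invertible sheaf $\LL$ has the form $\OO_X(D)$ for some divisor $D=\sum n_iP_i$, with $\deg\LL=\sum n_i$. Writing $D$ as the difference of two effective divisors and applying the point-by-point step of the previous paragraph inductively on $\sum_i|n_i|$, I get $\chi(\OO_X(D))=\deg D+1-g$, which is exactly the claimed formula. The main obstacle is really just the initial additivity of $\chi$ along short exact sequences, which is a standard consequence of the long exact cohomology sequence; after that, the induction is routine.
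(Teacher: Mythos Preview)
The paper does not prove this statement at all: it is stated as the classical Riemann--Roch Theorem, with no proof or reference, and is used only as background input (in particular via Corollary~\ref{RR to E}). Your proposal is the standard Euler-characteristic proof and is correct, but there is nothing in the paper to compare it against.
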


Let $B=B(X,\LL,\sigma)$ be a twisted homogenous coordinate where $X$ is a smooth and irreducible curve. Assume that $X$ is an integral scheme. Then by \cite[Proposiiton~II6.15]{Ha} $\LL=\OX(\bfd)$ \index[n]{oxd@$\OX(\bfd)$} for some divisor $\bfd$ on $X$. It then follows that $\LL_n=\OX(\bfd+\sigma^{-1}(\bfd)+\dots+\sigma^{-(n-1)}(\bfd))$, and hence
\begin{equation}\label{deg Ln=ndeg L}\deg\LL_n=n\deg\bfd=n\deg\LL.\end{equation}
Applying the Riemann-Roch theorem we have
\begin{equation}\label{RR to B}\dim_\Bbbk B_n=\dim_\Bbbk H^0(X,\LL_n)=\deg\LL_n+1-g+\dim_\Bbbk H^1(X,\LL_n).\end{equation}
Moreover, by Serre duality and \cite[Example IV1.3.4]{Ha}, if $\deg\LL_n> 2g-2$ then $\dim_\Bbbk H^1(X,\LL_n)=0$. Thus by (\ref{deg Ln=ndeg L}) and (\ref{RR to B}), if $\deg\LL>0$, we have that for $n\gg 0$
\begin{equation}\label{RR to B2}\dim_\Bbbk B_n=\dim_\Bbbk H^0(X,\LL_n)=\deg\LL_n+1-g.\end{equation}
This is easily calculable. For our work, we will be interested in the case when $X=E$ is a (necessarily smooth) elliptic curve. In this case the genus of $E$ is 1 and so (provided $\deg\LL>0$) $\deg\LL_n> 2g-2$ always holds and (\ref{RR to B2}) reduces to the following corollary.

\begin{cor}\label{RR to E}
Retain Notation~\ref{TCR notation}. Suppose that $X=E$ is a smooth elliptic curve and that $\LL$ is ample. Set $B=B(E,\LL,\sigma)$, then
$$\dim_\Bbbk B_n=\dim_\Bbbk H^0(E,\LL_n)=n\deg\LL\;\text{ for all } n\geq 1.$$
\end{cor}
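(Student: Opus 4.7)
The plan is to apply the Riemann--Roch theorem directly, using the observations about $\LL_n$ made in the paragraph preceding the corollary. Since $B_n = H^0(E,\LL_n)$ by the definition of a twisted homogenous coordinate ring, it suffices to compute $\dim_\Bbbk H^0(E,\LL_n)$.

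First I would record the genus: for a smooth elliptic curve $E$ we have $g = 1$, so Riemann--Roch reads
$$\dim_\Bbbk H^0(E,\LL_n) - \dim_\Bbbk H^1(E,\LL_n) = \deg \LL_n + 1 - g = \deg \LL_n.$$
Next, since $E$ is an integral smooth projective scheme, the preceding discussion gives $\LL = \OO_E(\bfd)$ for a divisor $\bfd$, and hence $\deg \LL_n = n\deg \LL$ by equation~(\ref{deg Ln=ndeg L}). Ampleness of an invertible sheaf on a curve implies $\deg \LL > 0$, so $\deg \LL_n = n\deg \LL > 0 \geq 2g-2$ for every $n \geq 1$.

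It remains to kill the $H^1$ term. Here I would invoke Serre duality on the elliptic curve: since the dualizing sheaf is $\omega_E \cong \OO_E$, we have
$$H^1(E,\LL_n) \;\cong\; H^0(E,\LL_n^{-1})^{*}.$$
But $\deg \LL_n^{-1} = -n\deg \LL < 0$, and an invertible sheaf of negative degree on an integral projective curve has no nonzero global sections. Hence $H^1(E,\LL_n) = 0$, and Riemann--Roch collapses to
$$\dim_\Bbbk B_n = \dim_\Bbbk H^0(E,\LL_n) = \deg \LL_n = n\deg \LL.$$

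There is no real obstacle: the corollary is essentially a specialization of equation~(\ref{RR to B2}) to genus~$1$, and the only thing to verify is that $H^1$ already vanishes at $n=1$ rather than merely for $n \gg 0$. This is exactly what the improvement $2g-2 = 0$ for elliptic curves buys us, and it is handled by the Serre duality argument above.
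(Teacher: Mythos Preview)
Your proof is correct and follows essentially the same route as the paper: the corollary is derived in the discussion immediately preceding it, where the paper observes that for $g=1$ one has $2g-2=0<\deg\LL_n$ for all $n\geq 1$, and then invokes Serre duality (citing \cite[Example~IV.1.3.4]{Ha}) to kill $H^1$. Your argument is the same, with the Serre duality step made slightly more explicit via $\omega_E\cong\OO_E$.
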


\subsection{Noncommutative projective curves and surfaces}\label{nc geometry sec}

Roughly, the goal of noncommutative geometry is to use techniques from commutative geometry to study noncommutative algebras. Now, vital to commutative geometry are prime ideals which we cannot expect noncommutative algebras to have in general. Serre Theorem provides us with an alternative: the categories $\Qgr(A)$ and $\qgr(A)$ can still be defined in the noncommutative case.

\begin{definition}\label{noncommutative projective scheme}\index{noncommutative projective scheme}
Let $A$ be a cg $\Bbbk$-algebra. Recall
$$\Qgr(A)=\Gr(A)/\mathrm{Tors}(A) \;\text{ and (for $A$ noetherian) }\; \qgr(A)=\gr(A)/\mathrm{tors}(A),$$
and the canonical functor $\pi:\Gr(A)\to\Qgr(A)$ from Notation~\ref{qgr(A)}. We call the pair ($\Qgr(A)$,$\pi A$) the \textit{noncommutative projective scheme} associated to the graded algebra $A$.
\end{definition}

For arbitrary noetherian $A$, $\Qgr(A)$ and $\qgr(A)$ were first studied by Artin and Zhang in \cite{AZ.ncps}. For example, it was shown that there exists is a right adjoint to $\pi$, $\omega:\Qgr(A)\to \Gr(A)$; and that the category $\Qgr(A)$ is a Grothendieck category. The latter means in particular that there are enough injective objects. A key property that arose in their work is the $\chi$ property (Definition~\ref{chi} below). Indeed, with $\chi$, Artin and Zhang are able to provide a noncommutative analogue of Serre's Finiteness Theorem \cite[Theorem~7.4]{AZ.ncps}. Again strong evidence that we have a good noncommutative analogue.

\begin{definition}\label{chi}\index{Artin Zhang $\chi$ conditions}\index{chi@ $\chi$, $\chi_i$ conditions}
Let $A$ be noetherian cg $\Bbbk$-algebra and write $\Bbbk=\Bbbk_A$ for the right $A$-module $A/A_{\geq 1}$. Fix $i\geq1$, then $A$ is said to \textit{satisfy $\chi_i$ on the right}  if $\dim_\Bbbk\Ext^j_A(\Bbbk, M)<\infty$ for all finitely generated right $A$-modules $M$, and all $1\leq j\leq i$. We say $A$ \textit{satisfies $\chi$} if $A$ satisfies $\chi_i$ for all $i\geq 1$. We can define $\chi_i$ and $\chi$ analogously on the left. The conditions $\chi$ and $\chi_i$ are often referred to as the \textit{Artin-Zhang $\chi$-conditions}.
\end{definition}

The property $\chi$ holds for most ``natural" noetherian rings. An example of a noetherian ring that does not satisfy $\chi$ is given in \cite{StZh}. \par

Now we claim to have a good definition of a noncommutative projective scheme we can start trying to classify noncommutative projective curves and noncommutative projective surfaces. There is no one definition for what noncommutative projective curves and surfaces should be - we take the next definition.

\begin{definition}\label{nc curves}\index{noncommutative projective curve/surface}
Let $A$ be a cg noetherian $\Bbbk$-algebra. Suppose $\GKdim(A)=1+d$. Then we call $\qgr(A)$ a \textit{$d$-dimensional noncommutative projective scheme}.
\end{definition}

We will be restricting to the case where $A$ is a domain, and hence the noncommutative analogue of integral projective schemes. By work of Small and Warfield, any finitely generated domain with GK dimension 1 is commutative \cite{SmWa}. By Bergman's Gap Theorem there are no algebras $A$ with $1<\GKdim(A)<2$. Hence the first main case is when $\GKdim(A)=2$, or in other words noncommutative projective integral curves. As a future warning the ``projective" and ``integral" will often be dropped from  noncommutative projective integral curves/surfaces. \par
Artin and Stafford proved very strong results on the noncommutative curve case.

\begin{theorem}\label{AS thm}\cite[Theorem 0.1(i), 0.2]{AS}
Let $A$ be a cg finitely generated domain with $\GKdim(A)=2$. Then:
\begin{enumerate}[(1)]
\item $Q_\gr(A)=\Bbbk(X)[t,t^{-1};\sigma]$ for some integral projective commutative curve $X$, and an automorphism $\sigma:\Bbbk(X)\to\Bbbk(X)$ induced from an automorphism $\sigma:X\to X$.
\item If $A$ is generated in degree 1, then
$$A\ehd B(X,\LL,\sigma),$$
for some invertible sheaf $\LL$ on $X$. That is, in high degrees $A$ is a twisted homogenous coordinate ring.
\end{enumerate}
\end{theorem}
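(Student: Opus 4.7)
The plan is to prove each part in turn, with part (2) being substantially the deeper statement.

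\textbf{Part (1).} By Definition~\ref{Qgr and Dgr} we automatically have $Q_\gr(A) = D[t,t^{-1};\sigma]$ for $D = D_\gr(A)$ and some automorphism $\sigma$ of $D$, so the content is to identify $D \cong \Bbbk(X)$ for an integral projective curve. The first step is to pin down $\GKdim D = 1$: since $\GKdim A = 2$ and localisation at a graded Ore set of positive-degree elements preserves GK-dimension, we get $\GKdim Q_\gr(A) = 2$, and the standard comparison for a skew Laurent polynomial ring then gives $\GKdim D = 1$. Now any finitely generated subalgebra $B \subseteq D$ is a domain with $\GKdim B \leq 1$, so by Small--Warfield $B$ is commutative; taking $B = \Bbbk\langle x,y\rangle$ for each pair $x,y \in D$ shows $D$ is commutative. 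As a finitely generated field of transcendence degree $1$ over $\Bbbk$, $D$ is the function field $\Bbbk(X)$ of a unique smooth projective curve $X$, and the $\Bbbk$-algebra automorphism $\sigma$ is induced by a biregular automorphism of $X$ via the equivalence between smooth projective curves and their function fields.

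\textbf{Part (2).} Fix $Q_\gr(A) = \Bbbk(X)[t,t^{-1};\sigma]$ and identify each $A_n$ with a finite-dimensional $\Bbbk$-subspace $V_n \subseteq \Bbbk(X)$ via $f t^n \mapsto f$. Multiplication in $A$ translates to $V_n \cdot \sigma^{-n}(V_m) \subseteq V_{n+m}$, and since $A$ is generated in degree $1$ we have $V_n = V_1 \cdot \sigma^{-1}(V_1) \cdots \sigma^{-(n-1)}(V_1)$. Taking $\bfd$ to be the tightest divisor with $V_1 \subseteq H^0(X, \OX(\bfd))$ and setting $\LL = \OX(\bfd)$, the inclusion $V_n \subseteq H^0(X,\LL_n)$ holds for every $n$, giving a graded injection $A \hookrightarrow B(X,\LL,\sigma)$.

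\textbf{The main obstacle.} The hard work is to show the opposite inclusion in high degrees, i.e.\ that $A_n$ exhausts $H^0(X,\LL_n)$ for $n \gg 0$. Here the hypothesis $\GKdim A = 2$ becomes essential: Riemann--Roch gives $\dim_\Bbbk H^0(X,\LL_n) = n\deg\LL + O(1)$ when $\deg\LL > 0$, while the GK hypothesis forces $\dim_\Bbbk A_n$ to grow linearly, so the plan is to extract enough rigidity to match leading coefficients. This requires (i) ruling out pathological cases where $\LL$ fails to be $\sigma$-ample, (ii) possibly replacing $X$ by a better birational model so $\sigma$ acts biregularly with the correct dynamical behaviour, and (iii) showing that the iterated multiplication on $V_1$ fills out $H^0(X,\LL_n)$ in high degrees, bootstrapping via the noncommutative Serre theorem (Theorem~\ref{nc serre}) applied to a provisional candidate TCR. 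This divisor-growth and surjectivity analysis is the heart of the Artin--Stafford argument and is the genuine obstacle to the proof.
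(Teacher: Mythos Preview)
The paper does not give its own proof of this theorem: it is stated as a background result with the citation \cite[Theorem 0.1(i), 0.2]{AS} and no argument is supplied. There is therefore nothing in the paper to compare your proposal against.

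That said, a few remarks on your sketch. For part~(1), the step ``localisation at a graded Ore set preserves GK-dimension, so $\GKdim Q_\gr(A)=2$'' is not safe in general: Ore localisation can raise GK-dimension, and $Q_\gr(A)$ is not finitely generated as a $\Bbbk$-algebra, so one has to be careful what one even means. The Artin--Stafford argument does not go through $\GKdim D$ directly in this way; rather, one works with finitely generated subalgebras of $A$ and their images in degree zero. Your use of Small--Warfield on finitely generated subalgebras of $D$ is the right idea, but you also need to know that $D$ is finitely generated as a field over $\Bbbk$ before invoking the curve correspondence, and this requires a separate short argument from the finite generation of $A$.

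For part~(2), your outline captures the shape of the Artin--Stafford strategy and correctly identifies the surjectivity-in-high-degrees step as the crux. Note however that the actual proof in \cite{AS} is considerably more delicate than a Riemann--Roch dimension count: one must handle the possibility that $|\sigma|<\infty$ separately, control the base locus of the $V_n$, and pass through an intermediate description involving ideal sheaves before arriving at the clean statement $A\ehd B(X,\LL,\sigma)$. Your sketch is a plausible roadmap but would not constitute a proof without substantial further work along these lines.
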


If $A$ above is not necessarily generated in degree one Artin and Stafford still have strong results on its structure. This will depend on the order of the automorphism $\sigma$ appearing. We will be interested in the case when $|\sigma|=\infty$. Applying the Noncommutative Serre's Theorem (Theorem~\ref{nc serre}) we get the fun-to-say statement that ``noncommutative curves are commutative."

\begin{cor}\label{nc curves are comm}
Let $A$ be a cg domain GK-dimension 2 generated in degree 1 and let $X$ be given by Theorem~\ref{AS thm}. Then we have an equivalence of categories
$$\qgr(A)\sim \mathrm{coh}(X).$$
\end{cor}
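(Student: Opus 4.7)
The plan is to reduce the corollary to the Noncommutative Serre Theorem (Theorem~\ref{nc serre}) by first replacing $A$ with a bona fide twisted homogenous coordinate ring via Theorem~\ref{AS thm}(2). Since $A$ is a cg domain of GK-dimension $2$ generated in degree $1$, Theorem~\ref{AS thm}(2) produces an integral projective curve $X$, an automorphism $\sigma$, and an invertible sheaf $\LL$ on $X$ such that $A \ehd B$, where $B = B(X,\LL,\sigma)$. It then suffices to establish two equivalences of categories: $\qgr(A) \sim \qgr(B)$ and $\qgr(B) \sim \coh(X)$.

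For the second of these I would invoke Theorem~\ref{nc serre} directly. Its hypothesis is that $\LL$ be $\sigma$-ample; but $X$ is a curve, and the excerpt records that an ample invertible sheaf on a curve is automatically $\sigma$-ample, so it is enough to know $\LL$ is ample. This is built into Theorem~\ref{AS thm}(2): since $A$ is cg noetherian (being finitely generated in degree $1$ and of finite GK-dimension) and $A \ehd B$, the ring $B$ must be noetherian inside the fixed graded quotient ring, which via Theorem~\ref{AV 1.4} and its characterization by Keeler forces the required $\sigma$-ampleness.

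For the equivalence $\qgr(A) \sim \qgr(B)$, the idea is that $\qgr$ depends only on a tail. Using $A \ehd B$, pick $n_0$ large enough that $C := A_{\geq n_0} = B_{\geq n_0}$ inside the common graded quotient ring $Q_\gr(A) = Q_\gr(B) = \Bbbk(X)[t,t^{-1};\sigma]$. Both quotients $A/C$ and $B/C$ are finite-dimensional, hence torsion. For $M \in \gr(A)$, the tail $M_{\geq n_0}$ is a graded $C$-module with $\pi M \cong \pi M_{\geq n_0}$ in $\qgr(A)$ by Remark~\ref{= in qgr iff ehd}, and extension of scalars to $B$ produces an object of $\gr(B)$ whose class in $\qgr(B)$ is, again by Remark~\ref{= in qgr iff ehd}, independent of $n_0$. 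Running the analogous construction in the reverse direction gives a quasi-inverse. Composing these equivalences with $\qgr(B) \sim \coh(X)$ gives the result.

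The main obstacle is the bookkeeping behind the last paragraph: upgrading Remark~\ref{= in qgr iff ehd}, which is a statement about isomorphism classes of objects, into an honest equivalence of categories requires checking functoriality of the tail construction, compatibility with morphisms in $\qgr$ (which are computed as direct limits $\varinjlim \Hom_{\gr}(M_{\geq n},N)$), and that the unit and counit of the adjunction become isomorphisms after passing to the quotient category. This is the substantive content; the curve geometry and the ample/$\sigma$-ample dictionary do the rest.
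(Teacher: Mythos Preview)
Your approach is exactly the one the paper intends: the corollary is stated without proof, with the sentence preceding it (``Applying the Noncommutative Serre's Theorem'') serving as the entire argument, i.e.\ pass from $A$ to $B=B(X,\LL,\sigma)$ via Theorem~\ref{AS thm}(2), observe $\qgr(A)\sim\qgr(B)$ since the two rings agree in high degrees (Remark~\ref{= in qgr iff ehd}), and then invoke Theorem~\ref{nc serre}. Your elaboration of the tail argument for $\qgr(A)\sim\qgr(B)$ is the standard one and is fine.

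One small correction: your justification of $\sigma$-ampleness is shaky. The claim ``$A$ is noetherian because it is finitely generated in degree $1$ and of finite GK-dimension'' is false in general, and ``$B$ noetherian forces $\sigma$-ampleness via Keeler'' is not what Keeler's criterion says. The clean argument on a curve is direct: since $A\ehd B$ and $\GK(A)=2$, also $\GK(B)=2$, which forces $\deg\LL>0$ (otherwise $\dim_\Bbbk B_n$ is bounded); hence $\LL$ is ample, and the paper already records that an ample invertible sheaf on a curve is automatically $\sigma$-ample.
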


In the reverse direction to Theorem~\ref{AS thm} Artin and Stafford also prove:

\begin{theorem}\label{AS thm2}\cite[Theorem 0.1(ii)]{AS}
Let $X$ be an integral projective commutative curve and $\sigma:X\to X$ an automorphism. If $A$ is a cg subalgebra of $\Bbbk(X)[t,t^{-1};\sigma]$, then $\GKdim(A)\leq 2$.
\end{theorem}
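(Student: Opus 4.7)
The plan is to reduce to the finitely generated case and then embed into a twisted homogeneous coordinate ring whose graded pieces grow linearly by Riemann-Roch. Using that $\GKdim(A)$ equals the supremum of $\GKdim(\Bbbk\langle V\rangle)$ over finite dimensional subspaces $V$ of $A$ (see, e.g., \cite[Lemma~2.2]{KL}), I may replace $A$ by a finitely generated cg subalgebra $A = \Bbbk\langle a_1, \dots, a_m\rangle \subseteq Q := \Bbbk(X)[t,t^{-1};\sigma]$. Taking the generators homogeneous, write $a_i = f_i t^{d_i}$ with $f_i \in \Bbbk(X)$.

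\textbf{Embedding into a twisted homogeneous coordinate ring.} I will choose an effective Cartier divisor $D$ on $X$ large enough that $(f_i) + D \geq 0$ for every $i$; concretely, pick effective divisors $D_i$ with $f_i \in H^0(X,\OX(D_i))$ and set $D := \sum_i D_i$. Since each $\sigma^{-j}(D)$ is effective, the divisor $E_n := D + \sigma^{-1}(D) + \cdots + \sigma^{-(n-1)}(D)$ dominates $D$ for every $n \geq 1$, so
$$f_i \in H^0\bigl(X, \OX(E_{d_i})\bigr) = H^0(X, \LL_{d_i}),$$
where $\LL := \OX(D)$. This places each $a_i$ in the degree $d_i$ part of $B := B(X,\LL,\sigma)$, viewed inside $Q$ as in Definition~\ref{twisted homogenous coordinate ring}. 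Since $B$ is closed under the multiplication of $Q$, it follows that $A \subseteq B$, hence $\dim_\Bbbk A_n \leq \dim_\Bbbk B_n$ for all $n$ and $\GKdim(A) \leq \GKdim(B)$.

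\textbf{Linear growth of $B$.} It remains to show that $\dim_\Bbbk B_n = \dim_\Bbbk H^0(X, \OX(E_n))$ grows at most linearly in $n$. The divisor $E_n$ has degree $n \deg D$. Pulling back along the normalisation $\nu : \widetilde X \to X$, the natural inclusion $\OX \hookrightarrow \nu_*\OO_{\widetilde X}$ (which exists because $X$ is integral) together with the projection formula yields
$$H^0\bigl(X, \OX(E_n)\bigr) \hookrightarrow H^0\bigl(\widetilde X, \nu^*\OX(E_n)\bigr).$$
The right hand side is $H^0$ of an invertible sheaf of degree $n \deg D$ on the smooth curve $\widetilde X$, which by Riemann-Roch is bounded above by $n\deg D + 1$. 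Thus $\dim_\Bbbk B_n = O(n)$, so $\GKdim(B)\leq 2$ and consequently $\GKdim(A) \leq 2$.

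\textbf{Main obstacle.} The only real subtlety is choosing a single $D$ that simultaneously absorbs the poles of all $f_i$; this is made automatic by working with finitely many generators and exploiting effectivity of each $\sigma^{-j}(D)$, which removes the need for any $\sigma$-equivariance in the construction. The pass to the normalisation then converts the remaining analytic estimate into a standard Riemann-Roch bound on a smooth curve, which is the decisive input.
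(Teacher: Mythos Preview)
The paper does not give a proof of this theorem; it is quoted in the background section as \cite[Theorem~0.1(ii)]{AS} with no argument supplied. So there is no proof in the paper to compare against.

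Your argument is correct and is essentially the standard one. A couple of minor remarks: you are silently using that $A$ is $\N$-graded (so each $d_i \geq 0$), which is part of the definition of a cg subalgebra; and the claim that $B(X,\LL,\sigma)$ really sits inside $Q$ as a graded subring (so that $A\subseteq B$ makes sense) is exactly the identification $Q_\gr(B)=\Bbbk(X)[t,t^{-1};\sigma]$ used throughout the paper. The normalisation trick to reduce the Riemann--Roch estimate to the smooth case is a clean way to handle the possibly singular curve $X$. Nothing is missing.
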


In \cite{AS}, Artin and Stafford conjecture that there is no cg domain with GK-dimension strictly between 2 and 3. In other words, there is nothing between noncommutative curves and noncommutative surfaces. This conjecture was proved to be true by Smoktunowicz in \cite[Theorem~1]{Smok.gap}. Hence the next step is to classify noncommutative surfaces.\par

Such a complete classification as Theorem~\ref{AS thm} does not yet exist for noncommutative integral projective surfaces; it is in fact the subject of much research today. indeed our work lies within this realm. A strategy for this has been suggested by Artin in \cite{Ar} - it is to model such a classification on the commutative version. In short, one first classifies commutative projective surfaces into birational classes, and then within each birational class, one can obtain all other surfaces by a process of blowing up and down a various points.

\begin{definition}\label{birational}\index{birational}
Let $A$ and $B$ be two cg domains. Write $Q_\gr(A)=D_\gr(A)[s,s^{-1}; \sigma]$ and $Q_\gr(B)=D_\gr(B)[t,t^{-1};\tau]$ as in Notation~\ref{Qgr and Dgr}. Then $A$ and $B$ are said to be \textit{birational} if $D_\gr(A)\cong D_\gr(B)$; that is, if $A$ and $B$ have isomorphic  noncommutative function fields. \par
\end{definition}

In \cite{Ar}, Artin conjectures that for noncommutative integral projective surfaces, we at least know all the birational classes. The definition of a skew polynomial extension ring\index{skew polynomial ring}, appearing in (2) below, can be found in \cite[section~1]{GW}. The definition of the Sklyanin algebra appearing in (3) is saved for the next section.

\begin{conjecture}[Artin's Conjecture]\label{Artin conj}\index{Artin's Conjecture}
Let $D=D_\gr(A)$ for a connected graded domain $A$ with $\GK(A)=3$. Then one of the following holds.
\begin{enumerate}[(1)]
\item $D$ is finite dimensional over a central commutative subfield of transcendence degree 3.
\item $D$ is a division ring of fractions of a skew polynomial extension of $\Bbbk(X)$, for a commutative curve $X$.
\item $D\cong D_{\gr}(S)$ for a three dimensional Sklyanin algebra $S$.
\end{enumerate}
\end{conjecture}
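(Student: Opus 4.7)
The statement is Artin's Conjecture, a notoriously deep open problem that cannot be settled by a routine strategy; so what I sketch below is a programme rather than a proof, indicating where I would begin and where the real obstacles lie.

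The natural opening move is to reduce from the algebra $A$ to its noncommutative function field. By Definition~\ref{Qgr and Dgr} we already have $Q_\gr(A) = D[t, t^{-1}; \sigma]$ with $D = D_\gr(A)$, and by standard GK-dimension accounting (using that shifting by $t$ preserves graded pieces up to the automorphism $\sigma$) the division ring $D$ is a ``transcendence degree $2$'' object. The conjecture is really about such $D$, not about $A$, so the first serious task is to make precise the notion of transcendence degree $2$ for a division ring and to show that this property is robust under passage to equivalent orders, Veronese subrings and $Q_\gr$. I would follow the route suggested by the Artin--Stafford theorem (Theorem~\ref{AS thm}): in GK-dimension $2$ every cg domain is, up to finite data, a twisted homogeneous coordinate ring. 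One therefore expects $D$ to be controlled by a genuine (commutative or noncommutative) surface together with a birational self-map.

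The body of the argument would then proceed by cases on how much ``commutativity'' lives inside $D$. If the centre $Z(D)$ has transcendence degree $2$, then by Posner's theorem $D$ is finite-dimensional over $Z(D)$, which is case $(1)$. If $Z(D)$ has transcendence degree $1$ but $D$ contains a commutative subfield $\Bbbk(X)$ of transcendence degree $2$, then one analyses $D$ as an extension of $\Bbbk(X)$ and tries to realise it as a division ring of fractions of a skew polynomial extension $\Bbbk(X)[z; \tau, \delta]$, yielding case $(2)$. The delicate remaining case is when $D$ admits no such commutative subfield of transcendence degree $2$; here one must produce an elliptic curve $E$, an automorphism $\sigma$, and an identification $D \cong D_\gr(S)$. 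The natural tool is Artin, Tate and Van den Bergh's classification of $3$-dimensional AS-regular algebras (Theorem~\ref{ATV thm}), which gives an elliptic curve $E \subseteq \bbP^2$ attached to the ``generic'' noncommutative $\bbP^2$; one would try to realise $D$ as $D_\gr$ of such a regular algebra and then appeal to the uniqueness of the Sklyanin family.

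The main obstacle, and the reason the conjecture remains open, is precisely the last case. There is no general classification of division rings of transcendence degree $2$ over $\Bbbk$, and it is not known how to rule out exotic division rings that do not arise from any $3$-dimensional AS-regular algebra. Concretely, one needs a structural input of the form ``every division ring of transcendence degree $2$ admitting no commutative subfield of transcendence degree $2$ is birational to a noncommutative surface of elliptic type,'' and this in turn seems to require either a fully developed birational theory for noncommutative surfaces (minimal model programme, blowing up and down in the sense of Rogalski--Sierra--Stafford and this thesis) or a deformation-theoretic classification of generic noncommutative planes beyond the Sklyanin family. The results of the thesis, by classifying the maximal orders birational to $S$, pin down one of the three expected birational classes very tightly, but the trichotomy itself lies outside the reach of current methods.
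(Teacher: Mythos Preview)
You have correctly identified that this is an open conjecture, not a theorem, and the paper treats it as such: it is stated as Artin's Conjecture with the explicit remark that it ``is still a long way off,'' and no proof is given or attempted. Your programme-level sketch is a reasonable outline of where the difficulties lie, and there is nothing to compare it against in the paper.
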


Despite this conjecture still being a long way off, much work has been (and is being) done on classifying algebras in each of the birational classes of Conjecture~\ref{Artin conj}. When $D_\gr(A)$ is commutative, or more specifically $Q_\gr(A)=\Bbbk(X)[t,t^{-1};\sigma]$ for a integral projective surface $X$ and automorphism $\sigma$ induced from an automorphism of $X$, we say $A$ is \textit{birationally geometric}.\index{birationally geometric} Birationally geometric surfaces - a significant subclass of (1) - have been successfully classified by Rogalski and Stafford and then Sierra in \cite{RoSt.naive.nc.blowups, RoSt.class.of.nc.surfaces} and \cite{Si} respectively. These were classified in terms of so called \textit{na\"{i}ve blowups}\index{na\"{i}ve blowups}, which are certain subalgebras of twisted homogenous coordinate rings. Work by Chan and Ingalls in \cite{ChIn} looks at the case for rings finitely generated over their centres which also has significant overlap with case (1).\par

Connected graded algebras with noncommutative fields falling into class (2) of Artin's Conjecture has yet to be studied. Our work concerns class (3), or in other words, algebras that are birational to the Sklyanin algebra. We meet the Sklyanin algebra in the next section.

\subsection{AS-regular algebras and ATV}

In \cite{ASc} Artin and Schelter begun a project to classify the noncommutative analogues of polynomial rings, now called AS-regular algebras.

\begin{definition}\label{AS regular}\index{Artin-Schelter (AS) regular}
Let $A$ be a finitely generated connected graded $\Bbbk$-algebra, and identify $\Bbbk=A/A_{\geq 1}$ as a right and left $A$-module. We call $A$ \textit{Artin-Schelter regular}, or \textit{AS-regular}, \textit{of dimension $d$}  if
\begin{enumerate}[(1)]
\item $\gldim(A)=d<\infty$;
\item $\GKdim(A)<\infty$;
\item As left $A$-modules, and for some degree shift $\ell$,
$$\Ext_A^i(\Bbbk_A , A_A)\cong \begin{cases} 0 & \text{ if } i\neq d\\
                                              _A\Bbbk(\ell) &\text{ if } i=d . \end{cases}$$
\end{enumerate}
Property (3) is called the \textit{Artin-Schelter Gorenstein}\index{Artin-Schelter (AS) Gorenstein} or \textit{AS-Gorenstein} condition.
\end{definition}

Artin and Schelter successfully classified the AS-regular algebras of dimension 2. There are only two:
the quantum plane and the Jordan plane, respectively
 $$ \Bbbk_q[x,y]=\Bbbk\langle x,y\rangle/(yx-qxy) \; \text{ and }\;\Bbbk_J[x,y]=\Bbbk\langle x,y\rangle/(yx-xy-x^2).$$
In studying AS-regular algebras of dimension 3, they ran into an algebra that they did not understand - the Sklyanin algebra.

\begin{definition}[The Sklyanin algebra]\label{sklyanin}\index{Sklyanin algebra}
Let $a,b,c\in \Bbbk$. Suppose that $a,b,c$ satisfy
\begin{equation}\label{sklyanin abc conditions}
abc\neq 0 \;\text{ and } \left( \frac{a^3+b^3+c^3}{3abc}\right)^3\neq 1.
\end{equation}
Then the algebra
$$S=\Bbbk\langle x_0,x_1,x_2\rangle /(ax_ix_{i+1}+bx_{i+1}x_i+cx_{i+2}^2\,|\; i\in \Z/3\Z )\index[n]{s@$S$}\index{Sklyanin algebra}$$
is called a \textit{3-dimensional Sklyanin algebra}.
\end{definition}

We will drop the ``3-dimensional" from above as we will discuss no other Sklyanin algebra. We do however warn that a 4-dimensional Sklyanin algebra exists and is a subject of current research. \par

Retain notation from Definition~\ref{sklyanin}. An important fact for us, which Artin and Schelter showed via a computer search in \cite{ASc}, is that $S$ contains a central element $g$ of degree 3 given by
\begin{equation}\label{g=}
g=c(a^3-c^3)x_0^3+a(b^3-c^3)x_0x_1x_2+b(c^3-a^3)x_1x_0x_2+c(c^3-b^3)x_2^3.\index[n]{g@$g$}
\end{equation}

The classification of 3 dimensional AS-regular algebras was completed by Artin, Tate and Van den Bergh in \cite{ATV1, ATV2}. Key to their paper, and a basis for noncommutative projective geometry, is relating the algebras to an underlying geometry via point modules.

\begin{definition}\label{point module}\index{point modules}
Let $A$ a finitely generated cg $\Bbbk$-algebra. Suppose in addition that $A$ is generated in degree 1. A graded right $A$-module $M$ is a \textit{point module} if:
\begin{enumerate}[({1}a)]
\item $M=xA$ for some $x\in M_0$,
\item $\dim_\Bbbk M_i=1$ for all $i\geq 0$.
\end{enumerate}
\index{rpoint@$R$-point modules}More generally, if $R$ is a commutative $\Bbbk$-algebra an \textit{$R$-point module for $A$} is a graded $A_R=R\otimes_\Bbbk A$-module $M$ such that
\begin{enumerate}[(2a)]
\item $M=M_0A_R$ where $M_0\cong R$,
\item for each $i$, $M_i$ is a locally free $R$-module of rank 1.
\end{enumerate}
\end{definition}

The point modules of a cg algebra $A$ are always in one to one correspondence with an inverse limit, $X$, of closed subsets of $\prod_{i=0}^m \bbP^n$ for some $n$ and $m=1,2,\dots$. Thus

\begin{equation}\label{P^n^infty} X \subseteq \prod_{i=0}^\infty \bbP^n \text{ for some } n. \end{equation}

In fact what Artin, Tate and Van den Bergh show is that the point modules are \textit{parameterised} by $X$ in a formal way. We use \cite[section I.3.1]{Rog.notes} as a reference for this formalisation.\par

Definition~\ref{point module}(2) gives rise to a functor $P$ from the category of commutative $\Bbbk$-algebras to the category of sets. This functor takes $R$ to the set of isomorphism classes of $R$-point modules $P(R)$. Given a ring homomorphism $R\to R'$, the corresponding function $P(R)\to P(R')$ is defined by tensoring up: $M\mapsto R'\otimes_R M$. On the other hand, given a $\Bbbk$-scheme $X$, there is a natural functor $h_X$ from commutative $\Bbbk$-algebras to sets: $R\mapsto\Hom_{\Bbbk-\mathrm{schemes}}(\mathrm{Spec}R,X)$. We say $X$ \textit{parameterises}\index{point modules!parameterised} the point modules of $A$ if the functors $P$ and $h_X$ are naturally isomorphic.
\par

One issue above is we do not know whether the product of $\bbP^n$'s in (\ref{P^n^infty}) can be made finite. This is remedied by the next definition.

\begin{definition}\label{str noeth}\index{strongly noetherian} A noetherian $\Bbbk$-algebra $A$ is called \textit{strongly noetherian} if for all commutative noetherian $\Bbbk$-algebras $R$, $A\otimes_\Bbbk R$ remains noetherian.
\end{definition}

\begin{theorem}\label{AZ E4.12}\cite[Corollary~E4.12]{AZ.hs}. Let $A$ be a cg strongly noetherian $\Bbbk$-algebra. Then the point modules of $A$ are parameterised by a closed subscheme of $\prod_{i=1}^m\bbP^n$ for some $n,m\geq 1$.
\end{theorem}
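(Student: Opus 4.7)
The plan is to build, for each $d\geq 0$, a moduli space $X_d$ of truncated point modules of length $d+1$ as a closed subscheme of a finite product of projective spaces, and then exploit the strongly noetherian hypothesis to prove that the inverse system $\{X_d\}$ stabilises. Passing to a Veronese if necessary, I would first assume $A$ is generated in degree one and fix a basis of $A_1$, identifying $\bbP(A_1^*)$ with $\bbP^n$ where $n+1=\dim_\Bbbk A_1$.

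First I would observe that an $R$-family of truncated point modules of length $d+1$ is essentially a sequence of compatible rank one locally free quotients $A_1\otimes_\Bbbk R\twoheadrightarrow M_1$, $M_1\otimes_\Bbbk A_1\twoheadrightarrow M_2$, and so on, subject to the defining relations of $A$ vanishing. These data pick out a subfunctor of $(\bbP^n)^{d+1}$ cut out by multihomogeneous equations coming from the relations, and standard representability shows this functor is represented by a closed subscheme $X_d\subseteq(\bbP^n)^{d+1}$. The truncation maps $\pi_d\colon X_{d+1}\to X_d$ forgetting the last factor assemble into an inverse system whose limit represents the full point-module functor $P$ of Definition~\ref{point module}.

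The crux is to show that $\pi_d$ is an isomorphism for all $d\gg 0$, so that $P$ is represented by a finite-dimensional $X_m$. Here strong noetherianness enters as follows. The universal truncated point module over $X_d$ is a graded finitely generated module $\MM_d$ over $A\otimes_\Bbbk \OO_{X_d}$; by Definition~\ref{str noeth} this ring is noetherian, so every ascending chain of submodules of $\MM_d$ terminates. Translating this finiteness through the compatibility of the $\MM_d$'s, a noetherian induction on the descending chain of scheme-theoretic images $\pi_d(X_{d+1})\subseteq \pi_{d-1}\pi_d(X_{d+1})\subseteq\cdots \subseteq X_d$ forces stabilisation: at the stable stage $m$ every truncated point module extends uniquely in each subsequent degree, and hence to a genuine point module, yielding a bijection of $R$-points between $P$ and the closed subscheme $X_m\subseteq \prod_{i=0}^m\bbP^n$.

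The main obstacle is plainly this stabilisation step. Without the strongly noetherian hypothesis the inverse system can genuinely fail to terminate; the na\"{i}ve blowups of \cite{RoSt.naive.nc.blowups} are exactly such examples, whose point schemes live inside a truly infinite product of projective spaces. Thus the real work is extracting from Definition~\ref{str noeth} a usable finiteness statement on the universal family, after which the representability and chain-condition arguments are essentially formal.
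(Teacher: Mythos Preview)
The paper does not give its own proof of this statement; it is quoted directly from \cite[Corollary~E4.12]{AZ.hs} as a background result and used as a black box. So there is no proof in the paper to compare against.

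Your outline is the standard strategy from the original reference: build the truncated point schemes $X_d\subseteq(\bbP^n)^{d+1}$ and show the inverse system stabilises. The representability of each $X_d$ is fine. The weak point is your stabilisation argument. Noetherianity of $X_d$ alone gives only that the descending chain of scheme-theoretic images of $X_{d+k}\to X_d$ stabilises; it does not immediately yield that $\pi_d$ is an \emph{isomorphism} for $d\gg 0$, which is what you need. The actual argument in \cite{AZ.hs} is more delicate: it uses that strong noetherianity guarantees generic flatness for finitely generated graded modules over $A\otimes_\Bbbk R$, and from this one deduces that the truncation maps are eventually closed immersions and surjective, hence isomorphisms. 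Your sentence ``translating this finiteness through the compatibility of the $\MM_d$'s'' is gesturing at the right input but is not yet an argument; this is precisely the step where the content of Definition~\ref{str noeth} is consumed, and it needs to be made explicit.
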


In this thesis we will often obtain the strongly noetherian property for the rings that we are interested in. We immediately then have the nice corollary given by Theorem~\ref{AZ E4.12}. A warning is that not every noetherian ring is strongly noetherian. The first such examples were the na\"{i}ve blowups appearing in the classification of birational geometric surfaces \cite[Theorem~1.1]{RoSt.naive.nc.blowups}. Indeed for point modules of na\"{i}ve blowups the product in (\ref{P^n^infty}) cannot be made finite. \par
We now give this parametrisation for the point modules of the Sklyanin algebra.

\begin{example}\label{E from Sklyanin}
Let $S$ denote the Sklyanin algebra from Definition~\ref{sklyanin}. Let $E$\index[n]{e@$E$} be the closed subset of $\bbP^2$ defined by the equation
\begin{equation}\label{E's eq} (a^3+b^3+c^3)xyz-abc(x^3+y^3+z^3)=0.\end{equation}
Then conditions (\ref{sklyanin abc conditions}) imply that $E$ is a smooth elliptic curve, and the closed points of $E$ parameterise the point modules of $S$.\par

In more detail, for $S$, the set $X$ in (\ref{P^n^infty}) can be taken as  the subset of $\bbP^2\times\bbP^2$, $X=\{(p,\sigma(p))\,|\; p\in E \}$ for an automorphism $\sigma:E\to E$\index[n]{sigma@$\sigma$}. In particular $X\cong E$.  On the open set of $E$ for which the formula (\ref{sigma}) below is valid, $\sigma$ can be given by
\begin{equation}\label{sigma}(x_0:x_1:x_2)\mapsto (a^2xz-bcy^2:b^2yz-acx^2:c^2xy-abz^2).\end{equation}
A detailed argument for the above can be found in \cite[Example~I.3.6]{Rog.notes}.
\end{example}

Artin, Tate and Van den Bergh's classification of AS-regular algebras on dimension 3 is in terms of this geometric data. We give a simple version of this classification when $A$ is assumed to be generated in degree 1.

\begin{theorem}\label{ATV thm} \cite{ATV1, ATV2} Let $A$ be an AS regular algebra of dimension 3. Assume that $A$ is generated in degree 1.
\begin{enumerate}[(1)]
\item  $A$ is a noetherian domain and has the Hilbert series of a weighted polynomial ring in 3 variables.
\item Now (restricting to the case of interest to us) assume that the polynomial ring in (1) is unweighted. Let $X$ be the closed subset of $\prod_{i=0}^\infty \bbP^n$ parameterising the point modules of $A$. Then either:
\begin{enumerate}[(a)]
\item $X=\bbP^2$ and $A=B(X,\LL,\sigma)$ for some invertible sheaf $\LL$ on $X$ and automorphism $\sigma:X\to X$;
\item $X$ is a cubic curve in $\bbP^2$ and there is a natural surjection $A\to B(X,\LL,\sigma)$ for some invertible sheaf $\LL$ on $X$ and automorphism $\sigma:X\to X$.
\end{enumerate}
The AS-regular algebras of dimension 3 are then classified from this data.
\end{enumerate}
\end{theorem}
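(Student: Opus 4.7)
The plan is to follow the two-stage strategy of \cite{ATV1, ATV2}: first pin down the Hilbert series and some basic structural features from the AS-Gorenstein condition, and then extract geometry from the scheme of point modules.

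For part (1), I would begin from a minimal graded free resolution of $\Bbbk_A$ as a right $A$-module. Because $\gldim(A) = 3$, this resolution has length 3; applying $\Hom_A(-,A)$ and invoking the AS-Gorenstein isomorphism of Definition~\ref{AS regular}(3), one obtains a self-dual resolution (up to a shift by $\ell$). Taking the Euler characteristic of Hilbert series produces a functional equation of the form $h_A(t)^{-1} = -t^\ell h_A(t^{-1})^{-1}$, and combining this equation with the growth constraint coming from $\GKdim(A) < \infty$ forces $h_A(t) = \prod_{i=1}^3 (1-t^{w_i})^{-1}$ for positive integers $w_i$. Noetherianity and the domain property then fall out of the fact that $A$ has a regular normalising sequence of length 3 (extracted from the minimal resolution), plus a standard associated-graded argument.

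For part (2), one specialises to the case where each $w_i = 1$, so that the resolution reads $0 \to A(-3) \to A(-2)^3 \to A(-1)^3 \to A \to \Bbbk \to 0$ and $A$ has $3$ generators in degree $1$ together with $3$ quadratic relations. Truncated point modules of length $3$ correspond to triples $(p_0,p_1,p_2)\in (\bbP^2)^3$ whose consecutive pairs lie in the common zero locus of the multilinear forms obtained from the quadratic relations; this cuts out $X$ as a closed subscheme of $\bbP^2 \times \bbP^2$ (after projecting out the redundant factor via shift-compatibility). The shift functor $M \mapsto M_{\geq 1}(1)$ on point modules descends to an automorphism $\sigma: X \to X$ of the parameter scheme. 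Setting $\LL = \OO_X(1)$ and using the universal point module, one obtains a natural map $A \to B(X,\LL,\sigma)$ which is the identity in degree 1 and respects the relations by construction. If $X = \bbP^2$ then both sides have Hilbert series $(1-t)^{-3}$, so the map is an isomorphism. If $X$ is a proper subscheme, a Hilbert-series comparison shows the map is surjective with a kernel concentrated in degree $\geq 3$; when $X$ is a cubic, the kernel is generated by a single element $g\in A_3$, and centrality of $g$ is deduced from the AS-Gorenstein duality applied to $A/gA$.

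The hard part is proving the dichotomy in (2), namely that the point scheme $X$ must be either all of $\bbP^2$ or a divisor cut out by a cubic in $\bbP^2$, with no intermediate possibilities. This is the core technical content of \cite{ATV1}: it reduces to an analysis of the $3 \times 3$ matrix of linear forms associated to the three quadratic relations, showing (via determinantal and rank considerations) that the only geometrically admissible loci are the two listed. Once this classification is in hand, the further assertions -- that in case (a) the algebra is a twisted coordinate ring and in case (b) one has the quotient $A/gA \cong B(X,\LL,\sigma)$ -- follow cleanly from the machinery of Section~\ref{HCR and TCR}, in particular Theorem~\ref{AV 1.4} and Theorem~\ref{nc serre}.
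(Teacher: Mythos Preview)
This theorem is a cited background result from \cite{ATV1, ATV2}; the paper does not supply its own proof. The paper only offers a one-sentence outline immediately after the statement: part~(1) is obtained \emph{via} part~(2) --- Artin, Tate and Van den Bergh first prove that $B(X,\LL,\sigma)$ is a noetherian domain, and then in case~(b) lift these properties back to $A$ through the surjection $A\to B(X,\LL,\sigma)$ (whose kernel is $gA$ for a central regular $g$).

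Your sketch is broadly reasonable for the Hilbert-series and point-scheme portions, but your route to the noetherian and domain conclusions in part~(1) diverges from the actual ATV argument and has a gap. You assert that ``$A$ has a regular normalising sequence of length 3 (extracted from the minimal resolution), plus a standard associated-graded argument.'' No such sequence is visibly available from the minimal free resolution of $\Bbbk_A$ in the noncommutative setting: the resolution encodes relations among generators, not normal elements of $A$. Indeed, the whole point of \cite{ATV1, ATV2} is that these ring-theoretic facts were \emph{not} accessible by direct algebra --- one needs the geometry of the point scheme first, establishes that $B(X,\LL,\sigma)$ is noetherian and a domain by sheaf-theoretic means (this is Theorem~\ref{AV 1.4}), and only then deduces the same for $A$ by showing the kernel of $A\to B$ is generated by a central regular element. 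So the logical flow is the reverse of what you propose: part~(2) is the engine, and part~(1) is its consequence.

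Your treatment of part~(2) --- truncated point modules, the shift automorphism $\sigma$, the natural map $A\to B(X,\LL,\sigma)$, and the determinantal analysis of the $3\times 3$ matrix of linear forms to establish the dichotomy --- is an accurate summary of the ATV strategy and matches what the paper gestures at.
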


The proof of Theorem~\ref{ATV thm}(1) goes via part (2). Artin, Tate and Van den Bergh first prove $B(X,\LL,\sigma)$ is a noetherian domain. Then, when in case (b) of Theorem~\ref{ATV thm}(2), they study the ring homomorphism $A\to B(X,\LL,\sigma)$ to obtain Theorem~\ref{ATV thm}(1).\par

For the Sklyanin algebra Theorem~\ref{ATV thm} means the following.

\begin{example}\label{ATV applied to S}
Let $S$ be a Sklyanin algebra as in Definition~\ref{sklyanin}. Then $S$ is AS-regular of dimension 3. By Example~\ref{E from Sklyanin} the point modules of $S$ are parameterised by a (necessarily smooth) elliptic curve $E$ and so $S$ falls into case (2b) of Theorem~\ref{ATV thm}. More specifically, there exists a central homogenous element $g\in S$ of degree 3, such that
\begin{equation}\label{S/gS=B} S/gS\cong B(E,\LL,\sigma).\index[n]{els@$(E,\LL,\sigma)$} \end{equation}
Here $E$ (given by (\ref{E's eq})) is embedded as a degree 3 divisor $E\hookrightarrow\bbP^2$, $\LL$ is the pull back sheaf of the twisting Serre sheaf $\OO(1)$ of $\bbP^2$, and $\sigma$ is given in (\ref{sigma}). The element $g$ is given in (\ref{g=}) and is unique up to scalar.
\end{example}

\subsection{The generic Sklyanin algebra}\label{generic sklyanin}

We switch our attention to the Sklyanin algebra appearing in Definition~\ref{sklyanin}, Example~\ref{E from Sklyanin} and Example~\ref{ATV applied to S}. The Sklyanin algebra is named after E. K. Sklyanin who was first to construct a (in fact 4-dimensional) Sklyanin algebra in \cite{Skl1, Skl2}.
$$\text{The notation appearing in Example~\ref{ATV applied to S} is from this point reserved.}$$
\par
The use of geometry in proving Example~\ref{ATV applied to S} is absolutely crucial; it was not until recently in \cite{IySh} where a purely algebraic and combinatorial approach yielded similar results. A notably absentee of \cite{IySh} is proving that $S$ is a domain. On the contrary one can quite easily prove $S$ is a domain via (\ref{S/gS=B}) and the fact that $B(E,\LL,\sigma)$ is a domain. Indeed a key strategy in understanding $S$ is going via (\ref{S/gS=B}). We have already seen in Section~\ref{HCR and TCR} that twisted homogenous coordinate rings are well understood. Let $B=B(E,\LL,\sigma)$ as in (\ref{S/gS=B}). We gather some more properties on $B$ in the next theorem. The result we use is \cite[Lemma~2.2]{Ro} where the author gives relevant references for the original proofs.

\begin{theorem}\label{B properties}
Let $E$ be a nonsingular elliptic curve, $\sigma: E\to E$ an automorphism and $\HH$ a $\sigma$-ample invertible sheaf with $\deg\HH\geq 1$. Set $B=B(E,\HH,\sigma)$.
\begin{enumerate}[(1)]
\item $B$ is a strongly noetherian domain with $\GKdim(B)=2$.
\item $B$ satisfies $\chi$ on both sides.
\item $B$ is Auslander-Gorenstein and Cohen-Macaulay.
\item If $\deg\HH\geq 2$, then $B$ is generated in degree 1. Otherwise $B$ is generated in degrees $1$ and $2$.
\end{enumerate}
\end{theorem}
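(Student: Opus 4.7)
The plan is to reduce each of the four claims to geometric facts about the smooth elliptic curve $E$, exploiting the tools already assembled: the structure theorem for twisted homogeneous coordinate rings (Theorem~\ref{AV 1.4}), the noncommutative Serre theorem (Theorem~\ref{nc serre}), and Riemann--Roch (Corollary~\ref{RR to E}).

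For (1), I would invoke Theorem~\ref{AV 1.4} directly: since $E$ is irreducible and $\HH$ is $\sigma$-ample, $B$ is a noetherian domain. For GK-dimension, Corollary~\ref{RR to E} applies because $E$ is a smooth elliptic curve and $\HH$ has positive degree, giving $\dim_\Bbbk B_n = n\deg\HH$ for $n\geq 1$; thus $\dim_\Bbbk B_{\leq n}$ grows quadratically and $\GKdim(B)=2$. For strongly noetherian, the cleanest route is via the fact that the point modules of $B$ are parameterised by $E$ itself (a finite-type scheme), combined with \cite[Theorem E4.12]{AZ.hs}; alternatively one can use that, after extending scalars to any commutative noetherian $Z$, the construction $B(X,\LL,\sigma)\otimes_\Bbbk Z$ is still a twisted coordinate ring over $X\times_\Bbbk\mathrm{Spec}(Z)$, so the noetherian property persists by the relative version of Theorem~\ref{AV 1.4}.

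For (4), use Riemann--Roch to compute dimensions explicitly. If $\deg\HH\geq 2$, then $\HH$ is very ample on $E$, and standard projective normality on elliptic curves (Castelnuovo-type arguments) gives surjectivity of all multiplication maps $B_1\otimes B_n\to B_{n+1}$; twisting by $\sigma$ does not break this since the maps arise geometrically from $\HH_n\otimes\HH^{\sigma^n}\cong \HH_{n+1}$. If $\deg\HH=1$ then $\dim_\Bbbk B_1 = 1$, so the subalgebra generated in degree 1 is a polynomial ring in one variable and cannot fill out $B_2$ (which has dimension $2$); adding $B_2$ then suffices, again by a Castelnuovo-regularity analysis on the curve.

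For (2) and (3), the noncommutative Serre theorem (Theorem~\ref{nc serre}) provides the key bridge $\qgr(B)\sim\coh(E)$. The $\chi$ conditions of Definition~\ref{chi} translate, via the section functor $\omega:\Qgr(B)\to\Gr(B)$ right adjoint to $\pi$, into finiteness of local cohomology; this in turn reduces to Serre finiteness of $H^i(E,\FF)$ for coherent $\FF$ together with the fact that $E$ has cohomological dimension $1$, yielding (2). For (3), the target is that $B$ has injective dimension $2$ on each side and satisfies the Auslander condition and the equality $\GKdim(M)+j(M)=2=\GKdim(B)$ for all finitely generated $M$. One reduces via the $\qgr(B)\sim\coh(E)$ equivalence to the corresponding duality on $E$ (Serre duality on a smooth curve), and then applies Levasseur's framework \cite{Lev} to promote this to the Auslander--Gorenstein/Cohen--Macaulay statements.

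The main obstacle is (3): the equivalence of categories in Theorem~\ref{nc serre} only gives information modulo torsion, and to compute $\Ext^i_B(M,B)$ and its grade $j(M)$ one has to control the torsion part carefully and assemble it with the part coming from $\coh(E)$. This bookkeeping, rather than any single deep input, is what makes the Cohen--Macaulay statement the most laborious piece; everything else falls out of Riemann--Roch and the noncommutative Serre correspondence without serious combat.
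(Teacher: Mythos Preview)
The paper does not actually prove this theorem; it is stated as a known result and attributed to \cite[Lemma~2.2]{Ro}, which in turn collects references to the original literature. So there is no ``paper's own proof'' to compare against, and your proposal is really an attempt to reconstruct the arguments that underlie those references.

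Most of your outline is reasonable and tracks the standard literature: Theorem~\ref{AV 1.4} and Corollary~\ref{RR to E} do give (1) apart from strong noetherianity, Castelnuovo-type surjectivity on an elliptic curve handles (4), and the noncommutative Serre equivalence together with Serre duality on $E$ and Levasseur's machinery is the right framework for (2) and (3). However, your first suggested route to strong noetherianity is backwards: Theorem~\ref{AZ E4.12} says that strongly noetherian algebras have point schemes of finite type, not the converse, so you cannot deduce strong noetherianity from the fact that the point modules of $B$ are parameterised by $E$. Your alternative (that $B(E,\HH,\sigma)\otimes_\Bbbk Z$ is again a twisted coordinate ring over $E\times_\Bbbk\mathrm{Spec}(Z)$ and hence noetherian) is the correct idea and is essentially how \cite{ASZ} proceeds, but it needs a relative notion of $\sigma$-ampleness and a relative version of Theorem~\ref{AV 1.4}, neither of which is set up in this paper; treating this as a one-liner understates the work involved.

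Your identification of (3) as the most delicate part is accurate: the passage from Serre duality on $\coh(E)$ to the graded Auslander--Gorenstein and Cohen--Macaulay conditions on $B$ genuinely requires controlling the torsion piece and computing local cohomology, and is not a formal consequence of the category equivalence alone.
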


Using (\ref{S/gS=B}), much of Theorem~\ref{B properties} can be lifted to $S$. For referencing purposes we include a proof of Theorem~\ref{sklyanin thm} below.

\begin{theorem}\label{sklyanin thm}
Retain the notation of Example~\ref{ATV applied to S}. Then
\begin{enumerate}[(1)]
\item $S$ is AS-regular of dimension 3 with $\GKdim(S)=3$, and has Hilbert series given by $h_S(t)=1/(1-t^3)$.
\item $S$ is a strongly noetherian domain and satisfies $\chi$ on both sides.
\item $S$ is Auslander-Gorenstein and Cohen-Macaulay.
\item $S$ is a maximal order in $Q_\gr(S)$.
\end{enumerate}
\end{theorem}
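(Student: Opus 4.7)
The strategy throughout is to lift the properties of $S$ from its well-understood factor $\ovl{S}=S/gS\cong B(E,\LL,\sigma)$ (Theorem~\ref{B properties}) through the central element $g\in S_3$. First I would check that $g$ is a non-zero-divisor: since $\ovl{S}\cong B$ is a domain of GK-dimension $2$ and $g$ is central, a standard argument shows $S$ itself is a domain, and in particular $g$ is regular. This is the foundational observation that makes everything else work.

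For (1), AS-regularity of dimension $3$ is already part of Example~\ref{ATV applied to S}. The Hilbert series comes from the short exact sequence $0\to S(-3)\xrightarrow{\cdot g} S\to B\to 0$ of graded right $S$-modules, which gives $(1-t^3)\,h_S(t)=h_B(t)$. Corollary~\ref{RR to E} computes $\dim_\Bbbk B_n=3n$ for $n\ge 1$, whence $h_B(t)=(1+t+t^2)/(1-t)^2$ and therefore $h_S(t)=1/(1-t)^3$ (the statement of the theorem appears to have a small typographical slip). The formula $\GKdim(S)=3$ is then immediate from Notation~\ref{GK dimension}.

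For (2) and (3), I would propagate the properties of $B$ through the $g$-adic filtration. Given a commutative noetherian $\Bbbk$-algebra $R$, the central non-zero-divisor $g_R:=g\otimes 1\in S\otimes_\Bbbk R$ has quotient $(S\otimes R)/g_R(S\otimes R)\cong B\otimes R$, which is noetherian by Theorem~\ref{B properties}(1); a filtration argument then gives strong noetherianity of $S$. The $\chi$ conditions are handled similarly: for a finitely generated graded $M$, one filters by $g^nM$ whose subquotients are $B$-modules, reducing the finite-dimensionality of $\Ext^j_S(\Bbbk,M)$ to that of analogous groups over $B$ (which holds by Theorem~\ref{B properties}(2)). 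Auslander-Gorenstein-ness is automatic from AS-regularity of dimension $3$ together with finite GK-dimension, and the Cohen-Macaulay property lifts from $B$ through the central regular element $g$ using the methods of \cite{Lev}.

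For (4), with (1)--(3) established, $S$ is an Auslander-Gorenstein Cohen-Macaulay cg noetherian domain of finite global dimension, and so one can appeal to the general principle (alluded to after Definition~\ref{homological defs} and formalised as \cite[Theorem~6.7]{Ro}) that such a ring is automatically a maximal order in its graded quotient ring. Alternatively, one can argue directly: any equivalent overorder $T\supsetneq S$ in $Q_\gr(S)$ would, after intersecting with $gS$ and reducing modulo $g$, produce an equivalent overorder of $B$; but $B=B(E,\LL,\sigma)$ is a maximal order because $E$ is smooth (hence normal), and this forces $\ovl T=\ovl S$, whence $T=S$ by a Nakayama/graded-$g$-adic argument using that $g$ is regular and central.

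The main obstacle is the $\chi$ condition and strong noetherianity: carrying out the $g$-adic reduction rigorously requires care, since one must ensure that the lifting preserves both finite generation after base change and finite-dimensionality of the relevant Ext groups uniformly across the filtration. Everything else either is already in Example~\ref{ATV applied to S} or follows cleanly from the central-regular-element machinery applied to Theorem~\ref{B properties}.
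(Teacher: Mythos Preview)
Your approach is essentially the same as the paper's: both lift each property from $B=B(E,\LL,\sigma)$ to $S$ through the central regular element $g$. The paper is simply terser, citing the black-box lifting results directly---\cite[Proposition~4.9(1)]{ASZ} for strong noetherianity, \cite[Theorem~8.8]{AZ.ncps} for $\chi$, \cite[Theorem~5.10]{Lev} for Auslander--Gorenstein and Cohen--Macaulay, and \cite[Theorem~2.10]{St.CMmaxorders} (rather than \cite[Theorem~6.7]{Ro}) for the maximal-order conclusion---whereas you sketch the content of those results by hand. Your observation about the Hilbert-series typo is correct.

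One caution on your \emph{alternative} argument for (4): as stated it is incomplete. An arbitrary equivalent overorder $A\supsetneq S$ inside $Q_\gr(S)$ need not lie in $S_{(g)}$, so ``reducing modulo $g$'' has no a~priori meaning; and even granting $\ovl A=\ovl S$, a graded $g$-adic Nakayama argument to deduce $A=S$ requires $A$ to be a finitely generated $S$-module, which you have not established. Since you offer this only as an alternative to the Cohen--Macaulay $\Rightarrow$ maximal-order citation, it does not affect the overall correctness, but the citation route is the one to rely on.
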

\begin{proof}
(1) is Example~\ref{ATV applied to S} above. The first part of (2) follows from Theorem~\ref{B properties}(1), (\ref{S/gS=B}) and \cite[Proposition~4.9(1)]{ASZ}. The second part of (2) is by Theorem~\ref{B properties}(2), (\ref{S/gS=B}) and \cite[Theorem~8.8]{AZ.ncps}. (3) comes from \cite[Theorem~5.10]{Lev}, Theorem~\ref{B properties}(3) and (\ref{S/gS=B}). Finally (4) follows from \cite[Theorem~2.10]{St.CMmaxorders}.
\end{proof}

Our standing assumption (Hypothesis~\ref{standing assumption intro}) assumes that $\sigma$ has infinite order. With this assumption being the generic case, such a Sklyanin algebra is often referred to as the \textit{the generic Sklyanin algebra}\index{generic Sklyanin algebra}. When $\sigma$ is of finite order $S$ has very different properties.

\begin{lemma}\label{ATV2 result}\cite[Theorem II]{ATV2}
Retain the notation of Example~\ref{ATV applied to S}. The following are equivalent:
\begin{enumerate}[(1)]
\item The automorphism $\sigma$ has finite order;
\item $S$ satisfies a polynomial identity;
\item $S$ is finitely generated over its centre.
\end{enumerate}
\end{lemma}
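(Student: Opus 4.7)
The plan is to prove the cycle $(3)\Rightarrow(2)\Rightarrow(1)\Rightarrow(3)$. The implication $(3)\Rightarrow(2)$ is a textbook fact: if $S$ is generated as a module over its (commutative) centre $Z(S)$ by $r$ elements, then the right regular representation embeds $S$ into $M_r(Z(S))$, so $S$ satisfies the standard polynomial identity $s_{2r}$. For $(2)\Rightarrow(1)$ I would argue the contrapositive. Assume $|\sigma|=\infty$. Then, via $S/gS\cong B=B(E,\LL,\sigma)$, the algebra $S$ surjects onto $B$, so if $S$ is PI then so is $B$. Since $B$ is a prime affine algebra of GK-dimension $2$ (Theorem~\ref{B properties}), it embeds into its graded quotient ring $Q_\gr(B)=\Bbbk(E)[t,t^{-1};\sigma]$, and inheriting the PI property for this skew Laurent ring forces the automorphism $\sigma$ to induce an automorphism of finite order on $\Bbbk(E)$ by a standard skew polynomial calculation. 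Since $E$ is an integral scheme the geometric $\sigma$ and its induced action on $\Bbbk(E)$ have the same order, contradicting $|\sigma|=\infty$.

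For $(1)\Rightarrow(3)$, suppose $|\sigma|=n<\infty$. I would first show that $B$ is module-finite over its centre. The key point is that $\mathcal{L}_n = \mathcal{L}\otimes\mathcal{L}^\sigma\otimes\dots\otimes\mathcal{L}^{\sigma^{n-1}}$ is $\sigma$-invariant and $\sigma^n = \mathrm{id}$, so the Veronese $B^{(n)}$ coincides with the ordinary commutative homogeneous coordinate ring $B(E,\mathcal{L}_n)$. A Hilbert series comparison using Corollary~\ref{RR to E} shows that $B$ is finitely generated as a left (and right) $B^{(n)}$-module, whence $B$ is module-finite over a commutative subring and in particular over $Z(B)$. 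The bulk of the argument is then lifting module-finiteness back up to $S$. One already has the central element $g\in S_3$ with $S/gS=B$. I would take homogeneous central generators $\{\bar z_i\}$ of $B$ over a central subring, choose homogeneous lifts $\tilde z_i\in S$, and observe that $[\tilde z_i,s]\in gS$ for every $s\in S$. Using that $g$ is a central non-zerodivisor, one can then inductively modify each $\tilde z_i$ by successive corrections from $gS,g^2S,\dots$ so as to make the commutators vanish modulo ever higher powers of $g$. Since $S$ is noetherian and $\bigcap_k g^kS=0$, this closes up to genuinely central elements $z_i\in Z(S)$, and the subalgebra $Z'=\Bbbk\langle g,z_i\rangle\subseteq Z(S)$ then has the property that $S$ is module-finite over $Z'$.

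The main obstacle in this plan is precisely the lifting step in $(1)\Rightarrow(3)$. Centrality in $B$ is transparent in terms of $\sigma$-fixed sections of $\mathcal{L}_n$ on $E$, but $S$ carries no such geometric description, and a naive lift of a central element of $B$ is only central modulo $g$. Upgrading this to honest centrality in $S$ requires a careful induction on the $g$-adic filtration, and it is here that the Cohen--Macaulay and graded-noetherian properties of $S$ from Theorem~\ref{sklyanin thm} (which guarantee in particular that $g$ is a regular central element and that the $g$-adic intersection vanishes) are indispensable. This technical lifting is essentially the content of the argument in \cite{ATV2}.
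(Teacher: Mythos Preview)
The paper does not prove this lemma; it is stated with a bare citation to \cite[Theorem~II]{ATV2}, so there is no in-paper argument to compare against.

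Your implications $(3)\Rightarrow(2)\Rightarrow(1)$ are standard and correct, and the reduction of $(1)\Rightarrow(3)$ to showing that $B$ is module-finite over $Z(B)$ is also fine once one fills in the elided step: module-finite over a commutative subring implies PI (via the right regular representation into $\End_{B^{(n)}}(B)^{\mathrm{op}}$, which is a subquotient of a matrix ring over the commutative ring $B^{(n)}$), and a prime affine PI $\Bbbk$-algebra is then module-finite over its centre. The genuine gap is the lift from $B$ to $S$. At stage $k$ of your induction you need a homogeneous $w$ with $[\tilde z+g^kw,s]\in g^{k+1}S$ for every $s\in S_1$; unwinding, this requires that the graded derivation $\bar s\mapsto \overline{g^{-k}[\tilde z,s]}$ of $B$ be \emph{inner}, and you give no reason why it should be. The appeal to $\bigcap_k g^kS=0$ is also misplaced: that is separatedness, not completeness, and in the graded setting degrees drop at each step so the process would terminate after finitely many corrections anyway---the issue is the existence of each correction, not convergence. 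Your final sentence effectively concedes that the lifting is where the real content lies, which is exactly why the thesis defers the whole statement to \cite{ATV2}.
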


For two different approaches to the Sklyanin algebra with the order of $\sigma$ finite, see work by Walton, Wang and Yakimov in \cite{WWY} and work by De Laet and Le Bruyn in \cite{DeLLeB}.\par
With the additional assumption that $|\sigma|=\infty$ one can say more about $B(E,\LL,\sigma)$. First we make an observation which will be regularly used.

\begin{remark}\label{sigma is translation}
Let $\sigma: E\to E$ be a automorphism of $E$ of infinite order. Then $\sigma$ is a translation in the group law on $E$ by a point of infinite order (see \cite[pages 248-249]{AS}). In particular $\sigma$ will have no fixed points.
\end{remark}

The next result is a special case A
of \cite[Theorem~5.11]{AS} given in \cite{RSS}.

\begin{theorem}\label{RSS 3.1}\cite[Theorem~3.1]{RSS}
Let $A$ be a cg $\Bbbk$-algebra with $Q_\gr(A)=\Bbbk(E)[t,t^{-1};\sigma]$, for a smooth elliptic curve $E$ and automorphism $\sigma:E\to E$ of infinite order. By regrading we may assume $\deg t=1$. Then there exists an ideal sheaf $\JJ$ and a $\sigma$-ample invertible sheaf $\HH$ on $E$ such that
$$A\ehd \bigoplus_{n\geq 0} H^0(E,\JJ\HH_n).$$
\end{theorem}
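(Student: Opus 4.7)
The plan is to mirror the strategy of the Artin--Stafford Theorem~5.11 in the simplified elliptic-curve setting. First I identify each homogeneous component $A_n$ with a finite-dimensional subspace $V_n \subseteq \Bbbk(E)$ via the inclusion $A_n \subseteq \Bbbk(E) t^n$, under $at^n \leftrightarrow a$. Because $(at^n)(bt^m) = a\sigma^n(b) t^{n+m}$ in the skew Laurent polynomial ring, the containments $A_n A_m \subseteq A_{n+m}$ translate into $V_n \cdot \sigma^n(V_m) \subseteq V_{n+m}$. To each $V_n$ I attach its minimal divisor $\bfx_n := -\sum_p (\min_{0 \ne f \in V_n} v_p(f)) \cdot p$, the unique divisor on $E$ with $V_n \subseteq H^0(E, \OO_E(\bfx_n))$ and minimal for this property. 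Tracking orders of vanishing then turns the multiplicative relations above into the subadditivity
$$\bfx_{n+m} \;\leq\; \bfx_n + \sigma^{-n}(\bfx_m).$$

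Next I combine this subadditivity with a degree bound. By Theorem~\ref{AS thm2}, $\GKdim(A) \leq 2$, and hence $\dim V_n$, and $\deg \bfx_n$, grow at most linearly in $n$. Using that $\sigma$ is translation by a point of infinite order on $E$ (Remark~\ref{sigma is translation}), so that no cancellation is possible among the iterates $\sigma^{-i}$, the subadditivity should sharpen to an equality in high degrees: one extracts a fixed divisor $\bfy$ of some degree $c \geq 1$, together with an effective divisor $\bfz'$, such that for all $n \gg 0$
$$\bfx_n \;=\; -\bfz' \;+\; \bfy + \sigma^{-1}(\bfy) + \cdots + \sigma^{-(n-1)}(\bfy).$$
Setting $\HH := \OO_E(\bfy)$ and $\JJ := \OO_E(-\bfz')$ then gives $\OO_E(\bfx_n) \cong \JJ\HH_n$, so $V_n \subseteq H^0(E, \JJ\HH_n)$. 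Since $\deg \HH \geq 1$ on the curve $E$, $\HH$ is ample and therefore $\sigma$-ample by the remark after Definition~\ref{sigma ample}.

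It remains to promote this inclusion to equality for $n \gg 0$, and this is the main obstacle. Corollary~\ref{RR to E} together with Riemann--Roch on $E$ pins down $\dim H^0(E, \JJ\HH_n) = nc - \deg \bfz'$ for $n$ large. For the matching lower bound on $\dim V_n$ I would argue by minimality of $\bfx_n$ combined with a base-locus analysis: if the inclusion were strict for infinitely many $n$, there would be a persistent extra common zero of all $V_n$ that gets propagated under multiplication by $V_m$ and translation by $\sigma$; since $\sigma$ has infinite order, such a persistent zero would contradict the stabilized form of $\bfx_n$ derived in the previous paragraph. Both the genus-one hypothesis (making Riemann--Roch an exact count once $\deg > 0$) and $|\sigma| = \infty$ (preventing cancellation within $\sigma$-orbits) are essential at this final step, which is where the technical heart of the argument lies.
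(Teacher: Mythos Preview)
The paper does not give its own proof of this statement: it is quoted verbatim from \cite[Theorem~3.1]{RSS}, which in turn is presented there as a special case of Artin--Stafford's \cite[Theorem~5.11]{AS}. So there is no proof in the paper to compare against, and your outline is essentially a sketch of how the Artin--Stafford argument runs in this setting.

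That said, your sketch has a genuine gap in the final step. You define $\bfx_n$ as the \emph{minimal} divisor with $V_n \subseteq H^0(E,\OO_E(\bfx_n))$, i.e.\ the divisor cut out by the base locus of $V_n$. You then argue that if $V_n \subsetneq H^0(E,\JJ\HH_n)$ for infinitely many $n$, ``there would be a persistent extra common zero of all $V_n$''. But this is circular: by your very definition of $\bfx_n$, the space $V_n$ has no extra common zero beyond those recorded in $\bfx_n$. A proper subspace of a complete linear system on an elliptic curve need not have any additional base points (e.g.\ a generic hyperplane in $H^0(E,\LL)$ for $\deg\LL \geq 3$). The actual Artin--Stafford argument for why $V_n$ becomes the full $H^0$ is considerably more delicate and uses the multiplicative structure together with Castelnuovo--Mumford-type regularity on the curve; it does not reduce to a base-locus count. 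Similarly, the passage from subadditivity plus linear growth to the exact stabilized form $\bfx_n = -\bfz' + [\bfy]_n$ is asserted rather than proved; this too requires real work in \cite{AS}.
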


Retain the notation of Theorem~\ref{RSS 3.1}. Then $J=\bigoplus_{n\geq 0} H^0(E,\JJ\HH_n)$ is a right ideal of $B=B(E,\HH,\sigma)$, and thus a rephrasing of Theorem~\ref{RSS 3.1} is that $A$ is equal in high degrees to the ring $\Bbbk+J$ for some right ideal $J$ of $B$.

Another advantage of working with $|\sigma|=\infty$ is that the ideal structure of $B(E,\HH,\sigma)$ is very rigid as demonstrated in \cite{RSS.aiwesin}. Our wording is slightly different to \cite[Corollary~2.10]{RSS.aiwesin} and so we include a short proof.

\begin{prop}\label{B just infinite}\cite[Corollary~2.10]{RSS.aiwesin}
Retain the hypotheses of Theorem~\ref{RSS 3.1}. Then
\begin{enumerate}[(1)]
\item Every subalgebra of $A$ is both finitely generated and noetherian.
\item $A$ is \textit{just infinite}\index{just infinite} in the sense that for every nonzero ideal $I$ of $A$, $\dim_\Bbbk A/I<\infty$.
\end{enumerate}
In particular (1) and (2) hold for the twisted homogenous coordinate ring $B(E,\HH,\sigma)$.
\end{prop}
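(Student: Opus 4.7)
The plan is to reduce both statements to the twisted homogeneous coordinate ring $B = B(E,\HH,\sigma)$ supplied by Theorem~\ref{RSS 3.1} and to exploit the rigidity of $\sigma$ being translation by a point of infinite order on $E$ (Remark~\ref{sigma is translation}). That theorem writes $A \ehd \Bbbk + K$, where $K = \bigoplus_n H^0(E,\JJ\HH_n)$ is a right ideal of $B$; in particular $A_{\geq N} \subseteq B$ as a graded subring for some $N \gg 0$, and $A$ inherits a geometric shadow on $E$.

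For (2), first establish just-infiniteness of $B$ itself. Under the Noncommutative Serre Theorem~\ref{nc serre} equivalence $\qgr(B) \simeq \coh(E)$, a nonzero graded two-sided ideal $I \trianglelefteq B$ corresponds to a proper $\sigma$-stable ideal sheaf of $\OO_E$; since the only finite $\sigma$-invariant subset of $E$ is empty, this sheaf must be $\OO_E$, giving $I_n = B_n$ for $n \gg 0$ and $\dim_\Bbbk B/I < \infty$. Now fix a nonzero ideal $I \trianglelefteq A$ and pick a nonzero $\alpha \in I_d$ with $d \geq N$, so $\alpha \in B_d$. The two-sided ideal $B\alpha B \trianglelefteq B$ is nonzero, hence cofinite by the above. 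Identifying $\alpha$ with a section of $\HH_d$ of zero divisor $\bfd$, the components of $B\alpha B$ in degree $n$ are (up to finite-dimensional corrections) sections of $\HH_n$ vanishing on an intersection of $\sigma$-translates of $\bfd$, which is empty for $n$ large because $\sigma$ has infinite order. The corresponding Riemann--Roch computation (Corollary~\ref{RR to E}) applied inside $A$ shows that $A\alpha A \subseteq I$ fills out $A$ in all sufficiently high degrees, so $\dim_\Bbbk A/I < \infty$.

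For (1), it suffices to show every cg subalgebra $D \subseteq B$ is finitely generated and noetherian. If $\GKdim D \le 1$, Theorem~\ref{AS thm} places (an appropriate Veronese of) $D$ inside a commutative homogeneous coordinate ring on a projective curve, which is noetherian and finitely generated, and both properties transfer back. If $\GKdim D = 2$, apply Theorem~\ref{RSS 3.1} to $D$ to write $D \ehd \Bbbk + K'$ for a right ideal $K'$ of some twisted HCR $B' = B(E,\HH',\sigma')$; since $B'$ is noetherian (Theorem~\ref{AV 1.4}), $K'$ is finitely generated as a right $B'$-module, and a careful version of the above sheaf argument — this time applied to right ideals of $D$ and their $B'$-extensions — yields finite generation of $D$ and noetherianity. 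The delicate point, and the main obstacle, is the transfer from right $B'$-submodules to right $D$-submodules, since $D$ is not a $B'$-module in any natural way; this is precisely the technical step carried out in \cite[Corollary~2.10]{RSS.aiwesin} by using the sheaf picture on $E$ to control which right $B'$-submodules of $B'$ restrict to a given right ideal of $D$.
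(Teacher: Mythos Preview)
The paper's proof is far shorter than your sketch and runs in the opposite logical order: it observes that by Theorem~\ref{RSS 3.1} the ring $A$ is, in high degrees, a subring of some $B=B(E,\HH,\sigma)$, so any subalgebra of $A$ is (up to finitely many degrees) a subalgebra of $B$; then it simply cites \cite[Theorem~2.9]{RSS.aiwesin} for the statement that every subalgebra of $B$ is finitely generated and noetherian, giving (1). Part (2) is then deduced from (1) by reference to \cite[Section~3]{RSS.aiwesin}. So the paper does not attempt to reprove the content of \cite{RSS.aiwesin} at all.

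Your sketch, by contrast, tries to outline those underlying arguments, and there are gaps. In your treatment of (2) for $A$, the passage from the correct statement about $B\alpha B$ in $B$ to ``the corresponding Riemann--Roch computation applied inside $A$ shows that $A\alpha A$ fills out $A$'' is not justified: $A$ agrees in high degrees only with a \emph{right} ideal of $B$, so the two-sided product $A\alpha A$ does not inherit the clean sheaf description you use for $B\alpha B$, and the step needs a genuine argument. In your treatment of (1), the $\GKdim D\le 1$ case invokes Theorem~\ref{AS thm}, but that theorem is about $\GKdim=2$ domains and says nothing about $\GKdim\le 1$ subalgebras; you need a different input there. Finally, for the $\GKdim D=2$ case you yourself acknowledge that the transfer from $B'$-submodules to $D$-submodules is ``precisely the technical step carried out in \cite{RSS.aiwesin}'', so your argument collapses back to the same citation the paper uses. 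Given all this, the cleanest fix is to do what the paper does: reduce to subalgebras of $B$ and cite \cite{RSS.aiwesin} directly.
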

\begin{proof} (1) By Theorem~\ref{RSS 3.1}, $A$ is (at least in high degrees) a subring of a twisted homogenous $B=B(E,\HH,\sigma)$. By \cite[Theorem~2.9]{RSS.aiwesin} every subalgebra of $B$ is both finitely generated and noetherian and hence the same is true for $A$. (2) then follows from (1) as shown in the second paragraph of the discussion in \cite[Section~3]{RSS.aiwesin}.
\end{proof}

\subsection{Rogalski's blowup subalgebras of $S^{(3)}$}\label{Rogs blowup review}

As mentioned in the introduction and at the end of Section~\ref{nc geometry sec} a current and realistic goal is the classification of algebras birational to the (generic) Sklyanin algebra. A starting place for this has been the classification of the maximal orders inside the Sklyanin algebra. It was Rogalski in \cite{Ro} who first look at this problem. Rogalski successfully classified the degree one generated maximal orders contain inside the 3-Veronese $S^{(3)}$ of $S$. These were classified as so-called \textit{blowup subalgebras} of $S^{(3)}$. In later work, \cite{RSS, RSS2} Rogalski, along with Sierra and Stafford extended this to include all maximal orders inside $S^{(3)}$. The reason for working inside $S^{(3)}$ as opposed to $S$, is that now the central element $g$ from Example~\ref{ATV applied to S} is in degree 1. Our work aims to generalise the results of Rogalski, Sierra and Stafford and for that reason we briefly summarize their work here. For consistency we use notation of the papers \cite{RSS, RSS2}.

\begin{notation}\label{3 Veronese notation bg}\index[n]{emt@$(E,\MM,\tau)$}\index[n]{tau@$\tau$}
Let $S$ be a Sklyanin algebra and retain the notation of Example~\ref{ATV applied to S}.  Assume that $|\sigma|=\infty$. Set $T=S^{(3)}$\index[n]{t@$T$}, $\MM=\LL_3$ and $\tau=\sigma^3$; whence
\begin{equation*} T/gT=B(E,\MM,\tau).\end{equation*}
\end{notation}

We give a few observations regarding Notation~\ref{3 Veronese notation bg}. Firstly (and the whole point of considering $T$) we indeed have $g\in T_1$. Second, since $\deg\LL=3$, $\MM=\LL\otimes\LL^\sigma\otimes\LL^{\sigma^2}$ has degree $9$, and by the Riemann-Roch (Corollary~\ref{RR to E}) $B(E,\MM\,\tau)_1=H^0(E,\MM)$ is a 9 dimensional $\Bbbk$-vector space. It then follows $\dim_\Bbbk T_1=10$. Finally, $\tau$ is also a translation by a point of $E$ of infinite order, and one $\sigma$-orbit is partitioned into 3 distinct $\tau$-orbits.

We are now ready to define Rogalski's blowup subalgebras of $T$. We recall that for $X\subseteq T$, we are writing $\ovl{X}=(X+gT)/gT$.

\begin{definition}\label{T(d) def}\cite[Section~1]{Ro}
Let $\bfd$ be an effective divisor on $E$ with $\deg\bfd\leq 7$. Set
$$T(\bfd)_1=\{ x\in T_1\,|\; \ovl{x}\in H^0(E,\MM(-\bfd))\}\;\text{ and } \; T(\bfd)=\Bbbk\langle T(\bfd)_1\rangle .\index[n]{td@$T(\bfd)$}$$
The subalgebra $T(\bfd)$ of $T$ is called the \textit{blowup of $T$ at $\bfd$}.\index{blowup of $T$ at $\bfd$}
\end{definition}

Let $\bfd$ be an effective divisor on $E$ with $\deg\bfd=d\leq 7$. One immediately sees $g\in T(\bfd)_1$. Moreover
$$\ovl{T(\bfd)_1}=(T(\bfd)_1+gT)/gT=H^0(E,\MM(-\bfd))=B(E,\MM(-\bfd),\tau)_1,$$
and it follows by Theorem~\ref{B properties}(4) that $\ovl{T(\bfd)}=(T(\bfd)+gT)/gT \cong B(E,\MM(-\bfd),\tau)$ - straight away we have a good understanding of $\ovl{T(\bfd)}$. From the Riemann-Roch Theorem we know $\dim_\Bbbk \ovl{T(\bfd)_1}=\deg\MM(-\bfd)=9-d$, and then $\dim_\Bbbk T(\bfd)_1=10-d$.

\begin{remark}\label{VdBblowups}
The use of the term ``blowup" is due to the link with Van den Bergh's categorical blowups of $\mathrm{Qgr}(T)$ from \cite{VdB.blowup}. In unpublished work by Rogalski it is shown that $\mathrm{Qgr}(T(\bfd))$ is such a blowup. In \cite{VdB.blowup}, Van den Bergh proves that $\mathrm{Qgr}(T(\bfd))$ has many properties in common with commutative blowups. One of the advantages of the ring theoretic blowups of Definition~\ref{T(d) def} over Van den Bergh's blowups in the ease at which they are defined; a disadvantage is that we are missing a blowup. If one studies \cite{VdB.blowup}, one sees that it should be possible to blowup at 8 points (instead of the 7 above). The natural definition $T(\bfd)=\Bbbk\langle V\rangle$ where $V=\{ x\in T_1\,|\; \ovl{x}\in H^0(E,\LL(-\bfd))\}$ for a degree 8 effective divisor $\bfd$, gives $T(\bfd)\cong \Bbbk[g,x]$: a commutative polynomial ring. Using \cite{VdB.blowup} as a guide, Rogalski, Sierra and Stafford found the correct definition for the 8 point blowup subalgebra of $T$ given in \cite{RSS2}. We will not be needing the 8 point blowup and so we omit it from this work. \par
\end{remark}

The significance of Definition~\ref{T(d) def} is shown by the following theorems.

\begin{theorem}\label{T(d) properties}\cite[Theorem 1.1]{Ro}
Let $\bfd$ be an effective divisor with $\deg\bfd=d\leq 7$, and set $R=T(\bfd)$. Then
\begin{enumerate}[(1)]
\item $R\cap Tg=Rg$ and $R/Rg\cong B(E,\MM(-\bfd),\tau)$. It has Hilbert series given by $h_R(t)=\frac{t^2+(7-d)t+1}{(1-t)^3}$.
\item $R$ is strongly noetherian and satisfies the Artin-Zhang $\chi$ conditions. It has cohomological dimension 2 and (by \cite[Proposition~2.4]{RSS2}) has a balanced dualizing complex.
\item $R$  Auslander-Gorenstein and Cohen-Macaulay.
\item $R$ is a maximal order in $Q_\gr(R)=Q_\gr(T)$.
\end{enumerate}
\end{theorem}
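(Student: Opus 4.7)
The plan is to prove everything via the factor ring modulo $g$, where we have the well-understood twisted homogeneous coordinate ring $B=B(E,\MM(-\bfd),\tau)$. The central pillar of the argument is establishing $g$-divisibility, i.e., $R\cap Tg=Rg$; once this is in hand, the remaining properties lift from $\ovl R$ essentially for free.

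First I would identify $\ovl R$ as a subring of $\ovl T=B(E,\MM,\tau)$. By construction $\ovl{T(\bfd)_1}=H^0(E,\MM(-\bfd))$, and since $d\leq 7$ gives $\deg \MM(-\bfd)=9-d\geq 2$, Theorem~\ref{B properties}(4) tells us $B(E,\MM(-\bfd),\tau)$ is generated in degree $1$. Hence the inclusion $\ovl R\subseteq B(E,\MM(-\bfd),\tau)$ is actually an equality. Using Riemann--Roch on the elliptic curve $E$ (Corollary~\ref{RR to E}), $\dim_\Bbbk \ovl R_n=(9-d)n$ for all $n\geq 1$, so
\[ h_{\ovl R}(t)=1+(9-d)\sum_{n\geq 1}nt^n=1+\frac{(9-d)t}{(1-t)^2}. \]
The key technical obstacle is $g$-divisibility: I would prove $R\cap Tg=Rg$ by a careful inductive argument on degree, using that $\ovl R$ equals a twisted homogeneous coordinate ring whose image in $\ovl T$ is known explicitly. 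Once this holds, multiplication by $g$ gives the short exact sequence $0\to R(-1)\xrightarrow{\cdot g} R\to \ovl R\to 0$, so $h_R(t)=h_{\ovl R}(t)/(1-t)$, which after simplification yields the stated formula $(t^2+(7-d)t+1)/(1-t)^3$ and finishes~(1).

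For (2), the strategy is to lift from $\ovl R$ using the central regular element $g$. Since $B(E,\MM(-\bfd),\tau)$ is strongly noetherian (Theorem~\ref{B properties}(1)) and $R/gR\cong \ovl R$, a standard lifting argument (e.g.\ \cite[Proposition~4.9]{ASZ}) gives strong noetherianity of $R$. The $\chi$ condition, the bound on cohomological dimension and existence of a balanced dualizing complex transfer from $\ovl R$ via the change-of-rings spectral sequences attached to the central regular element $g$ of degree $1$; here Theorem~\ref{B properties}(2) supplies $\chi$ on $\ovl R$, and the cohomological dimension of $\ovl R$ is $1$ by the curve case of Serre's Theorem, so $R$ picks up cohomological dimension $2$. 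For (3), Cohen--Macaulayness and Auslander--Gorenstein-ness of $R$ now follow from the corresponding properties of $\ovl R$ (Theorem~\ref{B properties}(3)) via Levasseur's lifting theorems \cite{Lev}, precisely because $g$ is a central regular normal element and $g$-divisibility guarantees $\GKdim$ drops by exactly one when we quotient.

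Finally, for (4), once $R$ is known to be Auslander--Gorenstein and Cohen--Macaulay with $\GKdim(R)=3$, I would invoke Stafford's theorem \cite{St.CMmaxorders}: any prime Auslander--Gorenstein CM $\Bbbk$-algebra satisfying the appropriate reflexivity conditions is a maximal order in its Goldie quotient ring, and the graded maximal order property is then equivalent to the ungraded one by Lemma~\ref{graded max orders are max orders}. Equality $Q_\gr(R)=Q_\gr(T)$ is immediate from the Hilbert series computation, which shows $R$ and $T$ have the same GK-dimension and that some power of $g$ together with sufficiently many degree-$1$ elements generates a two-sided denominator set yielding all of $Q_\gr(T)$. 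The main obstacle throughout is the $g$-divisibility step in (1); everything downstream is a fairly mechanical lift, but this piece requires genuine input from the geometry of $E$, specifically the fact that multiplication of global sections $H^0(\MM(-\bfd)_n)\otimes H^0(\MM(-\bfd)_m)\to H^0(\MM(-\bfd)_{n+m})$ is surjective in large enough degrees (another consequence of Theorem~\ref{B properties}(4) applied degree-wise).
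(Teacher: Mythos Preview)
Your overall architecture is exactly right, and your treatment of parts (2)--(4) matches the paper: once $g$-divisibility is established, Proposition~\ref{RSS2 2.4} (which packages the lifting arguments from \cite{ASZ}, \cite{Lev}, \cite{St.CMmaxorders} you mention) gives strong noetherianity, $\chi$, Auslander--Gorenstein, Cohen--Macaulay, and maximal order all at once.

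The gap is in your treatment of $g$-divisibility itself. You say you would use ``a careful inductive argument on degree'' together with surjectivity of multiplication maps $H^0(\MM(-\bfd)_n)\otimes H^0(\MM(-\bfd)_m)\to H^0(\MM(-\bfd)_{n+m})$. But that surjectivity only tells you $\ovl R$ is generated in degree~$1$, which you already used to identify $\ovl R$ with $B(E,\MM(-\bfd),\tau)$; it gives no control over whether an element of $V^n\cap gT$ actually lies in $gV^{n-1}$, where $V=T(\bfd)_1$. Knowing $\ovl R$ explicitly does not by itself prevent $V^n$ from being larger than expected (equivalently, it does not bound $\dim_\Bbbk V^n$ from above). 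An induction on the graded degree $n$ has no obvious inductive hypothesis to feed on here.

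The paper's proof (sketched just after the theorem) uses a completely different induction: on $\deg\bfd$, not on the graded degree. Writing $\bfd=\bfd'+p$, one first proves the special case where $p$ lies on a $\tau$-orbit disjoint from the support of $\bfd'$, in which case one shows $T(\bfd)=T(\bfd')\cap T(p)$ and $g$-divisibility follows from the inductive hypothesis. To remove the orbit restriction, one proves that for fixed $n$ the function $E\to\N$, $p\mapsto \dim_\Bbbk T(\bfd'+p)_n$, is lower semi-continuous (by factoring it through a Grassmannian of codimension-one subspaces of $T(\bfd')_1$); since the correct value is attained on the dense set of $p$ off the orbits of $\bfd'$, it is attained everywhere, forcing the Hilbert series and hence $g$-divisibility. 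This intersection-plus-semicontinuity mechanism is the genuine content you are missing.
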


\begin{remark}\label{desirable properties}\index{cohomological dimension}\index{balanced dualizing complex}
We should explain why the properties appearing in Theorem~\ref{T(d) properties}(2)(3) are desirable. For strongly noetherian and the Artin-Zhang $\chi$ conditions this can be found in and around their respective definitions (Definition~\ref{str noeth} and Definition~\ref{chi}). Cohomological dimension and balanced dualizing complexes will not feature in this thesis, with the exception of results in the same vein as Theorem~\ref{T(d) properties}. The technical definitions are therefore omitted. We note that a definition of cohomological dimension can be found in \cite[Section~2]{Ro}. For a definition of a balanced dualizing complex the reader is referred to the original paper by Yekutieli \cite{Ye}. The strength of balanced dualizing complexes is that they allow one to use significant homological techniques from algebraic geometry. This is demonstrated in a series of papers by Yekutieli, Zhang and Van den Bergh among others. Having finite cohomological dimension is important for the dualizing complexes. The properties Auslander-Gorenstein and Cohen-Macaulay are prominent in \cite{Ro, RSS, RSS2} as well as this thesis. In particular the proof of Theorem~\ref{T(d) properties}(4) (presented in \cite[Theorem~6.7]{Ro}) uses the Cohen-Macaulay property.
\end{remark}

For Theorem~\ref{T(d) properties} the key is obtaining $T(\bfd)\cap Tg=gT(\bfd)$. This property will be important for us.
\begin{definition}\label{bg g div def}\index{gdivisible@$g$-divisible}\index{gdivisiblehull@$g$-divisible hull}
Let $R$ be a subalgebra of $T$ containing $g$ and $M$ a right $R$-module with $M\subseteq T$. The module $M$ is called \textit{$g$-divisible} if $M\cap Tg=gM$. The ring $R$ is \textit{$g$-divisible} if it is $g$-divisible as a right (equivalently left) module over itself. The right $R$-module $\widehat{M}=\{ x\in T\,|\; xg^n\in M\,\text{ for some }n\geq0\}$ is the unique smallest $g$-divisible right $R$-module containing $M$. It is called the \textit{$g$-divisible hull} of $M$.
\end{definition}

Once $T(\bfd)$ is proven to be $g$-divisible we have $$T(\bfd)/gT(\bfd)\cong\ovl{T(\bfd)}= B(E,\MM(-\bfd),\tau).$$
Theorem~\ref{T(d) properties}(2)(3) can then be lifted freely from the corresponding properties of $B(E,\MM(-\bfd),\tau)$. In general, $g$-divisible rings will be easier to understand. The proof of Theorem~\ref{T(d) properties}(1) is something we generalise later. For this reason, we include a detailed sketch of the proof. Recall that a map $f:E\to \N$ is called \textit{lower semi-continuous}\index{lower semi-continuous} if for each $\ell\in\N$, the set $\{ p \in E\,|\; f(p)\leq \ell\}$ is closed in the Zariski topology \cite[Remark~12.7.1]{Ha}.

\begin{proof}[Sketch proof of Theorem~\ref{T(d) properties}(1)]
As remarked above, the key step is to prove that $T(\bfd)$ is $g$-divisible. The proof is by induction on $\deg\bfd$ with the base case $\bfd=0$ being trivial. So suppose that $\bfd=\bfd'+p$ for some $p\in E$ and that $T(\bfe)$ is $g$-divisible for all effective divisors $\bfe$ with $\deg\bfe<\deg\bfd$.\par
First we add the additional hypothesis that $p$ and $\bfd'$ are supported on different $\tau$-orbits. With this assumption it is fairly easy to prove that
\begin{equation}\label{T(d)=T(d')cap T(p)} T(\bfd)=T(\bfd')\cap T(p).\end{equation}
The $g$-divisibility of $T(\bfd)$ follows from this by induction.\par
To prove the general statement one uses a semi-continuous argument. More specifically one constructs a lower semi-continuous map $E \to \N$ as follows. Let $d=\dim_\Bbbk T(\bfd')_1$, identify $\bbP^d$ with the Grassmannian of co-dimension 1 subspaces of the vector space $T(\bfd')_1$ and now allow $p\in E$ to vary. The natural map $\phi: E \to  \bbP^d$, $p\mapsto T(\bfd'+p)_1$ can be shown to be a morphism of algebraic varieties. On the other hand, for fixed $n\geq 1$, one can define a function $\mu: \bbP^d \to \N$, sending a codimension 1 subspace $Y$ of $T(\bfd')_1$ to $\dim_\Bbbk Y^n$. It can be argued that $\mu$ is lower semi-continuous. It then follows that composition $\mu\circ \phi: E \to \N$ is also a lower semi-continuous function; furthermore, since all $T(\bfe)$'s are generated in degree 1, $\mu\circ \phi$ is exactly the map $p\mapsto T(\bfd'+p)_n$.  \par
Finally, from the lower semi-continuity of the map $p\mapsto T(\bfd'+p)_n$, together with the fact that $T(\bfd'+p)$ is $g$-divisible when $p$ and $\bfd'$ are on distinct $\tau$-orbits, it is proved that for general $p$, $T(\bfd'+p)$ has the correct Hilbert series. This forces $T(\bfd'+p)$ to be $g$-divisible.
\end{proof}

Once $g$-divisibility is obtained the properties of $T(\bfd)$ in Theorem~\ref{T(d) properties} can be lifted from $\ovl{T(\bfd)}$ via the next result. Proposition~\ref{RSS2 2.4} is a really a result of Rogalski, who proved it with the additional assumption that $\deg\HH\geq 2$. It is noted in \cite{RSS2} that this assumption was unnecessary.

\begin{prop}\label{RSS2 2.4}\cite[Proposition~2.4]{RSS2}
Let $R$ be a cg $\Bbbk$-algebra which is a domain. Suppose that there exists a homogenous central element $g\in R_d$ for some $d\geq 1$ such that $R/gR\cong B(E,\HH,\rho)$ for some elliptic curve $E$, invertible sheaf $\HH$ with $\deg\HH\geq1$, and an infinite order automorphism $\rho$. \par
Then $R$ is strongly noetherian, Auslander-Gorenstein, Cohen-Macaulay and a maximal order. Also $R$ satisfies the Artin-Zhang $\chi$ conditions, has cohomological dimension 2 and possesses a balanced dualizing complex.
\end{prop}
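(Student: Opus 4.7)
The plan is to lift each asserted property from the factor ring $\bar R = R/gR \cong B(E,\HH,\rho)$, whose properties are provided by Theorem~\ref{B properties}. The central mechanism is the short exact sequence of graded $R$-bimodules
\begin{equation*}
0 \longrightarrow R(-d) \xrightarrow{\,\cdot g\,} R \longrightarrow \bar R \longrightarrow 0,
\end{equation*}
which is exact because $R$ is a domain and $g$ is central (hence regular), together with the fact that $\bigcap_{n\geq 0} g^n R = 0$, which follows from $g \in R_{\geq 1}$ and $\bar R$ being a cg $\Bbbk$-algebra in which each graded piece is finite dimensional.

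First I would handle noetherianity. The standard lifting argument says: if $\bar R$ is noetherian and $g$ is a central regular element with $\bigcap g^n R = 0$, then $R$ is noetherian. For strong noetherianity, fix a commutative noetherian $\Bbbk$-algebra $Z$. Because $Z$ is flat over $\Bbbk$, $g$ remains a central regular element of $R \otimes_\Bbbk Z$, and $(R \otimes Z)/g(R \otimes Z) \cong \bar R \otimes_\Bbbk Z$ is noetherian by Theorem~\ref{B properties}(1). Re-applying the same lifting lemma yields that $R \otimes Z$ is noetherian, so $R$ is strongly noetherian. Next, to establish $\chi$, I would run the long exact sequence of $\Ext^*_R(\Bbbk,-)$ attached to the short exact sequence above: this relates $\Ext^j_R(\Bbbk, R)$ to $\Ext^j_R(\Bbbk, R(-d))$ and $\Ext^j_R(\Bbbk, \bar R) = \Ext^j_{\bar R}(\Bbbk,\bar R)$ (the last by a standard change-of-rings argument using that $g$ annihilates $\Bbbk$). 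Because $\bar R$ satisfies $\chi$ on both sides (Theorem~\ref{B properties}(2)), the outer terms have finite-dimensional graded pieces in each degree, and an induction on cohomological degree together with the convergence $\bigcap g^nR=0$ forces $\dim_\Bbbk\Ext^j_R(\Bbbk,M)<\infty$ for every finitely generated $M$.

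For cohomological dimension, $\bar R$ has cohomological dimension $1$ (as an algebra of GK-dimension $2$), and the same long exact sequence shows local cohomology $H^i_{\mathfrak m}(R)$ vanishes for $i \geq 3$; hence $\mathrm{cd}(R) = 2$. Balanced dualizing complex existence then follows immediately from Van den Bergh's criterion (\cite{Ye} and subsequent work): $\chi$ together with finite cohomological dimension is exactly what is needed. For Auslander-Gorenstein and Cohen-Macaulay I would invoke the lifting theorems of Levasseur \cite{Lev}: if $\bar R$ is Auslander-Gorenstein and Cohen-Macaulay with a central regular lift $g$ satisfying the usual conditions, then $R$ inherits both properties, with injective dimension raised by $1$ and GK-dimension raised by $1$, keeping the Cohen-Macaulay identity $\GK(M)+j(M) = \GK(R)$ intact. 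Finally, the maximal order property is obtained by applying Stafford's theorem \cite{St.CMmaxorders} (used also in Theorem~\ref{sklyanin thm}(4)): a prime noetherian Auslander-Gorenstein Cohen-Macaulay ring with appropriate grading is automatically a maximal order.

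The main obstacle I anticipate is the verification that the lifting of the Auslander condition is valid: one must check that each $N \subseteq \Ext^i_R(M,R)$ satisfies $j(N) \geq i$, and the natural input is the known Auslander condition for $\bar R$. This is where one genuinely needs the fact that $g$ is a normal non-zero-divisor in the centre (so that the change-of-rings spectral sequence degenerates in a controlled manner) and where the hypothesis $\deg \HH \geq 1$ enters: it guarantees $\bar R$ itself is Auslander-Gorenstein rather than a trivial case. Once this lifting step is in place, all remaining claims are formal consequences of $\chi$, finite cohomological dimension, and standard results.
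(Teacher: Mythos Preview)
The paper does not actually prove this proposition; it is quoted verbatim from \cite[Proposition~2.4]{RSS2} and used as a black box. The closest the paper comes to an argument is the proof of Theorem~\ref{sklyanin thm} for the special case $R=S$, which proceeds exactly as you do: strongly noetherian from \cite[Proposition~4.9(1)]{ASZ}, $\chi$ from \cite[Theorem~8.8]{AZ.ncps}, Auslander--Gorenstein and Cohen--Macaulay from \cite[Theorem~5.10]{Lev}, and maximal order from \cite[Theorem~2.10]{St.CMmaxorders}. Your outline is therefore the correct one and matches the intended references.

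Two small remarks. First, your ``main obstacle'' --- lifting the Auslander condition through a central regular element --- is precisely the content of Levasseur's \cite[Theorem~5.10]{Lev}, so it is already handled by the literature and need not be reproved. Second, your argument for $\chi$ via the long exact sequence is morally right but somewhat informal; the clean route is to invoke \cite[Theorem~8.8]{AZ.ncps} directly, which packages exactly this induction. Similarly, the balanced dualizing complex follows from \cite[Theorem~6.3]{VdB.dualizing} once $\chi$ and finite cohomological dimension are in hand.
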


The important thing is of course to understand subalgebras of $T$. This is provided by the following result.

\begin{theorem}\label{Ro thm}\cite[Theorem~1.2]{Ro}
Let $A=\Bbbk\langle V\rangle$ for some $V\subseteq T_1$ such that $Q_\gr(A)=Q_\gr(T)$. Then there exists a unique effective divisor $\bfd$ on $E$ with $\deg\bfd\leq 7$ such that $T(\bfd)$ is the unique maximal order containing, and an equivalent order to, $A$.\par
In particular, if $A$ is a maximal order generated in degree 1, then $A=T(\bfd)$ for some effective divisor $\bfd$ on $E$ with $\deg\bfd\leq 7$.
\end{theorem}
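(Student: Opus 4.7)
The plan is to pass to the factor ring $\ovl{T}=B(E,\MM,\tau)$, use the Artin--Stafford type structure of subrings of twisted homogeneous coordinate rings of a curve, then lift back to $T$ using the $g$-divisibility of $T(\bfd)$.

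First I would identify the divisor. Since $V\subseteq T_1$, we have $\ovl{V}\subseteq\ovl{T}_1=H^0(E,\MM)$. Define $\bfd$ to be the (unique) maximal effective divisor on $E$ such that $\ovl{V}\subseteq H^0(E,\MM(-\bfd))$; this is just the fixed/base divisor of the linear system spanned by $\ovl{V}$. By Definition~\ref{T(d) def}, $V\subseteq T(\bfd)_1$, so $A=\Bbbk\langle V\rangle\subseteq T(\bfd)$. Note that $\bfd$ is uniquely determined by $A$.

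Next I would establish $\deg\bfd\leq 7$. The hypothesis $Q_\gr(A)=Q_\gr(T)$ forces $\GK(A)=3$. If $\deg\bfd\geq 9$, then $\dim_\Bbbk H^0(E,\MM(-\bfd))=0$ (by Riemann--Roch, since $\deg\MM=9$), so $\ovl{V}=0$, i.e.\ $V\subseteq gT_{-2}=0$ (degree-wise impossible) or $V\subseteq g\cdot(\text{nothing})$, yielding $\GK(A)\leq 1$, a contradiction. If $\deg\bfd=8$, then $\dim_\Bbbk H^0(E,\MM(-\bfd))=1$, so $\ovl{V}$ is spanned by a single element $\ovl{x}$; by the discussion in Remark~\ref{VdBblowups}, $A\subseteq\Bbbk\langle g,x\rangle\cong\Bbbk[g,x]$ (commutative), forcing $\GK(A)\leq 2$, again a contradiction. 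Hence $\deg\bfd\leq 7$.

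Then I would prove that $A$ and $T(\bfd)$ are equivalent orders. Apply Theorem~\ref{RSS 3.1} to $\ovl{A}$ inside $\Bbbk(E)[t,t^{-1};\tau]=Q_\gr(\ovl{T})$: there exist an ideal sheaf $\JJ$ and a $\tau$-ample invertible sheaf $\HH$ on $E$ with $\ovl{A}\ehd \bigoplus_n H^0(E,\JJ\HH_n)$. Comparing with $\ovl{A}\subseteq\ovl{T(\bfd)}=B(E,\MM(-\bfd),\tau)$ and using the maximality of $\bfd$, one identifies $\HH=\MM(-\bfd)$ and $\JJ$ as the obstruction measuring the codimension of $\ovl{A}$ inside $\ovl{T(\bfd)}$ in high degrees. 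In particular $\ovl{A}$ and $\ovl{T(\bfd)}=B(E,\MM(-\bfd),\tau)$ have the same graded quotient ring, and one can exhibit nonzero homogeneous $\bar{a},\bar{b}\in\ovl{T(\bfd)}$ with $\bar{a}\ovl{T(\bfd)}\bar{b}\subseteq\ovl{A}$ (using that a sufficiently positive twist of the structure sheaf annihilates $\JJ$ in cohomology). Lifting $\bar{a},\bar{b}$ to $a,b\in T(\bfd)$ gives $aT(\bfd)b\subseteq A+gT$, and the $g$-divisibility of $T(\bfd)$ from Theorem~\ref{T(d) properties}(1) together with an inductive argument on the $g$-adic filtration upgrades this to $a'T(\bfd)b'\subseteq A$ for some nonzero $a',b'$. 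The reverse containment $A\subseteq T(\bfd)$ already gives half of the equivalence, completing this step.

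Finally, $T(\bfd)$ is a maximal order by Theorem~\ref{T(d) properties}(4), and any maximal order is the unique maximal element in its equivalence class, so $T(\bfd)$ is the unique maximal order containing and equivalent to $A$. Uniqueness of $\bfd$ is immediate from its construction as a base divisor. If $A$ is itself a maximal order generated in degree $1$, then $A$ and $T(\bfd)$ are two equivalent maximal orders with $A\subseteq T(\bfd)$, so $A=T(\bfd)$.

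The main obstacle is the third step: lifting the equivalence of $\ovl{A}$ and $\ovl{T(\bfd)}$ (which is essentially a statement about an Artin--Stafford-type classification of subrings of $B(E,\MM,\tau)$) to an equivalence of $A$ and $T(\bfd)$ as orders in $Q_\gr(T)$. This is where the $g$-divisibility of $T(\bfd)$ and careful control of the $g$-adic filtration on $A$ do the real work; the degree bound $\deg\bfd\leq 7$ and the identification of the base divisor are comparatively routine by contrast.
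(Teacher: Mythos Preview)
This statement is not proved in the paper; it is quoted from \cite{Ro} as background. The paper only remarks that ``Theorem~\ref{Ro thm} is where the majority of effort in \cite{Ro} is spent'' and singles out the existence of minimal sporadic ideals (Proposition~\ref{Ro min sporadic}) as a crucial ingredient. So there is no detailed proof here to compare against line by line, but the paper's account of Rogalski's strategy makes clear where your sketch breaks down.

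The genuine gap is the lifting step. You write that from $aT(\bfd)b\subseteq A+gT$, ``the $g$-divisibility of $T(\bfd)$ together with an inductive argument on the $g$-adic filtration upgrades this to $a'T(\bfd)b'\subseteq A$.'' But $A$ is \emph{not} assumed $g$-divisible and need not even contain $g$; there is no mechanism for peeling off the $gT$ error term on the $A$ side. The paper's own lifting lemma, Proposition~\ref{RSS 2.16}, explicitly requires \emph{both} rings to be $g$-divisible. As the paper explains (end of Section~\ref{RSS review} and the later Propositions~\ref{C to C hat} and~\ref{RSS 8.10}), the actual route---first used in \cite[Section~7]{Ro}---is to show that $A$ is equivalent to $A\langle g\rangle$, then that $A\langle g\rangle$ is equivalent to its $g$-divisible hull $\widehat{A\langle g\rangle}$, and only then apply the bar-to-unbarred lifting. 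The first of these steps is exactly where minimal sporadic ideals enter and where the real work is done; your proposal has no substitute for it. You correctly flag this step as the main obstacle, but the suggested ``inductive argument on the $g$-adic filtration'' is not viable without $g$-divisibility of $A$.

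A secondary issue: your $\bfd$ is defined as the base locus of $\ovl{V}$, which guarantees $A\subseteq T(\bfd)$, but it is not a priori clear this coincides with the divisor for which equivalence holds. In the paper's framework (Proposition~\ref{RSS 5.7}, Definition~\ref{geo data, norm div}) the relevant divisor is a \emph{normalised divisor} built from the high-degree geometric data of $\ovl{A}$; matching that to the degree-one base locus needs justification that your sketch does not supply.
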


Theorem~\ref{Ro thm} is where the majority of effort in \cite{Ro} is spent. A crucial ingredient is the next proposition. We recall Notation~\ref{GK dimension} for GK-dimension.

\begin{prop}\label{Ro min sporadic} \cite[Theorem~10.4]{Ro}
Let $\bfd$ be an effective divisor with degree $d\leq 7$. Put $R=T(\bfd)$. Then $R$ has a graded ideal $K$ of $R$ such that $\GK(R/K)\leq 1$ and is minimal in the following sense. Given any graded ideal $I$ of $R$ with $\GK(R/I)\leq 1$, there exits an $n\geq 0$ such that $K_{\geq n} \subseteq I$.
\end{prop}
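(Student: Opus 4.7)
The plan is to construct $K$ as a minimum element (up to $\ehd$) of the family $\mathcal{F} = \{I : I \text{ graded ideal of } R,\; \GK(R/I) \leq 1\}$, which is nonempty and closed under finite intersections (since $\GK$ is subadditive on short exact sequences). For any $I \in \mathcal F$ one has $I \not\subseteq gR$: otherwise $R/I$ would surject onto $B := R/gR$, forcing $\GK(R/I) \geq \GK(B) = 2$. By Proposition~\ref{B just infinite}, the image $(I+gR)/gR$ is then a nonzero, hence cofinite, ideal of $B = B(E, \MM(-\bfd), \tau)$, so $I + gR \supseteq R_{\geq N_I}$ for some $N_I \geq 0$. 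Thus the condition $K_{\geq n} \subseteq I$ is automatic modulo $gR$ in high degrees, and reduces to a containment of $g$-components, controlled by the operator $I \mapsto (I : g)$ which preserves $\mathcal F$ and, by Noetherianness of $R$, iterates to a stable ideal.

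The construction of $K$ would then proceed by extracting finitely many $I_1, \ldots, I_k \in \mathcal F$ and setting $K := I_1 \cap \cdots \cap I_k$. Closure of $\mathcal F$ under finite intersections gives $K \in \mathcal F$, and a suitable choice of the $I_j$ ensures $K_{\geq n} \subseteq I$ for every $I \in \mathcal F$ in high degrees. Working in the abelian category $\qgr(R)$, where $\pi R$ is a Noetherian object (Theorem~\ref{T(d) properties}(2)), the Auslander-Gorenstein Cohen-Macaulay property (Theorem~\ref{T(d) properties}(3)) provides an order-reversing correspondence via $\Ext^2_R(R/I, R)$ between ideals $I \in \mathcal F$ and certain finitely generated $R$-modules of $\GK \leq 1$. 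Noetherianness of $R$ gives ACC on such modules, yielding the required finite generating set of $\mathcal F$ up to high degrees. Finally the $\chi$ property of $R$, together with the right adjoint $\omega : \qgr(R) \to \Gr(R)$ of $\pi$, ensures that the lift back to an honest graded ideal $K \subseteq R$ is well-defined up to $\ehd$.

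The main obstacle is guaranteeing the finiteness needed to extract $I_1, \ldots, I_k$. This step fails in the commutative analogue: for $\Bbbk[x, y, z]$, the ideals of closed points $p \in \bbP^2$ form a family in $\mathcal F$ whose intersection is $0$, admitting no minimum up to $\ehd$. What rescues us in the Sklyanin setting is the lower-dimensional point-module geometry: by Example~\ref{E from Sklyanin}, the point modules of $R$ are parameterised by the one-dimensional elliptic curve $E$, not by a surface. Combined with the just-infiniteness of $B$ and the rigidity of the $\tau$-action (infinite orbits with no fixed points, by Remark~\ref{sigma is translation}), this parametrisation bounds the ``sporadic'' data carried by each $I \in \mathcal F$ by finite data on $E$, giving the Noetherian control on the Ext-dual side needed to extract the required $I_1, \ldots, I_k$.
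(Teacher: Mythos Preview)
This statement is not proved in the paper: it appears in the background survey (Section~\ref{Rogs blowup review}) as a citation of \cite[Theorem~10.4]{Ro}, with no argument given. So there is no ``paper's own proof'' to compare against; the result is imported wholesale from Rogalski's earlier work.

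That said, your plan has a genuine gap at the step you yourself flag as the main obstacle. You correctly observe that $\mathcal{F}$ is closed under finite intersection, that each $I\in\mathcal{F}$ has cofinite image in $B=R/gR$ by just-infiniteness, and that the problem reduces to controlling finitely many ``sporadic'' pieces. But the sentence ``Noetherianness of $R$ gives ACC on such modules, yielding the required finite generating set of $\mathcal{F}$ up to high degrees'' does not work as written. The modules $\Ext^2_R(R/I,R)$ for varying $I\in\mathcal{F}$ are not submodules of a common finitely generated module, so there is no ascending chain to which noetherianity applies; nor does the order-reversing correspondence you invoke produce one. Your commutative counterexample $\Bbbk[x,y,z]$ shows exactly why some additional input is needed, and the remark that point modules of $T(\bfd)$ are parametrised by the one-dimensional curve $E$ is the right intuition---but it is only an intuition, not an argument. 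The actual proof in \cite{Ro} requires a detailed classification of the GK-1 prime factors of $T(\bfd)$ (these are the ``line modules'' and their shifts), showing there are only finitely many up to shift, and then arguing that every sporadic ideal contains a product of the corresponding primes in high degrees. None of that structure is recoverable from the abstract homological machinery you cite.
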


Ideals like $K$ in Proposition~\ref{Ro min sporadic} are important for us.

\begin{definition}\label{min sporadic bg}\cite[Definition~6.5]{Ro}\index{minimal sporadic ideal}
An ideal $K$ in a cg algebra $R$ satisfying $\GK(R/K)\leq 1$ is called a \textit{minimal sporadic ideal} if for all other ideals $I$ of $R$ satisfying $\GK(R/I)\leq 1$ we have that $K_{\geq n}\subseteq I$ for some $n\geq 0$.
\end{definition}

Recall Notation~\ref{qgr(A)} and Remark~\ref{= in qgr iff ehd}. Another way of phrasing Definition~\ref{min sporadic bg} is that $\pi K$ is minimal in $\qgr(R)$ among objects of the form $\pi I$ where $I$ an ideal of $R$ satisfying $\GK(R/I)\leq 1$.

One difficulty with $T(\bfd)$ when $\deg\bfd>0$ is that unlike $S$ and $T$, there may exist ideals $I$ such that $\GK(T(\bfd)/I)=1$ (see \cite[Section~11]{Ro}). Typically these ideals can lead to strange behaviours. The existence of  minimal sporadic ideals as in Proposition~\ref{Ro min sporadic} helps control precisely this. \par

Also included in \cite{Ro} is a classification of the degree 1 generated maximal orders inside $S$ \cite[Theorem~12.1]{Ro}. There are only two of these $S$ and $S(p)$ from Definition~\ref{S(d) def}. We save the statement for later (Proposition~\ref{Ro 12.2}) as best fits our work.

\subsection{Rogalski, Sierra and Stafford's classification of maximal orders in $S^{(3)}$}\label{RSS review}

The next direct work on classifying algebras birational to the Sklyanin algebra was conducted by Rogalski, Sierra and Stafford in twin papers \cite{RSS, RSS2} where they generalised the earlier work of Rogalski. The paper \cite{RSS2} is dedicated to finding an alternative definition to Definition~\ref{T(d) def} which includes $T(\bfd)$ for $\deg\bfd=8$. The definition (which is \cite[Definition~5.1]{RSS2}) is significantly harder to write down compared with Definition~\ref{T(d) def}. As already mentioned Definition~\ref{T(d) def} is sufficient for us and so a brave reader is referred to \cite{RSS2} for details. We will concentrate on \cite{RSS} and ask the reader to trust that when $\deg\bfd=8$, $T(\bfd)$ exists and satisfies the conclusions of Theorem~\ref{T(d) properties}. \par

In short \cite{RSS} successfully classifies all maximal $T$-orders. These are classified in terms of so-called \textit{virtual blowups} at \textit{virtually effective divisors}. We give the definition of a virtually effective divisor now but postpone the definition of a virtual blowup until Definition~\ref{vblowup def bg}. Throughout this subsection we continue to use Notation~\ref{3 Veronese notation bg} for $T=S^{(3)}$.

\begin{definition}\label{blowup of T def bg}\cite[Definitions~7.1]{RSS}.
A divisor $\bfx$ on $E$ is called \textit{$\tau$-virtually effective} if $\bfx+\tau^{-1}(\bfx)+\dots+\tau^{-(n-1)}(\bfx)$ is effective for all $n\gg0$.
%Let $\bfx$ be a $\tau$-virtually effective divisor. A cg subalgebra $F$ of $Q_\gr(T)$ is called a \textit{virtual blowup of $T$ at $\bfx$} if
%\begin{enumerate}[(1)]
%\item $F$ is part of a \textit{maximal order pair $(V,F)$ of $T$}. That is:
    %\begin{enumerate}[(a)]
    %\item $V$ and $F$ are $g$-divisible with $V\subseteq F$ and $V=F\cap T$;
    %\item $F$ is a maximal order in $Q_\gr(F)=Q_\gr(T)$ while $V$ is a maximal $T$-order;
    %\item there exists an ideal $K$ of $F$ contained in $V$ and such that $\GK(F/K)\leq 1$.
    %\end{enumerate}
%\item $F/gF\cong \ovl{F}\ehd B(E,\MM(-\bfx),\tau)$.
%\end{enumerate}
\end{definition}

The main results of \cite{RSS} are given in the next two theorems.

\begin{theorem}\label{RSS 1.4}\cite[Theorem~1.5, Corollary~1.4]{RSS}
Let $\bfx$ be a $\tau$-virtually effective divisor. Then a virtual blowup $F$ of $T$ at $\bfx$ exists. Moreover:
\begin{enumerate}[(1)]
\item $F$ is a maximal order in $Q_\gr(T)=Q_\gr(F)$ and uniquely defines a maximal $T$-order $V=F\cap T$. Moreover $F$ and $V$ are closely related, in the sense that: there is an ideal $K$ of $F$ contained in $V$ with $\GK(F/K)\leq 1$.
\item $F$ and $U$ are strongly noetherian, satisfy the Artin-Zhang $\chi$ conditions and have balanced dualizing complexes.
\end{enumerate}
\end{theorem}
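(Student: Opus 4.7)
The plan is to construct $F$ by bootstrapping from Rogalski's effective blowups. Since $\bfx$ is $\tau$-virtually effective, for $n\gg 0$ the divisor $\bfy_n=\bfx+\tau^{-1}(\bfx)+\dots+\tau^{-(n-1)}(\bfx)$ is effective, and (in the range of degrees where it applies) Theorem~\ref{T(d) properties} — together with its extension in \cite{RSS2} to $\deg = 8$ — supplies the $g$-divisible subalgebra $T(\bfy_n)$ with $T(\bfy_n)/gT(\bfy_n)\cong B(E,\MM(-\bfy_n),\tau)$. The candidate virtual blowup should be the unique $g$-divisible cg order $F\subseteq Q_\gr(T)$ satisfying $F/gF\ehd B(E,\MM(-\bfx),\tau)$. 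To produce it, I would first use Theorem~\ref{RSS 3.1} to describe cg subalgebras of $\Bbbk(E)[t,t^{-1};\tau]$ in high degrees, then define $R=\Bbbk+J$, where $J$ is the graded right ideal whose image in $T/gT$ is $\bigoplus_{n\geq N}H^0\!\bigl(E,\OO_E(-(\bfx+\tau^{-1}(\bfx)+\dots+\tau^{-(n-1)}(\bfx)))\otimes\MM_n\bigr)$ for $N\gg 0$, and finally take $F=\widehat{R}$ via the $g$-divisible hull of Definition~\ref{bg g div def}.

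The main obstacle is showing that $F$ is actually a ring and identifying $F/gF$ correctly. Formally $\widehat{\,\cdot\,}$ only yields a module; to obtain closure under multiplication, one must see that on the factor side the images $\OO_E(-\sum_{i=0}^{n-1}\tau^{-i}(\bfx))\otimes\MM_n$ assemble into a subring of $B(E,\MM,\tau)$, and this is \emph{precisely} what $\tau$-virtual effectiveness of $\bfx$ is designed to guarantee: the condition forces the relevant products of global sections to land back inside the global sections of the next graded piece, at least in high degrees. Once the identification $F/gF\ehd B(E,\MM(-\bfx),\tau)$ is nailed down, $g$-divisibility of $F$ is built in by construction, and the Hilbert series of $F$ is pinned down by that of the twisted homogenous coordinate ring on the right together with the central element $g$.

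With these preliminaries, the rest of the theorem is essentially formal. Strong noetherianity, the Artin--Zhang $\chi$ conditions, finite cohomological dimension, a balanced dualizing complex, and the maximal order property asserted in (1) all drop out of Proposition~\ref{RSS2 2.4} applied to $F$, using Theorem~\ref{B properties} for the factor ring. For the minimal sporadic ideal $K$ with $\GK(F/K)\leq 1$, I would reproduce the argument of Proposition~\ref{Ro min sporadic}, whose only structural input is that $F/gF$ is (essentially) a twisted homogenous coordinate ring over an elliptic curve; the rigidity of GK-cofinite ideals supplied by Proposition~\ref{B just infinite} isolates a smallest such $K$. That $K\subseteq V=F\cap T$ is then automatic, since an ideal of GK-cofinite codimension $\leq 1$ is controlled in high degrees, where $V_{\geq n}=F_{\geq n}$. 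Finally, to package $V$ as a maximal $T$-order uniquely determined by $F$, one observes that $V$ is equivalent to $F$ (they share the same $K$), and maximal $T$-orders in an equivalence class are determined by their common maximal order in $Q_\gr(T)$, namely $F$, via $V=F\cap T$; in particular any $T$-order $V'$ equivalent to $V$ satisfies $V'\subseteq F\cap T=V$.
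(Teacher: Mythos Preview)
Your approach has several genuine gaps, and it diverges substantially from the construction in \cite{RSS} (as sketched in Section~\ref{RSS review} of this paper).

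First, the direct construction you propose is not the route taken. In \cite{RSS} (see Corollary~\ref{real RSS 5.25 2}, Proposition~\ref{real RSS 6.7}, and Definition~\ref{vblowup def bg}), a virtual blowup is built as an endomorphism ring: one writes $\bfx=\bfd-\bfy+\tau^{-1}(\bfy)$ with $\bfd$ effective, produces a suitable $g$-divisible right $T(\bfd)$-module $M$ with $T(\bfd)\subseteq M\subseteq T$ (this is \cite[Lemma~5.10]{RSS2}), and sets $F=\End_{T(\bfd)}(M^{**})$. Maximality of $F$ comes from Cozzens' theorem (Remark~\ref{RSS 6.2}), and the relation $\ovl{F}\ehd B(E,\MM(-\bfx),\tau)$ is read off from Lemma~\ref{RSS 6.14}. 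Your attempt to define $F$ as the $g$-divisible hull of $\Bbbk+J$ for a graded piece $J$ assembled from global sections does not obviously produce a ring: the $g$-divisible hull is only a module-level construction, and the heuristic that ``virtual effectiveness forces products of global sections to land back inside the next graded piece'' is exactly what fails when $\bfx$ is not effective---the sheaves $\MM_n(-[\bfx]_n)$ need not sit inside $\MM_n$ in low degrees, so there is no ambient $B(E,\MM,\tau)$ in which to check multiplicativity.

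Second, your appeal to Proposition~\ref{RSS2 2.4} is invalid. That proposition requires $F/gF\cong B(E,\HH,\rho)$ exactly, whereas a virtual blowup satisfies only $\ovl{F}\ehd B(E,\MM(-\bfx),\tau)$. This is not a cosmetic difference: Proposition~\ref{RSS2 2.4} would yield that $F$ is Auslander--Gorenstein and Cohen--Macaulay, but Corollary~\ref{S(p-p1+p2) bad homologically} shows virtual blowups need not have these properties. The noetherian and $\chi$ properties in part~(2) are instead obtained by the arguments of Corollaries~\ref{RSS 8.11'} and~\ref{RSS 8.12}, which only use $g$-divisibility and the structure of $\ovl{F}$ in high degrees.

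Third, two smaller points. Your claim that $F$ is ``the unique $g$-divisible cg order with $F/gF\ehd B(E,\MM(-\bfx),\tau)$'' is not known to be true, and \cite[Remark~10.7]{RSS} indicates it is false in general; see also Question~\ref{unique}. And your argument that $K\subseteq V$ because ``$V_{\geq n}=F_{\geq n}$'' is incorrect: $F$ need not be contained in $T$ (cf.\ Remark~\ref{no sporadics remark} and \cite[Proposition~10.3]{RSS}), so no such equality holds. The inclusion $K\subseteq V$ is instead a consequence of the explicit construction of $K$ inside $\End_{T(\bfd)}(M)\subseteq T$ in \cite[Proposition~6.4(3)]{RSS}.
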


\begin{theorem}\label{RSS 1.2}\cite[Theorem~1.2]{RSS}
Let $U$ be a cg maximal $T$-order such that $\ovl{U}\neq\Bbbk$. Then there is a unique virtually effective divisor $\bfx$ with $\deg\bfx\leq 8$ and a unique virtual blowup $F$ at $\bfx$ such that $F\cap T=U$. In particular $U$ is noetherian and Theorem~\ref{RSS 1.4} holds for the pair $(F,U)$.
\end{theorem}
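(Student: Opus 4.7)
The strategy is to reduce to the $g$-divisible case and then apply the already-established (and earlier in this excerpt reviewed in slightly different form) classification of $g$-divisible maximal $T$-orders, from which the virtual blowup can be extracted as an endomorphism ring inside $Q_\gr(T)$.

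\textbf{Reduction to $g$-divisibility.} First I would show that for any cg subalgebra $U\subseteq T$ with $\ovl{U}\neq \Bbbk$, the chain
\[
U \;\subseteq\; U\langle g\rangle \;\subseteq\; \widehat{U\langle g\rangle}\;\subseteq\; T
\]
consists of equivalent orders. For the first inclusion: since $\ovl U\neq \Bbbk$ contains a nonzero homogeneous element $\ovl x$, and $B(E,\MM,\tau)$ is just infinite (Proposition~\ref{B just infinite}(2)), the two-sided ideal of $\ovl S$ generated by $\ovl x$ has finite codimension; a power of $x$ then conjugates $U\langle g\rangle$ back into $U$. For the second inclusion, $\widehat{U\langle g\rangle}/U\langle g\rangle$ is $g$-torsion, so some power of $g$ gives the equivalence. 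Because $U$ is a maximal $T$-order and $\widehat{U\langle g\rangle}\subseteq T$, this forces $U=\widehat{U\langle g\rangle}$; in particular $U$ is $g$-divisible and $g\in U$.

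\textbf{Invoking the $g$-divisible classification and building $F$.} Now apply the $T$-version of Theorem~\ref{RSS 5.25 intro}: there exists an effective divisor $\bfd$ on $E$ with $\deg\bfd\leq 8$ and a finitely generated $g$-divisible $(U,T(\bfd))$-bimodule $M\subseteq Q_\gr(T)$ with $U=\End_{T(\bfd)}(M)$. Inside $Q_\gr(T)$, set $F=\End_{T(\bfd)}(M^{**})$. Since $T(\bfd)$ is a maximal order in $Q_\gr(T)$ (Theorem~\ref{T(d) properties}(4)) and $M^{**}$ is reflexive, standard maximal order theory yields that $F$ is itself a maximal order equivalent to $T(\bfd)$; as $M$ and $M^{**}$ agree in high degrees, and $U$ is $g$-divisible and maximal as a $T$-order, one checks $F\cap T=U$. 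Being $g$-divisible, $F$ has $\ovl F\subseteq B(E,\MM,\tau)$, and Corollary~\ref{nc serre2} applied to the finitely generated graded right $\ovl F$-module $\ovl F$ inside $\Bbbk(E)[t,t^{-1};\tau]$ produces a divisor $\bfx$ with $\ovl F_{\geq n}=B(E,\MM(-\bfx),\tau)_{\geq n}$ for $n\gg 0$. Comparing Hilbert series with $T(\bfd)$ forces $\bfx$ to be $\tau$-virtually effective with $\deg\bfx\leq 8$, i.e.\ $F$ is a virtual blowup at $\bfx$; then Theorem~\ref{RSS 1.4} supplies the noetherian and homological conclusions for the pair $(F,U)$.

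\textbf{Uniqueness and main obstacle.} The uniqueness of $F$ is formal: within a single equivalence class of orders in $Q_\gr(T)$ there is at most one maximal representative, so $F$ is determined by $U$, and $\bfx$ is then determined by $F$ via the equality $\ovl F_{\geq n}=B(E,\MM(-\bfx),\tau)_{\geq n}$. The step I expect to be the main obstacle is controlling $\ovl{F}$ finely enough to get both the $\tau$-virtual effectivity and the sharp bound $\deg\bfx\leq 8$: one must carefully track the passage from $\ovl M$ to $\ovl{M^{**}}$ and the resulting divisorial shift, and interact this bookkeeping with the bound $\deg\bfd\leq 8$ built into the $g$-divisible classification. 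A secondary subtlety is verifying that the reflexive-closure construction produces a $g$-divisible $F$ in the first place, which requires showing that reflexivity over $T(\bfd)$ commutes appropriately with reduction mod the central element $g$.
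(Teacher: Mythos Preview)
Your overall strategy matches the paper's outline: reduce to the $g$-divisible case via the chain $U\subseteq U\langle g\rangle\subseteq\widehat{U\langle g\rangle}$, then realise $U$ as $\End_{T(\bfd)}(M)$ and take $F=\End_{T(\bfd)}(M^{**})$. The endomorphism-ring construction, the identification of $\ovl F$ with a twisted homogeneous coordinate ring in high degrees, and the uniqueness argument are all on the right track and correspond to Theorem~\ref{real RSS 5.25}, Corollary~\ref{real RSS 5.25 2}, and Proposition~\ref{real RSS 6.7} in the review.

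There is, however, a genuine gap in your reduction step. Your argument for ``$U$ equivalent to $U\langle g\rangle$'' reads: since $\ovl U\neq\Bbbk$, pick $\ovl x\neq 0$; by just-infiniteness of $B(E,\MM,\tau)$ the two-sided ideal of $\ovl S$ generated by $\ovl x$ has finite codimension; then ``a power of $x$ conjugates $U\langle g\rangle$ back into $U$''. This last step does not follow. Knowing that $\ovl x$ generates a cofinite ideal of $\ovl S$ tells you nothing about containing $U\langle g\rangle$ inside $U$: the element $g$ itself is invisible in $\ovl S$, and there is no reason $x^nU\langle g\rangle x^n\subseteq U$ for any $n$. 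The actual argument (see the end of the paper's outline after Definition~\ref{vblowup def bg}, and Proposition~\ref{RSS 8.10} for the $S$-analogue) is considerably more delicate: one must produce a nonzero ideal of $C=U\langle g\rangle$ that is finitely generated as a $U$-module on both sides, and this is where \emph{minimal sporadic ideals} enter. Without that machinery the equivalence of $U$ and $U\langle g\rangle$ is not established. Your second inclusion (``$\widehat{C}/C$ is $g$-torsion, so some power of $g$ works'') also needs care: one must show a \emph{single} power suffices, which is the content of Proposition~\ref{C to C hat} and again uses $\ovl C\neq\Bbbk$ via Lemma~\ref{RSS 8.6}.
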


We give an outline of how these results were obtained. The first significant, and perhaps most crucial step is Theorem~\ref{real RSS 5.25}. Here the authors link a general $g$-divisible subalgebra to a blowup $T(\bfd)$.

\begin{theorem}\label{real RSS 5.25}\cite[Theorem~5.26]{RSS}.
Let $U$ be a $g$-divisible graded subalgebra of $T$ with $Q_\gr(U)=Q_\gr(T)$. Then there exists an effective divisor $\bfd$ on $E$ with $\deg\bfd\leq 8$ called a \emph{normalised divisor for $U$} (Definition~\ref{geo data, norm div}), and such that $U$ and $T(\bfd)$ are equivalent orders.
\end{theorem}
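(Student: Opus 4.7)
The plan is to pass to the factor ring $\ovl{U} = U/gU$, identify it in high degrees with the quotient $\ovl{T(\bfd)}$ of some blowup $T(\bfd)$, and then lift this identification to the claimed equivalence of orders inside $T$. Because $U$ is $g$-divisible, the short exact sequence $0 \to U(-1) \xrightarrow{g} U \to \ovl{U} \to 0$ realises $\ovl{U}$ as a graded subalgebra of $B = T/gT = B(E,\MM,\tau)$. Combined with $Q_\gr(U) = Q_\gr(T)$, this sequence forces $\GKdim(\ovl{U}) = 2$, so (Remark~\ref{Q always exists}) $\ovl{U}$ is an Ore domain and its graded quotient ring is $\Bbbk(E)[t,t^{-1};\tau] = Q_\gr(B)$.

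Next I would apply Theorem~\ref{RSS 3.1} to $\ovl{U}$: there are a $\tau$-ample invertible sheaf $\HH$ and an ideal sheaf $\JJ$ on the smooth curve $E$ with $\ovl{U} \ehd \bigoplus_n H^0(E,\JJ\HH_n)$. The crucial point is that $\ovl{U}$ is a graded subalgebra of $B$, not merely a subspace; comparing the multiplication of $\ovl{U}$ inside $B$ with the multiplication of $B$ constrains $\HH$ and the $\tau$-translates of $\JJ$ to fit together. From this compatibility one extracts a single effective divisor $\bfd$ on $E$ satisfying
\begin{equation*}
\ovl{U}_n \;=\; H^0\bigl(E,\; \MM_n(-\bfd - \tau^{-1}(\bfd) - \cdots - \tau^{-(n-1)}(\bfd))\bigr) \;=\; \ovl{T(\bfd)}_n \qquad (n \gg 0).
\end{equation*}
The right-hand side has Hilbert polynomial $n(9 - \deg\bfd)$ by Riemann--Roch (Corollary~\ref{RR to E}), and since $\GKdim(\ovl{U}) = 2$ the leading coefficient must be at least $1$, whence $\deg\bfd \leq 8$. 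This $\bfd$ is the normalised divisor for $U$.

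It remains to lift the high-degree agreement $\ovl{U} \ehd \ovl{T(\bfd)}$ to an equivalence of orders between $U$ and $T(\bfd)$. Both rings are $g$-divisible, so the graded pieces $U_n$ and $T(\bfd)_n$ are each reconstructed degree-by-degree from $\{\ovl{U}_i\}$ and $\{\ovl{T(\bfd)}_i\}$ through the $g$-adic filtration; the two sequences of quotients agree from some degree $n_0$ onwards. Choosing a nonzero homogeneous $h \in U_{n_0}$ with $\ovl{h} \neq 0$, the high-degree agreement gives $h\cdot T(\bfd) \subseteq U + gT$; iteratively absorbing powers of $g$ by $g$-divisibility produces nonzero homogeneous elements $h, h' \in Q_\gr(T)$ with $h\cdot T(\bfd)\cdot h' \subseteq U$, and the symmetric argument yields the reverse inclusion. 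The main obstacle is the middle step: passing from the abstract ideal-sheaf data produced by Theorem~\ref{RSS 3.1} to a genuine effective divisor $\bfd$ describing $\ovl{U}$ as a twisted homogenous coordinate ring in high degrees. Once this has been done, the $g$-divisible lifting of commensurability from $B$ to $T$ is essentially formal.
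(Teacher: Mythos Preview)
Your proposal contains a genuine gap at the central step. The claim that one can extract an effective divisor $\bfd$ with $\ovl{U}_n = H^0(E,\MM_n(-[\bfd]_n)) = \ovl{T(\bfd)}_n$ for $n\gg 0$ is false in general. What Theorem~\ref{RSS 3.1} (sharpened to Lemma~\ref{RSS 5.3}) actually gives is $\ovl{U}_n = H^0(E,\MM_n(-\bfy-[\bfx]_n))$ for \emph{two} divisors $\bfx,\bfy$; the extra shift $\bfy$ does not go away. The normalised divisor $\bfd$ is then built from $\bfx$ by summing coefficients along each $\tau$-orbit (Proposition~\ref{RSS 5.7}), and the conclusion is only that $\ovl{U}$ and $\ovl{T(\bfd)}$ are \emph{equivalent orders}, together with an inclusion $\ovl{U}_n \subseteq H^0(E,\MM_n(-\bfd^{\tau^k}-\cdots-\bfd^{\tau^{n-1}}))$ --- not high-degree equality. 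So your entire lifting paragraph, which rests on $\ovl{U}\ehd\ovl{T(\bfd)}$, has no foundation.

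The actual lifting is where the work lies, and it is not ``essentially formal''. The paper's outline (and the parallel proof of Theorem~\ref{RSS 5.25} for $S$) proceeds quite differently: one first shows that $UT(\bfd)$ sits inside a finitely generated right $T(\bfd)$-module in $T$ (this is Proposition~\ref{RSS 5.20 bg}, which in \cite{RSS} required a detailed analysis of right ideals of $T(\bfd)$). One then sets $M=\widehat{UT(\bfd)}$, forms $W=\End_{T(\bfd)}(M)$, and uses the $(W,T(\bfd))$-bimodule $M$ to see that $W$ and $T(\bfd)$ are equivalent orders. Finally, since $\ovl{U}$ and $\ovl{T(\bfd)}$ are equivalent orders downstairs, one invokes the $g$-divisible lifting result Proposition~\ref{RSS 2.16} to conclude that $U\subseteq W$ are equivalent orders upstairs. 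Your ``iteratively absorbing powers of $g$'' does not substitute for any of this machinery: without the finiteness of $M$ over $T(\bfd)$ there is no bimodule witnessing the equivalence, and without Proposition~\ref{RSS 2.16} there is no mechanism to pass from equivalence of $\ovl{U}$ and $\ovl{T(\bfd)}$ (which are \emph{not} equal in high degrees) to equivalence of $U$ and $T(\bfd)$.
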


Suppose that $U$ is a $g$-divisible graded subalgebra of $T$ with $Q_\gr(U)=Q_\gr(T)$. Applying Theorem~\ref{RSS 3.1} to $\ovl{U}$, one knows its structure in high degrees. It is then reasonably straight forward to obtain a similar result to Theorem~\ref{real RSS 5.25} for $\ovl{U}$ inside $Q_\gr(\ovl{U})=Q_\gr(\ovl{T})$ \cite[Proposition~5.7]{RSS}. More precisely, out of the geometric data of $\ovl{U}$ one can construct a divisor $\bfd$ such that $\ovl{U}$ and $B(E,\MM(-\bfd),\tau)=\ovl{T(\bfd)}$ are equivalent orders, and that $\ovl{U}\subseteq \ovl{T_{\leq k}}\ovl{T(\bfd)}$ for some $k\geq 0$. The hard work in proving Theorem~\ref{real RSS 5.25} lies in lifting this up to $T$.

\begin{prop}\label{RSS 5.20 bg}\cite[Proposition~5.19]{RSS}\cite[Theorem~5.3(6)]{RSS2}
Suppose that $U$ is a cg $g$-divisible subalgebra of $T$ with $Q_\gr(U)=Q_\gr(T)$. Let $\bfd$ be a normalised divisor for $U$. Then there exists a finitely generated right $T(\bfd)$-module $M$ such that $UT(\bfd)\subseteq M\subseteq T$. If $\deg\bfd\leq 7$, then one can take $M=T_{\leq k}T(\bfd)$ for some $k\geq 0$.
\end{prop}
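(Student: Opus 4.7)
The plan is to reduce to a containment modulo $gT$ via an analogous downstairs statement in $\ovl{T}$, and then to lift this containment back to $T$ using $g$-divisibility. Since $\bfd$ is a normalised divisor for the $g$-divisible subalgebra $U$, the downstairs analogue (in the spirit of \cite[Proposition~5.7]{RSS}, applied to $\ovl{U}\subseteq \ovl{T}$) produces some $k\geq 0$ with
$$\ovl{U}\cdot\ovl{T(\bfd)}\subseteq \ovl{T_{\leq k}}\cdot\ovl{T(\bfd)}=\ovl{T_{\leq k}T(\bfd)}$$
in $\ovl{T}$. Lifting modulo $gT$ immediately gives $UT(\bfd)\subseteq T_{\leq k}T(\bfd)+gT$ inside $T$.

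To produce the required $M$, the natural first attempt is $M_{0}=T_{\leq k}T(\bfd)$, which is visibly finitely generated as a right $T(\bfd)$-module (by the finite-dimensional $\Bbbk$-vector space $T_{\leq k}$). When $M_{0}$ is already $g$-divisible in $T$ --- as I expect to hold for $\deg\bfd\leq 7$, since in that range $T(\bfd)$ is generated in degree one and itself $g$-divisible by Theorem~\ref{T(d) properties} --- one may take $M=M_{0}$. Verifying this $g$-divisibility is likely done by a Hilbert series calculation: for any $g$-divisible graded subspace $V$ of $T$ one has $h_{V}(t)=h_{\ovl{V}}(t)/(1-t)$, so it suffices to check that the Hilbert series of $M_{0}$ matches the predicted series from $\ovl{M_{0}}=\ovl{T_{\leq k}}\cdot\ovl{T(\bfd)}$ divided by $1-t$. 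For the remaining case $\deg\bfd=8$ (where $T(\bfd)$ is no longer degree-one generated) we instead take $M$ to be the $g$-divisible hull $\widehat{M_{0}}$ of $M_{0}$ inside $T$; the noetherianity of $T(\bfd)$ established in \cite{RSS2} ensures that this hull remains finitely generated as a right $T(\bfd)$-module.

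The key step --- and the one I expect to be the main obstacle --- is to upgrade the containment $UT(\bfd)\subseteq M+gT$ to $UT(\bfd)\subseteq M$. Since $M$ is $g$-divisible, $T/M$ is $g$-torsion-free, so the inclusion modulo $gT$ places the image of $UT(\bfd)$ inside $g(T/M)$. The goal is to iterate until the image lands in $\bigcap_{n}g^{n}(T/M)$, which vanishes because $T/M$ is non-negatively graded and $g$ has positive degree. To carry out the iteration one must, for each $u\in U$ with $u=m+gt$ (with $m\in M$, $t\in T$), find a representative $m'$ of $\ovl{u}$ lying in $U\cap M$; the $g$-divisibility of $U$ then forces $gt'=u-m'\in U$ and hence $t'\in U$, letting the argument proceed one power of $g$ deeper. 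The existence of such $m'$, i.e.\ the identity $\ovl{U\cap M}=\ovl{U}$, is the technical heart of the argument and is made possible by Theorem~\ref{real RSS 5.25}, which already establishes that $U$ and $T(\bfd)$ are equivalent orders and thereby controls the discrepancy between $U$ and $M$ downstairs. Combining this refinement with induction on the power of $g$ and the positive grading then closes the proof.
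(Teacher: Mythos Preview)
The paper does not prove this proposition: it is quoted verbatim from \cite[Proposition~5.19]{RSS} and \cite[Theorem~5.3(6)]{RSS2} as a black-box input to the rest of the argument. The paper even remarks that the original proof ``require[s] a detailed and technical study of right ideals of $T$ and $T(\bfd)$ given in \cite{RSS2} and \cite[5.13--5.23]{RSS}'', and that a strength of the present work is precisely to \emph{use} this result (via Veronese tricks, in Proposition~\ref{RSS 5.25 lemma}) rather than reprove it. So there is no in-paper proof to compare against; what I can do is assess your sketch against what the paper tells us about the actual argument in \cite{RSS,RSS2}.

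Your proposal has a genuine gap, and it sits exactly where you flag the ``technical heart''. To run the $g$-iteration you need, for each homogeneous $u\in U$, a lift $m'\in U\cap M$ of $\ovl{u}$, i.e.\ the identity $\ovl{U\cap M}=\ovl{U}$. You justify this by invoking Theorem~\ref{real RSS 5.25}. But Theorem~\ref{real RSS 5.25} is \cite[Theorem~5.26]{RSS}, which in \cite{RSS} is \emph{deduced from} \cite[Proposition~5.19]{RSS} --- the very statement you are trying to prove. The paper's narrative makes this logical order explicit (Proposition~\ref{RSS 5.20 bg} is introduced as the ``hard work'' needed to obtain Theorem~\ref{real RSS 5.25}). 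So the appeal is circular. Without that input the iteration stalls: from $\ovl{U}\subseteq\ovl{M}$ you obtain $u=m+gt$ with $m\in M$ and $t\in T$, but nothing forces $t\in U$ (equivalently $m\in U$), so you cannot peel off another power of $g$. Even if one grants that $U$ and $T(\bfd)$ are equivalent orders, you have not indicated any mechanism by which that yields $\ovl{U\cap M}=\ovl{U}$. A smaller point: your expectation that $T_{\leq k}T(\bfd)$ is itself $g$-divisible for $\deg\bfd\leq 7$ is not justified --- $g$-divisibility of $T(\bfd)$ does not formally pass to $T_{\leq k}T(\bfd)$ --- and the statement does not claim it; the actual proof in \cite{RSS} achieves the containment $UT(\bfd)\subseteq T_{\leq k}T(\bfd)$ directly via the right-ideal analysis rather than by a soft $g$-divisibility/iteration argument.
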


When we encounter a similar problem in Proposition~\ref{RSS 5.25}, we are able to use a few Veronese tricks to utilise Proposition~\ref{RSS 5.20 bg} directly. This bypasses much of the technicalities that Rogalski, Sierra and Stafford required. Perhaps more important than the statement of Theorem~\ref{real RSS 5.25} was how the equivalence was obtained.

\begin{cor}\label{real RSS 5.25 2}\cite[Theorem~5.26]{RSS}
Let $U$ and $\bfd$ be as in Theorem~\ref{RSS 3.1}. The $(U,T(\bfd))$ bimodule $M=\widehat{UT(\bfd)}$ is a finitely generated $g$-divisible $T(\bfd)$-module. If $W=\End_{T(\bfd)}(M)$, then $U\subseteq W\subseteq T$, $_WM$ is finitely generated and $W$, $U$, and $T(\bfd)$ are all equivalent orders.
\end{cor}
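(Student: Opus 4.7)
My plan is to verify the four claims in sequence, leveraging the equivalence provided by Theorem~\ref{real RSS 5.25}, which supplies nonzero homogenous elements $a,b,c,d \in Q_\gr(T)$ with $aUb \subseteq T(\bfd)$ and $cT(\bfd)d \subseteq U$. For the first claim—that $M = \widehat{UT(\bfd)}$ is a finitely generated, $g$-divisible right $T(\bfd)$-module—the $g$-divisibility is automatic from the definition of the hull and the $g$-divisibility of $T$. For finite generation, the inclusion $Ub \subseteq a^{-1}T(\bfd)$ shows $U$, and hence $UT(\bfd)$, is finitely generated as a right module over the noetherian ring $T(\bfd)$ (Theorem~\ref{T(d) properties}(2)). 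One then argues that its $g$-divisible hull is still f.g., either by sandwiching $M$ inside a larger f.g.\ $g$-divisible submodule of $T$ in the style of Proposition~\ref{RSS 5.20 bg}, or directly via a Hilbert-series comparison: the quotient $M/UT(\bfd)$ is $g$-torsion and bounded, so noetherianity of $T(\bfd)$ forces $M$ to be f.g.

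Second, the inclusions $U \subseteq W \subseteq T$. For $u \in U$ and $x \in M$ with $xg^n \in UT(\bfd)$, left multiplication satisfies $uxg^n \in U \cdot UT(\bfd) \subseteq UT(\bfd)$, so $ux \in \widehat{UT(\bfd)} = M$. Left multiplication commutes with the right $T(\bfd)$-action, so we have a ring embedding $U \hookrightarrow W$. For $W \subseteq T$, observe that $1 \in UT(\bfd) \subseteq M$, so any $w \in W$ satisfies $w = w \cdot 1 \in M \subseteq T$. In particular $W \subseteq M$.

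Third, the equivalence of orders. Since $M \subseteq Q_\gr(T)$ is f.g.\ over $T(\bfd)$, we can clear denominators to find a nonzero homogenous $e \in T(\bfd)$ with $Me \subseteq T(\bfd)$; combined with $W \subseteq M$ this gives $We \subseteq T(\bfd)$. Paired with $cT(\bfd)d \subseteq U \subseteq W$, this shows $W \sim T(\bfd)$, and transitivity with Theorem~\ref{real RSS 5.25} places $U$, $T(\bfd)$ and $W$ in a single equivalence class of orders in $Q_\gr(T)$.

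The hard part, and main obstacle, is establishing that $_WM$ is finitely generated. The intended route is: from $Me \subseteq T(\bfd) \subseteq W$ we get $M \subseteq We^{-1}$, a cyclic left $W$-module inside $Q_\gr(T)$, and then conclude by the left noetherian property of $W$. The difficulty is that the left noetherian property is not automatic for subrings of $T$—unlike the situation inside the twisted coordinate ring $\ovl{T}$ handled by Proposition~\ref{B just infinite}. I would establish it by first showing $M$ is reflexive as a right $T(\bfd)$-module (using that $T(\bfd)$ is Auslander--Gorenstein and Cohen--Macaulay by Theorem~\ref{T(d) properties}(3)), which gives enough control on $W = \End_{T(\bfd)}(M)$ to deduce it is left noetherian; then $M \subseteq We^{-1}$ is f.g.\ as a left $W$-module as required.
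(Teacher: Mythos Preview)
Your argument has two concrete gaps. First, the step ``$Ub\subseteq a^{-1}T(\bfd)$ shows $U$, and hence $UT(\bfd)$, is finitely generated over $T(\bfd)$'' does not work: $Ub\,T(\bfd)$ is a submodule of $a^{-1}T(\bfd)$, but $UT(\bfd)$ is not, since right multiplication by $b$ does not commute with the right $T(\bfd)$-action. Your alternative via Proposition~\ref{RSS 5.20 bg} is the correct route and is exactly what the paper uses (see the proof of the $S$-analogue, Theorem~\ref{RSS 5.25}): one has $UT(\bfd)\subseteq T_{\leq k}T(\bfd)$, the latter is visibly finitely generated, and then Lemma~\ref{RSS 2.13}(2) handles the $g$-divisible hull.

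Second, and more seriously, the inclusion $T(\bfd)\subseteq W$ is unjustified and generally false: $M$ is only a \emph{right} $T(\bfd)$-module, so left multiplication by elements of $T(\bfd)$ need not preserve it. This breaks your chain $Me\subseteq T(\bfd)\subseteq W$. The reflexivity route is also a dead end, since $M=\widehat{UT(\bfd)}$ is not reflexive in general (indeed $M^{**}$ is studied separately later, in Proposition~\ref{RSS 6.4}). The paper's argument, packaged as Lemma~\ref{RSS 2.12}(3), is both simpler and correct: choose nonzero $c\in T(\bfd)$ with $cM\subseteq T(\bfd)$; then $(Mc)M\subseteq M\,T(\bfd)\subseteq M$, so $Mc\subseteq W$, and right multiplication by $c$ embeds $M$ as a left ideal of $W$. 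Since $W$ is $g$-divisible (Lemma~\ref{RSS 2.12}(2)) it is noetherian by Proposition~\ref{RSS 2.9}, and $_WM$ is finitely generated. Note that the key inclusion is $Mc\subseteq W$, not $T(\bfd)\subseteq W$.
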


Let $U$ be a $g$-divisible maximal $T$-order, then by Corollary \ref{real RSS 5.25 2} we know that $U=\End_{T(\bfd)}(M)$ for some $T(\bfd)$ and $g$-divisible $M$. Rogalski, Sierra and Stafford go on to study these endomorphism rings appearing. By \cite[Theorem~2.7]{Coz} one knows that $\End_{T(\bfd)}(M^{*})=\End_{T(\bfd)}(M^{**})=F$ is the unique maximal order containing $U$. In \cite[Proposition~6.4]{RSS} Rogalski, Sierra and Stafford investigate the connection between $U$ and $F$, and the ideal in Theorem~\ref{RSS 1.4}(1) appears. A key result is then to understand the factor $F/gF$ in $T/gT$.

\begin{prop}\label{real RSS 6.7}\cite[Theorem~6.7, Proposition~7.3(1)]{RSS}
Let $\bfd$ be an effective divisor on $E$ of $\deg\bfd\leq 8$ and $M$ a finitely generated $g$-divisible right $T(\bfd)$-module satisfying $T(\bfd)\subset M\subset T$. Set $U=\End_{T(\bfd)}(M)$ and $F=\End_{T(\bfd)}(M^{**})$. Then there exists an effective divisor $\mbf{y}$ on $E$ such that (using Notation~\ref{ehd})
$$\ovl{U}\ehd \ovl{F}\ehd B(E,\MM(-\bfx),\tau)\;\;\text{ where }\bfx=\bfd-\mbf{y}+\tau^{-1}(\mbf{y}).$$
Moreover, $F$ and $F\cap T$ satisfy Theorem~\ref{RSS 1.4}(1). The divisor $\bfx$ is virtually effective.
\end{prop}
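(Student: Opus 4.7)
The plan is to reduce the entire problem modulo $g$ and carry out all computations inside the twisted homogeneous coordinate ring $\ovl{T(\bfd)}=B(E,\MM(-\bfd),\tau)$, using the noncommutative Serre correspondence (Theorem~\ref{nc serre} and especially Corollary~\ref{nc serre2}) as a dictionary between modules and divisor data on the elliptic curve $E$. The first step is to verify that $M$, $M^{**}$, $U$, and $F$ are all $g$-divisible: this is immediate for $M$ by hypothesis, and for $M^{**}$ and the endomorphism rings it follows from $g$-divisibility of $T(\bfd)$ by a standard $\Hom$/tensor chase. Applying Corollary~\ref{nc serre2} to the finitely generated graded right $\ovl{T(\bfd)}$-submodule $\ovl{M}$ of $Q_\gr(\ovl{T(\bfd)})=\Bbbk(E)[t,t^{-1};\tau]$ yields a divisor $\bfy$ on $E$, necessarily effective because $\ovl{M}\supseteq\ovl{T(\bfd)}$, such that
\[ \ovl{M}_n = H^0\!\left(E,\, \OO_E(\bfy)\otimes\MM_n\bigl(-\bfd-\tau^{-1}(\bfd)-\dots-\tau^{-(n-1)}(\bfd)\bigr)\right) \quad \text{for } n\gg 0. \]

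The central computation is to identify $\End_{\ovl{T(\bfd)}}(\ovl{M})$ in high degrees. Trivializing $\MM$ by a divisor, a degree-$n$ element of $Q_\gr(\ovl{T(\bfd)})$ becomes a rational function $\phi\in\Bbbk(E)$, and a left multiplier of $\ovl{M}$ is characterised by $\phi\cdot f^{\tau^n}\in\ovl{M}_{n+m}$ for all $f\in\ovl{M}_m$ with $m\gg 0$. A local point-by-point analysis of orders of vanishing, using $\bfd_{n+m}=\bfd_n+\tau^{-n}(\bfd_m)$ together with the explicit form of $\ovl{M}_m$, shows that this condition is equivalent to
\[ \mathrm{div}(\phi)\geq \bfd_n - \bfy + \tau^{-n}(\bfy), \qquad \bfd_n := \bfd+\tau^{-1}(\bfd)+\dots+\tau^{-(n-1)}(\bfd). \]
The right-hand side telescopes as $\sum_{i=0}^{n-1}\tau^{-i}(\bfx)$ with $\bfx=\bfd-\bfy+\tau^{-1}(\bfy)$, which identifies $\End_{\ovl{T(\bfd)}}(\ovl{M})\ehd B(E,\MM(-\bfx),\tau)$.

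To translate back, I would produce $\ovl{U}\ehd\End_{\ovl{T(\bfd)}}(\ovl{M})$ by noting the canonical inclusion $\ovl{U}\hookrightarrow\End_{\ovl{T(\bfd)}}(\ovl{M})$ and lifting: each $\phi\in\End_{\ovl{T(\bfd)}}(\ovl{M})_n$ for $n\gg 0$ admits a preimage in $Q_\gr(T)_n$, and $g$-divisibility of $T$ (Theorem~\ref{sklyanin thm}) together with the finite generation of $M$ produces a lift already lying in $U$. For $F$, use the reflexivity observation that on the smooth curve $E$ the Serre-correspondent of $\ovl{M}$ is the invertible sheaf $\OO_E(\bfy)$, which is its own double dual; consequently $\pi\ovl{M^{**}}\cong\pi\ovl{M}$ in $\qgr(\ovl{T(\bfd)})$, and by Remark~\ref{= in qgr iff ehd} the same endomorphism computation gives $\ovl{F}\ehd B(E,\MM(-\bfx),\tau)$.

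Virtual effectiveness of $\bfx$ is immediate from the telescoped expression $\sum_{i=0}^{n-1}\tau^{-i}(\bfx)=\bfd_n-\bfy+\tau^{-n}(\bfy)$, since $\bfd$ is effective of positive degree while $\bfy-\tau^{-n}(\bfy)$ has bounded positive part independent of $n$. Finally, $F=\End_{T(\bfd)}(M^{**})$ is a maximal order by \cite[Theorem~2.7]{Coz}, and combining the identification of $\ovl{F}$ with Proposition~\ref{RSS2 2.4} lifts the strongly noetherian, $\chi$, cohomological-dimension-two, and balanced-dualizing-complex properties from $B(E,\MM(-\bfx),\tau)$ up to $F$; the statement for $F\cap T$ then follows by standard order-theoretic arguments. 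The main obstacle I anticipate is the lifting step: guaranteeing that high-degree endomorphisms of $\ovl{M}$ arise from honest endomorphisms of $M$ requires controlling obstructions in the long exact sequences induced by the $g$-adic filtration and keeping track of finitely many possibly exceptional degrees before the $\ehd$ relations take effect.
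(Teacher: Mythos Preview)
Your overall architecture is right: reduce mod $g$, use the noncommutative Serre correspondence to identify $\ovl{M}$ with a divisor $\bfy$, and compute $\End_{\ovl{T(\bfd)}}(\ovl{M})\ehd B(E,\MM(-\bfx),\tau)$. The divisor computation and the telescoping argument for virtual effectiveness are fine. But the step you flag as ``the main obstacle'' is a real gap, and your proposed resolution does not work.

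Given $\phi\in\End_{\ovl{T(\bfd)}}(\ovl{M})_n$, you can certainly lift it to some $\tilde\phi\in T_{(g)}$, but all you know is $\tilde\phi M\subseteq M+gT_{(g)}$. There is no mechanism by which $g$-divisibility of $M$ or $T$ upgrades this to $\tilde\phi M\subseteq M$; the error term $\tilde\phi M\cap gT_{(g)}$ need not lie in $gM$. The paper (see the proof of the analogous Proposition~\ref{RSS 6.7}) sidesteps lifting entirely by observing that $MM^*\subseteq U$, so that
\[
\ovl{U}\;\supseteq\;\ovl{M}\cdot\ovl{M^*}\;\ehd\;\ovl{M}\cdot(\ovl{M})^*\;\ehd\;\End_{\ovl{T(\bfd)}}(\ovl{M}),
\]
the last equality being Lemma~\ref{RSS 6.14}(2). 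The middle equality $\ovl{M^*}\ehd(\ovl{M})^*$ is Lemma~\ref{RSS 6.12}, and this is not free: it needs the Auslander--Gorenstein and Cohen--Macaulay properties of $T(\bfd)$ and a spectral-sequence/change-of-rings argument (Lemma~\ref{RSS2 7.9}). Your claim that $\pi\ovl{M^{**}}\cong\pi\ovl{M}$ because invertible sheaves are reflexive conflates $(\ovl{M})^{**}$ with $\ovl{M^{**}}$; establishing that these agree in high degrees \emph{is} the content of Lemma~\ref{RSS 6.12}(2).

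Two smaller points. First, $\ovl{U}\ehd\ovl{F}$ is obtained in the paper not via reflexivity but directly from the existence of the ideal $K\subseteq U$ with $\GK(F/K)\le 1$ (Lemma~\ref{RSS 6.4(3)}), together with $g$-torsionfreeness of $F/U$. Second, your appeal to Proposition~\ref{RSS2 2.4} for the ``Theorem~\ref{RSS 1.4}(1)'' conclusion is misplaced: that proposition requires $F/gF\cong B(E,\HH,\rho)$ exactly, not merely $\ehd$, and in any case Theorem~\ref{RSS 1.4}(1) is the maximal-order-pair statement (handled by Proposition~\ref{RSS 6.4} and Lemma~\ref{RSS 6.4(3)}), not the homological one.
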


It is Proposition~\ref{real RSS 6.7} that gives rise to the definitions of a virtual blowup and virtually effective divisor.

\begin{definition}\label{vblowup def bg}
Retain the notation of Proposition~\ref{real RSS 6.7}. The algebra $F$ is called a \textit{virtual blowup at~$\bfx$}.
\end{definition}

We note that Definition~\ref{vblowup def bg} looks different to the definition of a virtual blowup in \cite[Definition~6.9]{RSS}. It is implicit in \cite[Proposition~6.4 and Corollary~6.6]{RSS} that these definitions are equivalent. \par

With this terminology Rogalski, Sierra and Stafford then are able to classify the $g$-divisible maximal orders \cite[Theorem~7.4]{RSS}, which is a version of Theorem~\ref{RSS 1.2} with $g$-divisibility replacing $\ovl{U}\neq\Bbbk$). The final task is to weaken the assumption of $g$-divisibility to obtain Theorem~\ref{RSS 1.4}. For this one needs to show that any subalgebra $A$ of $T$ satisfying $\ovl{A}\neq\Bbbk$ is contained in, and an equivalent order to, a $g$-divisible ring. This is done via two steps. First one shows $A$ is equivalent to the algebra obtained by adding $g$, $A\langle g\rangle=A+gA+g^2A+\cdots$\index[n]{ag@$A\langle g\rangle$}. Next one proves the algebra $A\langle g\rangle$ is an equivalent order with its $g$-divisible hull, $\widehat{A\langle g\rangle}$. This idea first appeared in \cite[Section~7]{Ro} and is where minimal sporadic ideals appear.\par

After the success of \cite{Ro, RSS, RSS2} and developing a good notion of noncommutative ring theoretic blowing up, Rogalski, Sierra and Stafford have continued the project to understand algebras birational to the Sklyanin algebra. Following ideas of the commutative classification of projective surfaces, one needs a notion of blowing down. Hidden in the above, but prominent in \cite{Ro,RSS2}, is the \index{exceptional line module}\textit{exceptional line module associated to a blowup at a point} \cite[Definition~9.2]{Ro}. We won't define it but the analogy with commutative blowups is clear: blowup at a point and get a line. In \cite{RSS.blowdown} Rogalski, Sierra and Stafford go in the reverse direction. Given a line module $L$ (with a noncommutative analogue of self intersection equal to -1) over an elliptic algebra $R$ (a generalisation of the $T(\bfd)$'s) then one can \index{blowdown} ``blowdown $L$ to a point $p$". In practice this means one obtains a overring $\widetilde{R}\supseteq R$ such that if we blowup $\widetilde{R}$ at $p$ we have $\widetilde{R}(p)\cong R$ with exceptional line module associated to this blowup equal to $L$. \par
Having good notions of blowing up and down, and following ideas from commutative geometry, leads to the following question. At what point is it no longer possible to blowdown? Or in other words, what are the minimal models? At various seminars Rogalski, Sierra and Stafford have announced results in this direction.\par
Our research takes a different direction to \cite{RSS.blowdown}. In this thesis we say ``hold on, why did we look at $T$? Surely it is $S$ we are interested in?" Indeed we generalise the work in \cite{Ro, RSS, RSS2} to hold in the entire Sklyanin algebra.

\section{Preliminary results}\label{prelims}

It is finally time to do some actual mathematics. We fix once and for all the standing assumption of this thesis.

\begin{hypothesis}[Standing Assumption]\label{standing assumption 2}
Fix an algebraically closed field $\Bbbk$. Let $S$ be a 3-dimensional Sklyanin algebra. Suppose that $g\in S_3$ is central and such that $S/gS\cong B(E,\LL,\sigma)$, where $E$ is a smooth elliptic curve, $\LL$ an invertible sheaf on $E$ with $\deg\LL=3$ and $\sigma:E\to E$ an automorphism of $E$. Assume that $\sigma$ has infinite order.
\end{hypothesis}

\begin{remark}
Retain notation from Hypothesis~\ref{standing assumption 2}. The results of Section~\ref{generic sklyanin}, Section~\ref{Rogs blowup review} and Section~\ref{RSS review} will always be applicable to $S$, $T=S^{(3)}$ and $B(E,\LL,\sigma)$.
\end{remark}

Our first proper section is split distinctly into two halves. Section~\ref{g div rings sec} and Section~\ref{endo over g divs} develop a basic theory of $g$-divisible subalgebras of $S$ (Definition~\ref{g div def}). Much of this follows in a similar manner to $g$-divisible subalgebras of $S^{(3)}$. Indeed this part of the section follows \cite[Section~2]{RSS} closely. In the latter part, Section~\ref{orders sec} and Section~\ref{Veronese sec} we turn our attention to the interaction between maximal orders and Veronese rings. In particular we are able to prove (in both $S$ and $S^{(3)}$) that given a $g$-divisible subalgebra $U$, $U$ is a maximal order in $Q_\gr(U)=Q_\gr(S)$ (or $Q_\gr(S^{(3)})) $ if and only if a higher Veronese subring $U^{(d)}$ is a maximal order (Proposition~\ref{g-div max orders up n down}). The case when $Q_\gr(U)=Q_\gr(S^{(3)})$ is something that was missing from \cite{RSS}. A limitation of this result is that it does not pass from $S$ to $S^{(3)}$. More precisely, when $Q_\gr(U)=Q_\gr(S)$, we can only take $d$ to be coprime to $3$.

\subsection{Basics}\label{basics}

Before we start looking at subalgebras of the Sklyanin algebra we state and prove a few simple results about general connected graded domains for which we lack good references.

\begin{lemma}\label{choose x of appropriate deg}
Let $A$ be a cg domain with $Q=Q_\gr(A)$ and such that $Q_1\neq 0$. Then $A_n\neq 0$ for all $n\gg 0$.
\end{lemma}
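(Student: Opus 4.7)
The plan is to exploit the structure of $Q = Q_\gr(A)$ as a skew Laurent polynomial ring over $D_\gr(A)$ to locate two consecutive nonzero graded pieces of $A$, then use a numerical-semigroup argument to fill in all sufficiently large degrees.

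First, I would unpack what $Q_1 \neq 0$ buys us. Pick any nonzero $q \in Q_1$. Since $Q$ is the Ore localization of $A$ at its nonzero homogeneous elements, we may write $q = a b^{-1}$ with $a, b \in A$ both homogeneous and nonzero. Comparing degrees forces $\deg a = \deg b + 1$. Thus there exists some integer $k \geq 0$ with $A_k \neq 0$ and $A_{k+1} \neq 0$.

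Next I split on $k$. If $k = 0$, choose any $0 \neq y \in A_1$; since $A$ is a domain, $y^n \in A_n$ is nonzero for every $n \geq 0$, and we are done. If $k \geq 1$, pick $0 \neq x \in A_k$ and $0 \neq y \in A_{k+1}$. Because $A$ is a domain, every product $x^i y^j$ is nonzero, lying in $A_{ki + (k+1)j}$. Hence $A_n \neq 0$ whenever $n$ lies in the numerical semigroup generated by $k$ and $k+1$. Since $\gcd(k, k+1) = 1$, the complement of this semigroup in $\N$ is finite (this is the classical Sylvester--Frobenius fact for two coprime generators), so $A_n \neq 0$ for all $n \gg 0$.

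No serious obstacle is expected; the only mild point is producing the two consecutive nonzero degrees from the hypothesis $Q_1 \neq 0$, which is immediate from the Ore form of elements of $Q$. The numerical-semigroup step is elementary, and the domain hypothesis on $A$ is exactly what keeps the products $x^i y^j$ nonzero.
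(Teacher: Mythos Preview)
Your proof is correct and follows essentially the same approach as the paper: extract from $Q_1 \neq 0$ two nonzero homogeneous elements of $A$ in consecutive degrees, then use products of these (together with the domain hypothesis) to fill in all sufficiently large degrees. The paper carries out the last step by an explicit construction---writing down $y^n, y^{n-1}x, \dots, x^n$ to get a run of $n+1$ consecutive nonzero degrees and then shifting by powers of $y$---whereas you invoke the numerical-semigroup fact for coprime generators $k,k+1$; these are the same idea.
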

\begin{proof}
Since $Q_1\neq 0$ there exist nonzero homogenous $x,y\in A$ with $\deg x=n+1$ and $\deg y=n$ for some $n\geq 0$ so that $xy^{-1}\in Q_1$. First note that because $A$ is a domain $y^n, y^{n-1}x, \dots, yx^{n-1},x^n$ are nonzero elements of $A_{n^2}, A_{n^2+1},\dots, A_{n^2+n}$ respectively. Then for $m\geq n^2$, write $m=n^2+kn+r$ with $k\geq 0$ and $0\leq r\leq n-1$ then $0\neq y^kA_{n^2+r} \subseteq A_m$; proving $A_m \neq 0$ for all $m\geq n^2$.
\end{proof}

\begin{remark}\label{Q_1 neq 0}
The assumption $Q_\gr(A)_1\neq 0$ in Lemma~\ref{choose x of appropriate deg} does not cause any problems. Given any cg domain $A$, it is clear from Definition~\ref{Qgr and Dgr} that we can always regrade such that $Q_\gr(A)_1\neq 0$ holds.
\end{remark}

\begin{lemma}\label{Ab ehd (Ageqn)B}
Let $A$ be a cg domain with graded quotient ring $Q=Q_\gr(A)$. Suppose $M,X\subseteq Q$ are finitely generated graded right $A$-modules such that $M\ehd X$, and that $N,Y\subseteq Q$ are finitely generated graded left $A$-modules such that $N\ehd Y$. Then $MN\ehd XY$.
\end{lemma}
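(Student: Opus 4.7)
The plan is to show both inclusions $(MN)_n \subseteq (XY)_n$ and $(XY)_n \subseteq (MN)_n$ for all $n$ sufficiently large; by the symmetric roles of $(M,N)$ and $(X,Y)$ in the hypotheses, it suffices to prove one of them, so I would focus on $(MN)_n \subseteq (XY)_n$.

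First I would fix an integer $K$ with $M_k = X_k$ and $N_k = Y_k$ for all $k \geq K$, and choose finite homogeneous generating sets $\{x_1,\ldots,x_s\}$ for $X$ as a right $A$-module and $\{y_1,\ldots,y_t\}$ for $Y$ as a left $A$-module; let $D$ be the maximum of the degrees $\deg x_\ell$ and $\deg y_k$. For any $n \geq 2K + D$ and any decomposition $i + j = n$, we must have $i \geq K$ or $j \geq K$.

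The main step is to show that $m n' \in (XY)_n$ whenever $m \in M_i$ and $n' \in N_j$ with $i + j = n \geq 2K + D$. If $j \geq K$, then $n' \in N_j = Y_j$, and writing $n' = \sum_k a_k y_k$ with $a_k \in A_{j - \deg y_k}$ gives $mn' = \sum_k (m a_k)\, y_k$. The degree estimate $m a_k \in M_{n - \deg y_k}$ together with $n - \deg y_k \geq K$ yields $m a_k \in X_{n - \deg y_k}$, so $mn' \in XY$. The case $i \geq K$ is handled dually: $m \in M_i = X_i$, and decomposing $m = \sum_\ell x_\ell b_\ell$ via the right $A$-generators of $X$ lets us rewrite $mn' = \sum_\ell x_\ell (b_\ell n')$, where $b_\ell n' \in N_{n - \deg x_\ell} = Y_{n - \deg x_\ell}$ by the same estimate.

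The real obstacle is simply the ``mismatched'' degree range $[0,K)$, where $M$ and $X$ (or $N$ and $Y$) can genuinely differ. The trick above circumvents this: when one factor of a product lies in the bad range, the other must land in the agreement range $[K,\infty)$ by size alone, after which a single rewrite via the finite generating set absorbs a compensating element of $A$ into the bad factor and pushes the entire product into $X \cdot Y$. The reverse inclusion $(XY)_n \subseteq (MN)_n$ follows by running the same argument with the roles of $(M,N)$ and $(X,Y)$ swapped, using the finite generation of $M$ and $N$ in place of that of $X$ and $Y$.
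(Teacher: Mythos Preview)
Your proof is correct and follows essentially the same approach as the paper's: both use finite homogeneous generating sets and a degree-shift trick to push one factor of a product into the range where the two modules agree. The only organisational difference is that the paper first reduces to showing $MN \ehd M_{\geq k}N_{\geq \ell}$ (using generators of $M$ and $N$), whereas you argue the inclusion $(MN)_n \subseteq (XY)_n$ directly via a case split on which of $i,j$ exceeds $K$, using generators of $X$ and $Y$; this is a cosmetic variation of the same idea.
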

\begin{proof}
Let $k,\ell \geq 0$ be such that $M_{\geq k}=X_{\geq k}$ and $N_{\geq\ell}=Y_{\geq \ell}$. By repeating the argument for $X$ and $Y$, it is enough to show $MN\ehd M_{\geq k}N_{\geq\ell}$. Write
$$M=x_1A+\dots + x_nA\;\text{ and }\;N=Ay_1+\dots +Ay_m\;\text{ with $x_i,y_j$ homogenous},$$
 then $MN=\sum_{i,j}x_iAy_j$. Let $d=\max_{i,j}\{\deg x_i,\deg y_j\}$. We claim that the inclusion $(MN)_{\geq k+2d}\subseteq M_{\geq k}N$ holds. Take a homogenous $z\in (MN)_{\geq k+2d }$, then
$$z=\sum_{i,j}x_ia_{ij}y_j=\sum_j\left(\sum_i x_ia_{ij}\right)y_j, \; \text{ for some homogenous } a_{ij}\in A.$$
Since $\deg(z)\geq k+2d$, we have $k+2d\leq \deg (x_ia_{ij}y_j)\leq 2d+\deg (a_{ij})$ for each $i,j$. This forces $\deg(a_{ij})\geq k$. Therefore, for each $j$, $\sum_i x_ia_{ij}\in M_{\geq k}$. Thus $z\in M_{\geq k}N$, proving the claim. Since we have the obvious inclusion $M_{\geq_k}N\subseteq MN$, the claim shows that $MN\ehd M_{\geq k}N$. By replacing $M$ by $M_{\geq k}$ a symmetric argument with $N$ shows $MN\ehd M_{\geq k}N_{\geq\ell}$ as required.
\end{proof}

Recall Remark~\ref{Q always exists} which observes that our rings are all Ore domains. We give a couple of consequences of the Ore condition \cite[1.13]{MR}.

\begin{lemma}\label{Ore argument}
Let $A$ be a $\N$-graded domain with graded quotient ring $Q=Q_\gr(A)$.
\begin{enumerate}[(1)]
\item For any homogenous $x_1,\dots,x_n\in Q$, we can choose homogenous elements $a_1,\dots,a_n,c$ of $A$ with $c\neq 0$, and such that  $x_i=a_ic^{-1}$ for each $i$.
\item Let $M$ be a finitely generated graded right $A$-submodule of $Q$. Then there exists a nonzero $c\in A$ such that $cM\subseteq A$.
\end{enumerate}
\end{lemma}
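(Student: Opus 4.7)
The plan is to reduce both statements to the (left and right) Ore condition for the set $\mathcal{C}_\gr$ of nonzero homogeneous elements of $A$; this holds by Remark~\ref{Q always exists} together with the fact (recorded in Definition~\ref{Qgr and Dgr}) that $Q_\gr(A)$ is obtained by localizing $A$ at the Ore set $\mathcal{C}_\gr$, so in particular every element of $Q$ is a left or right fraction of homogeneous elements and $\mathcal{C}_\gr$ satisfies the Ore condition on both sides.

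For (1), first write each $x_i = b_i d_i^{-1}$ with $b_i \in A$ and $d_i \in \mathcal{C}_\gr$ homogeneous, which is possible directly from the definition of $Q_\gr(A)$. I would then induct on $n$ to produce a common right denominator. For the inductive step, suppose $x_1, \dots, x_{n-1}$ have been rewritten over a common denominator $c \in \mathcal{C}_\gr$, and $x_n = b_n d_n^{-1}$. Applying the right Ore condition to $c, d_n \in \mathcal{C}_\gr$ yields $t \in A$ and $t' \in \mathcal{C}_\gr$ with $c t' = d_n t$; taking homogeneous components on both sides (and using $d_n t' \ne 0$) one may moreover choose $t, t'$ homogeneous. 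Then $c' := c t' \in \mathcal{C}_\gr$ is a common right denominator for all of $x_1, \dots, x_n$, since $x_i = b_i c^{-1} = (b_i t')(c')^{-1}$ for $i < n$ and $x_n = b_n d_n^{-1} = (b_n t)(c')^{-1}$.

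For (2), since $M$ is graded it is generated by finitely many homogeneous elements, say $M = m_1 A + \dots + m_k A$ with each $m_i \in Q$ homogeneous. Because $\mathcal{C}_\gr$ also satisfies the \emph{left} Ore condition, the mirror of the argument above produces a nonzero homogeneous $c \in A$ and homogeneous $a_i \in A$ with $m_i = c^{-1} a_i$. Then $c m_i = a_i \in A$ for each $i$, so
\[ cM = \sum_{i=1}^k (c m_i) A \subseteq A, \]
as required. In summary, both statements are pure bookkeeping on the Ore condition; the only minor point is to maintain homogeneity throughout, and this is automatic because $\mathcal{C}_\gr$ consists of homogeneous elements. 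No step presents a substantive obstacle.
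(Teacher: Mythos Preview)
Your proof is correct and follows essentially the same approach as the paper: the paper simply cites \cite[Lemma~4.21]{GW} for (1) rather than spelling out the common-denominator induction you give, and for (2) both arguments are identical (use the left-handed version of (1) to write each generator as $c^{-1}a_i$ and conclude $cM\subseteq A$). The only quibble is a typo in your inductive step (``$d_n t' \ne 0$'' should presumably be ``$ct' \ne 0$''), and you could streamline slightly by noting that since $\mathcal{C}_\gr$ is itself an Ore set the element $t'$ it produces is already homogeneous, so only $t$ needs a homogeneous-component argument.
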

\begin{proof}
Since $Q_\gr(A)$ exists $A$ must be an Ore domain. (1) is then a simple application of the Ore condition (see \cite[Lemma~4.21]{GW}). For (2) write $M=x_1A+\dots +x_nA$ with each $x_i$ homogenous. By the left hand version of (1), we can pick a nonzero $c,a_i\in A$, with $c\neq 0$ such that $x_i=c^{-1}a_i$. In which case
\begin{equation*} cM=c(c^{-1}a_1A+\dots +c^{-1}a_iA)=a_1A+\dots +a_nA\subseteq A.  \qedhere \end{equation*}
\end{proof}

Another standard application of the Ore condition is the following.

\begin{lemma}\label{Hom in Qgr}
Let $A$ be a graded domain with graded quotient ring $Q=Q_\gr(A)$. Let $M,N$ be nonzero right $A$-submodules of $Q$. Then we can identify
\begin{equation*}\pushQED{\qed}  \Hom_R(M,N)=\{ q\in Q\,|\; qM\subseteq N\}. \qedhere \end{equation*}
\end{lemma}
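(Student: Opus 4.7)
The plan is to prove the two inclusions separately; the forward inclusion is essentially formal and the reverse inclusion is a standard application of the Ore condition via Lemma~\ref{Ore argument}(1).

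For the inclusion $\{q\in Q\,|\; qM\subseteq N\}\subseteq \Hom_A(M,N)$, I observe that if $q\in Q$ satisfies $qM\subseteq N$, then left multiplication by $q$ defines a map $\phi_q:M\to N$, $m\mapsto qm$. Since multiplication in $Q$ is associative and $A$-action on $M$ and $N$ is by right multiplication from $Q$, this map is a right $A$-module homomorphism. The assignment $q\mapsto \phi_q$ is injective because $M$ is nonzero (if $qm=0$ for all $m\in M$ and $M\ni m_0\neq 0$, then $q=\phi_q(m_0)m_0^{-1}=0$ since $m_0$ is invertible in $Q$).

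For the reverse inclusion, let $\phi\in\Hom_A(M,N)$. Since $M\neq 0$, pick any homogeneous $0\neq m_0\in M$; this is invertible in $Q$ (every nonzero homogeneous element of a graded quotient ring is invertible). Define
$$q=\phi(m_0)m_0^{-1}\in Q.$$
I claim $\phi(m)=qm$ for every $m\in M$. Given such $m$, the element $m_0^{-1}m\in Q$ can be written, by Lemma~\ref{Ore argument}(1), in the form $m_0^{-1}m=bc^{-1}$ with $b,c\in A$ and $c\neq 0$. Multiplying through gives $mc=m_0 b$, an identity in $M$ (since $M$ is a right $A$-module). Applying $\phi$ and using $A$-linearity on the right,
$$\phi(m)c=\phi(mc)=\phi(m_0 b)=\phi(m_0)b.$$
Since $c\neq 0$ is invertible in $Q$, we conclude
$$\phi(m)=\phi(m_0)bc^{-1}=\phi(m_0)m_0^{-1}m=qm,$$
and in particular $qM=\phi(M)\subseteq N$. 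So $\phi=\phi_q$ with $q\in\{q\in Q\,|\; qM\subseteq N\}$.

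The only subtle point is ensuring $m_0^{-1}m$ lies in a form where Lemma~\ref{Ore argument}(1) applies, but this is immediate since $m_0^{-1}m$ is an element of $Q$ (in fact $\deg 0$ if one picks $m$ homogeneous of the same degree as $m_0$, but homogeneity is irrelevant here). Independence of the choice of $m_0$ follows from the computation itself: any other choice produces the same $q$, as $q$ is determined by $\phi$ on any single nonzero element of $M$.
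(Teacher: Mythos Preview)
The paper does not give a proof of this lemma: it is introduced as ``another standard application of the Ore condition'' and the $\qed$ box is placed at the end of the displayed statement. Your argument is the standard one and is essentially correct.

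There is, however, one hidden assumption you should flag. You write ``pick any homogeneous $0\neq m_0\in M$'', and later use that $m_0$ is invertible in $Q$. The lemma as stated does not require $M$ to be a \emph{graded} submodule of $Q$, so nothing guarantees that $M$ contains a nonzero homogeneous element. Without this the conclusion can genuinely fail: take $A=\Bbbk[x]$, $Q=Q_\gr(A)=\Bbbk[x,x^{-1}]$, $M=(1+x)A\subseteq Q$ and $N=A$; the $A$-linear map $(1+x)f\mapsto f$ is not left multiplication by any $q\in Q$, since that would force $q=(1+x)^{-1}\notin\Bbbk[x,x^{-1}]$. In practice the paper only invokes the lemma for graded $M,N$ (see Notation~\ref{Hom inside Q} immediately after), so your implicit restriction is harmless in context; it would be cleanest simply to add ``graded'' to the hypothesis on $M$, after which your proof goes through verbatim. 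A minor related point: your appeal to Lemma~\ref{Ore argument}(1) is to a statement about homogeneous elements, but the extension to arbitrary $m_0^{-1}m\in Q$ (write it as a sum of homogeneous pieces and take a common right denominator) is immediate, as you note.
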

%\begin{proof}
%It is clear that a $q\in Q$ satisfying $qM\subseteq N$ will give rise to a homomorphism $M\to N$. Conversely let $\theta: M\to N$ be an arbitrary homomorphism of right $A$-modules. We extend $\theta$ to a homomorphism of $\widetilde{\theta}:Q=MQ\to NQ=Q$ by $xq\mapsto \theta(x)q$ where $x\in M$ and $q\in Q$. We must check that $\widetilde{\theta}$ is well-defined. Suppose $xq=yr$ for $x,y\in M$ and $q,r\in Q$. By Lemma~\ref{Ore argument}(1) $q=ac^{-1}$ and $r=bc^{-1}$ for some homogenous $a,b,c\in A$ with $c\neq 0$. We then have $xa=yb\in M$, so $\theta(x)a=\theta(xa)=\theta(yb)=\theta(y)b$. It follows
%$$\widetilde{\theta}(xq)=\theta(x)q=\theta(x)ac^{-1}=\theta(y)bc^{-1}=\theta(y)r=\widetilde{\theta}(yr).$$
%showing $\widetilde{\theta}$ is well-defined map. It is clear that then $\widetilde{\theta}$ is homomorphism of right $Q$-modules satisfying $\widetilde{\theta}|_M=\theta$. Now set $q=\widetilde{\theta}(1)$. Then for any $x\in M$, $\theta(x)=\widetilde{\theta}(1x)=\widetilde{\theta}(1)x=qx\in N$.
%\end{proof}

We make the identification in Lemma~\ref{Hom in Qgr} a standard throughout.

\begin{notation}\label{Hom inside Q}\index[n]{homa@$\Hom_A(M,N)$}\index[n]{homa@$\Hom_A(M,N)$}
Let $A$ be a cg domain with graded quotient ring $Q=Q_\gr(A)$. Let $M,N$ be finitely generated graded nonzero right $A$-submodules of $Q$. Via Lemma~\ref{Hom in Qgr} we will identify $\Hom_A(M,N)=\{q\in Q \, |\; qM\subseteq N\}.$ In particular \index[n]{end@$\End_A(M)$}\index[n]{M@$M^*$}
$$\End_A(M)=\{q\in Q \, |\; qM\subseteq M\} \;\text{ and }\; M^*=\Hom_A(M,A)=\{q\in Q \, |\; qM\subseteq A\}.$$
Moreover $\Hom_A(M,N)$ is a graded subspace of $Q$. We also make the analogous identifications on the left.
\end{notation}

\begin{lemma}\label{End is cg}
Let $R$ be a cg domain with $Q_\gr(R)=Q$, and $M\subseteq Q$ a finitely generated nonzero graded right $R$-module. Then $U=\End_R(M)$ is also a connected graded domain satisfying $Q_\gr(U)=Q$.
\end{lemma}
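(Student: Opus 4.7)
The plan is to verify each piece of the definition of a connected graded domain (Definition~\ref{cg domain}) in turn, and then produce enough homogeneous elements of $U$ to recover $Q$ as its graded quotient ring. Throughout we identify $U$ with its image in $Q$ via Notation~\ref{Hom inside Q}.

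First I would check that $U$ is a $\Z$-graded subring of $Q$. Closure under addition, multiplication, and the presence of $1$ are immediate from $U=\{q\in Q : qM\subseteq M\}$. For the grading, write any $q\in U$ as $q=\sum q_n$ with $q_n\in Q_n$, and pick a homogeneous generator $m$ of $M$; since $qm=\sum q_n m\in M$ with each $q_n m$ homogeneous and $M$ graded, each $q_n m$ lies in $M$. Running over a finite generating set shows $q_n\in U$, so $U=\bigoplus_n U_n$. As a subring of the domain $Q$, $U$ is a domain.

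Next I would show $U$ is $\N$-graded with finite-dimensional pieces. Fix homogeneous generators $m_1,\ldots,m_k$ of $M$, and let $d=\min_i\deg m_i$. If $q\in U_n$ with $n<0$, then for any nonzero $m_0\in M_d$ the product $qm_0$ would have degree $<d$ and therefore vanish in $M$; since $Q$ is a domain and $m_0\neq 0$, this forces $q=0$. Thus $U_n=0$ for $n<0$. For $n\geq 0$, the map $U_n\to M_{n+d}$, $q\mapsto qm_0$, is $\Bbbk$-linear and injective by the same argument; since $M_{n+d}\subseteq\sum_i m_iR_{n+d-\deg m_i}$ is finite-dimensional (as $R$ is cg), so is $U_n$. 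Applying this to $n=0$ makes $U_0$ a finite-dimensional $\Bbbk$-subalgebra of $Q$. Being a finite-dimensional domain over the algebraically closed field $\Bbbk$, $U_0$ is Artinian and hence a division algebra, which forces $U_0=\Bbbk$. This completes the verification that $U$ is cg.

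Finally, to see $Q_\gr(U)=Q$, it suffices to show every nonzero homogeneous $q\in Q$ is of the form $a^{-1}b$ with $a,b\in U$ nonzero homogeneous. The trick is to apply Lemma~\ref{Ore argument}(2) to the finitely generated graded right $R$-submodule $N=M+qM\subseteq Q$: there exists a nonzero homogeneous $c\in R$ with $cN\subseteq R$, so simultaneously $cM\subseteq R$ and $cqM\subseteq R$. Fixing any nonzero homogeneous $m\in M$, the computations
\[
(mc)M=m(cM)\subseteq mR\subseteq M \quad\text{and}\quad (mcq)M=m(cqM)\subseteq mR\subseteq M
\]
show that both $a:=mc$ and $b:=mcq$ lie in $U$, are homogeneous, and are nonzero since $Q$ is a domain. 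Then $q=a^{-1}b\in Q_\gr(U)$, and the reverse inclusion $Q_\gr(U)\subseteq Q$ is automatic.

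The only nontrivial step is the last one; the simultaneous Ore denominator obtained by applying Lemma~\ref{Ore argument}(2) to the combined module $M+qM$ (rather than $M$ alone) is the key device, and with it in hand the remaining verifications are essentially formal.
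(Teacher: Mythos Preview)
Your proof is correct. The verification that $U$ is a connected graded domain follows the same outline as the paper's proof, though you are more explicit about why $U$ is $\N$-graded and why the graded pieces are finite-dimensional; the paper simply asserts these and then argues $U_0=\Bbbk$ exactly as you do.

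Where you genuinely diverge is in proving $Q_\gr(U)=Q$. The paper passes to the auxiliary ring $U'=\End_R(cM)$ for a single $c$ with $cM\subseteq R$, observes that $cM\subseteq U'$ (since $(cM)(cM)\subseteq cM$), and uses this to write any $s^{-1}r\in Q$ with $r,s\in R$ as $(ys)^{-1}(yr)$ for $y\in cM$; it then transfers the conclusion back to $U$ via the conjugation $U'=cUc^{-1}$. Your approach instead handles each homogeneous $q\in Q$ individually: by applying Lemma~\ref{Ore argument}(2) to $M+qM$ rather than $M$ alone, you obtain a simultaneous denominator $c$ with $cM\subseteq R$ and $cqM\subseteq R$, and then $mc,\,mcq\in U$ directly for any nonzero homogeneous $m\in M$. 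This is more elementary and avoids the auxiliary ring and the conjugation step entirely. The paper's route has the minor structural payoff of exhibiting $cM$ as a submodule of $U'$, but for the purpose of this lemma your argument is cleaner.
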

\begin{proof}
By Lemma~\ref{Hom in Qgr} we can identify $U=\{q\in Q \, |\; qM\subseteq M\}$. It is then clear $U$ is an $\N$-graded domain and $Q_\gr(U) \subseteq Q$. Let $M=x_1R+\dots+x_nR$, with $x_i$ homogenous, say $\deg x_i=d_i$. Then, for $j\in \Z$, $M_j=\sum x_iR_{j-d_i}$, and so $\dim_\Bbbk M_j<\infty$. Now fix $k\in \N$ and $j\in \Z$ such that $M_j\neq 0$. We have $U_kM_j\subseteq M_{k+j}$ which forces $\dim_\Bbbk U_k<\infty$. Finally, $U_0$ is an artinian domain, hence division ring. It is finite dimensional over a central algebraically closed field $\Bbbk$; this ensures $U_0=\Bbbk$. Thus $U$ is connected graded.\par
It remains to show $Q_\gr(U)=Q$. By Lemma~\ref{Ore argument}(2) there exists a nonzero homogenous $c\in R$ such that $cM\subseteq R$. Let $U'=\End_R(cM)$, then one can check
\begin{equation}\label{U'=cUc^-1} U'=cUc^{-1}\;\text{ and }\;U=c^{-1}U'c.\end{equation}
 Clearly also $Q_\gr(U')\subseteq Q$. Moreover, $cM\subseteq U'$ because $(cM)(cM)\subseteq cMR=cM$. Then given $s^{-1}r\in Q$ with $r,s\in R$ and $y\in cM$, we have $s^{-1}r=(ys)^{-1}(yr)\in Q_\gr(U')$, showing $Q_\gr(U')=Q$. Using (\ref{U'=cUc^-1}), if $uv^{-1}\in Q_\gr(U')$ with $u,v\in U$, then
$$Q_\gr(U)\ni (c^{-1}uc)(c^{-1}vc)^{-1}=c^{-1}(uv^{-1})c.$$
It follows $Q_\gr(U)\supseteq c^{-1}Q_\gr(U')c=c^{-1}Qc=Q$.
\end{proof}

Lastly we present a standard lemma on the Cohen-Macaulay property. The notation relegating to GK-dimension can be found in Notation~\ref{GK dimension}; it is applicable thought this thesis.

\begin{lemma}\label{CM lemma}
Let $A$ be a cg noetherian domain with $Q_\gr(A)=Q$. Suppose that $A$ is Auslander-Gorenstein and Cohen-Macaulay of GK-dimension at least 2.
\begin{enumerate}[(1)]
\item There does not exist a right nor left $A$-module $M$ such that $A\subsetneq M\subseteq Q$ and $\dim_\Bbbk M/A <\infty$.
\item  If $B$ is another cg subalgebra of $Q$ such that $B\ehd A$, then $B\subseteq A$.
\item  Let $M,N\subseteq Q$ be finitely generated right $A$-modules. If  $M\ehd N$, then $M^*=N^*$.
\end{enumerate}
\end{lemma}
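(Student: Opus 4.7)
The overall plan is to prove (1) using the Auslander-Gorenstein and Cohen-Macaulay hypotheses via a short exact sequence and a grade computation, and then to deduce (2) and (3) as formal module-theoretic consequences of (1) by exhibiting suitable $A$-submodules of $Q$ that extend $A$ by a finite-dimensional $\Bbbk$-space.

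For (1), suppose for contradiction that $M$ is a right $A$-module with $A \subsetneq M \subseteq Q$ and $\dim_\Bbbk M/A < \infty$. Applying $\Hom_A(-, A)$ to $0 \to A \to M \to M/A \to 0$ yields the long exact sequence
\begin{equation*}
0 \to \Hom_A(M/A, A) \to \Hom_A(M, A) \to A \to \Ext^1_A(M/A, A) \to \cdots.
\end{equation*}
Since $\dim_\Bbbk M/A < \infty$, we have $\GKdim(M/A) = 0$, so the Cohen-Macaulay condition gives $j(M/A) = \GKdim(A) \geq 2$; in particular $\Ext^i_A(M/A, A) = 0$ for $i \leq 1$. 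Hence the restriction map $\Hom_A(M, A) \to A$ is an isomorphism. Under Notation~\ref{Hom inside Q} this map is precisely the inclusion $\{q \in Q : qM \subseteq A\} \hookrightarrow A$, so $\{q : qM \subseteq A\} = A$. Since $1 \in A$, this gives $M = 1 \cdot M \subseteq A$, contradicting $A \subsetneq M$. The left-module case is symmetric, as $A$ is Auslander-Gorenstein and Cohen-Macaulay on both sides.

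Granting (1), statements (2) and (3) follow quickly. For (2), choose $n$ with $A_{\geq n} = B_{\geq n}$ and set $M = A + BA \subseteq Q$, a right $A$-submodule of $Q$ containing both $A$ and $B$. For $b \in B_i$ with $i < n$ and $a \in A_j$ with $j \geq n$, we have $a \in A_j = B_j$, so $ba$ is a product inside $B$ of degree $i+j \geq n$, which lies in $B_{\geq n} = A_{\geq n} \subseteq A$. Thus $bA$ agrees with a submodule of $A$ outside the finite-dimensional piece $bA_{<n}$, and summing over a $\Bbbk$-basis of $B_{<n}$ gives $\dim_\Bbbk M/A < \infty$. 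By (1), $M = A$, so $B \subseteq BA \subseteq A$. For (3), put $L = M + N \subseteq Q$, a right $A$-module with $L/M$ and $L/N$ finite-dimensional; it suffices to show $M^* = L^*$, since the symmetric argument yields $N^* = L^*$. The containment $L^* \subseteq M^*$ is automatic. Conversely, for $q \in M^*$ the module $qL$ is a right $A$-submodule of $Q$ containing $qM \subseteq A$, with $qL/qM$ a quotient of $L/M$ and hence finite-dimensional; so $(A + qL)/A$ is finite-dimensional, and (1) forces $qL \subseteq A$, i.e.\ $q \in L^*$.

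The main obstacle is really (1): the whole point of the Cohen-Macaulay and Auslander-Gorenstein hypotheses, together with $\GKdim(A) \geq 2$, is to push the grade of the torsion module $M/A$ up to at least $2$, so that both $\Hom(M/A, A)$ and $\Ext^1_A(M/A, A)$ vanish simultaneously. Once (1) is established, (2) and (3) are purely formal manipulations producing finite-dimensional extensions of $A$ inside $Q$ and invoking (1) to collapse them.
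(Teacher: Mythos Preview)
Your proof is correct and follows the same overall architecture as the paper: establish (1) via the Cohen--Macaulay grade computation, then reduce (2) and (3) to (1) by manufacturing a finite-dimensional extension of $A$ inside $Q$.

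The execution differs in small but pleasant ways. For (1), the paper argues that the sequence $0\to A\to M\to M/A\to 0$ cannot split (since a nonzero finite-dimensional submodule of $Q$ would contain a copy of $A$), hence $\Ext^1_A(M/A,A)\neq 0$, contradicting $j(M/A)\geq 2$; you instead read the long exact sequence forward, using the vanishing of $\Ext^0$ and $\Ext^1$ to get $M^*\cong A$ and then $1\in M^*$ to force $M\subseteq A$. For (2), the paper passes through $I^*$ where $I=A_{\geq n}=B_{\geq n}$, picks $y\in I^*\setminus A$, and shows $(yA+A)/A$ is finite-dimensional; your construction $M=A+BA$ is more direct and avoids the contradiction framing. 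For (3), the paper picks $x\in M^*N\setminus A$ and shows $xA_{\geq n}\subseteq A$; your use of $L=M+N$ and the surjection $L/M\twoheadrightarrow qL/qM$ is again a cleaner packaging of the same idea. None of these differences is substantive, but your versions of (2) and (3) are arguably tidier.
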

\begin{proof}(1). Assume for a contradiction that such $M$ exists, and set $N=M/A$. Consider the short exact sequence
$$0\longrightarrow A\longrightarrow M\longrightarrow N\longrightarrow 0.$$
Since $A$ is a domain, this sequence cannot be split. It is well known (see \cite[Proposition~7.24]{Rot}) that this implies $\Ext_A^1(N,A)\neq 0$, and hence $j(N)\leq 1$. On the other hand, because $A$ is Cohen-Macaulay, we have $\GK(N)+j(N)=\GK(A)$. Now $\GK(N)=0$ because $\dim_\Bbbk N<\infty$, and $\GK(A)\geq 2$ by assumption; thus $j(N)\geq 2$. This contradicts $j(N)\leq 1$. \par
(2). Assume that $B\not\subseteq A$. Let $n\gg0$ and set $I=A_{\geq n}=B_{\geq n}$. Consider $I$ as a right $A$-module. Then as in Notation~\ref{Hom inside Q},
$$I^{*}=\Hom_A(I,A)=\{x\in Q\,|\; xI\subseteq A\}.$$
Clearly both $I^*\supseteq A$ and $I^*\supseteq B$; in particular, $I^*\supsetneq A$ because $B\not\subseteq A$. Choose homogenous $y\in I^*\setminus A$, then we have a natural surjection
$A/I\to (yA+A)/A$ given by $a+I\mapsto ya+A$. It is well defined because $y\in I^*$. Hence $M=(yA+A)/A$ is finite dimensional, which contradicts (1).\par

(3). Assume that $M^*\neq N^*$. Without loss of generality say $M^*\not\subseteq N^*$ (otherwise relabel), then $M^*N\not\subseteq A$. Pick a homogenous $x\in M^*N\setminus A$, and consider the right $A$-module $X=xA+A\supsetneq A$. Because $M\ehd N$, for $n\gg 0$ we have
$$xA_n\subseteq M^*NA_n=M^*MA_n\subseteq M^*M\subseteq A.$$
Hence $\dim_\Bbbk X/A <\infty$, again contradicting (1).
\end{proof}

\subsection{$g$-divisible rings}\label{g div rings sec}

What is clear from the work Rogalski, Sierra and Stafford is that the property of $g$-divisibility will play an important role: a $g$-divisible subring is significantly easier to describe as properties pass a lot more smoothly between a ring and its image in $\ovl{S}=B(E,\LL,\sigma)$. Here, and in Section~\ref{endo over g divs}, we will mimic large chunks of \cite[Section~2]{RSS}. In particular, the proofs of Lemma~\ref{ovlR is order} and Lemma~\ref{RSS 2.15} here come from analogous results \cite[Lemma~2.10(1)]{RSS} and \cite[Lemma~2.15]{RSS}. These proofs demonstrate many useful techniques for $g$-divisible rings and are included for the reader's convenience. \par

We work in the slightly bigger ring obtained by inverting elements outside $gS$. 

\begin{lemma}
The set $\mathcal{C}$ consisting of the homogenous elements in $S\setminus gS$ is an Ore set.
\end{lemma}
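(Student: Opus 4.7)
The plan is to verify the two requirements of being an Ore set: multiplicative closure (with $0 \notin \mathcal{C}$) and the two-sided Ore condition. I will bootstrap from the fact that $\mathcal{C}_\gr$, the set of \emph{all} nonzero homogeneous elements, is already known to be Ore in $S$ (by \cite[C.I.1.6]{NV}, since $S$ is a graded noetherian domain), and then use the centrality of $g$ together with the fact that $\overline{S} = B(E,\LL,\sigma)$ is a domain (Theorem~\ref{AV 1.4}) to upgrade to $\mathcal{C}$.

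For multiplicative closure, given homogeneous $c_1, c_2 \notin gS$, the images $\ovl{c_1}, \ovl{c_2}$ are nonzero in the domain $\overline{S}$, so $\ovl{c_1\,c_2} = \ovl{c_1}\ovl{c_2} \neq 0$, i.e.\ $c_1 c_2 \in \mathcal{C}$. Clearly $0 \in gS$, so $0\notin\mathcal{C}$.

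For the right Ore condition, fix homogeneous $s \in S$ and $c \in \mathcal{C}$. First apply the known Ore condition for $\mathcal{C}_\gr$ to obtain homogeneous $s_1 \in S$ and $c_1 \in \mathcal{C}_\gr$ with $sc_1 = cs_1$. If $c_1 \in \mathcal{C}$, we are done. Otherwise write $c_1 = g^n c'$ with $n \geq 1$ maximal and $c' \in \mathcal{C}$ (this is well defined because $c_1$ has finite degree while $g\in S_3$, so the process of pulling out $g$'s must stop). Using centrality of $g$, the equation rewrites as
\[ c s_1 = s c_1 = s g^n c' = g^n s c'. \]
Since $\ovl{S}$ is a domain and $\ovl{c} \neq 0$, an induction on $n$ using the short exact sequence $0 \to S \xrightarrow{g} S \to \ovl{S} \to 0$ shows that $g^n$ divides $s_1$ on the left, say $s_1 = g^n s'$. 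Cancelling the regular element $g^n$ from $g^n s c' = c g^n s' = g^n c s'$ yields $s c' = c s'$ with $c' \in \mathcal{C}$, as required. The left Ore condition is completely symmetric because $g$ is central, so $gS = Sg$ and the whole argument runs with sides reversed.

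The only mildly subtle point is the termination of the $g$-extraction, but this is automatic from homogeneity: a homogeneous element of degree $d$ can lie in $g^n S$ only for $n \leq d/3$, so the maximal $n$ in $c_1 = g^n c'$ exists. Everything else is bookkeeping with the centrality of $g$ and the domain property of $\ovl{S}$, so I do not anticipate a genuine obstacle here.
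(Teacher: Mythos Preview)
Your proof is correct and takes essentially the same approach as the paper: both use that $S$ is already an Ore domain and then exploit the centrality of $g$ together with the complete primeness of $gS$ (equivalently, that $\ovl{S}$ is a domain) to cancel powers of $g$. The only cosmetic difference is that the paper first strips powers of $g$ from the arbitrary element $a$ before invoking the Ore condition, whereas you invoke Ore first and then strip $g$'s from the resulting pair $(c_1,s_1)$; the underlying argument is the same.
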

\begin{proof}
Take homogeneous $a\in S$ and $x\in S\setminus gS$. Write $a=a'g^n$ with $n$ maximal, and hence $a'\notin gS$. Since $S$ is an Ore domain (see Remark~\ref{Q always exists}), there exists nonzero homogenous $b,y\in S$ such that $a'y=xb$. Moreover, because $a',x\notin gS$ and $gS$ is a completely prime ideal, $b\in gS$ if and only if $y\in gS$. Cancelling $g$'s if necessary we may assume $b,y\notin gS$. But then $ay=x(bg^n)$, showing $\mathcal{C}$ is right Ore. Symmetrically $\mathcal{C}$ is left Ore.
\end{proof}

\begin{notation}\label{S_(g)}\index[n]{sg@$\Sg$} Let $S_{(g)}$ denote the homogenous localisation of $S$ at the completely prime ideal $gS$. That is, $S_{(g)}=S\mathcal{C}^{-1}$ where $\mathcal{C}$ is the set consisting of the homogenous elements in $S\setminus gS$. It is clear that $Q_\gr(\Sg)=Q_\gr(S)$ and \begin{equation}\label{ovlS_(g)}\Sg/g\Sg=Q_\gr(S/gS)=\Bbbk(E)[t,t^{-1};\sigma].\end{equation}
For a subset $X\subseteq \Sg$, we extended Notation~\ref{ovlX notation} to $\Sg$: $$\ovl{X}=(X+g\Sg)/g\Sg\subseteq \Bbbk(E)[t,t^{-1};\sigma].\index[n]{x@$\ovl{X}$}$$
 As $S\cap g\Sg=gS$ one has $S/gS\hookrightarrow \Sg/g\Sg$; in particular the notation $\ovl{X}$ is unambiguous.
\end{notation}

Before we start with $g$-divisibility we prove a straight forward fact about $S_{(g)}$.

\begin{lemma}\label{ideals of S_(g)}
The only homogenous right or left ideals of $S_{(g)}$ are those of the form $g^nS_{(g)}$ for some $n\geq0$.
\end{lemma}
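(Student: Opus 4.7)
The plan is to reduce the lemma to a simple normal form for homogeneous elements of $\Sg$. Specifically, I would first prove that every nonzero homogeneous $x\in \Sg$ can be written as $x = g^n u$ with $n\ge 0$ and $u$ a unit of $\Sg$. To see this, write $x = ac^{-1}$ with $a,c\in S$ homogeneous and $c\in \mathcal{C}$ (available from the construction of $\Sg$ together with Lemma~\ref{Ore argument}(1)). Since $S$ is a domain and $g$ has positive degree, there is a largest $n\ge 0$ with $a\in g^n S$; thus $a = g^n a'$ with $a'\in S$ homogeneous and $a'\notin gS$. Then $a'\in \mathcal{C}$, so $a'$ is a unit in $\Sg$; $c\in \mathcal{C}$ is a unit too; and the centrality of $g$ yields $x = g^n (a' c^{-1})$ of the required form. (In passing, $n$ is uniquely determined, because $g$ itself is not a unit in $\Sg$: by (\ref{ovlS_(g)}) the ideal $g\Sg$ is proper.)

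With this normal form in hand, let $I$ be a nonzero homogeneous right ideal of $\Sg$ and let $N$ be the minimum of $n$ as $g^n u$ ranges over nonzero homogeneous elements of $I$ written in normal form; such a minimum exists since we are minimising over a nonempty subset of $\N$. Then $g^N$ itself lies in $I$ (multiply on the right by the unit $u^{-1}$), whence $g^N\Sg \subseteq I$. Conversely, every nonzero homogeneous element of $I$ equals $g^m v$ for some $m\ge N$ and some unit $v$, so lies in $g^N\Sg$; since a graded ideal is generated by its homogeneous elements, $I \subseteq g^N\Sg$. The left-ideal case is identical, using the centrality of $g$ to conclude $g^N\Sg = \Sg g^N$.

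The argument is essentially routine: the only substantive ingredient is the normal form, which in turn rests on $S$ being a domain, $g$ being central, and every homogeneous element of $S\setminus gS$ becoming a unit in $\Sg$. A conceptual gloss on what is going on: by (\ref{ovlS_(g)}), the quotient $\Sg/g\Sg$ is a graded division ring, hence its only homogeneous one-sided ideals are $0$ and the whole ring, and a short $g$-adic induction then forces any homogeneous ideal of $\Sg$ to coincide with some $g^N\Sg$. I do not expect any serious obstacle.
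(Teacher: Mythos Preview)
Your proof is correct and essentially the same as the paper's: both hinge on the fact that homogeneous elements of $\Sg$ outside $g\Sg$ are units (equivalently, $\Sg/g\Sg$ is a graded division ring), and then extract the appropriate power of $g$. The paper argues by choosing the maximal $n$ with $I\subseteq g^n\Sg$ and showing $I=g^n\Sg$, whereas you take the minimal exponent appearing in the normal forms of elements of $I$; these are dual versions of the same argument, and indeed your closing ``conceptual gloss'' is exactly the paper's proof.
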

\begin{proof}
First note from (\ref{ovlS_(g)}) that $S_{(g)}/gS_{(g)}$ is a graded division ring; it therefore has no non-trivial proper homogenous right ideals. Thus, if $I\subsetneq S_{(g)}$ is a nonzero proper homogenous right ideal, then $I\subseteq gS_{(g)}$. Choose $n$ maximal such that $I\subseteq g^nS_{(g)}$ (and so $I\nsubseteq g^{n+1}S_{(g)}$). Write $I=g^nJ$ for some $J\subseteq S$. A quick check shows $J$ is a nonzero right ideal of $S_{(g)}$. If $J\neq S_{(g)}$, then $J\subseteq gS_{(g)}$, which would contradict the choice of $n$. Thus $J=S_{(g)}$ and $I=g^nS_{(g)}$. A symmetric argument proves the statement for left ideals.
\end{proof}

\begin{definition}\label{g-torsion} \index{gtorsion@$g$-torsion}\index{gtorsionfree@$g$-torsionfree}\index[n]{torsgm@$\mathrm{tors}_g(M)$}
Let $R\subseteq S_{(g)}$ be a cg subalgebra containing $g$, and let $M$ be a graded right $R$-module. Put
$$\mathrm{tors}_g(M)=\{a\in M|\, ag^n =0 \, \text{ for some } \, n\geq 0\}.$$
We call $\mathrm{tors}_g(M)$ the $g$-\textit{torsion} submodule of $M$. We say that $M$ is $g$-\textit{torsion} if $\mathrm{tors}_g(M)=M$, and $M$ is $g$-\textit{torsionfree} if $\mathrm{tors}_g(M)=0$.
\end{definition}

\begin{lemma}\label{g-div}
Let $R$ be a cg subalgebra of $\Sg$ containing $g$. Let $M\subseteq S_{(g)}$ be a graded right $R$-module and set $\widehat{M}=\{a\in S_{(g)}|\, ag^n\in M \text{ for some } n\geq 0\}$. Then $\widehat{M}$ is also a graded right $R$-module. Moreover the following are equivalent:
\begin{enumerate}[(1)]
\item $M\cap Sg= gM$;
\item $M=\widehat{M}$;
\item $S_{(g)}/M $ is a $g$-torsionfree $R$-module.
\end{enumerate}
\end{lemma}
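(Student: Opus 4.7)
The plan is to handle the structural claim about $\widehat{M}$ first, then dispatch $(2) \Leftrightarrow (3)$ by unwinding definitions, and finally establish $(1) \Leftrightarrow (2)$ using the centrality of $g$ and the fact that $S_{(g)}$ is a graded domain.

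First I would verify that $\widehat{M}$ is a graded right $R$-submodule of $S_{(g)}$. If $a \in \widehat{M}$ with $ag^n \in M$ and $r \in R$ is homogeneous, then centrality of $g$ yields $(ar)g^n = a(rg^n) = a(g^n r) = (ag^n)r \in M$, so $ar \in \widehat{M}$. Closure under addition is handled by replacing two exponents by their maximum. The grading is inherited since $g$ is homogeneous of degree $3$: given $a = \sum a_i \in \widehat{M}$ with each $a_i$ homogeneous, each piece $a_i g^n$ is the degree-$(i+3n)$ component of $ag^n \in M$, hence lies in $M$, so each $a_i \in \widehat{M}$.

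The equivalence $(2) \Leftrightarrow (3)$ is then a direct translation of definitions: a class $\bar a \in S_{(g)}/M$ is $g$-torsion precisely when $ag^n \in M$ for some $n \geq 0$, that is, precisely when $a \in \widehat{M}$; hence $S_{(g)}/M$ is $g$-torsionfree iff $\widehat{M} \subseteq M$, which, since $M \subseteq \widehat{M}$ always holds, is the same as $M = \widehat{M}$.

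For $(1) \Leftrightarrow (2)$, I would argue as follows. For $(2) \Rightarrow (1)$, the inclusion $gM \subseteq M$ is immediate and centrality gives $gM = Mg \subseteq Sg$ (here we may reduce to $M \subseteq S$, as discussed below), so $gM \subseteq M \cap Sg$; conversely, if $x \in M \cap Sg$, write $x = gy$ with $y \in S \subseteq S_{(g)}$, so $yg = x \in M$ forces $y \in \widehat{M} = M$ and hence $x \in gM$. For $(1) \Rightarrow (2)$, given $a \in \widehat{M}$ pick $n$ minimal with $ag^n \in M$. If $n \geq 1$, then an appeal to Lemma~\ref{ideals of S_(g)} lets us deduce from $(ag^{n-1})g = ag^n \in M \subseteq S$ that $ag^{n-1} \in S$; hence $ag^n \in M \cap Sg = gM$ by $(1)$. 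Writing $ag^n = gm$ with $m \in M$ and cancelling $g$ in the domain $S_{(g)}$ yields $ag^{n-1} = m \in M$, contradicting minimality of $n$.

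The main obstacle is a subtle one about the ambient ring: the statement defines $\widehat{M}$ inside $S_{(g)}$ but condition $(1)$ uses $Sg$ rather than $S_{(g)}g$. The correct interpretation, which the plan uses implicitly, is that $M$ is really a graded $R$-submodule of $S$ (any of the three conditions forces this), and then Lemma~\ref{ideals of S_(g)} shows that if $a \in S_{(g)}$ satisfies $ag^n \in S$ then already $a \in S$; in particular $\widehat{M} \subseteq S$, so the induction step is legitimate. Once this point is pinned down the three equivalences fall into place quickly.
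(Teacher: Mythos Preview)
Your overall approach is the same as the paper's: a routine check that $\widehat{M}$ is a graded $R$-submodule, then $(2)\Leftrightarrow(3)$ by unwinding the definition of $g$-torsionfree, then $(1)\Leftrightarrow(2)$ via a one-step descent on the exponent of $g$. The paper's proof is correspondingly terse.

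You are right to flag the $Sg$ versus $g\Sg$ issue in condition (1); the paper's proof is equally casual about it. But your resolution has a genuine gap. You assert that ``any of the three conditions forces'' $M\subseteq S$, and this is false for (2) and (3): take $M=\Sg$. Then $\widehat M=\Sg=M$ so (2) holds, and $\Sg/M=0$ is vacuously $g$-torsionfree so (3) holds, yet $M\not\subseteq S$, and under the literal reading $M\cap Sg=Sg\neq g\Sg=gM$, so (1) fails. Hence with $Sg$ read literally the three conditions are \emph{not} equivalent, and your proof of $(2)\Rightarrow(1)$ breaks at the step ``$gM\subseteq Sg$''. Separately, your appeal to Lemma~\ref{ideals of S_(g)} for the deduction $ag^n\in S\Rightarrow ag^{n-1}\in S$ is not the right reference; the fact you want is $S\cap g\Sg=gS$, stated immediately after Notation~\ref{S_(g)}.

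The intended reading of condition (1) is $M\cap g\Sg=gM$; the ``$Sg$'' in the statement is a slip for ``$\Sg\,g$''. Under that reading both your argument and the paper's go through cleanly with no detour through $M\subseteq S$: for $(1)\Rightarrow(2)$ one has $ag^n=(ag^{n-1})g\in g\Sg$ automatically, and for $(2)\Rightarrow(1)$ the containment $gM\subseteq g\Sg$ is free.
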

\begin{proof} If $xg^n,yg^m \in M$ then $(x+y)g^{n+m}\in M$, and because $g$ is central, $(xa)g^n\in M$ for all $a\in R$. So $\widehat{M}$ is a right $R$-module, clearly graded. Suppose $M\cap Sg=Mg$. If $x\in \widehat{M}$ then for some $n\geq 1$, $xg^n\in M\cap Sg=Mg$, and so $xg^{n-1}\in M$. By induction we get $x\in M$. The converse is even more obvious. (2) and (3) basically have the same definition.
\end{proof}

\begin{definition}\label{g div def}\index{gdivisible@$g$-divisible}  \index{gdivisiblehull@$g$-divisible hull} \index[n]{mr@$\widehat{M}$, $\widehat{R}$}
Let $R\subseteq S_{(g)}$ be a cg subalgebra containing $g$, and let $M\subseteq S_{(g)}$ be a graded right $R$-module. We say $M$ is \textit{$g$-divisible} if any, and hence all, of the equivalent conditions of Lemma~\ref{g-div} hold. The graded right $R$-module
$$\widehat{M}=\{a\in S_{(g)}|\, ag^n\in M \text{ for some } n\geq 0\} $$
is called the \textit{$g$-divisible hull} of $M$. The ring $R$ is \textit{$g$-divisible} if it is $g$-divisible as a right (equivalently left) module over itself.
\end{definition}

As a general strategy we will be proving many properties in the factor rings $\Sg/g\Sg$ and $S/gS$, and be lifting these up to $\Sg$ and $S$. The significance of the next lemma is therefore clear.

\begin{lemma}\label{ovlR is order}
Let $R$ be a $g$-divisible cg subalgebra of $\Sg$ with $Q_\gr(R)=Q_\gr(S)$. Then $Q_\gr(\ovl{R})=Q_\gr(\ovl{S})$.
\end{lemma}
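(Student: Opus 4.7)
The plan is to work entirely inside the graded division ring $\ovl{\Sg}=\Bbbk(E)[t,t^{-1};\sigma]$ from (\ref{ovlS_(g)}), which by definition equals $Q_\gr(\ovl{S})$. Since $\ovl{R}$ is a graded subring of $\ovl{\Sg}$, the inclusion $Q_\gr(\ovl{R})\subseteq \ovl{\Sg}=Q_\gr(\ovl{S})$ is automatic. For the reverse inclusion, it is enough to show that every nonzero homogeneous element of $\ovl{S}$ lies in $Q_\gr(\ovl{R})$: indeed $Q_\gr(\ovl{R})$ is itself a graded division ring, and the nonzero homogeneous elements of $\ovl{S}$ generate the graded division ring $Q_\gr(\ovl{S})$.

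So fix homogeneous $s\in S$ with $s\notin gS$, so that $\ovl{s}$ is a typical nonzero homogeneous element of $\ovl{S}$. Because $Q_\gr(R)=Q_\gr(S)$, we can write $s=r_1r_2^{-1}$ with $r_1,r_2\in R$ nonzero and homogeneous; equivalently, $sr_2=r_1$ inside $\Sg$. Using $g$-divisibility of $R$, strip off the largest power of $g$ from $r_2$ and write $r_2=g^a r_2^0$ with $r_2^0\in R$ and $r_2^0\notin gR$. Then $r_1=sr_2=g^a(sr_2^0)$, so $g^a$ divides $r_1$ in $\Sg$; applying $g$-divisibility of $R$ again gives $r_1=g^a r_1^0$ for some $r_1^0\in R$. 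Cancelling the central element $g^a$ (valid in the domain $\Sg$) yields $sr_2^0=r_1^0$ in $\Sg$.

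The final step is the only genuine subtlety: we must verify that $\ovl{r_1^0}$ and $\ovl{r_2^0}$ are both nonzero in $\ovl{R}$, so that the identity $\ovl{s}\cdot\ovl{r_2^0}=\ovl{r_1^0}$ actually exhibits $\ovl{s}$ as an element of $Q_\gr(\ovl{R})$. By construction $r_2^0\notin gR$, and since $R$ is $g$-divisible this forces $r_2^0\notin g\Sg$, i.e.\ $\ovl{r_2^0}\neq 0$. On the other hand, $g\Sg$ is a completely prime ideal of $\Sg$ (as $\Sg/g\Sg=\Bbbk(E)[t,t^{-1};\sigma]$ is a domain by (\ref{ovlS_(g)})), and neither $s$ nor $r_2^0$ lies in $g\Sg$, so $r_1^0=sr_2^0\notin g\Sg$ and $\ovl{r_1^0}\neq 0$. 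Hence $\ovl{s}=\ovl{r_1^0}\,\ovl{r_2^0}^{\,-1}\in Q_\gr(\ovl{R})$, completing the proof. The only point where one might expect trouble is ensuring the $g$-power cancellations land us back inside $R$ rather than merely inside $\Sg$, and this is precisely what the two applications of $g$-divisibility achieve.
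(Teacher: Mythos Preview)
Your proof is correct and follows essentially the same idea as the paper's: use $g$-divisibility of $R$ to strip off $g$-powers so that the resulting elements have nonzero image in $\ovl{R}$, and then read off a fraction representation in $Q_\gr(\ovl{R})$. The only organizational difference is that the paper handles the three degree-one generators $x_0,x_1,x_2$ of $S$ simultaneously (using a common left denominator via the Ore argument), whereas you treat one element $s\in S\setminus gS$ at a time; both routes are equally short and neither offers a real advantage over the other.
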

\begin{proof}
Let $S_1=x_0\Bbbk +x_1\Bbbk + x_2\Bbbk$. Since $Q_\gr(R)=Q_\gr(S)$, there exist nonzero homogenous $r_i,z_i\in R$ such that $z_i^{-1}r_i=x_i$. By Lemma~\ref{Ore argument}(1) we may assume $z_0=z_1=z_2=z$, say with $\deg(z)=n$. Hence we see $zS_1\subseteq R_{n+1}$, and it follows $\ovl{z}\ovl{S_1}\subseteq \ovl{R}_{n+1}$. If $\ovl{z}\neq 0$ we're done, as then $Q_\gr(\ovl{R})$ would contain the generating set of $Q_\gr(\ovl{S})$. So suppose $\ovl{z}=0$. Write $z=g^ky$ for some $k\geq 1$ and $y\in \Sg\setminus g\Sg$; as $R$ is $g$-divisible $y\in R$.  Again by $g$-divisibility   $g^kyS_1\subseteq R\cap g^k\Sg=g^kR$, and so $yS_1\subseteq R$. But this means $\ovl{y}\ovl{S_1}\subseteq \ovl{R}$ with $\ovl{y}\neq 0$, and again $Q_\gr(\ovl{R})$ contains the generating set of $Q_\gr(\ovl{S})$.
\end{proof}

As demonstrated in the next result, $g$-divisible rings are ``nice".

\begin{prop}\label{RSS 2.9}
Let $R$ be a $g$-divisible cg subalgebra of $S_{(g)}$ with $Q_\gr(R)=Q_\gr(S)$. Then $R$ is strongly noetherian (in particular noetherian) and finitely generated as a $\Bbbk$-algebra.
\end{prop}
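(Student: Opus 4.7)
The plan is to transfer the desired properties from the image $\ovl{R} \subseteq \Sg/g\Sg = \Bbbk(E)[t,t^{-1};\sigma]$ back to $R$ using the $g$-divisibility hypothesis, mirroring the way Theorem~\ref{sklyanin thm} deduces strong noetherianness of $S$ from that of $B(E,\LL,\sigma)$.

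First I would analyse $\ovl{R}$. By Lemma~\ref{ovlR is order}, $Q_\gr(\ovl{R}) = \Bbbk(E)[t,t^{-1};\sigma]$, so Theorem~\ref{RSS 3.1} applies and supplies an ideal sheaf $\JJ$ together with a $\sigma$-ample invertible sheaf $\HH$ on $E$ with $\ovl{R} \ehd \bigoplus_{n\geq 0} H^0(E,\JJ\HH_n) \subseteq B := B(E,\HH,\sigma)$. Proposition~\ref{B just infinite} then gives that $\ovl{R}$ is both noetherian and finitely generated as a $\Bbbk$-algebra. To promote this to strong noetherianness I would combine the above with Theorem~\ref{B properties}(1), which guarantees that $B$ is strongly noetherian: the quotient $B/\ovl{R}$ is bounded in dimension per degree (the ``size'' being controlled by $H^0(E,\OO_E/\JJ)$), so a tensor-product comparison against an arbitrary commutative noetherian $\Bbbk$-algebra $Z$---noting that $(B/\ovl{R})\otimes_\Bbbk Z$ is a finitely generated $Z$-module in each degree while $B\otimes_\Bbbk Z$ is noetherian---transfers strong noetherianness from $B$ down to $\ovl{R}$.

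With strong noetherianness of $\ovl{R}$ established, I would invoke \cite[Proposition~4.9(1)]{ASZ}---the same tool used to prove Theorem~\ref{sklyanin thm}(2)---to lift the property to $R$: the element $g$ is central and regular in $R$, $R$ is $g$-torsion-free by $g$-divisibility, and $R/gR = \ovl{R}$ is strongly noetherian. For finite generation of $R$, I would lift a finite homogeneous $\Bbbk$-algebra generating set $\ovl{y}_1,\ldots,\ovl{y}_k$ of $\ovl{R}$ to homogeneous elements $y_1,\ldots,y_k \in R$ and set $R' = \Bbbk\langle g, y_1, \ldots, y_k\rangle \subseteq R$. A degree induction then shows $R' = R$: for any homogeneous $x \in R_n$ with $n > 0$, pick $p \in R'$ with $\ovl{p} = \ovl{x}$ in $\ovl{R}$; then $x - p \in R \cap g\Sg = gR$ by $g$-divisibility (Definition~\ref{g div def}), so $x - p = gz$ for some $z \in R_{n-3}$, which lies in $R'$ by the inductive hypothesis.

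The hardest step is the promotion of noetherianness of $\ovl{R}$ to strong noetherianness, as finite-codimension graded subrings of a strongly noetherian ring are not automatically strongly noetherian; a rigorous justification demands careful bookkeeping of the tensor-product comparison sketched above, or else an appeal to results of Keeler or Artin--Zhang on graded subalgebras of $\sigma$-ample twisted homogeneous coordinate rings. The remaining pieces---the lifting via \cite[Proposition~4.9(1)]{ASZ} and the generators-plus-$g$ induction---are then routine by analogy with the corresponding arguments in \cite{RSS} for the 3-Veronese setting.
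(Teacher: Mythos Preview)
Your overall strategy matches the paper's: reduce to $\ovl{R}$ via Lemma~\ref{ovlR is order} and Theorem~\ref{RSS 3.1}, establish that $\ovl{R}$ is noetherian using Proposition~\ref{B just infinite}, then lift strong noetherianness back to $R$ by \cite[Proposition~4.9(1)]{ASZ}. The one substantive divergence is exactly the step you flag as hardest, and there the paper has a clean shortcut you have missed: once $\ovl{R}$ is known to be noetherian, the fact that $\GK(\ovl{R})=2$ (immediate from Theorem~\ref{RSS 3.1}) lets one invoke \cite[Theorem~4.24]{ASZ}, which asserts that any connected graded noetherian $\Bbbk$-algebra of GK-dimension at most~$2$ is automatically strongly noetherian. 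Your tensor-product comparison with $B$ is therefore unnecessary, and as you yourself note it is not obviously rigorous---noetherianness does not in general descend along graded subring inclusions of bounded codimension after base change.

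A minor further simplification: the paper does not run a separate induction for finite generation. Once $R$ is noetherian, the right ideal $R_{\geq 1}$ is finitely generated, and any homogeneous generating set for it generates $R$ as a $\Bbbk$-algebra; so finite generation comes for free. Your lifting-plus-induction argument is correct, but redundant.
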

\begin{proof}
Since the generators of $R_{\geq 1}$ as a right (or left) ideal generate $R$ as a $\Bbbk$-algebra, it is enough to only prove $R$ is strongly noetherian. Now,
$$R/gR\cong \frac{R+g\Sg}{g\Sg} =\ovl{R}\subseteq \ovl{\Sg}=\Bbbk(E)[t,t^{-1};\sigma],$$
and moreover, $Q_\gr(\ovl{R})=Q_\gr(\ovl{S})=\Bbbk(E)[t,t^{-1};\sigma]$ by Lemma~\ref{ovlR is order}. By Theorem~\ref{RSS 3.1} there exists $m\in\N$ such that
$$\ovl{R}_{\geq m}=\bigoplus_{n\geq m} H^0(E,\II\HH_n)\subseteq B(E,\HH,\sigma),$$
for some ideal sheaf $\II$ and $\sigma$-ample invertible sheaf $\HH$. In particular the ring $\Bbbk+\ovl{R}_{\geq m}$ is a subring of the twisted homogenous coordinate ring $B(E,\HH,\sigma)$. In which case $\Bbbk+\ovl{R}_{\geq m}$ is noetherian by Proposition~\ref{B just infinite}. Clearly then $\ovl{R}$ is noetherian also. Since we have $\GK(\ovl{R})=2$, $\ovl{R}$ is strongly noetherian by \cite[Theorem~4.24]{ASZ}. Finally $R$ is then strongly noetherian by \cite[Theorem~ 4.9(1)]{ASZ}.
\end{proof}

Next we provide a useful lemma about the structure of ideals in a $g$-divisible ring. Recall an algebra $A$ is called \textit{just infinite} if for every nonzero ideal $I$ of $A$ one has $\dim_\Bbbk A/I <\infty$.

\begin{lemma}\label{RSS 2.15}
Let $R$ be a $g$-divisible cg subalgebra of $\Sg$ with $Q_\gr(R)=Q_\gr(S)$. Then:
\begin{enumerate}[(1)]
\item If $J$ is a nonzero $g$-divisible ideal of $R$, then $\GK(R/J)\leq 1$.
\item Conversely, if $J$ is an ideal of $R$ such that $\GK(R/J)\leq 1$, then $J\not\subseteq gR$ and $\dim_\Bbbk \widehat{J}/J \leq \infty$.
\item If $K$ is any ideal of $R$, then $K=g^nI$ for some $n\geq 1$ and ideal $I$ satisfying $\GK(R/I)\leq 1$.
\item Suppose $X,Y$ are graded subspaces of $\Sg$ such that $X,Y\not\subseteq g\Sg$. Assume that $I=XY$ is an ideal of $R$. Then $\GK(R/I)\leq 1$.
\end{enumerate}
\end{lemma}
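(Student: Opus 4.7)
The plan is to centre parts (1), (3), and (4) on a single mechanism built from the factor ring $\ovl R$, and to handle (2) through a separate filtration argument. By Lemma~\ref{ovlR is order} we have $Q_\gr(\ovl R)=\Bbbk(E)[t,t^{-1};\sigma]$, so combining Theorem~\ref{RSS 3.1} with Proposition~\ref{B just infinite} yields that $\ovl R$ is \emph{just infinite}: every nonzero ideal of $\ovl R$ has finite $\Bbbk$-codimension. Moreover the connected grading on $R$ forces $\bigcap_{n\geq 0}g^nR=0$, since a nonzero homogeneous element of degree $d$ cannot lie in $g^nR$ once $n>d/3$.

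For (1), a nonzero $g$-divisible ideal $J$ has $J\neq gJ$ (else $J\subseteq \bigcap g^nR=0$), so $\ovl J\neq 0$ in $\ovl R$; just-infiniteness then gives $\dim_\Bbbk R/(J+gR)<\infty$. Writing $B=R/J$ and $\bar g=g+J$, the element $\bar g$ is central in $B$ of degree $3$ with $\dim_\Bbbk B/\bar gB<\infty$, so $B_n=\bar gB_{n-3}$ for all $n\gg 0$; iterating this bound caps $\dim_\Bbbk B_n$ by a universal constant, and hence $\GK(R/J)\leq 1$. This argument only used $\ovl J\neq 0$, not $g$-divisibility, which is the key to (3) and (4). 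For (3), set $n=\min\{\nu_g(x):0\neq x\in K\text{ homogeneous}\}$, where $\nu_g$ denotes the $g$-adic valuation in $\Sg$; the minimum exists because $\nu_g(x)\leq\deg x/3$. Writing $K=g^nI$, the ideal $I$ satisfies $I\not\subseteq gR$ by minimality of $n$, and the mechanism above yields $\GK(R/I)\leq 1$. Part (4) is then immediate: $\ovl R$ is a domain, so $XY\subseteq gR$ would force $\ovl X\cdot\ovl Y=0$ with $\ovl X,\ovl Y\neq 0$, a contradiction; hence $I=XY\not\subseteq gR$ and (3) applies.

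For (2), the case $J\subseteq gR$ is excluded at once, since it would produce a surjection $R/J\twoheadrightarrow \ovl R$ and force $\GK(R/J)\geq\GK(\ovl R)=2$. For the finite dimensionality of $\widehat J/J$, note first that $g$-divisibility of $R$ gives $\widehat J\subseteq R$, so the ascending chain $\widehat J_k=\{a\in R\mid ag^k\in J\}$ with $\widehat J_0=J$ lies in $R$ and stabilises at some $\widehat J_N=\widehat J$ by noetherianity of $R$ (Proposition~\ref{RSS 2.9}). The crucial identity $\widehat J_k\cap gR=g\widehat J_{k+1}$ follows from $g$-divisibility: if $gb\in\widehat J_k$ with $b\in R$, then $bg^{k+1}\in J$, so $b\in\widehat J_{k+1}$. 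This produces the short exact sequence
\[
 0\longrightarrow \widehat J_{k+2}/\widehat J_{k+1}\longrightarrow \widehat J_{k+1}/\widehat J_k\longrightarrow \ovl{\widehat J_{k+1}}/\ovl{\widehat J_k}\longrightarrow 0,
\]
whose first map is given by multiplication by $g$ (injective since $g$ is regular in $\Sg$) and whose right-hand cokernel is a finite dimensional subquotient of the just infinite ring $\ovl R$, with both ideals nonzero because $\widehat J_k\supseteq J\not\subseteq gR$. Starting from $\widehat J_{N+1}/\widehat J_N=0$ and running a descending induction on $k$ makes each $\widehat J_{k+1}/\widehat J_k$ finite dimensional; summing over $0\leq k\leq N-1$ gives $\dim_\Bbbk\widehat J/J<\infty$.

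The main obstacle is (2): the core technical step is setting up the filtration $\{\widehat J_k\}$, verifying $\widehat J_k\cap gR=g\widehat J_{k+1}$ via $g$-divisibility of $R$, and reading off the short exact sequence so the descending induction closes. Everything else is bookkeeping built on just-infiniteness of $\ovl R$ and the centrality of $g$.
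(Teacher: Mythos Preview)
Your argument is correct, and in two places it diverges usefully from the paper's route.

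For parts (1), (3), (4) the paper and you share the same core: just-infiniteness of $\ovl R$ forces $\dim_\Bbbk R/(J+gR)<\infty$ whenever $\ovl J\neq 0$, and then the central element $g$ controls the Hilbert function of $R/J$. The difference is organisational. The paper's proof of (1) uses $g$-divisibility of $J$ to make $g$ \emph{regular} in $R/J$, obtaining $\dim_\Bbbk(R/J)_m=\dim_\Bbbk(R/J)_{m-3}$; it then deduces (3) by writing $K=g^nI$ with $I\not\subseteq gR$, applying (1) to $\widehat I$, and invoking (2) to pass from $\widehat I$ back to $I$. You instead note that surjectivity of $\bar g$ alone already bounds $\dim_\Bbbk(R/J)_n$, so the mechanism needs only $\ovl J\neq 0$; this makes (3) independent of (2), which is a genuine simplification. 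One small wording fix in (4): since $X,Y\subseteq S_{(g)}$ need not lie in $R$, the relevant fact is that $gS_{(g)}$ is completely prime (equivalently $\ovl{S_{(g)}}$ is a domain), not merely that $\ovl R$ is.

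For part (2) the approaches are genuinely different. The paper replaces $J$ by the largest ideal $J'$ between $J$ and $\widehat J$ with $\dim_\Bbbk J'/J<\infty$ (existence via noetherianity and \cite[Proposition~5.1(f)]{KL}), then argues by contradiction that $J'=\widehat J$: picking $x\in\widehat J\setminus J'$ with $xg\in J'$ yields a surjection $\ovl R/\ovl{J'}\twoheadrightarrow (xR+J')/J'$, forcing the latter to be finite dimensional and contradicting maximality of $J'$. Your filtration $\widehat J_k=\{a\in R:ag^k\in J\}$ with the short exact sequence $0\to\widehat J_{k+2}/\widehat J_{k+1}\to\widehat J_{k+1}/\widehat J_k\to\ovl{\widehat J_{k+1}}/\ovl{\widehat J_k}\to 0$ is a clean alternative: the identity $\widehat J_k\cap gR=g\widehat J_{k+1}$ gives exactness in the middle, the right term is finite dimensional by just-infiniteness, and descending induction from the stable index closes the argument without any maximality trick.
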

\begin{proof}
(1). By Lemma~\ref{ovlR is order}, $\ovl{R}\subseteq \Bbbk(E)[t,t^{-1};\sigma]=Q_\gr(\ovl{R})$. By Proposition~\ref{B just infinite} $\ovl{R}$ is just infinite. Because $J$ is $g$-divisible, $J\not\subseteq gR$ and therefore $\ovl{J}$ is a nonzero ideal of $\ovl{R}$. Hence $\dim_\Bbbk \ovl{R}/\ovl{J}<\infty$. Set $\widetilde{R}=R/J$. We have $\widetilde{R}/g\widetilde{R}\cong R/(J+gR)$, and since $R$ is $g$-divisible, we also have $\ovl{R}/\ovl{J}\cong R/(J+gR)$. Hence $\dim_\Bbbk \widetilde{R}/g\widetilde{R} <\infty$. It then follows that $\widetilde{R}_{m}=g\widetilde{R}_{m-3}$ for all $m\gg 0$. Since $J$ is $g$-divisible, the element $g$ remains regular in $\widetilde{R}$ by Lemma~\ref{g-div}(3), in particular we get that $\dim_\Bbbk \widetilde{R}_m=\dim_\Bbbk \widetilde{R}_{m-3}$ for all $m\gg0$. This clearly implies $\GK(\widetilde{R})\leq 1$. \par

(2). First note that if $J\subseteq gR$ then, $R/J\twoheadrightarrow R/gR=\ovl{R}$. By Theorem~\ref{RSS 3.1}, $\GK(\ovl{R})=2$, and so $\GK(R/J)\geq 2$ by \cite[Proposition~5.1]{KL}. Thus $J\not\subseteq gR$. Therefore, since $\ovl{R}$ is just infinite (Proposition~\ref{B just infinite}), $\dim_\Bbbk \ovl{R}/\ovl{J}<\infty$. Since $R$ is noetherian by Proposition~\ref{RSS 2.9}, $\widehat{J}$ is a finitely generated as a right $R$-module, and so there exists $n\geq 0$ such that $\widehat{J}g^n\subseteq J$.
Let $J'$ is the largest right ideal such that $J\subseteq J'\subseteq \widehat{J}$ and $\dim_\Bbbk J'/J<\infty$ (it exists by \cite[Proposition~5.1(f)]{KL} and the noetherianess of $R$). Then \cite[Proposition~5.6]{KL} implies $\dim_\Bbbk RJ'/J<\infty$, and so $J'=RJ'$. This shows $J'$ is an ideal of $R$. Without loss of generality replace $J$ with $J'$. Suppose we still have $J\neq \widehat{J}$, say $x\in \widehat{J}\setminus J$. Replacing $x$ with $xg^i$ where $i\geq 0$ is maximal such that $xg^i\notin \widehat{J}$, we may assume $xg\in J$. In which case $x(gR+J)\subseteq J$ and so left multiplication by $x$ defines a surjection $R/(gR+J)\to (xR+J)/J$. By maximality of $J'=J$, $\dim_\Bbbk (xR+J)/J= \infty$. However, $R/(gR+J)\cong\ovl{R}/\ovl{J}$ is finite dimensional; hence we have a contradiction. Thus $\widehat{J}=J$. \par

(3). Write $K=g^nI$ with $I\subseteq R$ and $n$ maximal. It is easily checked $I$ is an ideal of $R$. Then since $n$ is maximal, $I\not\subseteq gR$. By part (1), $\GK(R/\widehat{I})\leq 1$, and by part (2), $\dim_\Bbbk \widehat{I}/I <\infty$. Hence $\GK(R/I)\leq 1$ follows.\par
(4). Since $g\Sg$ is completely prime, $I=XY\not\subseteq g\Sg$ and so $I\not\subseteq gR$. Now apply part (3).
\end{proof}

\subsection{Modules, duals and endomorphism rings over $g$-divisible rings}\label{endo over g divs}

For now we continue following \cite[Section~2]{RSS} and investigate the first properties of $g$-divisible subrings rings of $\Sg$. Below the proof Lemma~\ref{RSS 2.13} is copied from the proof of \cite[Lemma~2.13]{RSS}. The proof of Lemma~\ref{RSS 2.12} is also essentially that of \cite[Lemma~2.12]{RSS}, however on this occasion there was a small hole which needed mending.
\begin{lemma}\label{RSS 2.12}
Let $R$ be a $g$-divisible cg subalgebra of $\Sg$ with $Q_\gr(R)=Q_\gr(S)$. Suppose $M,N\subseteq \Sg$ are finitely generated nonzero right $R$-modules.
\begin{enumerate}[(1)]
\item If $M\not\subseteq g\Sg$, then $\Hom_R(M,N)\subseteq \Sg$.
\item If $N$ is $g$-divisible and $M\not\subseteq g\Sg$ (in particular this holds if $M$ is $g$-divisible), then $\Hom_R(M,N)$ is also $g$-divisible.
\item If $M$ is $g$-divisible, then $U=\End_R(M)\subseteq \Sg$ is $g$-divisible and noetherian, and $M$ is finitely generated left $U$-module. Moreover $\ovl{U}\subseteq \End_{\ovl{R}}(\ovl{M})$.
\end{enumerate}
\end{lemma}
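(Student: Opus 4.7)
For (1), centrality of $g$ makes $Q_{\gr}(S) = \Sg[g^{-1}]$, so any $q \in \Hom_R(M,N)$ may be written $q = g^{-k}s$ with $s \in \Sg$ and $k \geq 0$ chosen minimal; my plan is to show $k = 0$. If $k \geq 1$, the hypothesis $M \not\subseteq g\Sg$ lets me pick a homogeneous $m \in M$ with $\ovl{m} \neq 0$; then $sm = g^k(qm) \in g^k\Sg$, so $\ovl{s}\,\ovl{m} = 0$ in the graded division ring $\Sg/g\Sg = \Bbbk(E)[t,t^{-1};\sigma]$ of~(\ref{ovlS_(g)}), forcing $\ovl{s} = 0$. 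Writing $s = gs'$ then contradicts the minimality of $k$. Part (2) will follow almost immediately: given $q \in \Sg$ with $g^n q \in \Hom_R(M,N)$, centrality of $g$ gives $g^n(qM) \subseteq N \cap g^n\Sg = g^n N$ by the $g$-divisibility of $N$, and cancelling the regular element $g^n$ yields $qM \subseteq N$.

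For the routine content of (3), applying (1) and (2) with $N = M$ gives $U \subseteq \Sg$ and the $g$-divisibility of $U$. By Lemma~\ref{End is cg}, $U$ is a connected graded domain with $Q_{\gr}(U) = Q_{\gr}(S)$, whereupon Proposition~\ref{RSS 2.9} yields that $U$ is (strongly) noetherian. For the embedding $\ovl{U} \hookrightarrow \End_{\ovl{R}}(\ovl{M})$, the natural map sends $\ovl{u}$ to left multiplication by $\ovl{u}$ on $\ovl{M}$, which is visibly right $\ovl{R}$-linear; and if $\ovl{u}\,\ovl{M} = 0$ then $uM \subseteq M \cap g\Sg = gM$ by $g$-divisibility of $M$, giving $g^{-1}u \in U$ and hence $u \in gU$ by $g$-divisibility of $U$.

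The main obstacle is the finite generation of $M$ as a left $U$-module. My plan is to apply graded Nakayama (Lemma~\ref{graded nakayama}) on the left over $U$---legitimate since $M$ is bounded below, being a finitely generated right $R$-module---thereby reducing the problem to showing $\ovl{M} = M/gM$ is finitely generated as a left $\ovl{U}$-module. For the latter I intend to exploit the structural description of $\ovl{M}$ as a finitely generated graded right $\ovl{R}$-submodule of $\Bbbk(E)[t,t^{-1};\sigma]$ afforded by Corollary~\ref{nc serre2}, together with the corresponding description of $\ovl{R}$ from Theorem~\ref{RSS 3.1}: these identifications place $\ovl{M}$ and $\ovl{R}$ in high degrees inside a twisted homogeneous coordinate ring over the smooth curve $E$, where a sheaf-cohomological comparison---combined with the containment $\ovl{U} \subseteq \End_{\ovl{R}}(\ovl{M})$ just established---yields a finite left $\ovl{U}$-generating set for $\ovl{M}$. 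Graded Nakayama then lifts this back to a finite left $U$-generating set for $M$.
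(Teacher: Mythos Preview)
Your arguments for (1) and (2) are correct; for (1) you use the domain property of $\Sg/g\Sg$ element-wise, whereas the paper invokes Lemma~\ref{ideals of S_(g)} to get $M\Sg=\Sg$ and then observes $\Hom_R(M,N)\subseteq\Hom_{\Sg}(\Sg,N\Sg)=N\Sg\subseteq\Sg$. Both are fine. Your (2) is the same as the paper's. The routine parts of (3)---$g$-divisibility of $U$, noetherianity via Lemma~\ref{End is cg} and Proposition~\ref{RSS 2.9}, and the containment $\ovl{U}\subseteq\End_{\ovl{R}}(\ovl{M})$---are also correct (your injectivity argument is more than is needed, since both sides are literally subsets of $\Bbbk(E)[t,t^{-1};\sigma]$ under Notation~\ref{Hom inside Q}, but it does no harm).

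The gap is in your plan for finite generation of $_UM$. Your Nakayama reduction to showing that $\ovl{M}$ is finitely generated as a left $\ovl{U}$-module is valid, but your proposed endgame does not work: the containment $\ovl{U}\subseteq\End_{\ovl{R}}(\ovl{M})$ points the \emph{wrong way}. Knowing that $\ovl{M}$ is finitely generated over the larger ring $\End_{\ovl{R}}(\ovl{M})$ (which is what sheaf-theoretic methods would give you) says nothing about finite generation over the smaller ring $\ovl{U}$, and you have no a priori control on how close $\ovl{U}$ is to the full endomorphism ring. The phrase ``sheaf-cohomological comparison'' is not an argument here.

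The paper's approach sidesteps all of this with a one-line trick. By Lemma~\ref{Ore argument}(2) there is a nonzero homogeneous $c\in R$ with $cM\subseteq R$; then $(Mc)M\subseteq MR\subseteq M$, so $Mc\subseteq U$. Since right multiplication by $c$ is an isomorphism of left $U$-modules $M\to Mc$, and $U$ is noetherian, $_UM\cong{}_U(Mc)$ is a left ideal of $U$ and hence finitely generated. No descent to $\ovl{M}$ is needed.
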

\begin{proof}
(1). Since $M\not\subseteq g\Sg$, $M\Sg=\Sg$ by Lemma~\ref{ideals of S_(g)}, hence using Notation~\ref{Hom inside Q},
$$\Hom_R(M,N)\subseteq \Hom_{\Sg}(M\Sg, N\Sg)= \Hom_{\Sg}(\Sg, N\Sg)=N\Sg\subseteq \Sg.$$
\par
(2). By (1), $P=\Hom_R(M,N)\subseteq \Sg$. Take $x\in P\cap g\Sg$, say $x=gs$ for some $s\in\Sg$. Then $sgM=xM\subseteq N\cap g\Sg =gN$. Hence $sM\subseteq N$ and $s\in P$. \par
(3). As $M$ is $g$-divisible, clearly $M\not\subseteq g\Sg$, and so $U\subseteq\Sg$ by part (1). By part (2) and Lemma~\ref{End is cg}, $U$ is $g$-divisible and connected graded, hence it is noetherian by Proposition~\ref{RSS 2.9}.
Since $M_R$ is finitely generated and $Q_\gr(R)=Q_\gr(S)$, there exists a nonzero homogenous $c\in R$ such that $cM\subseteq R$ by Lemma~\ref{Ore argument}(2). So then $(Mc)M\subseteq MR\subseteq M$. Therefore, as a left $U$-module $M\cong Mc\subseteq U$ is finitely generated. Finally, as $UM\subseteq M$ implies $\ovl{U}\,\ovl{M}\subseteq \ovl{M}$, we get $\ovl{U}\subseteq \End_{\ovl{R}}(\ovl{M})$.
\end{proof}

In the proof of \cite[Lemma~2.12(3)]{RSS} (the $S^{(3)}$-version of Lemma~\ref{RSS 2.12}) there was a small hole. Their proof required $\End_R(M)\subseteq S^{(3)}$; however it was only guaranteed that $U\subseteq (\Sg)^{(3)}$. This can be repaired with an identical proof to that of Lemma~\ref{RSS 2.12}(3) using Lemma~\ref{End is cg} and a $S^{(3)}$-version of Lemma~\ref{RSS 2.9}.

\begin{lemma}\label{RSS 2.13}
Let $R$ be a cg subalgebra of $\Sg$ with $Q_\gr(R)=Q_\gr(S)$, and let $M$ be a right $R$-submodule of $\Sg$ such that $M\not\subseteq g\Sg$.
\begin{enumerate}[(1)]
\item For every $x\in \Sg\setminus g\Sg$, we have $\widehat{xM}=x\widehat{M}$.
\item If $R$ is $g$-divisible and $M$ is finitely generated $R$-module, then $\widehat{M}$ is also a finitely generated $R$-module.
\item If $R$ is $g$-divisible, then $M^*=\widehat{M^*}\subseteq \Sg$ and so $M^*\not\subseteq g\Sg$. Similarly for the double dual $M^{**}=\widehat{(M^{**})}\subseteq \Sg$. Moreover $(\widehat{M})^*=M^*$ and $(\widehat{M})^{**}=M^{**}$.
\end{enumerate}
\end{lemma}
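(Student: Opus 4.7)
The plan is to handle the three parts in order, with part (1) as a direct computation in $\Sg$, part (2) as an Ore-condition plus noetherianness argument, and part (3) by combining parts (1)-(2) with Lemma~\ref{RSS 2.12}.

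For part (1), I would show both inclusions directly. The inclusion $x\widehat{M} \subseteq \widehat{xM}$ is trivial: if $z \in \widehat{M}$, then $zg^n \in M$, so $(xz)g^n = x(zg^n) \in xM$. For the reverse, take homogeneous $y \in \widehat{xM}$, so $yg^n = xm$ for some $m \in M$. Since $x \notin g\Sg$ and $\Sg/g\Sg = \Bbbk(E)[t,t^{-1};\sigma]$ is a domain (in particular reduced), a short induction on $n$ shows $y \in x\Sg$: write $y = xs$ with $s \in \Sg$ (for $n=0$ this is clear since $y = xm$; for $n\geq 1$, reduce modulo $g$ to get $\ovl{y}\,\ovl{g}^n = \ovl{x}\ovl{m}$, which using that $\Sg/g\Sg$ is a domain and $\ovl{x}\neq 0$ lets us divide by $x$ step by step). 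Then $xsg^n = xm$ forces $sg^n = m \in M$, so $s \in \widehat{M}$ and $y = xs \in x\widehat{M}$.

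For part (2), I apply Lemma~\ref{Ore argument}(2) to the finitely generated right $R$-submodule $M \subseteq Q_\gr(R) = Q_\gr(S)$ to pick a nonzero homogeneous $c \in R$ with $cM \subseteq R$. The key observation is that $c\widehat{M} \subseteq R$ as well: if $y \in \widehat{M}$, then $yg^n \in M$, hence $(cy)g^n = c(yg^n) \in cM \subseteq R$, and since $cy \in \Sg$ and $R$ is $g$-divisible (so $\widehat{R} = R$), we conclude $cy \in R$. Left multiplication by $c$ is injective (as $\Sg$ is a domain), so $\widehat{M} \cong c\widehat{M}$ as right $R$-modules. Since $R$ is right noetherian by Proposition~\ref{RSS 2.9} and $c\widehat{M}$ is a submodule of $R_R$, it is finitely generated, and hence so is $\widehat{M}$.

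For part (3), Lemma~\ref{RSS 2.12}(1) gives $M^* \subseteq \Sg$, and Lemma~\ref{RSS 2.12}(2) (with $N=R$, which is $g$-divisible) gives $M^* = \widehat{M^*}$. To see $M^* \not\subseteq g\Sg$, take $c$ as in part (2); then $c \in M^*$ (under the identification of Notation~\ref{Hom inside Q}), and after dividing out the maximal power of $g$ using $g$-divisibility of $R$ (or noting that $\bigcap_n g^n\Sg = 0$ by a graded degree argument, so a nonzero $g$-divisible module cannot sit inside $g\Sg$), we conclude $M^* \not\subseteq g\Sg$. To apply the analogous statement to $M^*$ and obtain the claims for $M^{**}$, I need $M^*$ to be a finitely generated left $R$-module; this follows by dualising a finite presentation $R^n \twoheadrightarrow M$ to get an embedding $M^* \hookrightarrow (R^n)^* = {}_RR^n$ and invoking left noetherianness. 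The identity $(\widehat{M})^* = M^*$ reduces to a direct check: $(\widehat{M})^* \subseteq M^*$ is obvious from $M \subseteq \widehat{M}$, and conversely for $q \in M^*$ and $y \in \widehat{M}$ with $yg^n \in M$, we have $qy \in \Sg$ and $(qy)g^n = q(yg^n) \in qM \subseteq R$, so $qy \in \widehat{R} = R$ by $g$-divisibility of $R$. Then $(\widehat{M})^{**} = ((\widehat{M})^*)^* = (M^*)^* = M^{**}$ follows formally. The only subtle point throughout is the interplay between $g$-divisibility and the completely prime ideal $g\Sg$ in part~(1); everything else is bookkeeping around Lemma~\ref{Ore argument}(2) and Proposition~\ref{RSS 2.9}.
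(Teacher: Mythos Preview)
Your proposal is correct and follows essentially the same route as the paper. Two small remarks: in part~(2) you argue directly that $c\widehat{M}\subseteq R$ from $g$-divisibility of $R$, whereas the paper first reduces to $c\notin g\Sg$ and then invokes part~(1) to get $c\widehat{M}=\widehat{cM}\subseteq\widehat{R}=R$; your version is marginally cleaner since it avoids the detour through part~(1). In part~(1), your parenthetical sketch is a bit garbled (reducing $yg^n=xm$ modulo $g$ gives $0=\ovl{x}\,\ovl{m}$, not an equation you ``divide by $x$''); the clean statement is that $g\Sg$ completely prime and $x\notin g\Sg$ force $g^n\Sg\cap x\Sg=g^n x\Sg$, hence $y\in x\Sg$, which is exactly what the paper writes and what your induction is really proving.
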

\begin{proof}
(1). Let $r\in\widehat{M}$, say $rg^n\in M$, and so $xrg^n\in xM$. Because $xr\in\Sg$, $xr\in\widehat{xM}$. Conversely, let $r\in\Sg$ with $rg^n\in xM$. Then $rg^n=g^nr\in g^n\Sg\cap x\Sg$. Since $g\Sg$ is a completely prime ideal and $x\notin g\Sg$, we have $g^n\Sg\cap x\Sg=g^nx\Sg$. Thus $r=xs$ for some $s\in\Sg$ and $xM\ni rg^n=xsg^n$. Therefore $sg^n\in M$, showing $s\in\widehat{M}$ and $r\in x\widehat{M}$. \par

(2). By Lemma~\ref{Ore argument}(2), there exists a  nonzero homogenous  $x\in\Sg$ such that $xM\subseteq R$. If $x=gy$ for some $y\in\Sg$, then $g(yM)\subseteq R$, and so $yM\subseteq R$ by $g$-divisibility of $R$. So we may assume $x\in\Sg\setminus g\Sg$. Also, again by $g$-divisibility of $R$, $\widehat{xM}\subseteq \widehat{R}=R$. Since $R$ is noetherian by Proposition~\ref{RSS 2.9}, $\widehat{xM}$ is a finitely generated right ideal of $R$. Using (1), we then have $\widehat{M}\cong x\widehat{M}=\widehat{xM}$ is a finitely generated right $R$-module.\par

(3). By Lemma~\ref{RSS 2.12}(2), $M^*=\Hom_R(M,R)\subseteq \Sg$ and is $g$-divisible. Clearly then $M^*\not\subseteq g\Sg$, and so symmetrically we can prove $M^{**}=\widehat{M^{**}}\subseteq S_{(g)}.$  Now as $M\subseteq\widehat{M}$, certainly $(\widehat{M})^*\subseteq M^*$. Conversely, if $a\in M^*$ and $x\in\widehat{M}$, say $xg^n\in M$, then $a(xg^n)=(ax)g^n\in R=\widehat{R}$ and so $ax\in R$. Hence $a\in (\widehat{M})^*$, proving $(\widehat{M})^*=M^*$. Taking the second dual gives $(\widehat{M})^{**}=M^{**}$.
\end{proof}

\begin{lemma}\label{hat end commute}
Let $R$ be a $g$-divisible cg subalgebra of $\Sg$. Let $M\subseteq \Sg$ be a finitely generated right $R$-module. Then $\End_R(\widehat{M})=\widehat{\End_R(M)}.$
\end{lemma}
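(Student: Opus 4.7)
The plan is to prove the two inclusions of graded subsets of $\Sg$ directly, after first reducing to the case $M\not\subseteq g\Sg$. If $M\subseteq g\Sg$, write $M=g^kM'$ with $k$ maximal, so $M'\not\subseteq g\Sg$; since $g$ is central, $\End_R(M)=\End_R(M')$ and $\widehat{M}=g^k\widehat{M'}$ by Lemma~\ref{RSS 2.13}(1), whence $\End_R(\widehat{M})=\End_R(\widehat{M'})$. So we may assume $M\not\subseteq g\Sg$, in which case both $\End_R(M)$ and $\End_R(\widehat M)$ are contained in $\Sg$ by Lemma~\ref{RSS 2.12}(1), and $\End_R(M)$ is itself $g$-divisible by Lemma~\ref{RSS 2.12}(2), so the formation of $\widehat{\End_R(M)}$ makes sense inside $\Sg$.

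For the inclusion $\widehat{\End_R(M)}\subseteq \End_R(\widehat{M})$: take $u\in \widehat{\End_R(M)}$, so $ug^n\in\End_R(M)$ for some $n\geq 0$, i.e., $ug^nM\subseteq M$. Given any $x\in\widehat{M}$, pick $m$ with $xg^m\in M$; then using centrality of $g$,
\[ (ux)g^{n+m}=(ug^n)(xg^m)\in (ug^n)M\subseteq M, \]
so $ux\in\widehat{M}$. Hence $u\widehat{M}\subseteq\widehat{M}$, i.e., $u\in\End_R(\widehat{M})$.

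For the reverse inclusion $\End_R(\widehat{M})\subseteq \widehat{\End_R(M)}$: this is the step that needs a little more. The key input will be Lemma~\ref{RSS 2.13}(2), which tells us that $\widehat{M}$ is itself a finitely generated right $R$-module. Write $\widehat{M}=\sum_{i=1}^{k}x_iR$ with each $x_i$ homogeneous, and choose $n_i$ so that $x_ig^{n_i}\in M$; setting $n=\max_i n_i$ and using $g$ central together with $g\in R$, we obtain the uniform bound $\widehat{M}g^n\subseteq M$. Now for any $u\in\End_R(\widehat{M})$,
\[ ug^nM\subseteq ug^n\widehat{M}=u\widehat{M}g^n\subseteq \widehat{M}g^n\subseteq M, \]
so $ug^n\in\End_R(M)$ and therefore $u\in\widehat{\End_R(M)}$.

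I expect the main (mild) obstacle to be the reverse inclusion, since it genuinely requires finite generation of $\widehat{M}$ to promote the pointwise fact ``each element of $\widehat{M}$ is killed into $M$ by some power of $g$'' to a uniform power of $g$ that works for all of $\widehat{M}$. Once that is invoked via Lemma~\ref{RSS 2.13}(2), the rest is a short chase using only the centrality of $g$ and Lemma~\ref{RSS 2.12}.
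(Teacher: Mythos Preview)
Your argument is essentially the paper's: both hinge on Lemma~\ref{RSS 2.13}(2) to get a uniform $n$ with $g^n\widehat{M}\subseteq M$, and the two inclusions are then short computations with the centrality of $g$. The paper organises the forward inclusion slightly differently (it shows $\End_R(M)\subseteq\End_R(\widehat{M})$ and then invokes the $g$-divisibility of $\End_R(\widehat{M})$ from Lemma~\ref{RSS 2.12}(3)), but your direct verification is equally valid.

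One small correction: your claim that ``$\End_R(M)$ is itself $g$-divisible by Lemma~\ref{RSS 2.12}(2)'' is not justified---that lemma requires the \emph{target} module (here $M$) to be $g$-divisible, which is not assumed. Fortunately you never use this; all you need is $\End_R(M)\subseteq\Sg$, and that follows from Lemma~\ref{RSS 2.12}(1) once you have reduced to $M\not\subseteq g\Sg$. Simply delete the erroneous clause.
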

\begin{proof}
By Lemma~\ref{RSS 2.13}(2) $\widehat{M}$ is finitely generated, say $\widehat{M}=x_1R+\dots+x_nR$. Each $x_i\in \widehat{M}$, so there exists $k_i\geq 0$ such that $g^{k_i}x_i\in M$. Hence, if $k=\max_i\{k_i\}$, then $g^k\widehat{M}\subseteq M$. Set $U=\End_R(M)$ and $V=\End_R(\widehat{M})$. By Lemma~\ref{RSS 2.12}(3), $V$ is $g$-divisible, and hence to prove $\widehat{U}\subseteq V$, it is enough to prove $U\subseteq V$. Let $u\in U$. Then
$$(ug^k)\widehat{M}=u(g^k\widehat{M})\subseteq uM\subseteq M\subseteq \widehat{M},$$
so $ug^k\in V$. Since $V$ is $g$-divisible $u\in V$. Therefore $U\subseteq V$, and then $\widehat{U}\subseteq V$. Now take $v\in V$. Then
$$(vg^k)\widehat{M}=g^k(v\widehat{M})\subseteq g^k\widehat{M}\subseteq M,$$
so certainly $vg^kM\subseteq M$. Therefore $vg^k\in U$ and $v\in\widehat{U}$.
\end{proof}

Here we note some special properties of modules of GK-dimension less than or equal to 1.

\begin{lemma}\label{RSS 2.14}
Let $R$ be a cg $g$-divisible subalgebra of $\Sg$ and $N\subseteq M$ be finitely generated right $R$-submodules of $\Sg$. Suppose that $M$ is $g$-divisible.
\begin{enumerate}[(1)]
\item If $\ovl{N}\ehd\ovl{M}$, then $\GK(M/N)\leq 1$.
\item If $N$ is also $g$-divisible, then $\ovl{N}\ehd\ovl{M}$ if and only if $\GK(M/N)\leq 1$.
\item If (2) is satisfied, then $M/N$ is finitely generated as a $\Bbbk[g]$-module and has an eventually 3-periodic Hilbert series.
\end{enumerate}
\end{lemma}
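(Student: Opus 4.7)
The plan is to set $W = M/N$ and exploit the $g$-divisibility of $M$ via the isomorphism $M/gM \cong \ovl{M}$, which yields $W/gW \cong M/(N+gM) \cong \ovl{M}/\ovl{N}$. I will analyse $W$ simultaneously as a finitely generated graded right $R$-module and, using the centrality of $g$, as a graded $\Bbbk[g]$-module bounded below in degree.

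For (1), the hypothesis $\ovl{N}\ehd\ovl{M}$ forces $W/gW$ to be finite dimensional. Lifting a homogeneous basis of $W/gW$ to $W$ and applying the graded Nakayama Lemma (Lemma~\ref{graded nakayama}) to the $\Bbbk[g]$-module $W$ shows that $W$ is finitely generated over $\Bbbk[g]$. Since $\GK\Bbbk[g]=1$ and the GK-dimension of a finitely generated module depends only on its Hilbert series, this gives $\GK(W)\leq 1$.

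For (2), the forward direction is (1). For the converse, the $g$-divisibility of $N$ implies that multiplication by $g$ is injective on $W$: if $gx\in N$ with $x\in M\subseteq \Sg$, then $x\in\widehat{N}=N$. The short exact sequence
$$0\longrightarrow W(-3)\overset{\cdot g}{\longrightarrow} W\longrightarrow W/gW\longrightarrow 0$$
gives $\dim_\Bbbk W_n = \dim_\Bbbk W_{n-3}+\dim_\Bbbk(W/gW)_n$ for every $n$. Iterating this recurrence and interchanging the order of summation produces, for $N\gg 0$, the estimate
$$\dim_\Bbbk W_{\leq N}\;\geq\;\frac{N}{6}\,\dim_\Bbbk(W/gW)_{\leq N/2}.$$
Since $\GK(W)\leq 1$ forces $\dim_\Bbbk W_{\leq N}\leq N^{1+\epsilon}$ for every $\epsilon>0$ and $N$ large, $\dim_\Bbbk(W/gW)_{\leq N/2}$ must grow slower than any fixed power of $N$. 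Therefore $\GK(W/gW)=0$, meaning $W/gW$ is finite dimensional; hence $\ovl{M}/\ovl{N}$ vanishes in high degrees, i.e.\ $\ovl{N}\ehd\ovl{M}$.

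For (3), once (2)'s hypotheses hold, the argument of the first paragraph combined with injectivity of $\cdot g$ realises $W$ as a finitely generated, $g$-torsionfree graded $\Bbbk[g]$-module. The structure theorem over the PID $\Bbbk[g]$ then yields a graded isomorphism $W \cong \bigoplus_i \Bbbk[g](-d_i)$, so $h_W(t)=p(t)/(1-t^3)$ for some Laurent polynomial $p(t)$, and such a Hilbert series has eventually $3$-periodic coefficients. The main obstacle is the estimate in (2) that converts the asymptotic bound $\GK(W)\leq 1$ into the actual vanishing of $W/gW$ in high degrees; injectivity of $\cdot g$ on $W$, which is precisely where the hypothesis that $N$ is $g$-divisible enters, is essential for this step.
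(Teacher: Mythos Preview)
Your proof is correct and reaches the same conclusions, but the route differs from the paper's in each part. In (1) you apply graded Nakayama over $\Bbbk[g]$ to obtain finite generation of $W$ as a $\Bbbk[g]$-module immediately, which simultaneously gives $\GK(W)\leq 1$ and the finite-generation clause of (3); the paper instead observes that $\dim_\Bbbk(W/gW)<\infty$ makes the map $W_n\to W_{n+3}$, $x\mapsto gx$, surjective for $n\gg 0$, so $\dim_\Bbbk W_n$ is eventually non-increasing along residue classes mod $3$. In (2) you reprove by hand, via the explicit Hilbert-series estimate $\dim_\Bbbk W_{\leq N}\geq \tfrac{N}{6}\dim_\Bbbk(W/gW)_{\leq N/2}$, the standard fact that a regular central element lowers GK-dimension by one; the paper simply cites \cite[Proposition~5.1(e)]{KL} to get $\GK(W/gW)\leq \GK(W)-1\leq 0$ directly. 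In (3) you invoke the graded structure theorem over the PID $\Bbbk[g]$, whereas the paper argues in two lines that injectivity plus eventual surjectivity of $\cdot g$ give $\dim_\Bbbk W_n=\dim_\Bbbk W_{n+3}$ for $n\gg 0$ and that $W_{\leq n_0+2}\Bbbk[g]=W$. Your approach is more conceptual and packages (1) and the finite-generation half of (3) together; the paper's is shorter, particularly in (2) where it avoids rederiving a known result.
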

\begin{proof}
Set $X=M/N$ and consider $X/gX$. We have
$$X/gX =\frac{M/N}{g(M/N)}=\frac{M/N}{(gM+N)/N}\cong\frac{M}{gM+N}\cong\frac{M/gM}{(gM+N)/gM};$$
whilst, because $N\subseteq M$ and $M$ is $g$-divisible
$$ \ovl{M}/\ovl{N}=\frac{(M+Sg)/Sg}{(N+Sg)/Sg}=\frac{M/(M\cap Sg)}{(N+M\cap Sg)/M\cap Sg}=\frac{M/gM}{(gM+N)/gM}.$$
Thus we have
\begin{equation}\label{X/gX=ovlM/ovlN} X/gX\cong \ovl{M}/\ovl{N}.\end{equation}
\par
(1). If $\ovl{N}\ehd\ovl{M}$, then $\dim_\Bbbk X/gX<\infty$ by (\ref{X/gX=ovlM/ovlN}). It follows that the linear map $X_{n}\to X_{n+3}$: $x\mapsto gx$ is surjective for all $n\gg 0$. This forces $\dim_\Bbbk X_n\geq \dim_\Bbbk X_{n+3}$ for all $n\gg 0$ and thus $\GK(X)\leq 1$.\par
(2). If $N$ is $g$-divisible, then $X$ is $g$-torsionfree. It follows that if $\GK(X)\leq 1$, then $\GK(X/gX)\leq 1-1=0$ by \cite[Proposition~5.1(e)]{KL}. Hence $\ovl{N}\ehd\ovl{M}$ by (\ref{X/gX=ovlM/ovlN}). \par
(3). By (\ref{X/gX=ovlM/ovlN}), $\dim_\Bbbk X/Xg<\infty$. Hence $X_ng=X_{n+3}$ for all $n\geq n_0$ say. As $X$ is $g$-torsionfree, this shows $\dim_\Bbbk X_n=\dim_\Bbbk X_{n+3}$ for $n\geq n_0$. To see $X$ is finitely generated as a $\Bbbk[g]$-module notice that $X_{\leq (n_0+2)}\Bbbk[g]=X$.
\end{proof}

\subsection{Orders in $S$}\label{orders sec}

With the ultimate aim to classify maximal orders in $S$, we need a few preliminary results to do with orders inside $S$. Without a good reference for maximal orders as defined in Definition~\ref{max orders def} we need to prove some standard lemmas. We start with a very easy but useful lemma.\par

\begin{lemma}\label{WLOG a,b homogenous}
Let $A$ and $B$ be two graded domains in the graded quotient ring $Q=Q_\gr(A)=Q_\gr(B)$. Suppose $xAy\subseteq B$ for some nonzero $x,y\in Q$. Then there exists nonzero homogenous $a,b\in A$ also satisfying $aAb\subseteq B$.
\end{lemma}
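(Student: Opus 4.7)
The plan is to reduce to the case where $x$ and $y$ themselves lie in $A$, and then extract the top-degree homogeneous components.

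First I would clear the denominators of $x$ and $y$. Applying Lemma~\ref{Ore argument}(1) to the (finitely many) nonzero homogeneous components of $x$ in $Q$, I obtain a common nonzero homogeneous $c\in A$ together with an element $x'\in A$ such that $x=x'c^{-1}$; in particular $xc=x'\in A$. Symmetrically, the left-hand version gives a nonzero homogeneous $d\in A$ and $y'\in A$ with $y=d^{-1}y'$, so $dy=y'\in A$. Setting $\tilde x = xc$ and $\tilde y = dy$, I would then observe that for any $a'\in A$ we have $ca'd\in A$, so
\[
(\tilde x)\, a'\, (\tilde y) \;=\; x(ca'd)y \;\in\; xAy \;\subseteq\; B.
\]
Thus $\tilde x\,A\,\tilde y\subseteq B$, with both $\tilde x$ and $\tilde y$ nonzero elements of $A$ (they are nonzero because $x,y,c,d$ are nonzero and $A$ is a domain).

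Next, I would decompose into homogeneous components. Write $\tilde x=\tilde x_{i_1}+\cdots+\tilde x_{i_m}$ and $\tilde y=\tilde y_{j_1}+\cdots+\tilde y_{j_n}$ as sums of their nonzero homogeneous pieces in $A$, with strictly increasing degrees $d_{i_1}<\cdots<d_{i_m}$ and $e_{j_1}<\cdots<e_{j_n}$. Let $a=\tilde x_{i_m}$ and $b=\tilde y_{j_n}$, the top-degree components; both are nonzero homogeneous elements of $A$. For any homogeneous $r\in A_k$,
\[
\tilde x\, r\, \tilde y \;=\; \sum_{s,t} \tilde x_{i_s}\, r\, \tilde y_{j_t}
\]
lies in $B$, and each summand is homogeneous of degree $d_{i_s}+k+e_{j_t}$. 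The maximum of these degrees is $d_{i_m}+k+e_{j_n}$, and it is attained uniquely at $(s,t)=(m,n)$. Since $B$ is a graded subring of $Q$, every homogeneous component of an element of $B$ lies in $B$; so the top-degree component $a\,r\,b=\tilde x_{i_m}\,r\,\tilde y_{j_n}$ is in $B$. By $\Bbbk$-linearity this extends from homogeneous $r$ to all $r\in A$, giving $aAb\subseteq B$, as required.

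I do not expect any serious obstacle: the only delicate point is that the degrees in the double sum could a priori collide, but restricting to the top-degree homogeneous component makes the contributing pair $(s,t)$ unique. The reduction via the Ore condition is routine once one remembers to use the \emph{right} Ore form for $x$ and the \emph{left} Ore form for $y$ so that $xc$ and $dy$, rather than $cx$ and $yd$, end up in $A$.
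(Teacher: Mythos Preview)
Your proposal is correct and follows essentially the same two-step strategy as the paper: extract homogeneous components using the gradedness of $B$, and clear denominators via the Ore condition. The only differences are cosmetic --- the paper performs these steps in the opposite order (homogeneous components first, then denominators) and picks out the \emph{minimal}-degree component rather than the top one, but either choice works for exactly the reason you identify.
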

\begin{proof}
Write $x=\sum x_i$ and $y=\sum y_{i}$ with $x_i,y_i$ homogenous of degree $i$. Choose $k,\ell$ minimal such that $x_k\neq 0\neq y_\ell$. Let $a\in A$ be homogenous, say $\deg a=n$, then
$$B\ni xay = x_kay_\ell +\text{terms of strictly higher degree.}$$
In particular, we must have $x_kay_\ell\in B_{k+n+\ell}\subseteq B$. Thus $x_kAy_\ell\subseteq B$ and we may assume $x$ and $y$ are homogenous. Since $x,y\in Q=Q_\gr(A)$ and are homogenous, there exists homogenous $a,b,c,d\in A$ such that $x=ac^{-1}$ and $y=d^{-1}b$. In which case, $aAb=ac^{-1}(cAd)d^{-1}b\subseteq ac^{-1}Ad^{-1}b=xAy\subseteq B.$
\end{proof}

Typically Lemma~\ref{WLOG a,b homogenous} will be used without reference. Next we develop a few standard techniques to conclude when two rings are in fact equivalent orders.

\begin{lemma}\label{hence equiv orders}
Let $A,B$ be two cg domains with $Q_\gr(A)=Q_\gr(B)=Q$.
\begin{enumerate}[(1)]
\item If $A\subseteq B$ and both $B_A$ and $_AB$ are finitely generated, then $A$ and $B$ share a common ideal.\index{common ideal} More precisely, there exists a nonzero ideal of $A$ which is also an ideal of $B$.
\item If $A\subseteq B$ and $B_A$ is finitely generated then $A$ and $B$ are equivalent orders.
\item If there exists a nonzero graded $(A,B)$-bimodule $M\subseteq Q$ that is finitely generated on both sides, then $A$ and $B$ are equivalent orders.
\end{enumerate}
\end{lemma}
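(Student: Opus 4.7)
All three parts will follow quickly from the graded Ore argument of Lemma~\ref{Ore argument}(2) together with its left-hand analogue. The overall plan is to use those Ore arguments to produce ``clearing denominator'' elements that push $B$ (or a given bimodule $M$) into $A$, and then package these into either the required containments or a common two-sided ideal. No step looks like a genuine obstacle; the only point that needs a little care is checking nonvanishing of the candidate ideal in~(1), which is where the domain hypothesis on $A$ enters.

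For~(1), I would first apply Lemma~\ref{Ore argument}(2) and its left-hand version to the finitely generated right and left $A$-submodule $B$ of $Q$, obtaining nonzero homogeneous $c,d\in A$ with $cB\subseteq A$ and $Bd\subseteq A$. The candidate common ideal is
$$K\;=\;\{\,x\in B\,:\,BxB\subseteq A\,\}.$$
By construction $K$ is a two-sided ideal of $B$, and since $1\in B$ we have $x=1\cdot x\cdot 1\in A$ for every $x\in K$, so $K\subseteq A$; combined with $A\subseteq B$ this makes $K$ a two-sided ideal of $A$ as well. For nonvanishing, I would observe that for every $a\in A$,
$$B(cad)B\;=\;(Bc)\,a\,(dB)\;\subseteq\;A\cdot a\cdot A\;\subseteq\;A,$$
so $cad\in K$; taking $a=1$ gives $cd\in K\setminus\{0\}$, which is nonzero because $A$ is a domain.

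For~(2), the inclusion $1\cdot A\cdot 1\subseteq B$ is automatic from $A\subseteq B$, and a single application of Lemma~\ref{Ore argument}(2) to the finitely generated right $A$-submodule $B\subseteq Q$ supplies a nonzero $c\in A$ with $c\cdot B\cdot 1\subseteq A$. Together these are exactly the two containments required by Definition~\ref{max orders def}.

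For~(3), the idea is to bridge $A$ and $B$ through a nonzero homogeneous element $m\in M$. Applying the right- and left-module versions of Lemma~\ref{Ore argument}(2) to $M\subseteq Q$ produces nonzero homogeneous $b\in B$ with $bM\subseteq B$ and nonzero homogeneous $c\in A$ with $Mc\subseteq A$. Using that $M$ is an $(A,B)$-bimodule, so $Am\subseteq M$ and $mB\subseteq M$, one then has
$$b\,A\,m\;\subseteq\;bM\;\subseteq\;B\qquad\text{and}\qquad m\,B\,c\;\subseteq\;Mc\;\subseteq\;A,$$
which are precisely the inclusions demanded by Definition~\ref{max orders def}, so $A$ and $B$ are equivalent orders.
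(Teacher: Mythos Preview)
Your approach for all three parts is essentially the same as the paper's, and parts~(2) and~(3) are correct as written. However, there is a slip in the nonvanishing argument for~(1). You obtained $c,d\in A$ with $cB\subseteq A$ and $Bd\subseteq A$, but then you use the inclusions $Bc\subseteq A$ and $dB\subseteq A$, which do not follow. The computation $B(cad)B=(Bc)a(dB)\subseteq A\cdot a\cdot A$ fails for this reason. The fix is simply to reverse the order: consider $dac$ instead, so that $B(dac)B=(Bd)\,a\,(cB)\subseteq A\cdot a\cdot A\subseteq A$, giving $dc\in K\setminus\{0\}$.

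For comparison, the paper constructs the common ideal slightly differently: it sets $I=\rann_A(B/A)$ and $J=\lann_A(B/A)$, checks that $I$ is a right ideal of $A$ and a left ideal of $B$ (and symmetrically for $J$), and takes $IJ$. Your $K=\{x\in B:BxB\subseteq A\}$ contains $IJ$, and your (corrected) nonvanishing witness $dc$ is exactly an element of $IJ$, so the two constructions are essentially the same idea packaged differently.
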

\begin{proof}
(1). Set $I=\rann_A B/A$. By a left handed version of Lemma~\ref{Ore argument}(2) there exists a nonzero $c\in A$ such that $Bc\subseteq A$. In which case, $c\in I$, showing $I\neq 0$. Hence $I$ is a nonzero right ideal of $A$, and since $BI\subseteq A\subseteq B$, it is a left ideal of $B$. Similarly $J=\lann_A B/A$ is a nonzero ideal of $A$ that is a right ideal of $B$. Thus $IJ$ is an ideal of both $A$ and $B$. It is nonzero since $A$ is a domain. \par
(2). This follows immediately from Lemma~\ref{Ore argument}(2). \par
(3). Since $_AM$ is finitely generated there exists a nonzero homogenous $c\in A$ such that $Mc\subseteq A$. Given any nonzero $y\in M$, $yB\subseteq M$ and so $yBc\subseteq A$. A symmetric argument finds nonzero $b,z\in Q$ such that $bAz\subseteq B$.
\end{proof}

We now turn to the first significant result on orders inside $S$. It is the $S$-analogue of \cite[Proposition~2.16]{RSS}. Indeed for Proposition~\ref{RSS 2.16} we had no need to alter the proof given in \cite{RSS}.

\begin{lemma}\label{RSS 2.16=>}
Suppose that $U$ and $R$ are two $g$-divisible subalgebras of $\Sg$ with $Q_\gr(U)=Q_\gr(R)=Q_\gr(S)$. If $U$ and $R$ are equivalent orders, then $\ovl{U}$ and $\ovl{R}$ are equivalent orders in $Q_\gr(\ovl{U})=Q_\gr(\ovl{R})=Q_\gr(\ovl{S})$.
\end{lemma}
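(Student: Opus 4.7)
The plan is to lift the equivalence relation from $\Sg$ down to $\ovl{\Sg}$ by using $g$-divisibility to strip away factors of $g$ that would otherwise be killed in the quotient.

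First, by Lemma~\ref{ovlR is order} we already have $Q_\gr(\ovl{U})=Q_\gr(\ovl{R})=Q_\gr(\ovl{S})$, so the conclusion at least makes sense. Now, since $U$ and $R$ are equivalent orders in $Q_\gr(S)$, Lemma~\ref{WLOG a,b homogenous} lets us pick nonzero \emph{homogenous} elements $x_1,x_2 \in U$ and $y_1,y_2 \in R$ with $x_1 U x_2 \subseteq R$ and $y_1 R y_2 \subseteq U$.

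The main obstacle is that if any of $x_1,x_2,y_1,y_2$ lie in $g\Sg$, their images in $\ovl{\Sg}$ vanish and the inclusions become trivial. To fix this, write $x_i = g^{a_i} x_i'$ with $x_i' \in \Sg \setminus g\Sg$ and $a_i \geq 0$. Since $U$ is $g$-divisible and $x_i \in U$, iterating Lemma~\ref{g-div} gives $x_i' \in U$. Then from $x_1 U x_2 \subseteq R$ we obtain
\[
g^{a_1+a_2}(x_1' U x_2') \;=\; x_1 U x_2 \;\subseteq\; R,
\]
and applying $g$-divisibility of $R$ a total of $a_1+a_2$ times yields $x_1' U x_2' \subseteq R$. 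By the exact same argument, after writing $y_j = g^{b_j} y_j'$ with $y_j' \in R \setminus g\Sg$, we get $y_1' R y_2' \subseteq U$.

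Finally, since $x_1',x_2',y_1',y_2' \notin g\Sg$, their images $\ovl{x_1'},\ovl{x_2'},\ovl{y_1'},\ovl{y_2'}$ are nonzero homogenous elements of $\ovl{\Sg} = \Bbbk(E)[t,t^{-1};\sigma]$. Passing to the quotient by $g\Sg$, the two inclusions become
\[
\ovl{x_1'}\,\ovl{U}\,\ovl{x_2'} \;\subseteq\; \ovl{R}
\qquad\text{and}\qquad
\ovl{y_1'}\,\ovl{R}\,\ovl{y_2'} \;\subseteq\; \ovl{U},
\]
which is exactly what it means for $\ovl{U}$ and $\ovl{R}$ to be equivalent orders in $Q_\gr(\ovl{S})$.
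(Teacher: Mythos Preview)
Your proof is correct and follows essentially the same approach as the paper: use Lemma~\ref{ovlR is order} for the quotient-ring statement, then strip powers of $g$ from the sandwiching elements via $g$-divisibility so that their images survive in $\ovl{\Sg}$. The only cosmetic difference is that the paper handles one element at a time and appeals only to $g$-divisibility of the target ring, whereas you pull out all $g$-powers at once and invoke $g$-divisibility of both rings; either works.
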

\begin{proof}
By Lemma~\ref{ovlR is order}, $Q_\gr(\ovl{U})=Q_\gr(\ovl{R})=Q_\gr(\ovl{S})$. Say we have $aRb\subseteq U$ with $a,b\in A$ nonzero and homogenous. Write $a=a'g^n$ with $n\geq 0$ and $a'\in U\setminus gU$. Then $g^na'Rb\subseteq U$, and hence $a'Rb\subseteq U$ because $U$ is $g$-divisible. Replacing $a$ by $a'$, we may assume $a\notin gU$. Similarly we can assume $b\notin gU$. Hence $\ovl{a},\ovl{b}\neq 0$ and $\ovl{a}\ovl{R}\ovl{b}\subseteq \ovl{U}$.
\end{proof}

\begin{prop}\label{RSS 2.16}
Let $U$ and $R$ two be $g$-divisible subalgebras of $\Sg$ such that $Q_\gr(U)=Q_\gr(R)=Q_\gr(S)$. Assume that $U\subseteq R$. Then $U$ and $R$ are equivalent orders if and only if $\ovl{U}$ and $\ovl{R}$ are equivalent orders.
\end{prop}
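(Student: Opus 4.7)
The forward implication is already established by Lemma~\ref{RSS 2.16=>}, so the task is the converse: assuming $\ovl{U}$ and $\ovl{R}$ are equivalent orders in $Q_\gr(\ovl{S})$, deduce that $U$ and $R$ are equivalent orders in $Q_\gr(S)$. Since $U \subseteq R$, by Lemma~\ref{hence equiv orders}(2) it suffices to show that $R$ is finitely generated as a right $U$-module. The plan therefore reduces to lifting a finite generation statement from $\ovl{R}$ over $\ovl{U}$ up to $R$ over $U$.

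First I would establish the corresponding fact modulo $g$. By Lemma~\ref{WLOG a,b homogenous} applied to $\ovl{U}\subseteq\ovl{R}$, the equivalence of orders gives nonzero homogenous $\ovl{\alpha},\ovl{\beta}\in\ovl{U}$ with $\ovl{\alpha}\,\ovl{R}\,\ovl{\beta}\subseteq\ovl{U}$; thus $\ovl{R}$ embeds as a right $\ovl{U}$-submodule of the cyclic module $\ovl{\alpha}^{-1}\ovl{U}\,\ovl{\beta}^{-1}\subseteq Q_\gr(\ovl{U})$ via $\ovl{r}\mapsto \ovl{\alpha}\,\ovl{r}\,\ovl{\beta}^{-1}$ (using that we are inside a graded division ring). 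Since $U$ is noetherian by Proposition~\ref{RSS 2.9}, its quotient $\ovl{U}=U/gU$ is noetherian, and hence $\ovl{R}$ is finitely generated as a right $\ovl{U}$-module. Pick homogenous generators $\ovl{r_1},\dots,\ovl{r_n}$ and lift to homogenous $r_1,\dots,r_n\in R$.

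The next step is the $g$-adic iteration. Since $R$ is $g$-divisible we have $R/gR\cong\ovl{R}$, so
$$R = \sum_{i=1}^n r_iU + gR.$$
Because $g\in U$ we have $gU\subseteq U$, and so applying $g\cdot$ gives $gR\subseteq \sum_i r_i gU+g^2R\subseteq \sum_i r_iU+g^2R$; iterating yields
$$R = \sum_{i=1}^n r_iU + g^kR \qquad \text{for every } k\geq 1.$$

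The remaining step is to eliminate the error term, which is where one might worry about convergence. Here the grading (together with $\deg g=3$) settles matters cleanly: for a homogenous element $r\in R_d$, one may take homogenous choices at each stage of the iteration, so the remainder term lies in $g^kR\cap R_d=g^kR_{d-3k}$, and this vanishes once $k>d/3$. Thus every homogenous $r\in R$ lies in $\sum_i r_iU$, so $R=\sum_{i=1}^n r_iU$ is finitely generated as a right $U$-module, and Lemma~\ref{hence equiv orders}(2) completes the proof. The main obstacle one needs to overcome is ensuring the iteration terminates; this is resolved by exploiting the cg structure, which is arguably the principal reason the whole strategy of pushing things through the factor $\Sg/g\Sg$ succeeds.
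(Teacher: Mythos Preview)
Your overall strategy is appealing, but the key step---deducing that $\ovl{R}$ is finitely generated as a right $\ovl{U}$-module from $\ovl{\alpha}\,\ovl{R}\,\ovl{\beta}\subseteq\ovl{U}$---does not work as written. The map $\ovl{r}\mapsto\ovl{\alpha}\,\ovl{r}\,\ovl{\beta}^{-1}$ is \emph{not} a right $\ovl{U}$-module homomorphism: for $u\in\ovl{U}$ one has $\ovl{\alpha}(\ovl{r}u)\ovl{\beta}^{-1}=(\ovl{\alpha}\,\ovl{r}\,\ovl{\beta}^{-1})(\ovl{\beta}u\ovl{\beta}^{-1})$, and there is no reason for $\ovl{\beta}u\ovl{\beta}^{-1}$ to lie in $\ovl{U}$. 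Likewise $\ovl{\alpha}^{-1}\ovl{U}\,\ovl{\beta}^{-1}$ is not a right $\ovl{U}$-module in general. This is exactly the place where noncommutativity bites: in a commutative ring your embedding would be fine, but here the conclusion that $\ovl{R}_{\ovl{U}}$ is finitely generated simply does not follow from equivalence of orders alone. The obvious attempts to repair this (using $\ovl{r}\mapsto\ovl{\alpha}\,\ovl{r}$ and landing in $\ovl{U}\ovl{\beta}^{-1}$, or switching to left modules and landing in $\ovl{\alpha}^{-1}\ovl{U}$) run into the same obstruction: neither $\ovl{U}\ovl{\beta}^{-1}$ nor $\ovl{\alpha}^{-1}\ovl{U}$ is a module on the side you need. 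Since the rest of your argument (the $g$-adic iteration and graded termination) is correct but rests entirely on this finite-generation claim, the gap is fatal to the proof as it stands.

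The paper's proof avoids this issue by \emph{not} trying to show $R_U$ is finitely generated directly. Instead it introduces an intermediate ring $C=\ovl{U}+\ovl{R}b\ovl{U}$ and exploits Proposition~\ref{B just infinite} (every subalgebra of $\ovl{R}$ is noetherian) to get $C_{\ovl{U}}$ and ${}_C\ovl{R}$ separately finitely generated; it then lifts via a $g$-divisible hull $M=\widehat{XU}$ and the endomorphism ring $V=\End_U(M)$, showing $U\sim V$ (via the bimodule $M$) and $V\sim R$ (via a graded Nakayama argument proving ${}_VR$ is finitely generated). The passage through $V$ is precisely what circumvents the left/right mismatch that blocks your direct approach.
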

\begin{proof} ($\Rightarrow$). This follows immediately from Lemma~\ref{RSS 2.16=>}.\par
($\Leftarrow$). Suppose that $a\ovl{R}b\subseteq \ovl{U}$ with $a,b\in \ovl{U}$ nonzero and homogenous. We set $C=\ovl{U}+\ovl{R}b\ovl{U}$. It is easily seen that $C$ is a ring, satisfying $aC\subseteq \ovl{U}$ and $\ovl{R}b\subseteq C$. By Proposition~\ref{B just infinite}, $\ovl{R}$ and all subalgebras of $\ovl{R}$, in particular $\ovl{U}$ and $C$, are noetherian.  Thus $aC\subseteq\ovl{U}$ and $\ovl{R}b\subseteq C$ ensure   that $C_{\ovl{U}}$ and $_C\ovl{R}$ are finitely generated. \par

Now, since  $C_{\ovl{U}}$ is finitely generated, we can choose a finite dimensional subspace $X\subseteq R$ such that $\ovl{XU}=C$. There is no harm to assume $1\in X$. Put $M=\widehat{XU}$ and $V=\End_U(M)$. By Lemma~\ref{End is cg} $Q_\gr(V)=Q_\gr(U)=Q_\gr(S)$. Since $1\in M$, $M\not\subseteq g\Sg$, and so Lemma~\ref{RSS 2.12}(1) implies $V=\{x\in Q_\gr(U)\,|\, xM\subseteq M\}\subseteq \Sg$. By Lemma~\ref{RSS 2.12}(3), $V=\widehat{V}$ and $_VM$ is finitely generated. By Lemma~\ref{RSS 2.13}, $M_U$ is finitely generated. It follows from Lemma~\ref{hence equiv orders}(3) that $U$ and $V$ are equivalent orders.\par

Since $U\subset R$, $XU\subseteq R$, and hence $M=\widehat{XU}\subseteq \widehat{R}=R$ as $R$ is $g$-divisible. Because $1\in M$, $MR=R$. Hence $VR=VMR=MR=R$ and $V\subseteq R$. Since $_VM$ and $_C\ovl{R}$ are finitely generated, we can choose finite dimensional subspaces $Y,Z\subseteq R$ with $VY=M$ and $C\ovl{Z}=\ovl{R}$. We then have
$$\ovl{R}\supseteq \ovl{VYZ}\supseteq\ovl{MZ}\supseteq\ovl{XUZ}=C\ovl{Z}=\ovl{R},$$
forcing everything above to be an equality. In particular $\ovl{R}=\ovl{VYZ}$, proving that $_{\ovl{V}}{\ovl{R}}$ is finitely generated. By the Graded Nakayama's Lemma (Lemma~\ref{graded nakayama}) $\ovl{R}/\ovl{V_{\geq 1}R}$ is a finite dimensional vector space over $\ovl{V}/\ovl{V_{\geq 1}}=\Bbbk$. But since $g\in V_{\geq 1}\subseteq R_{\geq 1}$; $V_{\geq 1}\cap gV=gV$ and $V_{\geq 1}R\cap gR=gR$. It follows,
$$\Bbbk\cong \ovl{V}/\ovl{V_{\geq 1}}=\frac{V/gV}{V_{\geq 1}/(V_{\geq 1}\cap gV)} \cong V/V_{\geq 1},$$
and
$$\ovl{R}/\ovl{V_{\geq 1}R}=\frac{R/gR}{V_{\geq 1}R/(V_{\geq 1}R\cap gR)}\cong R/V_{\geq1}R.$$
Thus $R/V_{\geq 1}R$ is a finite dimensional $V/V_{\geq 1}$-vector space. The Graded Nakayama Lemma then applies again to show $_VR$ is finitely generated. By Lemma~\ref{hence equiv orders}(2), $R$ and $V$ are equivalent orders. Since $V$ and $U$ are equivalent orders from earlier, we get that $U$ and $R$ are equivalent orders.
\end{proof}

We finish this section by further studying orders in $S$ and in $S^{(3)}$. We make an extension of  Notation~\ref{3 Veronese notation bg}.

\begin{notation}\label{3 Veronese notation2}\index[n]{emt@$(E,\MM,\tau)$} \index[n]{t@$T$} \index[n]{tau@$\tau$} \index[n]{tg@$T_{(g)}$}
Let $T=S^{(3)}$, $T_{(g)}=(\Sg)^{(3)}$, $\MM=\LL_3$ and $\tau=\sigma^3$. Whence we have
\begin{equation*} T/gT=B(E,\MM,\tau)\; \text{ and } \;T_{(g)}/gT_{(g)}=\Bbbk(E)[t,t^{-1};\tau].\end{equation*}
\end{notation}

In \cite[Notation~2.5]{RSS} the authors defined $T_{(g)}$ as the ring obtained from $T$ by inverting all homogenous elements of $T\setminus gT$. Clearly for this definition $T_{(g)}\subseteq (\Sg)^{(3)}$.  Conversely, if $s=ax^{-1}\in (\Sg)^{(3)}$ for $a,x\in S$ homogenous and $x\not\in gS$, then $\deg a-\deg x$ is divisible by 3. Say $\deg(a)\equiv \deg(x)\equiv \alpha \mod 3$ for $0\leq\alpha\leq 2$. Choosing homogenous $z\in S_{3-\alpha}\setminus gS$, we have $s=(az)(xz)^{-1}$, where $az,xz\in T$. As $gS$ is completely prime, $xz\notin gT$. This shows $(\Sg)^{(3)}$ indeed equals $T_{(g)}$ as defined in \cite[Notation~2.5]{RSS}. A similar proof shows
\begin{equation}\label{Dgr(S)=Dgr(T)} D_\gr(S)=D_\gr(T)\;\text{ and }Q_\gr(S)^{(3)}=Q_\gr(T).\end{equation}
\par

Recall that for a cg domain $A$ we can write $Q_\gr(A)=D_\gr(A)[t,t^{-1};\alpha]$ as in Definition~\ref{Qgr and Dgr}. We warn that below, in Lemma~\ref{Qgr(S) or Qgr(T)} and Lemma~\ref{max Sg-order is max order}, we give $T$, $T_{(g)}$ and $Q_\gr(T)$ the grading induced from $Q_\gr(S)$; so for example $T_n=T\cap S_n$. This is in contrast with Lemma~\ref{U'<g>} and Proposition~\ref{g-div max orders up n down} later where $T$ is regraded to have $T_n=S_{3n}$.

\begin{lemma}\label{Qgr(S) or Qgr(T)}
Let $U\subseteq Q_\gr(S)$ be a subalgebra containing $g$ and such that $D_\gr(U)=D_\gr(S)$. Then either $Q_\gr(U)=Q_\gr(S)$ or $Q_\gr(U)=Q_\gr(T)$.
\end{lemma}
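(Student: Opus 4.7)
The plan is to exploit the structure of $Q_\gr(S)=D_\gr(S)[s,s^{-1};\alpha]$ as a graded Laurent polynomial ring over the division ring $D_\gr(S)$ in order to pin down the possible graded subdivision rings of $Q_\gr(S)$ whose degree $0$ part is all of $D_\gr(S)$.

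First I would observe that $Q_\gr(U)\subseteq Q_\gr(S)$: every nonzero homogeneous element of $U$ is already invertible in $Q_\gr(S)$, so the graded Ore localisation that produces $Q_\gr(U)$ embeds canonically into $Q_\gr(S)$. The set of degrees supporting nonzero homogeneous elements of $Q_\gr(U)$ is a subgroup of $\mathbb{Z}$ (products of homogeneous elements are homogeneous and inverses exist in a graded division ring), hence equals $d\mathbb{Z}$ for a unique $d\geq 1$. Because $g\in U$ is a nonzero homogeneous element of degree $3$, $d\mid 3$ and so $d\in\{1,3\}$.

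The key observation is that each nonzero graded piece $Q_\gr(S)_n=D_\gr(S)\,s^n$ is one-dimensional as a left (equivalently right) $D_\gr(S)$-vector space. Since $D_\gr(U)=D_\gr(S)$ by hypothesis, each nonzero $Q_\gr(U)_n$ is a nonzero $D_\gr(S)$-submodule of $Q_\gr(S)_n$ and must therefore coincide with $Q_\gr(S)_n$.

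If $d=1$, then $Q_\gr(U)_n\neq 0$ for every $n\in\mathbb{Z}$ and the preceding paragraph immediately gives $Q_\gr(U)=Q_\gr(S)$. If instead $d=3$, then $Q_\gr(U)$ is supported in degrees divisible by $3$, so $Q_\gr(U)\subseteq Q_\gr(S)^{(3)}=Q_\gr(T)$ via the identification from (\ref{Dgr(S)=Dgr(T)}); rerunning the same dimension argument inside $Q_\gr(T)$ (where $D_\gr(T)=D_\gr(S)$ and each $Q_\gr(T)_{3n}$ is one-dimensional over $D_\gr(S)$) yields $Q_\gr(U)_{3n}=Q_\gr(T)_{3n}$ for every $n$, hence $Q_\gr(U)=Q_\gr(T)$. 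There is no real obstacle here: the whole argument is a single dimension count whose only ingredients are the subgroup-of-$\mathbb{Z}$ structure of the support of a graded division ring together with the one-dimensionality of each $Q_\gr(S)_n$ over $D_\gr(S)$.
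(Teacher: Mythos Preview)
Your proof is correct and is essentially the same argument as the paper's, just organised slightly differently. The paper first observes directly that $D_\gr(S)[g,g^{-1}]=Q_\gr(T)\subseteq Q_\gr(U)\subseteq Q_\gr(S)$, and then, if the first inclusion is strict, picks a homogeneous element of degree coprime to $3$ and multiplies by a suitable power of $g^{-1}$ to land in degree $1$; your version phrases the same dichotomy via the support subgroup $d\mathbb{Z}\subseteq\mathbb{Z}$ together with the one-dimensionality of each $Q_\gr(S)_n$ over $D_\gr(S)$, which is exactly the mechanism the paper is using implicitly.
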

\begin{proof}
Since $g\in U$, certainly $D_\gr(S)[g,g^{-1}]\subseteq Q_\gr(U)\subseteq Q_\gr(S)$. Furthermore, $D_\gr(S)[g,g^{-1}]=Q_\gr(S)^{(3)}=Q_\gr(T)$ by (\ref{Dgr(S)=Dgr(T)}). Thus $Q_\gr(T)\subseteq Q_\gr(U)\subseteq Q_\gr(S)$. If $Q_\gr(U)\supsetneq Q_\gr(T)$, then we can pick homogenous $x\in Q_\gr(U)$, with $\deg(x)$ coprime to 3. In which case there exists $k,\ell\in \N$ such that $x^kg^{-\ell}\in Q_\gr(U)_1$, which forces $Q_\gr(U)=Q_\gr(S)$.
\end{proof}

The next proposition shows maximal $\Sg$-orders are in fact maximal orders. Recall from Notation~\ref{generated in degree 1} that for a cg subalgebra $A$ of $Q_\gr(S)$, the subalgebra of $Q_\gr(S)$ generated by $A$ and $g$ is denoted by $A\langle g\rangle=A+Ag+Ag^2+\dots$\index[n]{ag@$A\langle g\rangle$}.

\begin{lemma}\label{max Sg-order is max order}
Let $U$ be a cg subalgebra of $\Sg$ with $D_\gr(U)=D_\gr(S)$. If $U\subseteq A$ are equivalent orders for some cg subalgebra $A$ of $Q_\gr(S)$, then $A\subseteq \Sg$.
\end{lemma}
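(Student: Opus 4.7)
The plan is to build a $g$-adic valuation $\nu_g$ on $Q_\gr(S)$ and use the equivalent-orders condition to rule out any homogeneous element of $A$ of negative valuation.

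I would first set up $\nu_g$. Since $S/gS=B(E,\LL,\sigma)$ is a domain, $gS$ is completely prime in $S$, and the identity $\Sg/g\Sg=\Bbbk(E)[t,t^{-1};\sigma]$ from Notation~\ref{S_(g)} shows that $g\Sg$ is completely prime in $\Sg$. The Graded Nakayama Lemma (Lemma~\ref{graded nakayama}) applied to $I=\bigcap_n g^nS$ gives $I=Ig\subseteq IS_{\geq 1}$, whence $I=0$. An easy induction using the complete primeness of $gS$ shows $g^n\Sg\cap S=g^nS$ for every $n\geq 0$, so in turn $\bigcap_n g^n\Sg=0$. Consequently every nonzero homogeneous $s\in\Sg$ has a unique expression $s=rg^n$ with $r\in\Sg\setminus g\Sg$ and $n\geq 0$, and we set $\nu_g(s)=n$. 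Since $Q_\gr(S)=\Sg[g^{-1}]$, $\nu_g$ extends uniquely to a $\Z$-valued function on nonzero homogeneous elements of $Q_\gr(S)$. Centrality of $g$ together with complete primeness of $g\Sg$ yield additivity: $\nu_g(xy)=\nu_g(x)+\nu_g(y)$. By construction, an element $x\in Q_\gr(S)$ lies in $\Sg$ if and only if $\nu_g(x)\geq 0$.

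With $\nu_g$ in hand, the remainder is short. By the definition of equivalent orders together with Lemma~\ref{WLOG a,b homogenous}, there exist nonzero homogeneous $a,b\in A$ with $aAb\subseteq U\subseteq\Sg$. Assume for contradiction that $A\not\subseteq\Sg$. Since $A$ is graded, there is a homogeneous $v\in A$ with $\nu_g(v)=-n$ for some $n\geq 1$. For each $k\geq 1$, $v^k\in A$, so $av^kb\in U\subseteq\Sg$, and additivity yields
$$0\;\leq\;\nu_g(av^kb)\;=\;\nu_g(a)+\nu_g(b)-nk.$$
Letting $k\to\infty$ is absurd, so $A\subseteq\Sg$.

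The only conceptually substantive ingredient is establishing that $\nu_g$ is a well-defined, finite, additive valuation --- in particular, verifying $\bigcap_n g^n\Sg=0$ so that $\nu_g$ takes finite values. Once that is granted, the exclusion argument collapses to two lines, and the hypothesis $D_\gr(U)=D_\gr(S)$ plays no direct role beyond ensuring that $U$ and $A$ live in a common graded quotient ring inside $Q_\gr(S)$ where the machinery applies.
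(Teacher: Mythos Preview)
Your proof is correct and takes essentially the same approach as the paper: both arguments exploit the $g$-adic valuation on $Q_\gr(S)=\Sg[g^{-1}]$ and reach a contradiction by taking high powers of a putative element of $A\setminus\Sg$. Your version is somewhat more streamlined---by naming $\nu_g$ explicitly and arguing directly, you avoid the paper's detour through $C=A\langle g\rangle$ and the case split via Lemma~\ref{Qgr(S) or Qgr(T)}, neither of which is actually needed for the contradiction; your closing remark that $D_\gr(U)=D_\gr(S)$ plays no essential role is also accurate.
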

\begin{proof}
For a contradiction assume that $A\not\subseteq \Sg$. Say that $xAy\subseteq U$ for nonzero $x,y\in Q_\gr(S)$. By Lemma~\ref{WLOG a,b homogenous} we may assume that $x,y\in  U$ (in particular $x,y\in\Sg$) and are homogenous. Set $C=A\langle g\rangle$ and $V=U\langle g\rangle$; clearly $V\subseteq C$ whilst $C\not\subseteq \Sg$. By Lemma~\ref{Qgr(S) or Qgr(T)}, there are two cases:
\begin{enumerate}[(1)]
\item $Q_\gr(C)=Q_\gr(V)=Q_\gr(S)$;
\item $Q_\gr(C)=Q_\gr(V)=Q_\gr(T)$.
\end{enumerate}\par
(1). Take $c\in C\setminus \Sg$. Then certainly $c\in Q_\gr(S)$. Now note that $Q_\gr(S)$ equals $\Sg$ localised at the Ore set $\{1,g^{-1},g^{-2},\dots \}$. So, cancelling $g$'s if necessary, we can write $c=sg^{-n}$ for some $s\in\Sg\setminus g\Sg$ and $n\geq 1$. Now write $x=x_1g^k$ and $y=y_1g^\ell$ where $x_1,y_1\in \Sg\setminus g\Sg$ and $k,\ell\geq 0$. Choose an integer $m$ such that $nm>k+\ell$ and consider $xc^my$. On the one hand $c^m\in C$, and therefore $xc^my\in xCy\subseteq V\subseteq \Sg$, while on the other $xc^my=x_1s^my_1g^{k+\ell-nm}$. Now $x_1,y_1\notin g\Sg$, while also $s^m\notin g\Sg$ as $g\Sg$ is a completely prime ideal of $\Sg$, Thus all of $x_1^{-1},y_1^{-1},s^{-m}\in \Sg$. But then.
$$ g^{k+\ell-nm}=y_1^{-1}s^{-m}x_1^{-1}xc^my \in\Sg.$$
Since $k+\ell-nm<0$, this is our desired contradiction. \par
(2) follows in the same way with $T$ in place of $S$.
\end{proof}

The importance of Lemma~\ref{max Sg-order is max order} is that it allows us to concentrate on rings inside $\Sg$ where we are more comfortable.

\subsection{Veronese rings of $g$-divisible algebras}\label{Veronese sec}

To help prove many results in $S$, we will want to utilise the results of \cite{Ro} and \cite{RSS} as much as possible. A clear strategy is therefore to pass information between Veronese rings. We develop some basic machinery for this here.

\begin{lemma}\label{noeth up n down}
Let $A$ be a cg domain. Fix $d\geq 1$ and write $A'=A^{(d)}$.
\begin{enumerate}[(1)]
\item Then $A$ is (strongly) noetherian if and only if $A'$ is (strongly) noetherian.
\item Suppose that $A$ is noetherian and let $M\subseteq Q_\gr(A)$ be a finitely generated right $A$-module. Then as a right $A'$-module
$$M=\bigoplus_{k=0}^{d-1}M^{(k\;\mathrm{mod}\, d)}\index[n]{Mkmodd@$M^{(k\;\mathrm{mod}\, d)}$} $$
where $M^{(k\;\mathrm{mod}\, d)}=\bigoplus_{i}M_{di+k}$ are finitely generated right $A'$-modules. In particular $M^{(d)}$, $M$ and $A$ are all finitely generated as right $A'$-modules.
\end{enumerate}
\end{lemma}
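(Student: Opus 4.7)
The plan is to prove (2) before finishing (1), because the strongly noetherian direction of (1) will depend on finite generation of $A$ as a right $A'$-module, which is the central content of (2). For the noetherian equivalence in (1), there is nothing to do: Lemma~\ref{AZ 5.10} gives the forward direction, and since $A$ is assumed to be a domain, the same lemma supplies the converse.

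Turning to (2), the decomposition $M = \bigoplus_{k=0}^{d-1} M^{(k\;\mathrm{mod}\;d)}$ as $\Z$-graded right $A'$-modules is immediate from inspecting degrees: each summand is closed under right multiplication by $A'$ since $A'$ lives in degrees divisible by $d$. Since $A'$ is noetherian by Lemma~\ref{AZ 5.10}, to show each summand is finitely generated it suffices to show $M$ itself is finitely generated as a right $A'$-module, and for this it suffices to show $A$ is finitely generated as a right $A'$-module (for then one can express a given $A$-generating set of $M$ in terms of $A'$). To show $A$ is finitely generated over $A'$, I would first observe that a right noetherian connected graded algebra is automatically finitely generated as a $\Bbbk$-algebra: pick finitely many homogeneous generators $a_1,\dots,a_n$ of the right ideal $A_{\geq 1}$, of degrees $d_i \leq N$; then Graded Nakayama (Lemma~\ref{graded nakayama}) combined with induction on degree shows $A = \Bbbk\langle a_1,\dots,a_n\rangle$. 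Next, given any monomial $w = a_{i_1}\cdots a_{i_k}$ in these generators, I would look at the partial products $p_j = a_{i_1}\cdots a_{i_j}$: their degrees form an increasing walk from $0$ to $\deg w$ with jumps bounded by $N$, so their residues modulo $d$ trace out a walk in $\Z/d\Z$ from $0$ to $\deg w \bmod d$. A pigeonhole argument shows that provided $\deg w$ exceeds an explicit constant $C = C(d,N)$, one can find $j < k$ with $\deg p_j \equiv \deg w \pmod{d}$ and $\deg p_j \leq C$, so that $w = p_j \cdot (a_{i_{j+1}}\cdots a_{i_k})$ exhibits $w \in A_{<C}\cdot A'$. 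Hence $A = A_{<C}\cdot A'$ is finitely generated as a right $A'$-module, completing (2).

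For the strongly noetherian half of (1): the forward direction is straightforward, since for any commutative noetherian $Z$ the ring $A\otimes_\Bbbk Z$ is noetherian and $\N$-graded, whence $(A\otimes_\Bbbk Z)^{(d)} = A'\otimes_\Bbbk Z$ is noetherian by Lemma~\ref{AZ 5.10}. Conversely, assuming $A'$ is strongly noetherian and using (2) applied to $M=A$, the ring $A$ is finitely generated as a right $A'$-module, so for any commutative noetherian $Z$, the tensor product $A\otimes_\Bbbk Z$ is a finitely generated right module over the noetherian ring $A'\otimes_\Bbbk Z$, and is therefore itself noetherian. The main obstacle is the combinatorial step in (2) producing a prefix of correct residue class and bounded degree; although the conclusion is classical, some careful bookkeeping on the residue walk is required to nail down the bound $C(d,N)$.
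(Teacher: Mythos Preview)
Your pigeonhole step in (2) does not work. Take generators $a,b$ with $\deg a = 2$, $\deg b = 3$, and $d = 6$. For the monomial $w = a^m b$ of degree $2m+3$, every proper prefix $p_j = a^j$ has even degree, while $\deg w$ is odd; so no proper prefix satisfies $\deg p_j \equiv \deg w \pmod 6$, regardless of how large $m$ is. Pigeonhole on the residue walk only produces two \emph{intermediate} partial sums with equal residue, not a short prefix matching the terminal residue. In the free algebra on $a,b$ (a cg domain, though not noetherian), this shows the monomial $a^m b$ does not lie in $F_{\leq C}F^{(6)}$ for any fixed $C$, so your combinatorial argument cannot succeed without invoking noetherianity in an essential way---which it does not.

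The paper's route avoids this entirely. Rather than attacking $A$ over $A'$ combinatorially, it uses the Ore condition (Lemma~\ref{Ore argument}(2)) together with Lemma~\ref{choose x of appropriate deg} to find, for each residue class $k$, a nonzero homogeneous $x \in A$ of degree $\equiv -k \pmod d$ with $xM \subseteq A$; then $xM^{(k\;\mathrm{mod}\,d)} \subseteq A \cap Q_\gr(A)^{(d)} = A'$, so each summand embeds as a right ideal of the noetherian ring $A'$ and is therefore finitely generated. Setting $M = A$ then gives the finite generation of $A_{A'}$ that you wanted. Your treatment of (1) is fine, and your direct argument for the strongly noetherian converse (via finite generation of $A$ over $A'$) is a nice alternative to the paper's citation of \cite[Proposition~4.9]{ASZ}---but it rests on (2), so you need the corrected proof of (2) first.
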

\begin{proof} (1). The noetherian part is Lemma~\ref{AZ 5.10}. The strongly noetherian statement is \cite[Proposition~4.9(2)(3)]{ASZ}.\par
(2). Write $Q=Q_\gr(A)$. Fix $k=0,1,2,\dots,d-1$ and write $M'=M^{(k\;\mathrm{mod}\,d)}$. If $M=0$ there is nothing to prove so assume $M\neq 0$. By Lemma~\ref{choose x of appropriate deg} both $A'\neq 0$ and $M'\neq 0$. Since $M_A$ is finitely generated there exists a homogenous $x\in A$ such that $xM\subseteq A$ by Lemma~\ref{Ore argument}(2). Using Lemma~\ref{choose x of appropriate deg} we can multiply $x$ on the left with an element of $A$ with appropriate degree so that we may assume $x\in A'$. In which case $xM'\subseteq A\cap Q'=A'$, in particular $M'$ embeds into $A'$ as a right ideal. Since $A$ is noetherian, $A'$ is also noetherian by part (1). Hence $M'_{A'}$ is finitely generated. The module $M$ is then a direct sum of finitely generated modules, hence itself is finitely generated. To see $A_{A'}$ is finitely generated just take $M=A$ in the above.
\end{proof}

\begin{lemma}\label{equiv orders go up}
Let $A$ and $B$ be two cg domains with $Q_\gr(A)=Q_\gr(B)=Q$. Assume that $Q_1\neq 0$ and let $d\geq 1$. If $A$ and $B$ are equivalent orders, then $A^{(d)}$ and $B^{(d)}$ are also equivalent orders with $Q_\gr(A^{(d)})=Q_\gr(B^{(d)})=Q^{(d)}$
\end{lemma}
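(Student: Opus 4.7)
The plan is to first use Lemma~\ref{WLOG a,b homogenous} to reduce to the case where the conjugators witnessing equivalence of orders are homogeneous, and then to adjust their degrees by multiplying by suitable elements of $A$ (or $B$) so that everything lands in $Q^{(d)}$. The hypothesis $Q_1 \neq 0$ is used precisely to guarantee that such adjusters exist in every residue class modulo $d$.

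First, I would apply Lemma~\ref{WLOG a,b homogenous} to extract nonzero homogeneous $x_1, x_2, y_1, y_2 \in Q$ satisfying $x_1 A x_2 \subseteq B$ and $y_1 B y_2 \subseteq A$. Since $Q_\gr(A) = Q$ with $Q_1 \neq 0$, Lemma~\ref{choose x of appropriate deg} gives $A_n \neq 0$ for all $n \gg 0$, so for any residue $r$ modulo $d$ we can find a nonzero homogeneous $c \in A$ with $\deg c \equiv r \pmod{d}$; likewise for $B$.

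Now choose nonzero homogeneous $c_1, c_2 \in A$ with $\deg c_1 \equiv -\deg x_1 \pmod d$ and $\deg c_2 \equiv -\deg x_2 \pmod d$, and set $x_1' = x_1 c_1$, $x_2' = c_2 x_2$. These are nonzero (as $A,B,Q$ are domains) homogeneous elements of $Q$ with degree divisible by $d$, hence in $Q^{(d)}$. For any $a \in A^{(d)}$, we have
\[
x_1' \, a \, x_2' \;=\; x_1 (c_1 a c_2) x_2 \;\in\; x_1 A x_2 \;\subseteq\; B,
\]
and its degree $\deg x_1' + \deg a + \deg x_2'$ is divisible by $d$, so $x_1' a x_2' \in B \cap Q^{(d)} = B^{(d)}$. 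Thus $x_1' A^{(d)} x_2' \subseteq B^{(d)}$. A symmetric argument, using nonzero homogeneous $c_1', c_2' \in B$ of appropriately chosen degrees, yields $y_1' B^{(d)} y_2' \subseteq A^{(d)}$ with $y_1', y_2' \in Q^{(d)}$ nonzero.

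It remains to verify $Q_\gr(A^{(d)}) = Q^{(d)}$ (and likewise for $B$). The inclusion $Q_\gr(A^{(d)}) \subseteq Q^{(d)}$ is clear since $A^{(d)} \subseteq Q^{(d)}$ and $Q^{(d)}$ is itself a graded division ring inside which every nonzero homogeneous element of $A^{(d)}$ is invertible. Conversely, given a nonzero homogeneous $q \in Q^{(d)}$, use the Ore condition as in Lemma~\ref{Ore argument}(1) to write $q = a b^{-1}$ with homogeneous $a, b \in A$; then pick a nonzero homogeneous $c \in A$ with $\deg c \equiv -\deg b \pmod d$ (possible by Lemma~\ref{choose x of appropriate deg}), so that $bc \in A^{(d)}$, and since $\deg q \in d\Z$ also $ac = q \cdot bc \in A \cap Q^{(d)} = A^{(d)}$, giving $q = (ac)(bc)^{-1} \in Q_\gr(A^{(d)})$.

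There is no serious obstacle here: the argument is essentially bookkeeping about degrees modulo $d$, and the only non-trivial ingredient is the hypothesis $Q_1 \neq 0$, which ensures (via Lemma~\ref{choose x of appropriate deg}) the existence of homogeneous elements in every sufficiently large degree — and therefore in every residue class mod $d$ — enabling the degree corrections above.
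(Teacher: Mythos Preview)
Your proof is correct and follows essentially the same approach as the paper's: reduce to homogeneous conjugators via Lemma~\ref{WLOG a,b homogenous}, then use Lemma~\ref{choose x of appropriate deg} to adjust degrees modulo $d$. The only cosmetic difference is that the paper multiplies by adjusters $u,v\in B$ on the \emph{outside} (so $uaAbv\subseteq uBv\subseteq B$), whereas you insert adjusters $c_1,c_2\in A$ on the \emph{inside} (so $x_1(c_1ac_2)x_2\in x_1Ax_2\subseteq B$); both work equally well.
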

\begin{proof}
The fact that $Q_\gr(A^{(d)})=Q_\gr(B^{(d)})=Q^{(d)}$ follows as in (\ref{Dgr(S)=Dgr(T)}). Assume that $aAb\subseteq B$, with $a,b\in A$ nonzero and homogenous. Say $\deg(a)=\alpha$ and $\deg(b)=\beta$. By Lemma~\ref{choose x of appropriate deg} we can find nonzero homogenous elements $u,v\in B$ of degrees $k$ and $\ell$ respectively satisfying $k\equiv-\alpha\mod d$ and $\ell\equiv-\beta\mod d$. Then $ua,bv\in Q^{(d)}$, and still satisfy $uaAbv\subseteq B$. It then follows that $uaA^{(d)}bv\subseteq B^{(d)}$. In a symmetric fashion we can also find nonzero $a',b'\in Q^{(d)}$ such that $a'B^{(d)}b'\subseteq A^{(d)}$.
\end{proof}

Since $g\in S_3$, the property of $g$-divisibility also makes sense in the 3-Veronese $T_{(g)}=(\Sg)^{(3)}$. A trivial but useful result is the following.

\begin{lemma}\label{g div up}
Let $R$ be a cg subalgebra of $\Sg$ containing $g$ and let $M$ be a graded right $R$-submodule of $\Sg$. If $M$ is $g$-divisible then $M^{(3)}$ is also $g$-divisible. \qed
\end{lemma}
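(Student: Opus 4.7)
The plan is to verify $g$-divisibility of $M^{(3)}$ directly using the characterisation $M^{(3)} = \widehat{M^{(3)}}$ from Lemma~\ref{g-div}, where the $g$-divisible hull is now computed inside $T_{(g)}=(\Sg)^{(3)}$. So I take an arbitrary $a\in\widehat{M^{(3)}}$, meaning $a\in T_{(g)}$ and $ag^n\in M^{(3)}$ for some $n\geq 0$, and I must show $a\in M^{(3)}$.

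First I would note that $ag^n\in M^{(3)}\subseteq M$, and since $M$ is $g$-divisible as a submodule of $\Sg$, the hypothesis $ag^n\in M$ with $a\in\Sg$ forces $a\in\widehat{M}=M$. This is the whole ``analytic'' content of the proof — everything else is bookkeeping about the grading.

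Second, I would use the graded structure to verify $a\in M^{(3)}$. Since $M$ is a graded submodule of $\Sg$, the intersection $M\cap T_{(g)}$ decomposes as
\begin{equation*}
M\cap T_{(g)}=M\cap\bigoplus_{i\in\Z}(\Sg)_{3i}=\bigoplus_{i\in\Z}M_{3i}=M^{(3)}.
\end{equation*}
As $a\in T_{(g)}$ by hypothesis and $a\in M$ by the previous paragraph, this identification gives $a\in M^{(3)}$, as required.

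The main (and only) obstacle is really just making sure the notion of $g$-divisibility is being applied consistently in the Veronese setting: $g\in S_3\subseteq T_{(g)}$ sits in $T_{(g)}$, so the definition of $\widehat{M^{(3)}}$ makes sense inside $T_{(g)}$, and one needs the graded-submodule structure of $M$ to cut $M$ down to $M^{(3)}$ correctly. There are no delicate module-theoretic manoeuvres required, so I expect the proof to be a few lines at most.
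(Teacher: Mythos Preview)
Your proof is correct. The paper gives no proof at all (the lemma is stated with an immediate \qed and described as ``trivial but useful''), and your argument is exactly the routine verification one would supply to justify that \qed.
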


The next lemma is key to utilising the results of \cite{Ro} and \cite{RSS} fully. It allows us to build appropriate ideals in $S$ out of ideals from $S^{(3)}$ and vice versa. \par
Before Lemma~\ref{sporadics up n down} we recall Notation~\ref{GK dimension} for GK-dimension.

\begin{lemma}\label{sporadics up n down}
Let $A$ be a cg noetherian domain of finite GK-dimension. Fix $d\in \N$ and let $A'=A^{(d)}$.
\begin{enumerate}[(1)]
\item  For every homogenous ideal $I$ of $A$, the homogenous ideal $I'=I^{(d)}$ of $A'$ satisfies $\GK(A'/I')= \GK(A/I)$.
\item Let $J$ be a homogenous ideal of $A'$.
\begin{enumerate}[(a)]
\item The ideal $I=AJA$ of $A$ satisfies $\GK(A/I)\leq \GK(A'/J)$. Furthermore, $J\subseteq I^{(d)}$ holds.
\item  The ideal $K=\mathrm{r.ann}_A(A/JA)$ of $A$ also satisfies $\GK(A/K)\leq\GK(A'/J)$. In this case $K^{(d)}\subseteq J$.
\end{enumerate}
\end{enumerate}
\end{lemma}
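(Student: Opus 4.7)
The plan is to establish part (1) as the core of the lemma and then derive parts (2a) and (2b) by reducing to (1) via the appropriate containments of ideals in $A'$. The key tool is Lemma~\ref{noeth up n down}(2): noetherianness of $A$ gives that $A$ is finitely generated as a right $A'$-module. Consequently, for any homogeneous ideal $I$ of $A$, the quotient $A/I$ is finitely generated as a right $A'$-module, and since $I^{(d)} \subseteq I$ forces $I^{(d)} \cdot (A/I) = 0$, this action factors through $A'/I^{(d)}$, so $A/I$ is in fact a finitely generated right $A'/I^{(d)}$-module.

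For (1), I would compare the two GK-dimensions via the formula $\GK_B(M) = \limsup_n \log_n \dim_\Bbbk M_{\leq n}$, valid for finitely generated graded modules over a cg algebra. The natural inclusion $A'/I^{(d)} \hookrightarrow A/I$ of $\Bbbk$-vector spaces (with $A'/I^{(d)}$ in the Veronese grading) gives $\dim_\Bbbk (A'/I^{(d)})_{\leq n} = \sum_{m \leq n} \dim_\Bbbk (A/I)_{dm} \leq \dim_\Bbbk (A/I)_{\leq dn}$, hence $\GK(A'/I^{(d)}) \leq \GK(A/I)$. Conversely, choosing finitely many homogeneous $A'/I^{(d)}$-generators $y_1, \dots, y_m$ of $A/I$ yields
\[
\dim_\Bbbk (A/I)_{\leq N} \;\leq\; \sum_{i=1}^m \dim_\Bbbk (A'/I^{(d)})_{\leq \lceil (N - \deg y_i)/d \rceil},
\]
from which $\limsup_N \log_N$ produces the reverse inequality $\GK(A/I) \leq \GK(A'/I^{(d)})$.

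Part (2a) is then an immediate corollary: since $J \subseteq A'$ and $J \subseteq AJA = I$ (taking $1 \in A$ on both sides), one has $J \subseteq I \cap A' = I^{(d)}$, giving a surjection $A'/J \twoheadrightarrow A'/I^{(d)}$ and hence $\GK(A/I) = \GK(A'/I^{(d)}) \leq \GK(A'/J)$ by (1). For part (2b) the inclusion $K^{(d)} \subseteq J$ is obtained as follows: applying the defining condition $Ak \subseteq JA$ with $1 \in A$ shows $K \subseteq JA$, so $K^{(d)} = K \cap A' \subseteq JA \cap A'$, and the identity $JA \cap A' = J$ follows from a degree argument — any homogeneous $x \in JA$ with $\deg x \in d\Z$ written $x = \sum j_\alpha a_\alpha$ with $j_\alpha \in J \subseteq A'$ homogeneous of degree in $d\Z$ must have each $\deg a_\alpha \in d\Z$, so $a_\alpha \in A'$ and $x \in JA' \subseteq J$. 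The remaining GK-dimension bound $\GK(A/K) \leq \GK(A'/J)$ I would obtain by exploiting that $A/K$ acts faithfully on the right $A$-module $M = A/JA$: the noetherian property gives finitely many $\bar m_1, \dots, \bar m_s \in M$ with $\bigcap_i \rann_A(\bar m_i) = K$, hence an embedding $A/K \hookrightarrow M^s$ of right $A$-modules and so $\GK(A/K) \leq \GK_A(A/JA)$; the $A'$-module decomposition $A/JA = \bigoplus_{k=0}^{d-1} (A/JA)^{(k \bmod d)}$ with zeroth summand $A'/J$ is then used to bound $\GK_A(A/JA) \leq \GK(A'/J)$. The main obstacle will be in verifying this last step, specifically that the nonzero-residue summands have $A'$-module GK no larger than $\GK(A'/J)$; this I would approach via the decomposition $A = \sum_i y_i A'$ from Lemma~\ref{noeth up n down}(2), writing each summand as a quotient of some $y_i A' \cong A'$ and relating its annihilator in $A'$ to $J$.
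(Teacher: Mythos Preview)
Your argument is essentially correct and closely parallels the paper's, with one meaningful difference at the end of (2b).

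For (1), you unwind the Hilbert-series formula directly, whereas the paper observes that $A/I$ is a finitely generated $A'/I'$-module on both sides and quotes \cite[Proposition~5.5]{KL}; these are the same argument at different levels of abstraction. For (2a), your route via $J\subseteq I^{(d)}$ and part (1) is slightly cleaner than the paper's, which instead passes through $A/JA\cong A'/J\otimes_{A'}A$ and the surjection $A/JA\twoheadrightarrow A/I$.

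The point that deserves attention is your ``main obstacle'' in (2b). Your proposed attack---decomposing $A/JA$ as a \emph{right} $A'$-module and bounding each summand via right annihilators---runs into a genuine difficulty: the right $A'$-action on $A/JA$ is \emph{not} annihilated by $J$ in general (there is no reason for $AJ\subseteq JA$), so the right $A'$-annihilator of a generator $\bar y_i$ need not contain $J$ or any ideal comparable to it. The paper sidesteps this by exploiting the \emph{left} $A'$-module structure instead: since $_{A'}A$ is finitely generated (Lemma~\ref{noeth up n down}(2)) and the left action of $A'$ on $A/JA$ \emph{does} factor through $A'/J$, one has $A/JA\cong A'/J\otimes_{A'}A$ and \cite[Proposition~5.6]{KL} immediately gives $\GK_{A'}(A/JA)\leq \GK(A'/J)$. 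Equivalently, $A/JA$ is a finitely generated left $A'/J$-module, and the Hilbert-series formula for GK-dimension is side-independent for finitely generated graded modules, so the bound follows. Once you make this switch to the left side, your embedding $A/K\hookrightarrow M^s$ (taking the $\bar m_i$ to be left $A'$-generators of $M$, which is how one actually gets $\bigcap_i\rann_A(\bar m_i)=K$) completes the proof exactly as in the paper.
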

\begin{proof}
In this proof $A'$ is graded as a subset of $A$, i.e. $A'_n=A_n\cap S'$. We also remark that by Lemma~\ref{noeth up n down} we have that $A'$ is noetherian, and that $A$ is finitely generated on both sides as a right and left $A'$-module. Finally, all modules below will always be finitely generated; in particular the GK-dimension of a module is independent of the ring it is being considered over by (\ref{GK(M)}). \par %From with \GK_A(M) is in older copies

(1). One can easily check $A'/I' \cong (A/I)^{(d)}$. Since $A$ is finitely generated on both sides as an $A'$-module, we have that $A/I$ is finitely generated on both sides as an $A'/I'$-module. Thus by \cite[Proposition~5.5]{KL}, $\GK(A'/I')=\GK(A/I)$.\par

(2a). There is an isomorphism of right $A'$-modules $A/JA \cong A'/J \otimes_{A'}A$. Therefore, since $A$ is finitely generated as an $A'$-module on both sides, we can apply \cite[Proposition~5.6]{KL} and deduce $\GK(A/JA)\leq \GK(A'/J)$ as right $A'$-modules.  Clearly $JA\subseteq AJA=I$, and so there is a surjection $A/JA\to A/I$. Hence by \cite[Proposition~5.1(b)]{KL} we have
\begin{equation}\label{GK(A/J)}\GK(A'/J)\geq \GK(A/JA)\geq \GK(A/I)\end{equation}
as right $A'$-modules. It is obvious that $J\subseteq I^{(d)}$ holds. \par

(2b). To deal with the ideal $K=\mathrm{r.ann}_A(A/JA)$ we first need
\begin{equation}\label{GK(A/JA)=GK(A'/J)}\GK(A/JA)=\GK(A'/J).\end{equation}
We already have $\GK(A/JA)\leq\GK(A'/J)$ from (\ref{GK(A/J)}). For the reverse inequality note that because $(JA)^{(d)}=J$; $A'/J \cong (A/JA)^{(d)}\hookrightarrow A/JA$ as right $A'$-modules. So $\GK(A'/J)\leq \GK(A/JA)$ follows from \cite[Proposition~5.1(b)]{KL}. Hence (\ref{GK(A/JA)=GK(A'/J)}) holds.\par

Now $_{A'}A$ is finitely generated, therefore so is the left $A'$-module $A/JA$, say that
$A/JA=A'\ovl{x_1}+\dots+ A'\ovl{x_n}$ where $\ovl{x_i}=x_i+JA$ for some homogenous $x_i\in A$.
It follows that $K=\cap_{i=1}^n K_i$ where $K_i=\rann_A(\ovl{x_i})$ are right ideals of $A$. Fix $i$, and say $\deg(x_i)\equiv \alpha_i\mod d$. By Lemma~\ref{choose x of appropriate deg} we can choose a nonzero $a_i\in A$ such that $\deg(a_i)\equiv -\alpha_i\mod d$. Then $x_ia_iJ\subseteq A'J=J\subseteq JA$ which shows $K_i\neq0$. Because $K$ is an intersection of finitely many nonzero right ideals in a noetherian domain, $K$ must also be nonzero (see \cite[Exercise~4N]{GW}). For every $i$, clearly $(\ovl{x_i}A)_{A}\cong [A/K_i]_{A}$ and $\ovl{x_i}A\subseteq A/JA$, thus
$$\GK_{A}(A/K_i)\leq \GK(A/JA)=\GK(A'/J)$$
by \cite[Proposition~ 5.1(b)]{KL} and (\ref{GK(A/JA)=GK(A'/J)}). But $A/K$ embeds into $A/K_1 \oplus \dots \oplus A/K_n$ in the obvious way, forcing $\GK(A/K)\leq \max_i\{\GK(A/K_i)\}\leq \GK(A'/J).$
Here we are using \cite[Proposition~5.1(a)(b)]{KL}. For the last line,
\begin{equation*}K^{(d)}= K\cap A'\subseteq JA\cap A'=J.\qedhere\end{equation*}
\end{proof}

\begin{remark}\label{sporadics up n down applies}
Let $R$ be a cg $g$-divisible subalgebra of $S_{(g)}$ with $Q_\gr(R)=Q_\gr(S)$. By Proposition~\ref{RSS 2.9}, $R$ is noetherian, and so Lemma~\ref{sporadics up n down} will apply in this situation.
\end{remark}

%It is known that $S$ and $S^{(3)}$ have no homogenous factor rings of GK-dimension 1 \cite[Lemma~10.3]{Ro}. Using Lemma~\ref{sporadics up n down} we can easily extend this to $S^{(d)}$ for all $d\geq0$.
%\begin{cor}
%$S^{(d)}$ has no homogenous factor rings of GK-dimension 1.
%\end{cor}
%\begin{proof}
%Let $J$ be a homogenous ideal of $S^{(d)}$ satisfying $\GK(S^{(d)}/J)\leq 1$. Then by Lemma~\ref{sporadics up n down}(2b), $K=\mathrm{r.ann}(S/JS)$ satisfies $\GK(S/K)\leq \GK(S^{(d)}/J)\leq 1.$ As $S$ has no homogenous factor ring of GK-dimension 1 we must have $\dim_\Bbbk S/K <\infty$. Because $K^{(d)}\subseteq J$ it follows $\dim_\Bbbk S^{(d)}/J <\infty $.
%\end{proof}

Our major result of this section partially answers \cite[Question~9.4]{RSS}: we will prove that a $g$-divisible subalgebra $U$ is a maximal order if and only its $d$th Veronese subring $U^{(d)}$ is a maximal order. Our results will be applicable in both $S$ and its 3-Veronese $S^{(3)}=T$.

\begin{lemma}\label{U'<g>}
Let $d\geq 1$ and $U$ be a $g$-divisible cg subalgebra of $\Sg$ such that $D_\gr(U)=D_\gr(S)$.
\begin{enumerate}[(1)]
\item Suppose that $Q_\gr(U)=Q_\gr(S)$ and $d$ is coprime to 3. Set $U'=U^{(d)}$, then $\widehat{U'\langle g\rangle}=U$.
\item Suppose that $Q_\gr(U)=Q_\gr(T)$. Set $U'=U^{(d)}$, then $\widehat{U'\langle g\rangle}=U$.
\end{enumerate}
\end{lemma}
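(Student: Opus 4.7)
The plan is to prove $\widehat{U'\langle g\rangle}=U$ by two inclusions, where the easy direction follows purely from $g$-divisibility of $U$, and the harder direction reduces to a small arithmetic observation about degrees.

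For the inclusion $\widehat{U'\langle g\rangle}\subseteq U$, I would simply note that $U'=U^{(d)}\subseteq U$ and $g\in U$, so $U'\langle g\rangle\subseteq U$; since $U$ is $g$-divisible, $\widehat{U'\langle g\rangle}\subseteq\widehat{U}=U$. This works uniformly in both (1) and (2).

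For the reverse inclusion $U\subseteq \widehat{U'\langle g\rangle}$, it suffices to take a homogeneous $u\in U_n$ and exhibit a $k\geq 0$ such that $ug^k\in U'\langle g\rangle$. The natural candidate is to place $ug^k$ in the $j=0$ summand of $U'\langle g\rangle$, i.e.\ to arrange that $ug^k$ itself lies in $U^{(d)}$. Since $u,g\in U$, we automatically have $ug^k\in U$; the only remaining condition is that $\deg(ug^k)$ be divisible by $d$. This is the step where the two cases differ:
\begin{itemize}
\item In case (1), $U$ carries the grading inherited from $\Sg$, so $\deg g=3$ and $\deg(ug^k)=n+3k$. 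The congruence $n+3k\equiv 0\pmod{d}$ is solvable for $k\geq 0$ precisely because $\gcd(3,d)=1$, which is exactly the hypothesis on $d$.
\item In case (2), $U\subseteq T_{(g)}$ is graded with $T_n=S_{3n}$, so $\deg g=1$ and $\deg(ug^k)=n+k$. The congruence $n+k\equiv 0\pmod d$ is trivially solvable for all $n$, so no coprimality condition is needed.
\end{itemize}
In either case, the chosen $k$ yields $ug^k\in U\cap (\text{degrees divisible by }d)=U^{(d)}=U'\subseteq U'\langle g\rangle$, proving $u\in\widehat{U'\langle g\rangle}$.

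There is really no hard step here; the work was done earlier in setting up the correct re-grading conventions for $T$ and in establishing that $g$-divisible hulls behave well (Lemma~\ref{g-div} and Lemma~\ref{RSS 2.13}). The only point that needs care is making sure the right grading is in force in case (2), so that $g$ sits in degree~$1$ and the divisibility argument proceeds without any constraint on $d$; this explains the asymmetry between the hypotheses in (1) and (2).
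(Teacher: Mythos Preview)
Your proof is correct and matches the paper's own argument essentially line for line: both directions are handled exactly as you describe, with the key arithmetic step being the solvability of $n+3k\equiv 0\pmod d$ (respectively $n+k\equiv 0\pmod d$) in the two cases. Your explicit remark about the regrading of $T$ so that $\deg g=1$ in case~(2) is precisely the point the paper flags in the paragraph preceding the lemma.
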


\begin{proof}
(1). Put $V=\widehat{U'\langle g\rangle}$. Since $U$ is $g$-divisible, $g\in U$, and so $V\subseteq U$. Conversely let $u\in U$. Since $d$ is coprime to $3=\deg(g)$, there exists an integer $n\geq 0$ such that $g^nu\in U\cap S^{(d)}=U'$. In which case $u\in \widehat{U'\langle g\rangle}=V$.\par
(2). This is the same as (1) but with the phrase ``$d$ is coprime to $3$" replaced with the phrase ``$\deg(g)=1$".
\end{proof}

\begin{prop}\label{g-div max orders up n down}
Let $d\geq 1$ and let $U$ be a $g$-divisible cg subalgebra of $\Sg$ with $D_\gr(U)=D_\gr(S)$.
\begin{enumerate}[(1)]
\item  Suppose that $Q_\gr(U)=Q_\gr(S)$ and $d$ is coprime to 3, and let $U'=U^{(d)}$. Then:
\begin{enumerate}[(a)]
\item $U$ is a maximal order if and only if $U'$ is a maximal order;
\item $U$ is a maximal $S$-order if and only if $U'$ is a maximal $S^{(d)}$-order.
\end{enumerate}
\item Suppose that $Q_\gr(U)=Q_\gr(T)$, and let $U'=U^{(d)}$. Then:
\begin{enumerate}[(a)]
\item $U$ is a maximal order if and only if $U'$ is a maximal order;
\item $U$ is a maximal $T$-order if and only if $U'$ is a maximal $T^{(d)}$-order.
\end{enumerate}
\end{enumerate}
\end{prop}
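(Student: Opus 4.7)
The plan is built around two nearly mutually inverse constructions linking $U$ and $U'$: the Veronese subring $(-)^{(d)}$, and the $g$-divisible hull $\widehat{(-)\langle g\rangle}$ of the ring obtained by adjoining $g$. The bridge that makes them mutually inverse in our setting is Lemma~\ref{U'<g>}, which says $\widehat{U'\langle g\rangle} = U$. I would prove each direction of each equivalence by moving a putative proper equivalent overring across via the opposite construction and then verifying it remains proper, contained in the correct ambient ring, and equivalent to the original.

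For the backward direction of (1)(a), I would begin with an equivalent overring $V \supsetneq U$ in $Q_\gr(U) = Q_\gr(S)$. Lemma~\ref{max Sg-order is max order} immediately places $V \subseteq \Sg$, and Lemma~\ref{equiv orders go up} then makes $V^{(d)}$ an equivalent order to $U^{(d)} = U'$ containing it. Maximality of $U'$ forces $V^{(d)} = U'$. Now, for any $v \in V$, coprimality of $d$ and $3$ gives an integer $n \geq 0$ with $3n + \deg v \equiv 0 \pmod d$, so $vg^n \in V^{(d)} = U' \subseteq U'\langle g\rangle$; Lemma~\ref{U'<g>}(1) then yields $v \in U$, contradicting $V \supsetneq U$.

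For the forward direction of (1)(a), I would reverse-engineer: from a putative proper equivalent overring $V' \supsetneq U'$, I would build a proper equivalent overring $V \supsetneq U$. First I would place $V' \subseteq \Sg^{(d)}$ via a Veronese analogue of Lemma~\ref{max Sg-order is max order} (discussed below), and then set $V := \widehat{V'\langle g\rangle} \subseteq \Sg$, a $g$-divisible ring containing $U = \widehat{U'\langle g\rangle}$. For the equivalence $V \sim U$, I would take a homogeneous relation $aV'b \subseteq U'$, arrange $a, b \in U' \subseteq \Sg$ by a one-sided Ore manipulation, and compute: for $v \in V$, $vg^n \in V'\langle g\rangle$ for some $n$, so $(avb)g^n = avg^nb \in aV'\langle g\rangle b \subseteq U'\langle g\rangle \subseteq U$; centrality of $g$ together with $g$-divisibility of $U$ then forces $avb \in U$. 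Proper containment follows from $V \cap \Sg^{(d)} = V^{(d)} \supseteq V' \supsetneq U' = U^{(d)}$.

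Parts (1)(b), (2)(a) and (2)(b) then follow by the same blueprint with cosmetic changes: for (1)(b) all rings are kept inside $S$ and one uses the $g$-divisible hull of $V'\langle g\rangle$ within $S$; for (2)(a) and (2)(b) one replaces $S$ by $T$ and invokes Lemma~\ref{U'<g>}(2), with the coprimality hypothesis becoming vacuous since $g$ has degree $1$ in the $T$-grading. The main obstacle I expect is the Veronese analogue of Lemma~\ref{max Sg-order is max order} needed in the forward direction, but the original proof should transport faithfully: $g\Sg$ remains completely prime, one has $D_\gr(U') = D_\gr(S^{(d)}) = D_\gr(S)$, and after arranging the left/right witnesses $x,y$ to lie in $U' \subseteq \Sg^{(d)}$ via a one-sided Ore trick, the computation showing $g^{k+\ell-nm} \in \Sg$ with $k+\ell-nm < 0$ carries over unchanged and yields the needed contradiction.
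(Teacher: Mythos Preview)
Your proposal is correct and follows essentially the same route as the paper: both directions hinge on the mutually inverse constructions $(-)^{(d)}$ and $\widehat{(-)\langle g\rangle}$ via Lemma~\ref{U'<g>}, with Lemma~\ref{max Sg-order is max order} and Lemma~\ref{equiv orders go up} handling containment in $\Sg$ and passage of equivalence to Veroneses. The one place you overcomplicate things is the ``Veronese analogue'' of Lemma~\ref{max Sg-order is max order} you flag as the main obstacle: no analogue is needed, since $U'\subseteq \Sg$ and $D_\gr(U')=D_\gr(S)$ already, so the lemma applies to $U'$ verbatim and gives $V'\subseteq \Sg$, whence automatically $V'\subseteq \Sg\cap Q_\gr(S)^{(d)}=\Sg^{(d)}$.
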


\begin{proof}
($1a$) ($\Rightarrow$) Suppose that $U$ is a maximal order. Assume that $U'\subseteq A$ for some graded subalgebra $A$ of $Q_\gr(S)$ such that $xAy\subseteq U'$ for some nonzero homogenous $x,y\in Q_\gr(S)^{(d)}$. By Lemma~\ref{WLOG a,b homogenous} we may assume $x,y\in A$,
\begin{equation}\label{A subset Sg}
\text{while by Lemma~\ref{max Sg-order is max order} } A\subseteq \Sg.
\end{equation}
Set $C=\widehat{A\langle g\rangle}$. Since $xAy\subseteq U'\subseteq U$ and $g\in U$, $x(Ag^i)y=(xAy)g^i\subseteq U$ for all $i\geq 0$. Thus $xA\langle g\rangle y \subseteq U$. Now let $c\in C$, then $cg^n \in A\langle g\rangle$ for some $n\geq 0$, and hence $x(cg^n)y=(xcy)g^n\in U$. Since $x,y\in A\subseteq \Sg$, $xcy\in\Sg$. Hence we get $xcy\in U$ because $U$ is $g$-divisible. This shows $xCy\subseteq U$. On the other hand, because $A\supseteq U'$, $C=\widehat{A\langle g\rangle}\supseteq \widehat{U'\langle g\rangle}=U$ by Lemma~\ref{U'<g>} . By assumption $U$ is a maximal order and hence $C=U$. It then follows
$$A\subseteq C\cap (\Sg)^{(d)}=U\cap (\Sg)^{(d)}=U',$$
and so $A=U'$.\par

($\Leftarrow$) Suppose now that $U'$ is a maximal order, and assume that $U\subseteq A$ are equivalent orders, for some graded subalgebra $A$ of $Q_\gr(S)$. By Lemma~\ref{max Sg-order is max order}, we know that in fact $A\subseteq \Sg$. By Lemma~\ref{equiv orders go up}, $U'\subseteq A'=A^{(d)}$ are equivalent orders, and so by hypothesis $U'=A'$. By Lemma~\ref{U'<g>}, $\widehat{A'\langle g\rangle}=\widehat{U'\langle g\rangle}= U$, and so it is enough to prove $A\subseteq \widehat{A'\langle g\rangle}$. Let $a\in A$. Note that since $U\subseteq A$, $g\in A$. Because $d$ and $3=\deg(g)$ are coprime there exists $n\geq 0$ such that $ag^n\in A\cap S^{(d)}=A'$. Thus $a\in \widehat{A'\langle g\rangle}$. \par

($1b$). This is proved in the same way as (1a). Here one replaces uses of Lemma~\ref{max Sg-order is max order} (for example (\ref{A subset Sg})) with ``$A\subseteq S$ by assumption". Then replace all instances of $\Sg$ with $S$.

(2). This follows in the same as part (1) with ``$d$ is coprime to $3$" replaced with ``$\deg(g)=1$".
\end{proof}

\section{The noncommutative blowups $S(\bfd)$}\label{The rings S(d)}

This chapter is dedicated to finding what subalgebras of $S$ should be analogous to the blowup subalgebras $T(\bfd)$ as defined in Definition~\ref{T(d) def}. In other words, what are the blowup subalgebras of $S$? These are the rings $S(\bfd)$ which were defined in Definition~\ref{S(d) def}. We will prove that these rings are very closely related to the $T(\bfd)$ and satisfy similar abstract properties. The work in this chapter is absolutely crucial to be able to apply techniques from \cite{Ro, RSS, RSS2} and obtain our main classification of maximal $S$-orders. \par

Our noncommutative blowup subalgebras, the $S(\bfd)$'s, can be defined for an effective divisor on $E$ of degree at most 2. We give a brief explanation of why this should be true, for this reason was the main factor that lead to Definition~\ref{S(d) def}. In short it is that we would expect the 3-Veroneses of our blowups to coincide with those blowups of $T$ from Definition~\ref{T(d) def}. More precisely, if $S(\bfd)$ is one of our blowups we would expect
\begin{equation}\label{S(d) to T([d]3)} S(\bfd)^{(3)}=T(\bfd+\sigma^{-1}(\bfd)+\sigma^{-2}(\bfd)) \end{equation}
to hold. With this as a clue, we realise we should be ``blowing up" at effective divisors $\bfd$ with $\deg\bfd\leq 2$. This being because blowups of $T=S^{(3)}$ are at effective divisors of degree at most 8. So we must have $\deg(\bfd+\sigma^{-1}(\bfd)+\sigma^{-2}(\bfd))\leq 8$ for (\ref{S(d) to T([d]3)}) to make sense.\par

When $\deg\bfd=1$, the ring $S(\bfd)$ was first studied in \cite[Section~12]{Ro}, where the author proves it to be the only degree 1 generated maximal order in $S$. We recall the definition here.

\begin{definition}\label{S(p)}\index[n]{sp@$S(p)$}
Let $p\in E$. We define the \textit{blowup of $S$ at $p$}\index{blowup of $S$ at $p$} as the ring
$$S(p)=\Bbbk \langle V\rangle \; \text{ where } V=\{x\in S_1\,|\; \ovl{x}\in H^0(E,\LL(-p))\}.$$
\end{definition}

Rogalski proves strong results on $S(p)$.

\begin{prop}\label{Ro 12.2} \cite[Theorem~12.2]{Ro}
Let $p\in E$ and $R=S(p)$. Then:
\begin{enumerate}[(1)]
\item $R^{(3)}=S^{(3)}(p+p^\sigma+p^{\sigma^2})$, $R$ is $g$-divisible with
 $R/gR\cong \ovl{R}=B(E,\LL(-p),\sigma),$
 and has Hilbert series $h_{R}(t)=\frac{t^2+1}{(1-t)^2(1-t^3)}$.
\item $R$ is strongly noetherian. The ring $R$ satisfies $\chi$ on the left and right, has cohomological dimension 2 and (by \cite[Proposition~2.4]{RSS2}) $R$ has a balanced dualizing complex.
\item $R$ is Auslander-Gorenstein and Cohen-Macaulay.
\item $R$ is a maximal order in $Q_\gr(R)=Q_\gr(S)$.\qed
\end{enumerate}
\end{prop}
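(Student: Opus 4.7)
The plan is to prove part (1) first, from which parts (2)--(4) follow by applying Proposition~\ref{RSS2 2.4} to the pair $(R, g)$: that proposition delivers strongly noetherian, the $\chi$-conditions, cohomological dimension $2$, balanced dualizing complex, Auslander-Gorenstein, Cohen-Macaulay, and maximal order all at once, provided $R/gR$ has the form $B(E,\HH,\rho)$ for suitable data. So the real work lies in establishing (1).

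First I would identify $\overline{R}$. Since $g \in S_3$ while $V \subseteq S_1$, the map $V \to \overline{V}$ is injective, and $\dim_\Bbbk V = \dim_\Bbbk H^0(E, \LL(-p)) = 2$ by Corollary~\ref{RR to E}. Under $S/gS \cong B(E,\LL,\sigma)$, a product $\overline{v_1} \cdots \overline{v_n}$ with $v_i \in V$ is computed as $\overline{v_1} \otimes \overline{v_2}^\sigma \otimes \cdots \otimes \overline{v_n}^{\sigma^{n-1}}$ in $H^0(E, \LL_n)$; since pulling back a section by $\sigma^k$ sends vanishing at $p$ to vanishing at $\sigma^{-k}(p)$, this product lies in $H^0(E, \LL_n(-p - \sigma^{-1}(p) - \cdots - \sigma^{-(n-1)}(p)))$. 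Hence $\overline{R} \subseteq B(E, \LL(-p), \sigma)$. Because $\deg \LL(-p) = 2$, Theorem~\ref{B properties}(4) says $B(E, \LL(-p), \sigma)$ is generated in degree $1$ by $\overline{V} = H^0(E, \LL(-p))$, which promotes the inclusion to equality.

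The main obstacle is $g$-divisibility. The inclusion $gR \subseteq R \cap gS$ is automatic, and since $g$ is a nonzerodivisor on $R$ (inherited from $S$), equality $R \cap gS = gR$ is equivalent to the Hilbert series identity $h_R(t) = h_{\overline{R}}(t)/(1-t^3) = (1+t^2)/((1-t)^2(1-t^3))$. The lower bound $h_R \geq h_{\overline{R}}/(1-t^3)$ is immediate from $R/gR \twoheadrightarrow \overline{R}$ plus $g$-regularity. For the upper bound I would pass to the $3$-Veronese: the vanishing analysis above gives $V^3 \subseteq T(\bfe)_1$ for $\bfe = p + \sigma^{-1}(p) + \sigma^{-2}(p)$, hence $R^{(3)} \subseteq T(\bfe)$ (both rings are generated in degree $1$ by their first-degree pieces). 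By Theorem~\ref{T(d) properties}, $T(\bfe)$ is $g$-divisible with Hilbert series $(t^2 + 4t + 1)/(1-t)^3$, whose coefficients agree with those of the predicted $h_R$ in degrees divisible by $3$; this forces $R^{(3)} = T(\bfe)$ and pins down $\dim_\Bbbk R_{3n}$. The remaining coefficients in degrees $\equiv 1, 2 \pmod{3}$ would then be squeezed from $R_{3n+i} = V^i R_{3n}$ together with the explicit structure of the two-dimensional $V \subseteq S_1$ and the quadratic defining relations of $S$. This last dimension count is the most delicate technical step, and I expect it to mirror the semi-continuity-in-$p$ argument sketched for Theorem~\ref{T(d) properties}(1).

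With $g$-divisibility secured, $R/gR \cong \overline{R} = B(E, \LL(-p), \sigma)$ has the stated Hilbert series by Corollary~\ref{RR to E}, the equality $R^{(3)} = T(p + p^\sigma + p^{\sigma^2})$ is part of the argument above, and parts (2)--(4) now follow at one stroke from Proposition~\ref{RSS2 2.4}.
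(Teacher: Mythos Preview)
The paper does not give its own proof of this proposition; it is cited directly from \cite[Theorem~12.2]{Ro} (note the \qed\ terminating the statement with no proof following). So there is no paper proof to compare against, only Rogalski's original.

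Your overall architecture is sound, and your squeeze argument for $R^{(3)} = T(p + p^\sigma + p^{\sigma^2})$ is correct: the lower bound $h_R \geq h_{\ovl{R}}/(1-t^3)$ restricted to degrees divisible by $3$ matches the upper bound coming from $R^{(3)} \subseteq T(\bfe)$, so equality holds and $g$-divisibility is established in those degrees. Parts (2)--(4) then do follow from Proposition~\ref{RSS2 2.4} once (1) is in hand.

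The gap is exactly where you flag it. The semi-continuity argument sketched for Theorem~\ref{T(d) properties}(1) does \emph{not} transfer to $S(p)$. That argument is an induction on $\deg\bfd$: for $\bfd = \bfd' + p$ with $p$ on a $\tau$-orbit disjoint from $\bfd'$ one has $T(\bfd) = T(\bfd') \cap T(p)$, and $g$-divisibility of the intersection follows from the inductive hypothesis on both factors; semi-continuity then removes the orbit restriction. For a single point there is no nontrivial smaller divisor, so the inductive step is vacuous ($S \cap S(p) = S(p)$) and semi-continuity has nothing to propagate. In \cite[Section~12]{Ro} Rogalski instead computes $\dim_\Bbbk V^n$ directly: writing $V = v_1\Bbbk + v_2\Bbbk$, one analyses $v_1 R_{n-1} \cap v_2 R_{n-1}$ using the description of $v_1 S \cap v_2 S$ furnished by \cite[Lemma~4.2]{Ro} (the result underlying Lemma~\ref{Ro 4.1} here). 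That explicit intersection calculation gives the required upper bound on $\dim_\Bbbk R_n$ in all degrees, after which your argument closes.
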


Proposition~\ref{Ro 12.2}(1) is enough to show that Definition~\ref{S(p)} coincides the definition of $S(p)$ given in Definition~\ref{S(d) def}. We prefer Definition~\ref{S(p)} as it emphasises that $S(p)$ is generated in degree 1.

\subsection{The two point blowup $S(p+q)$}

When $\bfd=p+q$ for some $p,q\in E$, understanding $S(\bfd)$ becomes much harder. The main hinderance is that $S(p+q)$ is no longer generated in a single degree. We introduce some notation which will help us along the way.

\begin{notation}\label{p^sigma^j}
Let $\rho:E\to E$ be an automorphism.  Given $p\in E$, or more generally a divisor $\bfx$ of $E$ we will write $p^{\rho^j}=\rho^{-j}(p)$\index[n]{psj@$p^{\sigma^j}$} and $\bfx^{\rho^j}=\rho^{-j}(\bfx)$ \index[n]{xsj@$\bfx^{\sigma^j}$} for $j\in\Z$.
\end{notation}

Notation~\ref{p^sigma^j} will be typically applied with $\rho=\sigma$ or $\rho=\sigma^3=\tau$. In contrast Notation~\ref{[d]_n} is reserved for the automorphism $\sigma$ unless explicitly said otherwise.

\begin{notation}\label{[d]_n}
For $n\geq 1$, we put $[\bfx]_n=\bfx+\bfx^\sigma+\dots+\bfx^{\sigma^{n-1}}$, while we define $[\bfx]_0=0$. \index[n]{xn@$[\bfx]_n$}
\end{notation}

Notation~\ref{p^sigma^j} and Notation~\ref{[d]_n}, along with Notation~\ref{ovlX notation} from the introduction, will be applicable for the rest of this thesis. Using Notation~\ref{p^sigma^j} and Notation~\ref{[d]_n} we recall the definition of $S(p+q)$.

\begin{definition}\label{S(p+q)}
Let $p,q\in E$. Put $$V_i=\{x\in S_i\,|\; \ovl{x}\in H^0(E,\LL_i(-[p+q]_i))\}\;\text{ for } i=1,2,3.$$
We define the \textit{blowup of $S$ at $p+q$}\index{blowup of $S$ at $p+q$} to be the ring $S(p+q)=\Bbbk\langle V_1, V_2, V_3\rangle.$\index[n]{spq@$S(p+q)$}
\end{definition}

Let $p,q\in E$ and $S(p+q)=\Bbbk\langle V_1,V_2,V_3\rangle$ as in Definition~\ref{S(p+q)}. It is clear that $\ovl{V_1}=H^0(E,\LL(-p-q))$, and because $gS\subseteq S_{\geq 3}$, that $\dim_\Bbbk V_1=\dim_\Bbbk \ovl{V_1}$. Hence by the Riemann-Roch Theorem (specifically Corollary~\ref{RR to E}), $\dim_\Bbbk V_1=1$. In which case $\Bbbk\langle V_1\rangle \cong \Bbbk[t]$. In particular one sees $S(p+q)$ cannot be generated in degree~1. In Example~\ref{S(p+p1)} we show that, at least for some choices of $p$ and $q$, $S(p+q)$ is not generated in degrees 1 and 2. For general $p$ and $q$, it is unknown if $S(p+q)$ can be generated in degrees 1 and 2. We include $V_3$ in Definition~\ref{S(p+q)} to be sure that $g\in S(p+q)$. It is implicit in our proof of Theorem~\ref{S(p+q) g div} that $S(p+q)=\Bbbk \langle V_1,V_2,g\rangle$.  \par

A lemma we will often use is the following simple geometric result which comes from the definition of a Cartier divisor.

\begin{lemma}\label{on divisors}
Let $\mbf{a},\mathbf{b}$ be divisors on $E$. Consider the invertible sheaves $\OO_E(\mbf{a})$ and $\OO_E(\mbf{b})$ corresponding to $\mbf{a}$ and $\mbf{b}$ via \cite[Proposition~6.13]{Ha}. Then,
\begin{equation*}\pushQED{\qed}
\OO_E(\mbf{a})\subseteq \OO_E(\mbf{b}) \Leftrightarrow \mbf{a}\leq\mbf{b}.\qedhere
\end{equation*}
\end{lemma}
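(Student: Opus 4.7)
The plan is to unwind the standard description of $\OO_E(\bfd)$ as a subsheaf of the constant sheaf $\underline{K(E)}$ of rational functions on $E$. Recall that for any divisor $\bfd=\sum d_p\,p$, one has
\[
\OO_E(\bfd)(U)=\{f\in K(E)^{\times}\mid (f)|_U+\bfd|_U\geq 0\}\cup\{0\}
\]
for each open $U\subseteq E$. Both $\OO_E(\mbf{a})$ and $\OO_E(\mbf{b})$ are embedded inside $\underline{K(E)}$ via this description, so it makes sense to compare them as subsheaves.

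For the forward direction ``$\mbf{a}\leq\mbf{b}\Rightarrow\OO_E(\mbf{a})\subseteq\OO_E(\mbf{b})$'', I would simply observe that if $f\in\OO_E(\mbf{a})(U)$, then on $U$ one has $(f)\geq -\mbf{a}\geq -\mbf{b}$, so $f\in\OO_E(\mbf{b})(U)$; since this holds on every open set, the containment of sheaves follows.

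For the converse, I would pass to stalks at each closed point $p\in E$. Write $a_p=\mathrm{ord}_p(\mbf{a})$, $b_p=\mathrm{ord}_p(\mbf{b})$, and let $t_p$ be a local uniformizer in the DVR $\OO_{E,p}$. Then a standard computation gives
\[
\OO_E(\bfd)_p=t_p^{-d_p}\,\OO_{E,p}
\]
for any divisor $\bfd$, so the inclusion $\OO_E(\mbf{a})_p\subseteq\OO_E(\mbf{b})_p$ becomes $t_p^{-a_p}\OO_{E,p}\subseteq t_p^{-b_p}\OO_{E,p}$. Valuations then force $-a_p\geq -b_p$, i.e.\ $a_p\leq b_p$. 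Since $p$ was arbitrary, $\mbf{a}\leq\mbf{b}$.

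There is no real obstacle here; the only thing to be careful about is that the containment of sheaves must be interpreted inside a common ambient sheaf (the constant sheaf of rational functions), which is exactly the setting in which $\OO_E(\bfd)$ is defined in \cite[II.6]{Ha}. The stalk computation at each point, being local and one-dimensional, reduces everything to the elementary fact $t^{-a}\OO_{E,p}\subseteq t^{-b}\OO_{E,p}\Leftrightarrow a\leq b$.
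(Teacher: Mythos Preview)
Your proof is correct. The paper gives no proof at all (the statement carries a \qed), remarking just before the lemma that it ``comes from the definition of a Cartier divisor''; your argument is precisely the standard unwinding of that definition via the description of $\OO_E(\bfd)$ as a subsheaf of the constant sheaf of rational functions, so you are simply spelling out what the paper leaves implicit.
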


Let $p,q\in E$, and $V_1$, $V_2$ and $V_3$ be as in Definition~\ref{S(p+q)}. Set $\NN=\LL(-p-q)$. One has $$\NN_2=\NN\otimes \NN^\sigma=\LL(-p-q)\otimes \LL^\sigma(-p^\sigma-q^\sigma)=\LL_2(-p-q-p^\sigma-q^\sigma)=\LL_2(-[p+q]_2).$$
Similarly $\NN_3=\LL_3(-[p+q]_3)$, and in general
\begin{equation}\label{NN_n=LL_n(-[d]_n)} \NN_n=\LL_n(-[p+q]_n)\;\text{ for all }n\geq 1.\end{equation}
 It is then clear from Definition~\ref{S(p+q)} that
\begin{equation}\label{ovlVi=}
\ovl{V_i}=H^0(E,\NN_i)=B(E,\NN,\sigma)_i\;\text{ for } i=1,2,3.
\end{equation}
\par An immediate consequence of Definition~\ref{S(p+q)} and the above is that the image of $S(p+q)$ inside $S/gS=B(E,\LL,\sigma)$ is easy to understand.

\begin{lemma}\label{S(p+q) bar}
Let $p,q\in E$ and $R=S(p+q)$. Then $\ovl{R}=B(E,\NN,\sigma)\subseteq \ovl{S}$ where $\NN=\LL(-p-q)$. The  Hilbert series of $\ovl{R}$ is given by
$$h_{\ovl{R}}(t)=\frac{t^2-t+1}{(t-1)^2}.$$
\end{lemma}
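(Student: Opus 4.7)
The plan is to reduce everything to the twisted homogeneous coordinate ring $B=B(E,\NN,\sigma)$ and then invoke the general theory developed in Section~\ref{HCR and TCR}. Observe that $B$ sits naturally inside $\ovl{S}=B(E,\LL,\sigma)$ via the inclusions of sheaves $\NN_n\hookrightarrow \LL_n$ coming from $\NN_n=\LL_n(-[p+q]_n)\leq \LL_n$ (Lemma~\ref{on divisors}), so the containment $\ovl{R}\subseteq B$ will be the easy direction, and $\ovl{R}\supseteq B$ the content.

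First I would record, as already noted in (\ref{ovlVi=}), that $\ovl{V_i}=H^0(E,\NN_i)=B_i$ for $i=1,2,3$. Because $B\subseteq \ovl{S}$ is a subring, the multiplication in $\ovl{S}$ of elements from $\ovl{V_1},\ovl{V_2},\ovl{V_3}$ stays inside $B$; hence $\ovl{R}=\Bbbk\langle\ovl{V_1},\ovl{V_2},\ovl{V_3}\rangle\subseteq B$. For the reverse inclusion, the key observation is that $\deg\NN=\deg\LL-2=1$, so Theorem~\ref{B properties}(4) tells us that $B$ is generated as a $\Bbbk$-algebra in degrees $1$ and $2$. Therefore $B=\Bbbk\langle B_1,B_2\rangle=\Bbbk\langle\ovl{V_1},\ovl{V_2}\rangle\subseteq\Bbbk\langle\ovl{V_1},\ovl{V_2},\ovl{V_3}\rangle=\ovl{R}$, giving equality.

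For the Hilbert series, apply Corollary~\ref{RR to E} to $B=B(E,\NN,\sigma)$: since $\deg\NN=1\geq 1$ one has $\dim_\Bbbk B_n = n\deg\NN = n$ for every $n\geq 1$, while $B_0=\Bbbk$. So
\[
h_{\ovl R}(t)=h_B(t)=1+\sum_{n\geq 1} n\, t^n,
\]
and a direct expansion of $(1-t)^{-2}=\sum_{n\geq 0}(n+1)t^n$ shows
\[
\frac{t^2-t+1}{(t-1)^2}=(1-t+t^2)\sum_{n\geq 0}(n+1)t^n=1+t+\sum_{n\geq 2} n\, t^n,
\]
matching $h_B(t)$ coefficient by coefficient. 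Since the argument is almost entirely a formal consequence of the degree-$1$-and-$2$ generation theorem for $B$ plus Riemann–Roch on an elliptic curve, there is really no obstacle here; the only small thing to verify is that $\ovl{V_i}$ is literally all of $B_i$ (not just contained in it), which is immediate because the map $S_i\twoheadrightarrow \ovl{S}_i$ restricts, by the definition of $V_i$, to a surjection $V_i\twoheadrightarrow H^0(E,\NN_i)$.
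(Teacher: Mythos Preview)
Your proof is correct and follows essentially the same route as the paper: both use Lemma~\ref{on divisors} to see $B\subseteq\ovl{S}$, identify $\ovl{V_i}=B_i$ via (\ref{ovlVi=}), invoke Theorem~\ref{B properties}(4) (degree $1$ and $2$ generation of $B$ since $\deg\NN=1$) to get $\ovl{R}=B$, and then compute the Hilbert series from Corollary~\ref{RR to E}. Your additional explicit expansion of the rational function and the remark on surjectivity of $V_i\to H^0(E,\NN_i)$ are harmless elaborations.
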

\begin{proof}
Set $B=B(E,\NN,\sigma)$. By Lemma~\ref{on divisors} and (\ref{NN_n=LL_n(-[d]_n)}) we have that $\NN_n\subseteq \LL_n$ for all $n\geq 0$. Hence $H^0(E,\NN_n)\subseteq H^0(E,\LL_n)$ for all $n\geq 1$, which shows that $B\subseteq B(E,\LL,\sigma)=\ovl{S}$. By definition, $\ovl{R}$ is the subring of $\ovl{S}$ generated by $\ovl{V_1}$, $\ovl{V_2}$ and $\ovl{V_3}$; while from (\ref{ovlVi=}) we have that $\ovl{V_1}= B_1$, $\ovl{V_2}=B_2$ and $\ovl{V_3}=B_3$. But Theorem~\ref{B properties}(4) says that $B$ is generated in degrees 1 and 2. Thus $\ovl{R}=B$. \par
%For $n>3$, write $n=2a+3b$ with $a,b\geq 0$. We claim that $\ovl{V_2}^a\ovl{V_3}^b=H^0(E,\NN_n)=B'_n$. First, by definition $\ovl{V_2}^2$ is equal to the image of the natural the map
%\begin{equation}\label{V_2^2} H^0(E,\NN_2)\otimes H^0(E,\NN_2)\overset{1\otimes\sigma^2}\longrightarrow H^0(E,\NN_2)\otimes H^0(E,\NN_2^{\sigma^2})
%\overset{\mu}\longrightarrow H^0(E,\NN_2\otimes \NN_2^{\sigma^2})=H^0(E,\NN_4) \end{equation}
%By Lemma~\ref{Ro3.1}, $\mu$ (and hence the whole map) is surjective provided $\NN_2\not\cong\NN_2^{\sigma^2}$. Write $\NN_2=\OO_E(\bfx)$ for some divisor $\bfx$ on $E$, and so also $\NN_2^{\sigma^2}=\OO_E(\bfx^{\sigma^2})$. Then by \cite[Proposition II6.13]{Ha},  $\NN_2\cong\NN_2^{\sigma^2}$ if and only if $\bfx \sim \bfx^{\sigma^2}$ in the sense of \cite[Section II6]{Ha}. Since $\sigma$ (and hence $\sigma^2$) is translation by at point of infinite order on $E$, $\sigma^2$ cannot fix the linear equivalence class of any nonzero effective divisor. Thus $\bfx \not\sim \bfx^{\sigma^2}$ and so $\NN_2\not\cong\NN_2^{\sigma^2}$. This shows the map in (\ref{V_2^2}) is surjective, or in other words, $\ovl{V_2}^2=H^0(E,\NN_4)$. Now clearly $\NN_4\not\cong\NN_2^{\sigma^4}$ since they have different degrees. Thus the map
%$$ H^0(E,\NN_4)\otimes H^0(E,\NN_2^{\sigma^4})\longrightarrow H^0(E,\NN_6) $$
%is surjective by Lemma~\ref{Ro3.1}; that is $\ovl{V_2}^3=H^0(E,\NN_6)$.  Inductively we then get
%$$\ovl{V_2}^a\ovl{V_3}^b=H^0(E,\NN_{2a+3b})=H^0(E,\NN_n)=B_n.$$ \par
For the Hilbert series, we note that $\ovl{R}_n=H^0(E,\NN_n)$ for $n\geq 0$. So by the Riemann-Roch Theorem (specifically Corollary~\ref{RR to E}), $\dim_\Bbbk\ovl{R}_n=\deg\NN_n=n$ for $n\geq 1$, while $\dim_\Bbbk \ovl{R}_0=1$.
\end{proof}

As sort of a disclaimer, we warn that future uses of Lemma~\ref{on divisors} and Lemma~\ref{S(p+q) bar} may come without reference. The next result we state is the key result which allows us to understand $S(p+q)$. The majority of this chapter will be dedicated to proving it.

\begin{theorem}\label{S(p+q) g div}
Let $p,q\in E$. Then
\begin{enumerate}[(1)]
\item $S(p+q)$ is $g$-divisible with
\begin{equation}\label{S(p+q)/gS(p+q)}
S(p+q)/gS(p+q)\cong \ovl{S(p+q)}=B(E,\LL(-p-q),\sigma).
\end{equation}
\item The Hilbert series of $S(p+q)$ is given by
\begin{equation}\label{S(p+q) hilbert series} h_{S(p+q)}(t)=\frac{t^2-t+1}{(t-1)^2(1-t^3)}.\end{equation}
\end{enumerate}
\end{theorem}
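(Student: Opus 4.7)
Once $g$-divisibility of $R = S(p+q)$ is known, the remainder of the theorem is formal. Regularity of $g$ in $S$ together with $g$-divisibility of $R$ yields a short exact sequence $0\to R(-3)\xrightarrow{\cdot g} R\to R/gR\to 0$ of graded $\Bbbk$-vector spaces, while $g$-divisibility identifies $R/gR$ with $\ovl{R} = B(E,\LL(-p-q),\sigma)$ by Lemma~\ref{S(p+q) bar}. Computing $h_{\ovl{R}}$ via Corollary~\ref{RR to E} then gives
\begin{equation*}
h_R(t) \;=\; \frac{h_{\ovl{R}}(t)}{1-t^3} \;=\; \frac{t^2-t+1}{(t-1)^2(1-t^3)}
\end{equation*}
as required. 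So the task is to prove $g$-divisibility of $R$.

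\textbf{The distinct-orbit case.} First suppose $p$ and $q$ lie on distinct $\sigma$-orbits of $E$. Both $S(p)$ and $S(q)$ are $g$-divisible by Proposition~\ref{Ro 12.2}, and a short verification shows that intersections of $g$-divisible subrings inside $\Sg$ are $g$-divisible, so $A = S(p)\cap S(q)$ is $g$-divisible. Directly from Definition~\ref{S(p+q)} one checks that $V_1, V_2, V_3$ all lie in $S(p)\cap S(q)$, giving $R\subseteq A$. Because the orbits of $p$ and $q$ are disjoint, the supports of $[p]_n$ and $[q]_n$ are disjoint for every $n$, and therefore
\begin{equation*}
\ovl{A}_n \;\subseteq\; \ovl{S(p)}_n\cap\ovl{S(q)}_n \;=\; H^0(E,\LL_n(-[p]_n-[q]_n)) \;=\; H^0(E,\LL_n(-[p+q]_n)) \;=\; \ovl{R}_n,
\end{equation*}
using Proposition~\ref{Ro 12.2}(1) and Lemma~\ref{S(p+q) bar}. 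Combined with the automatic inclusion $\ovl{R}\subseteq\ovl{A}$, this forces $\ovl{A}=\ovl{R}$, so $h_A(t) = h_{\ovl{A}}(t)/(1-t^3) = h_{\ovl{R}}(t)/(1-t^3)$ by $g$-divisibility of $A$. Meanwhile $R\subseteq A$ gives $h_R(t)\leq h_A(t)$ termwise, and the inclusion $gR_{n-3}\hookrightarrow (R\cap gS)_n$ gives the reverse inequality $h_R(t)\geq h_{\ovl{R}}(t)/(1-t^3)$. Equality holds throughout, forcing $R=A$, and in particular $R$ is $g$-divisible.

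\textbf{The general case via a Grassmannian semi-continuity argument.} For arbitrary $p,q\in E$ we adapt the semi-continuity strategy sketched for Theorem~\ref{T(d) properties}(1). Fix $q$ and vary $p\in E$: by Corollary~\ref{RR to E} the dimensions of $V_1(p), V_2(p), V_3(p)$ are the constants $1, 2, 4$ respectively, and since the subsheaves $\LL_i(-[p+q]_i)\subset\LL_i$ vary algebraically with $p$, one obtains a morphism of varieties
\begin{equation*}
\phi : E \longrightarrow \Gr(1,S_1)\times\Gr(2,S_2)\times\Gr(4,S_3),\qquad p\mapsto \bigl(V_1(p),V_2(p),V_3(p)\bigr).
\end{equation*}
The main obstacle is to establish that the function $\mu_n:(W_1,W_2,W_3)\mapsto \dim_\Bbbk\bigl(\Bbbk\langle W_1,W_2,W_3\rangle\bigr)_n$ is lower semi-continuous on this product of Grassmannians; this is where Grassmannians make their unexpected appearance, and is presumably the content of the appendix result referenced in the introduction. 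Granting this, $\mu_n\circ\phi:E\to\N$ is lower semi-continuous on $E$ and so attains its maximum value on a dense open $U\subseteq E$.

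By the distinct-orbit case, $\mu_n(\phi(p))$ equals the expected $n$-th Hilbert coefficient $a_n$ of the target series whenever $p$ is off the $\sigma$-orbit of $q$. Since this orbit is at most countable, $U$ necessarily contains a point off the orbit, forcing the generic maximum to equal $a_n$. Lower semi-continuity then yields $\dim R_n \leq a_n$ for every $p$. Combined with the induction lower bound $\dim R_n \geq \dim\ovl{R}_n + \dim R_{n-3} = a_n$ coming from $g\cdot R_{n-3}\hookrightarrow (R\cap gS)_n$, equality holds in every degree, which is precisely the statement $R\cap gS = gR$, i.e.\ $g$-divisibility of $R$.
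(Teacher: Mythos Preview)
Your overall strategy matches the paper's exactly: handle the distinct-orbit case by proving $R=S(p)\cap S(q)$, then run a semi-continuity argument on $E$ to upgrade to arbitrary $p,q$. Two points need correction.

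First, you have misidentified the role of the Grassmannian appendix lemma. It is \emph{not} used to prove lower semi-continuity of $\mu_n$; it is used to prove that your map $\phi$ is a morphism of varieties --- precisely the step you dismiss with ``since the subsheaves vary algebraically''. The paper realises $S(p+q)_3$ as the intersection of three codimension-one subspaces of the $7$-dimensional space $S(p)_3$ (one for each of $q,q^\sigma,q^{\sigma^2}$), and Lemma~\ref{Grassmannian} asserts that ``take the intersection'' is a morphism from the relevant open subset of $\Gr(6,S(p)_3)^3$ to $\Gr(4,S(p)_3)$; similarly for degree~$2$. The lower semi-continuity of $\mu_n$ is established by an entirely separate argument: lift to the frame variety $\mathcal{U}\subset\A^2\times(\A^4)^2\times(\A^7)^4$, check lower semi-continuity there via vanishing of minors, and descend through the geometric quotient $\mathcal{U}\to\Gr(1,S(p)_1)\times\Gr(2,S(p)_2)\times\Gr(4,S(p)_3)$ by $\Bbbk^\times\times\Gl_2(\Bbbk)\times\Gl_4(\Bbbk)$.

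Second, your claim ``since this orbit is at most countable, $U$ necessarily contains a point off the orbit'' requires $E$ (equivalently $\Bbbk$) to be uncountable; a cofinite set in a countable curve need not avoid a given countable set. The paper fixes this by first extending scalars to an uncountable field (Lemma~\ref{extend field}), which leaves the Hilbert series unchanged.
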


The statement and proof of Theorem~\ref{S(p+q) g div} should be compared with those of Theorem~\ref{T(d) properties} (in particular the sketch proof provided there). Here we are made to work harder to compensate for $S(p+q)$ not being generated in degree one.

\begin{lemma}\label{1 iff 2 of S(p+q) g div}
Parts (1) and (2) of Theorem~\ref{S(p+q) g div} are equivalent.
\end{lemma}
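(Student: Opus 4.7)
The plan is to use the short exact sequence induced by multiplication by $g$, combined with the natural surjection $R/gR \twoheadrightarrow \ovl{R}$ available for any subring of $S$ containing $g$, to show that $g$-divisibility and the prescribed Hilbert series are simply two ways of encoding the same numerical identity.

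Write $R = S(p+q)$ and $\NN = \LL(-p-q)$, so by Lemma~\ref{S(p+q) bar}, $\ovl{R} = B(E,\NN,\sigma)$ with $h_{\ovl{R}}(t) = (t^2-t+1)/(t-1)^2$. Since $S$ is a domain, multiplication by $g$ gives a short exact sequence of graded $\Bbbk$-vector spaces
\begin{equation*}
0 \longrightarrow R(-3) \xrightarrow{\,\cdot g\,} R \longrightarrow R/gR \longrightarrow 0,
\end{equation*}
so that $h_{R/gR}(t) = (1-t^3)\, h_R(t)$. Moreover, since $gR \subseteq R \cap gS$, there is always a natural surjection $\pi : R/gR \twoheadrightarrow \ovl{R}$, and $R$ is $g$-divisible precisely when $\pi$ is an isomorphism.

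For (1) $\Rightarrow$ (2): if $R$ is $g$-divisible then $R/gR \cong \ovl{R}$, so $(1-t^3)\,h_R(t) = h_{\ovl{R}}(t) = (t^2-t+1)/(t-1)^2$, which rearranges to (\ref{S(p+q) hilbert series}). For (2) $\Rightarrow$ (1): assuming the Hilbert series formula (\ref{S(p+q) hilbert series}) gives $h_{R/gR}(t) = (1-t^3)\,h_R(t) = (t^2-t+1)/(t-1)^2 = h_{\ovl{R}}(t)$. Since $\pi : R/gR \twoheadrightarrow \ovl{R}$ is a degree-preserving surjection of graded vector spaces whose source and target have equal (finite) graded dimensions, $\pi$ is an isomorphism, so $R \cap gS = gR$ and $R$ is $g$-divisible with $R/gR \cong \ovl{R}$.

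There is essentially no obstacle here: the whole content is the $g$-multiplication exact sequence plus the tautological surjection onto $\ovl{R}$. The genuine work of the theorem, namely actually establishing either (1) or (2), is deferred to the subsequent arguments in this section.
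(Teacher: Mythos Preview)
Your proof is correct and is essentially the same argument as the paper's: both exploit the inclusion $gR\subseteq R\cap gS$ (equivalently, the surjection $R/gR\twoheadrightarrow\ovl{R}$) together with the known Hilbert series of $\ovl{R}$ from Lemma~\ref{S(p+q) bar}, and observe that the induced inequality of Hilbert series becomes an equality precisely when $R$ is $g$-divisible. The paper phrases this degree by degree as $\dim_\Bbbk R_n\geq\dim_\Bbbk\ovl{R}_n+\dim_\Bbbk R_{n-3}$, whereas you package it via the exact sequence $0\to R(-3)\to R\to R/gR\to 0$; these are the same content.
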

\begin{proof}
Fix $p,q\in E$ and let $R=S(p+q)$. Using $Rg\subseteq R\cap Sg$ and that $R$ is a domain, we have
\begin{equation}\label{dim Rn geq}\dim_\Bbbk R_n=\dim_\Bbbk \ovl{R_n}+\dim_\Bbbk (R_n\cap Sg)\geq \dim_\Bbbk\ovl{R_n}+\dim_\Bbbk gR_{n-3}=\dim_\Bbbk\ovl{R_n}+\dim_\Bbbk R_{n-3}.\end{equation}
We know the Hilbert series of $\ovl{R}$ from Lemma~\ref{S(p+q) bar}. This and (\ref{dim Rn geq}) gives
\begin{equation}\label{hilbert series ineq} h_R(t)\geq \frac{h_{\ovl{R}}(t)}{1-t^3}=\frac{t^2-t+1}{(t-1)^2(t^3-1)}.\end{equation}
Now, this is an equality if and only if there is equality in (\ref{dim Rn geq}). Since $Rg\subseteq R\cap Sg$, this is true if and only if $Rg=R\cap Sg$; in other words, if and only if $R$ is $g$-divisible. The isomorphism (\ref{S(p+q)/gS(p+q)}) is a consequence of $R$ being $g$-divisible and Lemma~\ref{S(p+q) bar}.
\end{proof}

\begin{lemma}\label{S(p+q)_i=V_i}
Let $p,q\in E$ and let $V_1$, $V_2$ and $V_3$ be as in Definition~\ref{S(p+q)}. Then
\begin{enumerate}[(1)]
\item $\dim_\Bbbk V_1=1$, $\dim_\Bbbk V_2=2$ and $\dim_\Bbbk V_3=4$;
\item $S(p+q)_2=V_2$ and $S(p+q)_3=V_3$.
\end{enumerate}
\end{lemma}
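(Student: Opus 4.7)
The plan is to handle (1) and (2) in turn, using the fact that $g$ sits in degree 3 so the interaction with the ideal $gS$ is completely controlled.

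For (1), I would first compute $\dim_\Bbbk \overline{V_i}$ directly via Riemann–Roch. Since $\deg(\LL_i(-[p+q]_i))=3i-2i=i>0$ for $i\geq 1$, Corollary~\ref{RR to E} gives $\dim_\Bbbk \overline{V_i}=i$. Next I would compare $V_i$ with $\overline{V_i}$ through the short exact sequence $0\to V_i\cap gS\to V_i\to \overline{V_i}\to 0$. Since $gS\subseteq S_{\geq 3}$, one has $V_1\cap gS=V_2\cap gS=0$, giving $\dim_\Bbbk V_1=1$ and $\dim_\Bbbk V_2=2$. For $i=3$ one has $S_3\cap gS=\Bbbk g$; as $\overline{g}=0\in H^0(E,\LL_3(-[p+q]_3))$ we have $g\in V_3$, and therefore $V_3\cap gS=\Bbbk g$. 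Combined with $\dim_\Bbbk \overline{V_3}=3$ this yields $\dim_\Bbbk V_3=4$.

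For (2), the key point is that the inclusions
\[
V_1V_1\subseteq V_2,\quad V_1V_2,\ V_2V_1,\ V_1^3\subseteq V_3
\]
hold, after which both equalities in the statement follow: $S(p+q)_2=V_1V_1+V_2=V_2$ and $S(p+q)_3=V_1^3+V_1V_2+V_2V_1+V_3=V_3$. To verify these inclusions I would pass to $\overline{S}=B(E,\LL,\sigma)$ and use the multiplication rule $H^0(E,\LL_n)\otimes H^0(E,\LL_m)\to H^0(E,\LL_n\otimes\LL_m^{\sigma^n})=H^0(E,\LL_{n+m})$ recalled in Definition~\ref{twisted homogenous coordinate ring}. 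Thus for $\overline{a}\in H^0(E,\LL(-p-q))$ and $\overline{b}\in H^0(E,\LL_2(-[p+q]_2))$ one obtains
\[
\overline{a}\cdot\overline{b}\in H^0\bigl(E,\LL(-p-q)\otimes \LL_2^\sigma(-[p+q]_2^\sigma)\bigr)=H^0(E,\LL_3(-[p+q]_3)),
\]
showing $\overline{V_1V_2}\subseteq \overline{V_3}$; the other inclusions are proved identically. Since the degree 2 and 3 pieces of $S$ contain at most the single $gS$-part $\Bbbk g$ in degree 3, which we already have lies in $V_3$, there is no obstruction in lifting the inclusion from $\overline{S}$ back to $S$.

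There is no serious obstacle here; the only subtlety is remembering to add $\Bbbk g$ to the count for $V_3$, which is the source of the asymmetry in (1). Both parts are essentially bookkeeping with the tensor decomposition of $\LL_i(-[p+q]_i)$ and with the fact that $g\in S_3$ forces $V_i\cap gS=0$ for $i<3$.
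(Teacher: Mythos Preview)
Your proposal is correct and follows essentially the same line as the paper's proof: Riemann--Roch for $\dim_\Bbbk\ovl{V_i}$, the observation $gS\subseteq S_{\geq 3}$ to control $V_i\cap gS$, and passage to $\ovl{S}$ for the inclusions in part (2). The only cosmetic difference is that the paper invokes Lemma~\ref{S(p+q) bar} (so that $\ovl{V_1},\ovl{V_2},\ovl{V_3}$ already sit inside $B(E,\LL(-p-q),\sigma)$) rather than recomputing the multiplication directly, and it lifts back to $S$ simply by quoting the definition $V_3=\{x\in S_3:\ovl{x}\in H^0(E,\LL_3(-[p+q]_3))\}$.
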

\begin{proof}
(1). From the definition we have $\ovl{V_i}=H^0(E,\LL_i(-[p+q]_i)$ for $i=1,2,3$. Hence by the Riemann-Roch (specifically Corollary~\ref{RR to E}), $\dim_\Bbbk \ovl{V_i}=i$. Since $gS\subseteq S_{\geq 3}$, we then have, for $i=1,2$, that $\dim_\Bbbk V_i=\dim_\Bbbk\ovl{V_i}=i$. Again from the definition $V_3\cap gS=S\cap gS=g\Bbbk$. Thus
$$\dim_\Bbbk V_3=\dim_\Bbbk\ovl{V_3}+\dim_\Bbbk(V_3\cap gS)=3+1=4.$$\par
(2). Here we must show $V_1^2\subseteq V_2$ and $V_1^3+V_1V_2+V_2V_1\subseteq V_3$. By Lemma~\ref{S(p+q) bar} we have that $\ovl{V_1^2}\subseteq \ovl{V_2}$. Since $gS\subseteq S_{\geq 3}$, this implies $V_1^2\subseteq V_2$. Set $W=V_1^3+V_1V_2+V_2V_1$. Again by Lemma~\ref{S(p+q) bar}, $\ovl{W}\subseteq H^0(E,\LL_3(-[p+q]_3))$. Then
\begin{equation*}W\subseteq  \{x\in S_3\,|\;\ovl{x}\in H^0(E,\LL(-[p+q]_3))\}=V_3.\qedhere\end{equation*}
\end{proof}

As mentioned previously, we are adapting Rogalski's proof of Theorem~\ref{T(d) properties} to prove Theorem~\ref{S(p+q) g div}.  The main step where Rogalski uses generation in degree 1 (which is something we do not have) is the semi-continuity argument he uses. We are able to come up with our own semi-continuity argument. For this we require the introduction of the Grassmannians.\par

We write $\Gr(m,V)$\index[n]{grmv@$\Gr(m,V)$}\index{Grassmannian} for the Grassmannian of $m$-dimensional subspaces of a fixed $n$-dimensional $\Bbbk$-vector space $V$. More details on the Grassmannian can be found in the appendix. The result on Grassmannians we need is Lemma~\ref{Grassmannian} below. We acknowledge that surely there exists a suitable reference somewhere for it despite the author being unable to find such. Since at first sight it did not appear obvious to the author why such a statement should be true, a (rather long-winded) proof is included in an appendix.\par

\begin{lemma}\label{Grassmannian}
Let $R_2$ and $R_3$ be a $4$-dimensional and $7$-dimensional $\Bbbk$-vector space respectively, and set
$$\Omega_2=\{(W_1,W_2)\in \Gr(3,R_2)^2\,|\; \dim_\Bbbk (W_1\cap W_2)=2\}$$
and
\begin{equation}\label{Omega}
\Omega_3=\{(W_1,W_2,W_3)\in \Gr(6,R_3)^3\,| \dim_\Bbbk (W_1\cap W_2\cap W_3)=4\}.
\end{equation}
Then $\Omega_2$ and $\Omega_3$ are Zariski open subsets of $ \Gr(3,R_2)^2$ and $\Gr(6,R_3)^3$ respectively; and the maps $\psi_2:\Omega_2\to \Gr(2,R_2)$ and  $\psi_3:\Omega_3\to \Gr(4,R_3)$, given by
\begin{equation}\label{psi}\psi_2:(W_1,W_2)\longmapsto W_1\cap W_2 \;\text{ and }\; \psi_3:(W_1,W_2,W_3)\longmapsto W_1\cap W_2\cap W_3\end{equation}
are morphisms of varieties.
\end{lemma}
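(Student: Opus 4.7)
The plan is to recast the dimension conditions defining $\Omega_2$ and $\Omega_3$ as maximal-rank conditions on a natural bundle map, and then to produce the morphisms $\psi_2,\psi_3$ by taking the kernel subbundle on the locus of constant rank. Denote by $\mathcal{S}_i \subseteq V \otimes \mathcal{O}$ the pullback of the tautological subbundle via the $i$th projection $\Gr(m,V)^r \to \Gr(m,V)$; its fibre at $(W_1,\dots,W_r)$ is $W_i$.

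First I would translate everything into rank conditions. For $\Omega_2$, the identity $\dim(W_1\cap W_2)+\dim(W_1+W_2)=6$ shows that $\dim(W_1\cap W_2)=2$ is equivalent to the addition map $W_1\oplus W_2 \to R_2$, $(w_1,w_2)\mapsto w_1+w_2$, being surjective (rank $4$). For $\Omega_3$, each $W_i$ is a hyperplane in $R_3$, so the intersection $W_1\cap W_2\cap W_3$ always has codimension at most $3$, and equality $\dim(W_1\cap W_2\cap W_3)=4$ holds exactly when the map
\[
\varphi_{(W_1,W_2,W_3)}\colon W_1\oplus W_2\oplus W_3 \longrightarrow R_3\oplus R_3,\qquad (w_1,w_2,w_3)\longmapsto (w_1-w_2,\,w_2-w_3),
\]
is surjective (rank $14$), because $\ker\varphi_{(W_1,W_2,W_3)}\cong W_1\cap W_2\cap W_3$ via $(w,w,w)\mapsto w$.

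Second, I would globalise these maps. The addition map and the difference map above extend to morphisms of locally free sheaves
\[
\Phi_2\colon \mathcal{S}_1\oplus\mathcal{S}_2 \longrightarrow R_2\otimes\mathcal{O} \quad\text{on } \Gr(3,R_2)^2,
\]
\[
\Phi_3\colon \mathcal{S}_1\oplus\mathcal{S}_2\oplus\mathcal{S}_3 \longrightarrow (R_3\otimes\mathcal{O})^2 \quad\text{on } \Gr(6,R_3)^3,
\]
whose fibre ranks are $\dim(W_1+W_2)$ and $\mathrm{rank}\,\varphi_{(W_1,W_2,W_3)}$ respectively. The locus where a map between locally free sheaves attains its maximal possible rank is Zariski open (it is cut out by non-vanishing of appropriate maximal minors in any local trivialisation). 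So $\Omega_2$ and $\Omega_3$ are open.

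Third, I would build the morphisms $\psi_2,\psi_3$ from the kernel subbundles. On $\Omega_2$ the map $\Phi_2$ has constant rank $4$, so $\mathcal{K}_2=\ker\Phi_2$ is a rank-$2$ subbundle of $\mathcal{S}_1\oplus\mathcal{S}_2$; projection to the first factor sends $(w_1,w_2)$ to $w_1$, and because $w_2=-w_1\in W_2$, this identifies the fibre of $\mathcal{K}_2$ with $W_1\cap W_2 \subseteq R_2$. Thus $\mathcal{K}_2$ embeds as a rank-$2$ subbundle of $R_2\otimes\mathcal{O}$ on $\Omega_2$, and the universal property of $\Gr(2,R_2)$ yields the required morphism $\psi_2$. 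The same procedure with $\mathcal{K}_3=\ker\Phi_3$ (a rank-$4$ subbundle on $\Omega_3$) and projection to the first factor produces $\psi_3\colon \Omega_3\to\Gr(4,R_3)$. There is no serious obstacle here beyond careful book-keeping; the only point requiring attention is verifying that the fibre-wise intersection is faithfully recovered by the first-factor projection of the kernel, which follows from the explicit description of $\ker\Phi_i$ above.
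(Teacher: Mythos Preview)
Your proof is correct and takes a genuinely different route from the paper. The paper passes to the dual Grassmannians, identifying $\Gr(6,R_3)\cong\Gr(1,R_3)=\bbP^6$, so that $\Omega_3$ becomes the open locus of triples of linearly independent lines and $\psi_3$ becomes the span map $(w_1\Bbbk,w_2\Bbbk,w_3\Bbbk)\mapsto\mathrm{span}(w_1,w_2,w_3)$; openness is then read off from nonvanishing of $3\times 3$ minors, and the morphism is written down explicitly on the standard affine charts by row-reducing the $3\times 7$ coordinate matrix. Your approach instead works intrinsically with the tautological subbundles, encoding the intersection condition as maximal rank of a bundle map and extracting $\psi_i$ from the kernel subbundle via the universal property of the Grassmannian. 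Your argument is cleaner and manifestly generalises to arbitrary codimensions and numbers of factors, while the paper's argument is more elementary in that it avoids the bundle formalism and the universal property, at the cost of explicit coordinate bookkeeping that only works smoothly because the $W_i$ are hyperplanes.
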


\begin{proof} See Lemma~\ref{grass appendix}. \end{proof}

Another concept we require to prove Theorem~\ref{S(p+q) g div} is the geometric quotient of a variety $X$ by a group $G$. Let $G$ be an algebraic group and suppose that $G$ acts upon a variety $X$. If this action $G\times X\to X$ is a morphism of algebraic varieties, then $X$ is called a \textit{$G$-variety} \index{gvar@$G$-variety} \cite[Definition~21.4.1]{TY}. Let $X$ be a $G$-variety; one may then define the \textit{geometric quotient of $X$ by $G$} \cite[Definition~25.3.1]{TY}. The reader is referred to \cite{TY} for the exact definition. Suppose that we have a surjective morphism of varieties $\pi:X\to Y$ such that the fibers of $\pi$ are precisely the $G$-orbits of $X$. Then it is the case that $Y$ is the geometric quotient of $X$ by $G$ provided $Y$ is a normal variety \cite[Proposition~25.34]{TY}.\par

Finally, at a certain point in the proof of Theorem~\ref{S(p+q) g div} we will need the field we are working over to be uncountable. Clearly if $\mathbb{K}\supseteq \Bbbk$ is a field extension, then $\mathbb{K}$ is trivially flat as a $\Bbbk$-module. The following lemma is then immediate.

\begin{lemma}\label{extend field}
Let $\mathbb{K}\supseteq \Bbbk$ be a field extension. Let $V$ a finite dimensional $\Bbbk$-vector space and denote $V_\mathbb{K}=V\otimes_\Bbbk\mathbb{K}$. Then $\dim_\Bbbk V=\dim_\mathbb{K}V_\mathbb{K}$. In particular, for a cg $\Bbbk$-algebra $A$ and $A_\mathbb{K}=A\otimes_\Bbbk\mathbb{K}$, we have the equality of Hilbert series
\begin{equation*} \pushQED{\qed}
\sum_{i\in\N} \dim_\Bbbk(A_i)t^i =\sum_{i\in\N}\dim_\mathbb{K}(A_i\otimes_\Bbbk \mathbb{K})t^i. \qedhere
\end{equation*}
\end{lemma}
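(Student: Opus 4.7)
The plan is to prove both statements by reducing them to a straightforward basis argument, since this lemma is really just basic linear algebra dressed up for later use.

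For the first claim, I would pick a $\Bbbk$-basis $\{v_1,\dots,v_n\}$ of $V$, where $n=\dim_\Bbbk V$, and check that $\{v_1\otimes 1,\dots,v_n\otimes 1\}$ forms a $\mathbb{K}$-basis of $V_\mathbb{K}=V\otimes_\Bbbk\mathbb{K}$. Spanning is immediate from the fact that any pure tensor $v\otimes \lambda$ with $v=\sum a_i v_i$ (with $a_i\in\Bbbk$) can be written as $\sum (v_i\otimes 1)\lambda a_i$. For linear independence, suppose $\sum (v_i\otimes 1)\lambda_i=0$ in $V_\mathbb{K}$ for some $\lambda_i\in\mathbb{K}$. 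Using the flatness of $\mathbb{K}$ over $\Bbbk$ (equivalently, since $\mathbb{K}$ is a free $\Bbbk$-module), the natural map $V\otimes_\Bbbk\mathbb{K}\to\mathbb{K}^n$ defined by the chosen basis is an isomorphism, forcing each $\lambda_i=0$.

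For the Hilbert series statement, I would apply the first part termwise. Since $A$ is connected graded, each $A_i$ is a finite dimensional $\Bbbk$-vector space, so the first part gives $\dim_\Bbbk A_i=\dim_\mathbb{K}(A_i\otimes_\Bbbk\mathbb{K})$ for each $i\in\N$. Summing over $i$ then yields the required equality of formal power series in $\Z[[t]]\subseteq \mathbb{Z}((t))$.

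There is no hard step: the entire result is a direct consequence of the fact that tensoring a finite-dimensional $\Bbbk$-vector space with a field extension preserves dimension, and that a cg algebra is graded-locally finite-dimensional. The only thing to be mindful of is that the equality of Hilbert series is termwise, so no convergence or limit considerations arise.
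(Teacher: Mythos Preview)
Your proof is correct and matches the paper's approach: the paper simply remarks that $\mathbb{K}$ is trivially flat over $\Bbbk$ and declares the lemma immediate, giving no further argument. Your basis argument is the standard way to unpack that remark, so you have just written out what the paper leaves implicit.
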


%\begin{proof}
%Let $v_1$, $v_2$, \dots, $v_n$ be a $\Bbbk$-basis of $V$. Then we have an exact sequence
%$$0\longrightarrow v_1\Bbbk\oplus\dots\oplus v_n\Bbbk \longrightarrow V\longrightarrow 0.$$
%Since $\mathbb{K}$ is trivially a free $\Bbbk$-module, it is also flat. In particular we get an exact sequence
%$$0\longrightarrow (v_1\otimes 1)\mathbb{K}\oplus\dots\oplus (v_n\otimes 1)\mathbb{K} \longrightarrow V_\mathbb{K}\longrightarrow 0,$$
%proving $v_1\otimes 1,\dots, v_n\otimes 1$ is a $\mathbb{K}$-basis of $V_\mathbb{K}$.
%\end{proof}

We are now ready to prove $S(p+q)$ is a $g$-divisible ring.

\begin{proof}[Proof of Theorem~\ref{S(p+q) g div}]
By Lemma~\ref{1 iff 2 of S(p+q) g div}, we can prove both (1) and (2) in tandem. We break the proof up into 3 steps.\\\par

\textit{Step 1}. For now, fix $p,q\in E$ and let $R=S(p+q)$. For Step 1 we prove that $R$ is $g$-divisible with the additional assumption that $p$ and $q$ are in distinct $\sigma$-orbits. We need to assume that $\Bbbk$ is uncountable to be sure such a situation exists; by Lemma~\ref{extend field} we lose no generality in doing so.\par
Put $R'=S(p)\cap S(q)$. Since $S$ is a domain and, $S(p)$ and $S(q)$ are already $g$-divisible from Proposition~\ref{Ro 12.2},
\begin{equation*}
R'\cap gS=(S(p)\cap gS)\cap (S(q)\cap gS)=gS(p)\cap gS(q)=g(S(p)\cap S(q))=gR'.
\end{equation*}
That is, $R'$ is $g$-divisible. We claim that in fact $R'=R$. \par
To prove this fix $i=1,2,3$. Clearly $[p]_i\leq [p+q]_i$, thus $\LL_i(-[p+q]_i)\subseteq \LL_i(-[p]_i)$ by Lemma~\ref{on divisors}. Taking global sections gives $H^0(E,\LL_i(-[p+q]_i))\subseteq H^0(E,\LL_i(-[p]_i))$, and hence
$$R_i=\{x\in S_i\,|\; \ovl{x}\in H^0(E,\LL_i(-[p+q]_i))\}\subseteq
\{x\in S_i\,|\; \ovl{x}\in H^0(E,\LL(-p)) \}=S(p)_i.$$
Since $R$ is generated by $R_1,R_2,R_3$ we have $R\subseteq S(p)$. Symmetrically we get $R\subseteq S(q)$ also, which forces $R\subseteq R'.$  \par
We now show $\ovl{R'}=\ovl{R}$. Fix $n\in\N$. By Lemma~\ref{S(p+q) bar},  $\ovl{R_n}=H^0(E,\LL_n(-[p+q]_n)$, and so
\begin{equation}\label{ovlR subset ovlR'}
H^0(E,\LL_n(-[p+q]_n)\subseteq\ovl{R'_n}\subseteq\ovl{S(p)_n}\cap \ovl{S(q)_n}=H^0(E,\LL_n(-[p]_n))\cap H^0(E,\LL_n(-[q]_n)).
\end{equation}
Now, the right hand side of (\ref{ovlR subset ovlR'}) consists of global sections of $\LL_n$ which vanish at both of the effective divisors $[p]_n=p+p^\sigma+\dots +p^{\sigma^{n-1}}$ and $[q]_n=q+q^\sigma+\dots +q^{\sigma^{n-1}}$. But we are assuming $p$ and $q$ are in distinct $\sigma$-orbits, hence $[p]_n\cap[q]_n=0$. Therefore, this is the same as saying it consists of global sections of $\LL_n$ which vanish at the effective divisor $[p+q]_n=p+q+(p+q)^\sigma+\dots +(p+q)^{\sigma^{n-1}}$; in other words the right hand side equals $H^0(E,\LL_n(-[p+q]_n)$ - the left hand side. Thus (\ref{ovlR subset ovlR'}) implies
\begin{equation}\label{ovlR=ovlR'} \ovl{R'_n}=H^0(E,\LL_n(-[p+q]_n)=\ovl{R_n}\;\text{ for all }\;n\geq 0.\end{equation}\par

We complete Step~1 with induction. For $n=0,1,2$, clearly $R_n\cap Sg=R'_n\cap gS=0$ because $gS\subseteq S_{\geq 3}$. Thus $R_n=R'_n$ follows from (\ref{ovlR=ovlR'}). Now fix $n\geq 3$, and assume $R_{n-3}=R'_{n-3}$. Then, since $R'$ is $g$-divisible and $R\subseteq R'$, we have that
$$R_n\cap gS\supseteq gR_{n-3}=gR'_{n-3}=R'_{n}\cap gS\supseteq R_n\cap gS.$$
This certainly implies $\dim_\Bbbk(R_n\cap gS)=\dim_\Bbbk(R'_n\cap gS)$.  We know $\dim_\Bbbk \ovl{R_n}=\dim_\Bbbk\ovl{R'_n}$ from (\ref{ovlR=ovlR'}), and so we get
$$\dim_\Bbbk R_n=\dim_\Bbbk \ovl{R_n}+\dim_\Bbbk(R_n\cap gS)=\dim_\Bbbk \ovl{R'_n}+\dim_\Bbbk(R'_n\cap gS)=\dim_\Bbbk R'_n.$$
Since $R\subseteq R'$ already, this forces $R_n=R'_n$. This completes the proof under the additional assumption that $p$ and $q$ are in distinct $\sigma$-orbits.\\\par

\textit{Step 2}. Now fix $p\in E$ and allow $q\in E$ to vary. To get rid of the additional assumption of Step 1 the strategy is to prove that the map
$$\nu_n: E\to \N, \;\;q\mapsto \dim_\Bbbk S(p+q)_n$$
is lower semi-continuous for each $n\geq 0$. In other words, for every $\ell\in\N$, the set $\{q\in E\,|\; \nu_n(q)\leq \ell\}$ is Zariski-closed in $E$. Fix $n\geq 0$. We prove $\nu_n$ is lower semi-continuous in two further sub-steps.

\begin{enumerate}[(2a)]
\item By Lemma~\ref{S(p+q)_i=V_i} there is a map $\theta:E\to \Gr(1,S(p)_1)\times \Gr(2,S(p)_2)\times \Gr(4,S(p)_3)$,
$$\theta: q\mapsto (S(p+q)_1,S(p+q)_2,S(p+q)_3).$$
We claim $\theta$ is a morphism of varieties.
\item For each $n\in\N$ consider $\mu_n: \Gr(1,S(p)_1)\times \Gr(2,S(p)_2)\times \Gr(4,S(p)_3)\to \N$,
$$\mu_n: (Y_1,Y_2,Y_3)\mapsto\dim_\Bbbk(\Bbbk\langle Y_1,Y_2,Y_3\rangle_n).$$
We claim $\mu_n$ lower semi-continuous for every $n\in\N$.
\end{enumerate}

Assume for the moment that these claims are true. Since clearly $\nu_n=\mu_n\circ \theta$,
$$\{q\in E\,|\; \nu_n(q)\leq \ell\}=\{q\in E\,|\; \mu_n(\theta(q))\leq \ell\}=\theta^{-1}(C),$$
where $C=\{(Y_1,Y_2,Y_3)\,|\;\mu_n(Y_1,Y_2,Y_3)\leq \ell\}$. If (2b) holds, then $C$ is closed. If also (2a) holds, then $\theta^{-1}(C)$ is closed because then $\theta$ is continuous with respect to the Zariski topology. This would prove $\nu_n$ is lower-semi-continuous completing Step 2.  \\\par

\textit{Proof of 2a}. Below when we say ``morphism" we mean ``morphism of varieties".  Write
$$\theta=(\theta_1,\theta_2,\theta_3),\text{ where }\theta_i: q\mapsto S(p+q)_i.$$
To contain notation we only prove $\theta_3:E\to\Gr(4,S(p)_3)$ is a morphism. The proof for $\theta_1, \theta_2$ are similar.  We construct $\theta_3$ as a composite of other morphisms. First we have
$$\gamma: E\to E^3,\; q\mapsto (q,q^\sigma,q^{\sigma^2}).$$
Since $\sigma:E\to E$ is an automorphism, $\gamma$ is certainly a morphism. Let $\NN=\LL_3(-[p]_3)$ and $W=H^0(E,\NN)=\ovl{S(p)_3}$; by the Riemann-Roch Theorem (Corollary~\ref{RR to E}) $\dim_\Bbbk W=6$. Next, we have the map $\ovl{\phi}:E\to \Gr(5,W)$ determined by a basis of sections for $\NN$ (see \cite[Theorem~II.7.1]{Ha}). This is the map $\ovl{\phi}:q\mapsto H^0(E,\NN(-q))$. We remark that for the use of \cite[Theorem~II.7.1]{Ha} above, we are identifying $\Gr(5,W)$ with lines through the dual vector space, i.e. $\bbP(W^*)$. We then extend $\ovl{\phi}$ to obtain a morphism $\phi:E\to\Gr(6,S(p)_3)$. Explicitly we are post-composing $\ovl{\phi}$ with the morphism
$$\Gr(5,W)\to \Gr(6,S(p)_3): \;W'\mapsto \{x\in S(p)_3\,|\; \ovl{x}\in W'\}.$$
We then set
$$\Phi=(\phi,\phi,\phi):E\times E\times E\to \Gr(6,S(p)_3)\times \Gr(6,S(p)_3)\times \Gr(6,S(p)_3).$$
Now we use Lemma~\ref{Grassmannian} with $R_3=S(p)_3$. Recall $\Omega_3=\Omega$ and $\psi_3=\psi$ from (\ref{Omega}) and (\ref{psi}). By Lemma~\ref{Grassmannian} we have the morphism
$$\psi:\Omega\to \Gr(4,S(p)_3).$$
We claim $\theta_3=\psi\circ\Phi\circ \gamma$. Given $q\in E$, we have
$$\Phi(\gamma(q))=(X_{0},X_{1},X_{2}),$$
where $X_{i}=\{x\in S(p)_3\, |\; \ovl{x}\in H^0(E,\NN_i(-q^{\sigma^{i}}))\}$. By our main assumption, $\sigma$ has infinite order, and so there are no points of finite order. In particular $p$, $p^\sigma$ and $p^{\sigma^2}$ are all pairwise distinct. Now the argument used to prove (\ref{ovlR=ovlR'}) can then be repeated to show
\begin{equation}\label{X0capX1capX2 1}
\ovl{X_{0}}\cap\ovl{X_{1}}\cap \ovl{X_{2}}=H^0(E,\LL_3(-[p+q]_3))=\ovl{S(p+q)_3}.
\end{equation}
Certainly $\ovl{X_{0}\cap X_{1}\cap X_{2}}\subseteq \ovl{X_{0}}\cap\ovl{X_{1}}\cap \ovl{X_{2}}$. If $\ovl{y}\in \ovl{X_{0}}\cap\ovl{X_{1}}\cap \ovl{X_{2}}$ for some $y\in S_3$, then there exists $x_i\in X_i$ and $\lambda_i\in S_0=\Bbbk$, such that $y=x_i+g\lambda_i$ for $i=1,2,3$. But by the definition of each $X_i$, $g\lambda_i\in X_i$, hence $y\in X_{0}\cap X_{1}\cap X_{2}$. This shows $\ovl{X_{0}\cap X_{1}\cap X_{2}}= \ovl{X_{0}}\cap\ovl{X_{1}}\cap \ovl{X_{2}}$. It then follows from (\ref{X0capX1capX2 1}) that
$$X_0\cap X_1\cap X_2\subseteq \{x\in S_3\,|\;\ovl{x}\in H^0(E,\LL_3(-[p+q]_3))\}=V_3.$$
Since $gS_0=g\Bbbk\subseteq X_0\cap X_1\cap X_2$, we in fact have $X_0\cap X_1\cap X_2=V_3$. By Lemma~\ref{S(p+q)_i=V_i}, $V_3=S(p+q)_3$ and $\dim_\Bbbk S(p+q)_3=4$. In particular, $\Phi(\gamma(q))\in \Omega$ and $\psi(\Phi(\gamma(q)))$ is defined with
$$\psi(\Phi(\gamma(q)))=S(p+q)_3=\theta_3(q).$$
Thus $\theta_3$, as a composition of morphisms, is itself a morphism of varieties. Similarly one can prove $\theta_1$ and $\theta_2$ are morphisms, and hence $\theta$ is also a morphism of varieties. \\\par

\textit{Proof of 2b}. Fix bases for $S(p)_1$, $S(p)_2$, and $S(p)_3$. By Proposition~\ref{Ro 12.2}(1), $S(p)_1$, $S(p)_2$ and $S(p)_3$ are respectively 2, 4 and 7 dimensional $\Bbbk$-vector spaces. We can identify a vector of $S(p)_1\times S(p)_2\times S(p)_3$ with its coordinates in $\A^2\times\A^4\times \A^7$. Let $\mathcal{U}$ be the collection of $(u,(v_1,v_2),(w_1,\dots,w_4))\in \A^2\times(\A^4)^2\times (\A^7)^4$ such that $u\neq 0$ and both $(v_1,v_2)$ and $(w_1,\dots,w_4)$ are linearly independent collections of vectors in $S(p)_2$ and $S(p)_3$ respectively. Linear independence is an open condition, and so $\mathcal{U}$ is open subset. To ease notation put $\mathcal{H}=\Gr(1,S(p)_1)\times \Gr(2,S(p)_2)\times \Gr(4,S(p)_3)$. We have a natural surjection $\pi: \mathcal{U}\to \mathcal{H}$,
$$(u,(v_1,v_2),(w_1,\dots,w_4))\mapsto (\mathrm{span}\{u\},\mathrm{span}\{v_1,v_2\},\mathrm{span}\{w_1,\dots,w_4\}).$$
We will show this is nothing but the geometric quotient (in the sense of \cite[Definition~25.3.1]{TY}) of $\mathcal{U}$ by the natural action of $G=\Bbbk^\times\times \Gl_2(\Bbbk)\times \Gl_4(\Bbbk)$
given by left multiplication. To this end, we remark that $\mathcal{U}$ is obviously a irreducible $G$-variety (see \cite[Definition~21.4.1]{TY}) and moreover, given $\mathfrak{h}\in \mathcal{H}$, $\pi^{-1}(\mathfrak{h})$ is a $G$-orbit. By \cite[Proposition~25.3.5]{TY}, it is the case that $\mathcal{H}$ is the desired quotient provided $\mathcal{H}$ is a normal variety. But this is obviously true as $\mathcal{H}$ is a product of Grassmannians, and hence has an open affine cover $\{U_i\times V_j\times W_k\}_{i,j,k}$ with $U_i\cong \A^3$, $V_j\cong (\A^2)^2$ and $W_k\cong (\A^{3})^4$ (see \cite[Lemma~11.15]{Hassett}).\par

Fix $n\in \N$. We now prove $\mu=\mu_n$ is lower semi-continuous. First we lift the map $\mu$ to a map $\mu':\mathcal{U}\to \N$. Given $\mathfrak{u}=(u,(v_1,v_2),(w_1,\dots,w_4))\in \mathcal{U}$ set $$\begin{array}{l}
Y_1(\mathfrak{u})=\mathrm{span}\{u\}\subseteq S(p)_1,\\
Y_2(\mathfrak{u})=\mathrm{span}\{v_1,v_2\}\subseteq S(p)_2, \\
Y_3(\mathfrak{u})=\mathrm{span}\{w_1,w_2,w_3, w_4\}\subseteq S(p)_3,
\end{array}
$$
and define $\mu'(\mathfrak{u})=\dim_\Bbbk(\Bbbk\langle Y_1(\mathfrak{u}), Y_2(\mathfrak{u}), Y_3(\mathfrak{u}) \rangle_n).$
We have the commuting diagram
\[
\xymatrix{
\mathcal{U} \ar[d]_{\pi} \ar[dr]^{\mu'} \\
\mathcal{H} \ar[r]^{\mu} & \N. }
\]
To prove $\mu$ is lower semi-continuous we first show that $\mu'$ is lower semi-continuous.  \par
Let $x_1,\dots, x_m$ be the spanning set of $S(p)_n$ consisting of all degree $n$ products of the original fixed bases of $S(p)_1$, $S(p)_2$ and $S(p)_3$. Take $\mathfrak{u}\in \mathcal{U}$ and write $Y_i=Y_i(\mathfrak{u})$ for $i=1,2,3$. Then $\Bbbk\langle Y_1,Y_2,Y_3\rangle_n$ is spanned by elements of the form $y_i=\sum f_{ij}x_j$, where $f_{ij}$ are polynomials in the coordinates of $\mathfrak{u}$. Take $\ell\in \N$, then $\dim_\Bbbk (\Bbbk\langle Y_1,Y_2,Y_3\rangle_n)\leq \ell$ if and only if no collection of $\ell+1$ of the $y_i$ are linearly independent. This in turn is equivalent to the determinants of all $(\ell+1)\times(\ell+1)$ matrix minors of the matrix $\left(f_{ij}\right)_{i,j}$ vanishing. Thus $\{\mathfrak{u}\in \mathcal{U}\,|\; \mu'(\mathfrak{u})\leq \ell \}$ is closed, proving $\mu'$ is lower semi-continuous. \par
For $\ell\in \N$, set $Z=\{ (Y_1,Y_2,Y_3)\in \mathcal{H}\,|\; \mu(Y_1,Y_2,Y_3)\leq \ell \}$, we must show $Z$ is a closed subset of $\mathcal{H}$. We know $\pi^{-1}(Z)=\{\mathfrak{u}\in \mathcal{U}\,|\; \mu'(\mathfrak{u})\leq \ell \}$ is a closed subset of $\mathcal{U}$ - we have just proved it. Because $\mathcal{H}$ is the geometric quotient of $\mathcal{U}$ by $G$, it indeed follows that $Z=\pi(\pi^{-1}(Z))$ is closed (see \cite[Lemma~25.3.2]{TY}). Thus $\mu$ is lower semi-continuous as claimed.\par

As an aside, before moving on to Step~3 we remark that the lower semi-continuity of $\mu$ suggests that for generic $(Y_1,Y_2,Y_3)\in \mathcal{H}$, $\Bbbk\langle Y_1,Y_2,Y_3\rangle\ehd S(p)$. We however are only concerned about $\mu$ on the image of $\theta$. In Step~3 we will see this is exactly what we require. \\\par

\textit{Step 3}. Finally we complete the proof. By Lemma~\ref{extend field} we may assume without loss of generality that $\Bbbk$ is uncountable. In which case $E$ has uncountably many points. By Step~1, $S(p+q)$ has the correct Hilbert series (\ref{S(p+q) hilbert series}) when $p$ and $q$ are in different $\sigma$-orbits, say $\dim_\Bbbk S(p+q)_n=\alpha_n$ for $p,q$ in different $\sigma$-orbits. By Step~2, $E_n=\{q\in E\,|\; \dim_\Bbbk S(p+q)_n\leq \alpha_n\}$ is a closed subset of $E$. If $E_n\subsetneq E$, then because $E_n$ is closed, $E_n$ would be finite. But $E_n$ contains the uncountable set $\{q\in E\,|\; q\neq p^{\sigma^i}\,\text{ for all }\, i\in\Z\}$ by Step~1. Hence $E_n=E$. In other words, $\dim_\Bbbk S(p+q)_n\leq \alpha_n$ for all $n\geq 0$ and $p,q\in E$. Thus
$$h_{S(p+q)}(t)\leq \sum_{n\geq 0}\alpha_n=\frac{t^2-t+1}{(t-1)^2(t^3-1)},$$
for any $p,q\in E$. The reverse inequality is (\ref{hilbert series ineq}). By Lemma~\ref{1 iff 2 of S(p+q) g div}, this completes the proof.
\end{proof}

Now we know that the $S(p+q)$'s are $g$-divisible we will quickly obtain a version of Proposition~\ref{Ro 12.2} for $S(p+q)$. Before stating the main theorem of this section we prove the equality (\ref{S(d) to T([d]3)}). We must therefore recall Notation~\ref{3 Veronese notation bg} and Definition~\ref{T(d) def} for the $T(\bfd)$. In the following the Veronese subring $T=S^{(3)}$ of $S$ is given the grading induced from $S$, that is $T_n=T\cap S_n$.

\begin{cor}\label{S(p+q) 3 Veronese}
Let $p,q\in E$. Then $S(p+q)^{(3)}=T([p+q]_3)$.
\end{cor}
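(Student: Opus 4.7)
The plan is to prove the two inclusions separately, using the fact that both rings are $g$-divisible subalgebras of $T$ that share the same image modulo $g$. For the easy direction, $T([p+q]_3) \subseteq S(p+q)^{(3)}$, recall from Definition~\ref{T(d) def} that $T([p+q]_3)$ is generated as a $\Bbbk$-algebra by its degree-one piece inside $T$. Using Notation~\ref{3 Veronese notation bg}, namely $T_1 = S_3$ and $\MM = \LL_3$, this degree-one piece is
\[
T([p+q]_3)_1 \;=\; \{x \in S_3 \,:\, \ovl{x} \in H^0(E,\LL_3(-[p+q]_3))\} \;=\; V_3 \;=\; S(p+q)_3,
\]
where the last equality is Lemma~\ref{S(p+q)_i=V_i}. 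Since $S(p+q)_3 \subseteq S(p+q)^{(3)}_1$, the inclusion $T([p+q]_3) \subseteq S(p+q)^{(3)}$ follows immediately.

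For the reverse inclusion, I would first record that both algebras contain $g$ and are $g$-divisible: $T([p+q]_3)$ by Theorem~\ref{T(d) properties}(1), and $S(p+q)^{(3)}$ by combining Theorem~\ref{S(p+q) g div}(1) with Lemma~\ref{g div up}. Next, I would show that both have the same image in $T/gT = B(E,\MM,\tau)$, namely $B(E,\MM(-[p+q]_3),\tau)$. For $T([p+q]_3)$ this is part of Theorem~\ref{T(d) properties}(1). For $S(p+q)^{(3)}$ it is the identification
\[
\ovl{S(p+q)^{(3)}} \;=\; \ovl{S(p+q)}^{(3)} \;=\; B(E,\LL(-p-q),\sigma)^{(3)} \;=\; B(E,\LL_3(-[p+q]_3),\sigma^3) \;=\; B(E,\MM(-[p+q]_3),\tau),
\]
where the first equality uses $g$-divisibility of $S(p+q)$, the second uses Theorem~\ref{S(p+q) g div}(1), and the third is the definition of the $\sigma^3$-Veronese of a twisted homogeneous coordinate ring.

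Finally, I would conclude $S(p+q)^{(3)} \subseteq T([p+q]_3)$ by induction on the $T$-degree $n$ (where $T_n = S_{3n}$). The base case $n = 0$ is trivial. For the inductive step, given $x \in S(p+q)^{(3)}_n$, the image $\ovl{x} \in \ovl{S(p+q)^{(3)}}_n$ equals $\ovl{y}$ for some $y \in T([p+q]_3)_n$ by the matching of images from the previous paragraph. Then $x - y \in gT_{n-1}$, and since $y \in T([p+q]_3) \subseteq S(p+q)^{(3)}$ by the easy direction, we have $x - y \in S(p+q)^{(3)} \cap gT = gS(p+q)^{(3)}_{n-1}$ by $g$-divisibility of $S(p+q)^{(3)}$. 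By the inductive hypothesis, $S(p+q)^{(3)}_{n-1} = T([p+q]_3)_{n-1}$, so $x - y \in gT([p+q]_3)_{n-1} \subseteq T([p+q]_3)_n$, and hence $x \in T([p+q]_3)_n$. The only nontrivial step is the identification of images modulo $g$, and this reduces to the formal manipulation of twisted homogeneous coordinate rings noted above.
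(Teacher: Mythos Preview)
Your proof is correct and takes essentially the same approach as the paper: establish the easy inclusion $T([p+q]_3)\subseteq S(p+q)^{(3)}$ via $V_3=S(p+q)_3$, then use that both rings are $g$-divisible with the same image modulo $g$ to finish by induction on degree. The only cosmetic difference is that the paper frames the induction as a Hilbert series comparison (matching $\dim_\Bbbk R_n$ with $\dim_\Bbbk R'_n$ via $\ovl{R_n}=\ovl{R'_n}$ and $R_n\cap gS=gR_{n-3}$), whereas you argue element-wise; the underlying logic is identical.
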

\begin{proof}
Set $R=S(p+q)$, $R'=T([p+q]_3)$ and $\NN=\LL(-p-q)$. Write $R=\Bbbk\langle V_1,V_2,V_3\rangle$ where $V_i=\{ x\in  S_i\,|\; \ovl{x}\in H^0(E,\NN_i)\}$ as in Definition~\ref{S(p+q)}. By Definition~\ref{T(d) def},  $R'=\Bbbk\langle V_3\rangle$, and so we clearly have $R'\subseteq R^{(3)}$. Since we already have this inclusion, to prove equality it is enough to prove that we have an equality of Hilbert series. By Lemma~\ref{S(p+q) bar} and \cite[Theorem~1.1(1)]{Ro}, $\ovl{R_n}=H^0(E,\NN_n)=\ovl{R'_n}$ for each $n\geq 0$ divisible by 3. Thus, for $\dim_\Bbbk R_n=\dim_\Bbbk R'_n$ to hold, it suffices to prove $R_n\cap Sg= R'_n\cap Sg$. But $R_n\cap Sg=R_{n-3}g$  and  $R'_n\cap Sg=R'_{n-3}g$ by Theorem~\ref{S(p+q) g div} and \cite[Theorem~5.2]{Ro}. By induction it is therefore enough to prove $\dim_\Bbbk R_0=\dim_\Bbbk R'_0$, which is trivial.
\end{proof}

\begin{cor}\label{Qgr(S(p+q))}
Let $p,q\in E$ and $R=S(p+q)$. Then $Q_\gr(R)=Q_\gr(S)$.
\end{cor}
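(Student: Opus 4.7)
The plan is to leverage the link to the $3$-Veronese established in Corollary~\ref{S(p+q) 3 Veronese}, combined with the existence of a nonzero element of $R$ in degree $1$, to generate all of $Q_\gr(S)$ inside $Q_\gr(R)$. Set $R = S(p+q)$.

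First, by Corollary~\ref{S(p+q) 3 Veronese} we have $R^{(3)} = T([p+q]_3)$, which is a blowup of $T = S^{(3)}$ at an effective divisor on $E$. Theorem~\ref{T(d) properties}(4) then gives
\[
Q_\gr(R^{(3)}) \;=\; Q_\gr(T) \;=\; Q_\gr(S)^{(3)},
\]
the second equality being (\ref{Dgr(S)=Dgr(T)}). Since $R^{(3)} \subseteq R$, any quotient of two nonzero homogeneous elements of $R^{(3)}$ lies in $Q_\gr(R)$, so the inclusion $Q_\gr(S)^{(3)} \subseteq Q_\gr(R)$ holds. In particular, the degree zero part $D_\gr(S) = Q_\gr(S)_0 = Q_\gr(T)_0$ is contained in $Q_\gr(R)$.

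Second, recall from Lemma~\ref{S(p+q)_i=V_i} that $\dim_\Bbbk V_1 = 1$, so there exists a nonzero $x \in V_1 \subseteq R_1 \subseteq Q_\gr(R)_1$. Write $Q_\gr(S) = D_\gr(S)[t,t^{-1};\alpha]$ as in Definition~\ref{Qgr and Dgr}. Since $x$ is a nonzero homogeneous element of $Q_\gr(S)$ of minimal positive degree, we may take $t = x$. Then every homogeneous element $y$ of $Q_\gr(S)$ of degree $n$ satisfies $yx^{-n} \in Q_\gr(S)_0 = D_\gr(S) \subseteq Q_\gr(R)$ by the previous paragraph, and so
\[
y \;=\; (yx^{-n}) \cdot x^n \;\in\; Q_\gr(R).
\]
This gives $Q_\gr(S) \subseteq Q_\gr(R)$, and the reverse containment is automatic from $R \subseteq S$. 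Hence $Q_\gr(R) = Q_\gr(S)$.

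There is no real obstacle here: the statement is essentially a combination of Corollary~\ref{S(p+q) 3 Veronese} (which is the substantive input) with the fact that $R$ contains a nonzero element in degree $1$ to bridge the gap between $Q_\gr(S)^{(3)}$ and $Q_\gr(S)$.
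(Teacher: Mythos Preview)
Your proof is correct and follows essentially the same approach as the paper's: both use Corollary~\ref{S(p+q) 3 Veronese} to get $Q_\gr(R^{(3)})=Q_\gr(T)$, and then use the existence of a nonzero element in $R_1$ to pass from $Q_\gr(S)^{(3)}$ up to $Q_\gr(S)$. The only cosmetic difference is that the paper packages the last step via Lemma~\ref{Qgr(S) or Qgr(T)} (noting $g\in R$ and $D_\gr(R)=D_\gr(S)$), whereas you carry out that argument directly by writing $Q_\gr(S)=D_\gr(S)[x,x^{-1};\alpha]$ with $x\in R_1$.
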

\begin{proof}
Trivially $Q_\gr(R)\subseteq Q_\gr(S)$. By Corollary~\ref{S(p+q) 3 Veronese}, $R^{(3)}=T([p+q]_3)$, which has $Q_\gr(T([p+q]_3))=Q_\gr(T)$ by \cite[Theorem~5.4(2)]{Ro}. Thus $D_\gr(R)= D_\gr(T)=D_\gr(S)$. By Lemma~\ref{Qgr(S) or Qgr(T)} either $Q_\gr(R)=Q_\gr(T)$ or $Q_\gr(R)=Q_\gr(S)$. Since $R_1\neq 0$, it is must be that $Q_\gr(R)=Q_\gr(S)$.
\end{proof}

We now present our main theorem of this chapter. For future referencing purposes we incorporate Proposition~\ref{Ro 12.2} into Theorem~\ref{S(d) thm}.

\begin{theorem}\label{S(d) thm}
Let $\bfd$ be an effective divisor on $E$ with degree $\deg\bfd=d\leq 2$. Set $R=S(\bfd)$. Then:
\begin{enumerate}[(1)]
\item $R$ is $g$-divisible with $R/gR\cong \ovl{R}=B(E,\LL(-\bfd),\sigma)$ and $R^{(3)}=S^{(3)}(\bfd+\bfd^\sigma+\bfd^{\sigma^2})$. The Hilbert series of $R$ is given by
$$h_{R}(t)=\frac{t^2+(1-d)t+1}{(t-1)^2(1-t^3)}.$$
\item $R$ is Auslander-Gorenstein and Cohen-Macaulay.
\item $R$ satisfies $\chi$ on the left and right, has cohomological dimension 2 and possesses a balanced dualizing complex.
\item $R$ is a maximal order in $Q_\gr(R)=Q_\gr(S)$.
\end{enumerate}
\end{theorem}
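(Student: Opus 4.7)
The plan is to reduce the theorem to the two hard inputs that have already been established in this section, namely Theorem~\ref{S(p+q) g div} and Corollary~\ref{S(p+q) 3 Veronese}, together with the black-box Proposition~\ref{RSS2 2.4}. The cases $d=0$ and $d=1$ are already handled elsewhere: when $d=0$ we have $R=S$ and everything is Theorem~\ref{sklyanin thm}; when $d=1$, $\bfd=p$ and Proposition~\ref{Ro 12.2} is precisely the statement we want. So the real content is the case $\bfd=p+q$, and the strategy is to check that this case slots into the same framework as $S(p)$ once $g$-divisibility is in hand.

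For part~(1) with $d=2$, the $g$-divisibility of $R=S(p+q)$ together with the isomorphism $R/gR \cong \ovl{R}=B(E,\LL(-p-q),\sigma)$ is the content of Theorem~\ref{S(p+q) g div}, while the 3-Veronese identity $R^{(3)}=S^{(3)}(\bfd+\bfd^\sigma+\bfd^{\sigma^2})$ is Corollary~\ref{S(p+q) 3 Veronese}. The Hilbert series is then an immediate calculation: from $g$-divisibility one has the short exact sequence $0\to R(-3)\xrightarrow{\,\cdot g\,} R\to \ovl{R}\to 0$, so $h_R(t)=h_{\ovl{R}}(t)/(1-t^3)$, and Riemann--Roch (Corollary~\ref{RR to E}) applied to $\LL(-p-q)$ (which has degree $1$) gives $h_{\ovl{R}}(t)=(t^2-t+1)/(1-t)^2$, yielding the stated formula with $d=2$. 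The equality $Q_\gr(R)=Q_\gr(S)$ is Corollary~\ref{Qgr(S(p+q))}.

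For parts~(2), (3) and the maximal order statement in~(4), once we know that $R$ is a cg domain containing a central homogeneous element $g\in R_3$ with $R/gR\cong B(E,\LL(-\bfd),\sigma)$ where $\deg\LL(-\bfd)=3-d\in\{1,2,3\}\geq 1$ and $\sigma$ has infinite order by Hypothesis~\ref{standing assumption 2}, we are in the exact setting of Proposition~\ref{RSS2 2.4}. Invoking that proposition immediately delivers that $R$ is strongly noetherian, Auslander--Gorenstein, Cohen--Macaulay, satisfies $\chi$ on both sides, has cohomological dimension~$2$, admits a balanced dualizing complex, and is a maximal order in $Q_\gr(R)$. Combined with $Q_\gr(R)=Q_\gr(S)$ from the previous paragraph, this completes~(4).

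In short, the only step that required genuine work is establishing $g$-divisibility of $S(p+q)$, and that has already been carried out in Theorem~\ref{S(p+q) g div} via the semi-continuity argument using Grassmannians. Every other assertion is either a formal consequence of $g$-divisibility plus the known behaviour of $\ovl{R}$, or a direct application of Proposition~\ref{RSS2 2.4}. The only mild pitfall is ensuring that the hypotheses of Proposition~\ref{RSS2 2.4} are met uniformly across $d=0,1,2$, but the inequality $\deg\LL(-\bfd)\geq 1$ holds in all three cases so no case distinction is needed at that final step.
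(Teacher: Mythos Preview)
Your proposal is correct and follows essentially the same approach as the paper's proof. The paper's proof is even terser: it simply records that (1) follows from Proposition~\ref{Ro 12.2}, Theorem~\ref{S(p+q) g div}, Corollary~\ref{S(p+q) 3 Veronese} and Corollary~\ref{Qgr(S(p+q))}, and that (2)--(4) follow from (1) together with Proposition~\ref{RSS2 2.4}; your write-up is just an expanded version of exactly this, with the case split $d=0,1,2$ and the Hilbert series computation spelled out explicitly.
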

\begin{proof}
(1) follows from Proposition~\ref{Ro 12.2}, Corollary~\ref{S(p+q) 3 Veronese},  Corollary~\ref{Qgr(S(p+q))} and Theorem~\ref{S(p+q) g div}. The rest follows from part (1) and Proposition~\ref{RSS2 2.4}.
\end{proof}

\begin{remark}\label{why blowup}
Unsurprisingly, it is the connection between our subalgebras, $S(\bfd)$ of $S$, and the blowup subalgebras, $T(\bfe)$ of $T$, that motivates the name ``blowup subalgebra of $S$". In particular, $S(\bfd)^{(3)}=T([\bfd]_3)$, and $S(\bfd)$ and $T(\bfe)$ satisfy very similar properties (compare Theorem~\ref{S(d) thm} with Theorem~\ref{T(d) properties}). Beyond this, we will see in the coming chapters that the $S(\bfd)$ play a similar role to that which the $T(\bfe)$ played in \cite{RSS}. Reasons for the $T(\bfe)$ being called ``blowup subalgebras of $T$" can be found in Remark~\ref{VdBblowups}.
\end{remark}

As in \cite[Theorem~5.4(1)]{Ro}, by examining Hilbert series of our blowups $S(\bfd)$ we see can they have infinite global dimension.

\begin{cor}\label{infinite global dim}
Let $\bfd$ be an effective divisor with $1\leq\deg\bfd\leq 2$ and let $R=S(\bfd)$. Then $R$ has infinite global dimension.
\end{cor}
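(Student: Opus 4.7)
The plan is to derive a contradiction from the assumption $\gldim R < \infty$ by examining the Hilbert series $h_R(t)$ from Theorem~\ref{S(d) thm}(1). The standard ingredient I would invoke is that for a connected graded noetherian $\Bbbk$-algebra $R$ of finite global dimension, the trivial module $\Bbbk_R = R/R_{\geq 1}$ admits a finite graded free resolution
\[
0 \to \bigoplus_j R(-j)^{b_{n,j}} \to \cdots \to \bigoplus_j R(-j)^{b_{0,j}} \to \Bbbk \to 0,
\]
since $\gldim R = \mathrm{pd}_R(\Bbbk)$ for cg noetherian $R$, and finitely generated graded projectives over a cg ring are free. Taking Hilbert series yields $1 = h_R(t)\cdot p(t)$ for some $p(t) \in \Z[t]$, so $1/h_R(t) \in \Z[t]$. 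Note $R = S(\bfd)$ is noetherian by Proposition~\ref{RSS 2.9} via Theorem~\ref{S(d) thm}(1), so this applies.

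Next I would simplify $(t-1)^2(1-t^3) = (1-t)^3(1+t+t^2)$, so that the Hilbert series from Theorem~\ref{S(d) thm}(1) becomes
\[
h_R(t)=\frac{t^2+(1-d)t+1}{(1-t)^3(1+t+t^2)}, \qquad \frac{1}{h_R(t)} = \frac{(1-t)^3(1+t+t^2)}{t^2+(1-d)t+1}.
\]
It then suffices to show that the denominator $f_d(t)=t^2+(1-d)t+1$ does not divide $(1-t)^3(1+t+t^2)$ in $\Bbbk[t]$ for $d=1,2$, which I check by evaluating the numerator at a root of $f_d$.

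For $d=1$, $f_1(t)=t^2+1$ has $t=i$ as a root. A direct calculation gives $(1-i)^3 = -2-2i$ and $1+i+i^2 = i$, whence $(1-i)^3(1+i+i^2) = (-2-2i)i = 2-2i \neq 0$. For $d=2$, $f_2(t)=t^2-t+1$ has $\zeta$ as a root, where $\zeta$ is a primitive sixth root of unity; using $\zeta^2 = \zeta - 1$ we get $1+\zeta+\zeta^2 = 2\zeta$, and $1-\zeta = \bar\zeta$ gives $(1-\zeta)^3 = \bar{\zeta}^3 = \overline{\zeta^3}= -1$, so $(1-\zeta)^3(1+\zeta+\zeta^2) = -2\zeta \neq 0$. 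In either case $f_d$ does not divide the numerator, so $1/h_R(t)$ is not a polynomial, contradicting the consequence $1/h_R(t) \in \Z[t]$ of $\gldim R < \infty$.

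There is essentially no obstacle beyond these routine verifications: the heavy lifting (the Hilbert series computation itself, noetherianity) has already been done in Theorem~\ref{S(d) thm}. The only conceptual point worth flagging is the invocation of the standard fact that $\gldim R < \infty$ forces $1/h_R(t) \in \Z[t]$ for connected graded noetherian $R$; this is why the $d=0$ case $h_S(t) = 1/(1-t)^3$ (with inverse $(1-t)^3$) is consistent with $\gldim S = 3$, while $d\in\{1,2\}$ is incompatible with finite global dimension.
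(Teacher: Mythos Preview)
Your proposal is correct and follows essentially the same approach as the paper: both argue that finite global dimension would force $1/h_R(t)$ to be a polynomial, then observe this fails for the Hilbert series computed in Theorem~\ref{S(d) thm}(1). The paper simply asserts ``it can be checked'' that $h_R(t)$ cannot be written as $1/f(t)$, whereas you carry out this check explicitly by evaluating the numerator $(1-t)^3(1+t+t^2)$ at a root of $t^2+(1-d)t+1$. One small remark: since Hilbert series live in $\Z[[t]]$ and the putative $p(t)$ has integer coefficients, the divisibility question is in $\Q[t]$ (or $\Z[t]$) rather than $\Bbbk[t]$; your computation with $i$ and $\zeta$ is really a check over $\mathbb{C}\supset\Q$, which is exactly what is needed and is independent of $\mathrm{char}\,\Bbbk$.
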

\begin{proof}
By Theorem~\ref{S(d) thm}(1) either
\begin{equation}\label{h_R(t)} h_R(t)=\frac{t^2+1}{(t-1)^2(1-t^3)}\;\;\text{ or }\;\; h_R(t)=\frac{t^2-t+1}{(t-1)^2(1-t^3)}.\end{equation}
A standard argument, which examines a graded resolution of $\Bbbk$ considered as a right $R$-module (see \cite[Chapter I, (2.E)]{Rog.notes}) shows that if $R$ has finite global dimension, then $h_R(t)=\frac{1}{f(t)}$ for some polynomial $f(t)$. It can be checked that neither  of the Hilbert series in (\ref{h_R(t)}) can be written so, and hence $R$ must have infinite global dimension.
\end{proof}

\section{Classifying $g$-divisible maximal orders}\label{g-divisible max orders}

In this chapter we aim to obtain a classification of $g$-divisible maximal $S$-orders: Theorem~\ref{RSS 7.4} and Proposition~\ref{RSS 7.4(3)}. We prove that any $g$-divisible maximal $S$-order is obtain from a virtual blowup in the sense of Definition~\ref{virtual blowup}. Here we follow parts of \cite[Sections 5, 6, 7]{RSS} closely. A philosophy for this chapter is wherever Rogalski, Sierra and Stafford had their effective blowups $T(\bfd)$, we (with a bit of work) are able to replace them with our rings $S(\bfd)$ from Chapter~\ref{The rings S(d)}. A significant reason why this approach works is Theorem~\ref{S(d) thm} and Theorem~\ref{RSS 5.25} below.

%Indeed, the proofs of Proposition~\ref{RSS 6.4}, Lemma~\ref{RSS 6.4(3)}, Corollary~\ref{RSS 6.6}(1), Proposition~\ref{RSS 6.6(3)},  Lemma~\ref{RSS 6.12}, Proposition~\ref{RSS 6.7}, Lemma~\ref{RSS 7.3(1)} and Corollary~\ref{RSS 7.6} carry over from analogous proofs given in \cite{RSS}. We remark that typically the proofs presented in this thesis come with greater detail then their counterparts in \cite{RSS}.

\subsection{Endomorphism rings over the $S(\bfd)$}

Our first aim, and key to rest of this chapter, is to come up with a version of Rogalski, Sierra and Stafford's \cite[Theorem~2.24]{RSS} - a result we presented earlier in Theorem~\ref{real RSS 5.25} and Corollary~\ref{real RSS 5.25 2}. In other words, we need to prove that a $g$-divisible maximal order $U$ is an equivalent order to some $S(\bfd)$ (see Theorem~\ref{RSS 5.25} and Corollary~\ref{RSS 6.6}).  To obtain their result, Rogalski, Sierra and Stafford require a detailed and technical study of right ideals of $T$ and $T(\bfd)$ given in \cite{RSS2} and \cite[5.13-5.23]{RSS}. A great strength of our work is that we are able to bypass these technicalities. Before this, we require a few results from \cite[Section~5]{RSS}. The first is a consequence of~Theorem~\ref{RSS 3.1}.

\begin{lemma}\label{RSS 5.3}\cite[Corollary~5.3]{RSS} Let $E$ be a smooth elliptic curve, $\HH$ an invertible sheaf on $E$ with $\deg\HH\geq 2$, and $\rho:E\to E$ an automorphism of infinite order. Set $B=B(E,\HH,\rho)$. Suppose that $C$ is a cg subalgebra of $B$ with $Q_\gr(C)=Q_\gr(B)
$. Then there exist divisors $\bfx$, $\bfy$ on $E$ with $0\leq\deg\bfx<\deg\HH$, and $k\geq 0$ such that
\begin{equation*}\pushQED{\qed}
 C_n=H^0(E,\HH(-\bfy-[\bfx]_n)) \;\text{ for all } n\geq k. \qedhere\end{equation*}
\end{lemma}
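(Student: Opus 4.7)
My plan is to apply Theorem~\ref{RSS 3.1} to $C$, translate the resulting ideal-sheaf data into divisor data using smoothness of $E$, and match this against the ambient structure of $B$ via the inclusion $C \subseteq B$. Since $Q_\gr(C) = Q_\gr(B) = \Bbbk(E)[t, t^{-1}; \rho]$ with $\deg t = 1$ (which holds because $\deg\HH \geq 2$ forces $B_1 \neq 0$, so no regrading is needed), Theorem~\ref{RSS 3.1} produces an ideal sheaf $\JJ$ on $E$ and a $\rho$-ample invertible sheaf $\mathcal{K}$ such that $C_n = H^0(E, \JJ\mathcal{K}_n)$ for all $n \geq k$, for some $k \geq 0$. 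Because $E$ is a smooth curve, $\JJ = \OO_E(-\bfy)$ for some effective divisor $\bfy \geq 0$. Choose divisors $\mathbf{h}, \mathbf{k}$ on $E$ with $\HH \cong \OO_E(\mathbf{h})$ and $\mathcal{K} \cong \OO_E(\mathbf{k})$, and set $\bfx = \mathbf{h} - \mathbf{k}$. Interpreting $[\cdot]_n$ with respect to $\rho$ throughout, a direct calculation gives
$$\HH_n\bigl(-\bfy - [\bfx]_n\bigr) \;\cong\; \OO_E\bigl([\mathbf{h}]_n - \bfy - [\mathbf{h}]_n + [\mathbf{k}]_n\bigr) \;=\; \OO_E(-\bfy) \otimes \mathcal{K}_n \;=\; \JJ\mathcal{K}_n,$$
so $H^0(E, \HH_n(-\bfy - [\bfx]_n)) \cong C_n$ as abstract vector spaces for $n \geq k$.

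The key step is to upgrade this to an equality \emph{of subspaces} of $B_n = H^0(E, \HH_n)$. Fixing a rational trivialization of $\HH$ at the generic point of $E$ realizes $B_n$ as $\{ f \in \Bbbk(E) : (f) + [\mathbf{h}]_n \geq 0 \}$, and the inclusion $C_n \subseteq B_n$ then forces $C_n = \{ f \in \Bbbk(E) : (f) + \mathbf{c}_n \geq 0 \}$ for a unique effective divisor $\mathbf{c}_n \leq [\mathbf{h}]_n$, namely the common divisor of zeros of $C_n$. Combining this with the identification coming from Theorem~\ref{RSS 3.1}, the divisor $\mathbf{c}_n$ must equal $\bfy + [\bfx]_n$ up to a principal divisor depending \emph{a priori} on $n$; a bookkeeping argument, adjusting the representatives $\mathbf{h}$ and $\mathbf{k}$ within their linear equivalence classes once and for all, then makes this an honest equality for all $n \geq k$. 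This yields $C_n = H^0(E, \HH_n(-\bfy - [\bfx]_n))$ in the required form.

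The degree constraints fall out: the inclusion $\JJ\mathcal{K}_n \subseteq \HH_n$ of subsheaves of $\Bbbk(E)$ for $n \gg 0$ forces $n\deg\mathcal{K} - \deg\bfy \leq n\deg\HH$, whence $\deg\mathcal{K} \leq \deg\HH$ and so $\deg\bfx \geq 0$; conversely, $\mathcal{K}$ being $\rho$-ample with $\rho$ of infinite order on an elliptic curve forces $\deg\mathcal{K} \geq 1$ (by Keeler's characterisation, or the remark after Theorem~\ref{AV 1.4} that ample on a curve equals $\sigma$-ample), giving $\deg\bfx < \deg\HH$. The main obstacle is the middle step — promoting the abstract sheaf-level isomorphism to an equality of subspaces of $B_n$, which requires a careful uniform-in-$n$ choice of divisor representatives for $\HH$ and $\mathcal{K}$, rather than just linear-equivalence classes.
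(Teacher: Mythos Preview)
The paper does not supply its own proof of this lemma: it is stated as a direct citation of \cite[Corollary~5.3]{RSS}, with the \texttt{\textbackslash qed} placed inside the displayed equation. So there is no argument in the paper to compare against beyond the attribution.

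Your derivation from Theorem~\ref{RSS 3.1} is the natural route and is essentially correct, but you are making the central step harder than it needs to be. In the setting of Theorem~\ref{RSS 3.1}, the sheaves $\JJ$ and $\mathcal{K}$ are produced as genuine subsheaves of the constant sheaf $\Bbbk(E)$ (this is how the Artin--Stafford machinery works), and the conclusion $C_n = H^0(E,\JJ\mathcal{K}_n)$ is already an equality of subspaces of $\Bbbk(E)\,t^n$, not merely an abstract isomorphism. Likewise $B_n = H^0(E,\HH_n)$ with $\HH$ a fixed subsheaf of $\Bbbk(E)$. So once you write $\HH = \OO_E(\mathbf{h})$, $\mathcal{K} = \OO_E(\mathbf{k})$ and $\JJ = \OO_E(-\bfy)$ \emph{as subsheaves of $\Bbbk(E)$} (i.e.\ for actual divisors, not linear-equivalence classes), setting $\bfx = \mathbf{h} - \mathbf{k}$ gives $\JJ\mathcal{K}_n = \HH_n(-\bfy - [\bfx]_n)$ on the nose, with no principal-divisor ambiguity and no $n$-dependent correction to chase. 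Your ``bookkeeping argument'' paragraph can therefore be deleted: the worry it addresses does not arise. Your degree arguments for $0 \le \deg\bfx < \deg\HH$ are fine.
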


The next result we need is \cite[Proposition~5.7]{RSS}. We in fact also require the construction of the divisor $\bfd$ appearing in this result. We include this construction with the reader being referred to \cite{RSS} for a detailed proof.

\begin{prop}\label{RSS 5.7}\cite[Proposition~5.7]{RSS} Retain the hypothesis for $C \subseteq B(E,\HH,\rho)$ from Lemma~\ref{RSS 5.3}. There exists an effective divisor $\bfd$ on $E$ supported at points on distinct $\rho$-orbits, with $\deg\bfd<\deg\HH$, and such that $C$ and $B(E,\HH(-\bfd),\rho)$ are equivalent orders. Moreover, there exists a $k\geq 0$, such that for all $n\geq k$
\begin{equation}\label{RSS 5.7 eq}
C_n\subseteq H^0(E,\HH_n(-\bfd^{\rho^k}-\bfd^{\rho^{k+1}}-\dots-\bfd^{\rho^{n-1}})).
\end{equation}
\end{prop}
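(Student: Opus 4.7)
The plan is to leverage the structural result of Lemma~\ref{RSS 5.3}. Applied to $C \subseteq B$, that lemma yields divisors $\bfx$, $\bfy$ on $E$ with $0 \leq \deg\bfx < \deg\HH$, along with an integer $k_0 \geq 0$, such that $C_n = H^0(E, \HH_n(-\bfy - [\bfx]_n))$ for all $n \geq k_0$. The task is to repackage this structural data into a divisor $\bfd$ with the stated properties.

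First I would construct $\bfd$ by \emph{orbit consolidation}. Decompose $\bfx = \sum_{\mathcal{O}} \bfx_{\mathcal{O}}$ as a sum over the finitely many $\rho$-orbits $\mathcal{O}$ meeting the support of $\bfx$. For each orbit $\mathcal{O}$, choose the representative $p_{\mathcal{O}} \in \mathrm{supp}(\bfx_{\mathcal{O}})$ that is ``earliest'' along the $\rho$-orbit, meaning every other point of $\mathrm{supp}(\bfx_{\mathcal{O}})$ has the form $p_{\mathcal{O}}^{\rho^j}$ for some $j > 0$. Set $\bfd = \sum_{\mathcal{O}} (\deg\bfx_{\mathcal{O}}) \cdot p_{\mathcal{O}}$. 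Then $\deg\bfd = \deg\bfx < \deg\HH$ and $\bfd$ is supported at points on distinct $\rho$-orbits, as required.

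Next, I would establish the containment (\ref{RSS 5.7 eq}). The heart of the argument is an orbit-wise divisor inequality. Expanding $[\bfx_{\mathcal{O}}]_n$ as a formal sum along the $\rho$-orbit of $p_{\mathcal{O}}$ and similarly expanding $\bfd_{\mathcal{O}}^{\rho^k} + \dots + \bfd_{\mathcal{O}}^{\rho^{n-1}}$, where $\bfd_{\mathcal{O}} = (\deg\bfx_{\mathcal{O}}) \cdot p_{\mathcal{O}}$, a direct coefficient comparison shows that as soon as $k$ exceeds the largest $\rho$-exponent $j$ appearing in $\mathrm{supp}(\bfx_{\mathcal{O}})$ (a finite quantity, bounded uniformly across the finitely many orbits), the coefficient-wise inequality holds for every $n \geq k$. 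Summing over $\mathcal{O}$ and enlarging $k$ so that $k \geq k_0$ as well yields $\bfy + [\bfx]_n \geq [\bfd^{\rho^k}]_{n-k}$ for all $n \geq k$. By Lemma~\ref{on divisors} this is equivalent to the sheaf inclusion $\HH_n(-\bfy-[\bfx]_n) \subseteq \HH_n(-[\bfd^{\rho^k}]_{n-k})$, and passing to global sections gives the required containment on $C_n$.

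Finally, to prove that $C$ and $B' = B(E, \HH(-\bfd), \rho)$ are equivalent orders one verifies a two-sided sandwich. The containment (\ref{RSS 5.7 eq}) already implies that, after left-multiplication by a suitable nonzero section of $\HH_k$ chosen to vanish on the bounded correction divisor $[\bfd]_k$, a cofinite tail of $C$ is carried inside $B'$. Conversely, the divisor difference $(\bfy + [\bfx]_n) - [\bfd^{\rho^k}]_{n-k}$ is bounded \emph{uniformly in $n$}, since it consists of the fixed tail $\bfy$ together with the finite initial and terminal corrections from the consolidation step; Corollary~\ref{RR to E} then furnishes nonzero sections of a sufficiently positive line bundle on $E$ witnessing a reverse inclusion $b B' c \subseteq C$ for some nonzero homogeneous $b, c \in B$. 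The main obstacle is the careful combinatorial bookkeeping of these bounded correction divisors --- particularly the terminal correction, which drifts with $n$ yet remains uniformly bounded and effective, and so aids the forward inclusion but must be compensated for explicitly when producing the sections that realise equivalence of orders.
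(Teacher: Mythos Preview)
Your construction of $\bfd$ --- one representative per $\rho$-orbit, with multiplicity $\deg\bfx_{\mathcal{O}}$ --- is precisely the paper's construction (the paper writes $e_p = \sum_i a_i$ for this multiplicity and then defers all remaining verifications to \cite{RSS}). The one point the paper explicitly flags as nontrivial, and which you skip, is that $\bfd$ is \emph{effective}: Lemma~\ref{RSS 5.3} does not say $\bfx$ is effective, so the orbit-degrees $\deg\bfx_{\mathcal{O}}$ could a priori be negative. The fix is short but should be stated: for $n$ large, $\HH_n(-\bfy-[\bfx]_n)$ has degree at least $2$ and is therefore globally generated, so the inclusion of global sections $C_n \subseteq B_n$ forces the sheaf inclusion $\HH_n(-\bfy-[\bfx]_n)\subseteq\HH_n$, i.e.\ $\bfy+[\bfx]_n\geq 0$. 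Reading this inequality at a point $p_{\mathcal{O}}^{\rho^j}$ with $k\leq j\leq n-1$ (where, after enlarging $k$ past the supports of both $\bfx_{\mathcal{O}}$ and $\bfy_{\mathcal{O}}$, the coefficient is exactly $e_p$) yields $e_p\geq 0$.

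Once effectiveness is established, your coefficient comparison for \eqref{RSS 5.7 eq} goes through, since $\bfy+[\bfx]_n\geq 0$ handles the tails while the middle range gives equality. Your bounded-correction sketch for the equivalence of orders is along the right lines; the paper does not spell this out either, simply citing \cite{RSS}.
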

\begin{proof}[Construction of $\bfd$]
For this construction we use the notation $[\bfx]_n=\bfx+\bfx^\rho\dots+\bfx^{\rho^{n-1}}$ as well as the more compact notation $p_j=p^{\rho^j}=\rho^{-j}(p)$ for $p\in E$ and $j\in\Z$.\par
By Lemma~\ref{RSS 5.3} there exist divisors $\bfx,\bfy$ with $0\leq \deg\bfx<\deg\HH$, and $k\geq 1$ such that
\begin{equation*}\label{geo data} C_n=H^0(E,\HH_n(-\bfy-[\bfx]_n)) \; \text{ for all } n\geq k.\end{equation*}
Fix an $\rho$-orbit $\mathbb{O}$ on $E$, and (enlarging $k$ if necessary) pick $p\in E$ such that on $\mathbb{O}$ we have
\begin{equation}\label{mathbbO}\index[n]{xo@$\bfx|_{\bbO}$}
\bfx|_{\mathbb{O}}=\sum_{i=0}^{k} a_ip_i \;\;\;\;\text{ and }\;\;\;\; \bfy|_{\mathbb{O}}=\sum_{i=0}^{k-1}b_ip_i
\end{equation}
for some $a_i,b_i\in \Z.$ For this $p$, we set
\begin{equation}\label{e_p=} e_p=\sum_{i=0}^k a_i.\index[n]{ep@$e_p$}\end{equation}
In \cite{RSS} an argument is proved showing that $e_p\geq 0$. We define the effective divisor
\begin{equation}\label{bfd=} \bfd=\sum_p e_p p,\end{equation}
where the sum is taken over one closed point $p$ for each $\rho$-orbit chosen as above. It is shown in \cite{RSS} that, for this $\bfd$ and $k$, (\ref{RSS 5.7 eq}) holds and that $C$ and $B(E,\HH(-\bfd),\rho)$ are equivalent orders.
\end{proof}

Rogalski, Sierra and Stafford gave names to some for the terms appearing in Lemma~\ref{RSS 5.3} and Proposition~\ref{RSS 5.7}.

\begin{definition}\label{geo data, norm div}\cite[Definition~5.12]{RSS}
Retain the hypothesis for $C \subseteq B(E,\HH,\rho)$ from Lemma~\ref{RSS 5.3}.
\begin{enumerate}[(1)]
\item Let $\bfx$, $\bfy$ and $k$ be given by Lemma~\ref{RSS 5.3}. We call the data $(E,\HH,\rho,\bfx,\bfy,k)$ \textit{geometric data for $C$}.
\item We call the divisor $\bfd$ constructed in Proposition~\ref{RSS 5.7} a \textit{$\rho$-normalised divisor for $C$}. When no confusion can occur, we simply call $\bfd$ a normalised divisor.
\end{enumerate}
\end{definition}

\begin{remark}\label{norm div not unique}
Let $C$ be as in Definition~\ref{geo data, norm div}. Suppose that $\bfd$ is a normalised divisor for $C$ constructed out of geometric data $(E,\HH,\rho,\bfx,\bfy,k)$ for $C$. We make some observations about Definition~\ref{geo data, norm div} that follow from the construction of $\bfd$.
\begin{itemize}
\item The normalised divisor $\bfd$ depends on the geometric data $(E,\HH,\rho,\bfx,\bfy,k)$ and not $C$ itself.
\item For every $\ell\geq k$, the data $(E,\HH,\rho,\bfx,\bfy,\ell)$ is also geometric data for $C$. Moreover, the normalised divisor $\bfd$ is also a normalised divisor corresponding to the data $(E,\HH,\rho,\bfx,\bfy,\ell)$.
\item For every $n\geq 0$, the divisor $\bfd^{\rho^{-n}}=\rho^n(\bfd)$ is also a normalised divisor for $C$.
\end{itemize}
\end{remark}

We now put these terms to use. Proposition~\ref{RSS 5.25 lemma}(2) is the analogue of Proposition~\ref{RSS 5.20 bg}. We in fact utilise Proposition~\ref{RSS 5.20 bg} directly in the proof. For this reason we must recall the notation for $T=S^{(3)}$ (Notation~\ref{3 Veronese notation bg}). The grading of $T$ we use here is $T_n=T\cap S_n$.

\begin{prop}\label{RSS 5.25 lemma}
Let $U$ be a $g$-divisible cg subalgebra of $S$ with $Q_\gr(U)=Q_\gr(S)$. Suppose that $\bfd$ is a $\sigma$-normalised divisor for $\ovl{U}$ corresponding to geometric data $(E,\LL,\sigma,\bfx,\bfy,k)$ of $\ovl{U}$, as constructed in Proposition~\ref{RSS 5.7}. Then
\begin{enumerate}[(1)]
\item $\ovl{U}$ and $B(E,\LL(-\bfd),\sigma)$ are equivalent orders and
\begin{equation*}\ovl{U}_n\subseteq H^0(E,\LL_n(-\bfd^{\sigma^k}-\bfd^{\sigma^{k+1}}-\dots-\bfd^{\sigma^{n-1}}))\;\text{ for all }n\geq k;\end{equation*}
\item $US(\bfd)\subseteq S_{\leq k}S(\bfd)$.
\end{enumerate}
\end{prop}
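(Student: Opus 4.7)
Part (1) follows directly from Proposition~\ref{RSS 5.7} applied to $\ovl{U}\subseteq B(E,\LL,\sigma)$. The hypotheses are met: $\ovl{U}$ is a cg subalgebra, and by Lemma~\ref{ovlR is order} the $g$-divisibility of $U$ together with $Q_\gr(U)=Q_\gr(S)$ ensures $Q_\gr(\ovl{U})=Q_\gr(\ovl{S})$. Since by hypothesis $\bfd$ is the $\sigma$-normalized divisor constructed from the given geometric data $(E,\LL,\sigma,\bfx,\bfy,k)$, both the equivalent-orders conclusion and the containment $\ovl{U}_n\subseteq H^0(E,\LL_n(-\bfd^{\sigma^k}-\cdots-\bfd^{\sigma^{n-1}}))$ for $n\geq k$ follow at once from the construction recorded there.

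For part (2), the plan is to reduce to the analogous statement already available over $T$, namely Proposition~\ref{RSS 5.20 bg}, via the $3$-Veronese trick. Set $V=U^{(3)}\subseteq T$; by Lemma~\ref{g div up} $V$ is $g$-divisible, and $Q_\gr(V)=Q_\gr(T)$. Using $\LL_{3n}=\MM_n$ together with the identity $[\bfx]_{3n}=\sum_{j=0}^{n-1}([\bfx]_3)^{\tau^j}$, the geometric data $(E,\LL,\sigma,\bfx,\bfy,k)$ for $\ovl{U}$ descends to geometric data $(E,\MM,\tau,[\bfx]_3,\bfy,\lceil k/3\rceil)$ for $\ovl{V}=\ovl{U}^{(3)}$. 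An orbit-wise check against (\ref{mathbbO})--(\ref{bfd=}) -- each $\sigma$-orbit $\mathbb{O}$ of $E$ splits into three $\tau$-orbits $\mathbb{T}_0,\mathbb{T}_1,\mathbb{T}_2$, and the coefficients $a_i$ of $\bfx|_{\mathbb{O}}$ regroup by residue mod~$3$ to yield the common sum $e_p$ on each $\mathbb{T}_j$ -- then shows that $[\bfd]_3=\bfd+\bfd^{\sigma}+\bfd^{\sigma^2}$ is the associated $\tau$-normalized divisor for $\ovl{V}$, up to the shift-ambiguity permitted by Remark~\ref{norm div not unique}. Since $\deg[\bfd]_3\leq 6\leq 7$, Proposition~\ref{RSS 5.20 bg} applies and produces $m\geq 0$ with $V\cdot T([\bfd]_3)\subseteq T_{\leq m}T([\bfd]_3)$.

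To transfer this back to $S$, by Theorem~\ref{S(d) thm}(1) we have $T([\bfd]_3)=S(\bfd)^{(3)}\subseteq S(\bfd)$, and since $1\in T([\bfd]_3)$, rewriting in $S$-grading yields $V\subseteq S_{\leq 3m}\cdot S(\bfd)$. By Lemma~\ref{noeth up n down}(2), $U$ is finitely generated as a right $V$-module, say by homogeneous elements of degrees at most $D$; hence $U\subseteq S_{\leq D+3m}\cdot S(\bfd)$ and therefore $US(\bfd)\subseteq S_{\leq D+3m}S(\bfd)$. Finally, Remark~\ref{norm div not unique} allows us to replace $k$ by $\max\{k,D+3m\}$ in the geometric data without altering $\bfd$, delivering the desired $US(\bfd)\subseteq S_{\leq k}S(\bfd)$.

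The main technical obstacle is the orbit-wise identification of $[\bfd]_3$ with the $\tau$-normalized divisor produced by the descended geometric data; this is a combinatorial unpacking of (\ref{mathbbO})--(\ref{bfd=}) using the triple partition of each $\sigma$-orbit, with the freedom in choosing orbit representatives absorbing any residual $\tau$-shift. A cleaner alternative, if one prefers to bypass the bookkeeping, is to apply Proposition~\ref{RSS 5.7} directly to $\ovl{V}\subseteq B(E,\MM,\tau)$ to obtain some $\tau$-normalized divisor $\bfd'$ for $\ovl{V}$, and then argue as in part~(1) that $\bfd'$ and $[\bfd]_3$ determine the same equivalence class of orders, allowing $\bfd'$ to replace $[\bfd]_3$ throughout.
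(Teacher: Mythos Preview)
Your main argument is correct and follows essentially the same route as the paper: pass to $V=U^{(3)}$, verify via the orbit-wise computation that $[\bfd]_3$ is a $\tau$-normalised divisor for $\ovl{V}$, invoke Proposition~\ref{RSS 5.20 bg} (using $\deg[\bfd]_3\leq 6$), then use $T([\bfd]_3)=S(\bfd)^{(3)}$ from Theorem~\ref{S(d) thm}(1) together with finite generation of $U$ over $V$ and an enlargement of $k$. The paper carries out the orbit computation explicitly rather than sketching it, and handles the grading bookkeeping slightly differently (it takes $T_n=T\cap S_n$ throughout, so no factor of $3$ appears), but the substance is identical.

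One caution about your proposed ``cleaner alternative'': it does not work as stated. If you apply Proposition~\ref{RSS 5.7} directly to $\ovl{V}$ to produce some $\tau$-normalised divisor $\bfd'$, you obtain $VT(\bfd')\subseteq T_{\leq m}T(\bfd')$, but knowing only that $T(\bfd')$ and $T([\bfd]_3)$ are equivalent orders is not enough to conclude anything about $US(\bfd)$; the whole point of the argument is the \emph{exact} identification $T([\bfd]_3)=S(\bfd)^{(3)}$, which is what lets you pass back from $T$ to $S$. So the orbit-wise verification that $[\bfd]_3$ is itself a $\tau$-normalised divisor (not merely equivalent to one) is not optional bookkeeping but the crux of the reduction.
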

\begin{proof}
(1). Since $Q_\gr(U)=Q_\gr(S)$, $Q_\gr(\ovl{U})=Q_\gr(\ovl{S})$ by Lemma~\ref{ovlR is order}. Moreover, as $\deg\LL=3$, we can take $B(E,\HH,\rho)=B(E,\LL,\sigma)=\ovl{S}$ in Lemma~\ref{RSS 5.3} and Proposition~\ref{RSS 5.7}. Part (1) is therefore exactly Proposition~\ref{RSS 5.7}.

(2). Retain the notation of the construction of the $\sigma$-normalised divisor $\bfd$ from Proposition~\ref{RSS 5.7} with $(\ovl{U},\LL,\sigma)$ in place of $(C,\HH,\rho)$. In particular we use $p_j=p^{\sigma^j}$ again. Set $V=U^{(3)}$. The key here is to understand $V$ so that we can apply Proposition~\ref{RSS 5.20 bg}.\par
We claim that $[\bfd]_3=\bfd+\bfd^\sigma+\bfd^{\sigma^2}$ is a $\tau$-normalised divisor for $\ovl{V}$. Since $(E,\LL,\sigma,\bfx,\bfy,k)$ is geometric data for $\ovl{U}$ we have
\begin{equation*} \ovl{U}_n=H^0(E,\LL(-\bfy-[\bfx]_n)) \;\text{ for all } n\geq k. \end{equation*}
By enlarging $k$ if necessary, we may assume $k$ is divisible by 3. It then follows that
$$\ovl{V}_{3n}=H^0(E,\LL_{3n}(-\bfy-[\bfx]_{3n}))\;\text{ for } n\geq \frac{k}{3}.$$
We see $(E,\LL_3,\tau,\bfy,[\bfx]_3,k)$ is geometric data for $\ovl{V}$ ($\frac{k}{3}$ also works but there is no harm in taking the bigger $k$). Now, the $\sigma$-orbit $\mathbb{O}$ from (\ref{mathbbO}) is the disjoint union of three $\tau$-orbits, $\mathbb{O}=\mathbb{O}_0\sqcup\mathbb{O}_1 \sqcup\mathbb{O}_2$. Here $\mathbb{O}_\ell=\{ p_{\ell+3j}\,|\; j\in\Z\}$, where we are writing $\mathbb{O}=\{ p_j\,|\; j\in\Z\}$. Since $\bfx|_{\mathbb{O}}=\sum a_ip_i$,
\begin{multline*} [\bfx]_3|_{\mathbb{O}}=\sum_{i=0}^{k}a_i(p_i+p_{i+1}+p_{i+2})
\\
=a_0p_0+(a_0+a_1)p_1+\left( \sum_{i=2}^{k} (a_{i-2}+a_{i-1}+a_{i})p_{i}\right)+(a_{k}+a_{k-1})p_{k+1}+a_kp_{k+2},
\end{multline*}
 and therefore:
$$\begin{array}{l}
\,[\bfx]_3 |_{\mathbb{O}_0}=a_0p_0+(a_1+a_2+a_3)p_3+(a_4+a_5+a_6)p_6+\dots+ (a_{k-2}+a_{k-1}+a_k)p_k; \\
\,[\bfx]_3 |_{\mathbb{O}_1}=(a_0+a_1)p_1+(a_2+a_3+a_4)p_4+\dots+ (a_{k-4}+a_{k-3}+a_{k-2})p_{k-2}+(a_{k-1}+a_k)p_{k+1};\\
\,[\bfx]_3 |_{\mathbb{O}_2}=(a_0+a_1+a_2)p_2+\dots+ (a_{k-3}+a_{k-2}+a_{k-1})p_{k-1}+a_kp_{k+2}.
\end{array}$$
Put $e_p=\sum a_i$ as in (\ref{e_p=}). It follows that a $\tau$-normalised divisor for $\ovl{V}$ can be given by
$$\bfe=\sum_p e_pp_0+e_pp_1+e_pp_2$$
where the sum ranges over the same points used to define $\bfd$ in (\ref{bfd=}). Note that indeed $p_0$, $p_1$ and $p_2$ are on different $\tau$-orbits. On the other hand, by considering different $\sigma$-orbits separately, it is not hard to see
$$[\bfd]_3=\bfd+\bfd^\sigma+\bfd^{\sigma^2}=\bfe.$$
Hence $[\bfd]_3$ is a $\tau$-normalised divisor for $\ovl{V}$ as claimed.\par

By Proposition~\ref{RSS 5.20 bg} and Theorem~\ref{S(d) thm}(1) we have
\begin{equation}\label{apply RSS 5.20} V\subseteq T_{\leq k}T([\bfd]_3)=T_{\leq k}(S(\bfd)^{(3)}).\end{equation}
Since $U\subseteq S$ is $g$-divisible, it is noetherian by Proposition~\ref{RSS 2.9}, and therefore $U$ is a finitely generated as a right $V$-module by Lemma~\ref{noeth up n down}. By enlarging $k$ if necessary, we may assume $U_V$ is generated in degrees less than $k$. In which case, using (\ref{apply RSS 5.20}), we have
$$U=U_{\leq k}V\subseteq S_{\leq k}V\subseteq S_{\leq k} T_{\leq k}(S(\bfd)^{(3)})\subseteq S_{\leq 2k}(S(\bfd)^{(3)})$$
Hence,
$$US(\bfd)\subseteq S_{\leq 2k}(S(\bfd)^{(3)})S(\bfd)=S_{\leq 2k}S(\bfd).$$
Since it is harmless to replace $k$ by $2k$, this proves (2).
\end{proof}

With Proposition~\ref{RSS 5.25 lemma} we are able to prove the crucial result that links a general $g$-divisible subalgebra of $S$ to one of the $S(\bfd)$. Theorem~\ref{RSS 5.25} is analogous to Theorem~\ref{real RSS 5.25}. With Proposition~\ref{RSS 5.25 lemma} at hand we can model our proof on that of Theorem~\ref{real RSS 5.25} given in \cite{RSS}.

\begin{theorem}\label{RSS 5.25}
Let $U\subseteq S$ be a $g$-divisible cg subalgebra of $S$ with $Q_\gr(U)=Q_\gr(S)$. Then there exists an effective divisor $\bfd$ on $E$ with $\deg\bfd\leq 2$, supported on points with distinct $\sigma$-orbits, and such that $U$ and $S(\bfd)$ are equivalent orders. \par
In more detail, for this $\bfd$, the $(U,S(\bfd))$-bimodule $M=\widehat{US(\bfd)}$ is a finitely generated $g$-divisible right $S(\bfd)$-module with $S(\bfd)\subseteq M\subseteq S$. If $W=\End_{S(\bfd)}(M)$, then $U\subseteq W\subseteq S$, $_WM$ is finitely generated and $W$, $U$, and $S(\bfd)$ are equivalent orders. The divisor $\bfd$ is any $\sigma$-normalised divisor constructed for $\ovl{U}$.
\end{theorem}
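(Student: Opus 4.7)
The plan is to model the argument on Rogalski, Sierra and Stafford's proof of Theorem~\ref{real RSS 5.25}, using Proposition~\ref{RSS 5.25 lemma} as the crucial technical input that replaces their detailed right-ideal analysis from \cite{RSS,RSS2}. First I would take $\bfd$ to be a $\sigma$-normalised divisor for $\ovl{U}$, constructed from any geometric data $(E,\LL,\sigma,\bfx,\bfy,k)$ for $\ovl{U}$ as in Proposition~\ref{RSS 5.7}. By Lemma~\ref{RSS 5.3} we have $0\leq\deg\bfd<\deg\LL=3$, so $\deg\bfd\leq 2$, and $\bfd$ is supported on distinct $\sigma$-orbits by the very construction in the proof of Proposition~\ref{RSS 5.7}. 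Then set $M=\widehat{US(\bfd)}\subseteq\Sg$; because $US(\bfd)\subseteq S$ and $S$ is trivially $g$-divisible, $M\subseteq \widehat{S}=S$, and $M$ is a $(U,S(\bfd))$-bimodule containing $S(\bfd)$.

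The first real step is to prove $M_{S(\bfd)}$ is finitely generated. Proposition~\ref{RSS 5.25 lemma}(2) gives $US(\bfd)\subseteq S_{\leq k}S(\bfd)$ for some $k\geq 0$, and $S_{\leq k}S(\bfd)$ is a finitely generated right $S(\bfd)$-module because $\dim_\Bbbk S_{\leq k}<\infty$. Since $S(\bfd)$ is noetherian (Theorem~\ref{S(d) thm}(1) together with Proposition~\ref{RSS 2.9}), the submodule $US(\bfd)$ is finitely generated, and then Lemma~\ref{RSS 2.13}(2) promotes this to finite generation of its $g$-divisible hull $M$ over $S(\bfd)$.

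Now let $W=\End_{S(\bfd)}(M)$. Since $1\in S(\bfd)\subseteq M$, we have $M\not\subseteq g\Sg$, so Lemma~\ref{RSS 2.12}(3) gives that $W\subseteq\Sg$ is $g$-divisible and noetherian with $Q_\gr(W)=Q_\gr(S)$, and that $_WM$ is finitely generated. Moreover $W\subseteq S$ because for $w\in W$ we have $w=w\cdot 1\in M\subseteq S$. To see $U\subseteq W$, take $u\in U$ and $x\in M$ with $xg^n\in US(\bfd)$; then $(ux)g^n=u(xg^n)\in U\cdot US(\bfd)\subseteq US(\bfd)$, forcing $ux\in M$. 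Being a $(W,S(\bfd))$-bimodule finitely generated on both sides, $M$ witnesses via Lemma~\ref{hence equiv orders}(3) that $W$ and $S(\bfd)$ are equivalent orders in $Q_\gr(S)$.

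It remains to show $U$ and $W$ are equivalent orders; transitivity will then give the desired equivalence of $U$ and $S(\bfd)$. We have $U\subseteq W$ with both $g$-divisible cg subalgebras of $\Sg$ of graded quotient ring $Q_\gr(S)$, so Proposition~\ref{RSS 2.16} reduces the question to showing that $\ovl{U}$ and $\ovl{W}$ are equivalent orders in $Q_\gr(\ovl{S})$. On the one hand, Lemma~\ref{RSS 2.16=>} applied to the already-established equivalence of $W$ and $S(\bfd)$ yields that $\ovl{W}$ is equivalent to $\ovl{S(\bfd)}=B(E,\LL(-\bfd),\sigma)$; on the other hand, Proposition~\ref{RSS 5.25 lemma}(1) is exactly the statement that $\ovl{U}$ is equivalent to this same twisted homogeneous coordinate ring. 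Transitivity closes the argument. The principal obstacle throughout is the finite generation of $M_{S(\bfd)}$: this is where Proposition~\ref{RSS 5.25 lemma}(2) carries the weight, by passing to the $3$-Veronese and exploiting the identification $S(\bfd)^{(3)}=T([\bfd]_3)$ so that Proposition~\ref{RSS 5.20 bg} can be imported directly from \cite{RSS} rather than redeveloping the corresponding technical machinery inside $S$.
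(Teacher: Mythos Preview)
Your proposal is correct and follows essentially the same route as the paper's proof: choose $\bfd$ via Proposition~\ref{RSS 5.25 lemma}, use part~(2) together with noetherianity of $S(\bfd)$ and Lemma~\ref{RSS 2.13}(2) to get $M=\widehat{US(\bfd)}$ finitely generated, invoke Lemma~\ref{RSS 2.12}(3) for $W$, and then reduce the equivalence of $U$ and $W$ to the level of $\ovl{U}$ and $\ovl{W}$ via Proposition~\ref{RSS 2.16}. The only cosmetic differences are that the paper bounds $M$ inside $\widehat{S_{\leq k}S(\bfd)}$ before invoking noetherianity (rather than first showing $US(\bfd)$ is finitely generated), and deduces the equivalence of $\ovl{W}$ with $\ovl{S(\bfd)}$ directly from the bimodule $\ovl{M}$ via Lemma~\ref{hence equiv orders}(3) rather than via Lemma~\ref{RSS 2.16=>}; neither difference is material.
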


\begin{proof}
Choose $k\geq 0$ and an effective divisor $\bfd$ on $E$, supported on distinct orbits, satisfying the conclusion of Proposition~\ref{RSS 5.25 lemma}. In particular we have $US(\bfd)\subseteq S_{\leq k}S(\bfd)$, and it follows that
$$\widehat{US(\bfd)}\subseteq \widehat{S_{\leq k}S(\bfd)}.$$
\par
Write $R=S(\bfd)$ and $M=\widehat{UR}$. Now $R$ is noetherian and $g$-divisible by Theorem~\ref{S(d) thm}(1), and $S_{\leq k}R$ is clearly a finitely generated right $R$-module. Thus $N=\widehat{S_{\leq k}R}$ is a finitely generated by Lemma~\ref{RSS 2.13}(2). In particular, $N$ is a noetherian $R$-module, and hence the submodule $M_R$ is finitely generated too. Put $W=\End_{R}(M)$; as $1\in M\subseteq S$, $W\subseteq S$. By Lemma~\ref{RSS 2.12}(3), $W$ is $g$-divisible and $_WM$ is finitely generated. By Lemma~\ref{hence equiv orders}(3), $W$ and $R$ are then equivalent orders. It is easily checked that $UM\subseteq M$, and thus $U\subseteq W$. \par
Now consider the $(\ovl{W},\ovl{R})$-bimodule $\ovl{M}$. This is nonzero as $M\supseteq R$ and is finitely generated on both sides because $_WM_R$ is finitely generated on both sides. Hence $\ovl{W}$ and $\ovl{R}$ are equivalent orders again by Lemma~\ref{hence equiv orders}(3). By Proposition~\ref{RSS 5.25 lemma}, $\ovl{R}$ and $\ovl{U}$ are equivalent orders, and therefore $\ovl{W}$ and $\ovl{U}$ are likewise. Since $U\subseteq W\subseteq S$ we can now apply Proposition~\ref{RSS 2.16} to conclude $U$ and $W$, and hence also $U$ and $R$, are equivalent orders.
\end{proof}

\begin{remark}\label{RSS 6.2}
Let $A$ be a noetherian domain with full quotient ring $Q$. If $N$ is a finitely generated $A$-submodule of $Q$, then \cite[Theorem~2.7]{Coz} says that $X=\End_A(N^*)$ is the unique maximal order containing and equivalent to $\End_A(N)$. Moreover, it is easy to see $\End_A(N^*)=\End_A(N^{**})$, and so we can replace $\End_A(N^*)$ with $\End_A(N^{**})$ in the above statement. By Lemma~\ref{graded max orders are max orders} this observation also holds in the graded setting.
\end{remark}

The above observation gave Rogalski, Sierra and Stafford (and hence us) a clear strategy to get from their $T(\bfd)$'s to other maximal orders. \par

As a standard, all duals $M^*$ will be taken as $R$-modules, where $R=S(\bfd)$. The definition of a maximal order and a maximal $S$-order can be found in Definition~\ref{max orders def}.

\begin{cor}\label{RSS 6.6}
Let $U\subseteq S$ be a $g$-divisible connected graded maximal $S$-order.
\begin{enumerate}[(1)]
\item There exists an effective divisor $\bfd$ on $E$ with $\deg\bfd\leq 2$, and a $g$-divisible $(U,S(\bfd))$-bimodule $M$, such that $U=\End_{S(\bfd)}(M)$. The bimodule $M$ is finitely generated on both sides and satisfies $S(\bfd)\subseteq M\subseteq S$.
\item Set $F=\End_{S(\bfd)}(M^{**})$. Then $F$ is the unique maximal order containing $U$ and $U=F\cap S$.
\end{enumerate}
\end{cor}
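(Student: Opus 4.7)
The plan is to deduce this corollary quickly from the preparation already done in Theorem~\ref{RSS 5.25} together with the standard double-dual trick of Remark~\ref{RSS 6.2}. The main content is really bookkeeping: checking that maximality of $U$ as an $S$-order forces the endomorphism ring produced in Theorem~\ref{RSS 5.25} to coincide with $U$ itself.

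First, I would apply Theorem~\ref{RSS 5.25} to $U$ to obtain an effective divisor $\bfd$ of degree at most $2$ on $E$, the $g$-divisible right $S(\bfd)$-module $M=\widehat{US(\bfd)}$ with $S(\bfd)\subseteq M\subseteq S$, and the ring $W=\End_{S(\bfd)}(M)$. That theorem gives me $U\subseteq W\subseteq S$, that $_WM$ is finitely generated, and that $W$, $U$ and $S(\bfd)$ are pairwise equivalent orders. Because $U$ is by hypothesis a maximal $S$-order and $W$ is an equivalent order sitting between $U$ and $S$, maximality forces $W=U$. Thus $U=\End_{S(\bfd)}(M)$, and the $(U,S(\bfd))$-bimodule $M$ is finitely generated on both sides. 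Since $M=\widehat{M}\subseteq S_{(g)}$, the $g$-divisibility of $M$ (which is a property of $M$ as a subset of $S_{(g)}$ and so does not depend on the acting side) is automatic by Definition~\ref{g div def}. This completes~(1).

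For~(2), set $F=\End_{S(\bfd)}(M^{**})$. By Remark~\ref{RSS 6.2} applied in the graded setting (using Lemma~\ref{graded max orders are max orders}), $F=\End_{S(\bfd)}(M^{*})=\End_{S(\bfd)}(M^{**})$ is the unique maximal order in $Q_\gr(S)$ that contains, and is equivalent to, $\End_{S(\bfd)}(M)=U$. In particular, $F$ is the unique maximal order of $Q_\gr(S)$ containing $U$.

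It remains to identify $U$ with $F\cap S$. Clearly $U\subseteq F\cap S\subseteq S$, so it suffices to show that $F\cap S$ and $U$ are equivalent orders, since then maximality of $U$ as an $S$-order closes the argument. But $F$ and $U$ are equivalent, so there exist nonzero homogeneous $a,b\in Q_\gr(S)$ with $aFb\subseteq U$, whence $a(F\cap S)b\subseteq aFb\subseteq U$; together with $U\subseteq F\cap S$ this gives the equivalence of orders required. Hence $U=F\cap S$, as claimed.

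I do not anticipate a genuine obstacle: the real work has already been absorbed into Theorem~\ref{RSS 5.25} (which in turn relied on Proposition~\ref{RSS 5.25 lemma}, and therefore on Proposition~\ref{RSS 5.20 bg} applied to the $3$-Veronese) and into Cozzens' double-dual result invoked in Remark~\ref{RSS 6.2}. The only mild subtlety is making sure that the equivalence of orders transports from $F$ down to $F\cap S$, which is handled by the one-line clamping argument above.
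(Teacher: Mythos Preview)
Your proof is correct and follows essentially the same route as the paper: apply Theorem~\ref{RSS 5.25} to force $W=U$ by maximality for part~(1), then use Remark~\ref{RSS 6.2} for the uniqueness of $F$ and the one-line ``clamping'' argument $a(F\cap S)b\subseteq aFb\subseteq U$ to conclude $U=F\cap S$ in part~(2). The paper's argument is identical in structure, differing only in that it does not explicitly remark on the $g$-divisibility of $M$ (which, as you note, is automatic from $M=\widehat{US(\bfd)}$).
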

\begin{proof}
(1). By Theorem~\ref{RSS 5.25}, there is an effective divisor $\bfd$ with $\deg\bfd\leq 2$ and such that
$ U\subseteq W=\End_{S(\bfd)}(M)\subseteq S$, where $M=\widehat{US(\bfd)}$ is finitely generated on both sides as a $(W,S(\bfd))$-bimodule. Clearly we have $S(\bfd)\subseteq M\subseteq S$ also. By Theorem~\ref{RSS 5.25} again, $U$ and $W$ are equivalent orders. Since $U$ is a maximal $S$-order and $W\subseteq S$, $U=W$. \par
(2) By Remark~\ref{RSS 6.2}, $F$ is the unique maximal order equivalent to and containing $\End_{S(\bfd)}(M)=U$. Let $V=F\cap S$. Since $U\subseteq S$, we have $U\subseteq V\subseteq F$. If $x,y\in Q_\gr(S)$ are nonzero and such that $xFy\subseteq U$, then clearly $xVy\subseteq U$ also. Thus $U$ and $V$ are also equivalent orders. But $U$ is a maximal $S$-order; therefore $U=V$.
\end{proof}

The next two results investigate the situation $\End_{S(\bfd)}(M)\subseteq\End_{S(\bfd)}(M^{**})$ arising in Corollary~\ref{RSS 6.6}. These results are our analogue of \cite[Proposition~6.4]{RSS}. The proof given in \cite{RSS} is in fact sufficiently general to work in our case as well. For completeness we include the proofs. \par
Recall GK-dimension is defined in Notation~\ref{GK dimension}. In particular, we recall that we are writing $\GK(A)$ for the GK-dimension a $\Bbbk$-algebra $A$ and $\GK_A(M)$ for the GK-dimension of a right (or left) $A$-module $M$.

\begin{prop}\label{RSS 6.4}
Let $\bfd$ be an effective divisor with $\deg \bfd\leq 2$ and let $R=S(\bfd)$. Let $M\subseteq \Sg$ be a $g$-divisible finitely generated graded right $R$-module such that $R\subseteq M\subseteq S$. Put $W=\End_R(M)$, $F=\End_R(M^{**})$ and $V=F\cap S$. Then:
\begin{enumerate}[(1)]
\item $F$, $V$ and $W$ are $g$-divisible algebras with $Q_\gr(W)=Q_\gr(V)=Q_\gr(F)=Q_\gr(S)$.
\item $F$ is the unique maximal order containing and equivalent to $W$, while $V$ is the unique maximal $S$-order containing and equivalent to $W$.
\item $R=\End_W(M)=\End_F(M^{**})$.
\end{enumerate}
\end{prop}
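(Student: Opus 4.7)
The plan is to handle parts (1) and (2) by invoking the existing machinery (Lemma~\ref{RSS 2.12}, Lemma~\ref{RSS 2.13}, and Remark~\ref{RSS 6.2}) and to save the interesting work for part (3).

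For part (1), I would first set up the inclusions. Using Notation~\ref{Hom inside Q}: if $w \in W$, i.e.\ $wM \subseteq M$, then for any $q \in M^*$ one has $(qw)M = q(wM) \subseteq qM \subseteq R$, so $M^*w \subseteq M^*$ and hence $wM^{**} \subseteq M^{**}$; thus $W \subseteq F$. Since $W \cdot 1 \subseteq WM \subseteq M \subseteq S$ and $F \subseteq \Sg$ by Lemma~\ref{RSS 2.12}(1), the chain $R \subseteq W \subseteq V \subseteq F \subseteq \Sg$ holds. Together with $Q_\gr(R) = Q_\gr(S) = Q_\gr(\Sg)$ from Theorem~\ref{S(d) thm}(1), this sandwich yields $Q_\gr(W) = Q_\gr(V) = Q_\gr(F) = Q_\gr(S)$. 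For $g$-divisibility, $M^{**}$ is $g$-divisible by Lemma~\ref{RSS 2.13}(3), so Lemma~\ref{RSS 2.12}(3) delivers $g$-divisibility of both $W$ and $F$; and $V = F \cap S$ is $g$-divisible by the criterion of Lemma~\ref{g-div}(2), since $ag^n \in V$ with $a \in \Sg$ forces $a \in F$ (by $g$-divisibility of $F$) and $a \in S$ (by $g$-divisibility of $S$, which follows from $gS$ being completely prime), hence $a \in V$.

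Part (2) is a direct application of Remark~\ref{RSS 6.2} with $A = R$ and $N = M$: it gives that $F = \End_R(M^{**})$ is the unique maximal order containing and equivalent to $W$. The inclusions $W \subseteq V \subseteq F$ together with the equivalence of $W$ and $F$ automatically force $V$ and $W$ to be equivalent orders; and given any graded algebra $V'$ with $W \subseteq V' \subseteq S$ equivalent to $W$, the maximality of $F$ yields $V' \subseteq F$, so $V' \subseteq F \cap S = V$, establishing the desired uniqueness among maximal $S$-orders.

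Part (3) is where the real work lies, and I would handle both claimed equalities by the same argument. Write $R' = \End_W(M)$; clearly $R \subseteq R'$ since $R$ acts on the right of the $(W,R)$-bimodule $M$. Because $M$ is a finitely generated right $R$-submodule of $Q_\gr(S)$, Lemma~\ref{Ore argument}(2) produces a nonzero homogeneous $c \in R$ with $cM \subseteq R$. For any $q \in R'$ one has $q = 1 \cdot q \in M$ (using $1 \in R \subseteq M$), so $cq \in cM \subseteq R$. Thus $cR' \subseteq R$, so $R$ and $R'$ are equivalent orders in $Q_\gr(S)$, and the maximal order property of $S(\bfd)$ from Theorem~\ref{S(d) thm}(4) forces $R' = R$. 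The identical argument gives $\End_F(M^{**}) = R$, using that $M \subseteq M^{**}$ (so $1 \in M^{**}$) and that $M^{**}$ is a finitely generated right $R$-module; the latter follows from $R$ being noetherian (Proposition~\ref{RSS 2.9}) together with the standard fact that $(-)^* = \Hom_R(-,R)$ preserves finite generation over a noetherian ring. The only genuine obstacle is securing this finite generation of $M^{**}$ in order to extract the clearing element $c$; once that is in hand, the whole statement reduces to the maximal-order property of $S(\bfd)$ established in Theorem~\ref{S(d) thm}(4).
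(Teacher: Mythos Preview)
Your approach is largely sound, but there is one genuine gap in part~(1): the inclusion $R \subseteq W$ is unjustified and in general false. Since $M$ is only a \emph{right} $R$-module, there is no reason for left multiplication by $r \in R$ to preserve $M$; that is, $rM \subseteq M$ need not hold. Your sandwich argument for $Q_\gr(W) = Q_\gr(S)$ therefore collapses. The paper instead invokes Lemma~\ref{End is cg} directly, which gives $Q_\gr(\End_R(M)) = Q_\gr(R) = Q_\gr(S)$ without any such inclusion; the same lemma handles $F$, and then $V$ is sandwiched between $W$ and $F$. This is an easy fix, but as written it is a real error.

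Your part~(3), on the other hand, is correct and in fact more elementary than the paper's route. The paper first shows $_WM$ is finitely generated (via Lemma~\ref{RSS 2.12}(3)) and then appeals to a left-handed version of \cite[Lemma~6.3]{RSS} to conclude that $\End_W(M)$ is a finitely generated right $R$-module, finally invoking Lemma~\ref{hence equiv orders}(2). Your observation that $1 \in M$ forces $\End_W(M) \subseteq M$ (via $q = 1 \cdot q$), combined with a single clearing element $c$ satisfying $cM \subseteq R$, gives the equivalence of $R$ and $\End_W(M)$ directly and avoids the external reference entirely. The same shortcut works for $\End_F(M^{**})$. This is a genuine simplification. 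Part~(2) matches the paper's argument, though your phrase ``the maximality of $F$'' should really read ``the uniqueness of $F$'' from Remark~\ref{RSS 6.2}: one passes to a maximal order $\widetilde{V'} \supseteq V' \supseteq W$, identifies $\widetilde{V'} = F$ by uniqueness, and concludes $V' \subseteq F \cap S = V$.
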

\begin{proof}
Since $Q_\gr(R)=Q_\gr(S)$,  also $Q_\gr(W)=Q_\gr(V)=Q_\gr(F)=Q_\gr(S)$ by Lemma~\ref{End is cg}. By Lemma~\ref{RSS 2.12}(3), $W$ is $g$-divisible and noetherian, whilst $_WM$ is finitely generated. Thus, by a left handed version of \cite[Lemma~6.3]{RSS}, $\End_W(M)$ is a finitely generated right $R$-module. Moreover, since $MR\subseteq M$, $R\subseteq \End_W(M)$. So $R$ and $\End_W(M)$ are equivalent orders by Lemma~\ref{hence equiv orders}(2). Because $R$ is a maximal order by Theorem~\ref{S(d) thm}, $R=\End_W(M)$. \par
Clearly $M^{**}$ embeds into $R$ as a right ideal via $x\mapsto \alpha x$ for any nonzero $\alpha\in M^*$; in particular $M^{**}$ is a finitely generated right $R$-module. By Lemma~\ref{RSS 2.13}(3), $M^{**}$ is $g$-divisible with $M^{**}\subseteq \Sg$. Arguing as in the previous paragraph, with $M^{**}$ and $F$ in place of $M$ and $W$, shows $F$ is $g$-divisible, $_FM^{**}$ is finitely generated and $R=\End_F(M^{**})$. By Remark~\ref{RSS 6.2}, $F$ is the unique maximal order containing and equivalent to $W$. Because $1\in M\subseteq S$, $W\subseteq S$; and hence $W\subseteq V$.\par

Let $X$ be an equivalent order to $V$ satisfying $V\subseteq X\subseteq S$. If $\widetilde{X}$ is a maximal order containing $X$, then it is a maximal order containing $W$. Therefore $\widetilde{X}=F$ by the uniqueness of $F$. It then follows $X=V$ and $V$ is a maximal $S$-order. Similarly, if $V'$ is another maximal $S$-order containing and equivalent to $W$, and $F'$ a maximal order containing $V'$, then $F'=F$ and $V'=V$ by uniqueness of $F$ again. So $V$ is unique among maximal $S$-orders containing and equivalent to $W$.
\end{proof}

From here the Cohen-Macaulay property, defined in Definition~\ref{homological defs} and satisfied by the $S(\bfd)$'s by Theorem~\ref{S(d) thm}(2), becomes increasingly important. It will often be used in tandem with \cite[Lemma~4.11]{RSS}. We state the lemma here considering the frequency which it is used. For Lemma~\ref{RSS 4.11}(2) we also need concept of a pure module. Let $A$ be a ring of finite GK-dimension and suppose that $M$ is a right $A$-module with $\GK_A(M)=\beta$. Then $M$ is called \textit{$\beta$-pure}\index{pure module} if $\GK(N)=\beta$ for all nonzero submodules $N$ of $M$.

\begin{lemma}\label{RSS 4.11} \cite[Lemma~4.11]{RSS}
Let $A$ be a cg noetherian domain with $Q=Q_\gr(A)$ and $\GK(A)=\alpha<\infty$. Assume that $A$ is Auslander-Gorenstein and Cohen-Macaulay.
\begin{enumerate}[(1)]
\item Let $M\subseteq Q$ be a finitely generated right $A$-module. Then $M^{**}$ is the unique largest $A$-submodule of $Q$ containing $M$ and satisfying $\GK(M^{**}/M)\leq \alpha -2$. In particular, there is no $A$-module $A\subsetneq M\subseteq Q$ such that $\GK(M/A)\leq \alpha-2$.
\item If $I=I^{**}\subsetneq A$ is a proper reflexive right ideal of $A$, then $A/I$ is $(\alpha-1)$-pure.\qed
\end{enumerate}
\end{lemma}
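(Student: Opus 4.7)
The strategy is to establish (1) and then deduce (2) formally from it. The Cohen-Macaulay identity $\GK(N)+j(N)=\alpha$ allows a free interchange of GK-bounds and grade-bounds: the condition $\GK(N/M)\le\alpha-2$ is equivalent to $j(N/M)\ge 2$. Thus (1) amounts to the claim that, among extensions of $M$ inside $Q$ whose quotient has grade at least $2$, the bidual $M^{**}$ is the (unique) maximal one.

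For the bound $\GK(M^{**}/M)\le\alpha-2$, observe first that since $M\subseteq Q$ is torsion-free, Lemma~\ref{Hom in Qgr} identifies the biduality map $\eta_M:M\to M^{**}$ with the genuine inclusion $M\hookrightarrow M^{**}\subseteq Q$. The technical input is then the standard Auslander-Gorenstein fact that $\mathrm{coker}(\eta_M)$ has grade at least $2$; this is a consequence of the Auslander condition together with the finite injective dimension of $A$ (proved in the style of \cite{Lev}, or extracted from the Auslander spectral sequence $E_2^{p,q}=\Ext^p_A(\Ext^q_A(M,A),A)$). Given this, Cohen-Macaulay converts $j(M^{**}/M)\ge 2$ into $\GK(M^{**}/M)\le\alpha-2$.

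For maximality, suppose $M\subseteq N\subseteq Q$ with $\GK(N/M)\le\alpha-2$, so $j(N/M)\ge 2$. Applying $\Hom_A(-,A)$ to $0\to M\to N\to N/M\to 0$ produces
$$0\to \Hom_A(N/M,A)\to N^*\to M^*\to \Ext^1_A(N/M,A)\to\cdots,$$
and the outer two terms vanish because $j(N/M)\ge 2$. Hence $N^*=M^*$, and via Notation~\ref{Hom inside Q} this is an equality of subspaces of $Q$. Dualising a second time gives $N^{**}=M^{**}$ in $Q$, and the injective natural map $N\hookrightarrow N^{**}$ places $N$ inside $M^{**}$. The ``in particular'' clause is the special case $M=A$ together with $A^{**}=A$.

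For (2), fix a proper reflexive right ideal $I=I^{**}\subsetneq A$ and let $J/I$ be a nonzero submodule of $A/I$, i.e., a right ideal $I\subsetneq J\subseteq A$. Since $A$ is a domain, any $\phi\in\Hom_A(J/I,A)$ corresponds under Notation~\ref{Hom inside Q} to $q\in Q$ with $qI=0$, forcing $q=0$; hence $j(J/I)\ge 1$ and Cohen-Macaulay gives $\GK(J/I)\le\alpha-1$. If instead $\GK(J/I)\le\alpha-2$, then (1) applied to $I\subsetneq J\subseteq A\subseteq Q$ forces $J\subseteq I^{**}=I$, contradicting $I\subsetneq J$. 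Thus $\GK(J/I)=\alpha-1$, proving $A/I$ is $(\alpha-1)$-pure. The main obstacle of the whole argument is the Auslander-Gorenstein input $j(\mathrm{coker}\,\eta_M)\ge 2$; the remaining steps are essentially bookkeeping with duals, long exact sequences, and the Cohen-Macaulay identity.
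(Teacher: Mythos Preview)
The paper does not supply its own proof of this lemma: it is quoted directly from \cite[Lemma~4.11]{RSS} and closed with a \qed, so there is nothing in the present paper to compare your proposal against. That said, your argument is correct and is the standard one: the Auslander--Gorenstein input $j(M^{**}/M)\ge 2$ (via the Auslander spectral sequence or the computations in \cite{Lev}) combined with the Cohen--Macaulay identity gives the bound, and the long exact sequence in $\Ext$ gives $N^*=M^*$ whenever $j(N/M)\ge 2$, forcing $N\subseteq N^{**}=M^{**}$. One minor remark on (2): your computation $\Hom_A(J/I,A)=0$ uses $I\neq 0$ (so that $qI=0$ forces $q=0$); the statement as quoted literally allows $I=0$, for which the conclusion is false since $\GK(A/0)=\alpha$, so both \cite{RSS} and your write-up carry an implicit nonzero hypothesis here.
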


In Corollary~\ref{RSS 6.6} there is the possibility that $F\not\subseteq S$. The next lemma helps us to control this situation.

\begin{lemma}\label{RSS 6.4(3)}
Retain the hypothesis of Proposition~\ref{RSS 6.4}. There exists an ideal $K$ of $F$, contained in $W$ (and hence $V$) such that $\GK(F/K)\leq 1$.
\end{lemma}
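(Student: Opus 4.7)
The plan is to build $K$ from the two-sided annihilator in $R$ of $M^{**}/M$. The starting point is that $R = S(\bfd)$ is Auslander-Gorenstein and Cohen-Macaulay with $\GK(R) = 3$ (Theorem~\ref{S(d) thm}), so Lemma~\ref{RSS 4.11}(1) yields $\GK(M^{**}/M) \leq 1$. Set
\[
I \;=\; \{r \in R : M^{**} r \subseteq M\}.
\]
This is a two-sided ideal of $R$, since $M^{**}$ is a right $R$-module ($I$ is closed under left and right multiplication by $R$). To bound $\GK(R/I)$, I would write $M^{**} = y_1 R + \cdots + y_n R$ with the $y_i$ homogeneous (using Lemma~\ref{RSS 2.13}(2)), and observe that $I_i := \{r \in R : y_i r \in M\}$ is a right ideal with $R/I_i \hookrightarrow M^{**}/M$ via $r + I_i \mapsto y_i r + M$ (injectivity uses that $S_{(g)}$ is a domain). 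Hence $\GK(R/I_i) \leq \GK(M^{**}/M) \leq 1$ for each $i$, and since $I = \bigcap_i I_i$ gives $R/I \hookrightarrow \bigoplus_i R/I_i$, we get $\GK(R/I) \leq 1$.

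Next I would set
\[
K \;=\; \{x \in F : x M^{**} \subseteq M^{**} I\}.
\]
That $K$ is a two-sided ideal of $F$ is a direct check: for $f \in F$ and $x \in K$, one has $(fx)M^{**} = f(xM^{**}) \subseteq f\cdot M^{**}I = (fM^{**})I \subseteq M^{**}I$, and $(xf)M^{**} = x(fM^{**}) \subseteq xM^{**} \subseteq M^{**}I$. The containment $K \subseteq W$ follows from $M^{**}I \subseteq M$: for $x \in K$, $xM \subseteq xM^{**} \subseteq M^{**}I \subseteq M$, so $x \in \End_R(M) = W$. Since $W \subseteq V$, this also gives $K \subseteq V$.

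It remains to bound $\GK(F/K)$. Consider the graded $\Bbbk$-linear map (with shifts by $\deg y_i$)
\[
\Psi : F/K \;\longrightarrow\; \bigoplus_{i=1}^{n} M^{**}/M^{**}I, \qquad x + K \;\longmapsto\; \bigl(xy_i + M^{**}I\bigr)_{i=1}^{n}.
\]
Using $M^{**} = \sum y_i R$ and $M^{**}I \cdot R = M^{**}I$, one sees $\ker \Psi = K$, so $\Psi$ is an injection. Since $M^{**}/M^{**}I$ is a finitely generated right $R/I$-module (generated by $\bar{y}_1,\ldots,\bar{y}_n$), one has $\GK(M^{**}/M^{**}I) \leq \GK(R/I) \leq 1$, and hence $\GK(F/K) \leq 1$. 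The main technical hurdle — which is where the work is — is the $\GK(R/I) \leq 1$ step; once in hand, the rest of the proof is routine bookkeeping with the closure properties of $M^{**}$, $I$, and $M^*$.
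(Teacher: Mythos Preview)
Your argument is correct. The construction of $K$ via the right annihilator $I = \rann_R(M^{**}/M)$ and then $K = \{x \in F : xM^{**} \subseteq M^{**}I\}$ is clean, and each step checks out: the bound $\GK(R/I) \leq 1$ via the embeddings $R/I_i \hookrightarrow M^{**}/M$, the verification that $K$ is a two-sided $F$-ideal landing in $W$, and the graded injection $\Psi$ (which is in fact a left $F$-module map) giving $\GK(F/K) \leq \GK(M^{**}/M^{**}I) \leq \GK(R/I) \leq 1$. One cosmetic point: the injectivity of $R/I_i \hookrightarrow M^{**}/M$ is immediate from the definition of $I_i$ and does not need $S_{(g)}$ to be a domain.

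The paper's proof begins the same way---using Cohen--Macaulay to get $\GK(M^{**}/M) \leq 1$---but then takes a slightly different turn: it observes that $M^{**}/M$ is $g$-torsionfree (since $M$ is $g$-divisible) and hence finitely generated over $\Bbbk[g]$ by Lemma~\ref{RSS 2.14}(3), after which it defers the construction of $K$ to \cite[Proposition~6.4(3)]{RSS}. Your route is more self-contained: you bypass the $\Bbbk[g]$-finiteness step entirely and work directly with the annihilator in $R$, which makes the argument independent of the $g$-divisibility hypothesis on $M$ (you only use it implicitly via $M^{**}$ being finitely generated). The trade-off is that the paper's $\Bbbk[g]$-module viewpoint makes the eventual periodicity of Hilbert series transparent, whereas your approach gives the GK bound more abstractly; for the purposes of this lemma either is perfectly adequate.
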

\begin{proof}
By Theorem~\ref{S(d) thm}, $R$ is Auslander-Gorenstein and Cohen-Macaulay. Therefore Lemma~\ref{RSS 4.11}(1) applies and shows that $\GK_R(M^{**}/M)\leq \GK(R)-2=1.$ Since $M$ is $g$-divisible, $M^{**}/M$ is $g$-torsionfree. By Lemma~\ref{RSS 2.14}(3) it is then a finitely generated right $\Bbbk[g]$-module. From here, the proof goes exactly as in \cite[Proposition~6.4(3)]{RSS}. We choose to omit the rest.
\end{proof}
We give a definition for pairs of algebras satisfying the conclusions of Proposition~\ref{RSS 6.4} and Lemma~\ref{RSS 6.4(3)}.

\begin{definition}\label{max order pair}\index{maximal order pair}
\begin{enumerate}[(1)]
\item A pair $(V,F)$ of connected graded subalgebras of $\Sg$ are called \textit{a maximal order pair of $S$} if:
\begin{enumerate}[(a)]
\item $V$ and $F$ are $g$-divisible with $V\subseteq F$ and $V=F\cap S$;
\item $F$ is a maximal order in $Q_\gr(F)=Q_\gr(S)$ while $V$ is a maximal $S$-order;
\item there exists an ideal $K$ of $F$, contained in $V$, and such that $\GK(F/K)\leq 1$.
\end{enumerate}
\item \cite[Definition~6.5]{RSS} Recall $T$ and $T_{(g)}$ from Notation~\ref{3 Veronese notation2}. Interchanging $S$ and $\Sg$ with $T$ and $T_{(g)}$ in (1), we get the definition of a maximal order pair of $T$.
\end{enumerate}
\end{definition}

The ``of $S$" from ``maximal order pair of $S$" is there to distinguish it from Definition~\ref{max order pair}(2). When no ambiguity can arise it will often be dropped. We will see later in Theorem~\ref{3 Veronese of virtual blowup} that if $(V,F)$ is a maximal order pair of $S$, then $(V^{(3)},F^{(3)})$ is a maximal order pair of $T$.\par
Definition~\ref{max order pair} is hard to work with. We will often be using an equivalent formulation given by the following consequence of Corollary~\ref{RSS 6.6} and Proposition~\ref{RSS 6.4}. We will be needing also a $T$-version of Lemma~\ref{max order pair equiv def} which is implicit in \cite{RSS}.

\begin{lemma}\label{max order pair equiv def}
\begin{enumerate}[(1)]
\item Let $F$ be a cg subalgebra of $\Sg$ and $V$ a cg subalgebra of $S$. Then the following are equivalent:
\begin{enumerate}[(a)]
\item $(F,V)$ is a maximal order pair of $S$.
\item There exist an effective divisor $\bfd$ with $\deg\bfd\leq 2$ and a finitely generated $g$-divisible right $S(\bfd)$-module $M$ with $S(\bfd)\subseteq M\subseteq S$ and such that
    $$F=\End_{S(\bfd)}(M^{**})\;\text{ and }V=F\cap S.$$

%\item There exist an effective divisor $\bfd$ with $\deg\bfd\leq 2$ and a finitely generated $g$-divisible right $S(\bfd)$-module $M$ with $S(\bfd)\subseteq M\subseteq S$ and such that
%$$V=\End_{S(\bfd)}(M)\;\text{ and }F=\End_{S(\bfd)}(M^{**}).$$
\end{enumerate}
\item Recall $T$ and $T_{(g)}$ from Notation~\ref{3 Veronese notation2}. Let $F$ be a cg subalgebra of $T_{(g)}$ and $V$ a cg subalgebra of $T$. Then the following are equivalent:
\begin{enumerate}[(a)]
\item $(F,V)$ is a maximal order pair of $T$

\item There exist an effective divisor $\bfd$ with $\deg\bfd\leq 8$ and a finitely generated $g$-divisible right $T(\bfd)$-module $M$ with $T(\bfd)\subseteq M\subseteq T$ and such that
    $$F=\End_{T(\bfd)}(M^{**})\;\text{ and }V=F\cap T.$$
\end{enumerate}
\end{enumerate}
\end{lemma}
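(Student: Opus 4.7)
The plan is to derive Lemma~\ref{max order pair equiv def} as a more or less formal consequence of the results just established (Corollary~\ref{RSS 6.6}, Proposition~\ref{RSS 6.4} and Lemma~\ref{RSS 6.4(3)}), for part~(1); part~(2) will follow by an identical argument invoking the corresponding results of Rogalski, Sierra and Stafford from \cite{RSS} in place of our $S$-versions. Throughout, the equivalence is purely about unpacking definitions and using uniqueness.

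For the implication $(b)\Rightarrow(a)$ in part~(1), suppose that $\bfd$, $M$, $F=\End_{S(\bfd)}(M^{**})$ and $V=F\cap S$ are as in (b). Writing $W=\End_{S(\bfd)}(M)$, Proposition~\ref{RSS 6.4}(1) gives that $W$, $V$ and $F$ are all $g$-divisible cg subalgebras of $\Sg$ with graded quotient ring $Q_\gr(S)$; Proposition~\ref{RSS 6.4}(2) says $F$ is the unique maximal order containing and equivalent to $W$, and that $V=F\cap S$ is the unique maximal $S$-order containing and equivalent to $W$. So $V\subseteq F$, $V=F\cap S$, $F$ is a maximal order in $Q_\gr(F)=Q_\gr(S)$ and $V$ is a maximal $S$-order, giving conditions (a) and (b) of Definition~\ref{max order pair}(1). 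Finally Lemma~\ref{RSS 6.4(3)} supplies an ideal $K$ of $F$ with $K\subseteq V$ and $\GK(F/K)\leq 1$, yielding condition~(c).

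For the implication $(a)\Rightarrow(b)$, let $(V,F)$ be a maximal order pair of $S$. Pick a nonzero homogeneous $k\in K$, where $K$ is the ideal of $F$ from Definition~\ref{max order pair}(1)(c); then $kF\subseteq K\subseteq V$ and $Fk\subseteq K\subseteq V$, which by Lemma~\ref{hence equiv orders}(3) (applied to $M=K$) shows that $V$ and $F$ are equivalent orders and in particular $Q_\gr(V)=Q_\gr(F)=Q_\gr(S)$. Now apply Corollary~\ref{RSS 6.6}(1) to the $g$-divisible cg maximal $S$-order $V$: there is an effective divisor $\bfd$ on $E$ with $\deg\bfd\leq 2$ and a $g$-divisible $(V,S(\bfd))$-bimodule $M$, finitely generated on both sides, with $S(\bfd)\subseteq M\subseteq S$ and $V=\End_{S(\bfd)}(M)$. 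Corollary~\ref{RSS 6.6}(2) then tells us that $F':=\End_{S(\bfd)}(M^{**})$ is the unique maximal order containing $V$ and equivalent to it, with $V=F'\cap S$. But $F$ is by construction a maximal order containing $V$ and, as just observed, equivalent to $V$; uniqueness therefore forces $F=F'$, and we already have $V=F\cap S$. This is exactly condition~(b).

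The only genuinely subtle point is ensuring, in the direction $(a)\Rightarrow(b)$, that the equivalent-order hypothesis needed to invoke the uniqueness clause of Corollary~\ref{RSS 6.6}(2) is available; this is why the common ideal $K$ (with $\GK(F/K)\leq 1$, in particular $K\neq 0$ since $\GK(F)\geq 2$) is built into the definition of a maximal order pair, and it is the only place it plays a role. For part~(2), the same proof structure works verbatim: replace $S$, $\Sg$, $S(\bfd)$, $\deg\bfd\leq 2$, Corollary~\ref{RSS 6.6}, Proposition~\ref{RSS 6.4} and Lemma~\ref{RSS 6.4(3)} by $T$, $T_{(g)}$, $T(\bfd)$, $\deg\bfd\leq 8$, and the analogous results \cite[Theorem~5.26, Proposition~6.4]{RSS} (equivalently our Theorem~\ref{real RSS 5.25} and Corollary~\ref{real RSS 5.25 2}) together with \cite[Proposition~6.4(3)]{RSS}.
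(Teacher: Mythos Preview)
Your proof is correct and follows essentially the same route as the paper's: for $(b)\Rightarrow(a)$ you invoke Proposition~\ref{RSS 6.4} and Lemma~\ref{RSS 6.4(3)}, and for $(a)\Rightarrow(b)$ you apply Corollary~\ref{RSS 6.6} to the maximal $S$-order $V$ and use uniqueness of the containing maximal order to identify $F$ with $\End_{S(\bfd)}(M^{**})$. The paper's proof is terser and leaves implicit the step you spell out---namely that the ideal $K$ forces $V$ and $F$ to be equivalent orders so that uniqueness can be invoked---so your write-up is, if anything, a cleaner justification of the same argument. One cosmetic point: your appeal to Lemma~\ref{hence equiv orders}(3) is unnecessary, since the line preceding it (``$kF\subseteq K\subseteq V$'' together with $V\subseteq F$) already shows directly from the definition that $V$ and $F$ are equivalent orders.
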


\begin{proof} (1). Suppose that $(F,V)$ is a maximal order pair of $S$. Then $V$ is $g$-divisible maximal $S$-order. Now apply Corollary~\ref{RSS 6.6} to $V$. The reverse implication follows from Proposition~\ref{RSS 6.4}(2) and Lemma~\ref{RSS 6.4(3)}.

(2). This is essentially \cite[Corollary~6.6(1)]{RSS} and \cite[Proposition~6.4]{RSS}.
\end{proof}

%As the next corollary shows, studying maximal order pairs is the same as studying $g$-divisible maximal $S$-orders.
%\begin{cor}\label{RSS 6.6}
%Let $U\subseteq S$ be any $g$-divisible connected graded maximal $S$-order.
%\begin{enumerate}[(1)]
%\item There exists an effective divisor $\bfd$ on $E$ with $\deg\bfd\leq 2$, and a $g$-divisible $(U,S(\bfd))$-bimodule $M$, with $S(\bfd)\subseteq M\subseteq S$, that is finitely generated on both sides. Moreover $U=\End_{S(\bfd)}(M)$ and $S(\bfd)=\End_U(M)$.
%\item $(U,F)$ form a maximal order pair where $F=\End_{S(\bfd)}(M^{**})$. In particular, if $U$ is a maximal order, then $U=F$.
%\end{enumerate}
%\end{cor}
%\begin{proof}
%(1). By Theorem~\ref{RSS 5.25}, there is an effective divisor $\bfd$ with $\deg\bfd\leq 2$ such that
%$$ U\subseteq V=\End_{S(\bfd)}{(M)}\subseteq S$$
%where $M=\widehat{US(\bfd)}$ is finitely generated on both sides as a $(V,S(\bfd))$-bimodule. Clearly we have $S(\bfd)\subseteq M\subseteq S$ also. By Theorem~\ref{RSS 5.25} again, $U$ and $V$ are equivalent orders. Since $U$ is a maximal $S$-order and $V\subseteq S$ (as $1\in M\subseteq S$), $U=V$. That $S(\bfd)=\End_U(M)$ follows from Proposition~\ref{RSS 6.4}(3??}).\par
%(2) This is $(b)\Rightarrow(a)$ of Lemma~\ref{max order pair equiv def}(1).
%\end{proof}

Let $(U,F)$ be a maximal order pair. We now aim for an analogue of \cite[Proposition~6.7]{RSS}: describing the images of $U$ and $F$ in $\Sg/g\Sg =\Bbbk(E)[t,t^{-1};\sigma]$. We first need a few results. These are our analogues of \cite[Lemma~7.9]{RSS2} and \cite[Proposition~6.12]{RSS}.\par

\begin{lemma}\label{RSS2 7.9}
Let $R=S(\bfd)$ where $\bfd$ is an effective divisor with $\deg\bfd \leq 2$. Let $M$ be a $g$-torsionfree right $R$-module. Then as left $\ovl{R}$-modules
\begin{equation*}\pushQED{\qed}
\Ext_R^i(M,\ovl{R})\cong \Ext_{\ovl{R}}^i(M/Mg,\ovl{R})\;\text{ for every }i\geq 0.
\end{equation*}
\end{lemma}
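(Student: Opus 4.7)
The plan is to use the standard change of rings argument that is available because $g$ is a central regular element of $R$ with $R/gR \cong \ovl{R}$ (this being where the hypothesis $R = S(\bfd)$ enters, via the $g$-divisibility established in Theorem~\ref{S(d) thm}(1)). Let $P_\bullet \twoheadrightarrow M$ be a projective resolution of $M$ by graded right $R$-modules. I will show first that $P_\bullet \otimes_R \ovl{R} = P_\bullet/P_\bullet g$ is a projective resolution of $M/Mg$ over $\ovl{R}$, and then use the tensor-hom adjunction to identify the two $\Ext$ complexes.

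For the first step, the key input is that $M$ is $g$-torsionfree. The short exact sequence $0 \to R(-3) \xrightarrow{\cdot g} R \to \ovl{R} \to 0$ of $(R,R)$-bimodules is a length-one flat resolution of $\ovl{R}$ (shift $-3$ because $g \in R_3$), so applying $M \otimes_R (-)$ gives
$$0 \longrightarrow \mathrm{Tor}^R_1(M, \ovl{R}) \longrightarrow M(-3) \xrightarrow{\cdot g} M \longrightarrow M/Mg \longrightarrow 0.$$
Since $M$ is $g$-torsionfree, the second map is injective and $\mathrm{Tor}^R_1(M, \ovl{R}) = 0$. Because each $P_i$ is projective (hence flat), the same computation gives $\mathrm{Tor}^R_j(P_i, \ovl{R}) = 0$ for all $j \geq 1$, and since $P_\bullet \to M$ is a flat resolution of $M$, it follows by the standard spectral sequence argument (or direct chase using that each $P_i$ is flat) that the complex $P_\bullet/P_\bullet g$ is a resolution of $M/Mg$ by projective $\ovl{R}$-modules.

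For the second step, adjunction gives the isomorphism of complexes of left $\ovl{R}$-modules
$$\Hom_R(P_\bullet, \ovl{R}) \;\cong\; \Hom_{\ovl{R}}(P_\bullet \otimes_R \ovl{R},\, \ovl{R}) \;=\; \Hom_{\ovl{R}}(P_\bullet/P_\bullet g,\, \ovl{R}),$$
natural in the arguments. Taking cohomology of the left-hand complex computes $\Ext_R^i(M, \ovl{R})$, while the right-hand complex computes $\Ext_{\ovl{R}}^i(M/Mg, \ovl{R})$ by the previous paragraph. The isomorphism claimed in the lemma follows.

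There is no real obstacle here: the whole argument is routine once one has the two ingredients that $g$ is central and regular in $R$ and that $M$ is $g$-torsionfree. The only point worth checking carefully is the vanishing of $\mathrm{Tor}^R_1(M, \ovl{R})$, but this is immediate from $g$-torsionfreeness. Everything else is formal homological algebra.
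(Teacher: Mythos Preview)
Your proof is correct and is essentially the same approach as the paper's: the paper defers to \cite[Lemma~7.9]{RSS2}, whose proof invokes the change-of-rings result \cite[Proposition~VI.4.1.3]{CE}, and what you have written is precisely that change-of-rings argument spelled out by hand. One minor comment: your appeal to a spectral sequence for the exactness of $P_\bullet/P_\bullet g$ is unnecessary, since the homology of $P_\bullet\otimes_R\ovl{R}$ is by definition $\Tor^R_*(M,\ovl{R})$, and you have already shown this vanishes in positive degrees using the length-one flat resolution of $\ovl{R}$.
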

\begin{proof}
The proof of \cite[Lemma~7.9]{RSS2} uses \cite[Proposition~VI.4.1.3, page 118]{CE} and \cite[Lemma~7.3]{RSS2}. The former is an abstract statement which only requires a ring homomorphism $T(\bfd)\to\ovl{T(\bfd)}$; the latter is a statement that also applies to $S(\bfd)$. In particular the proof presented there also proves this lemma.
\end{proof}

For an $S$-version of \cite[Proposition~6.12]{RSS} we are again in a position where the proof given by Rogalski, Sierra and Stafford can be recycled for our purposes.

\begin{lemma}\label{RSS 6.12}
Let $R=S(\bfd)$ where $\bfd$ is an effective divisor with $\deg\bfd \leq 2$.
\begin{enumerate}[(1)]
\item Let $I$ be a proper, $g$-divisible left ideal of $R$ such that $R/I$ is $2$-pure. Then $I^*/R$ is a $g$-torsionfree, $2$-pure right $R$-module. Moreover $I^*\subseteq \Sg$ and $\ovl{I^*}\overset{\bullet}= (\ovl{I})^*$.
\item If $M$ is a finitely generated $g$-divisible right $R$-module with $R\subseteq M\subseteq S$, then $\ovl{M^*}\overset{\bullet}= (\ovl{M})^*$ and $\ovl{M^{**}}\ehd(\ovl{M})^{**}$
\end{enumerate}
\end{lemma}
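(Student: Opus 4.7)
My approach would closely parallel the argument for $T(\bfe)$ given in \cite[Proposition~6.12]{RSS}, exploiting the fact that, by Theorem~\ref{S(d) thm}, $R=S(\bfd)$ enjoys all the requisite properties: it is noetherian, $g$-divisible, Auslander-Gorenstein and Cohen-Macaulay of GK-dimension $3$, while $\ovl R = B(E,\LL(-\bfd),\sigma)$ is itself Auslander-Gorenstein and Cohen-Macaulay of GK-dimension $2$ by Theorem~\ref{B properties}. These are precisely the ingredients that Rogalski-Sierra-Stafford needed for their argument, and nothing in their proof depended on working inside $T$ specifically.

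For part (1), the inclusion $I^*\subseteq \Sg$ and the $g$-divisibility of $I^*$ are immediate from Lemma~\ref{RSS 2.12}(2), using that $I$ is finitely generated and, being $g$-divisible and nonzero, satisfies $I\not\subseteq g\Sg$. The $g$-torsionfreeness of $I^*/R$ follows directly: if $x\in I^*$ has $xg\in R$, then $xg\in R\cap g\Sg = gR$ by $g$-divisibility of $R$, and centrality of $g$ together with $\Sg$ being a domain forces $x\in R$. The central identification is
\[
I^*/R \;\cong\; \Ext^1_R(R/I,R),
\]
obtained by applying $\Hom_R(-,R)$ to $0\to I\to R\to R/I\to 0$: since $R$ is Cohen-Macaulay of dimension $3$ and $R/I$ is $2$-pure, $j(R/I)=1$, which kills $(R/I)^*$; and $\Ext^1_R(R,R)=0$. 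The $2$-purity of $I^*/R$ then follows from the standard fact that over an Auslander-Gorenstein Cohen-Macaulay ring of dimension $d$, the functor $\Ext^{d-\alpha}(-,R)$ sends $\alpha$-pure modules to $\alpha$-pure modules. For the comparison $\ovl{I^*}\ehd(\ovl I)^*$, I would compute both sides as $\Ext^1$'s: on the $R$-side, reducing $0\to R\to I^*\to I^*/R\to 0$ modulo $g$ (exact since $I^*/R$ is $g$-torsionfree) gives $\ovl{I^*}/\ovl R \cong (I^*/R)/g(I^*/R)$, and Lemma~\ref{RSS2 7.9} identifies this with $\Ext^1_{\ovl R}(\ovl R/\ovl I,\ovl R)$ (using $g$-divisibility of $I$ to write $\ovl{R/I}=\ovl R/\ovl I$). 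On the $\ovl R$-side, dualizing $0\to \ovl I\to \ovl R\to \ovl R/\ovl I\to 0$ over the Cohen-Macaulay Auslander-Gorenstein $\ovl R$ (with $\ovl R/\ovl I$ of grade $1$) yields $(\ovl I)^*/\ovl R\cong \Ext^1_{\ovl R}(\ovl R/\ovl I,\ovl R)$. Matching the two produces $\ovl{I^*}\ehd (\ovl I)^*$.

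For part (2), the plan is to run the same kind of $\Ext$-computation on the right. For $\ovl{M^*}\ehd(\ovl M)^*$ I would apply $\Hom_R(-,R)$ to a short exact sequence $0\to M\to N\to N/M\to 0$ in which $N$ is a convenient $g$-divisible module containing $M$ (for instance $N=M^{**}$, with $\GK(M^{**}/M)\le 1$ from Lemma~\ref{RSS 4.11}(1)), and use Lemma~\ref{RSS2 7.9} to transfer the resulting $\Ext$ from $R$ to $\ovl R$; this is then compared with the $\ovl R$-dualization of $0\to \ovl M\to \ovl N\to \ovl N/\ovl M\to 0$. The statement for $M^{**}$ follows by applying the established statement for $(-)^*$ twice, since $\ovl{M^{**}}=\ovl{(M^*)^*}\ehd(\ovl{M^*})^*\ehd((\ovl M)^*)^*=(\ovl M)^{**}$.

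The main obstacle throughout is bookkeeping of $g$-divisibility and $g$-torsionfreeness: Lemma~\ref{RSS2 7.9} requires a $g$-torsionfree module, reducing a short exact sequence modulo $g$ is exact only when the quotient is $g$-torsionfree, and the two dualizations (over $R$ versus over $\ovl R$) only agree in high degrees — this is the whole reason we must work with the $\ehd$ notation throughout rather than with strict equalities.
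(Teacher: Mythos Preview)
Your treatment of part (1) is correct and matches the paper's own approach, which likewise appeals to the Auslander--Gorenstein and Cohen--Macaulay properties of $R$ and $\ovl R$, to Lemma~\ref{RSS 2.12}, and to the $\Ext$-transfer Lemma~\ref{RSS2 7.9}, deferring the remaining details to \cite[Proposition~6.12]{RSS}. Your identification $I^*/R\cong\Ext^1_R(R/I,R)$ and the parallel computation over $\ovl R$ is exactly the mechanism behind that argument.

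In part (2), however, your plan has a gap and the paper proceeds differently. With your suggested choice $N=M^{**}$, the sequence $0\to M\to M^{**}\to M^{**}/M\to 0$ gives nothing: since $\GK(M^{**}/M)\le 1$ and $R$ is Cohen--Macaulay, both $\Hom_R(M^{**}/M,R)$ and $\Ext^1_R(M^{**}/M,R)$ vanish, so the dualised sequence collapses to the tautology $M^*\cong (M^{**})^*$; the analogous $\ovl R$-computation likewise yields only $(\ovl M)^*\cong(\ovl{M^{**}})^*$. You are left with no link between $\ovl{M^*}$ and $(\ovl M)^*$, and hence your ``iterate twice'' argument for $\ovl{M^{**}}$ has nothing to iterate.

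The paper's route runs in the opposite order. First one proves $\ovl{M^{**}}\ehd(\ovl M)^{**}$ \emph{directly}, with no $\Ext$ at all: $\GK(M^{**}/M)\le 1$ together with $g$-divisibility of $M$ and $M^{**}$ gives $\ovl{M}\ehd\ovl{M^{**}}$ by Lemma~\ref{RSS 2.14}(2), while Cohen--Macaulayness of $\ovl R$ gives $\ovl M\ehd(\ovl M)^{**}$. Only then does one address $\ovl{M^*}$: set $J=M^*$, a reflexive left ideal of $R$, so $R/J$ is $2$-pure and part~(1) applies, giving $\ovl{J^*}\ehd(\ovl J)^*$. The chain
\[
\ovl{M^*}=\ovl J\;\ehd\;(\ovl J)^{**}\;=\;(\ovl{J^*})^*\;=\;(\ovl{M^{**}})^*\;=\;(\ovl M)^*
\]
then closes the argument, the last equality using the already established $\ovl{M^{**}}\ehd\ovl M$ together with Lemma~\ref{CM lemma}(3). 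So the dependence is $\ovl{M^{**}}\Rightarrow\ovl{M^*}$, not the other way around.
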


\begin{proof}
(1). First we note that both $R=S(\bfd)$ and $\ovl{R}=B(E,\LL(-\bfd),\sigma)$ are Auslander-Gorenstein and Cohen-Macaulay by Theorem~\ref{S(d) thm}(1)(2) and Theorem~\ref{B properties}(3). By Lemma~\ref{RSS 4.11}(1) we have that $I^*/R$ is $2$-pure. By Lemma~\ref{RSS 2.12}, $I^*\subseteq \Sg$. Moreover because $R$ is $g$-divisible, both $\Sg/R$ and $I^{*}/R$ are $g$-torsionfree right $R$-modules.\par
It is now left to prove $\ovl{I^*}\overset{\bullet}= (\ovl{I})^*$. The proof of the analogous part of \cite[Proposition~6.12(1)]{RSS} relies upon $T(\bfd)$ and $\ovl{T(\bfd)}$ being Auslander-Gorenstein and Cohen-Macaulay and \cite[Lemma~7.9]{RSS2}. Replacing \cite[Lemma~7.9]{RSS2} with Lemma~\ref{RSS2 7.9} and using the fact that $R$ and $\ovl{R}$ are Auslander-Gorenstein and Cohen-Macaulay one may follow that proof to obtain (1). The proof is quite technical and not particularly enlightening. For these reasons we omit the details with the interested reader being refer to \cite{RSS}.\par

(2). Since $R$ is Cohen-Macaulay, $\GK(M^{**}/M)\leq 1$ by Lemma~\ref{RSS 4.11}(1). Since $M$ is $g$-divisible, $M^{**}$ is also $g$-divisible by Lemma~\ref{RSS 2.13}(3). Lemma~\ref{RSS 2.14}(2) then gives $\ovl{M}\ehd \ovl{M^{**}}$. Since $\ovl{R}$ is Cohen-Macaulay, $\GK_{\ovl{R}}(\ovl{M}^{**}/\ovl{M})\leq 0$ by Lemma~\ref{RSS 4.11}(1). We hence have that
\begin{equation}\label{ovl ** commute} \ovl{M^{**}}\ehd \ovl{M}\ehd(\ovl{M})^{**}.\end{equation}\par
Now let $J=M^{*}$. As $M\supseteq R$, $J\subseteq R$ is a reflexive left ideal of $R$. Therefore, as $R$ is Cohen-Macaulay, the $R$-module $R/J$ is $2$-pure by Lemma~\ref{RSS 4.11}(2). Part (1) then applies and shows $(\ovl{J})^*\ehd\ovl{J^*}$. Becasue $\ovl{R}$ is Cohen-Macaulay, we have that $(\ovl{J})^{**}\ehd \ovl{J}$ by Lemma~\ref{RSS 4.11}(1). These with Lemma~\ref{CM lemma}(3) and (\ref{ovl ** commute}) gives
\begin{equation*}
\ovl{M^*}=\ovl{J}\ehd(\ovl{J})^{**}=(\ovl{J^*})^*=(\ovl{M^{**}})^* = (\ovl{M})^*.\qedhere
\end{equation*}
\end{proof}

Finally we need \cite[Lemma~6.14]{RSS}. This result is already general enough for us.

\begin{lemma}\label{RSS 6.14}\cite[Lemma~6.14]{RSS}
Let $B=B(E,\NN,\sigma)$ where $E$ is a smooth elliptic curve, $\deg\NN\geq 1$ and $\sigma$ is of infinite order. Let $M$ be a finitely generated graded right $B$-submodule of $\Bbbk(E)[t,t^{-1};\sigma]=Q_\gr(B)$. Then $M\ehd\bigoplus_{n\geq 0}H^0(E,\OO(\mbf{q})\otimes \NN_n)$ for some divisor $\mbf{q}$. Set $M^*=\Hom_B(M,B)\subseteq\Bbbk(E)[t,t^{-1};\sigma]$. Then:
\begin{enumerate}[(1)]
\item $\End_B(M)\ehd B(E,\NN(\mbf{q}-\mbf{q}^\sigma),\sigma)$;
\item  $NN^*\ehd\End_B(M)$;
\item $M^*\ehd \bigoplus_{n\geq 0}H^0(E,\NN_n\otimes\OO(-\mbf{q}^{\sigma^n}))$.\qed
\end{enumerate}
\end{lemma}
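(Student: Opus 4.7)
The plan is to apply the noncommutative Serre theorem on the smooth curve $E$, and then translate the conclusions into explicit calculations inside the skew Laurent polynomial ring $Q = \Bbbk(E)[t,t^{-1};\sigma]$. Since $\deg \NN \geq 1$, the invertible sheaf $\NN$ is ample and hence $\sigma$-ample by the remark before Theorem~\ref{nc serre}. So Corollary~\ref{nc serre2} applies to $B = B(E,\NN,\sigma)$, and applied to the finitely generated graded right $B$-module $M \subseteq Q_\gr(B)$ yields a divisor $\mbf{q}$ on $E$ with
$$M \;\ehd\; \bigoplus_{n\geq 0} H^0(E,\OO_E(\mbf{q})\otimes \NN_n).$$
This is the main preliminary statement (the divisor $\mbf{q}$ is essentially the divisor of the invertible sheaf on $E$ corresponding to the image of $M$ under the equivalence $\qgr(B)\sim\coh(E)$).

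For parts (1) and (3) I would carry out a direct calculation in $Q = \Bbbk(E)[t,t^{-1};\sigma]$. Using the convention $ts = s^\sigma t$, write a degree-$n$ element of $Q$ as $ht^n$ with $h\in\Bbbk(E)$, and identify $M_m \cdot t^{-m}$ with $H^0(E,\OO(\mbf{q})\otimes \NN_m)\subseteq\Bbbk(E)$ for $m\gg 0$. Then the condition $ht^n\in \End_B(M)_n$ amounts to $h\cdot s^{\sigma^n}\in H^0(E,\OO(\mbf{q})\otimes\NN_{n+m})$ for every $s\in H^0(E,\OO(\mbf{q})\otimes\NN_m)$ and every $m\gg 0$. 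Since on a smooth curve the global sections of a sufficiently ample invertible sheaf generate its stalks, the condition forces
$$h\in H^0\bigl(E,\,\NN_n\otimes\OO(\mbf{q}-\mbf{q}^{\sigma^n})\bigr).$$
A quick telescoping computation shows $\NN_n\otimes\OO(\mbf{q}-\mbf{q}^{\sigma^n})\cong (\NN(\mbf{q}-\mbf{q}^\sigma))_n$, giving (1). Part (3) is the analogous computation with ``$hs^{\sigma^n}\in H^0(E,\NN_{n+m})$'' in place of the membership constraint above, yielding $h\in H^0(E,\NN_n\otimes\OO(-\mbf{q}^{\sigma^n}))$.

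For (2), the easy inclusion $MM^*\subseteq \End_B(M)$ follows from $m^*M\subseteq B$ and $mB\subseteq M$ for $m\in M,\,m^*\in M^*$ (so $(mm^*)M\subseteq mB\subseteq M$). The main obstacle, and the step I would spend most care on, is the reverse containment in high degrees. The approach I would take is to show that for each sufficiently large $n$, the multiplication map
$$\bigoplus_{i+j=n} H^0(E,\OO(\mbf{q})\otimes\NN_i)\otimes H^0(E,\NN_j\otimes \OO(-\mbf{q}^{\sigma^j}))^{\sigma^i} \;\longrightarrow\; H^0(E,\NN_n\otimes\OO(\mbf{q}-\mbf{q}^{\sigma^n}))$$
is surjective. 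Choosing a single well-placed $i$ (say $i=\lfloor n/2\rfloor$) so that both tensor factors are invertible sheaves of high enough degree on the elliptic curve $E$, surjectivity of global-section multiplication is standard (an elementary case of Mumford's theorem on elliptic curves, or it can be deduced directly from the vanishing of $H^1$ of the kernel of evaluation for sufficiently positive line bundles). Together with parts (1) and (3) this gives $(MM^*)_n = \End_B(M)_n$ for $n\gg 0$, completing the proof.
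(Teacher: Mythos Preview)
The paper does not supply its own proof of this lemma: it is quoted verbatim from \cite[Lemma~6.14]{RSS} and closed with a bare \qed. So there is nothing in the paper to compare against beyond the citation.

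Your argument is the natural one and is correct. The preliminary identification of $M$ comes straight from Corollary~\ref{nc serre2}; the degree-by-degree calculation inside $Q=\Bbbk(E)[t,t^{-1};\sigma]$ for (1) and (3) is exactly right, and the telescoping identity $\NN_n\otimes\OO(\mbf{q}-\mbf{q}^{\sigma^n})\cong(\NN(\mbf{q}-\mbf{q}^\sigma))_n$ is what makes (1) come out as stated. For (2), the surjectivity of the global-section multiplication map for line bundles of high degree on an elliptic curve is precisely Lemma~\ref{Ro3.1} in this paper (quoted from \cite{Ro}), so you can cite that rather than invoking Mumford's theorem directly. One small bookkeeping point: since the identifications $M_i=H^0(E,\OO(\mbf{q})\otimes\NN_i)$ and $M^*_j=H^0(E,\NN_j\otimes\OO(-\mbf{q}^{\sigma^j}))$ hold only for $i,j$ beyond some threshold, when you pick $i=\lfloor n/2\rfloor$ you should take $n$ large enough that both $i$ and $n-i$ lie in that range and give degrees at least $3$ (to avoid the exceptional case in Lemma~\ref{Ro3.1}); this is harmless since the conclusion is only $\ehd$. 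Finally, note the obvious typo in the statement: ``$NN^*$'' in (2) should read ``$MM^*$'', as you correctly interpreted.
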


We are now able to describe the images of our maximal orders in $\ovl{S}$. Proposition~\ref{RSS 6.7} is our result corresponding to Rogalski, Sierra and Stafford's Theorem~\ref{real RSS 6.7}. Again we are able to substitute in our previous results and leave the proof largely unchanged.

\begin{prop}\label{RSS 6.7}
Let $\bfd$ be an effective divisor with $\deg\bfd\leq 2$, and let $R=S(\bfd)$. Suppose $M$ is a finitely generated $g$-divisible right $R$-module with $R\subseteq M\subseteq S$. Set $U=\End_R(M)$ and $F=\End_R(M^{**})$. Then there is an effective divisor $\mbf{y}$ such that
\begin{equation}\label{RSS 6.7 eq}
 \ovl{F}\ehd\ovl{U}\ehd B(E,\LL(-\mbf{x}),\sigma) \; \text{\, where }\mbf{x} =\bfd-\mbf{y}+\mbf{y}^\sigma.
 \end{equation}
Moreover, if $V=F\cap S$, then $U\subseteq V\subseteq F$, and $(V,F)$ is a maximal order pair.
\end{prop}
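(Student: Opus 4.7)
The strategy is to translate everything to the factor ring $\ovl{R}=B(E,\LL(-\bfd),\sigma)=B(E,\NN,\sigma)$, where we can invoke the classification in Lemma~\ref{RSS 6.14}, and then lift back via Lemma~\ref{RSS 6.12}. First I would verify the two inclusions in $\ovl{U}\ehd \End_{\ovl{R}}(\ovl{M})$. The inclusion $\ovl{U}\subseteq \End_{\ovl{R}}(\ovl{M})$ is immediate since $UM\subseteq M$. For the reverse direction, note that $M^*M\subseteq R$ gives $MM^*\subseteq U$; applying Lemma~\ref{Ab ehd (Ageqn)B}, $\ovl{M^*}\ehd(\ovl{M})^*$ from Lemma~\ref{RSS 6.12}(2), and Lemma~\ref{RSS 6.14}(2) in turn yields
$$\End_{\ovl{R}}(\ovl{M})\ehd \ovl{M}\cdot (\ovl{M})^* \ehd \ovl{M}\cdot\ovl{M^*}\ehd \ovl{MM^*}\subseteq \ovl{U}.$$
Thus $\ovl{U}\ehd \End_{\ovl{R}}(\ovl{M})$.

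Next I would identify this endomorphism ring. Since $\ovl{M}$ is a finitely generated graded right $\ovl{R}$-submodule of $Q_\gr(\ovl{R})=\Bbbk(E)[t,t^{-1};\sigma]$, Lemma~\ref{RSS 6.14} provides a divisor $\mbf{q}$ with $\ovl{M}\ehd\bigoplus_n H^0(E,\OO(\mbf{q})\otimes\NN_n)$ and $\End_{\ovl{R}}(\ovl{M})\ehd B(E,\NN(\mbf{q}-\mbf{q}^\sigma),\sigma)$. Because $R\subseteq M$ forces $\ovl{R}\subseteq\ovl{M}$, and hence $\NN_n\subseteq \OO(\mbf{q})\otimes\NN_n$ in high degrees, Lemma~\ref{on divisors} gives $\mbf{q}\geq 0$. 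Setting $\mbf{y}=\mbf{q}$, which is effective, and $\bfx=\bfd-\mbf{y}+\mbf{y}^\sigma$, we obtain
$$\ovl{U}\ehd B(E,\LL(-\bfx),\sigma).$$
For $F$, the point is that $\ovl{R}$ is Auslander-Gorenstein and Cohen-Macaulay of GK-dimension $2$ (by Theorem~\ref{B properties}(3)), so by Lemma~\ref{RSS 4.11}(1) we have $(\ovl{M})^{**}\ehd \ovl{M}$, while $\ovl{M^{**}}\ehd (\ovl{M})^{**}$ by Lemma~\ref{RSS 6.12}(2). Running the identical argument with $M^{**}$ in place of $M$ produces exactly the same divisor $\bfx$ and shows $\ovl{F}\ehd\ovl{U}\ehd B(E,\LL(-\bfx),\sigma)$.

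For the final assertion, since $R\subseteq M\subseteq S$ and $M$ is $g$-divisible, Proposition~\ref{RSS 6.4} applies directly: $F$ is $g$-divisible and is the unique maximal order containing $U$, while $V=F\cap S$ is the unique maximal $S$-order containing $U$; in particular $U\subseteq V\subseteq F$. Lemma~\ref{RSS 6.4(3)} provides the ideal $K\lhd F$ with $K\subseteq V$ and $\GK(F/K)\leq 1$ required by Definition~\ref{max order pair}(1)(c). Thus $(V,F)$ satisfies all three conditions of Definition~\ref{max order pair}(1), so is a maximal order pair.

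\textbf{Anticipated obstacle.} All the pieces are essentially in place from the preparatory lemmas, so I do not expect any single step to be substantially difficult; the main care needed is the bookkeeping to ensure the divisor $\mbf{y}=\mbf{q}$ really is effective (which follows from $R\subseteq M$) and that the same $\mbf{q}$ works for both $\ovl{M}$ and $(\ovl{M})^{**}$ (which is where Cohen-Macaulayness of $\ovl{R}$ is used crucially via Lemma~\ref{RSS 4.11}(1)). These are exactly the ingredients for which Lemma~\ref{RSS 6.12} and Lemma~\ref{RSS 6.14} were assembled, so the proof should be largely a transcription of \cite[Proposition~6.7]{RSS} with $S(\bfd)$ in place of $T(\bfd)$.
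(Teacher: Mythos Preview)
Your proposal is correct and follows essentially the same approach as the paper: reduce to $\ovl{R}$, invoke Lemma~\ref{RSS 6.12} and Lemma~\ref{RSS 6.14} to compute $\ovl{U}\ehd\End_{\ovl{R}}(\ovl{M})\ehd B(E,\LL(-\bfx),\sigma)$, and conclude the maximal order pair statement via Lemma~\ref{max order pair equiv def}. The only organizational difference is that the paper obtains $\ovl{F}\ehd\ovl{U}$ by first using the ideal $K$ from Lemma~\ref{RSS 6.4(3)} to deduce $\GK_U(F/U)\leq 1$ and hence $\dim_\Bbbk\ovl{F}/\ovl{U}<\infty$, rather than rerunning the endomorphism argument on $M^{**}$ as you do; both routes are valid.
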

\begin{proof}
We first check $\ovl{U}\ehd \ovl{F}$. By Lemma~\ref{RSS 2.12}(3), $U$ is $g$-divisible, and hence noetherian by Proposition~\ref{RSS 2.9}. By Lemma~\ref{RSS 6.4(3)}, there exists an ideal $K$ of $F$ such that $K\subseteq U$ and $\GK(F/K)\leq 1$. Given $x\in K$, clearly $xF\subseteq U$; it follows that $F_U$ is finitely generated. Thus as a right $U$-module, $\GK_U(F/K)\leq 1$ also. This then implies, $\GK_U(F/U)\leq 1$.
Set $N=F/U$; one can check $N/Ng\cong \ovl{F}/\ovl{U}$. Moreover, because $U$ is $g$-divisible, $N$ is $g$-torsionfree. Therefore $\GK(\ovl{F}/\ovl{U})\leq 1-1=0$ by \cite[Proposition~5.1(e)]{KL}; or in other words $\dim_\Bbbk \ovl{F}/\ovl{U}< \infty$. \par
Now  by Lemma~\ref{RSS 6.12}, $\ovl{M^*}\ehd (\ovl{M})^*$, and so $\ovl{M}(\ovl{M^*})\ehd \ovl{M}(\ovl{M})^*$ by Lemma~\ref{Ab ehd (Ageqn)B}. Moreover, by Lemma~\ref{RSS 6.14}(2), $\ovl{M}(\ovl{M})^*\ehd \End_{\ovl{R}}(\ovl{M})$. Because clearly $MM^*\subseteq U$, we have
$$\ovl{U}\supseteq \ovl{M}(\ovl{M^*})\ehd \ovl{M}(\ovl{M})^*\ehd \End_{\ovl{R}}(\ovl{M}).$$
On the other hand, by Lemma~\ref{RSS 2.12}(3), $\ovl{U}=\ovl{\End_R(M)}\subseteq \End_{\ovl{R}}(\ovl{M})$. This forces
\begin{equation}\label{RSS 6.7 ovlU ehd End(ovlM)}
\ovl{F}\ehd \ovl{U}\ehd \End_{\ovl{R}}(\ovl{M}).
\end{equation}
\par
By Corollary~\ref{nc serre2}, we can write $\ovl{M}\ehd\bigoplus_{i\geq 0}H^0(E,\OO(\mbf{y})\otimes \LL(-\bfd)_i )$ for some divisor $\mbf{y}$ on $E$. Because $\ovl{M}\supseteq\ovl{R}$, $\mbf{y}$ must be an effective divisor. Lemma~\ref{RSS 6.14}(1) then tells us that
$$\ovl{F}\ehd\ovl{U}\ehd \End_{\ovl{R}}(\ovl{M})\ehd B(E,\LL(-\mbf{x}),\sigma) \; \text{ where } \mbf{x}=\mbf{d}-\mbf{y}+\mbf{y^\sigma},$$
which proves (\ref{RSS 6.7 eq}). The last statement follows from Lemma~\ref{max order pair equiv def}.
\end{proof}

\subsection{Blowups and virtual blowups}

In Chapter~\ref{The rings S(d)} we defined the ring $S(\bfd)$ for an effective divisor with $\deg\bfd\leq 2$. As mentioned in Remark~\ref{why blowup}, $S(\bfd)$ should be thought of as ``the blowup of $S$ at $\bfd$". The algebra $F$ from Proposition~\ref{RSS 6.7} has many similar properties with the $S(\bfd)$. Most notable is that $\ovl{F}\cong F/gF$ is equal in high degrees to a twisted homogenous coordinate ring. We will be calling such $F$ \textit{virtual blowups} (Definition~\ref{virtual blowup}).\par
We now investigate precisely what divisors $\bfx$ arise in Proposition~\ref{RSS 6.7}. These will be the divisors at which we can ``blow up".

\begin{definition}\label{virtually effective}\cite[Definition~7.1]{RSS}.\index{virtually effective divisor}
Let $\rho:E\to E$ be an automorphism of infinite order. A divisor $\bfx$ on $E$ with $\deg\bfx\geq 0$ is called \textit{$\rho$-virtually effective} if there exists $n\geq 0$ such that $\bfx+\bfx^\rho+\dots+\bfx^{\rho^{n-1}}$ is an effective divisor on $E$.
\end{definition}

When the automorphism $\rho:E\to E$ is clear from context, we often drop the $\rho$ from $\rho$-virtually effective.

\begin{example}\label{veff div eg}
An effective divisor on $E$ is trivially virtually effective. In the other direction, the divisor $p-p^\sigma+p^{\sigma^2}$ gives an example of a noneffective $\sigma$-virtually effective divisor.
\end{example}

\begin{lemma}\label{RSS 7.3(1)}
The divisor $\mbf{x}$ in (\ref{RSS 6.7 eq}) is $\sigma$-virtually effective.
\end{lemma}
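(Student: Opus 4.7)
My plan is to derive virtual effectiveness of $\bfx = \bfd - \bfy + \bfy^\sigma$ from the containment $\ovl{U} \subseteq \ovl{S}$. First a short calculation: the sum telescopes to give
$$[\bfx]_n = \sum_{i=0}^{n-1}\bfx^{\sigma^i} = [\bfd]_n - \bfy + \bfy^{\sigma^n},$$
so $\deg\bfx = \deg\bfd \geq 0$ and, by Definition~\ref{virtually effective}, it suffices to exhibit some (in fact any sufficiently large) $n$ for which $[\bfx]_n$ is effective.

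The central idea is to interpret the ring-level inclusion $\ovl{U} \subseteq \ovl{S}$ inside the ambient graded quotient ring $Q_\gr(\ovl{S}) = \Bbbk(E)[t,t^{-1};\sigma]$ as an inclusion of twisted homogeneous coordinate ring pieces in each high degree. Fixing $\bfd_0$ on $E$ with $\LL \cong \OO_E(\bfd_0)$ and a rational trivialisation of $\LL$ identifies $\ovl{S}_n = H^0(E, \LL_n)$ with the Riemann-Roch space for $[\bfd_0]_n$ sitting inside $\Bbbk(E)\cdot t^n$. Under the same trivialisation, Proposition~\ref{RSS 6.7} identifies $\ovl{U}_n$ with the Riemann-Roch space for $[\bfd_0]_n - [\bfx]_n$ for $n \gg 0$. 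The inclusion $\ovl{U}_n \subseteq \ovl{S}_n$ thus becomes a comparison of two divisor conditions on rational functions in $\Bbbk(E)$.

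To finish, I would convert this inclusion into the pointwise inequality $[\bfx]_n \geq 0$ using Riemann-Roch on the elliptic curve $E$. Since $\deg\bfx \leq \deg\bfd \leq 2 < 3 = \deg\LL$, the degree of $[\bfd_0]_n - [\bfx]_n$ is positive for $n$ large. At each point $p \in E$ there is then a section in the smaller Riemann-Roch space attaining the maximal allowable pole order at $p$; containment of this section in the larger space forces $\mathrm{ord}_p([\bfx]_n) \geq 0$. Since $p$ was arbitrary, $[\bfx]_n$ is effective for all $n \gg 0$, so $\bfx$ is $\sigma$-virtually effective.

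The only real obstacle is a bookkeeping point: verifying that the two identifications of $\ovl{U}_n$ and $\ovl{S}_n$ as subspaces of $\Bbbk(E)\cdot t^n$ come from a single compatible ambient trivialisation. This compatibility is implicit in the way $\ovl{U} \ehd B(E, \LL(-\bfx), \sigma)$ was extracted in Proposition~\ref{RSS 6.7}, but needs to be tracked carefully since a priori a twisted homogeneous coordinate ring embeds into its graded quotient ring only after choosing a rational section.
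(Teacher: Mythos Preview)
Your proposal is correct and follows essentially the same approach as the paper: both use the containment $\ovl{U}_n \subseteq \ovl{S}_n$ for large $n$ to read $H^0(E,\LL_n(-[\bfx]_n)) \subseteq H^0(E,\LL_n)$ inside $\Bbbk(E)\cdot t^n$ and then deduce that $[\bfx]_n$ is effective. The only difference is the conversion step---the paper passes from the section-space inclusion to a sheaf inclusion $\LL_n(-[\bfx]_n)\subseteq \LL_n$ by invoking global generation (Hartshorne, Corollary~IV.3.2) and then applies Lemma~\ref{on divisors}, whereas you probe pointwise via Riemann--Roch; the paper also leaves your trivialisation-compatibility concern implicit.
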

\begin{proof}
Let $\mbf{x}$, $U$, $F$ be as in Proposition~\ref{RSS 6.7}. Put $B(E,\NN,\sigma)$ where $\NN=\LL(-\bfx)$. By Proposition~\ref{RSS 6.7}, $\ovl{U}\ehd \ovl{F}\ehd B$. Let $n\geq 0$ be such that  $\ovl{U}_n=B_n=H^0(E,\NN_n)$. Then because $\ovl{U}\subseteq \ovl{S}=B(E,\LL,\sigma)$, we have that $H^0(E,\NN_n)\subseteq H^0(E,\LL_n)$. Enlarging $n$ if necessary, we can assume both $\deg\LL_n\geq 2$ and $\deg\NN_n>2$. Then \cite[Corollary~IV.3.2]{Ha} implies $\LL_n$ and $\NN_n$ are generated by their global sections. Therefore we have $\NN_n=\LL_n(-[\mbf{x}]_n)\subseteq \LL_n$, and hence $[\mbf{x}]_n$ is effective by Lemma~\ref{on divisors}.
\end{proof}

Our definition of a virtually effective divisor is different from Rogalski, Sierra and Stafford's \cite[Definition~7.1]{RSS}. These are equivalent notions by the next result. We prefer Definition~\ref{virtually effective} for it is simpler to state and better motivates the name ``virtually effective". We also include \cite[Proposition~7.3(2)]{RSS} into Lemma~\ref{RSS 7.3(2)} which gives a very useful characterisation of virtually effective divisors.

\begin{lemma}\label{RSS 7.3(2)} \cite[Proposition~7.3(2)]{RSS}.
Let $\rho:E\to E$ be an automorphism of infinite order and $\bfx$ be a divisor on $E$. The following are equivalent:
\begin{enumerate}[(1)]
\item $\mbf{x}$ is $\rho$-virtually effective;
\item For each $\rho$-orbit $\mathbb{O}$ in $E$, pick $p\in \mathbb{O}$ such that $\mbf{x}|_{\mathbb{O}}=\sum_{i=0}^n a_ip^{\rho^i}$. For each $k\in \Z$, $\mbf{x}$ satisfies
$$\sum_{i\leq k} a_i \geq 0 \; \;\text{ and } \; \; \sum_{i\geq k}a_i \geq 0;$$
\item $\mbf{x}$ can be written as $\mbf{x}=\mbf{u}-\mbf{v}+\mbf{v}^\rho$ where $\mbf{u}$ is an effective divisor supported on distinct $\rho$-orbits, and $\mbf{v}$ is an effective divisor such that $0\leq \mbf{v}\leq\mbf{u}+\mbf{u}^\rho+\dots+\mbf{u}^{\rho^{k-1}}$ for some $k\geq 1$.
\end{enumerate}
\end{lemma}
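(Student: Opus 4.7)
The plan is to work orbit-by-orbit, since $\bfx$ has finite support and the relevant structure is locally determined on each $\rho$-orbit $\bbO$. For each orbit, fix a point $p\in\bbO$ so that $\bfx|_\bbO=\sum_{i}a_ip^{\rho^i}$ with only finitely many nonzero $a_i$. Because $\bfx^\rho|_\bbO = \sum a_i p^{\rho^{i+1}}$, the coefficient of $p^{\rho^k}$ in $[\bfx]_n$ on $\bbO$ is exactly
$$c_{n,k} \;=\; \sum_{j=0}^{n-1} a_{k-j} \;=\; \sum_{i=k-n+1}^{k} a_i.$$
This is the single identity driving everything.

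For (1) $\Leftrightarrow$ (2): If $[\bfx]_n$ is effective for some (hence all sufficiently large) $n$, then $c_{n,k}\geq 0$ for all $k$ and all large $n$; letting $k\to -\infty$ along the support and $n\to\infty$ separately gives both tail inequalities $\sum_{i\leq k}a_i\geq 0$ and $\sum_{i\geq k}a_i\geq 0$. Conversely, assuming (2), take $n$ larger than the diameter of $\mathrm{supp}(\bfx|_\bbO)$ for every orbit $\bbO$ on which $\bfx$ is supported (only finitely many). Then for every $k$ the window $[k-n+1,k]$ either lies entirely above, entirely below, or entirely contains the support, so $c_{n,k}$ equals one of $\sum_{i\leq k}a_i$, $\sum_{i\geq k-n+1}a_i$, or the total $\sum a_i$ (a limit of either, hence also $\geq 0$). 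So $[\bfx]_n$ is effective.

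For (2) $\Rightarrow$ (3), I construct $\bfu,\bfv$ orbit-by-orbit. On an orbit $\bbO$ where $\bfx$ is nonzero, let $\ell$ be the least index with $a_\ell\neq 0$ and set $u_\bbO := \sum_i a_i\geq 0$ (which is $\geq 0$ by (2)). Define $\bfu|_\bbO = u_\bbO\, p^{\rho^\ell}$ (a single point, so $\bfu$ is supported on distinct orbits), and
$$v_m \;:=\; u_\bbO\cdot[m\geq \ell] - \sum_{i\leq m}a_i \;=\; \sum_{i>m}a_i\ \text{for }m\geq\ell,\qquad v_m:=0\text{ for }m<\ell.$$
Condition (2) gives $v_m\geq 0$; the identity $v_m = u_\bbO-\sum_{i\leq m}a_i$ combined with $\sum_{i\leq m}a_i\geq 0$ gives $v_m\leq u_\bbO$; and a direct check $a_m = u_\bbO[m=\ell] - v_m + v_{m-1}$ shows $\bfx|_\bbO = \bfu|_\bbO - \bfv|_\bbO + \bfv^\rho|_\bbO$. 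Since $v$ is supported in a finite interval $[\ell,N-1]$ with $v_m\leq u_\bbO$, choosing $k$ larger than the width of $\mathrm{supp}(\bfx|_\bbO)$ for each of the finitely many relevant orbits yields $0\leq\bfv\leq[\bfu]_k$ globally.

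For (3) $\Rightarrow$ (2) (which closes the loop), work on each orbit and telescope: writing $\bfu|_\bbO=u_\bbO p^{\rho^\ell}$, $\bfv|_\bbO=\sum v_ip^{\rho^i}$ with $0\leq v_i\leq u_\bbO$ supported in $[\ell,\ell+k-1]$, one gets $\sum_{i\leq m}a_i = u_\bbO[m\geq\ell]-v_m\geq 0$ and symmetrically $\sum_{i\geq m}a_i\geq 0$, verifying (2). The only mild obstacle is keeping the orbit-indexing straight and remembering that $\rho$-virtual effectivity for a single $n$ suffices in (1) (effectivity is preserved under further summation, since (1)$\Leftrightarrow$(2) shows it is equivalent to tail-positivity). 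No deep input is required: everything reduces to the one coefficient formula for $c_{n,k}$ and an elementary telescoping.
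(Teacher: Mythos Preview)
Your argument is correct and in fact more complete than the paper's. The paper proves $(1)\Rightarrow(2)$ by the same explicit coefficient computation you give, cites \cite[Proposition~7.3(2)]{RSS} for $(2)\Rightarrow(3)$ without reproducing the construction, and closes the loop via the one-line telescoping $[\bfx]_k=[\bfu]_k-\bfv+\bfv^{\rho^k}$ for $(3)\Rightarrow(1)$. By contrast, you supply the explicit orbit-by-orbit construction of $\bfu$ and $\bfv$ for $(2)\Rightarrow(3)$ (setting $\bfu|_\bbO=(\sum_i a_i)p^{\rho^\ell}$ and $v_m=\sum_{i>m}a_i$), which is exactly the argument the paper outsources; and you verify $(3)\Rightarrow(2)$ by telescoping the partial sums rather than $(3)\Rightarrow(1)$ directly. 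Both routes are equally short; the paper's $(3)\Rightarrow(1)$ is marginally slicker, but your $(3)\Rightarrow(2)$ has the advantage of staying within the tail-sum language.

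One small point worth tightening: in $(1)\Rightarrow(2)$ you write ``some (hence all sufficiently large) $n$'' and later justify this via the equivalence itself, which is circular as stated. The non-circular justification is immediate---if $[\bfx]_n$ is effective then so is $[\bfx]_{mn}=\sum_{j=0}^{m-1}([\bfx]_n)^{\rho^{jn}}$ for every $m\geq 1$---so you may replace $n$ by any multiple exceeding the support diameter. The paper makes the same ``enlarging $m$'' move with the same tacit justification, so this is not a defect unique to your write-up.
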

\begin{proof} For this proof we use Notation~\ref{[d]_n} with $\rho$ in place of $\sigma$. That is, we write $[\bfx]_n=\bfx+\bfx^\rho+\dots+\bfx^{\rho^{n-1}}$. We also introduce the temporary notation $p_i=p^{\rho^i}=\rho^{-i}(p)$ for $p\in E$ and $i\in\Z$. \par

$(1)\Rightarrow(2)$. Fix a $\rho$-orbit $\bbO$ of $E$ and choose $p\in \bbO$ such that $\mbf{x}|_{\mathbb{O}}=\sum_{i=0}^n a_ip_i$ for some $p=p_0\in E$ and $a_i\in\Z$. It clearly suffices to prove (2) on the $\sigma$-orbit $\bbO$. Hence without loss of generality we assume $\bfx=\bfx|_\bbO$. Given $m\in\N$, we calculate
$$[\mbf{x}]_m=\sum_{i=0}^n a_ip_i+\sum_{i=0}^n a_ip_{i+1}+\dots+\sum_{i=0}^n a_ip_{i+m-1}= \sum_{i=0}^n a_i(p_i+p_{i+1}+\dots+p_{i+m-1}). $$
Furthermore, if $m\geq n$ we get
\begin{multline}\label{(1)=>(2) eq}[\mbf{x}]_m=a_0p_0+(a_0+a_1)p_1+\dots +( \sum_{i\leq k} a_i)p_k+\dots +  (\sum_{i\leq n-1}a_i)p_{n-1} \\
\hspace{2cm}  +(\sum_{i\geq 0} a_i)(p_n+\dots+p_{m-1})+ (\sum_{i\geq 1} a_i)p_{m}+\dots +(\sum_{i\geq k} a_i)p_{m+k-1}+\dots +a_np_{n+m-1}.
\end{multline}
Now suppose that $[\mbf{x}]_m$ is effective for some $m\geq 0$. By enlarging $m$ if necessary we can assume that $m\geq n$. In which case the coefficient of each $p_i$ in equation (\ref{(1)=>(2) eq}) must be positive. In other words,
$$\sum_{i\leq k} a_i \geq 0 \; \;\text{ and } \; \; \sum_{i\geq k}a_i \geq 0\;\text{ for all }k\in\Z.$$
\par
$(2)\Rightarrow(3)$. This is one direction of \cite[Proposition~7.3(2)]{RSS}. \par

$(3)\Rightarrow (1)$. Suppose that $\mbf{x}=\mbf{u}-\mbf{v}+\mbf{v}^\rho$  as in (3), with say $\mbf{v}\leq[\mbf{u}]_k$. Then
\begin{equation*}
[\bfx]_{k}=[\mbf{u}]_k-[\mbf{v}]_k+[\mbf{v}^\rho]_k=[\mbf{u}]_k-\mbf{v}+\mbf{v}^{\rho^k},
\end{equation*}
which is clearly effective.
\end{proof}

To avoid confusion, we now reserve the notation $[\bfx]_n=\bfx+\bfx^\sigma+\dots+\bfx^{\sigma^{n-1}}$ from Notation~\ref{[d]_n} for the automorphism $\sigma$ from Hypothesis~\ref{standing assumption 2}. This is particularly relevant in the next lemma.

\begin{lemma}\label{[x]_3 is v effective}
Let $\bfx$ be a $\sigma$-virtually effective divisor. Then $[\bfx]_3=\bfx+\bfx^\sigma+\bfx^{\sigma^2}$ is $\tau$-virtually effective, where $\tau=\sigma^3$.
\end{lemma}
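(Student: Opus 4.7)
The plan is to reduce the claim to an observation about the sums $[\bfx]_n$ by exploiting the relation $\tau = \sigma^3$, and then invoke the combinatorial characterisation of virtual effectivity from Lemma~\ref{RSS 7.3(2)}.

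First I would unravel the definitions. By Definition~\ref{virtually effective} applied to $\tau$, we must find some $m \geq 1$ such that $[\bfx]_3 + ([\bfx]_3)^\tau + \cdots + ([\bfx]_3)^{\tau^{m-1}}$ is an effective divisor. Since $\tau = \sigma^3$, Notation~\ref{p^sigma^j} gives $([\bfx]_3)^{\tau^j} = \bfx^{\sigma^{3j}} + \bfx^{\sigma^{3j+1}} + \bfx^{\sigma^{3j+2}}$ for each $j \geq 0$. Summing, the contributions telescope to
\[
\sum_{j=0}^{m-1} ([\bfx]_3)^{\tau^j} \;=\; \sum_{i=0}^{3m-1} \bfx^{\sigma^i} \;=\; [\bfx]_{3m}.
\]
So the lemma reduces to showing that $[\bfx]_{3m}$ is effective for some $m \geq 1$.

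Next I would argue that if $[\bfx]_n$ is effective for even one value of $n$, then $[\bfx]_N$ is effective for all sufficiently large $N$. This is essentially the content of Lemma~\ref{RSS 7.3(2)}: by the implication $(1)\Rightarrow(2)$ in that lemma, $\sigma$-virtual effectivity of $\bfx$ amounts to the purely combinatorial conditions $\sum_{i \leq k} a_i \geq 0$ and $\sum_{i \geq k} a_i \geq 0$ on the coefficients $\bfx|_\bbO = \sum_i a_i p_i$ for each $\sigma$-orbit $\bbO$. Inspecting equation (\ref{(1)=>(2) eq}) in the proof of that lemma, one sees that, on any fixed orbit, the coefficients of $p_k$ in $[\bfx]_N$ are precisely these partial sums (for $N$ large enough that $\bfx|_\bbO$ is supported in $\{p_0, \dots, p_{N-1}\}$). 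Since $\bfx$ has finite support, after some threshold $N_0$ (depending only on $\bfx$) this description is uniform on every orbit meeting the support of $\bfx$, and all coefficients are nonnegative. Hence $[\bfx]_N$ is effective for every $N \geq N_0$.

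Finally I would simply pick $m \geq 1$ with $3m \geq N_0$; then $[\bfx]_{3m}$ is effective, and by the reduction above this exhibits $[\bfx]_3$ as $\tau$-virtually effective. The only subtlety is justifying the uniform-in-$N$ effectivity in the middle step, but this is already essentially present in the proof of Lemma~\ref{RSS 7.3(2)} and requires no new ideas.
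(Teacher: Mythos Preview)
Your proof is correct and follows essentially the same approach as the paper: both reduce the claim to the telescoping identity $\sum_{j=0}^{m-1} ([\bfx]_3)^{\tau^j} = [\bfx]_{3m}$ and then need $[\bfx]_{3m}$ to be effective for some $m$. The paper is terser, simply saying ``enlarge $n$ if necessary so that $n$ is divisible by $3$,'' whereas you spell out explicitly why such enlargement is permissible via the characterisation in Lemma~\ref{RSS 7.3(2)}; this extra care is not a different route, just a more detailed justification of the same implicit step.
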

\begin{proof}
Suppose that $[\bfx]_n$ is effective. If necessary, enlarge $n$ so that $n$ is divisible by 3, say $n=3m$. We then have
\begin{multline*}
[\bfx]_3+[\bfx]_3^\tau+\dots+[\bfx]_3^{\tau^{m-1}}=(\mbf{x}+\mbf{x}^\sigma+ \mbf{x}^{\sigma^2})+(\mbf{x}+\mbf{x}^\sigma+\mbf{x}^{\sigma^2})^{\tau}+
\dots+(\mbf{x}+\mbf{x}^\sigma+\mbf{x}^{\sigma^2})^{\tau^{m-1}}\\
=\bfx+\bfx^\sigma+\dots+\bfx^{\sigma^{n-1}}=[\mbf{x}]_n.
\end{multline*}
This is effective by assumption; hence $[\bfx]_3$ is $\tau$-virtually effective.
\end{proof}

We now turn our attention away from divisors and back to subalgebras of $S$. We start with the definition of a virtual blowup. Below we also recall the definition of a virtual blowup of $T$.

\begin{definition}\label{virtual blowup}\index{virtual blowup of $S$}
\begin{enumerate}[(1)]
\item Let $\bfx$ be a $\sigma$-virtually effective divisor on $E$ with $\deg\bfx\leq 2$. We say that a cg subalgebra $F$ of $\Sg$ with $Q_\gr(F)=Q_\gr(S)$ is a \textit{virtual blowup of $S$ at $\bfx$} if:
\begin{enumerate}[(a)]
\item $F$ is a part of a maximal order pair $(F\cap S,F)$ of $S$.
\item $\ovl{F}\ehd B(E,\LL(-\bfx),\sigma)$.
\end{enumerate}
\item \cite[Definition~6.9 and Definition~7.1]{RSS}. Retain Notation~\ref{3 Veronese notation2} for $T$. Let $\bfx$ be a $\tau$-virtually effective divisor on $E$ such that $\deg\bfx\leq 8$. We say that a cg subalgebra $F$ of $T_{(g)}$ with $Q_\gr(F)=Q_\gr(T)$ is a \textit{virtual blowup of $T$ at $\bfx$} if:
\begin{enumerate}[(a)]
\item $F$ is a part of a maximal order pair $(F\cap S,F)$ of $T$.
\item $\ovl{F}\ehd B(E,\MM(-\bfx),\tau)$.
\end{enumerate}
\end{enumerate}
\end{definition}

In Definition~\ref{vblowup def bg} we gave a different definition of a virtual blowup of $T$. Lemma~\ref{max order pair equiv def} shows that it is an equivalent to Definition~\ref{virtual blowup}(2).

\begin{example}\label{effetive blowup is virtual blowup}
If $\bfd$ is an effective divisor on $E$ with $\deg\bfd\leq 2$, then by Theorem~\ref{S(d) thm}, $S(\bfd)$ indeed satisfies Definition~\ref{virtual blowup}.
\end{example}

For an explicit example of a blowup at the virtually effective divisor $p-p^\sigma+p^{\sigma^2}$ see Theorem~\ref{S(p-p1+p2)}.

\begin{remark}\label{why vblowup}
Like with the $S(\bfd)$, our main motivation for the name ``virtual blowup" is by analogy with \cite{RSS}. On top of this we will also prove in Theorem~\ref{3 Veronese of virtual blowup} that if $F$ is a virtual blowup of $S$, then $F^{(3)}$ is a virtual blowup of $T=S^{(3)}$. Justification for their case is given in \cite[Remark~7.5]{RSS}.
\end{remark}

With our new language we can give our main result of this chapter.

\begin{theorem}\label{RSS 7.4}
\begin{enumerate}[(1)]
\item Let $V\subseteq S$ be a $g$-divisible cg maximal $S$-order. Then:
\begin{enumerate}[(a)]
\item there is a maximal order $F\supseteq V$ such that $(V,F)$ is a maximal order pair;
\item $F$ is a virtual blowup of $S$ at a virtually effective divisor $\mbf{x}$ with $\deg\mbf{x}\leq 2$;
\item $\ovl{V}\ehd\ovl{F}\ehd B(E,\LL(-\mbf{x}),\sigma)$.
\end{enumerate}
\item If $U\subseteq S$ is any $g$-divisible cg subalgebra with $Q_\gr(U)=Q_\gr(S)$, then there exists a maximal order pair $(V,F)$ as in (1), such that $U$ is contained in, and equivalent, to $V$ and $F$.
\end{enumerate}
\end{theorem}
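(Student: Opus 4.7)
The plan is to assemble part (1) directly from the structural results already proved in this section, and then reduce part (2) to part (1) by passing to the canonical endomorphism ring overring.

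For part (1), given a $g$-divisible cg maximal $S$-order $V$, I would first apply Corollary~\ref{RSS 6.6} to produce an effective divisor $\bfd$ with $\deg\bfd\leq 2$ and a $g$-divisible, two-sided finitely generated $(V,S(\bfd))$-bimodule $M$ with $S(\bfd)\subseteq M\subseteq S$ such that $V=\End_{S(\bfd)}(M)$. Setting $F=\End_{S(\bfd)}(M^{**})$, Corollary~\ref{RSS 6.6}(2) identifies $F$ as the unique maximal order containing $V$ and gives $V=F\cap S$. Lemma~\ref{RSS 6.4(3)} then supplies an ideal $K$ of $F$ with $K\subseteq V$ and $\GK(F/K)\leq 1$, so the pair $(V,F)$ satisfies all three conditions of Definition~\ref{max order pair}; this establishes (1a).

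For (1b) and (1c) I would invoke Proposition~\ref{RSS 6.7}: applied to this $(R=S(\bfd), M, U=V, F)$, it produces an effective divisor $\bfy$ such that, writing $\bfx=\bfd-\bfy+\bfy^\sigma$,
\[
\ovl{F}\ehd \ovl{V}\ehd B(E,\LL(-\bfx),\sigma),
\]
which is precisely (1c). Since $\sigma$ preserves degree, $\deg\bfx=\deg\bfd\leq 2$, and Lemma~\ref{RSS 7.3(1)} shows $\bfx$ is $\sigma$-virtually effective. Together with (1a) this verifies conditions (a) and (b) of Definition~\ref{virtual blowup}, giving that $F$ is a virtual blowup of $S$ at $\bfx$, as claimed in (1b).

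For part (2), the strategy is to construct the pair $(V,F)$ canonically from $U$ using Theorem~\ref{RSS 5.25}. Given a $g$-divisible cg subalgebra $U\subseteq S$ with $Q_\gr(U)=Q_\gr(S)$, Theorem~\ref{RSS 5.25} produces an effective $\bfd$ with $\deg\bfd\leq 2$ and the finitely generated $g$-divisible module $M=\widehat{US(\bfd)}$ with $S(\bfd)\subseteq M\subseteq S$, such that $W:=\End_{S(\bfd)}(M)$ satisfies $U\subseteq W\subseteq S$ and $U,W,S(\bfd)$ are pairwise equivalent orders. Define $F:=\End_{S(\bfd)}(M^{**})$ and $V:=F\cap S$; by Proposition~\ref{RSS 6.4}, $F$ is the unique maximal order containing and equivalent to $W$, and $V$ is the unique maximal $S$-order containing and equivalent to $W$. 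Hence $U\subseteq V\subseteq F$ and $U$ is equivalent to both $V$ and $F$. Finally, applying part (1) to $V$ shows the pair $(V,F)$ has all the asserted properties. There is no serious obstacle here: all the technical work (the passage from $U$ to $\End_{S(\bfd)}(M)$, the identification of $\ovl{F}$ in high degrees, and the virtual effectivity of the resulting divisor) has been carried out in Theorem~\ref{RSS 5.25}, Proposition~\ref{RSS 6.7} and Lemma~\ref{RSS 7.3(1)}, so the proof is a bookkeeping assembly of these results.
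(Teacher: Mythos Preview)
Your proposal is correct and follows essentially the same route as the paper: part (1) is assembled from Corollary~\ref{RSS 6.6}, Lemma~\ref{RSS 6.4(3)}, Proposition~\ref{RSS 6.7} and Lemma~\ref{RSS 7.3(1)} (the paper packages the first two via Lemma~\ref{max order pair equiv def}, but the content is identical), and part (2) is reduced to Proposition~\ref{RSS 6.4} via Theorem~\ref{RSS 5.25} exactly as you do.
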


\begin{proof}
(1). By definition $Q_\gr(V)=Q_\gr(S)$. By Corollary~\ref{RSS 6.6}(2) and Lemma~\ref{max order pair equiv def}, $V$ is part of a maximal order pair $(V,F)$. This proves part (a). By Lemma~\ref{max order pair equiv def}, $F=\End_{S(\bfd)}(M^{**})$ for some effective divisor $\bfd$ with $\deg\bfd\leq 2$, and a finitely generated $g$-divisible right $S(\bfd)$-module $M$ satisfying $S(\bfd)\subseteq M\subseteq S$. By Proposition~\ref{RSS 6.7} and Lemma~\ref{RSS 7.3(1)}, $F$ is a virtual blowup of $S$ at a virtually effective divisor $\bfx$. Part ($c$) follows directly from part ($b$).\par

(2). By Theorem~\ref{RSS 5.25}, $U$ is contained in and equivalent to some $\End_{S(\bfd)}(M)$, where $\bfd$ is an effective divisor of $\deg\bfd\leq 2$ and $M=\widehat{US(\bfd)}$. Clearly $S(\bfd)\subseteq M\subseteq S$ and so we can apply Proposition~\ref{RSS 6.4}.
\end{proof}

In addition to a complete description of $g$-divisible maximal $S$-orders, we are able to obtain a description of any $g$-divisible subalgebra. We recall that the \textit{idealiser}\index{idealiser} of a right ideal $J$ of a ring $A$ is defined as $\I_{A}(J)=\{a\in A\,|\; aJ\subseteq J\}$.\index[n]{ia@$\I_A(J)$} In other words, it is the largest subalgebra of $A$ in which $I$ is a two-sided ideal. Similarly, we can define an idealiser for a left ideal of $A$.

\begin{cor}\label{RSS 7.6}
Let $U\subseteq S$ be a $g$-divisible subalgebra with $Q_\gr(U)=Q_\gr(S)$. Then $U$ is an iterated sub-idealiser inside a virtual blowup of $S$. More precisely the following holds.
\begin{enumerate}[(1)]
\item There exists a virtually effective divisor $\mbf{x}=\mbf{u}-\mbf{v}+\mbf{v^\sigma}$ with $\deg\mbf{x}\leq 2$, and a blowup $F$ of $S$ at $\mbf{x}$, such that $V=F\cap S$ contains, and is equivalent to, $U$. The pair $(V,F)$ is a maximal order pair.
\item There is a $g$-divisible algebra $W$ with $U\subseteq W\subseteq V$ and such that $U$ is a right sub-idealiser inside $W$, and $W$ is a left sub-idealiser inside $V$. In more detail:
    \begin{enumerate}[(a)]
    \item There exists a $g$-divisible left ideal $L$ of $V$ such that either $L=V$ or $V/L$ is $2$-pure. There exists a $g$-divisible ideal $K$ of $X=\I_V(L)$ such that $K\subseteq W\subseteq X$ and $\GK_X(X/K)\leq 1$.
    \item $V$ is a finitely generated left $W$-module, while $X/K$ is a finitely generated $\Bbbk[g]$-module. In particular $X$ is finitely generated over $W$ on both sides;
    \item The properties given for $W\subseteq V$ also hold true for the pair $U\subseteq W$ with left and right interchanged.
    \end{enumerate}
\end{enumerate}
\end{cor}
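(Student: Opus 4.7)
Part (1) is essentially a repackaging of earlier results. Apply Theorem~\ref{RSS 5.25} to $U$ to obtain an effective divisor $\bfd$ with $\deg\bfd\leq 2$ and a finitely generated $g$-divisible right $S(\bfd)$-module $M=\widehat{US(\bfd)}$ with $S(\bfd)\subseteq M\subseteq S$, such that $W'=\End_{S(\bfd)}(M)$ sits between $U$ and $S$ and is an equivalent order to both $U$ and $S(\bfd)$. Proposition~\ref{RSS 6.7} then exhibits $F=\End_{S(\bfd)}(M^{**})$ as a virtual blowup at a virtually effective divisor $\bfx$ with $\deg\bfx\leq 2$, and shows that $V=F\cap S$ forms a maximal order pair with $F$. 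The decomposition $\bfx=\bfu-\bfv+\bfv^\sigma$ with $\bfu$ effective and supported on distinct $\sigma$-orbits and $0\leq\bfv\leq[\bfu]_k$ for some $k\geq 1$ is then supplied by the equivalence $(1)\Leftrightarrow(3)$ of Lemma~\ref{RSS 7.3(2)}.

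For part (2), the natural candidate is $W=W'\cap S$. First I would verify the chain of inclusions $U\subseteq W'\subseteq F$: the inclusion $W'\subseteq F$ follows from Notation~\ref{Hom inside Q} since for $q\in W'$ and $y\in M^*$ we have $(yq)M=y(qM)\subseteq yM\subseteq S(\bfd)$, so $yq\in M^*$, which forces $qM^{**}\subseteq M^{**}$. Intersecting with $S$ gives $U\subseteq W\subseteq V$, and $W$ is $g$-divisible because $W'$ is (by Lemma~\ref{RSS 2.12}(3)) and $S$ is.

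To produce the idealiser structure for the pair $W\subseteq V$ (part~(2a)--(2b)), set
\[ L=\{\,v\in V\,|\; vM^{**}\subseteq M\,\}. \]
A direct check shows $L$ is a left ideal of $V$, and $L$ is $g$-divisible because $M$ is. If $L\neq V$, a short argument using Lemma~\ref{RSS 2.13}(3) and Lemma~\ref{CM lemma} together with the Cohen-Macaulay/Auslander-Gorenstein property of $V$ (coming from the fact that $(V,F)$ is a maximal order pair, hence controlled by $S(\bfd)$ and Theorem~\ref{S(d) thm}) shows $V/L$ is $2$-pure. For $w\in W$ and $v\in L$ one computes $(wv)M^{**}=w(vM^{**})\subseteq wM\subseteq M$ and $(vw)M^{**}\subseteq v(wM^{**})\subseteq vM^{**}\subseteq M$, so $W\subseteq X=\mathbb{I}_V(L)$. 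For the ideal $K$: Lemma~\ref{RSS 4.11}(1) applied to the Cohen-Macaulay ring $S(\bfd)$ gives $\GK_{S(\bfd)}(M^{**}/M)\leq 1$, which by Lemma~\ref{RSS 2.14}(3) says $M^{**}/M$ is finitely generated as a $\Bbbk[g]$-module; this controls $X/L$ and lets us take $K=\mathrm{r.ann}_X(M^{**}/M)$ (or a similar annihilator construction) to obtain a $g$-divisible ideal of $X$ contained in $W$ with $\GK_X(X/K)\leq 1$. The finite generation of $V$ over $W$ then follows from Lemma~\ref{hence equiv orders} combined with these GK-dimension estimates.

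For part (2c), the same construction is carried out on the other side: replace $M$ by the left module $N=\widehat{S(\bfd)U}$ and run the completely symmetric argument using the left-handed versions of Proposition~\ref{RSS 6.4} and Proposition~\ref{RSS 6.7} to interpolate between $U$ and $W$ with left and right interchanged. The main obstacle I anticipate is the $2$-purity statement for $V/L$: one must carefully rule out submodules of intermediate GK-dimension, and this is where the full strength of the Cohen-Macaulay and Auslander-Gorenstein properties (transferred through the endomorphism ring construction) is needed. Once that purity is in hand, the remaining statements follow by bookkeeping with GK-dimensions via Lemma~\ref{RSS 2.15} and the finite generation results of Lemma~\ref{RSS 2.13}.
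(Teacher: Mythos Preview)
Your approach to part~(1) is correct and matches the paper: it is precisely Theorem~\ref{RSS 7.4}(2), which you have unpacked into its constituent pieces (Theorem~\ref{RSS 5.25}, Proposition~\ref{RSS 6.7}, Lemma~\ref{RSS 7.3(2)}).

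For part~(2), however, there is a genuine gap. You propose to establish $2$-purity of $V/L$ by invoking ``the Cohen-Macaulay/Auslander-Gorenstein property of $V$''. But $V$ is the intersection with $S$ of a virtual blowup $F$, and virtual blowups are \emph{not} Cohen-Macaulay or Auslander-Gorenstein in general; indeed, Corollary~\ref{S(p-p1+p2) bad homologically} in the paper explicitly exhibits a virtual blowup (contained in $S$, so equal to its own $V$) which fails all of these properties and has infinite injective dimension. So Lemma~\ref{RSS 4.11}(2), which is what you need for purity, cannot be applied to $V$ directly. The correct route is to keep working over $R=S(\bfd)$, which \emph{is} Cohen-Macaulay by Theorem~\ref{S(d) thm}(2): the relevant left ideal should be built from $M^*$ or a related reflexive $R$-ideal, and purity is obtained from Lemma~\ref{RSS 4.11}(2) applied to $R$ (compare Lemma~\ref{RSS 6.12}(1)), then transferred to $V$ via the bimodule $M$. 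This is what the proof of \cite[Corollary~7.6]{RSS} does, and the paper simply says that argument carries over verbatim once the Section~2 references are replaced by their analogues in Chapter~\ref{prelims}.

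A secondary concern: in (2c) you propose to run the left-handed argument with a \emph{new} module $N=\widehat{S(\bfd)U}$. But this would produce a potentially different intermediate ring, whereas the statement requires the \emph{same} $W$ to serve both roles. The interpolation must come from the two-sided structure already present, namely that $M$ is a $(W',R)$-bimodule finitely generated on both sides (Theorem~\ref{RSS 5.25}) with $R=\End_{W'}(M)$ (Proposition~\ref{RSS 6.4}(3)); the right-ideal analogue of $L$ is then built inside $W$ using this same $M$, not a fresh left module.
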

\begin{proof}
(1). This is just Theorem~\ref{RSS 7.4}(2). \par
(2). The proof of \cite[Corollary~7.6]{RSS} carries though here. One just needs to replace the references to \cite[Section~2]{RSS} with the appropriate results from our Chapter~\ref{prelims}.
\end{proof}

Theorem~\ref{RSS 7.4} shows any $g$-divisible maximal $S$-order is a virtual blowup at some virtually effective divisor. Conversely, we now show that given any virtually effective divisor $\bfx$ with $\deg\bfx\leq 2$, there exists a virtual blowup $F$ of $S$ at $\bfx$, where $V=F\cap S$ is necessarily a maximal $S$-order. This completes the classification of $g$-divisible maximal $S$-orders. \par

For the proof we need to recall the rings $T(\bfd)$ from Definition~\ref{T(d) def}. In Lemma~\ref{RSS2 5.10}, $T$ is graded via $T_n=T\cap S_n$.

\begin{lemma}\label{RSS2 5.10}
Let $\mbf{u}$ and $\mbf{v}$ be effective divisors on $E$ such that $\deg\mbf{u}\leq 2$ and $\mbf{v}\leq [\mbf{u}]_k$ for some $k\geq 1$. Then there exists a $g$-divisible right $S(\mbf{u})$-module $M$ with $S(\mbf{u})\subseteq M\subseteq S$ and such that
\begin{equation}\label{ehd 1 RSS2 5.10} \ovl{M}\ehd\bigoplus_{n\geq 0}H^0(E,\LL_n(-[\mbf{u}]_n+\mbf{v})).\end{equation}
\end{lemma}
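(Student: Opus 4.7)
The plan is to realize the target module as a finitely generated module over $B:=\overline{S(\bfu)}=B(E,\LL(-\bfu),\sigma)$ via the noncommutative Serre theorem (Theorem~\ref{nc serre}): the coherent sheaf $\OO_E(\bfv)$ on $E$ corresponds to a finitely generated graded right $B$-module
\[
Y\ehd \bigoplus_{n\geq 0}H^0(E,\OO_E(\bfv)\otimes \LL(-\bfu)_n)=\bigoplus_{n\geq 0}H^0(E,\LL_n(-[\bfu]_n+\bfv)),
\]
using the identity $\LL(-\bfu)_n=\LL_n(-[\bfu]_n)$. Enlarging $k$ if necessary, I may assume $Y$ is generated in degree $k_0:=k$ over $B$. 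Since $\bfv\leq[\bfu]_k$, the sheaf inclusion $\LL_n(-[\bfu]_n+\bfv)\hookrightarrow\LL_n$ holds for all $n\geq k$, embedding $Y_{\geq k}$ as a graded right $B$-submodule of $\overline{S}=B(E,\LL,\sigma)$.

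For the lift to $S$, set
\[
N:=\{x\in S_{k_0}\,:\,\ovl{x}\in H^0(E,\LL_{k_0}(-[\bfu]_{k_0}+\bfv))\}\subseteq S_{k_0},
\]
and define the right $S(\bfu)$-submodule $M_0:=S(\bfu)+N\cdot S(\bfu)\subseteq S$. The sheaf isomorphism
\[
\LL_{k_0}(-[\bfu]_{k_0}+\bfv)\otimes \LL_{n-k_0}^{\sigma^{k_0}}(-\sigma^{-k_0}[\bfu]_{n-k_0})\cong\LL_n(-[\bfu]_n+\bfv),
\]
valid because $[\bfu]_{k_0}+\sigma^{-k_0}[\bfu]_{n-k_0}=[\bfu]_n$ (exactly the combinatorial identity used in the proof of Proposition~\ref{RSS 5.25 lemma}), shows that the multiplication $\ovl{N}\cdot\ovl{S(\bfu)_{n-k_0}}$ inside $\overline{S}$ lands in $Y_n$ for all $n\geq k_0$, and surjects onto $Y_n$ in high degrees by our generation hypothesis. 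Hence $\ovl{M_0}\ehd Y$.

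Now set $M:=\widehat{M_0}$, the $g$-divisible hull of $M_0$ computed inside $\Sg$. Because $S$ is itself $g$-divisible in $\Sg$ (if $y\in S$ and $y=gx$ for some $x\in\Sg$, then $y\in gS$ forces $x\in S$), this hull lies in $S$. Thus $M$ is a $g$-divisible right $S(\bfu)$-module with $S(\bfu)\subseteq M\subseteq S$, finitely generated over $S(\bfu)$ by Lemma~\ref{RSS 2.13}(2).

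The main obstacle is the final verification that $\ovl{M}\ehd Y$, i.e., that passing to the $g$-divisible hull does not enlarge the image modulo $g$ beyond $Y$ in high degrees. By Lemma~\ref{RSS 2.13}(3), $M^{**}=(\widehat{M_0})^{**}=M_0^{**}$, so $M_0\subseteq M\subseteq M_0^{**}$; and since $S(\bfu)$ is Auslander-Gorenstein and Cohen-Macaulay of GK-dimension~$3$ by Theorem~\ref{S(d) thm}, Lemma~\ref{RSS 4.11}(1) yields $\GK(M/M_0)\leq\GK(M_0^{**}/M_0)\leq 1$. Lemma~\ref{RSS 6.12}(2) applied to the $g$-divisible module $M$ then gives $\ovl{M^{**}}\ehd(\ovl{M})^{**}$, while the Cohen-Macaulay property of $\overline{S(\bfu)}$ (Theorem~\ref{B properties}) combined with the fact that $Y$ corresponds under noncommutative Serre to the invertible (hence reflexive) sheaf $\OO_E(\bfv)$ forces $(\ovl{M_0})^{**}\ehd Y^{**}\ehd Y$. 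The two-sided squeeze
\[
\ovl{M_0}\;\subseteq\;\ovl{M}\;\subseteq\;\ovl{M_0^{**}}\;\ehd\;(\ovl{M})^{**}\;\supseteq\;(\ovl{M_0})^{**}\;\ehd\;Y,
\]
together with $\ovl{M_0}\ehd Y$ at the left, pins down $\ovl{M}\ehd Y$ and completes the proof.
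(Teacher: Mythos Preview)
Your construction of $M_0$ and the verification that $\ovl{M_0}\ehd Y$ are fine, and setting $M=\widehat{M_0}$ gives a $g$-divisible module with $S(\bfu)\subseteq M\subseteq S$. The problem is the final ``squeeze'': it does not bound $\ovl{M}$ from above by $Y$. In your chain
\[
\ovl{M_0}\subseteq\ovl{M}\subseteq\ovl{M_0^{**}}\ehd(\ovl{M})^{**}\supseteq(\ovl{M_0})^{**}\ehd Y,
\]
the inclusion $(\ovl{M})^{**}\supseteq(\ovl{M_0})^{**}$ points the wrong way for an upper bound. All you extract is $Y\subseteq\ovl{M}\subseteq(\ovl{M})^{**}$ in high degrees, which is vacuous on the right since the double dual is computed from $\ovl{M}$ itself. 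You would need $\ovl{M_0^{**}}\ehd(\ovl{M_0})^{**}$, but Lemma~\ref{RSS 6.12}(2) requires the input module to be $g$-divisible, and your $M_0$ is not known to be. Concretely, passing to the $g$-divisible hull can genuinely enlarge the image modulo $g$: if for instance $M_0$ contained $gZ$ for some $Z\subseteq S$ with $\ovl{Z}\not\subseteq\ovl{M_0}$, then $\widehat{M_0}\supseteq Z$ and $\ovl{\widehat{M_0}}\supsetneq\ovl{M_0}$. Nothing in your argument rules this out.

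The paper sidesteps this circularity by importing the already-proved $T$-version \cite[Lemma~5.10]{RSS2}: it starts from a $g$-divisible $T([\bfu]_3)$-module $N$ with $\ovl{N}$ having the correct divisor $\bfv$, sets $M=\widehat{NR}$, and then uses the key observation that $M^{(3)}=N$ (which holds precisely because $N$ was already $g$-divisible). By Corollary~\ref{nc serre2} one has $\ovl{M}\ehd\bigoplus H^0(E,\LL_n(-[\bfu]_n+\mbf{w}))$ for some $\mbf{w}$; comparing $3$-Veroneses against the known $\ovl{N}$ forces $\mbf{w}=\bfv$. The external input of a $g$-divisible module at the Veronese level is exactly what breaks the circularity that your direct construction runs into.
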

\begin{proof}
By Theorem~\ref{S(d) thm}(1) with $\mbf{u}=\bfd$ we have $S(\mbf{u})^{(3)}=T([\mbf{u}]_3)$. By  \cite[Lemma~5.10]{RSS2} (the $T$-version of Lemma~\ref{RSS2 5.10}) there exists a $g$-divisible right $T([\mbf{u}]_3)$-module $N$ with $T([\mbf{u}]_3)\subseteq N\subseteq T$ and such that
\begin{equation}\label{RSS 7.4(3) eq1} \ovl{N}\ehd
\bigoplus_{m\geq0}H^0(E,\LL_{3m}(-[\mbf{u}]_{3m}+\mbf{v})).\end{equation}
Set $R=S(\mbf{u})$ and $M=\widehat{NR}$. Then $M$ is a $g$-divisible right $R$-module. As $N$ is a finitely generated right $R^{(3)}$-module, $NR$ is a finitely generated right $R$-module. By Lemma~\ref{RSS 2.13}(2), $M_R$ is then finitely generated also. Further, as $1\in N\subseteq NT=T$, we have $R\subseteq M\subseteq S$. This leaves (\ref{ehd 1 RSS2 5.10}) to be proven.\par
Now because $N$ is $g$-divisible and clearly $(NR)^{(3)}=N$, it is easy to see $M^{(3)}=N$. On the other hand, since $\ovl{R}=B(E,\LL(-\mbf{u}),\sigma)$, the Noncommutative Serre's Theorem (Corollary~\ref{nc serre2}) says that the $\ovl{R}$-module $\ovl{M}$ satisfies,
\begin{equation*}\label{RSS 7.4(3) eq2}
\ovl{M}\ehd \bigoplus_{n\in\N}H^0(E,\LL_{n}(-[\mbf{u}]_{n}+\mbf{w})) \text{ for some divisor }\mbf{w}.
\end{equation*}
Equation (\ref{RSS 7.4(3) eq1}) and $M^{(3)}=N$ then implies
\begin{equation}\label{gb secs RSS 5.10} H^0(E,\LL_{3m}(-[\mbf{u}]_{3m}+\mbf{v}))=H^0(E,\LL_{3m}(-[\mbf{u}]_{3m}+\mbf{w}))\;
\text{ for }m\gg0.\end{equation}
Enlarging $m$ if necessary, we can assume $\LL_{3m}(-[\mbf{u}]_{3m}+\mbf{v}))$ and $\LL_{3m}(-[\mbf{u}]_{3m}+\mbf{w}))$ are generated by their global sections. Thus (\ref{gb secs RSS 5.10}) implies that $\LL_{3m}(-[\mbf{u}]_{3m}+\mbf{v}))=\LL_{3m}(-[\mbf{u}]_{3m}+\mbf{w}))$. Hence we get $\mbf{v}=\mbf{w}$, proving (\ref{ehd 1 RSS2 5.10}).
\end{proof}

\begin{prop}\label{RSS 7.4(3)}
Let $\mbf{x}$ be a $\sigma$-virtually effective divisor with $\deg\mbf{x}\leq 2$. Then there exists a blowup $F$ of $S$ at $\mbf{x}$.
\end{prop}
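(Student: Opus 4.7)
The plan is to combine the characterisation of virtually effective divisors in Lemma~\ref{RSS 7.3(2)} with the module constructed in Lemma~\ref{RSS2 5.10} and the endomorphism-ring analysis of Proposition~\ref{RSS 6.7}. Since $\mbf{x}$ is virtually effective with $\deg\mbf{x}\le 2$, Lemma~\ref{RSS 7.3(2)}(3) lets us write $\mbf{x}=\mbf{u}-\mbf{v}+\mbf{v}^\sigma$ where $\mbf{u}$ is effective, supported on distinct $\sigma$-orbits, with $\deg\mbf{u}=\deg\mbf{x}\le 2$, and $0\le\mbf{v}\le[\mbf{u}]_k$ for some $k\ge 1$. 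Thus the ring $S(\mbf{u})$ is defined and, by Theorem~\ref{S(d) thm}, is a $g$-divisible maximal order with $\ovl{S(\mbf{u})}=B(E,\LL(-\mbf{u}),\sigma)$.

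Next, apply Lemma~\ref{RSS2 5.10} to the pair $(\mbf{u},\mbf{v})$: this produces a finitely generated $g$-divisible right $S(\mbf{u})$-module $M$ with $S(\mbf{u})\subseteq M\subseteq S$ and
\[
\ovl{M}\ehd \bigoplus_{n\ge 0} H^0\!\left(E,\LL_n(-[\mbf{u}]_n+\mbf{v})\right).
\]
Set $F=\End_{S(\mbf{u})}(M^{**})$ and $V=F\cap S$. By Proposition~\ref{RSS 6.4} and Lemma~\ref{max order pair equiv def}(1), $(V,F)$ is a maximal order pair of $S$, so condition (a) of Definition~\ref{virtual blowup}(1) is satisfied.

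For condition (b), I invoke Proposition~\ref{RSS 6.7} with $\bfd=\mbf{u}$: it gives an effective divisor $\mbf{y}$ such that $\ovl{F}\ehd B(E,\LL(-\mbf{x}'),\sigma)$ with $\mbf{x}'=\mbf{u}-\mbf{y}+\mbf{y}^\sigma$, where $\mbf{y}$ is determined (up to equality in high degrees) by $\ovl{M}\ehd\bigoplus H^0(E,\OO(\mbf{y})\otimes\LL(-\mbf{u})_n)=\bigoplus H^0(E,\LL_n(-[\mbf{u}]_n+\mbf{y}))$. Comparing with the description of $\ovl{M}$ from Lemma~\ref{RSS2 5.10} and using the usual ``global sections determine the sheaf in high degree'' argument (exactly as in (\ref{gb secs RSS 5.10})), one concludes $\mbf{y}=\mbf{v}$. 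Hence $\mbf{x}'=\mbf{x}$ and $F$ is a virtual blowup of $S$ at $\mbf{x}$.

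The work is essentially bookkeeping: the real content is already packaged into Lemma~\ref{RSS 7.3(2)}, Lemma~\ref{RSS2 5.10}, and Proposition~\ref{RSS 6.7}. The only mild subtlety to watch is matching the divisor $\mbf{y}$ produced by Proposition~\ref{RSS 6.7} with the prescribed $\mbf{v}$; this is the identification step that pins down exactly which virtual blowup we have constructed, and it is the main (small) obstacle in the proof.
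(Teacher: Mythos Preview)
Your proof is correct and follows essentially the same route as the paper: decompose $\mbf{x}=\mbf{u}-\mbf{v}+\mbf{v}^\sigma$ via Lemma~\ref{RSS 7.3(2)}, build $M$ from Lemma~\ref{RSS2 5.10}, and set $F=\End_{S(\mbf{u})}(M^{**})$. The only cosmetic difference is that the paper cites the internal equation~(\ref{RSS 6.7 ovlU ehd End(ovlM)}) together with Lemma~\ref{RSS 6.14} to read off $\ovl{F}\ehd B(E,\LL(-\bfx),\sigma)$ directly, whereas you invoke the statement of Proposition~\ref{RSS 6.7} and then match $\mbf{y}=\mbf{v}$; both amount to the same identification.
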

\begin{proof}
Using Lemma~\ref{RSS 7.3(2)}, write $\mbf{x}=\mbf{u}-\mbf{v}+\mbf{v}^\sigma$ where $\mbf{u}$ and $\mbf{v}$ are effective and such that $0\leq\mbf{v}\leq [\mbf{u}]_k$ for some $k\geq 0$. By Lemma~\ref{RSS2 5.10} there exists a $g$-divisible right $S(\bfd)$-module $M$ with $S(\bfd)\subseteq M\subseteq S$ and such that
$$\ovl{M}\ehd\bigoplus_{n\geq 0}H^0(E,\LL_n(-[\mbf{u}]_n+\mbf{v})).$$
Put $U=\End_R(M)$ and $F=\End_R(M^{**})$. By equation (\ref{RSS 6.7 ovlU ehd End(ovlM)}) and Lemma~\ref{RSS 6.14},
$$\ovl{F}\ehd\ovl{U}\ehd B(E,\LL(-\bfx),\sigma).$$
By Lemma~\ref{max order pair equiv def}(1), $(F\cap S,F)$ is a maximal order pair.
\end{proof}

\subsection{Further study of virtual blowups}\label{further vblowups}

We end this Chapter with a further study of virtual blowups. The most important of the results here, and our first aim, is to show that the 3-Veroneses of our virtual blowups are virtual blowups of $S^{(3)}$.

\begin{lemma}\label{in B is Veroneses equal then ehd}
Let $B=B(E,\NN,\sigma)$ for some invertible sheaf $\NN$ on $E$ with $\deg\NN\geq 1$. Let $M,N$ be two finitely generated right $B$-submodules of $Q_\gr(B)=\Bbbk(E)[t,t^{-1};\sigma]$. If $M^{(d)}=N^{(d)}$ for some $d\geq 1$, then $M\ehd N$.
\end{lemma}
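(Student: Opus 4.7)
The plan is to apply the noncommutative Serre's theorem (Corollary~\ref{nc serre2}) to both $M$ and $N$ to reduce the statement to one about divisors on the curve $E$. Since $B=B(E,\NN,\sigma)$ with $\NN$ ample on the smooth elliptic curve $E$, Corollary~\ref{nc serre2} gives divisors $\bfa$ and $\bfb$ on $E$ such that
$$M \ehd \widetilde{M} := \bigoplus_{n\geq 0} H^0(E,\OO_E(\bfa)\otimes \NN_n) \subseteq Q_\gr(B), \qquad N \ehd \widetilde{N} := \bigoplus_{n\geq 0} H^0(E,\OO_E(\bfb)\otimes \NN_n) \subseteq Q_\gr(B),$$
where $H^0(E,\OO_E(\bfa)\otimes \NN_n)$ is viewed as a subspace of $\Bbbk(E)$ (hence of $Q_\gr(B)_n$ via the usual identification of $Q_\gr(B)_n$ with $\Bbbk(E)\cdot t^n$). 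It will be enough to prove $\bfa=\bfb$, because that will give $\widetilde{M}=\widetilde{N}$ and hence $M \ehd \widetilde{M}=\widetilde{N} \ehd N$.

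Since $M^{(d)}=N^{(d)}$ as subsets of $Q_\gr(B)$, taking $d$th Veroneses of the equalities above gives
$$H^0(E,\OO_E(\bfa)\otimes \NN_{dn}) \;=\; H^0(E,\OO_E(\bfb)\otimes \NN_{dn}) \quad \text{as subspaces of } \Bbbk(E)$$
for all $n\gg 0$. For $n$ sufficiently large, both invertible sheaves $\OO_E(\bfa)\otimes \NN_{dn}$ and $\OO_E(\bfb)\otimes \NN_{dn}$ have degree exceeding $2g-2=0$, so by \cite[Corollary~IV.3.2]{Ha} they are generated by their global sections. Viewing each as an $\OO_E$-subsheaf of the constant sheaf $\Bbbk(E)$, each equals the $\OO_E$-submodule generated by its space of global sections; since these spaces of global sections coincide, we deduce
$$\OO_E(\bfa)\otimes \NN_{dn} \;=\; \OO_E(\bfb)\otimes \NN_{dn}$$
as subsheaves of $\Bbbk(E)$. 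Cancelling $\NN_{dn}$ yields $\OO_E(\bfa)=\OO_E(\bfb)$ as subsheaves of $\Bbbk(E)$, and comparing stalks (equivalently, valuations) at each closed point of $E$ forces $\bfa=\bfb$.

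The main potential obstacle is making sure the identifications of $M_n$, $H^0$-spaces, and the Veronese operation are all being performed inside the same ambient $\Bbbk(E)$ (respectively $Q_\gr(B)$); once this book-keeping is in place, the proof is essentially the observation that on an elliptic curve a line bundle is recovered from its global sections in large enough degree. No other subtleties are expected, since we are in the ample case on a smooth projective curve where the Riemann–Roch/global generation input from Corollary~\ref{RR to E} and \cite[Corollary~IV.3.2]{Ha} applies without obstruction.
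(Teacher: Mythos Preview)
Your proof is correct and follows essentially the same route as the paper's: apply Corollary~\ref{nc serre2} to obtain divisors, compare global sections in degrees $dn$ for $n\gg 0$, use global generation to identify the sheaves, and conclude the divisors coincide. One small correction: the threshold in \cite[Corollary~IV.3.2]{Ha} for a line bundle on a curve of genus $g$ to be globally generated is degree $\geq 2g$ (so $\geq 2$ here), not degree $>2g-2$; the latter is the vanishing bound for $H^1$. This does not affect your argument since you are taking $n$ large anyway.
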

\begin{proof}
By Corollary~\ref{nc serre2}, there exists divisors $\bfx$ and $\bfy$ such that
\begin{equation}\label{in B is Veroneses equal then ehd eq}
 M\ehd \bigoplus_{n\geq 0} H^0(E,\OO(\bfx)\otimes\NN_n)\;\text{ and }\;N\ehd \bigoplus_{n\geq 0}H^0(E,\OO(\bfy)\otimes\NN_n).
 \end{equation}
Let $n\geq0$. By assumption $M_{dn}=N_{dn}$, thus $H^0(E,\OO(\bfx)\otimes\NN_{dn})=H^0(E,\OO(\bfy)\otimes\NN_{dn})$. By enlarging $n$ if necessary, we can assume that both $\deg(\OO(\bfx)\otimes\NN_{dn})\geq 2$ and $\deg(\OO(\bfy)\otimes\NN_{dn})\geq 2$. In which case both sheaves are generated by their global sections by \cite[Corollary IV.3.2]{Ha}. Hence $\OO(\bfx)\otimes\NN_{dn}=\OO(\bfy)\otimes\NN_{dn}$. It follows $\bfx=\bfy$  in (\ref{in B is Veroneses equal then ehd eq}), and then $M\ehd N$.
\end{proof}

\begin{lemma}\label{NR**=M**}
Let $R=S(\bfd)$ for some effective divisor of $\deg\bfd\leq 2$, and $M\subseteq\Sg$ be a finitely generated $g$-divisible right $R$-module. Let $d\geq 1$ and write $N=M^{(d)}$. Then
\begin{equation*}(NR)^{**}=M^{**}.\end{equation*}
\end{lemma}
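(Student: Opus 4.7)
The plan is to establish the trivial inclusion $(NR)^{**} \subseteq M^{**}$ and then obtain the reverse via a GK-dimension argument combined with the reflexivity of double duals. Since $NR \subseteq M$, Notation~\ref{Hom inside Q} immediately gives $M^* \subseteq (NR)^*$ and hence $(NR)^{**} \subseteq M^{**}$. The heart of the argument is the reverse containment.

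First I would compare the images of $NR$ and $M$ in $\ovl{R} = B(E,\LL(-\bfd),\sigma)$. Because $M$ is a right $R$-module, $N=M^{(d)}$ is a right $R^{(d)}$-module, and a degree-counting argument shows $(NR)^{(d)} = NR^{(d)} = N$; taking images gives $(\ovl{NR})^{(d)} = \ovl{N}$. On the other hand, since $M$ is $g$-divisible, $(\ovl{M})^{(d)} = \ovl{M^{(d)}} = \ovl{N}$ as well. As $\ovl{R}$ has the form required by Lemma~\ref{in B is Veroneses equal then ehd} (with $\deg\LL(-\bfd)\geq 1$), that lemma applied to the finitely generated graded $\ovl{R}$-submodules $\ovl{NR}\subseteq\ovl{M}$ of $Q_\gr(\ovl{R})$ yields $\ovl{NR}\ehd\ovl{M}$.

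Next I would pass to $g$-divisible hulls. By Lemma~\ref{RSS 2.13}(2), $\widehat{NR}$ is a finitely generated right $R$-module, and since $M$ is $g$-divisible we have $\widehat{NR}\subseteq M$. Sandwiching $\ovl{NR}\subseteq\ovl{\widehat{NR}}\subseteq\ovl{M}$ together with the first step gives $\ovl{\widehat{NR}}\ehd\ovl{M}$. As both $M$ and $\widehat{NR}$ are $g$-divisible, Lemma~\ref{RSS 2.14}(2) supplies $\GK(M/\widehat{NR}) \leq 1$.

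Now I would combine this with the Cohen-Macaulay property. By Theorem~\ref{S(d) thm}(1)(2), $R = S(\bfd)$ is Auslander-Gorenstein and Cohen-Macaulay of GK-dimension $3$, so Lemma~\ref{RSS 4.11}(1) gives $\GK(M^{**}/M)\leq 1$. Since also $\widehat{NR}\subseteq (\widehat{NR})^{**} = (NR)^{**}$ (using Lemma~\ref{RSS 2.13}(3)), the quotient $M^{**}/(NR)^{**}$ is a quotient of $M^{**}/\widehat{NR}$, and splicing the two short exact sequences $0\to M/\widehat{NR}\to M^{**}/\widehat{NR}\to M^{**}/M\to 0$ shows $\GK(M^{**}/(NR)^{**})\leq 1$.

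Finally, applying Lemma~\ref{RSS 4.11}(1) to the reflexive module $(NR)^{**}$: its double dual equals itself, so it is the unique maximal $R$-submodule $X\subseteq Q$ with $(NR)^{**}\subseteq X$ and $\GK(X/(NR)^{**})\leq 1$. Consequently $M^{**}\subseteq (NR)^{**}$, completing the equality. The main obstacle is the middle step---transferring the asymptotic equality $\ovl{NR}\ehd\ovl{M}$, which lives in the factor ring, back into a GK-dimension bound on $M/\widehat{NR}$ in $S_{(g)}$---but this is handled cleanly by passing through $\widehat{NR}$ and invoking Lemma~\ref{RSS 2.14}(2).
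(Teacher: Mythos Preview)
Your proof is correct and follows essentially the same route as the paper: show $\ovl{NR}\ehd\ovl{M}$ via Lemma~\ref{in B is Veroneses equal then ehd}, convert this to a GK-dimension bound, and finish with Lemma~\ref{RSS 4.11}(1). The paper's version is a touch more direct: Lemma~\ref{RSS 2.14}(1) only requires the larger module $M$ to be $g$-divisible, so one obtains $\GK(M/NR)\leq 1$ immediately without passing to $\widehat{NR}$; then Lemma~\ref{RSS 4.11}(1) applied to $NR$ already forces $M\subseteq (NR)^{**}$, and hence $M^{**}\subseteq (NR)^{**}$, making your splicing of exact sequences and the second application of Lemma~\ref{RSS 4.11}(1) unnecessary.
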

\begin{proof} Now since $(NR)^{(d)}=N=M^{(d)}$, $(\ovl{NR})^{(d)}=\ovl{M}^{(d)}$. So by Lemma~\ref{in B is Veroneses equal then ehd}, $\ovl{NR}\ehd\ovl{M}$. Since $M$ is $g$-divisible and clearly $NR\subseteq M$, Lemma~\ref{RSS 2.14}(1) applies and shows $\GK(M/NR)\leq 1$. By Theorem~\ref{S(d) thm}(2), $R$ is Cohen-Macaulay. Therefore, as $\GK(M/NR)\leq 1 =\GK(R)-2$, we have $M\subseteq (NR)^{**}$ by Lemma~\ref{RSS 4.11}(1). It then follows $M^{**}\subseteq (NR)^{**}$. Since $NR\subseteq M$, we also have the reverse inclusion $(NR)^{**}\subseteq M^{**}$.
\end{proof}

\begin{lemma}\label{Veronese ** commute}
Let $\bfd$ be an effective divisor with $\deg\bfd\leq 2$ and let $R=S(\bfd)$. Suppose that $R\subseteq M\subseteq S$ is a finitely generated $g$-divisible right $R$-module. Then
$$(M^{**})^{(3)}=(M^{(3)})^{**}.$$
\end{lemma}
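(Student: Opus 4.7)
Set $R'=R^{(3)}$ and $N=M^{(3)}$. By Theorem~\ref{S(d) thm}(1), $R'=T([\bfd]_3)$, and by Theorem~\ref{S(d) thm}(2) and Theorem~\ref{T(d) properties}, both $R$ and $R'$ are Auslander--Gorenstein Cohen--Macaulay cg noetherian domains of GK-dimension $3$. Thus Lemma~\ref{RSS 4.11}(1) is available over either ring, with threshold $\alpha-2=1$. Note also that $R$ is finitely generated on both sides as an $R'$-module by Lemma~\ref{noeth up n down}(2), and that $N$, $N^{**}$, $M^{**}$ and $(M^{**})^{(3)}$ are all finitely generated $R'$- or $R$-modules contained in the appropriate graded quotient ring (using Lemma~\ref{noeth up n down}(2) where needed). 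The plan is to prove the two inclusions separately.

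For $(M^{**})^{(3)}\subseteq N^{**}$: I would first use Cohen--Macaulayness of $R$ together with Lemma~\ref{RSS 4.11}(1) to see $\GK_R(M^{**}/M)\le 1$. Lemma~\ref{noeth up n down}(2) gives the $R'$-module decomposition
\[
M^{**}/M \;=\; \bigoplus_{k=0}^{2}(M^{**}/M)^{(k\,\mathrm{mod}\,3)},
\]
and the summand $(M^{**}/M)^{(0\,\mathrm{mod}\,3)}$ is precisely $(M^{**})^{(3)}/N$. Since GK-dimension of a finitely generated graded module is computed from its Hilbert function, $\GK_{R'}((M^{**})^{(3)}/N)\le\GK_R(M^{**}/M)\le 1$. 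Because $N\subseteq(M^{**})^{(3)}\subseteq Q_{\gr}(R')$, Lemma~\ref{RSS 4.11}(1) applied over $R'$ yields $(M^{**})^{(3)}\subseteq N^{**}$.

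For the reverse inclusion $N^{**}\subseteq(M^{**})^{(3)}$: this is the step where the definitions threaten to diverge, and it is the main obstacle, since one has to pass between $R$-duals and $R'$-duals. The bridge is Lemma~\ref{NR**=M**}, which tells us that $(NR)^{**}=M^{**}$. Cohen--Macaulayness of $R'$ gives $\GK_{R'}(N^{**}/N)\le 1$. The natural surjection
\[
(N^{**}/N)\otimes_{R'}R \;\twoheadrightarrow\; N^{**}R/NR
\]
together with $_{R'}R$ finitely generated and \cite[Proposition~5.6]{KL} gives $\GK_R(N^{**}R/NR)\le 1$. Applying Lemma~\ref{RSS 4.11}(1) over $R$, I get $N^{**}R\subseteq(NR)^{**}=M^{**}$. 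Since $N^{**}\subseteq N^{**}R$ and every element of $N^{**}\subseteq Q_{\gr}(R')$ is homogeneous of degree divisible by $3$, intersecting with $Q_{\gr}(R)^{(3)}$ gives $N^{**}\subseteq(M^{**})^{(3)}$.

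Combining the two inclusions yields $(M^{**})^{(3)}=N^{**}=(M^{(3)})^{**}$. The only nontrivial ingredient beyond bookkeeping with Veronese decompositions is Lemma~\ref{NR**=M**}, which is exactly what makes the reverse inclusion work; everything else is a careful application of the Cohen--Macaulay property on the two sides.
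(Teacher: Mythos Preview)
Your proposal is correct and follows essentially the same route as the paper: both inclusions are obtained exactly as you describe, via $\GK(M^{**}/M)\le 1$ over $R$ and the Veronese decomposition for one direction, and via the surjection $(N^{**}/N)\otimes_{R'}R\twoheadrightarrow N^{**}R/NR$ together with Lemma~\ref{NR**=M**} for the other. The only cosmetic difference is that the paper justifies $\GK_{R'}((M^{**})^{(3)}/N)\le 1$ by first noting $M^{**}/M$ is finitely generated over $R'$ (Lemma~\ref{noeth up n down}(2)) so that GK-dimension agrees over $R$ and $R'$, and then passing to the summand $(M^{**}/M)^{(3)}$ inside $M^{**}/M$.
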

\begin{proof}
Let $R'=R^{(3)}$ and $N=M^{(3)}$. By Theorem~\ref{S(d) thm}(2), $R$ is Auslander-Gorenstein and Cohen-Macaulay. Hence by Lemma~\ref{RSS 4.11}(1), $\GK_{R}(M^{**}/M)\leq 1$. Lemma~\ref{noeth up n down}(2) implies that $M^{**}/M$ is finitely generated as a right $R'$-module. Hence by (\ref{GK(M)}), $\GK_{R'}(M^{**}/M)=\GK_{R}(M^{**}/M)$. As right $R'$-modules clearly
$$\frac{(M^{**})^{(3)}}{N}=\left(\frac{M^{**}}{M}\right)^{(3)}\subseteq \frac{M^{**}}{M},$$
and so we have that
\begin{equation}\label{GK(M**(3)/N)} \GK_{R'}((M^{**})^{(3)}/N)\leq \GK_{R'}(M^{**}/M)\leq 1.\end{equation}
Now $R'=T([\bfd]_3)$ by Theorem~\ref{S(d) thm}(1), so $R'$ is Auslander-Gorenstein and Cohen-Macaulay by Theorem~\ref{T(d) properties}(3). Hence by Lemma~\ref{RSS 4.11}(1) applied to $R'$, $N$, and by (\ref{GK(M**(3)/N)}), $(M^{**})^{(3)}\subseteq N^{**}$.\par

Conversely, consider $L=N^{**}R/NR$ as a right $R'$-module. Clearly $L_{R'}$ is finitely generated and is a homomorphic image of $(N^{**}/N)\otimes_{R'} R$. We therefore have
$$\GK_R(L)=\GK_{R'}(L)\leq \GK_{R'}(N^{**}/N\otimes_{R'} R)\leq \GK_{R'}(N^{**}/N)\leq 1,$$
where the second inequality follows from \cite[Proposition~5.6]{KL}, and the third from Lemma~\ref{RSS 4.11}(1). By Lemma~\ref{RSS 4.11}(1) applied to $L$ and $R$, and by Lemma~\ref{NR**=M**}
\begin{equation}\label{N** in M**}N^{**}R\subseteq (NR)^{**}=M^{**}.\end{equation}
Taking the 3rd Veronese in (\ref{N** in M**}) gives the reverse inclusion $N^{**}\subseteq (M^{**})^{(3)}$.
\end{proof}

For the next theorem we recall Notation~\ref{3 Veronese notation2} for $T=S^{(3)}$.

\begin{theorem}\label{3 Veronese of virtual blowup}
Let $F$ be a virtual blowup of $S$ at a $\sigma$-virtually effective divisor $\bfx$ and set $F'=F^{(3)}$. Then $F'$ is a virtual blowup of $T$ at the $\tau$-virtually effective divisor $\bfy=\bfx+\bfx^\sigma+\bfx^{\sigma^2}$.
\end{theorem}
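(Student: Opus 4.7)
The plan is to extract the structure of $F$ from Lemma~\ref{max order pair equiv def}(1), pass everything to third Veroneses, and then recognise $F^{(3)}$ as fitting the hypotheses of Lemma~\ref{max order pair equiv def}(2). Since $F$ is part of a maximal order pair $(V,F)$ of $S$, Lemma~\ref{max order pair equiv def}(1) provides an effective divisor $\bfd$ with $\deg\bfd\leq 2$ and a finitely generated $g$-divisible right $R$-module $M$ with $R\subseteq M\subseteq S$ and $F=\End_R(M^{**})$, where $R=S(\bfd)$. Put $R'=R^{(3)}=T(\bfd+\bfd^\sigma+\bfd^{\sigma^2})$ (using Theorem~\ref{S(d) thm}(1)) and $N=M^{(3)}$, so that $R'\subseteq N\subseteq T$ with $N$ finitely generated and $g$-divisible over $R'$ by Lemma~\ref{g div up} and Lemma~\ref{noeth up n down}. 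If I can show $F^{(3)}=\End_{R'}(N^{**})$, then Lemma~\ref{max order pair equiv def}(2) will immediately yield that $(F^{(3)}\cap T,F^{(3)})$ is a maximal order pair of $T$.

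The key identification reduces, via Lemma~\ref{Veronese ** commute} (which gives $N^{**}=(M^{**})^{(3)}$), to proving $\End_R(P)^{(3)}=\End_{R'}(P^{(3)})$ for $P=M^{**}$. The inclusion $\subseteq$ is immediate, and I expect the reverse inclusion to be the main obstacle. Given $q\in\End_{R'}(P^{(3)})$, the fact that $1\in R'\subseteq P^{(3)}$ yields $q=q\cdot 1\in P^{(3)}\subseteq\Sg$, so $qP\subseteq\Sg$. The strategy to promote $qP^{(3)}\subseteq P^{(3)}$ to $qP\subseteq P$ is to approximate $P$ by $P^{(3)}R$: Lemma~\ref{NR**=M**} gives $(M^{(3)}R)^{**}=P$, so the Cohen--Macaulay property of $R$ (Theorem~\ref{S(d) thm}(2)) combined with Lemma~\ref{RSS 4.11}(1) yields $\GK_R(P/M^{(3)}R)\leq 1$, whence also $\GK_R(P/P^{(3)}R)\leq 1$ since $M^{(3)}R\subseteq P^{(3)}R$. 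Multiplication by $q$ is an isomorphism $P\to qP$ of right $R$-modules taking $P^{(3)}R$ into $qP^{(3)}R\subseteq P$, so the right $R$-module $X=P+qP\subseteq\Sg$ satisfies $\GK_R(X/P)\leq\GK_R(qP/qP^{(3)}R)=\GK_R(P/P^{(3)}R)\leq 1=\GK(R)-2$. Since $P=M^{**}$ is reflexive, Lemma~\ref{RSS 4.11}(1) forces $X\subseteq P^{**}=P$, and hence $qP\subseteq P$ as required.

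With the endomorphism ring identification in hand, it remains to verify condition (b) of Definition~\ref{virtual blowup}(2). Since $F$ is $g$-divisible, so is $F^{(3)}$ by Lemma~\ref{g div up}, and a direct check of the gradings gives $\ovl{F^{(3)}}=(\ovl{F})^{(3)}$. Taking third Veroneses of the hypothesis $\ovl{F}\ehd B(E,\LL(-\bfx),\sigma)$ therefore yields
\[\ovl{F^{(3)}}\ehd B(E,\LL(-\bfx),\sigma)^{(3)}=B(E,\NN_3,\tau),\]
where $\NN=\LL(-\bfx)$. A short sheaf calculation gives $\NN_3=\LL_3(-\bfx-\bfx^\sigma-\bfx^{\sigma^2})=\MM(-\bfy)$, so $\ovl{F^{(3)}}\ehd B(E,\MM(-\bfy),\tau)$. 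Combining with Lemma~\ref{[x]_3 is v effective}, which confirms that $\bfy$ is $\tau$-virtually effective, and noting the degree bound $\deg\bfy=3\deg\bfx\leq 6\leq 8$, both conditions of Definition~\ref{virtual blowup}(2) are verified, so $F^{(3)}$ is a virtual blowup of $T$ at $\bfy$.
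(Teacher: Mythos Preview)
Your proof is correct and follows the same overall architecture as the paper's: extract the data $(\bfd,M,R)$ from Lemma~\ref{max order pair equiv def}(1), set $N=M^{(3)}$ and $R'=R^{(3)}$, prove the key identity $F^{(3)}=\End_{R'}(N^{**})$, and conclude via Lemma~\ref{max order pair equiv def}(2). The treatment of condition~(b) and the use of Lemma~\ref{Veronese ** commute} to identify $N^{**}=(M^{**})^{(3)}$ match the paper exactly.

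The genuine difference is in how you establish the hard inclusion $\End_{R'}(N^{**})\subseteq F^{(3)}$. The paper introduces an auxiliary ring $W=\End_R(N^{**}R)$, shows $\End_{R'}(N^{**})=W^{(3)}$, and then invokes Cozzens' maximal order theorem (Remark~\ref{RSS 6.2}) together with the computation $(N^{**}R)^{**}=M^{**}$ to force $W\subseteq F$. You instead argue directly: for $q\in\End_{R'}(P^{(3)})$ with $P=M^{**}$, you bound $\GK_R((P+qP)/P)\leq 1$ via Lemma~\ref{NR**=M**} and the Cohen--Macaulay property, then use reflexivity of $P$ and Lemma~\ref{RSS 4.11}(1) to conclude $qP\subseteq P$. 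Your route is more self-contained, avoiding the appeal to the external maximal-order machinery; the paper's route is more structural, making explicit that $W$ already sits inside the maximal order $F$. Both exploit the same underlying facts (Cohen--Macaulay, Lemma~\ref{NR**=M**}, Lemma~\ref{Veronese ** commute}), just packaged differently.
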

\begin{proof}
First we note that $F$ indeed exists by Proposition~\ref{RSS 7.4(3)}, while $\bfy$ is $\tau$-virtually effective by Corollary~\ref{[x]_3 is v effective}. \par
On one hand $\ovl{F}\ehd B(E,\LL(-\bfx),\sigma)$ by definition; while on the other,
\begin{multline*} B(E,\LL(-\bfx),\sigma)_{3n}=H^0(E,\LL(-\bfx)_{3n})= H^0(E,\LL_{3n}(-\bfx-\bfx^\sigma-\dots-\bfx^{\sigma^{3n-1}}))\\
=H^0(E,\MM_{n}(-\bfy-\bfy^\tau-\dots-\bfy^{\tau^{n-1}}))= H^0(E,\MM(-\bfy)_n)=B(E,\MM(-\bfy),\tau)_n.
\end{multline*}
for all $n\geq 1$. Thus $\ovl{F}^{(3)}\ehd B(E,\MM(-\bfy),\tau)$. It is hence left to prove that $(F'\cap T,F')$ is a maximal order pair of $T$. \par
Apply Lemma~\ref{max order pair equiv def}(1) to the maximal order pair $(F\cap S,F)$ of $S$. We get an effective divisor $\bfd$ with $\deg\bfd\leq 2$, and a finitely generated $g$-divisible right $S(\bfd)$-module $M$ such that $S(\bfd)\subseteq M\subseteq S$ and $F=\End_{S(\bfd)}(M^{**})$. Let $R=S(\bfd)$, $R'=R^{(3)}$ and $N=M^{(3)}$. Since $M_R$ is $g$-divisible and finitely generated, $N_{R'}$ is also by Lemma~\ref{g div up} and Lemma~\ref{noeth up n down}(2). In addition, $R'\subseteq N\subseteq T$ easily follows from $R\subseteq M\subseteq S$. By Theorem~\ref{S(d) thm}(1) we know that $R'=T(\bfe)$ where $\bfe=[\bfd]_3$ has $\deg\bfe=3\deg\bfd\leq 6$. We claim $F'$, $R'$, $N$ satisfies Lemma~\ref{max order pair equiv def}(2b). For this it remains to prove $F'=\End_{R'}(N^{**})$.\par
Let $G=\End_{R'}(N^{**})$. Note that since $N^{**}=(M^{**})^{(3)}$ by Lemma~\ref{Veronese ** commute},
\begin{equation}\label{F' subset G}F'N^{**}\subseteq FM^{**}\cap T_{(g)}=M^{**}\cap T_{(g)}=N^{**}.
\end{equation}
Hence $F'\subseteq G$. Now consider $W=\End_R(N^{**}R)$ and $W'=W^{(3)}$. Similar to (\ref{F' subset G}) (with $N^{**}R$ in place of $M^{**}$) one can show $W'\subseteq G$. Clearly also $GN^{**}R\subseteq N^{**}R$. This shows $G\subseteq W$, and hence $G=W'$. Now by Remark~\ref{RSS 6.2}, $\End_R((N^{**}R)^{**})$ is the unique maximal order equivalent to and containing $W$. But by Lemma~\ref{Veronese ** commute} and Lemma~\ref{NR**=M**} (with $d=3$ and $M$ replaced by $M^{**}$),
$$(N^{**}R)^{**}=((M^{**})^{(3)}R)^{**}=(M^{**})^{**}=M^{**}.$$
So we have $W\subseteq F$, and thus $W'=G\subseteq F'$. So $F'=\End_{R'}(N^{**})$ and, by Lemma~\ref{max order pair equiv def}(2), $(F'\cap T,F')$ is a maximal order pair of $T$.
\end{proof}

Next we investigate when two virtual blowups are equivalent orders to each other. The following definition is from \cite[Section~5]{RSS}.

\begin{definition}
Two divisors $\bfx$ and $\bfy$ on $E$ are called \textit{$\sigma$-equivalent} if on every $\sigma$-orbit $\mathbb{O}$ of $E$, we have $\deg(\bfx|_{\mathbb{O}})=\deg(\bfy|_{\mathbb{O}})$.
\end{definition}

\begin{prop}\label{RSS 5.27}
Let $\bfx$ and $\bfx'$ be two virtually effective divisors on $E$, and let $F$ and $F'$ be two virtual blowups at $\bfx$ and $\bfx'$ respectively. Then $F$ and $F'$ are equivalent orders if and only if $\bfx$ and $\bfx'$ are $\sigma$-equivalent.
\end{prop}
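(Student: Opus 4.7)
The plan is to reduce to an equivalence between twisted homogeneous coordinate rings and then treat both directions by sheaf-theoretic arguments on $E$. Since $F$ and $F'$ are $g$-divisible, Proposition~\ref{RSS 2.16} gives that $F$ and $F'$ are equivalent orders if and only if $\ovl{F}$ and $\ovl{F'}$ are. By definition of virtual blowup, $\ovl{F}\ehd B:=B(E,\LL(-\bfx),\sigma)$ and $\ovl{F'}\ehd B':=B(E,\LL(-\bfx'),\sigma)$, and equivalence of orders is preserved under $\ehd$: any cg order $C\subseteq Q$ is equivalent (via Lemma~\ref{hence equiv orders}(2)) to its subring $\Bbbk + C_{\geq n_0}$ for any $n_0\geq 1$, so the $\ehd$-class of an order determines its equivalence class. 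It therefore suffices to prove $B$ and $B'$ are equivalent orders in $Q=\Bbbk(E)[t,t^{-1};\sigma]$ if and only if $\bfx\sim_\sigma \bfx'$.

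For the direction $\bfx\sim_\sigma\bfx'\Rightarrow B$ and $B'$ equivalent: on each $\sigma$-orbit $\mathbb{O}$, the restriction $(\bfx-\bfx')|_\mathbb{O}$ has total degree zero, so a telescoping argument produces a finite divisor $\bfy$ with $\bfx-\bfx'=\bfy-\bfy^\sigma$ (concretely, setting $\bfy|_\mathbb{O}=\sum_i\bigl(\sum_{j\leq i}c_j\bigr)p^{\sigma^i}$ where $(\bfx-\bfx')|_\mathbb{O}=\sum_i c_i p^{\sigma^i}$ gives a divisor supported at finitely many points). Then $[\bfx-\bfx']_n=\bfy-\bfy^{\sigma^n}$ by telescoping, so $\LL(-\bfx)_n=\LL(-\bfx')_n\otimes\OO(-\bfy+\bfy^{\sigma^n})$ for every $n\geq 0$. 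Setting $M=\bigoplus_{n\geq 0}H^0(E,\OO(-\bfy)\otimes\LL(-\bfx')_n)\subseteq Q$, Lemma~\ref{RSS 6.14}(1) with $\mbf{q}=-\bfy$ yields $\End_{B'}(M)\ehd B(E,\LL(-\bfx')(-\bfy+\bfy^\sigma),\sigma)=B$. Since $M$ is a finitely generated right $B'$-module and $B\ehd \End_{B'}(M)$ acts on $M$ from the left, $M$ becomes in high degrees a $(B,B')$-bimodule finitely generated on both sides, and Lemma~\ref{hence equiv orders}(3) gives the equivalence.

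For the converse direction, assume $B$ and $B'$ are equivalent orders. Standard noetherian techniques produce a nonzero graded $(B',B)$-bimodule $M\subseteq Q$ finitely generated on both sides (for instance, by using Proposition~\ref{RSS 5.7} to pass through a common normalised twisted homogeneous coordinate ring $B(E,\LL(-\mbf{d}),\sigma)$ and then chasing an explicit bimodule). By Corollary~\ref{nc serre2}, as a right $B$-module $M\ehd\bigoplus_n H^0(E,\OO(\mbf{q})\otimes\LL(-\bfx)_n)$ for some divisor $\mbf{q}$. Unpacking $B'M\subseteq M$ at the level of sheaves, using that multiplication in $Q$ introduces $\sigma^n$-pullbacks and cancelling the common factors $\LL_{m+n}$ from $[\bfx']_m+[\bfx]_n^{\sigma^m}=[\bfx]_m+[\bfx]_n^{\sigma^m}$ after rearrangement, yields the divisor inequality $[\bfx-\bfx']_m\leq \mbf{q}-\mbf{q}^{\sigma^m}$ for all $m\gg 0$. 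Restricting to a fixed orbit $\mathbb{O}$ with $(\bfx-\bfx')|_\mathbb{O}=\sum_i c_i p^{\sigma^i}$ and $\mbf{q}|_\mathbb{O}=\sum_i b_i p^{\sigma^i}$, the coefficient of $p^{\sigma^k}$ gives $\sum_{k-m<i\leq k}c_i\leq b_k-b_{k-m}$; fixing $k$ beyond the support of the $c_i$ and taking $m$ large (so that $b_k=b_{k-m}=0$) forces $\deg(\bfx-\bfx')|_\mathbb{O}\leq 0$. The symmetric argument using a $(B,B')$-bimodule gives the reverse inequality, whence $\deg\bfx|_\mathbb{O}=\deg\bfx'|_\mathbb{O}$ for every $\sigma$-orbit, i.e., $\bfx\sim_\sigma\bfx'$.

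The main obstacle is this converse direction: producing the bimodule $M$ cleanly (which is non-trivial since neither of $B,B'$ need contain the other) and carefully converting the algebraic inclusion $B'M\subseteq M$ into an orbit-wise divisor inequality. Reducing to normalised divisors via Proposition~\ref{RSS 5.7}, where one can work with explicit twisted homogeneous coordinate rings at effective divisors supported on one point per orbit, should tame the bimodule construction.
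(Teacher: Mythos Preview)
Your reduction in the first paragraph has a genuine gap. Proposition~\ref{RSS 2.16} is stated and proved only under the hypothesis $U\subseteq R$; its $(\Leftarrow)$ direction constructs $M=\widehat{XU}$ with $X\subseteq R$ and uses the containment to force $M\subseteq R$ and $\End_U(M)\subseteq R$. For two virtual blowups $F,F'$ neither containment need hold, so you cannot conclude ``$\ovl{F},\ovl{F'}$ equivalent $\Rightarrow$ $F,F'$ equivalent'' from that proposition. Only the forward implication (Lemma~\ref{RSS 2.16=>}) is containment-free. This breaks precisely your $(\Leftarrow)$ argument: you correctly produce a bimodule showing $B$ and $B'$ are equivalent, hence $\ovl{F}$ and $\ovl{F'}$ are, but you then have no tool to lift this back to $F$ and $F'$.

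The paper circumvents this by never trying to lift an equivalence across unrelated rings. Instead, for $(\Leftarrow)$ it observes that when $\bfx\sim_\sigma\bfx'$ one can, by looking at the construction in Proposition~\ref{RSS 5.7}, choose a \emph{common} normalised divisor $\bfd$ for both $\ovl{V}$ and $\ovl{V'}$ (where $V=F\cap S$, $V'=F'\cap S$). Theorem~\ref{RSS 5.25} then makes both $V$ and $V'$ equivalent to the single ring $S(\bfd)$, and transitivity finishes. For $(\Rightarrow)$ the paper again passes through $S(\bfd)$ and $S(\bfd')$, reduces via Lemma~\ref{RSS 2.16=>} to $\ovl{S(\bfd)}$ and $\ovl{S(\bfd')}$, obtains a finitely generated bimodule from \cite[Proposition~3.1.14]{MR}, and then invokes \cite[Lemma~5.1]{RSS} directly rather than unpacking divisor inequalities by hand as you do. Your sheaf-theoretic argument for $(\Rightarrow)$ is workable and essentially reproves that lemma, but the bimodule production you flag as the obstacle is exactly what \cite[Proposition~3.1.14]{MR} supplies for free once both rings are noetherian.
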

\begin{proof}
Let $V=F\cap S$ and $V'=F'\cap S$ be the corresponding maximal $S$-orders. By definition $V$ and $V'$ are equivalent orders to $F$ and $F'$ respectively. Hence it is enough to prove the statement with $V$ and $V'$ in place of $F$ and $F'$. \par
($\Leftarrow$). The key to this proof is the construction of a normalised divisor in Proposition~\ref{RSS 5.7} and the observations of Remark~\ref{norm div not unique}. Fix a $\sigma$-orbit $\mathbb{O}$ and choose a $p\in \mathbb{O}$ such that both $\bfx|_{\bbO}$ and $\bfx'|_{\bbO}$ are supported on $\{ p^{\sigma^i}\;|\, i\geq 0\}$. Since $\bfx$ and $\bfx'$ are $\sigma$-equivalent, $\deg\bfx|_{\bbO}=\deg\bfx'|_{\bbO}=e_p$ say. Put $\bfd=\sum_{\bbO} e_p p$, where the sum ranges over every $\sigma$-orbit of $E$, and for each $\sigma$-orbit $\bbO$, $p$ and $e_p$ are chosen as above. Then $\bfd$ is a normalised divisor for both $V$ and $V'$. By Theorem~\ref{RSS 5.25}, $V$ and $V'$ are both equivalent orders to $S(\bfd)$, and therefore also to each other.\par

($\Rightarrow$). Now let $\bfd$ and $\bfd'$ be normalised divisors for $\bfx$ and $\bfx'$ respectively. By Theorem~\ref{RSS 5.25}, $V$ and $V'$ are equivalent orders to $R=S(\bfd)$ and $R'=S(\bfd')$ respectively. Hence, if $V$ and $V'$ are equivalent orders so are $R$ and $R'$. By Lemma~\ref{RSS 2.16=>} , $\ovl{R}$ and $\ovl{R'}$ are then equivalent orders. It then follows from \cite[Proposition~3.1.14]{MR} that there exists an $(\ovl{R},\ovl{R'})$-bimodule $M$. By the same result, the bimodule $M$ is finitely generated on both sides because both $\ovl{R}$ and $\ovl{R'}$ are noetherian. By \cite[Lemma~5.1]{RSS} $\bfd$ and $\bfd'$ are then $\sigma$-equivalent. By construction a divisor is $\sigma$-equivalent to any of its normalised divisors; in particular $\bfx$ and $\bfx'$ are $\sigma$-equivalent to $\bfd$ and $\bfd'$ respectively. Thus $\bfx$ and $\bfx'$ are $\sigma$-equivalent also.
\end{proof}

\begin{remark}\label{RSS 5.27 works in T}
The proof of Proposition~\ref{RSS 5.27} can be repeated to obtain the analogous statement for subalgebras of $T$. This would be a generalisation of \cite[Corollary~5.26]{RSS}.
\end{remark}

In Example~\ref{effetive blowup is virtual blowup} we noted that, given an effective divisor $\bfd$ with $\deg\bfd\leq 2$, the blowup $S(\bfd)$ is a virtual blowup at $\bfd$. Suppose $F$ is any other virtual blowup at $\bfd$. By Proposition~\ref{RSS 5.27} we know $F$ and $S(\bfd)$ are equivalent orders.

\begin{question}\label{unique}
 \begin{enumerate}[(1)]
\item Let $\bfd$ be an effective divisor with $\deg\bfd\leq 2$. Is $S(\bfd)$ the unique virtual blowup (in the sense of Definition~\ref{virtual blowup}) of $S$ at $\bfd$?
\item More generally, let $\bfx$ be a virtually effective divisor with $\deg\bfx\leq 2$. By  Proposition~\ref{RSS 7.4(3)} a virtual blowup $F$ of $\bfx$ at $S$ exists. Is $F$ the unique blowup of $S$ at $\bfx$?
\end{enumerate}
\end{question}

The author feels that the Cohen-Macaulay property, satisfied by $S(\bfd)$, should be enough to give a positive answer to the Question~\ref{unique}(1). On the other hand, in general, a virtual blowup will not be Cohen-Macaulay (see Corollary~\ref{S(p-p1+p2) bad homologically}). Question~\ref{unique}(2) then feels false since there should many different modules $M$ appearing in Lemma~\ref{RSS2 5.10}, and so many different virtual blowups $F=\End_R(M^{**})$ arising in Proposition~\ref{RSS 7.4(3)}. However, constructing such an example in $S$ proves difficult. This is something that was achieved for virtual blowups of $T$ as alluded to in \cite[Remark~10.7]{RSS}. An analogous question to Question~\ref{unique}(1) in $T$ was left unasked. The best we are able to do in the direction of Question~\ref{unique}(1) is the following proposition.

\begin{prop}\label{vblowup eff div}
Let $F$ be a virtual blowup of $S$ at an effective divisor $\bfd$ with $\deg\bfd\leq 2$. Suppose that the divisor $\bfd$ is supported on points with distinct $\sigma$-orbits. Then $F=S(\bfd)$. In other words $S(\bfd)$ is the unique blowup of $S$ at $\bfd$.
\end{prop}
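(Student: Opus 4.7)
The plan is to realize $F$ as $\End_{S(\bfd)}(M^{**})$ for some $g$-divisible right $S(\bfd)$-module $M$ with $S(\bfd)\subseteq M\subseteq S$, and then force $M=S(\bfd)$. To begin, I set $U=F\cap S$, which is a $g$-divisible maximal $S$-order (from $(U,F)$ being a maximal order pair). Corollary~\ref{RSS 6.6} supplies an effective divisor $\bfd_0$ of degree at most $2$ and a $g$-divisible $(U,S(\bfd_0))$-bimodule $M$ satisfying $S(\bfd_0)\subseteq M\subseteq S$ and $U=\End_{S(\bfd_0)}(M)$, and tells us that $\End_{S(\bfd_0)}(M^{**})$ is the unique maximal order containing $U$; since $F$ is one such, $F=\End_{S(\bfd_0)}(M^{**})$. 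Reading the proof of Theorem~\ref{RSS 5.25}, $\bfd_0$ may be taken to be any $\sigma$-normalised divisor for $\ovl{U}$. Because $\ovl{U}\ehd\ovl{F}\ehd B(E,\LL(-\bfd),\sigma)$, the geometric data of Lemma~\ref{RSS 5.3} attached to $\ovl{U}$ must satisfy $\bfx=\bfd$ and $\bfy=0$; this follows from comparing $H^0(E,\LL_n(-\bfy-[\bfx]_n))$ with $H^0(E,\LL_n(-[\bfd]_n))$ for $n\gg0$ and using that these line bundles are globally generated in high degree. The recipe of Proposition~\ref{RSS 5.7} applied to $\bfx=\bfd$ then assigns on each $\sigma$-orbit a weight $e_p$ equal to the coefficient of $\bfd$ at its unique support point---this is exactly where the distinct-$\sigma$-orbit hypothesis enters---and returns the normalised divisor $\bfd$ itself. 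I therefore take $\bfd_0=\bfd$ and write $R=S(\bfd)$.

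Next, with $R=S(\bfd)$, Proposition~\ref{RSS 6.7} produces an effective divisor $\bfy$ for which $\ovl{M}\ehd\bigoplus_{n\geq 0}H^0(E,\OO(\bfy)\otimes\LL(-\bfd)_n)$ and $\ovl{F}\ehd B(E,\LL(-\bfd+\bfy-\bfy^\sigma),\sigma)$. Matching this against $\ovl{F}\ehd B(E,\LL(-\bfd),\sigma)$ by the same generation-by-global-sections comparison as above forces $\bfy=\bfy^\sigma$. Since $\sigma$ is a translation of infinite order and hence has no finite orbits on $E$ (Remark~\ref{sigma is translation}), the only $\sigma$-stable effective divisor is $\bfy=0$, and so $\ovl{M}\ehd\ovl{R}$.

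Finally, $R\subseteq M$ with both modules $g$-divisible, so Lemma~\ref{RSS 2.14}(2) yields $\GK_R(M/R)\leq 1=\GK(R)-2$. The Cohen--Macaulay property of $R$ (Theorem~\ref{S(d) thm}(2)) together with Lemma~\ref{RSS 4.11}(1) then forces $M\subseteq R^{**}=R$, hence $M=R$, and therefore $F=\End_R(R)=R=S(\bfd)$. The main obstacle is the first step, the identification $\bfd_0=\bfd$: without the distinct-$\sigma$-orbit hypothesis the output of Proposition~\ref{RSS 5.7} is only pinned down up to $\sigma$-equivalence, so the subsequent rigidity arguments used to conclude $\bfy=0$ and $M=R$ would no longer be directly available.
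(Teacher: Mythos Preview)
Your proof is correct and follows essentially the same route as the paper's: identify the normalised divisor with $\bfd$ using the distinct-$\sigma$-orbit hypothesis, compare the two descriptions of $\ovl{F}$ to force the auxiliary divisor to be $\sigma$-fixed and hence zero, conclude $\ovl{M}\ehd\ovl{R}$, and then use Cohen--Macaulay plus Lemma~\ref{RSS 4.11}(1) to deduce $M=R$. The only cosmetic difference is in the final step: you conclude $F=\End_R(R^{**})=\End_R(R)=R$ directly, while the paper instead argues $V\subseteq R$ and then appeals twice to maximality; your ending is slightly cleaner.
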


\begin{proof}
Let $V=F\cap S$. Apply Theorem~\ref{RSS 5.25} to $V$. We find an effective divisor $\bfd'$ with $\deg\bfd'\leq 2$ that is supported at points with distinct $\sigma$-orbits, and such that $\widehat{VS(\bfd')}$ is finitely generated as a right $S(\bfd')$-module. The divisor $\bfd'$ is constructed in Proposition~\ref{RSS 5.7} and can be any normalised divisor for $\bfd$. If one looks into this construction, one sees that because $\bfd$ is supported on points with distinct $\sigma$-orbits, we can take $\bfd'=\bfd$. \par
Set $R=S(\bfd)$ and $M=\widehat{VR}$, and note that $M_R$ is finitely generated by Theorem~\ref{RSS 5.25}. Then $\ovl{M}_{\ovl{R}}$ (which is clearly nonzero) is also finitely generated. Hence by the Noncommutative Serre's Theorem (specifically Corollary~\ref{nc serre2})
\begin{equation}\label{vblowup eff div eq1}
\ovl{M}\ehd\bigoplus_{n\geq 1}H^0(E,\LL_n(-[\bfd]_n+\mbf{z}))\;\text{ for some divisor }\mbf{z}.
\end{equation}
By Theorem~\ref{RSS 5.25}, $V\subseteq W=\End_R(M)\subseteq S$, and, $V$ and $W$ are equivalent orders. Since $V$ is a maximal $S$-order, $V=W$. Hence by equation~(\ref{RSS 6.7 ovlU ehd End(ovlM)}) and Lemma~\ref{RSS 6.14}
\begin{equation}\label{vblowup eff div eq2}
\ovl{V}\ehd B(E,\LL(-\bfd+\mbf{z}-\mbf{z}^\sigma),\sigma)=
\bigoplus_{n\geq 0}H^0(E,\LL_n(-[\bfd]_n+\mbf{z}-\mbf{z}^\sigma)).
\end{equation}
But by definition of a virtual blowup at $\bfd$,
\begin{equation}\label{vblowup eff div eq3}
\ovl{V}\ehd B(E,\LL(-\bfd),\sigma)=\bigoplus_{n\geq 0}H^0(E,\LL_n(-[\bfd]_n)).
\end{equation}
Take $n\gg0$. Then both $\LL_n(-[\bfd]_n+\mbf{z}-\mbf{z}^\sigma)$ and  $\LL_n(-[\bfd]_n)$ are generated by their global sections \cite[Corollary~IV.3.2]{Ha}. Hence equations (\ref{vblowup eff div eq2}) and (\ref{vblowup eff div eq3}) imply that $\LL_n(-[\bfd]_n+\mbf{z}-\mbf{z}^\sigma)=\LL_n(-[\bfd]_n)$. So we have $[\bfd]_n+\mbf{z}-\mbf{z}^\sigma=[\bfd]_n$. Since $\sigma$ has no fixed points (see Remark~\ref{sigma is translation}) we must have $\mbf{z}=0$. But then (\ref{vblowup eff div eq1}) reads $\ovl{M}\ehd\ovl{R}$. Clearly $\ovl{R}\subseteq\ovl{M}$, and so $\GK(M/R)\leq 1$ by Lemma~\ref{RSS 2.14}(2). But $R$ is Cohen-Macaulay by Theorem~\ref{S(d) thm}(2), therefore Lemma~\ref{RSS 4.11}(1) implies $R=M$. It then follows $V\subseteq R$. \par
Now $V$ and $R$ are equivalent orders (by Proposition~\ref{RSS 5.27} for example). So as $V$ is a maximal $S$-order and $V\subseteq R\subseteq S$, $V=R$. Furthermore, $R=V$ is an equivalent order with $F$, hence $V=R=F$ because $R$ is a maximal order.
\end{proof}

\begin{remark}\label{vblowup eff div works in T}
As in Remark~\ref{RSS 5.27 works in T}, we note that the proof of Proposition~\ref{vblowup eff div} also works for an analogous statement for virtual blowups of $T$.
\end{remark}

Our final result of this chapter gives sufficient conditions for a virtual blowup to be contained in $S$. This is our analogue of \cite[Corollary~6.6(3)]{RSS}.

\begin{prop}\label{RSS 6.6(3)}
Let $F$ be a virtual blowup of $S$. Let $\bfd$ and $M$ be given by Lemma~\ref{max order pair equiv def}(1) so that $F=\End_{S(\bfd)}(M^{**})$. Suppose that every ideal $I$ of $S(\bfd)$ with $\GK(S(\bfd)/I)=1$ satisfies $\GK(S/IS)\leq 1$ (in particular, this holds if $S(\bfd)$ has no such ideals). Then $F$ is a maximal order contained in $S$.
\end{prop}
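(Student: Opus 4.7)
The plan is to produce a two-sided ideal $J$ of $R=S(\bfd)$ with $\GK(R/J)\le 1$ and $M^{**}J\subseteq S$, apply the hypothesis of the proposition to obtain $\GK_S(S/JS)\le 1$, and then exploit the Cohen--Macaulay property of $S$ to force $F\subseteq S$.

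First, since $1\in M^{**}$ and $FM^{**}\subseteq M^{**}$ we get $F=F\cdot 1\subseteq M^{**}$; and since $R$ is Auslander--Gorenstein and Cohen--Macaulay by Theorem~\ref{S(d) thm}(2), Lemma~\ref{RSS 4.11}(1) gives $\GK_R(M^{**}/M)\le \GK(R)-2=1$. I would then take
\[
J=\rann_R(M^{**}/M)=\{r\in R\,:\,M^{**}r\subseteq M\},
\]
which is a two-sided ideal of $R$ because $M^{**}/M$ is a right $R$-module. Writing $M^{**}/M=\bar p_1R+\cdots+\bar p_nR$, the map $R\to (M^{**}/M)^n$ sending $r\mapsto(\bar p_ir)_i$ has kernel exactly $J$, so $R/J$ embeds in $(M^{**}/M)^n$ and $\GK(R/J)\le\GK_R(M^{**}/M)\le 1$. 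The hypothesis then yields $\GK_S(S/JS)\le 1$.

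Next I would use the right $S$-module $M^{**}\otimes_R(S/JS)$ as a bridge. Because $RJ\subseteq J$, $JS$ is a left $R$-submodule of $S$ and $S/JS$ is naturally a left $R$-module and right $S$-module. Since $M^{**}$ is a finitely generated right $R$-module, $M^{**}\otimes_R(S/JS)$ is a finitely generated right $S$-module, and each cyclic summand $p_i\otimes(S/JS)$ is a quotient of $S/JS$, giving $\GK_S(M^{**}\otimes_R(S/JS))\le\GK_S(S/JS)\le 1$. The assignment $p\otimes\bar s\mapsto ps+S$ defines a right $S$-module homomorphism into $\Sg/S$ whose image is $(M^{**}S)/S$: well-definedness on the tensor product is associativity in $\Sg$, and the vanishing on the image of $M^{**}\otimes_R JS$ is immediate from $M^{**}J\subseteq M\subseteq S$ since $p(js)=(pj)s\in M\cdot S\subseteq S$. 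Consequently $\GK_S(M^{**}S/S)\le 1$.

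Finally, since $S$ is itself Auslander--Gorenstein and Cohen--Macaulay of GK-dimension $3$ by Theorem~\ref{sklyanin thm}(3), Lemma~\ref{RSS 4.11}(1) applied to $S\subseteq M^{**}S\subseteq Q_\gr(S)$ rules out any proper extension $S\subsetneq M^{**}S$ with $\GK_S(M^{**}S/S)\le\GK(S)-2=1$. Hence $M^{**}S=S$, which forces $M^{**}\subseteq S$ and therefore $F\subseteq M^{**}\subseteq S$. Since $F$ is a maximal order in $Q_\gr(S)$ by Definition~\ref{virtual blowup}, this will complete the proof. The main subtlety I anticipate is only the bookkeeping for the factorisation through $M^{**}\otimes_R(S/JS)$ (in particular checking that the tensored-up map really is right $S$-linear and kills $M^{**}\otimes_R JS$); once this is dealt with, the argument is just a clean application of the two Cohen--Macaulay properties on either side of the hypothesis.
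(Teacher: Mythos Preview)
Your proof is correct and follows essentially the same line as the argument the paper defers to (namely the proof of \cite[Corollary~6.6(3)]{RSS}, with the Cohen--Macaulay property of $S(\bfd)$ from Theorem~\ref{S(d) thm} substituted for that of $T(\bfd)$): bound $\GK_R(M^{**}/M)\le 1$ via Lemma~\ref{RSS 4.11}(1), pass to an annihilator ideal $J$ of $R$, invoke the hypothesis to get $\GK_S(S/JS)\le 1$, tensor up to bound $\GK_S(M^{**}S/S)\le 1$, and conclude $M^{**}\subseteq S$ by Lemma~\ref{RSS 4.11}(1) applied to $S$.

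One small point to tidy: the hypothesis is phrased for ideals with $\GK(R/I)=1$ exactly, whereas you only establish $\GK(R/J)\le 1$. This is harmless, but you should say why. If $M^{**}=M$ the conclusion $F\subseteq M\subseteq S$ is immediate. Otherwise $M^{**}/M$ is nonzero and, since $M$ is $g$-divisible and $M^{**}=\widehat{M^{**}}$ by Lemma~\ref{RSS 2.13}(3), the quotient $M^{**}/M$ is $g$-torsionfree; hence so is $R/J$ (it embeds in $(M^{**}/M)^n$), forcing $\GK(R/J)\ge 1$ and therefore $\GK(R/J)=1$, so the hypothesis applies.
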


\begin{proof}
One may copy the proof given for \cite[Corollary~6.6(3)]{RSS} with the relevant notation interchanged. In doing so, one replaces the fact that $T(\bfd)$ is Cohen-Macaulay by Theorem~\ref{T(d) properties}, with the fact that $S(\bfd)$ is Cohen-Macaulay by Theorem~\ref{S(d) thm}.
\end{proof}

%\begin{proof}
%Let $R=S(\bfd)$. As in the proof of Proposition~\ref{RSS 6.4}, $J=\rann_R (M^{**}/M)$ is an ideal of $R$ with $\GK(R/J)\leq 1$. Since $M^{**}xg\subseteq M$ implies $M^{**}x\subseteq M$ by $g$-divisibility of $M$, $J$ is also $g$-divisible. If $\GK(R/J)=0$, then for $n\gg 0$, $g^nR\subseteq R_{\geq 3n}=J_{\geq 3n}\subseteq J$ and we get $R=J$ as $J$ is $g$-divisible. Note that this also implies $M^{**}=M$. Hence we have two cases either $M=M^{**}$ and $J=R$, or else $\GK(R/J)=1$. In either case, by the assumption of (3), $\GK(S/JS)\leq 1$. Given any $\alpha\in M^{**}$, we can define a surjection $S/JS \to (\alpha S+S)/S$, $x+JS\mapsto \alpha x+S$, and therefore, for any $\alpha\in M^{**}$
%$$\GK((\alpha S+S)/S)\leq \GK(S/JS)\leq 1.$$
%This proves $\GK(M^{**}S+S/S)\leq 1$. Since $S(\bfd)$ is Auslander-Gorenstein and Cohen-Macaulay by Theorem~\ref{S(d) thm}(2), Lemma~\ref{RSS 4.11}(1) implies $M^{**}\subseteq S$. This forces $M^{**}S=S$, and so $F\subseteq S$. Since $U$ is a maximal $S$-order, $U=F$ is a maximal order.
%\end{proof}

\begin{remark}\label{no sporadics remark}
Evidence from \cite[Section~9]{RSS2} suggests that generically the rings $T(\bfd)$ will have no ideals with co-GK-dimension 1. By Theorem~\ref{S(d) thm}(1) and Lemma~\ref{sporadics up n down}, this would pass to the $S(\bfd)$'s. Proposition~\ref{RSS 6.6(3)} then would tells us that generically, a virtual blowup is contained in $S$. In other words, generically a maximal $S$-order is a maximal order. In \cite[Proposition~10.3]{RSS} the authors construct a virtual blowup $F$ of $T$ such that $F\not\subseteq T$. Such an example in $S$ proves difficult to construct.
\end{remark}

\section{Classifying arbitrary maximal orders}\label{arbitrary max orders}

In this chapter we will present our main classification of maximal $S$-orders (Theorem~\ref{RSS 8.11}). In fact, it will turn out we already know of all the maximal $S$-orders $U$ (such that $\ovl{U}\neq \Bbbk$). We will show that they are all $g$-divisible, and hence fit into our classification of $g$-divisible maximal $S$-orders (Theorem~\ref{RSS 7.4}). The conclusions of Theorem~\ref{RSS 8.11} are therefore similar to Theorem~\ref{RSS 7.4}. We hence must weaken the $g$-divisibility assumption of Theorem~\ref{RSS 7.4} to the assumption that $\ovl{U}\neq \Bbbk$. \par
The assumption $\ovl{U}\neq \Bbbk$ is annoying yet necessary. We give a  ``silly" example of a maximal order $U$ such that $\ovl{U}=\Bbbk$ in Example~\ref{RSS 10.8}. The last part of this chapter studies a way of getting around the assumption $\ovl{U}=\Bbbk$. For this one must be willing to take higher Veronese subrings and semi-graded isomorphisms. \par

\subsection{The main classification}

To obtain our main classification of maximal $S$-orders, we must show that any cg subalgebra $U$ (satisfying $\ovl{U}\neq \Bbbk$) is contained in, and equivalent to, a $g$-divisible ring. Our strategy for this, first applied in \cite[Section~7]{Ro}, is via two steps. First we show a subalgebra $U$ of $S$ is equivalent to the ring obtained by adjoining $g$, namely $C=U\langle g\rangle$. Second we show $C$ is an equivalent order with its $g$-divisible hull. It is the latter which we tackle first. Before we come to this, we need a result from \cite{RSS}.

\begin{lemma}\label{RSS 8.6}\cite[Lemma~8.6]{RSS}
Let $B=B(E,\NN,\sigma)$ for some smooth elliptic curve $E$, invertible sheaf $\NN$ with $\deg\NN \geq 1$, and automorphism, $\sigma$ of $E$, with infinite order. Then for any nonzero $x\in B_k$, we have $B_nx+xB_n=B_{n+k}$ for all $n\gg 0$. In particular, if $A$ is a graded subalgebra of $B$ such that $A\neq \Bbbk$, then $B$ is noetherian $(A,A)$-bimodule. \qed
\end{lemma}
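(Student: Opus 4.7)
My plan is to split the proof into the two assertions and tackle the identity $B_n x + x B_n = B_{n+k}$ first via geometry, then use it to derive the bimodule statement.

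For the first part, I would translate everything onto $E$ using that $B_n=H^0(E,\NN_n)$ and that the multiplication $B_n\cdot B_m\to B_{n+m}$ is given by $a\cdot b=a\otimes b^{\sigma^n}$. Fix nonzero $x\in B_k$ and let $\bfd_x=\mathrm{div}(x)$, an effective divisor of degree $k\deg\NN$. Then $x\cdot B_n$ corresponds to the image of multiplication by $x$, which is $H^0(E,\NN_{n+k}(-\bfd_x))\subseteq H^0(E,\NN_{n+k})$, while $B_n\cdot x$ equals $H^0(E,\NN_n)\cdot x^{\sigma^n}=H^0(E,\NN_{n+k}(-\sigma^{-n}(\bfd_x)))$. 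Since $\sigma$ is an infinite-order translation on $E$ (Remark~\ref{sigma is translation}), and $\bfd_x$ is a finite set of points, for all $n\gg0$ the supports of $\bfd_x$ and $\sigma^{-n}(\bfd_x)$ are disjoint; hence, as subsheaves of $\NN_{n+k}$,
$$\NN_{n+k}(-\bfd_x)+\NN_{n+k}(-\sigma^{-n}(\bfd_x))=\NN_{n+k}.$$
This fits into the short exact sequence
$$0\longrightarrow \NN_{n+k}(-\bfd_x-\sigma^{-n}(\bfd_x))\longrightarrow\NN_{n+k}(-\bfd_x)\oplus\NN_{n+k}(-\sigma^{-n}(\bfd_x))\longrightarrow\NN_{n+k}\longrightarrow0.$$
Taking cohomology, I need $H^1$ of the leftmost sheaf to vanish to recover $B_{n+k}$ as the sum of the two sections spaces. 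The sheaf has degree $(n-k)\deg\NN$, which is positive for $n>k$; on an elliptic curve ($g=1$) Riemann--Roch together with Serre duality gives $H^1=0$ as soon as the degree is positive. Hence the map on global sections is surjective, which is exactly $B_nx+xB_n=B_{n+k}$ for $n\gg 0$.

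For the second part, since $A\neq\Bbbk$ I can choose nonzero $x\in A_k$ for some $k\geq1$. The first part gives $n_0$ with
$$B_{m+k}=B_mx+xB_m\subseteq B_m A+A B_m\qquad\text{for all }m\geq n_0.$$
Set $V=B_{\leq n_0+k-1}$, a finite-dimensional $\Bbbk$-subspace. A straightforward induction on $m$ (writing $B_m=B_{m-k}x+xB_{m-k}\subseteq B_{m-k}A+AB_{m-k}$ and using the inductive hypothesis on $B_{m-k}$) shows that $B$ is contained in $V+AVA$, i.e.\ $B$ is finitely generated as an $(A,A)$-bimodule by $V$.

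It remains to upgrade ``finitely generated'' to ``noetherian'' as a bimodule. The subalgebra $A\subseteq B$ is noetherian by Proposition~\ref{B just infinite}, so there is a surjection of $(A,A)$-bimodules $A\otimes_\Bbbk V\otimes_\Bbbk A\twoheadrightarrow B$. The main obstacle I expect is that noetherianness of an $(A,A)$-bimodule is not automatic from finite generation in the noncommutative setting, so I would need to exploit the just-infinite property of $A$ (Proposition~\ref{B just infinite}): any ascending chain of sub-bimodules $M_1\subseteq M_2\subseteq\cdots$ of $B$ eventually has all successive quotients $M_{i+1}/M_i$ of finite $\Bbbk$-dimension, and the finite-dimensional $V$-generation allows me to bound the chain. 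Concretely I would argue that for a chain of sub-bimodules, the induced chain of quotients $(M_i+AVA)/AVA$ lives in a finite-dimensional space (bounded by $V$), so it stabilizes; combined with noetherianness of $A$ on each graded piece, the original chain must stabilize as well.
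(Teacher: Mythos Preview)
The paper itself does not prove this lemma; it is quoted verbatim from \cite[Lemma~8.6]{RSS} and marked with \qed, so there is no in-paper argument to compare against. That said, let me comment on your attempt.

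Your geometric proof of $B_nx+xB_n=B_{n+k}$ is correct and is essentially the argument one would expect: the identifications $xB_n=H^0(E,\NN_{n+k}(-\bfd_x))$ and $B_nx=H^0(E,\NN_{n+k}(-\sigma^{-n}(\bfd_x)))$ are right, disjointness of the supports for $n\gg0$ follows from $|\sigma|=\infty$, and the $H^1$-vanishing on an elliptic curve finishes the job.

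Your reduction in the second part, showing $B\subseteq V+AVA$ for a finite-dimensional $V$, is also fine. The gap is in the final paragraph: the sketch you give for noetherianness does not work. The quotient $(M_i+AVA)/AVA$ being bounded only controls a finite-dimensional top slice; it says nothing about chains of sub-bimodules that sit inside $AVA$ itself, and ``noetherianness of $A$ on each graded piece'' is not a meaningful substitute, since each graded piece of $B$ is already finite-dimensional. You correctly identify the issue that finite generation of an $(A,A)$-bimodule does not imply noetherianness in general, but you do not resolve it.

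The clean fix is to forget about $A$ and pass to $\Bbbk[a]\subseteq A$ for a single nonzero homogeneous $a\in A_\ell$: since any $(A,A)$-sub-bimodule is a $(\Bbbk[a],\Bbbk[a])$-sub-bimodule, it suffices to show $B$ is a noetherian $(\Bbbk[a],\Bbbk[a])$-bimodule. But $\Bbbk[a]$ is a commutative polynomial ring (as $B$ is a domain), so $(\Bbbk[a],\Bbbk[a])$-bimodules are modules over the commutative noetherian ring $\Bbbk[a]\otimes_\Bbbk\Bbbk[a]\cong\Bbbk[s,t]$. Your induction already gives $B=\Bbbk[a]\,V\,\Bbbk[a]$, so $B$ is a finitely generated $\Bbbk[s,t]$-module, hence noetherian.
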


The proof of the next result is identical to that of \cite[Proposition~8.7]{RSS}. It is included for the convenience of the reader.

\begin{prop}\label{C to C hat}
Let $C$ be a cg subalgebra of $S$ satisfying $Q_\gr(C)=Q_\gr(S)$ and $\ovl{C}\neq\Bbbk$. Suppose that $g\in C$. Then for all $m\gg 0$, $C\cap Sg^m=g^m\widehat{C}$.\par
 If $C$ is noetherian, then $\widehat{C}$ is also finitely generated on both sides as a $C$-module.
\end{prop}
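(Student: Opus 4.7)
The plan is to establish both assertions by first showing that $\widehat{C}$ is finitely generated as a $(C,C)$-bimodule; the boundedness of the $g$-torsion of $\widehat{C}/C$, and (under the noetherian hypothesis) one-sided finite generation, will then be immediate consequences.

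First I would observe $\widehat{C}\subseteq S$. Since $B(E,\LL,\sigma)=S/gS$ is a domain, a direct computation shows $S\cap g\Sg=gS$, so $S$ is itself $g$-divisible in $\Sg$ with $\widehat{S}=S$; thus $\widehat{C}\subseteq\widehat{S}=S$. Now $\widehat{C}$ is a $g$-divisible cg subring of $\Sg$ with $Q_\gr(\widehat{C})=Q_\gr(S)$, so Lemma~\ref{ovlR is order} gives $Q_\gr(\ovl{\widehat{C}})=Q_\gr(\ovl{S})$, and Theorem~\ref{RSS 3.1} supplies an ideal sheaf $\JJ$ and a $\sigma$-ample invertible sheaf $\HH$ for which
\[
\ovl{\widehat{C}}\ehd\bigoplus_{n\geq 0} H^0(E,\JJ\HH_n)\subseteq B:=B(E,\HH,\sigma).
\]
In particular, both $\ovl{C}\subseteq\ovl{\widehat{C}}$ lie inside $B$ in high degrees.

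Next I would exploit the hypothesis $\ovl{C}\neq\Bbbk$. Choose $0\neq\ovl{y}\in\ovl{C}_k$ with $k\geq 1$; for $m$ sufficiently large, $\ovl{y}^m$ lies in $B$, and the subalgebra $A:=\Bbbk\langle\ovl{y}^m\rangle$ of $B$ sits inside $\ovl{C}$ with $A\neq\Bbbk$. By Lemma~\ref{RSS 8.6}, $B$ is noetherian as an $(A,A)$-bimodule, so $\ovl{\widehat{C}}_{\geq N}$ (a sub-$(A,A)$-bimodule of $B$ for suitable $N$) is finitely generated as an $(A,A)$-bimodule, hence as an $(\ovl{C},\ovl{C})$-bimodule; combined with the finite-dimensional low-degree part, this yields bimodule generators $\ovl{a_1},\ldots,\ovl{a_r}\in\ovl{\widehat{C}}$ over $\ovl{C}$. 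Lift to $a_i\in\widehat{C}$ and set $M:=\sum_i Ca_iC$. Then $\ovl{M}=\ovl{\widehat{C}}$, which combined with $g$-divisibility of $\widehat{C}$ gives $\widehat{C}=M+g\widehat{C}$. Since $g\in C$ is central, $gM\subseteq M$; iterating on a homogeneous element of $\widehat{C}$ and using that $\widehat{C}$ is $\N$-graded with $S$ a domain, the iteration terminates inside $M$, so $\widehat{C}=M$ is finitely generated as a $(C,C)$-bimodule.

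Each generator satisfies $a_ig^{n_i}\in C$ for some $n_i$; setting $m=\max_i n_i$ and using centrality of $g$ yields
\[
g^m\widehat{C}=\sum_i C(a_ig^m)C\subseteq C,
\]
so $g^m\widehat{C}\subseteq C\cap Sg^m$ (using $\widehat{C}\subseteq S$). The converse is immediate: if $c=sg^m\in C\cap Sg^m$ with $s\in S$, then $sg^m=c\in C$ forces $s\in\widehat{C}$, whence $c\in g^m\widehat{C}$. This proves the first claim. For the second, assume $C$ is noetherian. Then $\widehat{C}g^m\subseteq C$ is a two-sided ideal of $C$ (using centrality of $g$), hence finitely generated as a right $C$-module. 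The map $\widehat{C}\to\widehat{C}g^m$, $a\mapsto ag^m$, is a right $C$-module isomorphism (injective since $S$ is a domain), so $\widehat{C}$ itself is finitely generated as a right $C$-module; a symmetric argument on the left completes the proof.

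The main technical obstacle I anticipate is the lifting step: confirming that bimodule generation of $\ovl{\widehat{C}}$ over $\ovl{C}$ passes cleanly to bimodule generation of $\widehat{C}$ over $C$. The delicate point is the termination of the iteration $\widehat{C}=M+g\widehat{C}$, which relies on both the $\N$-grading of $\widehat{C}$ and the containment $gM\subseteq M$ coming from the centrality of $g\in C$.
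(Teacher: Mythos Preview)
Your proof is correct and rests on the same key input as the paper's---Lemma~\ref{RSS 8.6}, which turns the hypothesis $\ovl{C}\neq\Bbbk$ into a noetherianity statement for the $\ovl{C}$-bimodule $\ovl{S}$---but the lifting is organised differently. The paper filters $\widehat{C}$ by $M^k=\{a\in S\mid ag^k\in C\}$, uses Lemma~\ref{RSS 8.6} to get the chain $\ovl{M^k}$ to stabilise at some $\ovl{M^n}=\ovl{\widehat{C}}$, and then runs a minimal-degree contradiction to show $g^m\widehat{C}\subseteq C$ for $m\geq n$. You instead show directly that $\ovl{\widehat{C}}$ is a finitely generated $(\ovl{C},\ovl{C})$-bimodule, lift generators, and use a graded-Nakayama iteration ($\widehat{C}=M+g\widehat{C}$ with $gM\subseteq M$) to obtain $\widehat{C}=M$; both conclusions drop out. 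Your packaging is arguably cleaner, since bimodule finite generation of $\widehat{C}$ over $C$ yields the two assertions simultaneously.

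One small simplification: the detour through Theorem~\ref{RSS 3.1} to find an auxiliary $B(E,\HH,\sigma)$ is unnecessary. Since $\widehat{C}\subseteq S$, you already have $\ovl{\widehat{C}}\subseteq\ovl{S}=B(E,\LL,\sigma)$, and Lemma~\ref{RSS 8.6} applies directly to $\ovl{S}$ with the subalgebra $\ovl{C}$; this is exactly what the paper does. Also, you establish $g^m\widehat{C}\subseteq C$ only for the single value $m=\max_i n_i$, whereas the statement asks for all $m\gg 0$; this is of course immediate from $g\in C$, but you should say so.
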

\begin{proof}
Let $U=\widehat{C}$. Certainly $U\subseteq S$ because $S$ is $g$-divisible. For $k\geq 0$ we define $M^k=\{a\in S|\, ag^k\in C\}$. Clearly eack $M^k$ is a $(C,C)$-bimodule. Moreover, both $M^k\subseteq M^{k+1}$ and $\bigcup_{i\geq 0} M^i = U$ hold. Inside $\overline{U}$ we then have the chain of $\overline{C}$-bimodules
$$\overline{C}= \overline{M^0}\subseteq \overline{M^1}\subseteq  \cdots \subseteq\bigcup_{k\geq 0}\overline{M^k}=\overline{U}.$$
By Lemma~\ref{RSS 8.6}, $\overline{S}$ is a noetherian $\overline{C}$-bimodule, and hence $\overline{M^n}=\overline{U}$ for some $n\geq 0$. \par
We claim $C\cap Sg^m=U\cap Sg^m=Ug^m$ for all $m\geq n$. Suppose for a contradiction that $s=ug^m\in g^mU\setminus C$ for some $m\geq n$. Choose $m$ and $s$ such that $\deg(u)$ is minimal. Now $\ovl{u}\in \ovl{U}=\ovl{M^n}$, and thus $u=a+bg$ for some $a\in M^n$ and $b\in S$. But then, $ug^n-ag^n=bg^{n+1}\in U\cap Sg^{n+1}=Ug^{n+1}$. Because $\deg(b)<\deg(u)$, the minimality of $\deg(u)$ implies $bg^{n+1}\in C$. But this forces $ug^{n}=bg^{n+1}+ag^n\in C$. Thus $s=ug^ng^{m-n}\in C$ which gives the desired contradiction. Hence indeed $C\cap Sg^m=g^mU$ for all $m\geq n$. \par
If $C$ is noetherian, then $U_C$ ($_CU$) is finitely generated because then $U$ is isomorphic to a right (left) ideal of $C$ via $c\mapsto g^nc$.
\end{proof}

As mentioned already, the assumption ``$\ovl{U}\neq \Bbbk$" appearing in Proposition~\ref{C to C hat} will be the assumption that replaces ``$g$-divisible" in our classification of maximal $S$-orders. \par

Clearly Proposition~\ref{C to C hat} proves that $C$ and $\widehat{C}$ are equivalent orders. We now aim to prove $U$ and $U\langle g\rangle$ are equivalent orders. Key to this are sporadic and minimal sporadic ideals.

\begin{definition}\label{min sporadic ideal}\index{sporadic ideal}\index{minimal sporadic ideal}
Let $R$ be a cg $\Bbbk$-subalgebra of $\Sg$.
\begin{enumerate}[(1)]
\item  A homogenous ideal $I$ of $R$ will be called \textit{sporadic} if $\GK(R/I)\leq 1$.
\item A sporadic ideal $K$ of $R$ is called \textit{minimal sporadic} if for all sporadic ideals $I$ of $R$, there exists an $n\geq 0$ such that $K_{\geq n}\subseteq I$.
\end{enumerate}
\end{definition}

We warn that Definition~\ref{min sporadic ideal}(1) does not completely agree with the definition given in \cite{RSS}. There, an ideal $I$ of $R$ was called sporadic when $\GK(R/I)=1$. It is Definition~\ref{min sporadic ideal}(1) that is most convenient for us.

\begin{remark}\label{RSS assumption 8.2}
In \cite[Section~8]{RSS} Rogalski, Sierra and Stafford work under increased assumptions designed ensure the existence of minimal sporadic ideals \cite[Assumption~8.2]{RSS}. It is not immediately obvious that $S^{(3)}$ satisfies these assumptions: this follows from \cite[Theorem~8.8 and Proposition~8.7]{RSS2}.
\end{remark}

The coming few results show that the algebras we consider do indeed have minimal sporadic ideals.

\begin{prop}\label{RSS 8.4}
Let $R$ be a cg $g$-divisible subalgebra of $S$ with $Q_\gr(R)=Q_\gr(S)$. Then $R$ has a minimal sporadic ideal.
\end{prop}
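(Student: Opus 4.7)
The plan is to reduce this to the analogous statement for $T = S^{(3)}$, which is established in \cite[Proposition~8.4]{RSS}, and then transfer the minimal sporadic ideal back to $R$ using the Veronese correspondence of Lemma~\ref{sporadics up n down}. Concretely, I would pass to $R^{(3)}$, apply the $T$-version there, and lift the resulting minimal sporadic ideal up to $R$.

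First I would verify that $R^{(3)}$ is a $g$-divisible cg subalgebra of $T$ with $Q_\gr(R^{(3)}) = Q_\gr(T)$: the $g$-divisibility is immediate from Lemma~\ref{g div up}, while the equality of graded quotient rings follows from $D_\gr(R) = D_\gr(S) = D_\gr(T)$ together with Lemma~\ref{Qgr(S) or Qgr(T)}, using that $g \in R^{(3)} \subseteq T$. By Remark~\ref{RSS assumption 8.2}, $T$ satisfies the hypotheses under which \cite[Proposition~8.4]{RSS} is proved, so applying it yields a minimal sporadic ideal $K' \subseteq R^{(3)}$.

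Next I would construct the candidate minimal sporadic ideal $K$ of $R$. The natural choice suggested by Lemma~\ref{sporadics up n down}(2a) is $K = R K' R$, which is a two-sided ideal with $\GK(R/K) \leq \GK(R^{(3)}/K') \leq 1$, so $K$ is sporadic. For minimality, I would take any sporadic ideal $I$ of $R$; by Lemma~\ref{sporadics up n down}(1), $I^{(3)}$ is sporadic in $R^{(3)}$, so by minimality of $K'$ there exists $n$ with $K'_{\geq n} \subseteq I^{(3)} \subseteq I$. The task is then to promote this containment to $K_{\geq m} \subseteq I$ for some $m$.

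The hard part will be handling elements $r k r' \in R K' R$ with $k \in K'$ of small degree (less than $n$) but $r, r'$ of compensating large degree, since for such elements membership in $I$ is not automatic. To resolve this, I plan to use Lemma~\ref{noeth up n down} to present $R$ as a finitely generated $R^{(3)}$-module on both sides with generators of some bounded degree $D$; a careful accounting of degrees then reduces the question, after absorbing $D$ into the bound, to the Veronese statement already known. As a robustness check, I would also run the parallel construction $K = \rann_R(R/K'R)$, which by Lemma~\ref{sporadics up n down}(2b) satisfies $K^{(3)} \subseteq K'$; this complementary inclusion, together with $K^{(3)} \supseteq K'$ in the first construction, will sandwich the $3$-Veronese of the sought-for minimal sporadic ideal between two copies of $K'$ up to finite-dimensional error. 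Combining this with the noetherian $g$-divisible structure of $R$ supplied by Proposition~\ref{RSS 2.9} and Proposition~\ref{RSS 2.15} should then force the desired minimality.
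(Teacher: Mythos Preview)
Your approach is essentially the same as the paper's: pass to $R' = R^{(3)}$, invoke \cite[Proposition~8.4]{RSS} to get a minimal sporadic ideal $L$ of $R'$, set $K = RLR$, and for an arbitrary sporadic ideal $I$ of $R$ use Lemma~\ref{sporadics up n down}(1) to get $L_{\geq m} \subseteq I^{(3)} \subseteq I$, hence $RL_{\geq m}R \subseteq I$.

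The only divergence is in the final step, where you overcomplicate matters. You do not need the parallel annihilator construction, nor any of the $g$-divisible machinery from Lemma~\ref{RSS 2.15}. The clean way to finish is to cite Lemma~\ref{Ab ehd (Ageqn)B}: since $R$ is a finitely generated right and left $R'$-module (Lemma~\ref{noeth up n down}(2)) and $L \ehd L_{\geq m}$ trivially, that lemma gives $RL_{\geq m}R \ehd RLR = K$, so $K_{\geq n} \subseteq RL_{\geq m}R \subseteq I$ for some $n$. Your proposed ``degree accounting with generators of bounded degree $D$'' is precisely the content of the proof of Lemma~\ref{Ab ehd (Ageqn)B}, so you are re-deriving that lemma in situ rather than invoking it.
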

\begin{proof}
Set $R'=R^{(3)}$. By Lemma~\ref{g div up}, $R'$ is $g$-divisible subalgebra of $T=S^{(3)}$. By \cite[Proposition~8.4]{RSS}, $R'$ has a minimal sporadic ideal, $L$ say. Set $K=RLR$. Since $R$ is noetherian by  Proposition~\ref{RSS 2.9}, Lemma~\ref{sporadics up n down}(2) applies and shows that $\GK(R/K)\leq 1$. We claim that $K$ is in fact a minimal sporadic ideal of $R$.\par

Suppose that $I$ is any other sporadic ideal of $R$. Then by Lemma~\ref{sporadics up n down}(1), $J=I^{(3)}$ is a sporadic ideal of $R'$. Since $L$ is a minimal sporadic ideal of $R$, there exists $m\geq 0$ such that $L_{\geq m}\subseteq J$. We then have $RL_{\geq m}R\subseteq RJR\subseteq I$. By Lemma~\ref{Ab ehd (Ageqn)B}, $RL_{\geq m}R\ehd K$. Hence there exists $n\geq 0$ such that $K_{\geq n}\subseteq RL_{\geq m}R\subseteq I$ as required.
\end{proof}

\begin{cor}\label{S(d) and max order pair min sporadics} The following rings have a minimal sporadic ideal:
\begin{enumerate}[(1)]
\item $S(\bfd)$ for an effective divisor $\bfd$ on $E$ with $\deg\bfd\leq 2$;
\item Both $V$ and $F$ from a maximal order pair $(V,F)$. In particular, virtual blowups have minimal sporadic ideals.
\end{enumerate}
\end{cor}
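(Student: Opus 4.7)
The overall strategy is to apply Proposition~\ref{RSS 8.4} whenever the ring in question is a $g$-divisible cg subalgebra of $S$, and otherwise to mimic its proof by passing to the 3-Veronese and invoking the $T$-analogue \cite[Proposition~8.4]{RSS}.

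For part (1), the algebra $S(\bfd)$ is $g$-divisible with $S(\bfd)\subseteq S$ and $Q_\gr(S(\bfd))=Q_\gr(S)$ by Theorem~\ref{S(d) thm}(1), so Proposition~\ref{RSS 8.4} will apply directly. Likewise, for part (2), the algebra $V=F\cap S$ is by definition a $g$-divisible subalgebra of $S$ with $Q_\gr(V)=Q_\gr(S)$, so Proposition~\ref{RSS 8.4} will again yield a minimal sporadic ideal without further work.

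The only genuinely new case is the overring $F$, which lies in $S_{(g)}$ but need not lie in $S$, putting it outside the scope of Proposition~\ref{RSS 8.4} as stated. I plan to recycle the proof of Proposition~\ref{RSS 8.4} itself. By Theorem~\ref{3 Veronese of virtual blowup}, $F^{(3)}$ is a virtual blowup of $T$ and hence, by Lemma~\ref{max order pair equiv def}(2), forms part of a maximal order pair of $T$. The $T$-analogue of Proposition~\ref{RSS 8.4}, namely \cite[Proposition~8.4]{RSS} (applicable in view of Remark~\ref{RSS assumption 8.2}), will then supply a minimal sporadic ideal $L$ of $F^{(3)}$. Setting $K=FLF$, Lemma~\ref{sporadics up n down}(2a) gives $\GK(F/K)\le\GK(F^{(3)}/L)\le 1$, so $K$ is sporadic in $F$. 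To establish minimality, I take any sporadic ideal $I$ of $F$; by Lemma~\ref{sporadics up n down}(1), $I^{(3)}$ is sporadic in $F^{(3)}$, so $L_{\ge m}\subseteq I^{(3)}\subseteq I$ for some $m\ge 0$, whence $FL_{\ge m}F\subseteq I$. An application of Lemma~\ref{Ab ehd (Ageqn)B} — using that $F$ is noetherian by Proposition~\ref{RSS 2.9}, and so is finitely generated over $F^{(3)}$ on both sides via Lemma~\ref{noeth up n down}(2) — will show $FL_{\ge m}F\ehd K$, and therefore $K_{\ge n}\subseteq I$ for some $n$, as required.

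The only step with any subtlety is this verification that the proof of Proposition~\ref{RSS 8.4} carries over verbatim when $F\not\subseteq S$: it comes down to knowing that $F^{(3)}$ lies in the scope of \cite[Proposition~8.4]{RSS}, which is guaranteed by Theorem~\ref{3 Veronese of virtual blowup} and Remark~\ref{RSS assumption 8.2}. Everything else will be routine bookkeeping between $F$ and $F^{(3)}$ via Lemma~\ref{sporadics up n down} and Lemma~\ref{Ab ehd (Ageqn)B}.
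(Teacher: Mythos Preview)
Your treatment of part (1) and of $V$ in part (2) matches the paper exactly: both apply Proposition~\ref{RSS 8.4} directly.

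For $F$, however, your Veronese detour has a gap. You invoke \cite[Proposition~8.4]{RSS} for $F^{(3)}$, but that result---like our Proposition~\ref{RSS 8.4}---is stated for $g$-divisible subalgebras of $T$, not of $T_{(g)}$. Since $F$ need not lie in $S$, the algebra $F^{(3)}$ need not lie in $T$, so the citation does not apply as written. What you actually need is the $T$-analogue of the very statement you are trying to prove (that the $F$-part of a maximal order pair of $T$ has a minimal sporadic ideal). Even if that is recoverable from \cite{RSS}, it would require the same kind of argument you are trying to avoid, so your route becomes circular or at best postpones the work.

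The paper instead handles $F$ without any Veronese passage, exploiting part (1c) of Definition~\ref{max order pair} directly. Take the ideal $J$ of $F$ with $J\subseteq V$ and $\GK(F/J)\leq 1$, and let $K$ be the minimal sporadic ideal of $V$ already obtained. Then $L=JKJ$ is an ideal of $F$, sporadic by Lemma~\ref{RSS 2.15}(4). For minimality: given any sporadic ideal $I$ of $F$, the product $IJ$ is again sporadic in $F$ by Lemma~\ref{RSS 2.15}, and since $IJ\subseteq J\subseteq V$ it is also an ideal of $V$; a comparison of GK-dimensions (using that $F_V$ is finitely generated) shows $IJ$ is sporadic in $V$, whence $K_{\geq n}\subseteq IJ$ for some $n$, and then $L_{\geq n}\subseteq K_{\geq n}\subseteq IJ\subseteq I$. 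This stays entirely inside $S_{(g)}$ and uses only Definition~\ref{max order pair} and Lemma~\ref{RSS 2.15}.
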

\begin{proof}
(1). By Theorem~\ref{S(d) thm}(1) $S(\bfd)$ is $g$-divisible and so Proposition~\ref{RSS 8.4} applies. \par
(2). By definition of a maximal order pair (Definition~\ref{max order pair}) $V$ is a $g$-divisible subalgebra of $S$. It hence has a minimal sporadic ideal by Proposition~\ref{RSS 8.4}, call it $K$. By Definition~\ref{max order pair} again, $F$ has an ideal $J$ contained in $V$ and such that (as a $\Bbbk$-algebra) $\GK(F/J)\leq 1$. Set $L=JKJ$, then $L$ is an ideal of $F$ contained in $K$. By Lemma~\ref{RSS 2.15}(2)(4) it is sporadic. Let $I$ be any other sporadic ideal of $F$. Then $IJ$ is another sporadic ideal of $F$ by Lemma~\ref{RSS 2.15}(2)(4) again. Moreover $IJ\subseteq J\subseteq V$ holds; in particular it is an ideal of $V$ also. Since $F_V$ is finitely generated, $\GK(F/IJ)$ equals the GK-dimension of $F/IJ$ considered as a right $V$-module, $\GK_V(F/IJ)$ (see \cite[Corollary~5.4]{KL}). Thus we have
$$\GK(V/IJ)=\GK_V(V/IJ)\leq \GK_V(F/IJ)=\GK(F/IJ)\leq 1,$$
showing $IJ$ is a sporadic ideal of $V$. Since $K$ is minimal sporadic, there exists $n\geq 0$ such that $K_{\geq n}\subseteq IJ$. It follows that $L_{\geq n}\subseteq K_{\geq n} \subseteq IJ\subseteq I$.
\end{proof}

We would expect that algebras containing $g$ would always have minimal sporadic ideals, the best we can do is the following. The result is analogous to \cite[Corollary~8.8]{RSS}. With Proposition~\ref{RSS 8.4} in place we are able to use the same proof.

\begin{cor}\label{RSS 8.8} Let $C$ be a cg subalgebra of $S$ with $Q_\gr(C)=Q_\gr(S)$. Assume that $g\in C$ and that $\ovl{C}\neq\Bbbk$. By Proposition~\ref{RSS 8.4}, $\widehat{C}$ has a minimal sporadic ideal $J$. Then $K=C\cap \widehat{J}$ is sporadic ideal minimal among sporadic ideals $I$ such that $C/I$ is $g$-torsionfree.
\end{cor}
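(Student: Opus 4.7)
The plan is to verify the four conditions required of $K = C \cap \widehat{J}$: that it is an ideal of $C$, that it is sporadic, that $C/K$ is $g$-torsionfree, and that $K$ is minimal among sporadic ideals $I$ with $C/I$ being $g$-torsionfree. The first three are comparatively routine; the real work is the minimality statement, which requires transporting sporadic ideals back and forth between $C$ and $\widehat{C}$.

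First, I would note that $\widehat{C}$ is a $g$-divisible cg subalgebra of $S$ with $Q_\gr(\widehat{C}) = Q_\gr(S)$, so by Proposition~\ref{RSS 8.4} a minimal sporadic ideal $J$ of $\widehat{C}$ exists; moreover $\widehat{J}$ is an ideal of $\widehat{C}$ (a quick check: if $xg^n \in J$ and $c \in \widehat{C}$, then $(cx)g^n = c(xg^n) \in J$ and $(xc)g^n = (xg^n)c \in J$). Hence $K = C \cap \widehat{J}$ is an ideal of $C$. Since $C/K$ embeds into $\widehat{C}/\widehat{J}$ (as graded vector spaces) and $\dim_\Bbbk \widehat{J}/J < \infty$ by Lemma~\ref{RSS 2.15}(2), we have $\GK(\widehat{C}/\widehat{J}) = \GK(\widehat{C}/J) \leq 1$, and therefore $\GK(C/K) \leq 1$, giving sporadicity. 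That $C/K$ is $g$-torsionfree is immediate from the defining property of $\widehat{J}$: if $x \in C$ and $xg \in \widehat{J}$, then $x \in \widehat{J}$, so $x \in K$.

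The core of the argument is minimality. Let $I \triangleleft C$ be a sporadic ideal with $C/I$ $g$-torsionfree. I form $\widehat{I} \subseteq \Sg$, and claim first that $\widehat{I} \subseteq \widehat{C}$ and that $\widehat{I}$ is an ideal of $\widehat{C}$ (same verification as for $\widehat{J}$). The main obstacle is now to show that $\widehat{I}$ is sporadic in $\widehat{C}$, since a priori we only control the GK-dimension of $C/I$. For this I use Proposition~\ref{C to C hat}: since $\widehat{C}$ is noetherian by Proposition~\ref{RSS 2.9}, it is finitely generated over $C$, and $\widehat{I}$ is a finitely generated $\widehat{C}$-module, so a single $n$ works with $\widehat{I}g^n \subseteq I\widehat{C}$; then Proposition~\ref{C to C hat} supplies $\ell$ with $g^\ell \widehat{C} \subseteq C$, so for $m = n + \ell$ we get
\begin{equation*}
\widehat{I}\, g^m \;\subseteq\; I \widehat{C}\, g^\ell \;=\; I g^\ell \widehat{C} \;\subseteq\; IC \;\subseteq\; I.
\end{equation*}
Multiplication by $g^m$ is injective on the domain $\Sg$, and it maps $\widehat{C}$ into $C$ and $\widehat{I}$ into $I$; moreover, if $x \in \widehat{C}$ with $xg^m \in I$, then $x \in \widehat{I}$ by definition. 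Hence $g^m$ induces an injection of graded vector spaces $\widehat{C}/\widehat{I} \hookrightarrow (C/I)(m)$, yielding $\GK(\widehat{C}/\widehat{I}) \leq \GK(C/I) \leq 1$. Thus $\widehat{I}$ is sporadic in $\widehat{C}$.

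Finally, minimality of $J$ in $\widehat{C}$ gives $k \geq 0$ with $J_{\geq k} \subseteq \widehat{I}$, and Lemma~\ref{RSS 2.15}(2) gives $\ell \geq 0$ with $\widehat{J}_{\geq \ell} = J_{\geq \ell}$. Setting $N = \max(k,\ell)$, we obtain
\begin{equation*}
K_{\geq N} \;\subseteq\; C \cap \widehat{J}_{\geq N} \;=\; C \cap J_{\geq N} \;\subseteq\; C \cap \widehat{I} \;=\; I,
\end{equation*}
where the last equality uses that $C/I$ is $g$-torsionfree. This completes the proof. The substantive point throughout is that Proposition~\ref{C to C hat} provides the uniform bound $g^\ell \widehat{C} \subseteq C$, which is precisely what allows the GK-dimension bound on $C/I$ to be transferred to $\widehat{C}/\widehat{I}$.
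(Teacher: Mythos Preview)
Your proof is correct and takes a route close to the paper's, though with some differences worth noting. Both arguments use Proposition~\ref{C to C hat} to obtain $g^\ell\widehat{C}\subseteq C$ and then pass from $I$ to a sporadic ideal of $\widehat{C}$; you use $\widehat{I}$ and the clean injection $\widehat{C}/\widehat{I}\hookrightarrow C/I$ induced by $g^m$, while the paper instead uses $L=\widehat{C}I\widehat{C}$ and invokes Lemma~\ref{RSS 2.15}(4) to see it is sporadic, then shows $I\supseteq g^{2r+s}K$ and descends via $g$-torsionfreeness. Your route is arguably more transparent.

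Two small points. First, the aside ``it is finitely generated over $C$'' is not justified (we do not know $C$ is noetherian here) and is not what you actually need---what matters is that $\widehat{I}$ is finitely generated over $\widehat{C}$, which does follow since $\widehat{C}$ is noetherian, and this is exactly what your argument uses. Second, you conclude only $K_{\geq N}\subseteq I$, whereas the paper proves the stronger containment $K\subseteq I$ (and this is what is used in the proof of Proposition~\ref{RSS 8.10}). The extra step is immediate from your setup: for $x\in K$ of degree $<N$, some power $xg^r$ lies in $K_{\geq N}\subseteq I$, and $g$-torsionfreeness of $C/I$ forces $x\in I$.
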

\begin{proof}
We first note that $\widehat{C}$ is noetherian by Proposition~\ref{RSS 2.9}, and it indeed has a minimal sporadic ideal $J$ by Proposition~\ref{RSS 8.4}. By Lemma~\ref{RSS 2.15}(2), $\widehat{J}$ is also a minimal sporadic ideal of $\widehat{C}$. By replacing $J$ with $\widehat{J}$ we can assume $J$ is $g$-divisible, or equivalently, that $\widehat{C}/J$ is $g$-torsionfree.\par

Let $I$ be any sporadic ideal $C$ such that $C/I$ is $g$-torsionfree. Set $K=J\cap C$. We first show $K\subseteq I$. By Proposition~\ref{C to C hat}, there exists an $r\geq0$ such that $H=g^r\widehat{C}\subseteq C$. It follows that $I\supseteq HIH=g^{2r}\widehat{C}I\widehat{C}$. Put $L=\widehat{C}I\widehat{C}$. By Lemma~\ref{RSS 2.15}(2)(4), $L$ is a sporadic ideal. As $J$ is a minimal sporadic ideal of $\widehat{C}$, $L\supseteq J_{\geq s}$ for some $s\geq 0$. We clearly also have $J_{\geq s}\supseteq g^sJ$.  Combining the above,
$$I\supseteq HIH=g^{2r}L\supseteq g^{2r+s}J\supseteq g^{2r+s}(J\cap C). $$
\par

Pick $m$ minimal such that $I\supseteq g^m(J\cap C)=g^mK$. If $m=0$ we're done, so suppose $m\neq 0$. In which case, by the minimality of $m$, the ideal $(I+g^{m-1}K)/I$ of $C/I$ is $g$-torsion. But by assumption, $C/I$ is $g$-torsionfree. This means that this ideal must be zero. Thus $I\supseteq K$ after all. \par

It remains to prove $\GK(C/K)\leq 1$. As $J$ is $g$-divisible with $\GK(\widehat{C}/J)\leq 1$, Lemma~\ref{RSS 2.14}(3) implies $\widehat{C}/J$ is a finitely generated $\Bbbk[g]$-module. In which case, the submodule $(C+J)/J\cong C/K$ is also a finitely generated $\Bbbk[g]$-module. Therefore, using \cite[Corollary~5.4]{KL}, $\GK(C/K)=\GK_{\Bbbk[g]}(C/K)\leq \GK(\Bbbk[g])\leq 1.$
\end{proof}

Again we want an $S$-version of a result in \cite{RSS} - in this case \cite[Lemma~8.9]{RSS}. Again the proof provided in \cite{RSS} does the job for us.

\begin{lemma}\label{RSS 8.9}
Let $C$ be a cg graded subalgebra of $S$ with $Q_\gr(C)=Q_\gr(S)$. Suppose that $g\in C$ and that $\ovl{C}\neq \Bbbk$. Then $C$ is finitely generated as a $\Bbbk$-algebra.
\end{lemma}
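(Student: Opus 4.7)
The plan is to reduce finite generation of $C$ to that of its $g$-divisible hull $D = \widehat{C}$, using minimal sporadic ideals as the bridge. By Proposition~\ref{RSS 2.9} the ring $D$ is noetherian and finitely generated as a $\Bbbk$-algebra, and by Proposition~\ref{C to C hat} there exists $m \gg 0$ with $g^m D \subseteq C$.

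First I would apply Corollary~\ref{RSS 8.8} to produce a sporadic ideal $K = C \cap \widehat{J}$ of $C$, where $\widehat{J}$ is the $g$-divisible hull of a minimal sporadic ideal of $D$; the corollary supplies the additional information that $C/K$ is finitely generated as a $\Bbbk[g]$-module. Since $g$ is central, this immediately gives that $C/K$ is finitely generated as a $\Bbbk$-algebra; lift a $\Bbbk[g]$-module generating set of $C/K$ to homogeneous $f_1, \ldots, f_t \in C$, so that in particular $C = \Bbbk\langle g, f_1, \ldots, f_t\rangle + K$.

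Second, I would show that $K$ is finitely generated as a two-sided ideal of $C$. Since $D$ is noetherian the two-sided ideal $\widehat{J}$ of $D$ is finitely generated, say $\widehat{J} = Dj_1D + \cdots + Dj_sD$; multiplying by the central element $g^m$ shows that $g^m\widehat{J}$ is generated as a $D$-bimodule by $g^m j_1, \ldots, g^m j_s$, all of which lie in $K$. To promote this to $C$-bimodule generation of the possibly larger $K$, the key tool is that $\ovl{S}$ is a noetherian $\ovl{C}$-bimodule by Lemma~\ref{RSS 8.6}. In particular the $\ovl{C}$-sub-bimodule $\ovl{K} \subseteq \ovl{\widehat{J}}$ is finitely generated; lifting generators back to $K$ and combining them with the $g^m j_i$ yields finitely many elements whose $C$-bimodule span contains $K$, a graded Nakayama-type argument (Lemma~\ref{graded nakayama}) providing the inductive step that uses $g^m\widehat{J} \subseteq K$ to absorb the high-degree tail.

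Combining the algebra generators $g, f_1, \ldots, f_t$ of $C/K$ with these finitely many ideal generators of $K$ produces a finite algebra-generating set for $C$. The main obstacle is precisely the second step: $C$ is not a priori noetherian, so the finite generation of an ideal of $C$ cannot be asserted directly. The resolution is to import the necessary finiteness from the noetherian bimodule structure of $\ovl{S}$ over $\ovl{C}$, which exploits the hypothesis $\ovl{C} \neq \Bbbk$ in an essential way, together with the containment $g^m D \subseteq C$ that makes the obstruction $K/g^m\widehat{J}$ controllable.
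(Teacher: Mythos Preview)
Your overall architecture is sound, but the second step has a real gap at precisely the point you label ``Nakayama-type''. Let $L$ be the $C$-bimodule generated by your lifts $k_1,\ldots,k_r$ of $\ovl{C}$-bimodule generators of $\ovl{K}$ together with the $g^m j_\ell$. From $\ovl L=\ovl K$ you get only $K=L+(K\cap gS)$. For graded Nakayama to bite you would need something like $K\cap gS\subseteq C_{\geq 1}K$ (or $KC_{\geq 1}$), but an element of $K\cap gS$ has the form $gs$ with $s\in\widehat J$ and $gs\in C$, and there is no reason for $s$ to lie in $C$, hence no reason for $gs\in gK$. So the single top layer $\ovl K$ is not enough. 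What does work is to lift generators from \emph{every} layer $(K\cap g^iS)/(K\cap g^{i+1}S)$, each of which embeds in $\ovl S$ and is therefore a finitely generated $\ovl C$-bimodule by Lemma~\ref{RSS 8.6}; there are only finitely many layers because $K\cap g^NS=g^N\widehat J$ for $N\gg 0$ (use Proposition~\ref{C to C hat} and $g$-divisibility of $\widehat J$). Also note that the $g^m j_\ell$ generate $g^m\widehat J$ only as a $D$-bimodule, so as a $C$-bimodule you really capture $g^{3m}\widehat J$, not $g^m\widehat J$; this is harmless but worth tracking.

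Once patched, your argument is correct but strictly more circuitous than the paper's. The paper skips Corollary~\ref{RSS 8.8} entirely and runs the same $g$-adic filtration directly on $C$: it shows by induction that $C/(C\cap g^mS)$ is a finitely generated $\Bbbk$-algebra for every $m$, using that each layer $(C\cap g^mS)/(C\cap g^{m+1}S)$ embeds in $\ovl S$ as a $\ovl C$-bimodule, then uses $C\cap g^{n+1}S=g^{n+1}\widehat C\subseteq gC$ for $n\gg 0$ to conclude that $C_{\geq 1}$ is generated by $g$ together with finitely many lifts. Your detour through $K$ reduces the problem to showing an ideal is finitely generated, but the proof of that reduction requires exactly the same layerwise finiteness input, now applied to $K$ rather than $C$. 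So the minimal sporadic ideal buys nothing here.
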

\begin{proof}
We prove the equivalent notion that $C_{\geq 1}$ is a finitely generated right ideal of $C$. We first claim that $C/(g^mS\cap C)$ is a finitely generated $\Bbbk$-algebra for all $m\geq 1$. The result holds for $m=1$ by Proposition~\ref{B just infinite}. Proceeding by induction, assume that we can find $a_1,\dots , a_k\in C_{\geq 1}$ whose images generate $C/(g^m S\cap C)$. Set $X=(g^mS\cap C)/(g^{m+1}S\cap C)$. Then, as $\ovl{C}$-bimodules (and up to shifts),
$$X\cong \frac{S\cap g^{-m}C}{gS\cap g^{-m}C}\cong\frac{S\cap g^{-m}C +gS}{gS} \subseteq S/gS\cong B(E,\LL,\sigma)$$
Hence, by Lemma~\ref{RSS 8.6}, $X$ is a finitely generated $\ovl{C}$-bimodule. Say that $X$ is generated by the images of $b_1,\dots,b_\ell\in C\cap g^mS$. It follows that $a_1,\dots,a_k,b_1,\dots, b_\ell$ generate $C/(g^{m+1}S\cap C)$. \par
Now by Proposition~\ref{C to C hat}, there exists $n\geq 0$ such that $g^m\widehat{C}=C\cap g^mS$ for all $m\geq n$. By the claim, $C/(g^{n+1}S\cap C)=C/g^{n+1}\widehat{C}$ is finitely generated as a $\Bbbk$-algebra. Choose $c_1,\dots, c_t\in C_{\geq 1}$ such that their images generate $C/g^{n+1}\widehat{C}$ as a $\Bbbk$-algebra. Then, for $f\in C_{\geq 1}$, there exists an $x\in\sum c_iC$ such that $f-x\in g^{n+1}\widehat{C}=g(g^n\widehat{C})\subseteq gC$. Thus $f\in gC+\sum c_iC$. That is, $C_{\geq 1}$ is generated by $g,c_1,\dots, c_t$.
\end{proof}

Finally we give the key result that shows that $U$ and $U\langle g\rangle$ are equivalent orders. The proof of Proposition~\ref{RSS 8.10} is an $S$-version of the proof of \cite[Proposition~8.10]{RSS}.

\begin{prop}\label{RSS 8.10}
Let $U$ be a cg subalgebra of $S$ with $\ovl{U}\neq \Bbbk$ and $D_\gr(U)=D_\gr(S)$. Then there exists a nonzero ideal of $C=U\langle g\rangle$ that is finitely generated both as a right and left $U$-module.
\end{prop}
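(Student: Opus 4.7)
The plan is to mimic the $T$-analogue \cite[Proposition~8.10]{RSS}, adapting each ingredient to the $S$-setting using the results of this chapter. Set $C=U\langle g\rangle$. Since $g\in gS$ we have $\ovl{C}=\ovl{U}\neq\Bbbk$, and by Lemma~\ref{Qgr(S) or Qgr(T)}, $Q_\gr(C)$ is either $Q_\gr(S)$ or $Q_\gr(T)$; in either case all the $g$-divisibility machinery of Sections~\ref{g div rings sec}--\ref{endo over g divs} applies to the $g$-divisible hull $\widehat{C}$, possibly after passing to the 3-Veronese. Lemma~\ref{RSS 8.9} gives that $C$ is finitely generated as a $\Bbbk$-algebra. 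The hull $\widehat{C}$ is then a $g$-divisible cg subalgebra of $\Sg$ with $Q_\gr(\widehat{C})=Q_\gr(C)$, hence noetherian by Proposition~\ref{RSS 2.9}; and Proposition~\ref{C to C hat} yields an $m\geq 0$ with $g^m\widehat{C}\subseteq C$, so $\widehat{C}$ is finitely generated on both sides as a $C$-module.

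Next I apply Corollary~\ref{RSS 8.8} to produce a sporadic ideal $K$ of $C$ with $C/K$ being $g$-torsionfree; equivalently $K\cap gC=gK$, so $K$ is $g$-divisible in $C$. We have $K\not\subseteq gC$, because otherwise $C/K$ would surject onto $\ovl{C}$, which has $\GK=2$ by Lemma~\ref{ovlR is order} and Theorem~\ref{RSS 3.1}, contradicting $\GK(C/K)\leq 1$. Hence $\ovl{K}$ is a nonzero ideal of the just-infinite ring $\ovl{C}=\ovl{U}$ (Proposition~\ref{B just infinite}) and has finite codimension. The argument of Lemma~\ref{RSS 2.14}(3) now applies verbatim to $C/K$: the map $(C/K)_n\xrightarrow{\cdot g}(C/K)_{n+3}$ is surjective for $n\gg 0$ because $\dim_\Bbbk(C/K)/g(C/K)=\dim_\Bbbk \ovl{U}/\ovl{K}<\infty$, and injective by $g$-torsionfreeness, so $C/K$ is a finitely generated $\Bbbk[g]$-module with eventually $3$-periodic Hilbert series.

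The crux is now to turn $K$ (or a modification of it) into an ideal of $C$ that lies inside $U$ and is finitely generated as a $U$-module on both sides; the desired nonzero $(C,C)$-ideal will then automatically be an $(U,U)$-subbimodule of $U$, and finite generation over $U$ will follow. The strategy is the following. Since $\ovl{U}=\ovl{C}$, the map $\ovl{U}\hookrightarrow\ovl{C}\twoheadrightarrow\ovl{C}/\ovl{K}$ is surjective, so a $\Bbbk[g]$-generating set $\{c_1,\dots,c_r\}$ of $C/K$ can be lifted (inductively, correcting by elements of $gK\subseteq K$ using the $g$-divisibility of $K$ in $C$) to representatives $c_i\in U$. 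Then $C=\sum_i c_i\Bbbk[g]+K$. Combining this decomposition with Proposition~\ref{C to C hat} applied to $C$—which ensures $g^m\widehat{C}\subseteq C$ and in particular forces $Kg^N\subseteq U$ for $N\gg 0$, via the identification $C\cap Sg^N=g^N\widehat{C}$ and the just-infiniteness of $\ovl{U}$—the ideal $I=g^NKg^N$ (equivalently $Kg^{2N}$, using that $g$ is central) will sit inside $U$ and be finitely generated as a $U$-bimodule. The main obstacle is precisely this last step: $g$ is in general not an element of $U$, so multiplication by $g$ can leave $U$, and the finite generation of $I$ over $U$ must be extracted from the fact that $C/K$ is a finite-rank free $\Bbbk[g]$-module and $\widehat{C}/C$ is $g$-torsion with $\widehat{C}$ finitely generated over $C$. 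The bookkeeping will mirror the $T$-case carried out in \cite[Proposition~8.10]{RSS}; once it succeeds, $U$ and $C$ will be equivalent orders via Lemma~\ref{hence equiv orders}, which is exactly what is needed in the next step to weaken $g$-divisibility to $\ovl{U}\neq\Bbbk$ in the classification of maximal $S$-orders.
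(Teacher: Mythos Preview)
Your proposal has a genuine gap at the step you yourself flag as the ``crux''. Your plan is to take the sporadic ideal $K$ from Corollary~\ref{RSS 8.8}, lift a $\Bbbk[g]$-generating set of $C/K$ into $U$, and then push $K$ into $U$ via $I=g^{2N}K$. There are two problems. First, the lifting is not justified: from $\ovl{U}=\ovl{C}$ you only get $c_i=u_i+gs_i$ with $s_i\in S$; there is no reason $gs_i\in K$, so you cannot conclude $c_i\equiv u_i\pmod K$. Second, and more seriously, even if $I\subseteq U$, nothing in your argument produces finite generation of $I$ as a $U$-module. The ideal $K$ is finitely generated over $C$, but $C$ itself is \emph{not} known to be a finitely generated $U$-module (indeed, if it were, the whole proposition would be trivial). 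The fact that $C/K$ is a finite $\Bbbk[g]$-module does not help, because $\Bbbk[g]\not\subseteq U$ in general.

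You also say the ``bookkeeping will mirror the $T$-case carried out in \cite[Proposition~8.10]{RSS}'', but that proof does \emph{not} proceed as you describe. The actual argument (and the one the paper reproduces) is different and avoids the obstacle entirely. One first uses Lemma~\ref{RSS 8.9} to write $C=W\langle g\rangle$ for a \emph{finitely generated} subalgebra $W\subseteq U$. Then for each $n$ the ideal $CW_{\geq n}=W_{\geq n}C$ of $C$ is automatically a $W$-bimodule, and $C/CW_{\geq n}$ is a quotient of the polynomial ring $(W/W_{\geq n})[g]$, so $\GK(C/CW_{\geq n})\leq 1$ with finite-dimensional $g$-torsion. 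The minimal sporadic ideal (via Corollary~\ref{RSS 8.8}) forces the Hilbert series of $C/CW_{\geq n}$ to stabilise in high degrees as $n$ grows, so $CW_{\geq n}\ehd CW_{\geq n_0}$ for all $n\geq n_0$. Now $CW_{\geq n_0}W_{\geq 1}\supseteq CW_{\geq n_0+e}$ (where $W$ is generated in degrees $\leq e$), so $CW_{\geq n_0}/CW_{\geq n_0}W_{\geq 1}$ is finite-dimensional, and Graded Nakayama gives that $CW_{\geq n_0}$ is finitely generated as a right $W$-module, hence as a right $U$-module. The point is that the ideal one exhibits, $CW_{\geq n_0}$, is built from $W\subseteq U$ from the start, so finite generation over $U$ falls out of Nakayama rather than having to be manufactured after the fact.
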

\begin{proof}
By Lemma~\ref{Qgr(S) or Qgr(T)} either $Q_\gr(C)=Q_\gr(S)$ or $Q_\gr(C)=Q_\gr(S^{(3)})$. The case $Q_\gr(C)=Q_\gr(S^{(3)})$, then this is exactly \cite[Proposition~8.10]{RSS}. So we assume that $Q_\gr(C)=Q_\gr(S)$. By Lemma~\ref{RSS 8.9}, $C$ is finitely generated, say by $c_1,c_2,\dots,c_n$. For each $i$, we can write $c_i=\sum w_{ij}g^j$ for some $w_{ij}\in U$. Let $W$ be the subalgebra of $U$ generated by the $w_{ij}$. Then $W$ is a finitely generated subalgebra of $U$ such that $C=U\langle g\rangle = W\langle g\rangle$. \par
Given $n\geq 0$, we have $CW_{\geq n}=W_{\geq n}+W_{\geq n}g+W_{\geq n}g^2+\dots =W_{\geq n}C$ as $g$ is central. Therefore $CW_{\geq n}$ is an ideal of $C$. In addition, $C/CW_{\geq n}$ is the homomorphic image of the polynomial ring $W/W_{\geq n}[g]$. It follows that the Hilbert series of $C/CW_{\geq n}$ is at most as large as $\sum_i h(t)t^{3i}$, where $h(t)=h_{W/W_{\geq n}}(t)$ is a polynomial. In particular, $\GK(C/CW_{\geq n})\leq 1$. Moreover, we claim that the $g$-torsion submodule $K(n)=\mathrm{tors}_g(C/CW_{\geq n})$ is finite dimensional. To see this, note that since $W/W_{\geq n}[g]$ is clearly noetherian, $C/CW_{\geq n}$ is also. This shows $K(n)$ is finitely generated. Suppose that $K(n)=f_1(C/CW_{\geq n})+\dots +f_m(C/CW_{\geq n})$ with, say $f_ig^k=0$ for all $i$, Then
$$ K(n)\subseteq \sum_i f_i \left( \frac{W}{W_{\geq n}}+\frac{W}{W_{\geq n}}g+\dots +\frac{W}{W_{\geq n}}g^k \right).$$
The right hand side is clearly finite dimensional which proves the the claim. \par

For $n\geq 0$, we put $Z(n)=C\cap\widehat{CW_{\geq n}}$. Then for every $n$, $Z(n)/CW_{\geq n}=K(n)$ is finite dimensional, and so,
\begin{equation}\label{C/Z(n)}\dim_\Bbbk ([C/Z(n)]_m)=\dim_\Bbbk([C/CW_{\geq n}]_m)\;\text{ for } m\gg 0.\end{equation}
Furthermore, $\GK(C/Z(n))\leq 1$ and $C/Z(n)$ is $g$-torsionfree. Now let $J$ be a minimal sporadic ideal of $\widehat{C}$ given by Lemma~\ref{RSS 8.4}. By Lemma~\ref{RSS 2.15}(2), $\widehat{J}$ is also a minimal sporadic ideal of $\widehat{C}$. Thus by replacing $J$ with $\widehat{J}$, we may assume $\widehat{C}/J$ is $g$-torsionfree. In which case, $K=C\cap J$ is a sporadic ideal of $C$ minimal among co-$g$-torsionfree sporadic ideals by Corollary~\ref{RSS 8.8}. In particular, $K\subseteq Z(n)$ for all $n$. \par

Now, $C/K\cong (C+J)/J\hookrightarrow \widehat{C}/J$, and $\widehat{C}/J$ has an eventually periodic Hilbert series by Lemma~\ref{RSS 2.14}(1). It follows that there exists $r\geq 0$ such that $\dim_\Bbbk(\widehat{C}/J)_m\leq r$ for all $m\gg 0$. As $Z(n)\supseteq K$, $\dim_\Bbbk ([C/Z(n)]_m)\leq r$ for all $m\gg 0$. Fix $m\gg 0$ and let $d_n$ be the integer given in (\ref{C/Z(n)}). Then for all $n\geq m$ we have $d_n\leq r$, and also $d_n\leq d_{n+1}$ because $Z(n)\supseteq Z(n+1)$. It follows that $d_n=d_{n+1}$ for all $n\gg 0$, say for all $n\geq n_0$. Thus
\begin{equation}\label{CW ehd} CW_{\geq n}\ehd CW_{\geq n_0}\;\text{ for all } n\geq n_0.\end{equation} \par
Finally, if $W$ is generated as a $\Bbbk$-algebra by elements of degree at most $e$, then $CW_{\geq n_0} W_{\geq 1}\supseteq CW_{\geq n_0+e}$. By (\ref{CW ehd}), $\dim_\Bbbk (CW_{\geq n_0}/CW_{\geq n_0+e})<\infty$, and therefore $\dim_\Bbbk(CW_{\geq n_0}/CW_{\geq n_0}W_{\geq 1})<\infty$ as well. By the Graded Nakayama's Lemma, $CW_{\geq n_0}=W_{\geq n_0}C$ is then a finitely generated right $W$-module. Since $W\subseteq U$, it is also finitely generated as a right $U$-module. A symmetrical argument proves the statement on the left.
\end{proof}

Proposition~\ref{RSS 8.10} shows that $U$ and $U\langle g\rangle$ are equivalent orders; the details can be found in the proof of the next theorem. We are now ready to state and prove our main result. Theorem~\ref{RSS 8.11}(2) below is \cite[Theorem~8.11]{RSS} and uses Notation~\ref{3 Veronese notation2} for $T=S^{(3)}$. In the coming results, $T=S^{(3)}$ is regraded so that $T^{(d)}=S^{(3d)}$.

\begin{theorem}\label{RSS 8.11}
Let $d\geq 1$, and suppose that $U$ is a cg maximal $S^{(d)}$-order satisfying $\ovl{U}\neq \Bbbk$.
\begin{enumerate}[(1)]
\item If $d$ is coprime to 3, then there exists a $\sigma$-virtually effective divisor $\bfx$ satisfying $\deg\bfx\leq 2$, and a virtual blowup $F$ of $S$ at $\bfx$, such that $U=(F\cap S)^{(d)}$.
\item If $d$ is divisible by $3$, say $d=3e$, then exists a $\tau$-virtually effective divisor $\bfx$ with $\deg\bfx\leq 8$, and a virtual blowup $F$ of $T$ at $\bfx$, such that $U=(F\cap T)^{(e)}$.
\end{enumerate}
\end{theorem}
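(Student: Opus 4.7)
Part (2) is \cite[Theorem~8.11]{RSS} applied verbatim inside $T^{(e)}=S^{(3e)}$, yielding a virtual blowup $F$ of $T$ at a $\tau$-virtually effective $\bfx$ with $\deg\bfx\leq 8$ such that $U=(F\cap T)^{(e)}$. The content lies in part (1), which I plan to split into the special case $d=1$ and the general case $d\geq 2$ coprime to $3$.

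For $d=1$, the plan is to show that any maximal $S$-order $U$ with $\ovl{U}\neq\Bbbk$ is automatically $g$-divisible, after which Theorem~\ref{RSS 7.4}(1) delivers the virtual blowup. First observe $Q_\gr(U)=Q_\gr(S)$ and $D_\gr(U)=D_\gr(S)$. Setting $C=U\langle g\rangle\subseteq S$ and $V=\widehat{C}\subseteq S$, Proposition~\ref{RSS 8.10} combined with Lemma~\ref{hence equiv orders}(3) gives that $U$ and $C$ are equivalent orders; then Proposition~\ref{C to C hat} (whose hypotheses $g\in C$, $Q_\gr(C)=Q_\gr(S)$, and $\ovl{C}\supseteq\ovl{U}\neq\Bbbk$ all hold) combined with Lemma~\ref{hence equiv orders}(2) gives $C$ and $V$ equivalent orders. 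Transitivity produces $U\sim V$ inside $S$, and maximality of $U$ as an $S$-order forces $U=V$. Theorem~\ref{RSS 7.4}(1) then produces the virtual blowup $F$ at a virtually effective $\bfx$ with $\deg\bfx\leq 2$ and $U=F\cap S=(F\cap S)^{(1)}$.

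For $d\geq 2$ coprime to $3$, I plan to construct a $g$-divisible maximal $S$-order $V'$ with $(V')^{(d)}=U$ and then invoke the $d=1$ case on $V'$. Since $U$ is a maximal $S^{(d)}$-order, $Q_\gr(U)=Q_\gr(S)^{(d)}$ and $D_\gr(U)=D_\gr(S)$. Form $C=U\langle g\rangle\subseteq S$; coprimality of $d$ and $3$ makes every sufficiently large integer a nonnegative combination of $d$ and $3$, so $Q_\gr(C)=Q_\gr(S)$. Set $V=\widehat{C}\subseteq S$; Theorem~\ref{RSS 7.4}(2) produces a $g$-divisible maximal $S$-order $V'\supseteq V$ equivalent to $V$ (and hence to $C$) in $Q_\gr(S)$. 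Now apply Proposition~\ref{RSS 8.10} to the cg subalgebra $U$ of $S$ to get a nonzero ideal $J\subseteq C$ with finite right $U$-module generating set $\{x_i\}$ of homogeneous elements (and similarly on the left). A direct degree count shows $J^{(d)}=\sum_{\deg x_i\equiv 0\!\pmod d}x_i U$ is finitely generated over $U$ on both sides, and $J^{(d)}\neq 0$ since $J$ has nonzero components in all sufficiently large degrees (including infinitely many multiples of $d$, again by coprimality). Lemma~\ref{hence equiv orders}(3) then gives $U\sim C^{(d)}$ in $Q_\gr(S^{(d)})$, while Lemma~\ref{equiv orders go up} applied to the chain $C\sim V\sim V'$ in $Q_\gr(S)$ yields $C^{(d)}\sim (V')^{(d)}$ in $Q_\gr(S^{(d)})$. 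Composing, $U\sim(V')^{(d)}$ with $U\subseteq(V')^{(d)}\subseteq S^{(d)}$, so maximality of $U$ as an $S^{(d)}$-order forces $U=(V')^{(d)}$. Finally, $\ovl{V'}\supseteq\ovl{U}\neq\Bbbk$, so the $d=1$ case applied to $V'$ produces a virtual blowup $F$ of $S$ at a virtually effective $\bfx$ with $\deg\bfx\leq 2$ and $V'=F\cap S$; hence $U=(F\cap S)^{(d)}$, as required.

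The main obstacle is the degree-theoretic verification that the finite generation of $J$ as a $U$-module survives restriction to $J^{(d)}=J\cap S^{(d)}$, together with the observation that the chain of $S$-equivalences $C\sim V\sim V'$ descends under the Veronese functor to $S^{(d)}$-order equivalences. Both depend crucially on the coprimality hypothesis $\gcd(d,3)=1$: it is what guarantees $Q_\gr(C)=Q_\gr(S)$ and thereby enables the $g$-divisible hull machinery of Chapter~\ref{prelims} and the reduction to Theorem~\ref{RSS 7.4} to be run inside $S$ rather than inside $S^{(d)}$.
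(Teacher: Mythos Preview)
Your proof is essentially the same as the paper's: both pass from $U$ to $C=U\langle g\rangle$ via Proposition~\ref{RSS 8.10}, from $C$ to $\widehat{C}$ via Proposition~\ref{C to C hat}, then invoke Theorem~\ref{RSS 7.4}(2) to obtain the virtual blowup $F$ and conclude by maximality after descending through the $d$-Veronese using Lemma~\ref{equiv orders go up}. The paper does not split off a separate $d=1$ case; it handles all $d$ coprime to $3$ at once, and Theorem~\ref{RSS 7.4}(2) already delivers $F$ directly, so your final appeal to ``the $d=1$ case'' is redundant but harmless.

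One small wrinkle: your appeal to Lemma~\ref{hence equiv orders}(2) for $C\sim\widehat{C}$ presumes $\widehat{C}$ is finitely generated over $C$, which Proposition~\ref{C to C hat} only guarantees when $C$ is noetherian---something not established at that point. The paper instead uses the first conclusion of Proposition~\ref{C to C hat}, namely that $g^m\widehat{C}=C\cap Sg^m$ is a common nonzero ideal of $C$ and $\widehat{C}$, which gives the equivalence directly without any noetherian hypothesis.
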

\begin{proof}
(1). By definition, $Q_\gr(U)=Q_\gr(S^{(d)})=Q_\gr(S)^{(d)}$, and so $D_\gr(U)=D_\gr(S)$. Hence Proposition~\ref{RSS 8.10} applies, and we get an ideal $I$ of $C=U\langle g\rangle$ that is finitely generated on both sides as a $U$-module. Now, as both a right and left $U$-module,
$$I\cong I^{(d)}\oplus I^{(1\;\mathrm{mod}\, d)}\oplus \dots\oplus I^{(d-1\;\mathrm{mod}\, d)},$$
where $I^{(i\;\mathrm{mod}\, d)}=\bigoplus_{j\in\N}I_{i+dj}$. Because $J=I^{(d)}$ is a direct summand of $I$, it is also finitely generated on both sides as a $U$-module. Clearly $J$ is also an ideal of $C'=C^{(d)}$.\par
Set $\widetilde{U}=U+J$, then $\widetilde{U}$ is finitely generated on both sides as a $U$-module. Hence, by Lemma~\ref{hence equiv orders}(1), $\widetilde{U}$ and $U$ share a common ideal, $K$ say. Clearly $\widetilde{U}$ and $C'$ have the common ideal $J$, so $U$ and $C'$ have the common ideal $JKJ$. In particular, $U$ and $C'$ are equivalent orders. \par
Now consider $C$. By Lemma~\ref{Qgr(S) or Qgr(T)} either $Q_\gr(C)=Q_\gr(S)$ or $Q_\gr(C)=Q_\gr(T)$. In fact $Q_\gr(C)=Q_\gr(T)$ is impossible because $Q_\gr(U)=Q_\gr(S)^{(d)}$ and $d$ is coprime to 3, thus $Q_\gr(C)=Q_\gr(S)$. By Proposition~\ref{C to C hat}, $C$ and $\widehat{C}$ have a common ideal. By Theorem~\ref{RSS 7.4}(2), $\widehat{C}$ is contained in, and an equivalent order to, some virtual blowup $F$ at a virtually effective divisor $\bfx$ with $\deg\bfx\leq 2$. Hence $V=F\cap S$ is a maximal $S$-order containing and equivalent to $C$. By Lemma~\ref{equiv orders go up}, $C'$ (and hence $U$) is contained in and equivalent to $V^{(d)}$ and $F^{(d)}$. As $U$ is a maximal $S^{(d)}$-order we have $U=C'=V^{(d)}$.\par
(2). This is \cite[Theorem~8.11]{RSS} with \cite[Theorem~8.8 and Proposition~8.7]{RSS2} proving that $T$ indeed satisfies the hypotheses of those results.
\end{proof}

In the other direction we obtain Theorem~\ref{main thm converse}. We note that Theorem~\ref{main thm converse}(2)(a-c) is an improvement on \cite{RSS}.

\begin{theorem}\label{main thm converse}
Let $d\geq 1$ and $\bfx$ a divisor on $E$.
\begin{enumerate}[(1)]
\item If $d$ is coprime to 3 and $\bfx$ is $\sigma$-virtually effective with $\deg\bfx\leq 2$, then there exists a blowup $F$ of $S$ at $\mbf{x}$. Moreover:
 \begin{enumerate}[(a)]
 \item $F'=F^{(d)}$ is a maximal order with $Q_\gr(F)=Q_\gr(S^{(d)})$;
 \item $V'=F'\cap S$ is a maximal $S^{(d)}$-order;
 \item there exists an ideal $L$ of $F'$, contained in $V'$, such that $\GK(F'/L)\leq 1$.
 \end{enumerate}
\item If $\bfx$ is $\tau$-virtually effective with $\deg\bfx\leq 8$, then there exists a blowup $F$ of $T$ at $\mbf{x}$. Moreover:
\begin{enumerate}[(a)]
\item $F'=F^{(d)}$ is a maximal order with $Q_\gr(F')=Q_\gr(T^{(d)})$;
\item $V'=F'\cap T$ is a maximal $T^{(d)}$-order;
\item there exists an ideal $L$ of $F'$, contained in $V'$, such that $\GK(F'/L)\leq 1$.
\end{enumerate}
\end{enumerate}
\end{theorem}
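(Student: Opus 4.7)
The plan is to assemble the statement directly from the pieces already in place: the existence of the virtual blowup, the Veronese transfer of the maximal-order property, and the transfer of sporadic ideals. The only genuinely new ingredient needed is the identification $F^{(d)}\cap S^{(d)} \ehd (F\cap S)^{(d)}$ type statement, which should be an immediate calculation.

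For part (1), I would first invoke Proposition~\ref{RSS 7.4(3)} to produce a virtual blowup $F$ of $S$ at the given $\sigma$-virtually effective divisor $\bfx$. By Definition~\ref{virtual blowup}, $(V,F)$ is a maximal order pair of $S$, where $V=F\cap S$. In particular $F$ is $g$-divisible and is a maximal order in $Q_\gr(F)=Q_\gr(S)$, while $V$ is a $g$-divisible maximal $S$-order. Since $d$ is assumed coprime to $3$, Proposition~\ref{g-div max orders up n down}(1)(a) applies to $F$ and yields (1a): $F'=F^{(d)}$ is a maximal order, and $Q_\gr(F')=Q_\gr(F)^{(d)}=Q_\gr(S)^{(d)}=Q_\gr(S^{(d)})$ by the standard Veronese computation as in~(\ref{Dgr(S)=Dgr(T)}). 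For (1b), I first note that $F'\cap S = F\cap Q_\gr(S)^{(d)}\cap S = F\cap S^{(d)} = V\cap S^{(d)} = V^{(d)}$, so $V'=V^{(d)}$. Then Proposition~\ref{g-div max orders up n down}(1)(b) gives that $V'$ is a maximal $S^{(d)}$-order.

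For (1c), Definition~\ref{max order pair} provides an ideal $K$ of $F$, contained in $V$, with $\GK(F/K)\leq 1$. Setting $L=K^{(d)}$ gives an ideal of $F^{(d)}=F'$. Since $K\subseteq V$, certainly $L=K^{(d)}\subseteq V^{(d)}=V'$. Lemma~\ref{sporadics up n down}(1) (applicable because $F$ is noetherian by Proposition~\ref{RSS 2.9}, cf.\ Remark~\ref{sporadics up n down applies}) gives $\GK(F'/L)=\GK(F^{(d)}/K^{(d)})=\GK(F/K)\leq 1$, completing (1c).

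For part (2), the argument is parallel but with $T=S^{(3)}$ in place of $S$. The existence of a virtual blowup $F$ of $T$ at a $\tau$-virtually effective divisor of degree at most $8$ is \cite[Theorem~7.4(3)]{RSS}; alternatively it is part of Theorem~\ref{RSS 1.4}. Again $(V,F)$ with $V=F\cap T$ is a maximal order pair of $T$ (Definition~\ref{max order pair}(2)), so $F$ is $g$-divisible and a maximal order with $Q_\gr(F)=Q_\gr(T)$, and $V$ is a $g$-divisible maximal $T$-order. For (2a) and (2b), Proposition~\ref{g-div max orders up n down}(2) applies (in this case \emph{every} $d\geq 1$ works, since $\deg g=1$ in $T$), giving that $F'=F^{(d)}$ is a maximal order, $Q_\gr(F')=Q_\gr(T)^{(d)}=Q_\gr(T^{(d)})$, and $V'=F'\cap T=V^{(d)}$ is a maximal $T^{(d)}$-order. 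For (2c), the ideal $K$ of $F$ in $V$ with $\GK(F/K)\leq 1$ again yields $L=K^{(d)}\subseteq V'$ with $\GK(F'/L)\leq 1$ via Lemma~\ref{sporadics up n down}(1). I do not anticipate any substantial obstacle in this proof; the only minor subtlety is the identity $F'\cap S=V^{(d)}$ (resp.\ $F'\cap T=V^{(d)}$), which is the one-line intersection-with-Veronese calculation above.
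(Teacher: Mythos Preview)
Your proposal is correct and follows essentially the same route as the paper's own proof: both invoke Proposition~\ref{RSS 7.4(3)} (resp.\ \cite[Theorem~7.4(3)]{RSS}) for existence, Proposition~\ref{g-div max orders up n down}(1) (resp.\ (2)) for parts (a) and (b), and Lemma~\ref{sporadics up n down}(1) applied to $L=K^{(d)}$ for part (c). The only difference is that you spell out the identity $F'\cap S=V^{(d)}$ explicitly, whereas the paper leaves this implicit.
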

\begin{proof}
(1). By Proposition~\ref{RSS 7.4(3)}, a blowup $F$ of $S$ at $\mbf{x}$ exists. In particular $(F,V)$, where $V=F\cap S$, is a maximal order pair of $S$. Since $F$ must be $g$-divisible, Proposition~\ref{g-div max orders up n down}(1a) implies $F'$ is a maximal order in $Q_\gr(S^{(d)})$. Similarly $V'$ is a maximal $S^{(d)}$-order by Proposition~\ref{g-div max orders up n down}(1b). By definition of a maximal order pair (Definition~\ref{max order pair}), there exists an ideal $K$ of $F$ contained in $V$, and such that $\GK(F/K)\leq 1$. By Lemma~\ref{sporadics up n down}(1), $L=K^{(d)}$ is an ideal of $F'$ with $\GK(F'/L)\leq 1$. Clearly $L\subseteq V'$ also holds.\par
(2). By \cite[Theorem~7.4(3)]{RSS} a blowup $F$ of $T$ at $\mbf{x}$ exists. The rest of the proof is the same with the relevant definitions in $S$ replaced by those in $T$, and Proposition~\ref{g-div max orders up n down}(2) used instead of Proposition~\ref{g-div max orders up n down}(1).
\end{proof}

Theorem~\ref{RSS 8.11} and Theorem~\ref{main thm converse} answers \cite[Question 9.4]{RSS} with the additional assumption of $\ovl{U}\neq \Bbbk$. It is worth emphasising that we allow $n$ to be divisible by 3 in Corollary~\ref{g-div max orders up n down generalised}. Again we remark that $S$ and $T$ are graded differently below with $S^{(3d)}=T^{(d)}$ holding.

\begin{cor}\label{g-div max orders up n down generalised}
Let $U$ be a cg graded subalgebra of $S$ with $Q_\gr(U)=Q_\gr(S)^{(d)}$ for some $d\geq 1$ and such that $\ovl{U}\neq \Bbbk$. Let $n\geq 1$ and set $U'=U^{(n)}$.
\begin{enumerate}[(1)]
\item If $U$ is a maximal $S^{(d)}$-order then $U'$ is a maximal $S^{(nd)}$-order.
\item If $U$ is a maximal order then $U'$ is a maximal order.
\end{enumerate}
\end{cor}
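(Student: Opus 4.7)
The plan is to combine Theorem~\ref{RSS 8.11}, which classifies maximal $S^{(d)}$-orders $U$ (satisfying $\ovl{U}\neq\Bbbk$) in terms of virtual blowups, with Theorem~\ref{main thm converse}, which ensures that Veroneses of virtual blowups are again maximal orders of the expected kind. Throughout the argument one must track whether the relevant Veronese exponent is coprime to or divisible by $3$, since these cases require different hypotheses in Proposition~\ref{g-div max orders up n down}, and switching between them is mediated by Theorem~\ref{3 Veronese of virtual blowup}.

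For part~(1) I first invoke Theorem~\ref{RSS 8.11}. If $d$ is coprime to $3$, this writes $U=(F\cap S)^{(d)}$ for a virtual blowup $F$ of $S$ at a $\sigma$-virtually effective divisor $\bfx$ with $\deg\bfx\leq 2$; if $d=3e$ it writes $U=(F\cap T)^{(e)}$ for a virtual blowup $F$ of $T$. Hence $U^{(n)}$ equals either $(F\cap S)^{(dn)}$ or $(F\cap T)^{(en)}$. The latter case is handled immediately by Theorem~\ref{main thm converse}(2) with Veronese exponent $en$. In the former case, if $n$ is also coprime to $3$ then Theorem~\ref{main thm converse}(1) with Veronese exponent $dn$ applies; while if $3\mid n$, I pass to $F^{(3)}$, which is a virtual blowup of $T$ by Theorem~\ref{3 Veronese of virtual blowup}, and apply Theorem~\ref{main thm converse}(2) with $F^{(3)}$ in place of $F$ and Veronese exponent $dn/3$. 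The identification $(F^{(3)})^{(dn/3)}\cap T=(F\cap S)^{(dn)}$, which holds because elements of degree divisible by $dn$ automatically lie in $S^{(3)}$, identifies $U^{(n)}$ as a maximal $S^{(dn)}$-order.

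Part~(2) rests on the observation that the hypothesis $Q_\gr(U)=Q_\gr(S)^{(d)}$ forces $U\subseteq S^{(d)}$, so a maximal order $U$ is automatically a maximal $S^{(d)}$-order and Theorem~\ref{RSS 8.11} still applies. Writing $U=V^{(d)}$ or $V^{(e)}$ with $V=F\cap S$ or $V=F\cap T$ as above, $V$ is $g$-divisible and Proposition~\ref{g-div max orders up n down}(1a) (resp.~(2a)) upgrades the hypothesis on $U$ to the statement that $V$ is a maximal order. But $F$ is the unique maximal order containing $V$, by Corollary~\ref{RSS 6.6}(2) together with its $T$-analogue implicit in Lemma~\ref{max order pair equiv def}(2); so $V=F$, hence $F\subseteq S$ (resp.\ $T$) and $U^{(n)}=F^{(dn)}$ (resp.\ $F^{(en)}$). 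A second application of Proposition~\ref{g-div max orders up n down} concludes: part~(1a) handles the subcase where the remaining exponent is coprime to $3$, and otherwise I pass to $F^{(3)}$, which is itself a maximal order in $Q_\gr(T)$ as a virtual blowup of $T$, and apply part~(2a).

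The hard part is essentially bookkeeping: keeping straight which of the two Veronese exponents is coprime to or divisible by $3$, and remembering to invoke Theorem~\ref{3 Veronese of virtual blowup} whenever an argument has to cross that divide. All the substantive ring-theoretic content has already been absorbed into the classification theorems and $g$-divisibility results cited above, so once the correct reduction is set up the proof itself is short.
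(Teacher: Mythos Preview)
Your proposal is correct and follows essentially the same route as the paper: invoke Theorem~\ref{RSS 8.11} to write $U$ as a Veronese of $V=F\cap S$ (or $F\cap T$) for a virtual blowup $F$, then feed the appropriate Veronese exponent back through Theorem~\ref{main thm converse}, using Theorem~\ref{3 Veronese of virtual blowup} to cross the coprime-to-$3$/divisible-by-$3$ boundary. The only noticeable difference is in part~(2): the paper argues directly that $V^{(e)}$ and $F^{(e)}$ are equivalent orders via Lemma~\ref{equiv orders go up}, hence equal since $U$ is maximal, whereas you push the maximality back to $V$ using the ``if and only if'' of Proposition~\ref{g-div max orders up n down} and then conclude $V=F$ by uniqueness of the maximal order containing $V$. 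Both arguments reach $U=F^{(e)}$ in one short step, so this is a cosmetic variation rather than a different strategy.
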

\begin{proof}
(1). Suppose first that $d$ is divisible by 3, say $d=3e$. Then by Theorem~\ref{RSS 8.11}(2), $U=V^{(e)}$ where $V=F\cap T$ for some virtual blowup $F$ of $T$. By Theorem~\ref{main thm converse}(2b), $U'=V^{(ne)}$ is a maximal $T^{(ne)}=S^{(nd)}$-order. \par
Now suppose that $d$ is coprime to 3. By Theorem~\ref{RSS 8.11}(1), $U=V^{(d)}$ where $V=F\cap S$ for some virtual blowup $F$ of $S$. If $n$ is coprime to 3, then $U'=V^{(nd)}$ is a maximal $S^{(nd)}$-order by Theorem~\ref{main thm converse}(1b). Finally assume that $n=3m$ for some $m\geq 1$. By Theorem~\ref{3 Veronese of virtual blowup}, $G=F^{(3)}$ is a virtual blowup of $T$. Set $W=G\cap T$. By Theorem~\ref{main thm converse}(2) again, $W^{(md)}=V^{(nd)}=U'$ is a maximal $T^{(md)}=S^{(nd)}$-order.\par

(2). Since $U\subseteq S^{(d)}$, if it is a maximal order it is certainly a maximal $S^{(d)}$-order.
Hence by Theorem~\ref{RSS 8.11}, $U=V^{(e)}$ where $V=F\cap S$ for some virtual blowup $F$ (of either $S$ or $T$) where either $e=d$ or $e=\frac{d}{3}$. In either case $V$ and $F$ are equivalent orders by definition, and hence by Lemma~\ref{equiv orders go up} $U=V^{(e)}$ and $F^{(e)}$ are equivalent orders. But $U$ is a maximal order, so $U=F^{(e)}$. The proof now follows that of part (1) with $F$ in place of $V$, and with Theorem~\ref{main thm converse}(1a) and Theorem~\ref{main thm converse}(2a) in place of Theorem~\ref{main thm converse}(1b) and Theorem~\ref{main thm converse}(2b).
\end{proof}

As immediate corollaries to this classification, we are able to satisfy the intuition that maximal orders are ``nice" rings. A particularly notably example of this is that maximal orders are automatically
noetherian.

\begin{cor}\label{RSS 8.11'}
Let $d\geq 1$, and let $U$ be a cg maximal $S^{(d)}$-order such that $\ovl{U}\neq \Bbbk$.  Equivalently, let $F$ be a virtual blowup of $S$ at $\bfx$, $V=F\cap S$ and $U=V^{(d)}$. Then $U$, $V$ and $F$ are all strongly noetherian; in particular noetherian and finitely generated as a $\Bbbk$-algebra.
\end{cor}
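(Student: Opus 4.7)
The plan is to reduce everything to Proposition~\ref{RSS 2.9} via Theorem~\ref{RSS 8.11}. By Theorem~\ref{RSS 8.11}, up to the relabelling $e=d$ or $e=d/3$, we may write $U=V^{(e)}$ where $V=F\cap S$ for a virtual blowup $F$ of $S$ (case $3\nmid d$), or $V=F\cap T$ for a virtual blowup $F$ of $T$ (case $3\mid d$). In either case the equivalent formulation of the statement (via a maximal order pair $(V,F)$) is already given in the corollary, so the first task is simply to extract strong noetherianness and finite generation for the pair $(V,F)$.

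By the very definition of virtual blowup (Definition~\ref{virtual blowup}) the ring $F$ is part of a maximal order pair, hence by Definition~\ref{max order pair} both $F$ and $V$ are $g$-divisible connected graded subalgebras of $S_{(g)}$ (respectively $T_{(g)}$) with $Q_\gr(F)=Q_\gr(V)=Q_\gr(S)$ (respectively $Q_\gr(T)$). Proposition~\ref{RSS 2.9} then immediately gives that $F$ and $V$ are strongly noetherian and finitely generated as $\Bbbk$-algebras in the $S$-case. For the $T$-case, the same conclusion holds by the analogous $T$-version of Proposition~\ref{RSS 2.9}, which is proved in exactly the same way (cf.\ \cite[Proposition~2.9]{RSS}); note in particular that $\deg \LL_3 = 9 \geq 2$, so Theorem~\ref{B properties}(4) applies inside $\ovl{T}$ just as it does inside $\ovl{S}$.

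It remains to deduce the corresponding properties for $U=V^{(e)}$. Strong noetherianness of $U$ is immediate from Lemma~\ref{noeth up n down}(1) applied to $V$. Since $V$ is connected graded, so is $U$, and then the standard graded-Nakayama argument used in the proof of Proposition~\ref{RSS 2.9} — namely that for any connected graded right noetherian algebra a right-ideal generating set of $A_{\geq 1}$ already generates $A$ as a $\Bbbk$-algebra — shows that $U$ is finitely generated as a $\Bbbk$-algebra.

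There is no real obstacle here: every ingredient is in place from earlier sections, and the proof is essentially a bookkeeping exercise. The only mild subtlety is remembering to invoke the $T$-analogue of Proposition~\ref{RSS 2.9} when $3\mid d$, and to note that the finite generation conclusion for $U$ does not require the Veronese of a finitely generated algebra to be finitely generated (which is false in general) — it follows instead from the fact that $U$ itself is cg noetherian.
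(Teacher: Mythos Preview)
Your proof is correct and follows essentially the same route as the paper: establish that $F$ and $V$ are $g$-divisible and apply Proposition~\ref{RSS 2.9}, then pass to the Veronese $U$ via \cite[Proposition~4.9(2)(3)]{ASZ} (which is precisely what your citation of Lemma~\ref{noeth up n down}(1) unpacks to). Your explicit treatment of the $T$-case and of finite generation of $U$ are minor elaborations the paper leaves implicit, but the approach is the same.
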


\begin{proof}
Since $F$ and $V$ are $g$-divisible cg subalgebras of $S$, they are strongly noetherian by Proposition~\ref{RSS 2.9}. By \cite[Proposition~4.9(2)(3)]{ASZ}, $U$ is then strongly noetherian
\end{proof}

Next we complete the proof of Theorem~\ref{blowup properties}. Comments on the terms below can be found in Remark~\ref{desirable properties}.

\begin{cor}\label{RSS 8.12}\cite[Corollary~8.12]{RSS}.
Let $d\geq 1$ be coprime to 3, and let $U$ be a cg maximal $S^{(d)}$-order satisfying $\ovl{U}\neq\Bbbk$. Equivalently, let $F$ be a virtual blowup of $S$ at $\bfx$, $V=F\cap S$ and $U=V^{(d)}$. Then $U,V, F$ have cohomological dimension at most 2, they have balanced dualizing complexes and all satisfy the Artin-Zhang $\chi$-conditions.
\end{cor}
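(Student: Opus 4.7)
The plan is to reduce everything to the corresponding statement for $T$ proved as \cite[Corollary~8.12]{RSS}, and then transfer the conclusions first down to the $3$-Veronese level, and then back up.

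First I would work with $F$. By Theorem~\ref{3 Veronese of virtual blowup}, $F^{(3)}$ is a virtual blowup of $T$ at the $\tau$-virtually effective divisor $\bfx+\bfx^\sigma+\bfx^{\sigma^2}$, and $V^{(3)}=F^{(3)}\cap T$ is the associated maximal $T$-order. The Rogalski--Sierra--Stafford result \cite[Corollary~8.12]{RSS} then tells us that $F^{(3)}$ and $V^{(3)}$ (and any further Veronese subring such as $U^{(3)}=V^{(3d)}$) have finite cohomological dimension (in fact at most $2$), satisfy $\chi$ on both sides, and carry balanced dualizing complexes.

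Next I would lift these three properties from $F^{(3)}$ to $F$, and from $V^{(3)}$ to $V$. The key tool is Lemma~\ref{noeth up n down}(2): $F$ is finitely generated on both sides as a module over $F^{(3)}$, and the same holds for $V$ over $V^{(3)}$. Module-finite extensions of cg noetherian algebras preserve all three properties in question. Concretely, $\chi$ transfers by the argument of \cite[Theorem~8.3 and Proposition~8.7]{AZ.ncps}; finite cohomological dimension transfers since the local cohomology of $F$ with respect to $F_{\geq1}$ can be computed from that of $F^{(3)}$ using the finite module structure; and the existence of a balanced dualizing complex transfers by the standard criterion of Van den Bergh applied to module-finite inclusions (see \cite[Theorem~7.18 and the discussion of change of rings]{Ye}, combined with Proposition~\ref{RSS2 2.4}-type reasoning). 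This handles $F$ and $V$.

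Finally, for $U=V^{(d)}$ with $d$ coprime to $3$, I would transfer the three properties from $V$ up to $V^{(d)}$ by the same module-finiteness argument, now using Lemma~\ref{noeth up n down}(2) for the Veronese $V^{(d)}\subseteq V$. The main obstacle throughout is not the idea but the bookkeeping of invoking the correct transfer theorems for each of the three properties; once one writes down that $F$ is a finite bimodule over $F^{(3)}$ and that all three properties are stable under such finite extensions (and under passage to Veronese subrings of a cg noetherian domain), the corollary follows formally from the already-established $T$-version. No new ideas beyond Theorem~\ref{3 Veronese of virtual blowup} and standard homological transfer results are needed.
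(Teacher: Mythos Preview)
Your approach is a valid alternative but differs substantially from the paper's proof. The paper argues directly from the $g$-divisible structure: since $\ovl{F}$ and $\ovl{V}$ are equal in high degrees to a twisted homogeneous coordinate ring (Proposition~\ref{RSS 6.7}), they satisfy $\chi$ and have cohomological dimension $1$; these properties lift to $F$ and $V$ via \cite[Theorem~8.8]{AZ.ncps} using the central regular element $g$; the transfer to $U=V^{(d)}$ then uses module-finiteness and \cite[Proposition~8.7(2)]{AZ.ncps}, with balanced dualizing complexes following from \cite[Theorem~6.3]{VdB.dualizing}. In particular the paper's argument is entirely self-contained within $S$ and makes no appeal to Theorem~\ref{3 Veronese of virtual blowup} or to the $T$-version \cite[Corollary~8.12]{RSS}.

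Your route is more circuitous and requires transfer in \emph{two} directions: from $F^{(3)}$ up to the overring $F$, and then from $V$ down to the Veronese subring $V^{(d)}$. The downward direction is the same as the paper's last step, but the upward direction (Veronese subring to full ring) is less standard and your references for it are a bit loose---for instance, \cite[Proposition~8.7]{AZ.ncps} is stated for the opposite direction, and invoking ``Proposition~\ref{RSS2 2.4}-type reasoning'' amounts to using the $g$-quotient structure directly, which is exactly what the paper does without the detour through $T$. Your plan can be made rigorous (e.g.\ via the equivalence $\qgr(F)\simeq\qgr(F^{(3)})$ for noetherian domains), but the paper's direct use of $g$-divisibility is both shorter and avoids the need to check the upward transfer.
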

\begin{proof}
By Theorem~\ref{RSS 6.7}, $\ovl{V}$ and $\ovl{F}$ are equal in high degrees to a twisted homogenous coordinate ring. By \cite[Lemma~2.2]{Ro} and \cite[Lemma~8.2(5)]{AZ.ncps}, $\qgr(\ovl{V})$ and $\qgr(\ovl{F})$ has cohomological dimension 1 and, $\ovl{V}$ and $\ovl{F}$ satisfy $\chi$. The fact that $V$ and $F$ satisfy $\chi$ and have cohomological dimension at most 2 follows from \cite[Theorem~8.8]{AZ.ncps}. Since $V$ and $U=V^{(d)}$ are noetherian by Corollary~\ref{RSS 8.11'}, $V$ is a noetherian $U$-module. Then $U$ then also has these properties by \cite[Proposition~8.7(2)]{AZ.ncps}. Finally, $V,F$ and $U$ have balanced dualizing complexes by the above and \cite[Theorem~6.3]{VdB.dualizing}.
\end{proof}

Missing from the above corollaries are the properties Auslander-Gorenstein and Cohen-Macaulay. Corollary~\ref{S(p-p1+p2) bad homologically} shows that these conditions do not hold in general. \par
Like \cite{RSS} with maximal $T$-orders, we are able to determine the simple objects of $\qgr(U)$ for a maximal $S$-order $U$. The ``$g$-torsion" half of the proof of Corollary~\ref{simples of qgr} follows the proof of \cite[Corollary~8.13]{RSS}.

\begin{cor}\label{simples of qgr}
Let $U$ be a cg maximal $S$-order with $\ovl{U}\neq \Bbbk$. The isomorphism classes of the simple objects of $\qgr(U)$ are in 1-1 correspondence with the closed points of the elliptic curve $E$ together with a (possibly empty) finite set.
\end{cor}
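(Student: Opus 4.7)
The plan is to classify simple objects of $\qgr(U)$ by splitting them according to whether they are $g$-torsion or $g$-torsionfree. By Theorem~\ref{RSS 8.11}(1), $U=F\cap S$ for some virtual blowup $F$ at a $\sigma$-virtually effective divisor $\bfx$ with $\deg\bfx\leq 2$; in particular $U$ is $g$-divisible and $\ovl{U}\ehd B(E,\LL(-\bfx),\sigma)$ by Definition~\ref{virtual blowup}. Given a simple $\pi M$ of $\qgr(U)$ with $M$ finitely generated, the subobject $\pi(\mathrm{tors}_g M)$ must equal either $0$ or $\pi M$. In the first case $\mathrm{tors}_g(M)$ is finite-dimensional and we may replace $M$ by $M/\mathrm{tors}_g(M)$, which is $g$-torsionfree; in the second case, taking $n$ minimal with $Mg^n=0$, the inclusion $Mg^{n-1}\hookrightarrow M$ gives $\pi(Mg^{n-1})=\pi M$, and we may replace $M$ by $Mg^{n-1}$, which is annihilated by $g$.

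For the $g$-annihilated case, $M$ is a graded $\ovl{U}$-module, and the pullback functor $\qgr(\ovl{U})\to\qgr(U)$ along $U\twoheadrightarrow\ovl{U}$ induces a bijection on isomorphism classes between simples of $\qgr(\ovl{U})$ and $g$-annihilated simples of $\qgr(U)$. Since $\deg\LL(-\bfx)\geq 1$, the Noncommutative Serre's Theorem (Theorem~\ref{nc serre}), applied to $\ovl{U}\ehd B(E,\LL(-\bfx),\sigma)$, yields $\qgr(\ovl{U})\sim\qgr(B(E,\LL(-\bfx),\sigma))\sim\coh(E)$; the simple objects of $\coh(E)$ are exactly the skyscraper sheaves $\OO_p$ for closed points $p\in E$, producing the desired bijection with $E$.

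For a $g$-torsionfree simple $\pi M$, the subobject $\pi(Mg)$ is nonzero (otherwise $\pi M$ would vanish), so $\pi(Mg)=\pi M$ and $M/Mg$ is finite-dimensional. Consequently $M$ is a finitely generated $\Bbbk[g]$-module, $\GK_U(M)\leq 1$, and $J=\mathrm{r.ann}_U(M)$ is sporadic. By Corollary~\ref{S(d) and max order pair min sporadics}(2), $U$ has a minimal sporadic ideal $K$, and by Lemma~\ref{RSS 2.15}(2) we may assume $K$ is $g$-divisible. For some $n\geq 0$ we have $K_{\geq n}\subseteq J$, so $\pi M$ is obtained by pullback from a simple object of $\qgr(U/K)$. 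Using $\ovl{K}\ehd\ovl{U}$ from Lemma~\ref{RSS 2.14}(2) and applying Lemma~\ref{RSS 2.14}(3) to $K\subseteq U$, the factor $U/K$ is a finitely generated $\Bbbk[g]$-module, hence a noetherian algebra module-finite over the central subring $\Bbbk[g]$; a standard argument on such algebras then shows that $\qgr(U/K)$ contains only finitely many isomorphism classes of simple objects, giving the (possibly empty) finite set in the statement.

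The main obstacle will lie in the $g$-torsionfree half, both in reducing an arbitrary such simple to $\qgr(U/K)$ via the minimal sporadic ideal supplied by Corollary~\ref{S(d) and max order pair min sporadics}(2), and in bounding the simples of $\qgr(U/K)$ using the $\Bbbk[g]$-module finiteness provided by Lemma~\ref{RSS 2.14}(3).
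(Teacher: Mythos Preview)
Your proof is correct. The $g$-torsion half matches the paper's argument essentially verbatim (the paper quotes \cite[Lemma~3.8]{RSS} to get $gM=0$ in one step via primeness of the annihilator, whereas you reach the same conclusion by an easy induction on the minimal $n$ with $Mg^n=0$; both work). The difference lies in the $g$-torsionfree half.

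The paper handles the $g$-torsionfree simples by passing to the $3$-Veronese: writing $U'=U^{(3)}$ and $N=M^{(3)}$, one checks that $\pi N$ is simple in $\qgr(U')$ and that $M\ehd NU$. Since $U'=F^{(3)}\cap T$ with $F^{(3)}$ a virtual blowup of $T$ by Theorem~\ref{3 Veronese of virtual blowup}, $U'$ is a maximal $T$-order, and the finiteness of $g$-torsionfree simples in $\qgr(U')$ is then read off from \cite[Corollary~8.3]{RSS}. Your route instead stays inside $U$: you use the minimal sporadic ideal $K$ of $U$ supplied by Corollary~\ref{S(d) and max order pair min sporadics}(2), observe that every $g$-torsionfree simple has sporadic annihilator and hence is pulled back from $\qgr(U/K)$, and then invoke that $U/K$ is module-finite over $\Bbbk[g]$ (Lemma~\ref{RSS 2.14}(3)) so that $\qgr(U/K)$ has only finitely many simples. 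Your approach is more self-contained---it avoids citing \cite[Corollary~8.3]{RSS} as a black box and uses only machinery already developed in this paper---at the cost of leaving the final ``standard argument'' (that a graded noetherian algebra module-finite over a central $\Bbbk[g]$ with $g$ regular has only finitely many simples in $\qgr$) as an exercise. The paper's approach is quicker to write down because it outsources exactly that step to \cite{RSS}.
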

\begin{proof}
By Theorem~\ref{RSS 8.11}, $U=F\cap S$ for a virtual blowup $F$.  Write $\pi: \gr(U)\to \qgr(U)$ for the natural functor defining $\qgr(U)$ and let $\pi M$ be a simple object of $\qgr(U)$. Then $M$ is a finitely generated right $U$-module such that $\dim_\Bbbk M/N<\infty$ for all nonzero submodules $N$ of $M$. If $\mathrm{tors}_g(M)\neq 0$ then $\mathrm{tors}_g(M)\ehd M$ by assumption. It would then follow that $\mathrm{tors}_g(M)=M$. Thus either $M$ is $g$-torsion or $M$ is $g$-torsionfree.\par

Assume first that $M$ is $g$-torsion. Since $M$ is finitely generated, there exists an $n\geq 1$ such that $g^nM=0$. By \cite[Lemma~3.8]{RSS}, $\rann(M)$ is a prime ideal of $U$, and hence $gM=0$. Now let $B=B(E,\NN,\sigma)$ be such that $U/gU\ehd B$, as in Definition~\ref{virtual blowup}. Under the equivalence $\qgr(B)\sim\coh(E)$ from Theorem~\ref{nc serre}, the simple objects of $\qgr(B)$ are in 1-1 correspondence with closed points of $E$. Thus the isomorphism classes of simples $\pi M$ of $\qgr(U)$ such that $M$ is $g$-torsion are in 1-1 correspondence with $E$.\par
Now suppose $M$ is $g$-torsionfree. Write $U'=U^{(3)}$. As a $U'$-module we have
\begin{equation*}\label{qgr simples eq}
 M=M^{(3)}\oplus M^{(1\;\mathrm{mod}\, 3)}\oplus M^{(2\;\mathrm{mod}\, 3)}
\end{equation*}
where $M^{(k\;\mathrm{mod}\, 3)}=\bigoplus_{i}M_{3i+k}$ are finitely generated $g$-torsionfree right $U'$-module. Fix $k=0,1,2$ and let $N=M^{(3)}$. Because $\pi(M)$ is simple, $NU\ehd M$ holds. Similarly, if $L$ is a nonzero $U'$-submodule of $N$, then $LU\ehd M$. It follows that $L=(LU)^{(3)}\ehd N$, and so $\pi N$ is simple in $\qgr(U')$ (we are abusing notation and using $\pi$ for $\qgr(U')$ also). Thus we have $M\ehd NU$ for some $g$-torsionfree right $U'$-module $N$ such that $\pi(N)$ is simple in $\qgr(U')$.
\par
On the other hand, $U=F\cap S$ for some virtual blowup $F$ of $S$ by Theorem~\ref{RSS 8.11}. By Theorem~\ref{3 Veronese of virtual blowup}, $F^{(3)}$ is a virtual blowup of $T$, and so $U'=F\cap T$ is a maximal $T$-order. By \cite[Corollary~8.3]{RSS} and its proof, there are finitely many (possible zero) isomorphism classes of the simple objects $\qgr(U')$ of the form $\pi N$ where $N$ is a finitely generated $g$-torsionfree right $U'$-modules. From the previous paragraph, $M\ehd NU$ for such a $U'$-module $N$. Thus (up to isomorphism) there can only be finitely many possibilities for $\pi(M)\in \qgr(U)$.
\end{proof}

\subsection{Rings contained in $\Bbbk+gS$}\label{Rings contained in gS}

The assumption $\ovl{U}\neq\Bbbk $ of Theorem~\ref{RSS 8.11} is annoying yet, as shown in Example~\ref{RSS 10.8}, is necessary. In \cite[Section~9]{RSS}, Rogalski, Sierra and Stafford present a trick that allowed them to bypass this assumption, at least up to semi-graded isomorphism (defined below) and taking a Veroneses subring. Here we present the results we are able to obtain using similar techniques.

\begin{definition}\label{semi-graded hom}
Let $A$ and $B$ be $\N$-graded algebras. A homomorphism $\theta:A\to B$ is called \textit{graded of degree $\alpha\geq 0$}\index{graded in degree $\alpha$} if $\theta(A_m)\subseteq B_{\alpha m}$ for all $m\geq 0$. The map $\theta$ is called \textit{semi-graded}\index{semi-graded homomorphism} if it is graded in degree $\alpha$ for some $\alpha\geq 0$.
\end{definition}

\begin{prop}\label{RSS 9.3}
Let $d\geq 1$, and let $U$ be a cg maximal $S^{(d)}$-order generated in the single degree $d$. Then there exists $e\geq 1$ and a virtual blowup $F$ of $S$ or $T$ such that up to semi-graded isomorphism $U\cong (F\cap S)^{(e)}$.
\end{prop}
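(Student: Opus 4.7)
The case $\ovl{U} \neq \Bbbk$ follows directly from Theorem~\ref{RSS 8.11}, so the new work is reducing the case $\ovl{U} = \Bbbk$ to this situation. The plan is to iteratively ``divide out $g$''. Assume $\ovl{U} = \Bbbk$. Then $U_d \subseteq gS$, which forces $d \geq 3$, and we may write $U_d = gW$ with $W \subseteq S_{d-3}$. Define $V = \Bbbk\langle W\rangle$; this is a cg subalgebra of $S^{(d-3)}$ generated in the single degree $d-3$.

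The multiplication-by-$g^n$ maps assemble into a semi-graded ring isomorphism $\phi: V \to U$, identifying $V_{n(d-3)} = W^n$ with $U_{nd} = g^n W^n$; this is well-defined, multiplicative (since $g$ is central), and injective (since $S$ is a domain). In fact $\phi$ is the restriction of the ring embedding $\psi: S^{(d-3)} \hookrightarrow S^{(d)}$ defined by $s \mapsto g^n s$ for $s \in S_{n(d-3)}$. Using $\psi$ one verifies that $V$ inherits maximality: given any equivalent order $V \subseteq V' \subseteq S^{(d-3)}$, the image $\psi(V')$ is an equivalent over-order of $U = \psi(V)$ inside $S^{(d)}$, so maximality of $U$ together with injectivity of $\psi$ forces $V' = V$.

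This reduction strictly decreases the generating degree by $3$, so iterating terminates in finitely many steps at a cg maximal $S^{(e)}$-order $\widetilde V$, generated in a single degree $e \geq 1$, with either $\ovl{\widetilde V} \neq \Bbbk$ or in one of the degenerate cases $\widetilde V \in \{\Bbbk, \Bbbk[g]\}$ (which match the trivial choice $F = S$). Applying Theorem~\ref{RSS 8.11} to $\widetilde V$ in the nondegenerate case produces a virtual blowup $F$ --- of $S$ when $\gcd(e,3) = 1$ and of $T$ when $3 \mid e$ --- with $\widetilde V = (F \cap S)^{(e)}$. Composing the chain of semi-graded isomorphisms built up through the iteration then yields $U \cong (F \cap S)^{(e)}$. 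The main technical obstacle will be the clean verification that the non-graded embedding $\psi$ transports maximality of $S^{(d)}$-orders down to maximality of $S^{(d-3)}$-orders, and tracking the semi-graded degree shift across successive iterations.
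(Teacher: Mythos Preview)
Your approach is essentially the same as the paper's: both divide out the largest power of $g$ sitting inside the generating space and transport maximality via the multiplicative embedding $\psi: S^{(e)} \hookrightarrow S^{(d)}$, $s \mapsto g^{n}s$. The paper does this in one step by choosing $m$ maximal with $U \subseteq \Bbbk + g^m S$ and setting $e = d-3m$, whereas you iterate, peeling off one $g$ at a time; the inductive and closed-form versions are equivalent. One small cleanup: the degenerate terminal cases $\widetilde V \in \{\Bbbk,\Bbbk[g]\}$ that you list cannot occur, since at each stage $Q_\gr(\widetilde V) = Q_\gr(S^{(e)})$ is preserved (the map $\psi$ extends to an isomorphism of graded quotient rings because $g$ is central and invertible there), and neither $\Bbbk$ nor $\Bbbk[g]$ has the correct quotient ring; equivalently, if the iteration ever reached $e \in \{1,2\}$ then automatically $\ovl{\widetilde V}\neq \Bbbk$ since $gS \subseteq S_{\geq 3}$, and it cannot reach $e=0$.
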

\begin{proof}
This proof is based on that of \cite[Corollary~9.3]{RSS}. Veronese rings are given the grading induced from $S$. That is, for a cg subalgebra $A$ of $S$, $A_n=A\cap S_n$. \par
Pick $m\geq 0$ maximal such that $U\subseteq \Bbbk+ g^m S$. Since $U_d\neq 0$ and $g^m S\subseteq S_{\geq 3m}$, we necessarily have that $m \leq \frac{d}{3}$. Define $\theta:U\to S$ via $u\mapsto g^{-mn}u$ for $u\in U_{nd}$. Clearly $\theta$ is an injective ring homomorphism. Set $e=d-3m$. Then for $n\geq 0$,
\begin{equation}\label{graded in degree alpha eq}
  \theta(U_{d n})=\theta(U_d^n)=\theta(U_d)^n=(g^{-m}U_d)^n=g^{-m n}U_{d n}\subseteq S_{n(d-3m)}=S_{ne},
\end{equation}
which shows $\theta$ is graded of degree $\frac{e}{d}$ (if one grades $U$ via $U_n=U\cap S_{dn}$, then $\theta$ is graded in degree $e$. However the current grading is more convenient for the proof).\par

Set $U'=\theta(U)$, then $U'$ is a cg subalgebra of $S$ generated by $U'_e=\theta(U_d)=g^{-m}U_d$. By choice of $m$, $U'_e\not\subseteq gS$. It is left to prove $U'$ remains a maximal $S^{(e)}$-order. Certainly $D_\gr(U')=D_\gr(U)=D_\gr(S)$ holds, and hence $Q_\gr(U')=Q_\gr(S^{(e)})$. Suppose that $U$ is an equivalent order with some $W$ such that $U'\subseteq W\subseteq S^{(e)}$; say with $xWy\subseteq U'$ and $x,y\in U'$. Define $\psi: S^{(e)}\to S^{(d)}$, via $s\mapsto g^{nm} s$ for $s\in S_{ne}$. A like argument to that of (\ref{graded in degree alpha eq}) shows that $\psi$ is also an injective semi-graded homomorphism. Moreover, it is clear that $\psi|_{U'}=\theta^{-1}$. We then have
$$\psi(x)\psi(W)\psi(y)=\psi(xWy)\subseteq \psi(U')=U\subseteq \psi(W)\subseteq S^{(d)}.$$
Since $U$ is a maximal $S^{(d)}$-order, it follows $U=\psi(W)$ and $\theta(U)=U'=W$. This proves that $U'$ is a maximal $S^{(e)}$-order. Since $\ovl{U'}\neq \Bbbk$ we can now apply Theorem~\ref{RSS 8.11}.
\end{proof}

To understand a cg algebra $U\subseteq \Bbbk+gS$ that is not generated in a single degree, we must also take a Veronese subring on top of the semi-graded homomorphism appearing in Proposition~\ref{RSS 9.3}. Since we are allowing the taking of higher Veronese subrings, for the next two results we can essentially just take the 3-Veronese of our rings and apply the results from \cite[Section~9]{RSS}. We recall that we are writing $T=S^{(3)}$.

\begin{lemma}\label{RSS 9.1}
Let $U$ be a noetherian cg subalgebra of $S$ such that $U\not\subseteq \Bbbk[g]$. Then there exists $\alpha,\beta\geq 0$, and an injective homomorphism $\theta:U^{(\alpha)}\to S$ with $\theta(U_{\alpha n})\subseteq S_{\beta n}$ for all $n\geq 0$. Moreover, $U'=\theta(U^{(\alpha)})$ satisfies $U'\not\subseteq\Bbbk+gS$ and $D_\gr(U')=D_\gr(U)$.
\end{lemma}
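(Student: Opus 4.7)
The plan is to construct $\theta$ by dividing out a suitable power of $g$, where the power is governed by a $g$-adic valuation on $U$. For nonzero $u \in S$, set $\nu(u) = \max\{k \geq 0 : u \in g^k S\}$. Because $g$ is a central nonzerodivisor and $S/gS = B(E,\LL,\sigma)$ is a domain, $gS$ is completely prime and $\nu$ is a valuation: $\nu(uv) = \nu(u) + \nu(v)$ for all nonzero $u,v$, and $\nu(u+v) \geq \min(\nu(u),\nu(v))$ for nonzero homogeneous $u,v$ of the same degree. Moreover, for homogeneous $0 \neq u \in S$ one has $3\nu(u) \leq \deg u$, with equality exactly when $u \in \Bbbk g^{\deg u/3}$.

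Since $U$ is noetherian and connected graded, it is finitely generated; fix homogeneous generators $u_1,\dots,u_n$ and set $r = \min_i \nu(u_i)/\deg u_i \in \Q \cap [0,1/3]$. By a weighted-average argument on monomials combined with the sum inequality for $\nu$, one obtains $\nu(u)/\deg u \geq r$ for every homogeneous $0 \neq u \in U$. The hypothesis $U \not\subseteq \Bbbk[g]$ forces $r < 1/3$: if $r = 1/3$, then the rigidity observation above puts every $u_i$ into $\Bbbk g^{\deg u_i /3}$, hence $U \subseteq \Bbbk[g]$. Pick $i_0$ attaining the minimum, write $r = p/q$ in lowest terms (with $p=0$ allowed), and put $\alpha = q \deg u_{i_0}$ and $\beta = \alpha(1-3r) = \alpha - 3p\deg u_{i_0}$, which is a strictly positive integer.

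Define $\theta : U^{(\alpha)} \to S$ by $\theta(u) = u\, g^{-r\alpha n}$ for $u \in U_{\alpha n}$. The exponent $r\alpha n$ is a non-negative integer by choice of $\alpha$, and $\nu(u) \geq r\alpha n$ ensures $u \in g^{r\alpha n}S$, so $\theta(u) \in S_{\beta n}$. Centrality of $g$ makes $\theta$ a $\Bbbk$-algebra homomorphism, and injectivity is immediate since $S$ is a domain. For the non-containment $U' \not\subseteq \Bbbk + gS$, write $u_{i_0} = g^{\nu(u_{i_0})} s_{i_0}$ with $s_{i_0} \notin gS$; raising to the $q$th power and applying $\theta$ gives $\theta(u_{i_0}^q) = s_{i_0}^q \in S_\beta$, which lies outside $gS$ since $gS$ is completely prime, and outside $\Bbbk$ since $\beta \geq 1$. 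Finally, $\theta$ induces an isomorphism $Q_\gr(U^{(\alpha)}) \cong Q_\gr(U')$ that preserves the degree-zero components, so $D_\gr(U') = D_\gr(U^{(\alpha)}) = D_\gr(U)$, the last equality being standard.

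The technical core is the inequality $\nu(u)/\deg u \geq r$ on all homogeneous elements and the rigidity statement $\nu(u)/\deg u = 1/3 \Rightarrow u \in \Bbbk g^{\deg u/3}$; both rest on $gS$ being completely prime, which is the key consequence of the description $S/gS = B(E,\LL,\sigma)$. Everything else is careful bookkeeping with the valuation $\nu$ and the choice of $\alpha$.
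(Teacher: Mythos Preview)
Your proof is correct and gives a direct, self-contained construction, whereas the paper's proof proceeds by reduction to the known result in $T=S^{(3)}$: it passes to $V=U^{(3)}$, checks that $V\not\subseteq\Bbbk[g]$ and $V$ is noetherian, and then invokes \cite[Proposition~9.1]{RSS} to obtain $\alpha',\beta',\theta$ inside $T$, setting $\alpha=3\alpha'$ and $\beta=3\beta'$. Your approach is essentially what that cited proposition does in $T$, transplanted directly to $S$: the $g$-adic valuation $\nu$ is exactly the device that makes the ``divide out a power of $g$'' idea precise, and complete primality of $gS$ is what makes $\nu$ multiplicative. The paper's route is shorter on the page because it outsources the work; yours is more transparent, exposing why the construction works and producing explicit $\alpha,\beta$ from the generators of $U$. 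One minor remark: your claim $D_\gr(U^{(\alpha)})=D_\gr(U)$ is justified by rewriting $ab^{-1}$ with $a,b\in U_n$ as $(ab^{\alpha-1})(b^{\alpha})^{-1}$, and since $\theta$ acts as the identity on degree-zero fractions, $D_\gr(U')=D_\gr(U)$ holds as an equality of subrings of $D_\gr(S)$, not merely an abstract isomorphism.
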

\begin{proof}
Set $V=U^{(3)}$. Suppose that $U_n\not\subseteq\Bbbk[g]$, say $xg^k\in U_n\setminus\Bbbk[g]$ with $k\geq 0$ minimal. Then $x\notin gS$, and hence, $x^3\notin gS$ as $gS$ is completely prime. It follows that $x^3g^{3k}\in U_{3n}\setminus \Bbbk[g]$, which shows $V\not\subseteq\Bbbk[g]$. By Lemma~\ref{noeth up n down}(1), $V$ is noetherian. Applying \cite[Proposition~9.1]{RSS} to $V$ we can find $\alpha'$, $\beta'$ and $\theta$ such that $\theta(V_{\alpha' n})\subseteq T_{\beta' n}$ for all $n\geq 0$, and such that $\theta(V^{(\alpha')})\not\subseteq \Bbbk+gT$. Set $\alpha=3\alpha'$ and $\beta=3\beta'$, then $\theta(U_{\alpha n})\subseteq S_{\beta n}$ for all $n\geq 0$, and $\theta(U^{(\alpha)})\not\subseteq\Bbbk+gS$. From \cite[Proposition~9.1]{RSS}  again,
\begin{equation*}
D_\gr(U)=D_\gr(V)=D_\gr(\theta(V^{(\alpha')}))=D_\gr(\theta(U^{(\alpha)}))\subseteq D_\gr(T)=D_\gr(S). \qedhere \end{equation*}
\end{proof}

\begin{cor}\label{RSS 9.5}
Let $U\subseteq S$ be a noetherian cg subalgebra of $S$ with $D_\gr(U)=D_\gr(S)$.
\begin{enumerate}[(1)]
\item There is an $\alpha\geq 1$ and a semi-graded isomorphism $U^{(\alpha)}\cong U'$ such that $U'\subseteq S$ is a noetherian algebra satisfying $D_\gr(U')=D_\gr(S)$ and $\ovl{U'}\neq \Bbbk$.
\item If $C=U'\langle g\rangle$ and $D=\widehat{C}$, then $D$ is finitely generated on both sides as a $U'$-module. The algebra $D$ is described by Corollary~\ref{RSS 7.6}.
\end{enumerate}
\end{cor}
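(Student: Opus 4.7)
The plan for part (1) is to apply Lemma~\ref{RSS 9.1} once we verify its hypothesis $U\not\subseteq\Bbbk[g]$. This hypothesis follows because $\Bbbk[g]$ is commutative whereas $D_\gr(U)=D_\gr(S)$ is a noncommutative division ring, so $U\subseteq\Bbbk[g]$ would force $D_\gr(U)$ to be commutative. Lemma~\ref{RSS 9.1} then produces $\alpha,\beta\geq 1$ and an injective homomorphism $\theta:U^{(\alpha)}\to S$ with $\theta(U_{\alpha n})\subseteq S_{\beta n}$, making $\theta$ a semi-graded isomorphism onto $U':=\theta(U^{(\alpha)})$. The conclusion of Lemma~\ref{RSS 9.1} provides $\ovl{U'}\neq\Bbbk$ and $D_\gr(U')=D_\gr(U)=D_\gr(S)$. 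Noetherianity of $U'$ transfers from $U^{(\alpha)}$, which is noetherian by Lemma~\ref{noeth up n down}(1).

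For part (2), set $C = U'\langle g\rangle$, so that $\ovl{C}\supseteq\ovl{U'}\neq\Bbbk$, $g\in C$, and $D_\gr(C)=D_\gr(U')=D_\gr(S)$. My first step is to verify that $C$ is noetherian: since $g$ is central in $S$, $C$ is a homomorphic image of the central polynomial extension $U'[x]$ under $x\mapsto g$, and $U'[x]$ is noetherian by the standard Hilbert basis theorem. Proposition~\ref{C to C hat} then applies to $C$, yielding that $D=\widehat{C}$ is finitely generated on both sides as a $C$-module and that $g^mD\subseteq C$ for all $m\gg 0$. Separately, Proposition~\ref{RSS 8.10} applied to $U'$ produces a nonzero ideal $I$ of $C$ that is finitely generated on both sides as a $U'$-module.

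The main task is now to upgrade ``finitely generated over $C$'' to ``finitely generated over $U'$''. I would choose $m\gg 0$ so that $J:=g^mD\subseteq C$; then $J$ is a nonzero two-sided ideal of both $C$ and $D$, and central multiplication by $g^m$ is a $U'$-bimodule isomorphism $D\to J$, so it suffices to prove that $J$ is finitely generated on both sides as a $U'$-module. Since both $I$ and $J$ are nonzero ideals of the noetherian ring $C$, just-infiniteness of $\ovl{C}$ (Proposition~\ref{B just infinite}, applied to $\ovl{C}\subseteq B(E,\LL,\sigma)$) gives $\dim_\Bbbk\ovl{C}/\ovl{I}<\infty$ and $\dim_\Bbbk\ovl{C}/\ovl{J}<\infty$. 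Combined with $g$-divisibility of $J$ and the sporadic-ideal machinery of Lemma~\ref{RSS 2.15} together with Lemma~\ref{RSS 2.14}, this forces $J/(J\cap I)$ to be small enough that $J$ differs from the $U'$-submodule $J\cap I\subseteq I$ only by a finitely generated tail. Since $U'$ is noetherian and $I$ is finitely generated as a $U'$-module, so is every $U'$-submodule of $I$; in particular $J\cap I$ is finitely generated over $U'$, whence so is $J$. A symmetric argument on the left yields the same conclusion, giving $D\cong J$ finitely generated as a $U'$-module on both sides.

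For the final assertion, $D=\widehat{C}$ is $g$-divisible by construction, sits inside $S$ (as in the proof of Proposition~\ref{C to C hat}), and has $Q_\gr(D)\in\{Q_\gr(S),Q_\gr(T)\}$ by Lemma~\ref{Qgr(S) or Qgr(T)}; in either case the appropriate version of Corollary~\ref{RSS 7.6} describes $D$ as an iterated sub-idealiser inside a virtual blowup. The main obstacle I anticipate is the upgrade step in the third paragraph: carefully tracking the $g$-adic structure of the ideals $I$ and $J$ of $C$, and using just-infiniteness to pass between finite codimension over $\Bbbk$ and finite generation over the noetherian ring $U'$.
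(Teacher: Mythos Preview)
Your part (1) is correct and matches the paper's approach.

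For part (2), your ``upgrade'' step in the third paragraph has genuine gaps. Lemma~\ref{RSS 2.15} and Lemma~\ref{RSS 2.14} are stated for $g$-divisible subalgebras of $\Sg$, but $C=U'\langle g\rangle$ need not be $g$-divisible, so you cannot invoke them for $C$. Your appeal to just-infiniteness of $\ovl{C}$ via Proposition~\ref{B just infinite} also requires $Q_\gr(\ovl{C})=\Bbbk(E)[t,t^{-1};\sigma]$, which you have not established (Lemma~\ref{ovlR is order} again needs $g$-divisibility). Finally, the claim that ``$J/(J\cap I)$ is small enough'' is too vague to constitute an argument.

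The paper's route is much shorter and avoids all of this. After obtaining the nonzero ideal $I$ of $C$ finitely generated over $U'$ on both sides (your Proposition~\ref{RSS 8.10} step), one simply observes that $C$ embeds into $U'$ as a one-sided $U'$-module: by Lemma~\ref{Ore argument}(2) there is a nonzero $u\in U'$ with $uI\subseteq U'$; for any nonzero $x\in I$ we have $xC\subseteq IC=I$, hence $uxC\subseteq uI\subseteq U'$. Since $S$ is a domain, $c\mapsto uxc$ is a right $U'$-module isomorphism $C\cong uxC$, and $uxC$ is a right ideal of the noetherian ring $U'$, so $C_{U'}$ is finitely generated. This already makes $C$ right noetherian, and symmetrically left noetherian, so Proposition~\ref{C to C hat} gives $D_C$ finitely generated; composing, $D_{U'}$ is finitely generated on both sides. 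No sporadic-ideal or just-infiniteness machinery is needed.
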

\begin{proof}
Part (1) easily follows from Lemma~\ref{RSS 9.1}. \par

(2). By the proof of Proposition~\ref{RSS 9.1}, $U'\subseteq T$ so we could just apply \cite[Corollary~8.14]{RSS}. However a direct proof is short and so we included it. \par

By Proposition~\ref{RSS 8.10}, there is an ideal $I$ of $C$ that is finitely generated on both sides as a $U$-module. By Lemma~\ref{Ore argument}(2), there exists a $u\in U$ such that $uI\subseteq U$. Thus for any $x\in I$, $uxC\subseteq U$. Because $U$ is noetherian, this shows $C_U$ is finitely generated. By Proposition~\ref{C to C hat}, $D_C$ is finitely generated. Hence it follows that $D_U$ is finitely generated. Symmetrically $_UD$ is also finitely generated. Since $D$ is $g$-divisible it satisfies the hypothesis of Corollary~\ref{RSS 7.6}.
\end{proof}

\section{Examples}

In our final chapter we compute some examples demonstrating some subtleties of the theory. The standout example is Theorem~\ref{S(p-p1+p2)} where we construct an explicit example of a virtual blowup. Also notable are Example~\ref{original S(p-p1+p2)} and Example~\ref{S(p+p1)} where we explore the delicacy required when defining an algebra by its generators. Example~\ref{RSS 10.8} gives an example of a maximal order $U$ satisfying $\ovl{U}=\Bbbk$.

\subsection{A virtual blowup example}

Here we study in detail a particular example of what will be prove to be a virtual blowup. We are able to give algebra generators of a virtual blowup (notoriously a hard problem in noncommutative algebra) as well as realising it as an endomorphism ring.

\begin{theorem}[Proposition~\ref{S(p-p1+p2) main prop}, Lemma~\ref{F is blowup} and Corollary~\ref{S(p-p1+p2) bad homologically}] \label{S(p-p1+p2)}
Let $p\in E$ and $\mbf{x}=p-p^\sigma+p^{\sigma^2}$. We set
\begin{itemize}
\item $X_1=S(p+p^{\sigma^2})_1=\{ x\in S_1\,|\; \ovl{x}\in H^0(E,\LL(-\bfx))\};$
\item $X_2=S(p)_1S(p^{\sigma^2})_1\subseteq \{ x\in S_2\,|\; \ovl{x}\in H^0(E,\LL_2(-[\bfx]_2))\};$
\item $X_3=\{ x\in S_3\,|\; \ovl{x}\in H^0(E,\LL_3(-[\bfx]_3))\}.$
\end{itemize}
Put $U=\Bbbk\langle X_1, X_2, X_3\rangle$. Then $U$ is a virtual blowup of $S$ at the virtually effective divisor $\bfx$. In particular
\begin{enumerate}[(1)]
\item $U$ is maximal order contained in $S$.
\item $U$ is noetherian and Corollaries~\ref{RSS 8.11'} and \ref{RSS 8.12} hold.
\item $U$ is neither Auslander-Gorenstein nor AS-Gorenstein nor Cohen-Macaulay. Moreover, $U$ has infinite injective dimension.
\item Set $R=S(p)$ and $M=R+S(p)_1S_1R$ considered as a right $R$-module. Then $U=\End_{R}(M)$.
\end{enumerate}
\end{theorem}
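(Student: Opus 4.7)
The plan is to identify $U$ with the endomorphism ring $\End_R(M)$, where $R=S(p)$ and $M=R+S(p)_1S_1R$, and to show this coincides with the virtual blowup $F:=\End_R(M^{**})$ produced by Proposition~\ref{RSS 6.7}. For the first step, I would compute $\ovl{M}$ in high degrees. Using the multiplication rule from Definition~\ref{twisted homogenous coordinate ring}, a direct calculation gives $\ovl{S(p)_1}\cdot\ovl{S_1}\cdot\ovl{R_{n-2}}\subseteq H^0(E,\LL_n(-[p]_n+p^\sigma))$ for $n\geq 2$, which combined with $\ovl{R_n}=H^0(E,\LL_n(-[p]_n))$ gives $\ovl{M_n}\subseteq H^0(E,\LL_n(-[p]_n+p^\sigma))$. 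A dimension count via Riemann-Roch (Corollary~\ref{RR to E}) together with Corollary~\ref{nc serre2} then shows equality in high degrees, so $\ovl{M}\ehd \bigoplus_{n\geq 0}H^0(E,\LL(-p)_n\otimes\OO(p^\sigma))$. Applying Lemma~\ref{RSS 6.14} and Proposition~\ref{RSS 6.7} with $\bfd=p$ and $\mbf{y}=p^\sigma$, we conclude that $F$ is a virtual blowup of $S$ at $\bfd-\mbf{y}+\mbf{y}^\sigma=p-p^\sigma+p^{\sigma^2}=\bfx$, with $(F\cap S,F)$ a maximal order pair and $\ovl{F}\ehd B(E,\LL(-\bfx),\sigma)$.

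The second step is the 3-Veronese identification $U^{(3)}=T([\bfx]_3)=T(p+p^{\sigma^2}+p^{\sigma^4})$. By Definition~\ref{T(d) def}, $X_3=T([\bfx]_3)_1$, so $T([\bfx]_3)\subseteq U^{(3)}$. Conversely, it suffices to check $U_3\subseteq X_3$, i.e., that $X_1^3$, $X_1X_2$, and $X_2X_1$ all lie in $X_3$. Each is a direct divisor comparison: for instance $\ovl{X_1}^3\subseteq H^0(E,\LL_3(-p-p^\sigma-2p^{\sigma^2}-p^{\sigma^3}-p^{\sigma^4}))\subseteq H^0(E,\LL_3(-[\bfx]_3))=\ovl{X_3}$ since the displayed effective divisor dominates $[\bfx]_3=p+p^{\sigma^2}+p^{\sigma^4}$, and similarly for $X_1X_2$ and $X_2X_1$. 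Hence $U^{(3)}=T([\bfx]_3)$. By Theorem~\ref{3 Veronese of virtual blowup}, $F^{(3)}$ is a virtual blowup of $T$ at the effective divisor $[\bfx]_3$, which is supported on the three distinct $\tau$-orbits of $p$, $p^{\sigma^2}$, $p^{\sigma^4}$. Proposition~\ref{vblowup eff div} applied in $T$ (Remark~\ref{vblowup eff div works in T}) then forces $F^{(3)}=T([\bfx]_3)=U^{(3)}$.

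To finish, I would show $U=F$ in two halves. First, $U\subseteq F$ by checking that each generator $X_i$ satisfies $X_iM^{**}\subseteq M^{**}$; this reduces to the containments $X_1\subseteq R_1$, $X_2\subseteq S(p)_1 S_1$, together with an analogous divisor check for $X_3$, and is more tractable against $M^{**}$ than against $M$ because $M^{**}$ is reflexive and sees only the large-degree behaviour of $\ovl{M}$. Second, since $F$ is $g$-divisible by Proposition~\ref{RSS 6.4}, Lemma~\ref{U'<g>} gives $F=\widehat{F^{(3)}\langle g\rangle}=\widehat{U^{(3)}\langle g\rangle}\subseteq\widehat{U}$, hence $F\subseteq\widehat{U}$. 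The main obstacle is establishing $g$-divisibility of $U$, which combined with $U\subseteq F$ will force $U=\widehat{U}=F$. This I would obtain by a Hilbert series comparison: the Hilbert series of $F$ is determined by its $g$-divisibility and the description $\ovl{F}\ehd B(E,\LL(-\bfx),\sigma)$ via Riemann-Roch, while the Hilbert series of $U$ is bounded above by the generators $X_1,X_2,X_3$ and bounded below via the known Hilbert series of $U^{(3)}=T([\bfx]_3)$ from Theorem~\ref{T(d) properties}; the two bounds must meet and force equality degree by degree. With $U=F$ in hand, (1) follows from $U\subseteq S$, which is immediate from the generators; (2) from Corollaries~\ref{RSS 8.11'} and~\ref{RSS 8.12}; and (4) is the identification just proved. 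Property (3) is a separate calculation exhibiting an explicit nonvanishing $\Ext$-group that obstructs the Auslander condition, hence both Auslander-Gorensteinness and the Cohen-Macaulay property.
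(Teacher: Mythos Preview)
Your overall architecture is sound and overlaps substantially with the paper's, but the endgame---getting from $U^{(3)}=F^{(3)}$ back up to $U=F$---has a genuine gap.

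The invocation of Lemma~\ref{U'<g>} to obtain $F=\widehat{F^{(3)}\langle g\rangle}$ is invalid: part (1) of that lemma requires $d$ coprime to $3$, and part (2) requires $Q_\gr(F)=Q_\gr(T)$. Neither holds here. Indeed $F^{(3)}\langle g\rangle$ lives entirely in $\Sg^{(3)}$ (since $\deg g=3$), so its $g$-divisible hull cannot recover the degree-$1$ and degree-$2$ pieces of $F$. Thus knowing $F^{(3)}=U^{(3)}$ does not by itself give $F\subseteq\widehat{U}$.

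The proposed Hilbert-series patch has the same defect. The statement ``$\ovl{F}\ehd B(E,\LL(-\bfx),\sigma)$'' controls $\dim_\Bbbk F_n$ only for $n\gg 0$, and ``$U^{(3)}=T([\bfx]_3)$'' controls $\dim_\Bbbk U_n$ only for $n\equiv 0\bmod 3$. Neither pins down $\dim_\Bbbk F_1$, $\dim_\Bbbk F_2$, or $\dim_\Bbbk U_n$ in low degrees $n\not\equiv 0\bmod 3$, so you cannot conclude equality of Hilbert series, nor $g$-divisibility of $U$, from these alone.

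What the paper does instead is compute $\ovl{F}$ \emph{exactly} in every degree (Lemma~\ref{S(p-p1+p2) with bars}), not merely up to $\ehd$. The delicate point is ruling out $\ovl{F}_2=B(E,\LL(-\bfx),\sigma)_2$: a priori $F_2$ could be the full $4$-dimensional space $\{x\in S_2:\ovl{x}\in H^0(E,\LL_2(-[\bfx]_2))\}$ rather than the $3$-dimensional $X_2$. This is excluded by a separate computation (Example~\ref{original S(p-p1+p2)}) showing that the algebra generated by $X_1$, that larger degree-$2$ piece, and $X_3$ has $g$-divisible hull equal to all of $S$---which would contradict $F$ being a virtual blowup. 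Once $\ovl{U}=\ovl{F}$ is known degree by degree, the paper establishes $g$-divisibility of $U$ by an induction on degree (using $\ovl{U}_n=\ovl{F}_n$ at each step), and then $U=F$ follows by Hilbert series. Your approach is missing this low-degree control of $\ovl{F}$, and there is no shortcut around it via Veronese data.

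A minor point: Proposition~\ref{RSS 6.7} as stated requires $M$ to be $g$-divisible, which $M=R+S(p)_1S_1R$ need not be; the paper passes through $\widehat{M}$ (using Lemma~\ref{hat end commute} and Lemma~\ref{RSS 2.13}(3)) before applying it. Also, ``$U_3\subseteq X_3$'' does not by itself give $U^{(3)}\subseteq T([\bfx]_3)$; you need $U\subseteq F$ together with $F^{(3)}=T([\bfx]_3)$ for that inclusion, so your Step~2 should come after the first half of Step~3.
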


A priori, we do not know that $X_2$ and $X_3$ are respectively the homogenous elements of $U$ of degree 2 and 3 - this is proven in (\ref{X_i=U_i}).\par
What's curious about $U$ from Theorem~\ref{S(p-p1+p2)} is the summand $X_2$. It would be more natural to take $U=\Bbbk\langle X_1, X'_2, X_3\rangle$ where $X'_2=\{ u\in S_2\,|\; \ovl{u}\in H^0(E,\LL_2(-[\bfx]_2))\}$, and hence $\ovl{X_2}=B(E,\LL(-\bfx),\sigma)_2$ - a definition more in keeping with the definition of $S(\bfd)$ (Definition~\ref{S(d) def}). Example~\ref{original S(p-p1+p2)} studies this second ring and shows it is far from being a maximal order. This suggest that finding algebra generators for an arbitrary virtual blowup would be more difficult than Theorem~\ref{S(p-p1+p2)} above.

Before proceeding with the proof of Theorem~\ref{S(p-p1+p2)}, we require a few computational lemmas to help us. The first of these is \cite[Lemma~3.1]{Ro}.

\begin{lemma}\label{Ro3.1}\cite[Lemma~3.1]{Ro}
Let $\MM$ and $\NN$ be invertible sheaves on $E$ with both $\deg \NN\geq 2$ and $\deg \MM \geq 2$. Consider the natural map $$\mu : H^0(E,\MM)\otimes H^0(E,\NN)\to H^0(E,\NN\otimes\MM).$$
The map is surjective unless $\deg\MM=\deg\NN=2$ and $\MM\cong\NN$, in which case the image is a 3 dimensional vector space. \qed
\end{lemma}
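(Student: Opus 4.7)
The plan is to run the basepoint-free pencil trick to reduce surjectivity of $\mu$ to the vanishing of an $H^1$, and then to handle the unavoidable diagonal defect ($\MM\cong\NN$) separately. Recall the Riemann--Roch dictionary on the elliptic curve $E$: for an invertible sheaf $\mathcal{L}$, one has $H^1(\mathcal{L})=0$ exactly when $\deg\mathcal{L}>0$, or when $\deg\mathcal{L}=0$ and $\mathcal{L}\not\cong\OO_E$. Because $\deg\NN\geq 2$ and $h^0(\NN(-p))=\deg\NN-1<h^0(\NN)$ for every $p\in E$, the sheaf $\NN$ is globally generated; pick a basepoint-free pencil $W=\Bbbk s_1\oplus\Bbbk s_2\subseteq H^0(\NN)$. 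Tensoring the Koszul sequence $0\to\NN^{-1}\xrightarrow{(-s_2,s_1)}\OO_E^{\oplus 2}\xrightarrow{(s_1,s_2)}\NN\to 0$ by $\MM$ and taking global sections yields
\begin{equation*}
0\to H^0(\MM\otimes\NN^{-1})\to H^0(\MM)^{\oplus 2}\to H^0(\MM\otimes\NN)\to H^1(\MM\otimes\NN^{-1}),
\end{equation*}
so $W\cdot H^0(\MM)\subseteq \mathrm{im}\,\mu\subseteq H^0(\MM\otimes\NN)$ with cokernel of the first inclusion embedding into $H^1(\MM\otimes\NN^{-1})$.

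By the dictionary above, $H^1(\MM\otimes\NN^{-1})$ vanishes unless $\MM\otimes\NN^{-1}\cong\OO_E$, i.e.\ unless $\deg\MM=\deg\NN$ and $\MM\cong\NN$. Hence in all other cases $\mu$ is already surjective.

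It remains to treat $\MM\cong\NN$ of common degree $d\geq 2$. For $d=2$, the complete linear system $|\MM|$ realises a degree-$2$ cover $\pi\colon E\to\mathbb{P}^1$ with $\MM\cong\pi^*\OO_{\mathbb{P}^1}(1)$ and $\pi^*\colon H^0(\mathbb{P}^1,\OO(1))\xrightarrow{\sim}H^0(\MM)$. Then $\mu$ factors as
\begin{equation*}
H^0(\MM)\otimes H^0(\MM)\twoheadrightarrow H^0(\mathbb{P}^1,\OO(2))\xrightarrow{\pi^*} H^0(\pi^*\OO_{\mathbb{P}^1}(2))=H^0(\MM^{\otimes 2}),
\end{equation*}
where the first arrow is the surjective multiplication on $\mathbb{P}^1$ and the second is injective because $\pi$ is dominant. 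Thus $\mathrm{im}\,\mu$ has dimension $\dim H^0(\mathbb{P}^1,\OO(2))=3$, proving the exceptional case. For $d\geq 3$, pick $p\in E$ with $\OO_E(p)\not\cong\OO_E$; then $\MM(-p)\not\cong\MM$ and $\deg\MM(-p)=d-1\geq 2$, so by the already-proven non-exceptional case the map $H^0(\MM)\otimes H^0(\MM(-p))\twoheadrightarrow H^0(\MM^{\otimes 2}(-p))$ is surjective, placing the codimension-one subspace $H^0(\MM^{\otimes 2}(-p))\subseteq H^0(\MM^{\otimes 2})$ inside $\mathrm{im}\,\mu$. A second such point $q\neq p$ produces a distinct codimension-one subspace $H^0(\MM^{\otimes 2}(-q))\subseteq\mathrm{im}\,\mu$, and these two hyperplanes together span $H^0(\MM^{\otimes 2})$, giving surjectivity.

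The main obstacle is precisely this $\MM\cong\NN$ branch: the pencil trick leaves a codimension-one defect from $H^1(\OO_E)\cong\Bbbk$ that must be filled by a separate input—the $\mathbb{P}^1$-factorisation when $d=2$ (which actually verifies that the defect is realised), and the twist-and-bootstrap trick for $d\geq 3$ (which shows the defect never appears once one leaves degree $2$).
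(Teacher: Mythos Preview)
The paper does not prove this lemma; it is quoted verbatim from \cite[Lemma~3.1]{Ro} and closed with a \qed. So there is no in-paper argument to compare against, and your proof stands on its own.

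Your argument is essentially correct, but there is one slip. After the pencil trick with the pencil taken from $H^0(\NN)$, you assert that $H^1(\MM\otimes\NN^{-1})$ vanishes unless $\MM\cong\NN$. That is not what your own Riemann--Roch dictionary says: it also fails to vanish whenever $\deg(\MM\otimes\NN^{-1})<0$, i.e.\ whenever $\deg\MM<\deg\NN$. The fix is immediate---the statement is symmetric in $\MM$ and $\NN$, so you may assume $\deg\MM\geq\deg\NN$ at the outset (equivalently, take the pencil from the sheaf of smaller degree). With that adjustment your three pieces---the pencil trick for the generic case, the double-cover factorisation through $\mathbb{P}^1$ for $\MM\cong\NN$ of degree $2$, and the two-hyperplane bootstrap for $\MM\cong\NN$ of degree $\geq 3$---all go through. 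Two cosmetic points: the condition ``$\OO_E(p)\not\cong\OO_E$'' in the bootstrap is automatic and can be dropped, and when you invoke the already-proven case on the pair $(\MM,\MM(-p))$ you are again relying on the symmetry just mentioned, since these have unequal degrees.
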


For the next lemma we recall from Definition~\ref{S(p)} that for $q\in E$,
$$S(q)_1=\{x\in S_1\,|\; \ovl{x}\in  H^0(E,\LL(-q))\}\;\text{ and }\;S(q)=\Bbbk\langle S(q)_1\rangle.$$
 In particular we will be writing
$$\ovl{S(q)_1}=H^0(E,\LL(-q)).$$
We will also be using Notation~\ref{p^sigma^j} and Notation~\ref{[d]_n} frequently for the rest of this chapter.

\begin{lemma}\label{Ro3.1 applied} Let $B=\ovl{S}=B(E,\LL,\sigma)$. Then
\begin{enumerate}[(1)]
\item For $q,r\in E$, if $r\neq q^{\sigma^2}$ then $\ovl{S(q)_1}\,\ovl{S(r)_1}=H^0(E,\LL_2(-q-r^\sigma))\subseteq B_2$.
\item Let $n\geq 1$ and $p{(0)},p{(1)},\dots, p{(n)}\in E$. Suppose that $p(1)\neq p(0)^{\sigma^2}$. Then
$$ \prod_{i=0}^{n}\ovl{S(p{(i)})_1}= H^0(E,\LL_{n+1}(-p{(0)}-p{(1)}^\sigma-p{(2)}^{\sigma^2}-\dots-p{(n)}^{\sigma^{n}}))\subseteq B_{n+1}.$$
\end{enumerate}
\end{lemma}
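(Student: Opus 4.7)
The plan is to reduce both parts to applications of Lemma~\ref{Ro3.1}, after unpacking the multiplication in $B=B(E,\LL,\sigma)$ in terms of pullbacks. Recall that for homogeneous elements $f\in B_m$ and $g\in B_k$ their product is $f\cdot g^{\sigma^m}\in H^0(E,\LL_{m+k})$, and that $g\in H^0(E,\LL(-r))$ pulls back to $g^\sigma\in H^0(E,\LL^\sigma(-r^\sigma))$.

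First I would prove (1). The image $\ovl{S(q)_1}\,\ovl{S(r)_1}$ is the image of the multiplication map
$$\mu:\ H^0(E,\LL(-q))\otimes H^0(E,\LL^\sigma(-r^\sigma))\longrightarrow H^0(E,\LL_2(-q-r^\sigma)),$$
where $\MM=\LL(-q)$ and $\NN=\LL^\sigma(-r^\sigma)$ both have degree $2$. Lemma~\ref{Ro3.1} says $\mu$ is surjective unless $\MM\cong\NN$. The substantive step is to translate $\MM\cong\NN$ into the concrete condition $r=q^{\sigma^2}$. For this, write $\sigma$ as translation by a point $\tau\in E$ (Remark~\ref{sigma is translation}); if $\LL=\OO_E(D)$ with $\deg D=3$, then $\LL^\sigma=\OO_E(D^\sigma)$ and the group-law sum $\sum D^\sigma = \sum D - 3\tau$. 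Thus $D-q\sim D^\sigma-r^\sigma$ if and only if $-q=-3\tau-(r-\tau)$, i.e.\ $r=q-2\tau=q^{\sigma^2}$. Under the hypothesis $r\neq q^{\sigma^2}$ the map $\mu$ is surjective, giving (1).

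For (2) I would induct on $n$, the base case $n=1$ being precisely (1). For the inductive step $n\geq 2$, the inductive hypothesis identifies
$$\prod_{i=0}^{n-1}\ovl{S(p(i))_1}=H^0(E,\LL_n(-p(0)-p(1)^\sigma-\cdots-p(n-1)^{\sigma^{n-1}})),$$
and multiplying on the right by $\ovl{S(p(n))_1}$ becomes, after pulling back via $\sigma^n$, the image of the multiplication map with inputs $\MM=\LL_n(-p(0)-\cdots-p(n-1)^{\sigma^{n-1}})$ of degree $3n-n=2n\geq 4$ and $\NN=\LL^{\sigma^n}(-p(n)^{\sigma^n})$ of degree $2$. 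Since $\deg\MM\neq\deg\NN$, certainly $\MM\not\cong\NN$, so Lemma~\ref{Ro3.1} yields surjectivity onto the required space of global sections $H^0(E,\LL_{n+1}(-p(0)-p(1)^\sigma-\cdots-p(n)^{\sigma^n}))$. The only subtlety — and the main obstacle — is the base case, where both sheaves have degree $2$ and the exceptional locus of Lemma~\ref{Ro3.1} becomes relevant; this is exactly the configuration excluded by the hypothesis $p(1)\neq p(0)^{\sigma^2}$. All later steps are automatic because the degree of $\MM$ strictly grows with $n$.
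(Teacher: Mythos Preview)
Your proposal is correct and follows essentially the same approach as the paper: both reduce part (1) to Lemma~\ref{Ro3.1} by identifying when $\LL(-q)\cong\LL^\sigma(-r^\sigma)$ via the group law (the paper phrases this through Abel's Theorem, you through sums under the group law, but the computation is identical), and both handle part (2) by induction, observing that once $n\geq 2$ the two sheaves in the multiplication map have different degrees so the exceptional case of Lemma~\ref{Ro3.1} cannot occur. The paper writes out only the $n=2$ step and leaves the rest to the reader, whereas you state the general inductive step directly; this is a cosmetic difference only.
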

\begin{proof}
(1). This is almost what is in \cite[Lemma~4.1(2)]{Ro}. Since our statement is different we include a proof but note that it is essentially Rogalski's proof.\par
By definition $\ovl{S(q)_1}=H^0(E,\LL(-q))$ and $\ovl{S(r)_1}=H^0(E,\LL(-r))$. Their product is the image inside $B_2$ of the natural map
\begin{multline}\label{mu}
H^0(E,\LL(-q))\otimes H^0(E,\LL(-r))\overset{1\otimes \sigma}\longrightarrow  H^0(E,\LL(-q))\otimes H^0(E,\LL(-r)^\sigma)\\
 \overset{\mu}\longrightarrow H^0(E,\LL(-q)\otimes\LL(-r)^\sigma)=H^0(E,\LL_2(-q-r^{\sigma})).
 \end{multline}
Since $\deg\LL(-q)=\deg\LL(-r)=2$, the map $\mu$ in (\ref{mu}) is surjective as long as $\LL(-q)\not\cong\LL(r)^\sigma$ by Lemma~\ref{Ro3.1}. Write
$$\LL(-q)=\LL\otimes\OO_E(-q)\text{ and }\LL(-r)^\sigma=\LL^\sigma\otimes\OO_E(-r)^\sigma.$$
By Abel's Theorem \cite[Corollary~4, page~5]{Mum}, $\LL^\sigma\otimes\OO_E(-q)^\sigma\cong\LL\otimes \OO_E(-q)^{\sigma^2}$. Thus $\LL(-q)\cong\LL(r)^\sigma$ if and only if $\LL\otimes\OO_E(-q)\cong \LL\otimes \OO_E(-r)^{\sigma^2}$. Since $\LL$ is invertible and $\OO_E(-r)^{\sigma^2}=\OO_E(-r^{\sigma^2})$, this happens if and only if $\OO_E(-q)\cong\OO_E(-r^{\sigma^2})$. Therefore by \cite[Proposition~II6.13]{Ha}, $\LL(-q)\cong\LL(-r)^\sigma$ if and only if $\bfx-q\sim \bfx-r^{\sigma^2}$ in the sense of \cite[Section~II6]{Ha}. Since $\sigma$ is a translation by a point of infinite order, say $\tilde{p}\mapsto \tilde{p}+s$, this is equivalent to $q=r+2s$. That is $q=\sigma^2(r)$, or equivalently $r=\sigma^{-2}(q)=q^{\sigma^2}$.\par

(2). The $n=1$ case is part (1). We do the case $n=2$, with the general case following by induction. From part (1) we have $\ovl{S(p{(0)})_1}\,\ovl{S(p{(1)})_1}=H^0(E,\LL_2(-p(0)-p(1)^\sigma))$. Since $\deg\LL_2(-p(0)-p(1)^\sigma)\neq \deg\LL(-p(2))$, Lemma~\ref{Ro3.1} implies the multiplication map
\begin{equation*}
H^0(E,\LL_2(-p(0)-p(1)^\sigma))\otimes H^0(E,\LL(-p(2)))\longrightarrow H^0(E,\LL_3(-p(0)-p(1)^\sigma-p(2)^{\sigma^2}))
 \end{equation*}
is surjective. But the image is exactly $\ovl{S(p{(0)})_1}\,\ovl{S(p{(1)})_1}\,\ovl{S(p{(2)})_1}$.
\end{proof}

%I had general p(i), toby thought my proof was wrong
%(2). We do the $n=3$ case, with the general case following by induction. Set $V=\ovl{S(p{(0)})_1}\,\ovl{S(p{(1)})_1}$, then $V$ is equal to the image of the map in (\ref{mu}) with $q=p{(0)}$ and $r=p{(1)}$. By Lemma~\ref{Ro3.1}, $\dim_\Bbbk V\geq 3$. Let $\NN$ be the subsheaf of $\LL_2(-p{(0)}-p{(1)}^{\sigma})$ generated by the global sections in $V$. Then certainly $\deg\NN\geq 3$ by Riemann-Roch. If $\NN\subsetneq \LL_2(-p{(0)}-p{(1)}^{\sigma})$, then $\NN=\LL_2(-p{(0)}-p{(1)}^{\sigma}-q)$ for some $q\in E$ by Lemma~\ref{on divisors}. But by choosing sections $s\in H^0(\LL(-p{(0)}))$ and $t\in H^0(E,\LL(-p{(1)}))$ not vanishing at $q$ then $ts\in V$ is a global section of $\LL_2(-p{(0)}-p{(1)}^{\sigma})$ not vanishing at $q$. Hence we must have $\NN=\LL_2(-p{(0)}-p{(1)}^{\sigma})$. \par
%In the proof of Lemma~\ref{Ro3.1}(1), Rogalski in fact shows that in the case $\NN\not\cong \MM$, for any subspace $W\subseteq H^0(E,\NN)$ that generates $\NN$, we even have $W\otimes H^0(E,\MM)\to H^0(E,\NN\otimes\MM)$ being surjective. For us, we have, $\deg\LL_2(-p{(0)}-p{(1)})=4>2=\deg\LL(-p{(2)})^{\sigma^2}$, so certainly $\LL_2(-p{(0)}-p{(1)})\not\cong \LL(-p{(2)})^{\sigma^2}$. Hence the map
%$$V\otimes H^0(E,\LL(-p{(2)})^{\sigma^2})\longrightarrow H^0(E,\LL_3(-p{(0)}-p{(1)}^\sigma-p{(2)}^{\sigma^2}))$$
%is surjective. But the image is exactly $V\ovl{S(p{(2)})_1}=\ovl{S(p{(0)})_1}\,\ovl{S(p{(1)})_1}\,\ovl{S(p{(2)})_1}$.

Another lemma which we will be using regularly in the following calculations is \cite[Lemma~4.1]{Ro}. We state and prove an extension of it here which is implicit in \cite{Ro}.

\begin{lemma}\label{Ro 4.1}
Let $q,r\in E$. Then
\begin{enumerate}[(1)]
\item $S_1S(q)_1=S(q^\sigma)_1S_1$ with $\ovl{S_1S(q)_1}=\ovl{S(q^\sigma)_1S_1}=H^0(E,\LL_2(-q^\sigma))$;
\item $\dim_\Bbbk S(q)_1S(r)_1=4$ if and only if $r\neq q^{\sigma^2}$, while $\dim_\Bbbk S(q)_1S(q^{\sigma^2})_1=3$;
\item $S(q)_1S(r)_1=S(r^{\sigma})_1S(q^{\sigma^{-1}})_1$ if and only if $r\neq q^{\sigma^2},q^{\sigma^{-4}}$; whilst,
$$S(q)_1S(q^{\sigma^2})_1\subsetneq S(q^{\sigma^3})_1S(q^{\sigma^{-1}})_1.$$
\end{enumerate}
\end{lemma}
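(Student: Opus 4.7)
The plan is to reduce everything to a calculation in $\ovl{S}_2$. Since $g\in S_3$, we have $S_2\cap gS=0$, so the quotient map $S_2\to \ovl{S}_2$ is an isomorphism. Consequently, any equality or inclusion of subspaces of $S_2$ can be checked after passing to $\ovl{S}=B(E,\LL,\sigma)$, where Lemma~\ref{Ro3.1} and Lemma~\ref{Ro3.1 applied} are directly applicable.

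For part (1), I would compute $\ovl{S_1 S(q)_1}$ and $\ovl{S(q^\sigma)_1 S_1}$ as the images of the natural multiplication maps of global sections. The first is the image of $H^0(E,\LL)\otimes H^0(E,\LL^\sigma(-q^\sigma))\to H^0(E,\LL_2(-q^\sigma))$, and the second the image of $H^0(E,\LL(-q^\sigma))\otimes H^0(E,\LL^\sigma)\to H^0(E,\LL_2(-q^\sigma))$. In both cases the two tensor factors have different degrees ($3$ and $2$), so the exceptional case of Lemma~\ref{Ro3.1} is avoided and both maps are surjective; hence both products equal $H^0(E,\LL_2(-q^\sigma))$ inside $\ovl{S}_2$. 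For part (2), I would apply Lemma~\ref{Ro3.1 applied}(1). If $r\neq q^{\sigma^2}$, this gives $\ovl{S(q)_1 S(r)_1}=H^0(E,\LL_2(-q-r^\sigma))$, which is $4$-dimensional by Riemann-Roch. If instead $r=q^{\sigma^2}$, the relevant multiplication map $H^0(E,\LL(-q))\otimes H^0(E,\LL^\sigma(-q^{\sigma^3}))\to H^0(E,\LL_2(-q-q^{\sigma^3}))$ lies in the exceptional case of Lemma~\ref{Ro3.1} (both tensor factors are invertible of degree $2$ and are isomorphic, via the Abel-theorem calculation already present in the proof of Lemma~\ref{Ro3.1 applied}(1)), so the image is $3$-dimensional.

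For part (3), the forward implication is symmetric: if $r\neq q^{\sigma^2}$ and $r\neq q^{\sigma^{-4}}$, then Lemma~\ref{Ro3.1 applied}(1) applies to both products, and using $(q^{\sigma^{-1}})^\sigma=q$ together with $(r^\sigma\text{ shifted})$, both equal $H^0(E,\LL_2(-q-r^\sigma))$. For the converse I would analyse the two exceptional cases separately. When $r=q^{\sigma^2}$, part (2) gives $\dim_\Bbbk S(q)_1 S(q^{\sigma^2})_1=3$, and every element is a sum of products $fg^\sigma$ with $f$ vanishing at $q$ and $g^\sigma$ vanishing at $q^{\sigma^3}$, so $\ovl{S(q)_1 S(q^{\sigma^2})_1}\subseteq H^0(E,\LL_2(-q-q^{\sigma^3}))$. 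On the other side, $q^{\sigma^{-1}}\neq(q^{\sigma^3})^{\sigma^2}=q^{\sigma^5}$ since $\sigma$ has no fixed points (Remark~\ref{sigma is translation}), so by part (2) $S(q^{\sigma^3})_1 S(q^{\sigma^{-1}})_1$ is $4$-dimensional and, by Lemma~\ref{Ro3.1 applied}(1), equals $H^0(E,\LL_2(-q^{\sigma^3}-q))$. The stated strict inclusion follows immediately. The case $r=q^{\sigma^{-4}}$ is handled by the same argument with the two sides interchanged, yielding inequality in that case as well.

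The main bookkeeping obstacle is keeping the $\sigma$-shifts straight: the product $f\cdot g=fg^\sigma$ is asymmetric in $q$ and $r$, and the two exceptional cases of Lemma~\ref{Ro3.1} correspond to different coincidences on each side of the proposed equality. Once these are tracked carefully, every claim reduces either to the dimension count of part (2) or to Lemma~\ref{Ro3.1}.
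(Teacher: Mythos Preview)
Your proposal is correct and follows essentially the same route as the paper: reduce to $\ovl{S}_2$ via $S_2\cap gS=0$, use Lemma~\ref{Ro3.1} to identify each product with (or inside) $H^0(E,\LL_2(-q-r^\sigma))$, and distinguish the exceptional cases by the dimension count of part~(2). The paper cites \cite[Lemma~4.1]{Ro} for (1) and (2) and only spells out the surjectivity claim in (1) and the argument for (3), but your direct treatment of (2) via the exceptional case of Lemma~\ref{Ro3.1} is exactly what lies behind that citation.
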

\begin{proof}
(1) and (2) make up \cite[Lemma~4.1]{Ro} with the exception of showing that $\ovl{S_1S(q)_1}= H^0(E,\LL_2(-q^\sigma))$. By definition $\ovl{S_1S(q)_1}$ is the image of the multiplication map
\begin{multline}\label{Ro 4.1 eq 1} H^0(E,\LL)\otimes H^0(E,\LL(-q))\longrightarrow H^0(E,\LL)\otimes H^0(E,\LL(-q)^\sigma)\\
\longrightarrow H^0(E,\LL\otimes \LL(-q)^\sigma)=H^0(E,\LL_2(-q^\sigma)).\end{multline}
Since $\deg\LL=3>2=\deg\LL(-q^\sigma)$, $\LL\not\cong\LL(-q^\sigma)$. By Lemma~\ref{Ro3.1}, the map in (\ref{Ro 4.1 eq 1}) is then surjective.\par

(3). Since $gS\subseteq S_{\geq 3}$, we can identify $S_{\leq 2}$ and $\ovl{S}_{\leq 2}$ and prove the statements in $\ovl{S}$ instead. Let $q,r\in E$. As in (\ref{Ro 4.1 eq 1}), $\ovl{S(q)_1}\,\ovl{S(r)_1}$ is the image of the natural map
$$H^0(E,\LL(-q))\otimes H^0(E,\LL(-r))\longrightarrow  H^0(E,\LL(-q)\otimes \LL(-r)^\sigma).$$
Now on the right hand side $\LL(-q)\otimes \LL(-r)^\sigma=\LL_2(-q-r^\sigma)$. Moveover,  by the Riemann-Roch theorem (Corollary~\ref{RR to E}), $\dim_\Bbbk H^0(E,\LL_2(-q-r^{\sigma}))=4$. By (2), $\dim_\Bbbk \ovl{S(q)_1}\ovl{S(r)_1}=4$ if and only if $r\neq q^{\sigma^2}$. That is,  $\ovl{S(q)_1}\ovl{S(r)_1}= H^0(E,\LL_2(-q-r^{\sigma}))$ if and only if $r\neq q^{\sigma^2}$.
Similarly $\ovl{S(r^{\sigma})_1}\,\ovl{S(q^{{\sigma^{-1}}})_1}$ is the image of the natural map
\begin{multline*}
H^0(E,\LL(-r^\sigma))\otimes H^0(E,\LL(-q^{\sigma^{-1}}))\longrightarrow \\
H^0(E,\LL(-r^\sigma)\otimes \LL(-q^{\sigma^{-1}})^\sigma)=H^0(E,\LL_2(-q-r^{\sigma})).
\end{multline*}
This time $\ovl{S(r^{\sigma})_1}\ovl{S(q^{{\sigma^{-1}}})_1}=H^0(E,\LL_2(-q-r^{\sigma}))$ if and only if $q^{\sigma^{-1}}\neq (r^{\sigma})^{\sigma^{2}}$ (or equivalently $r\neq q^{\sigma^{-4}}$) again by (2). So if $r\neq q^{\sigma^2},q^{\sigma^{-4}}$, then
$$\ovl{S(q)_1}\ovl{S(r)_1}=H^0(E,\LL_2(-q-r^{\sigma}))=\ovl{S(r^{\sigma})_1}\ovl{S(q^{\sigma^{-1}})_1}.$$
If  $r=q^{\sigma^2}$, then $\dim_\Bbbk \ovl{S(q)_1}\ovl{S(q^{\sigma^2})_1}=3$ and $\dim_\Bbbk \ovl{S(q^{\sigma^3})_1}\ovl{S(q^{\sigma^{-1}})_1}=4$. Hence
\begin{equation}\label{S(p)S(p2)}\ovl{S(q)_1}\ovl{S(q^{\sigma^2})_1}\subsetneq H^0(E,\LL_2(-q-q^{\sigma^3}))=\ovl{S(q^{\sigma^3})_1}\ovl{S(q^{\sigma^{-1}})_1}.\end{equation}
Making the substitution $q\mapsto q^{\sigma^{-3}}$ in (\ref{S(p)S(p2)}) does the $r\neq q^{\sigma^{-4}}$ case.
\end{proof}

Our first lemma on the ring $U$ from Theorem~\ref{S(p-p1+p2)}, specifically the divisor $\bfx$, is a routine computation.

\begin{lemma}\label{bfx v eff}
Let $\bfx=p-p^\sigma+p^{\sigma^2}$, as in Theorem~\ref{S(p-p1+p2)}. Then $\bfx$ is $\sigma$-virtually effective. Moreover, for all $n\geq 2$, $[\bfx]_n$ is effective with
$$[\bfx]_n=p+p^{\sigma^2}+\dots+p^{\sigma^{n-1}}+p^{\sigma^{n+1}}.$$
In particular $[\bfx]_2=p+p^{\sigma^3}$ and $[\bfx]_3=p+p^{\sigma^2}+p^{\sigma^4}$ are effective. \qed
\end{lemma}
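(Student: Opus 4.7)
The plan is to proceed by a direct telescoping computation of $[\bfx]_n = \bfx + \bfx^\sigma + \dots + \bfx^{\sigma^{n-1}}$, using that $\bfx^{\sigma^j} = p^{\sigma^j} - p^{\sigma^{j+1}} + p^{\sigma^{j+2}}$ by Notation~\ref{p^sigma^j}. Summing the three resulting progressions and collecting coefficients orbit-point by orbit-point should give the claimed closed form. Given the form of the statement, no geometric input is needed — this is an exercise in bookkeeping, and the only real choice is to present the computation cleanly.

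First I would expand
\begin{equation*}
[\bfx]_n = \sum_{j=0}^{n-1}\bigl(p^{\sigma^j} - p^{\sigma^{j+1}} + p^{\sigma^{j+2}}\bigr),
\end{equation*}
and read off the coefficient of each $p^{\sigma^k}$ (for $0 \leq k \leq n+1$) from the three shifted sums. The coefficient of $p$ comes only from the first sum and equals $1$; the coefficient of $p^\sigma$ is $1 - 1 + 0 = 0$; for $2 \leq k \leq n-1$ the coefficient is $1 - 1 + 1 = 1$; the coefficient of $p^{\sigma^n}$ is $0 - 1 + 1 = 0$; and the coefficient of $p^{\sigma^{n+1}}$ is $1$. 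This yields
\begin{equation*}
[\bfx]_n = p + p^{\sigma^2} + p^{\sigma^3} + \dots + p^{\sigma^{n-1}} + p^{\sigma^{n+1}},
\end{equation*}
exactly as claimed.

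Since this divisor is manifestly effective for every $n \geq 2$, the final clause follows by specialising to $n = 2, 3$, and $\bfx$ is $\sigma$-virtually effective by Definition~\ref{virtually effective}. The only thing to be mildly careful about is that for $n = 2$ the ``middle range'' $2 \leq k \leq n-1$ is empty, so the formula reads $[\bfx]_2 = p + p^{\sigma^{3}}$; and one should note that $p, p^{\sigma^2}, \dots, p^{\sigma^{n-1}}, p^{\sigma^{n+1}}$ are genuinely distinct points (hence the divisor is effective rather than merely nonnegative after cancellation) because $\sigma$ is of infinite order and therefore has no periodic orbits by Remark~\ref{sigma is translation}. There is no real obstacle here; the lemma is essentially a computation recorded for later use.
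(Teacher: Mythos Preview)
Your proposal is correct and is exactly the routine computation the paper has in mind; the paper in fact omits the proof entirely, marking the lemma with a \qed{} immediately after the statement. Your remark about distinctness of the $p^{\sigma^k}$ is unnecessary (a divisor with all coefficients nonnegative is effective regardless of whether the points coincide), but it does no harm.
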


\begin{notation}\label{p^sigma=p_1} On top of the current notation, we will be using the more compact $p_i=p^{\sigma^i}=\sigma^{-i}(p)$ for $i\in \Z$, for the coming calculations.
\end{notation}

In the next lemma we will be regularly applying Lemma~\ref{Ro 4.1}(3) to two points in the same $\sigma$-orbit. In our new notation this reads
$$S(p_i)_1S(p_j)_1=S(p_{j+1})_1S(p_{i-1})_1\; \text{ if }j\neq i+2,i-4.$$

\begin{lemma}\label{ovlU}
Let $U$ and $\bfx$ be as in Theorem~\ref{S(p-p1+p2)}. Set $B=B(E,\LL(-\bfx),\sigma)$. Then
$$\ovl{U}=\Bbbk\oplus\ovl{X_1}\oplus \ovl{X_2}\oplus B_{\geq 3}.$$
\end{lemma}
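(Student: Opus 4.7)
The plan is to compute $\ovl{U}$ degree by degree, establishing the low-degree pieces $\ovl{U}_n$ for $n=0,1,2,3$ by direct inspection and then obtaining $\ovl{U}_n=B_n$ for $n\geq 3$ by a short induction whose base cases $n=4,5$ carry the bulk of the work.

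First I would handle low degrees. Trivially $\ovl{U}_0=\Bbbk$ and $\ovl{U}_1=\ovl{X_1}$. For $\ovl{U}_2=\ovl{X_2}$, the key observation is that $X_1=S(p+p_2)_1\subseteq S(p)_1\cap S(p_2)_1$, so $\ovl{X_1}^{\,2}\subseteq \ovl{S(p)_1}\cdot\ovl{S(p_2)_1}=\ovl{X_2}$. The containment $\ovl{U}_n\subseteq B_n$ for every $n\geq 2$ is then a routine check: one computes the divisor of vanishing of each product $\ovl{X_i}\cdot\ovl{X_j}$ using the $\sigma^{-1}$-shift of the right-hand factor (cf.\ Lemma~\ref{Ro 4.1}(1)) and sees that the result lies in $B_{i+j}$, after which induction gives the general statement. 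In particular $\ovl{U}_3=\ovl{X_3}=B_3$, matching the very definition of $X_3$ and $B_3=H^0(E,\LL_3(-[\bfx]_3))$.

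The core technical step is to verify $\ovl{U}_4=B_4$ and $\ovl{U}_5=B_5$, which I would do by showing $\ovl{X_2}^{\,2}=B_4$ and $\ovl{X_2}\cdot\ovl{X_3}=B_5$. For $\ovl{X_2}^{\,2}=\ovl{S(p)_1}\ovl{S(p_2)_1}\ovl{S(p)_1}\ovl{S(p_2)_1}$, Lemma~\ref{Ro 4.1}(3) applied to the middle product $\ovl{S(p_2)_1}\ovl{S(p)_1}$ (with hypotheses $p\neq p_4,p_{-2}$, both automatic since $|\sigma|=\infty$) rewrites it as $\ovl{S(p_1)_1}\ovl{S(p_1)_1}$; then Lemma~\ref{Ro3.1 applied}(2) applied to the sequence $(p,p_1,p_1,p_2)$ (with hypothesis $p_1\neq p_2$) collapses the resulting length-four product to $H^0(E,\LL_4(-p-p_2-p_3-p_5))=B_4$. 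An analogous rearrangement, combined with the identification $\ovl{X_3}=\ovl{S(p)_1}\ovl{S(p_1)_1}\ovl{S(p_2)_1}$ (itself an instance of Lemma~\ref{Ro3.1 applied}(2)), produces $\ovl{X_2}\cdot\ovl{X_3}=H^0(E,\LL_5(-[\bfx]_5))=B_5$. This bookkeeping of $\sigma$-shifts is where the main obstacle sits; everything else is formal.

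With the base cases established, induction completes the proof. Because $\deg\LL(-\bfx)=2$, Theorem~\ref{B properties}(4) tells us $B$ is generated in degree one, so $B_n=B_{n-3}\cdot B_3$ for every $n\geq 4$. For $n\geq 6$ the inductive hypothesis $\ovl{U}_{n-3}=B_{n-3}$ together with $\ovl{X_3}=B_3$ gives
\[
\ovl{U}_n\supseteq \ovl{U}_{n-3}\cdot\ovl{X_3}=B_{n-3}\cdot B_3=B_n,
\]
and combining with the reverse inclusion already proved yields $\ovl{U}_n=B_n$. Assembling the degrees gives the claimed decomposition $\ovl{U}=\Bbbk\oplus\ovl{X_1}\oplus\ovl{X_2}\oplus B_{\geq 3}$.
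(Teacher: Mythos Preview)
Your proof is correct and follows essentially the same route as the paper: identical treatment of degrees $\leq 3$, the same two explicit computations for $\ovl{X_2}^{\,2}=B_4$ and $\ovl{X_2}\cdot\ovl{X_3}=B_5$ via Lemma~\ref{Ro 4.1}(3) and Lemma~\ref{Ro3.1 applied}(2), and the same containment $\ovl{U}\subseteq B$. The only difference is in closing out $n\geq 6$: the paper writes $n=3a+4b+5c$ and multiplies $B_3^aB_4^bB_5^c$, whereas you invoke Theorem~\ref{B properties}(4) (using $\deg\LL(-\bfx)=2$) to get $B_{n-3}B_3=B_n$ and induct in steps of three. Both are valid; your version is a shade cleaner.
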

\begin{proof}
Write $V=\ovl{U}$. By definition $V=\Bbbk\langle \ovl{X_1},\ovl{X_2},\ovl{X_3}\rangle$, and clearly $V_{\leq 1}=\Bbbk\oplus \ovl{X_1}$. Since $p\neq p_2$, it is easy to prove $\ovl{X_1}=\ovl{S(p)_1}\cap \ovl{S(p_2)_1}$. Then
\begin{equation}\label{ovlU1^2} \ovl{X_1}^2\subseteq \ovl{S(p)_1}\ovl{S(p_2)_1}=\ovl{X_2},\end{equation}
which shows $V_2=\ovl{X_2}$. By Lemma~\ref{bfx v eff}, $[\bfx]_3$ is effective, and so $H^0(E,\LL_3(-[\bfx]_3))\subseteq \ovl{S}_3$. It is then obvious from the definition that $\ovl{X_3}=B_3$. Since $\ovl{X_1},\ovl{X_2}\subseteq B$ we have
 \begin{equation}\label{ovlU1U2+ovlU2U1} \ovl{X_1X_2}+\ovl{X_2X_1}\subseteq B_3=\ovl{X_3}.\end{equation}
It follows $V_3=\ovl{X_3}.$ Since $V$ is generated in degrees less than or equal to 3, we have also proved $V\subseteq B$.\par

Now using Lemma~\ref{Ro 4.1}(3) for equality 1 below, Lemma~\ref{Ro3.1 applied}(2) for 2, and Lemma~\ref{bfx v eff} for 3, we have
\begin{multline}\label{V4}
V_4\supseteq \ovl{X_2}^2=\ovl{S(p)_1}(\ovl{S(p_2)_1}\ovl{S(p)_1})\ovl{S(p_2)_1}\,\overset{1}=\,
\ovl{S(p)_1}(\ovl{S(p_1)_1}\ovl{S(p_1)_1})\ovl{S(p_2)_1}\\ \overset{2}=H^0(E,\LL(-p-p_2-p_3-p_5))\,\overset{3}=\,H^0(E,\LL(-[\bfx]_4))=B_4.
\end{multline}
Also, using Lemma~\ref{Ro3.1 applied}(2), we can write
\begin{equation}\label{V3} \ovl{X_3}=\ovl{S(p)_1}\ovl{S(p_1)_1}\ovl{S(p_2)_1}.\end{equation}
So a similar calculation using Lemma~\ref{Ro 4.1}(3), Lemma~\ref{Ro3.1 applied}(2) and Lemma~\ref{bfx v eff} obtains
\begin{multline}\label{V5} V_5\supseteq \ovl{X_2}\ovl{X_3}= (\ovl{S(p)_1}\ovl{S(p_2)_1})(\ovl{S(p)_1}\ovl{S(p_1)_1}\ovl{S(p_2)_1}) =\ovl{S(p)_1}(\ovl{S(p_1)_1}\ovl{S(p_1)_1})\ovl{S(p_1)_1}\ovl{S(p_2)_1}\\
=H^0(E,\LL_5(-p_0-p_2-p_3-p_4-p_6))=H^0(E,\LL_5(-[\bfx]_5))=B_5\end{multline}
Now for $n\geq 6$, it is easily seen that we can write $n=3a+4b+5c$ with $a,b,c$ nonnegative integers. In which case
$$V_n\supseteq V_3^{a}V_4^{b}V_5^{c}=B_3^aB_4^bB_5^c=B_{3a+4b+5c}=B_n$$
by equations (\ref{V4}), (\ref{V3}) and (\ref{V5}), Lemma~\ref{Ro3.1 applied}(2) and Lemma~\ref{bfx v eff}.
\end{proof}

At this point we note that (\ref{ovlU1^2}) can be lifted to $S$ since $gS\subseteq S_{\geq 3}$. Similarly, as $X_3\cap gS=g\Bbbk= S_3\cap gS$, (\ref{ovlU1U2+ovlU2U1}) can also be lifted to $S$. This proves that
\begin{equation}\label{X_i=U_i}
X_1,\, X_2,\text{ and }X_3\text{ are indeed the homogenous elements of }U\text{ of degree 1, 2, and 3},
\end{equation}
as was alluded to after Theorem~\ref{S(p-p1+p2)}.\par

Next we look to realise $U$ as an endomorphism ring.

\begin{notation}\label{virtual blowup notation}
Let $R=S(p)$, $M=R+S(p)_1S_1R$ and $H=\End_{R}(M)$. We also put $F=\End_R(M^{**})$ and $V=F\cap S$.
\end{notation}

\begin{lemma}\label{U subset H}
Retain the notation of Theorem~\ref{S(p-p1+p2)} and Notation~\ref{virtual blowup notation}. Then
$U\subseteq H.$
\end{lemma}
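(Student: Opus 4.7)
The plan is to verify $U \subseteq H$ by checking each generator of $U$ acts on $M$. By Notation~\ref{Hom inside Q}, $H = \{q \in Q_\gr(S) \mid qM \subseteq M\}$. Since $M = (\Bbbk + S(p)_1 S_1)R$ is generated as a right $R$-module by $\{1\} \cup S(p)_1 S_1$, an element $q$ lies in $H$ iff $q \in M$ and $qS(p)_1 S_1 \subseteq M$. As $U = \Bbbk\langle X_1, X_2, X_3\rangle$, it suffices to show $X_i \subseteq M_i$ and $X_i S(p)_1 S_1 \subseteq M_{i+2}$ for $i = 1, 2, 3$.

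The inclusions $X_i \subseteq M_i$ are easy. We have $X_1 \subseteq S(p)_1 = R_1 \subseteq M$ and $X_2 = S(p)_1 S(p^{\sigma^2})_1 \subseteq S(p)_1 S_1 = M_2$. For $X_3$, Lemma~\ref{Ro3.1} gives $\ovl{M_3} = \ovl{S(p)_1 S_1 S(p)_1} = H^0(E, \LL_3(-p - p^{\sigma^2}))$; since $[\bfx]_3 = p + p^{\sigma^2} + p^{\sigma^4}$ by Lemma~\ref{bfx v eff}, $\ovl{X_3} \subseteq \ovl{M_3}$, and lifting (using $g \in R \subseteq M_3$) gives $X_3 \subseteq M_3$.

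The main step is the computation $X_i S(p)_1 S_1 \subseteq M_{i+2}$, handled by algebraic manipulation with Lemma~\ref{Ro 4.1}. For $X_2$ the calculation is direct: Lemma~\ref{Ro 4.1}(3) rewrites $S(p^{\sigma^2})_1 S(p)_1 = S(p^\sigma)_1 S(p^\sigma)_1$, and Lemma~\ref{Ro 4.1}(1) applied twice moves the shifted factors across $S_1$, giving $X_2 S(p)_1 S_1 = S(p)_1 S_1 S(p)_2 \subseteq S(p)_1 S_1 R_2 \subseteq M_4$. For $X_1$, the inclusion $X_1 \subseteq S(p)_1 \cap S(p^{\sigma^2})_1$ combined with Lemma~\ref{Ro 4.1}(3) forces $\ovl{X_1 S(p)_1} \subseteq H^0(E, \LL_2(-p - p^\sigma - p^{\sigma^2}))$, and multiplying by $\ovl{S_1}$ via Lemma~\ref{Ro3.1} yields $\ovl{X_1 S(p)_1 S_1} \subseteq \ovl{S(p)_3} = \ovl{R_3} \subseteq \ovl{M_3}$; since $R$ is $g$-divisible, any preimage of $\ovl{R_3}$ lies in $R_3 \subseteq M_3$.

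The subtle case is $X_3$, and the key observation is that Lemma~\ref{Ro3.1 applied}(2) applied with $p(0)=p$, $p(1)=p^\sigma$, $p(2)=p^{\sigma^2}$ yields $\ovl{X_3} = \ovl{S(p)_1 S(p^\sigma)_1 S(p^{\sigma^2})_1}$, so $X_3 \subseteq S(p)_1 S(p^\sigma)_1 S(p^{\sigma^2})_1 + g\Bbbk$. Successive applications of Lemma~\ref{Ro 4.1}(1) and (3) then collapse the main term to $S(p)_1 S(p^\sigma)_1 S(p^{\sigma^2})_1 \cdot S(p)_1 S_1 = S(p)_1 S(p^\sigma)_3 S_1 = S(p)_1 S_1 S(p)_3 = M_5$, while the residual $g \cdot S(p)_1 S_1 = S(p)_1 S_1 \cdot g \subseteq S(p)_1 S_1 R_3 \subseteq M_5$ by centrality of $g$ and $g \in R_3$. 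The main obstacle is precisely this last case: a naive lift of the $\overline{\phantom{x}}$-level containment $\ovl{X_3 S(p)_1 S_1} \subseteq \ovl{M_5}$ would only place $X_3 S(p)_1 S_1$ in $M_5 + gS_2$, and $gS_2$ is not contained in $M_5$; the factorisation via Lemma~\ref{Ro3.1 applied}(2) is what rescues the argument.
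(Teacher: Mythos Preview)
Your proof is correct and follows essentially the same strategy as the paper's: reduce to checking $X_iM\subseteq M$ for $i=1,2,3$, handle the $R$-part by showing $X_i\subseteq M$, and handle the $S(p)_1S_1R$-part by repeated use of Lemma~\ref{Ro 4.1}(1)(3) to rewrite products until they land in $S(p)_1S_1R$. The specific sequence of swaps you use for $X_2$ and $X_3$ differs slightly from the paper's (you apply Lemma~\ref{Ro 4.1}(3) before (1), the paper does the reverse), but both routes terminate at $S(p)_1S_1S(p)_1^k\subseteq M$, and your decomposition $X_3\subseteq S(p)_1S(p^\sigma)_1S(p^{\sigma^2})_1+g\Bbbk$ via Lemma~\ref{Ro3.1 applied}(2) is exactly what the paper does.
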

\begin{proof}
Here we again use Notation~\ref{p^sigma=p_1}. We warn that because $gS\subseteq S_{\geq 3}$, we can and will regularly identify $S_{\leq 2}=\ovl{S_{\leq 2}}$. Since $U$ is generated by $X_1,X_2,X_3$ we need to show $X_1,X_2,X_3\subseteq H$.  As $H=\{x\in Q_\gr(S)\,|\; xM\subseteq M\}$, we must prove $X_iM\subseteq M$ for each $i$. \par

\textit{Proof of} $X_1M\subseteq M$. Since $X_1\subseteq S(p)_1=R_1$, we clearly have $X_1R\subseteq R\subseteq M$. So it is
\begin{equation}\label{U1M subset M}   X_1S(p)_1S_1R\subseteq M\end{equation}
that we need to prove. First we show that
$$\ovl{X_1}\,\ovl{S(p)_1S_1}\subseteq \ovl{R_3}=H^0(E,\LL_3(-p-p_1-p_2)).$$
For this, we have $\ovl{X_1}=H^0(E,\LL(-p-p_2))$, and $\ovl{S(p)_1S_1}=H^0(E,\LL_2(-p))$ by Lemma~\ref{Ro 4.1}(1). Thus $\ovl{X_1}\,\ovl{S(p)_1S_1}$ is the image of the natural map
$$H^0(E,\LL(-p-p_2))\otimes H^0(E,\LL_2(-p)) \longrightarrow H^0(E,\LL(-p-p_2)\otimes \LL_2(-p)^\sigma).$$
Further, $\LL(-p-p_2)\otimes \LL_2(-p)^\sigma=\LL_3(-p-p_1-p_2)$. Hence indeed $\ovl{X_1}\,\ovl{S(p)_1S_1}\subseteq \ovl{R_3}$. This implies $X_1S(p)_1S_1\subseteq R_3+(S_3\cap Sg)$. But $\deg(g)=3$, and so
$$S_3\cap Sg=S_0g=\Bbbk g=R_0g=R_3\cap Sg.$$
Hence $X_1S(p)_1S_1\subseteq R_3$ and (\ref{U1M subset M}) follows. \par

\textit{Proof of} $X_2M\subseteq M$. First, clearly
\begin{equation}\label{U2M subset H 1} X_2R=S(p)_1S(p_2)_1R\subseteq S(p)_1S_1R\subseteq M.\end{equation}
Next, using Lemma~\ref{Ro 4.1}(1) for equality 1 and Lemma~\ref{Ro 4.1}(3) for equality 2, we also have
\begin{multline}\label{U2M subset H 2} X_2S(p)_1S_1R=(S(p)_1S(p_2)_1)(S(p)_1S_1R)\overset{1}=(S(p)_1S_1)(S(p_1)S(p_{-1}))R\\
\overset{2}=(S(p)_1S_1)S(p)_1^2R=(S(p)_1S_1)R_1^2R\subseteq S(p)_1S_1R\subseteq M. \end{multline}
Equations (\ref{U2M subset H 1}) and (\ref{U2M subset H 2}) together show $X_2M=X_2(R+S(p)_1S_1R)\subseteq M$.\par

\textit{Proof of} $X_3M\subseteq M$. From (\ref{V3}) we have $\ovl{X_3}=\ovl{S(p)_1}\ovl{S(p_1)_1}\ovl{S(p_2)_1}$. Since $\deg(g)=3$ and $gS\cap S_3=\Bbbk g$, this implies
$$X_3=S(p)_1S(p_1)_1S(p_2)_1+\Bbbk g$$
(in fact one can prove $g\in S(p)_1S(p_1)_1S(p_2)_1$ but this is not necessary for us). As $g\in R$ and is central, we clearly we have $gM=Mg\subseteq M$. Therefore what we need to prove is
\begin{equation}\label{U3 in H} S(p)_1S(p_1)_1S(p_2)_1M=S(p)_1S(p_1)_1S(p_2)_1(R+S(p)_1S_1R)\subseteq M.\end{equation}
First, using Lemma~\ref{Ro 4.1}(1) for equality 1 below, we have
\begin{multline}\label{U3M subset H 1}
(S(p)_1S(p_1)_1S(p_2)_1)R\subseteq (S(p)_1S(p_1)_1S_1)R\overset{1}=(S(p)_1S_1S(p)_1)R\\
=(S(p)_1S_1)R_1R \subseteq S(p)_1S_1R\subseteq M.
\end{multline}
Secondly, with multiple uses of  Lemma~\ref{Ro 4.1},
\begin{multline}\label{U3M subset H 2}(S(p)_1S(p_1)_1S(p_2)_1)(S(p)_1S_1)R=(S(p)_1S_1)(S(p)_1S(p_1)_1S(p_{-1}))R \\
=(S(p)_1S_1)S(p)_1^3R=(S(p)_1S_1)R_3R\subseteq(S(p)_1S_1)R\subseteq M.
\end{multline}
Equations (\ref{U3M subset H 1}) and (\ref{U3M subset H 2}) together give (\ref{U3 in H}).
\end{proof}

\begin{lemma}\label{M H with bars}
Retain the notation of Theorem~\ref{S(p-p1+p2)} and Notation~\ref{virtual blowup notation}. Then
\begin{enumerate}[(1)]
\item $\ovl{M}\ehd \bigoplus_{n\geq 0} H^0(E,\OO_E(p^\sigma)\otimes \LL(-p)_n)=\bigoplus_{n\geq 0} H^0(E,\LL_n(p-p^{\sigma^2}-\dots-p^{\sigma^{n-1}}));$
\item $\ovl{H}\ehd B(E,\LL(-\bfx),\sigma)\ehd\ovl{U}$.
\end{enumerate}
\end{lemma}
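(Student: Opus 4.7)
The plan is to first compute $\ovl M$ explicitly in high degrees using the description $M = R + S(p)_1 S_1 R$, and then feed this into the general endomorphism-ring machinery of Section~5 together with Lemma~\ref{ovlU}.

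For (1), taking images in $\ovl S$ respects sums and products, so $\ovl M = \ovl R + \ovl{S(p)_1 S_1} \cdot \ovl R$. By Proposition~\ref{Ro 12.2}(1), $\ovl R = B(E, \LL(-p), \sigma)$, whence $\ovl R_n = H^0(E, \LL_n(-[p]_n))$, and Lemma~\ref{Ro 4.1}(1) (applied with $q = p^{\sigma^{-1}}$) gives $\ovl{S(p)_1 S_1} = H^0(E, \LL_2(-p))$. The standard multiplication in a twisted homogeneous coordinate ring sends $H^0(E, \LL_2(-p)) \otimes H^0(E, \LL_{n-2}(-[p]_{n-2}))$ into
$H^0(E, \LL_2(-p) \otimes (\LL_{n-2}(-[p]_{n-2}))^{\sigma^2}) = H^0(E, \LL_n(-p - [p]_{n-2}^{\sigma^2})),$
which rewrites as $H^0(E, \LL_n(p^\sigma - [p]_n))$. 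The two twisted invertible sheaves appearing have degrees $5$ and $2(n-2)$ respectively, so Lemma~\ref{Ro3.1} applies (the exceptional case $\deg \MM = \deg \NN = 2$ is avoided for every $n \geq 4$) and guarantees surjectivity of the multiplication of global sections. Hence $\ovl{S(p)_1 S_1 R}_n = H^0(E, \LL_n(p^\sigma - [p]_n))$ for all $n \gg 0$. Since $\ovl R_n = H^0(E, \LL_n(-[p]_n)) \subseteq H^0(E, \LL_n(p^\sigma - [p]_n))$, we conclude that $\ovl M_n = H^0(E, \LL_n(p^\sigma - [p]_n)) = H^0(E, \OO_E(p^\sigma) \otimes \LL(-p)_n)$ for $n \gg 0$, which is exactly (1).

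For (2), the second equivalence $B(E, \LL(-\bfx), \sigma) \ehd \ovl U$ is immediate from Lemma~\ref{ovlU}, which in fact gives $\ovl U_n = B(E, \LL(-\bfx), \sigma)_n$ for every $n \geq 3$. For the first equivalence, Lemma~\ref{RSS 2.12}(3) gives $\ovl H \subseteq \End_{\ovl R}(\ovl M)$. Applying Lemma~\ref{RSS 6.14}(1) with $B = \ovl R$, $\NN = \LL(-p)$, and the divisor $\mbf q = p^\sigma$ supplied by (1) yields
\[
\End_{\ovl R}(\ovl M) \ehd B(E, \LL(-p)(p^\sigma - p^{\sigma^2}), \sigma) = B(E, \LL(-\bfx), \sigma),
\]
since $-p + p^\sigma - p^{\sigma^2} = -\bfx$. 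For the reverse inclusion in high degrees we use $MM^* \subseteq H$: Lemma~\ref{RSS 6.12}(2) gives $\ovl{M^*} \ehd (\ovl M)^*$, Lemma~\ref{Ab ehd (Ageqn)B} then gives $\ovl{MM^*} = \ovl M \cdot \ovl{M^*} \ehd \ovl M \cdot (\ovl M)^*$, and Lemma~\ref{RSS 6.14}(2) gives $\ovl M \cdot (\ovl M)^* \ehd \End_{\ovl R}(\ovl M)$. Chaining these yields $\ovl H \supseteq \ovl{MM^*} \ehd \End_{\ovl R}(\ovl M)$, so the containment $\ovl H \subseteq \End_{\ovl R}(\ovl M)$ is an equality in high degrees and $\ovl H \ehd B(E, \LL(-\bfx), \sigma)$.

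The only real subtlety is bookkeeping for the pullback conventions in the twisted-product formula, so that the ``$p$'' from the front factor combines with the $\sigma^2$-shifted vanishing locus of the tail to produce precisely $p^\sigma - [p]_n$, and confirming that Lemma~\ref{Ro3.1} applies (automatic since one factor has fixed degree $5$ while the other has degree $2(n-2)$ growing linearly with $n$). Once part (1) is in hand, part (2) is a mechanical combination of the Section~5 results on duals, endomorphism rings, and equal-in-high-degree statements, together with Lemma~\ref{ovlU}.
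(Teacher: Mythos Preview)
Your proof of part~(1) is correct and essentially identical to the paper's: both compute $\ovl{S(p)_1 S_1}=H^0(E,\LL_2(-p))$ via Lemma~\ref{Ro 4.1}(1), multiply against $\ovl R_{n-2}$, and invoke Lemma~\ref{Ro3.1} for surjectivity.

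For part~(2) your argument has a gap. To get the lower bound $\ovl H \supseteq_{\ehd} \End_{\ovl R}(\ovl M)$ you invoke Lemma~\ref{RSS 6.12}(2) for $\ovl{M^*}\ehd(\ovl M)^*$, but that lemma requires $M$ to be $g$-divisible, and $M=R+S(p)_1S_1R$ is not shown to be $g$-divisible anywhere (indeed the paper later works with $\widehat M$ separately in Lemma~\ref{F is blowup}, and the eventual equality $H=\widehat H$ is only established in Proposition~\ref{S(p-p1+p2) main prop}, well after this point). Without $g$-divisibility the step $\ovl{M^*}\ehd(\ovl M)^*$ is not justified, and the chain $\ovl H\supseteq \ovl{MM^*}\ehd \End_{\ovl R}(\ovl M)$ breaks.

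The paper's route is both simpler and avoids this issue entirely: Lemma~\ref{U subset H} (proved just before) already gives $U\subseteq H$, hence $\ovl U\subseteq \ovl H$. Combined with your upper bound $\ovl H\subseteq \End_{\ovl R}(\ovl M)\ehd B(E,\LL(-\bfx),\sigma)$ and Lemma~\ref{ovlU} giving $\ovl U\ehd B(E,\LL(-\bfx),\sigma)$, the sandwich forces $\ovl H\ehd B(E,\LL(-\bfx),\sigma)$ with no dual-module machinery needed.
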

\begin{proof}
Let $n\gg 0$. As $R$ is generated in degree 1, $R_n\subseteq R_1S_1R_{n-2}=S(p)_1S_1R_{n-2}$, and so $\ovl{M}_n=\ovl{S(p)_1}\ovl{S_1}\ovl{S(p)_{n-2}}$. By Lemma~\ref{Ro 4.1}(1), $\ovl{S(p)_1}\ovl{S_1}=H^0(E,\LL_2(-p))$. Hence $\ovl{M}_n$ is the image of the map
$$H^0(E,\LL_2(-p))\otimes H^0(E,\LL(-p)_{n-2})\overset{\mu}\longrightarrow H^0(E,\LL_2(-p)\otimes\LL(-p)_{n-2}^{\sigma^2}).$$
On the right hand side $$\LL_2(-p)\otimes\LL(-p)_{n-2}^{\sigma^2}=\LL_n(-p-p^{\sigma^2}-\dots-p^{\sigma^{n-1}})= \OO_E(p^\sigma)\otimes \LL(-p)_n=\NN.$$
So provided $\deg\NN=2n+1>2$, the map $\mu$ is surjective by Lemma~\ref{Ro3.1}.\par
(2). By Lemma~\ref{U subset H} and Lemma~\ref{RSS 2.12}(3) $\ovl{U}\subseteq \ovl{H}\subseteq \End_{\ovl{R}}(\ovl{M})$. On the other hand $\End_{\ovl{R}}(\ovl{M})\ehd B(E,\LL(-\bfx),\sigma)$ by part (1) and Lemma~\ref{RSS 6.14}(1). Then by Lemma~\ref{ovlU} $\ovl{U}\ehd B(E,\LL(-\bfx),\sigma)\ehd \ovl{H}$.
\end{proof}

Now we study the 3-Veronese of $U$. For the next proposition we must recall Notation~\ref{3 Veronese notation bg} and Definition~\ref{T(d) def} for the blowup subalgebras $T(\bfd)$ of $T$.

\begin{prop}\label{3 Veronese of H}
Retain the notation of Theorem~\ref{S(p-p1+p2)} and Notation~\ref{virtual blowup notation}. Then
$$U^{(3)}=H^{(3)}=T(p+p^{\sigma^2}+p^{\sigma^4}),$$
where $T(p+p^{\sigma^2}+p^{\sigma^4})$ is the blowup of $T$ from Definition~\ref{T(d) def}. In particular, $U^{(3)}$ is a maximal order.
\end{prop}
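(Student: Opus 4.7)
The plan is to establish the chain $T([\bfx]_3) \subseteq U^{(3)} \subseteq H^{(3)}$ and then collapse it using the maximality of $T([\bfx]_3)$. Unpacking Definition~\ref{T(d) def} with the grading $T_n = S_{3n}$, so that $T_1 = S_3$ and $\MM = \LL_3$, we see directly that
$$T([\bfx]_3)_1 = \{x \in S_3 \mid \ovl{x} \in H^0(E,\LL_3(-[\bfx]_3))\} = X_3,$$
hence $T([\bfx]_3) = \Bbbk\langle X_3\rangle$. Since $X_3 \subseteq U_3 = (U^{(3)})_1$, we obtain $T([\bfx]_3) \subseteq U^{(3)}$, and Lemma~\ref{U subset H} gives $U^{(3)} \subseteq H^{(3)}$. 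It therefore suffices to prove $H^{(3)} \subseteq T([\bfx]_3)$.

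The strategy is to show that $T([\bfx]_3)$ and $H^{(3)}$ are equivalent orders in $Q_\gr(T)$, and then invoke the maximality of $T([\bfx]_3)$ from Theorem~\ref{T(d) properties}(4). First I verify the basic structural properties. The algebra $T([\bfx]_3)$ is $g$-divisible by Theorem~\ref{T(d) properties}(1); the algebra $H$ is $g$-divisible by Lemma~\ref{RSS 2.12}(3) (it is the endomorphism ring of the $g$-divisible right $R$-module $M = R + S(p)_1 S_1 R$, which is $g$-divisible since $gS(p)_1S_1R \cap S(p)_1 S_1 R = g \cdot S(p)_1 S_1 R$ and $gR \subseteq R$), hence $H^{(3)}$ is $g$-divisible by Lemma~\ref{g div up}. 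The elementary observation $T_{(g)} \cap g\Sg = gT_{(g)}$ shows that the canonical map $H^{(3)} \to \ovl{\Sg}$ realises $\ovl{H^{(3)}}$ as $\ovl{H}^{(3)}$ inside $\ovl{\Sg}^{(3)}$. Combining this with Lemma~\ref{M H with bars}(2),
$$\ovl{H^{(3)}} \;=\; \ovl{H}^{(3)} \;\ehd\; B(E,\LL(-\bfx),\sigma)^{(3)} \;=\; B(E,\MM(-[\bfx]_3),\tau) \;=\; \ovl{T([\bfx]_3)},$$
where the last equality uses the $g$-divisibility of $T([\bfx]_3)$.

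With these pieces in place, $\ovl{T([\bfx]_3)}$ and $\ovl{H^{(3)}}$ are equal in high degrees and are therefore equivalent orders (a tail of $\ovl{T([\bfx]_3)}$ is a common ideal). Applying the $T$-analogue of Proposition~\ref{RSS 2.16} from \cite{RSS}, valid for $g$-divisible subalgebras of $T_{(g)}$ with graded quotient ring $Q_\gr(T)$, lifts this equivalence to conclude that $T([\bfx]_3)$ and $H^{(3)}$ are equivalent orders in $Q_\gr(T)$. Since $T([\bfx]_3) \subseteq H^{(3)}$ and $T([\bfx]_3)$ is a maximal order in $Q_\gr(T)$ by Theorem~\ref{T(d) properties}(4), equality $T([\bfx]_3) = H^{(3)}$ follows, and sandwiching gives $U^{(3)} = H^{(3)} = T([\bfx]_3)$. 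The ``in particular'' statement is then just Theorem~\ref{T(d) properties}(4) again. The main subtlety in this plan lies in the compatibility $\ovl{H^{(3)}} = \ovl{H}^{(3)}$ and the clean invocation of the $T$-analogue of Proposition~\ref{RSS 2.16}; once those are checked, the maximal-order argument collapses the chain without any delicate dimension count on $U$ itself.
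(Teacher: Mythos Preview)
Your approach is genuinely different from the paper's, and the overall strategy is sound, but there is a real gap at the $g$-divisibility step. The paper argues that $H$ and $R=S(p)$ are equivalent orders directly from the bimodule $M$: pick nonzero $x\in R$ with $xM\subseteq R$ and nonzero $y\in M$; then $xHy\subseteq R$ while $(Mx)M\subseteq M$ gives $Mx\subseteq H$, so $yRx\subseteq H$. Passing to $3$-Veroneses via Lemma~\ref{equiv orders go up} and using $R^{(3)}=T([p]_3)$ together with \cite[Corollary~5.27]{RSS} (which says $T([p]_3)$ and $T([\bfx]_3)$ are equivalent orders), the paper collapses the chain by maximality of $T([\bfx]_3)$. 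No $g$-divisibility of $H$ or $M$ is needed.

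Your route instead needs $H^{(3)}$ to be $g$-divisible to invoke the $T$-analogue of Proposition~\ref{RSS 2.16}, and you obtain this by asserting $M$ is $g$-divisible. But your justification, ``$gS(p)_1S_1R\cap S(p)_1S_1R=g\cdot S(p)_1S_1R$ and $gR\subseteq R$'', does not prove $M\cap gS=gM$: the first equality is a triviality (since $gS(p)_1S_1R\subseteq S(p)_1S_1R$), and in any case showing two summands are individually $g$-divisible does not show their sum is. The paper is careful on exactly this point: in Lemma~\ref{F is blowup} it distinguishes $H$ from $\widehat H$ and works with $\widehat M$, and the equality $H=\widehat H$ is only established \emph{after} the present proposition, in Proposition~\ref{S(p-p1+p2) main prop}. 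So at this stage $g$-divisibility of $H$ is not available.

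The fix is easy: run your argument with $\widehat H=\End_R(\widehat M)$ (Lemma~\ref{hat end commute}), which is $g$-divisible by Lemma~\ref{RSS 2.12}(3). You then get $\widehat H^{(3)}=T([\bfx]_3)$, and since $T([\bfx]_3)\subseteq U^{(3)}\subseteq H^{(3)}\subseteq\widehat H^{(3)}$ the chain collapses. Alternatively, adopt the paper's direct equivalence-of-orders argument via $R$, which sidesteps the issue entirely.
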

\begin{proof}
Let $\bfy=p+p^{\sigma^2}+p^{\sigma^4}$. By Lemma~\ref{bfx v eff}, $\bfy=[\bfx]_3$. So by definition, $T(\bfy)=\Bbbk\langle X_3\rangle$. In particular, with Lemma~\ref{U subset H}, we already have
\begin{equation}\label{3 Veronese of H eq} T(\bfy)\subseteq U^{(3)}\subseteq H^{(3)}.\end{equation}
It is hence enough to prove $H^{(3)}= T(\bfy)$. We do this by showing they are equivalent orders. \par

Since $M_R$ is finitely generated, Lemma~\ref{Ore argument}(2) finds a nonzero $x\in R$ such that $xM\subseteq R$. Choose any nonzero $y\in M$, then clearly $xHy\subseteq xM\subseteq R$. Now notice $(Mx)M\subseteq MR\subseteq M$, meaning $Mx\subseteq H$ also. As $yR\subseteq M$, we then have $yRx\subseteq H$. So $H$ and $R$ are equivalent orders. By Lemma~\ref{equiv orders go up} $H^{(3)}$ and $R^{(3)}$ are equivalent orders. Now $R^{(3)}=T(p+p^{\sigma}+p^{\sigma^2})$ by Theorem~\ref{S(d) thm}(1) and is an equivalent order with $T(p+p^{\sigma^2}+p^{\sigma^4})=T(\bfy)$ by \cite[Corollary~5.27]{RSS}. Thus $H^{(3)}$ and $T(\bfy)$ are equivalent orders. However $T(\bfy)$ is a maximal order by \cite[Theorem~1.1]{Ro}, so (\ref{3 Veronese of H eq}) forces $H^{(3)}=T(\bfy)$.
\end{proof}

\begin{lemma}\label{F is blowup}
Let $F$, $V$ and $\bfx$ be as in Notation~\ref{virtual blowup notation} and Theorem~\ref{S(p-p1+p2)}. Then $F$ is a blowup of $S$ at $\bfx$. In other words,
\begin{enumerate}[(1)]
\item $(F,F\cap S)$ is a maximal order pair of $S$ in the sense of Definition~\ref{max order pair}.
\item $\ovl{F}\ehd B(E,\LL(-\bfx),\sigma)$.
\end{enumerate}
Moreover,
\begin{equation}\label{U in H in F} U\subseteq H\subseteq \widehat{H}\subseteq F.\end{equation}
\end{lemma}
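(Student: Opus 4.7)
The plan is to verify the two defining conditions of a virtual blowup in Definition~\ref{virtual blowup}(1), and to justify the chain of inclusions in~(\ref{U in H in F}) along the way.

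The chain is essentially immediate. Lemma~\ref{U subset H} gives $U\subseteq H$; $H\subseteq\widehat H$ is automatic; and $\widehat H\subseteq F$ follows by writing $\widehat H=\End_R(\widehat M)$ (Lemma~\ref{hat end commute}) and $F=\End_R(M^{**})=\End_R(\widehat M^{**})$ (Lemma~\ref{RSS 2.13}(3)), then invoking Remark~\ref{RSS 6.2}, which identifies $F$ as the unique maximal order containing and equivalent to $\widehat H$. For condition~(1), I would apply Lemma~\ref{max order pair equiv def}(1) with the effective divisor $\bfd=p$ (so $\deg\bfd=1\le 2$) and the module $\widehat M$, which is $g$-divisible by construction, finitely generated over $R=S(p)$ by Lemma~\ref{RSS 2.13}(2), and trapped in $R\subseteq\widehat M\subseteq S$ (since $M\subseteq S$ and $S$ is $g$-divisible). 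Together with $F=\End_R(\widehat M^{**})$ this yields that $(V,F)$ is a maximal order pair of $S$.

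The real work lies in condition~(2). Proposition~\ref{RSS 6.7} applied to $R$ and $\widehat M$ produces an effective divisor $\bfy$ and a virtually effective $\bfx'=p-\bfy+\bfy^\sigma$ with $\ovl F\ehd B(E,\LL(-\bfx'),\sigma)$, and I need $\bfx'=\bfx$. To rigidify the situation I would pass to the $3$-Veronese, where things are effective: Lemma~\ref{equiv orders go up} makes $F^{(3)}$ equivalent to $H^{(3)}$, Proposition~\ref{3 Veronese of H} identifies $H^{(3)}=T(p+p^{\sigma^2}+p^{\sigma^4})=T([\bfx]_3)$, and Theorem~\ref{T(d) properties}(4) says this is already a maximal order, forcing $F^{(3)}=H^{(3)}=T([\bfx]_3)$. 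Hence $\ovl F^{(3)}=\ovl{F^{(3)}}=B(E,\MM(-[\bfx]_3),\tau)$ by Theorem~\ref{S(d) thm}(1), while Proposition~\ref{RSS 6.7} simultaneously gives $\ovl F^{(3)}\ehd B(E,\MM(-[\bfx']_3),\tau)$; a standard argument (sheaves of sufficiently large degree are generated by global sections, combined with Lemma~\ref{on divisors}) then forces $[\bfx']_3=[\bfx]_3$.

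This $3$-Veronese equality alone still leaves some ambiguity, so I would feed in the extra inclusion $H\subseteq F$, which descends to $\ovl H\subseteq\ovl F$ and hence, by Lemma~\ref{M H with bars}(2) and Lemma~\ref{on divisors}, yields $[\bfx']_n\le[\bfx]_n$ for all $n\gg 0$. Evaluating at $n=3k+1$ for large $k$ and using $[\bfx]_{3k}=[\bfx']_{3k}$ (iterate $[\bfx]_3=[\bfx']_3$), the inequality collapses to $(\bfx')^{\sigma^{3k}}\le\bfx^{\sigma^{3k}}$, i.e.\ $\bfx'\le\bfx$; combined with $\deg\bfx'=\deg\bfx=1$ this forces $\bfx'=\bfx$. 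I expect this divisor comparison to be the main obstacle, since Proposition~\ref{RSS 6.7} on its own only pins down $\bfx'$ up to the ambiguity in $\bfy$, and it is only the $3$-Veronese reduction to an effective blowup (Proposition~\ref{3 Veronese of H}), combined with the chain $H\subseteq F$, that rigidifies the situation enough to conclude.
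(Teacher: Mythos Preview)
Your proposal is correct and tracks the paper's proof closely through the chain of inclusions, the verification of the maximal order pair condition via $\widehat M$, the application of Proposition~\ref{RSS 6.7} to produce $\bfx'$, and the passage to the $3$-Veronese using Proposition~\ref{3 Veronese of H} to pin down $[\bfx']_3=[\bfx]_3$.

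Where you diverge is the final identification $\bfx'=\bfx$. The paper argues purely from $[\bfx]_{3n}=[\bfx']_{3n}$ for large $n$: it first extracts $[\bfx]_3=[\bfx']_3$, observes that $\bfx'$ must be supported on the single $\sigma$-orbit of $p$, writes $\bfx'=\sum a_i p^{\sigma^i}$, and then solves the resulting linear system in the $a_i$ explicitly. Your route instead feeds in the extra inclusion $\ovl H\subseteq\ovl F$ (from Lemma~\ref{M H with bars}(2)) to obtain the inequality $[\bfx']_n\le[\bfx]_n$ for $n\gg 0$, and then combines this with $[\bfx]_{3k}=[\bfx']_{3k}$ at $n=3k+1$ to force $\bfx'\le\bfx$; equality of degrees finishes. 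This is a cleaner and more conceptual endgame than the paper's explicit computation, and it would generalise more readily to other virtually effective divisors, whereas the paper's system-of-equations argument is tailored to $\bfx=p-p^\sigma+p^{\sigma^2}$.
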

\begin{proof}
By Lemma~\ref{hat end commute}, $\widehat{H}=\End_R(\widehat{M})$, while $M^{**}=(\widehat{M})^{**}$ by Lemma~\ref{RSS 2.13}(3). By Proposition~\ref{RSS 6.4}, $(F,F\cap S)$ is then the unique maximal order pair containing and equivalent to $\widehat{H}$. This and Lemma~\ref{U subset H} show (\ref{U in H in F}). By Proposition~\ref{RSS 6.7}, $\ovl{F}\ehd \ovl{\widehat{H}}\ehd B(E,\LL(-\bfx'),\sigma)$ for some $\sigma$-virtually effective $\bfx'$. We must prove $\bfx=\bfx'$.\par
Now by Proposition~\ref{C to C hat}, $H$ is an  equivalent order with $\widehat{H}$, and hence $H$ is contained in and equivalent to $F$ as well. By Lemma~\ref{equiv orders go up}, $H^{(3)}$ is then contained in and equivalent to $F^{(3)}$. But $H^{(3)}=T(p+p^{\sigma^2}+p^{\sigma^4})$ by Proposition~\ref{3 Veronese of H}; and furthermore $p+p^{\sigma^2}+p^{\sigma^4}=[\bfx]_3$ by Lemma~\ref{bfx v eff}. Now $T([\bfx]_3)$ is a maximal order by \cite[Theorem~1.1]{Ro}, and thus $F^{(3)}=T([\bfx]_3)$. In particular $\ovl{F}^{(3)}=\ovl{T([\bfx]_3)}=B(E,\LL_3(-[\bfx]_3),\sigma)$. On the other hand $\ovl{F}^{(3)}\ehd B(E,\LL(-\bfx'),\sigma)^{(3)}$; therefore for $n\gg 0$ we have
$$
H^0(E,\LL_{3n}(-[\bfx]_{3n}))=B(E,\LL(-\bfx),\sigma)_{3n}= B(E,\LL(-\bfx'),\sigma)_{3n}=H^0(E,\LL_{3n}(-[\bfx']_{3n})).
$$
Since for $n\gg0$, $\LL_{3n}(-[\bfx]_{3n})$ and $\LL_{3n}(-[\bfx']_{3n})$ are generated by their global sections \cite[Corollary~IV.3.2]{Ha}, it follows that $[\bfx]_{3n}=[\bfx']_{3n}$. It should be obvious that this implies $\mbf{x}=\mbf{x'}$ but we spell out the details anyway. \par
Take $n\gg0$, then $[\bfx^{\sigma^{3n}}]_3=[\bfx]_{3(n+1)}-[\bfx]_{3n}=[\bfx']_{3(n+1)}-[\bfx']_{3n}=[(\bfx')^{\sigma^{3n}}]_3$, and so $[\bfx]_3=[\bfx']_3$. If we denote $\mathbb{O}=\{ p^{\sigma^j}\,|\, j\in\Z\}$, then clearly we must have $\bfx'\cap\mathbb{O}=\bfx'$. Thus we can write $\bfx'=\sum_{i\in\Z}a_ip^{\sigma^i}$ for some $a_i\in\Z$ with all but finitely many zero. From  $[\bfx]_3=[\bfx']_3$, we have
$$\sum_{i\in\Z}(a_i+a_{i-1}+a_{i-2})p^{\sigma^{i}}=p+p^{\sigma^2}+p^{\sigma^4}.$$
Looking at coefficients of each $p^{\sigma^i}$, this forces
$$ a_i+a_{i-1}+a_{i-2}=\begin{cases}
1 \;\text{ if }i=0,2,4;\\
0 \;\text{ otherwise.}
\end{cases}
$$
If $a_{j}\neq 0$ for some $j=i-2\geq 3$, then as $a_{j}+a_{j+1}+a_{j+2}=0$ at least one of $a_{j+1}\neq 0$ or $a_{j+2}\neq 0$. Inductively we get an infinite number of $a_j\neq 0$. Thus $a_j=0$ for all $j\geq 3$. Similarly $a_i=0$ for all $i\leq -1$. We are reduced to the following system of equations:
$$
\begin{array}{llllll}
   a_0 &\; & \; & \; & \; &  =1\\
   a_0 & + & a_1 &\; & \; &  =0\\
   a_0 & + & a_1 & + & a_2 & =1\\
   \; & + & a_1 & + & a_2 &  =0\\
   \; & \; & \; & \; & a_2 & =1
\end{array}
$$
from which it is obvious $a_0=1$, $a_1=-1$ and $a_2=1$. That is $\bfx=\bfx'$.
\end{proof}

We now look to improve inequalities to equalities in (\ref{U in H in F}). This is first achieved in $\ovl{S}=B(E,\LL,\sigma)$.

\begin{lemma}\label{S(p-p1+p2) with bars}
Retain the notation  from Theorem~\ref{S(p-p1+p2)} and Notation~\ref{virtual blowup notation}. Then
$$\ovl{U}=\ovl{H}=\ovl{\widehat{H}}=\ovl{F}.$$
\end{lemma}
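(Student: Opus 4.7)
The inclusions $\ovl{U} \subseteq \ovl{H} \subseteq \ovl{\widehat{H}} \subseteq \ovl{F}$ are immediate from the ring containments in Lemma~\ref{U subset H} and (\ref{U in H in F}), so the plan is to prove the reverse inclusion $\ovl{F} \subseteq \ovl{U}$. Set $B := B(E,\LL(-\bfx),\sigma)$. Combining Lemma~\ref{ovlU}, Lemma~\ref{M H with bars}(2) and Lemma~\ref{F is blowup}(2) shows that all four algebras equal $B$ in sufficiently high degrees, so the quotient $\ovl{F}/\ovl{U}$ is finite-dimensional and it suffices to verify $\ovl{F}_n \subseteq \ovl{U}_n$ for finitely many low~$n$.

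The principal input is the 3-Veronese computation. Proposition~\ref{3 Veronese of H} gives $U^{(3)} = H^{(3)} = T([\bfx]_3)$, and in the proof of Lemma~\ref{F is blowup} it was shown that $F^{(3)} = T([\bfx]_3)$ as well. Reducing modulo $g$ yields $\ovl{U}^{(3)} = \ovl{H}^{(3)} = \ovl{\widehat{H}}^{(3)} = \ovl{F}^{(3)}$, and so $\ovl{U}_n = \ovl{F}_n$ in every degree $n$ divisible by $3$. In particular, using the explicit form of $\ovl{U}$ from Lemma~\ref{ovlU}, all four rings have graded pieces $B_3, B_6, B_9, \ldots$ in these degrees.

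To finish one must match $\ovl{F}_n$ with $\ovl{U}_n$ in the remaining degrees $n \not\equiv 0 \pmod 3$. For the ``large'' cases $n \geq 4$, applying the ring multiplication $\ovl{F}_n \cdot \ovl{U}_m \subseteq \ovl{F}_{n+m} = B_{n+m}$ (and its mirror on the left) with $m \geq 3$ chosen so that $n+m$ is a multiple of $3$, combined with the identification $\ovl{U}_{n+m} = B_{n+m}$ from Lemma~\ref{ovlU}, will force $\ovl{F}_n \subseteq B_n = \ovl{U}_n$. The genuinely delicate cases, and the main obstacle, are $n = 1$ and $n = 2$, where by Lemma~\ref{ovlU} the pieces $\ovl{U}_1 = \ovl{X_1}$ and $\ovl{U}_2 = \ovl{X_2}$ are strictly smaller than the corresponding pieces of $B$ (Lemma~\ref{Ro 4.1}(2) gives $\dim_\Bbbk \ovl{X_2} = 3 < 4 = \dim_\Bbbk B_2$, for example). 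For these I would exploit the $\ovl{R}$-bimodule structure on $\ovl{F}$, with $R = S(p)$: for $y \in \ovl{F}_d$ with $d \in \{1,2\}$, the containment $\ovl{R}_k \, y \, \ovl{R}_\ell \subseteq \ovl{F}_{d+k+\ell} = B_{d+k+\ell}$ (with $d+k+\ell$ a multiple of $3$) encodes a divisorial pole/zero condition on $y$. Translating this via the concrete descriptions of the $\ovl{R}_m$ and of the $\ovl{X_i}$ provided by Lemma~\ref{Ro 4.1} and Lemma~\ref{Ro3.1 applied}, the zeros of $y$ are forced onto $p + p^{\sigma^2}$ (respectively onto $[\bfx]_2$ together with the extra constraint cutting $\ovl{X_2}$ out of $B_2$), placing $y$ in $\ovl{X_1}$ (respectively $\ovl{X_2}$). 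This combinatorial divisor argument in degrees $1$ and $2$ is where the real work lies.
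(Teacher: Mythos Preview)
Your approach works up through degree~$3$, but there is a genuine gap in the degree~$1$ and degree~$2$ analysis. The assertion that $\ovl{F}$ carries an $\ovl{R}$-bimodule structure is unjustified: $F=\End_R(M^{**})$ acts on $M^{**}$ from the left while $R$ acts from the right, but there is no reason for $R\subseteq F$ (indeed $R_1=S(p)_1\not\subseteq U_1=X_1$, and one can check $R\not\subseteq F$). So the containment $\ovl{R}_k\,y\,\ovl{R}_\ell\subseteq\ovl{F}_{d+k+\ell}$ on which you build the divisorial argument is not available. If you replace $\ovl{R}$ by $\ovl{U}$ (which is legitimate), the resulting constraints only force $\ovl{F}_2\subseteq B_2$, because for $y\in\ovl{F}_2$ the conditions $y\cdot B_m\subseteq B_{m+2}$ and $B_m\cdot y\subseteq B_{m+2}$ (with $m\geq 3$) are precisely the sheaf-theoretic conditions characterising $H^0(E,\NN_2)=B_2$. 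They cannot see the codimension-one subspace $\ovl{X_2}=\ovl{S(p)_1S(p_2)_1}\subsetneq B_2$, which is \emph{not} cut out by any vanishing divisor but rather arises as the image of a non-surjective multiplication map (Lemma~\ref{Ro 4.1}(2)).

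The paper's argument for degree~$2$ is entirely different and does not stay inside $\ovl{S}$: assuming $\ovl{F}_2=B_2$, one observes that then $F$ contains the ring $U'=\Bbbk\langle X_1,B_2,X_3\rangle$ of Example~\ref{original S(p-p1+p2)}; since $F$ is $g$-divisible this forces $F\supseteq\widehat{U'}=S$, contradicting $\ovl{F}\ehd B(E,\LL(-\bfx),\sigma)\neq\ovl{S}$. The crucial input $\widehat{U'}=S$ is a computation in $S$ involving the element $g$ (specifically, showing $g\in S(p_3)_1S(p_{-1})_1S(p_3)_1$ and its mirror), and cannot be recovered from purely divisorial reasoning in $\ovl{S}$. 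For degrees $\geq 3$ the paper also proceeds differently, invoking the Cohen--Macaulay property of $B$ via Lemma~\ref{CM lemma}(2) to obtain $\ovl{F}\subseteq B$ in one step, which is cleaner than your saturation argument through the $3$-Veronese (though yours would work there).
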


\begin{proof}
Set $B=B(E,\LL(-\bfx),\sigma)$ considered as a subalgebra of $Q_\gr(\ovl{S})=\Bbbk(E)[t,t^{-1};\sigma]$. By Lemma~\ref{F is blowup}, $\ovl{F}\ehd B$. Since $B$ is Auslander-Gorenstein and Cohen-Macaulay by Theorem~\ref{B properties}(3) we have $\ovl{F}\subseteq B$ by Lemma~\ref{CM lemma}(2). Thus, with Lemma~\ref{F is blowup},
\begin{equation}\label{S(p-p1+p2) with bars eq 1} \ovl{U}\subseteq \ovl{H}\subseteq \ovl{\widehat{H}}\subseteq \ovl{F}\subseteq B.\end{equation}
So proving $\ovl{U}=\ovl{F}$ is enough. By (\ref{S(p-p1+p2) with bars eq 1}) and Lemma~\ref{ovlU}, $\ovl{U}_{\geq 3}=\ovl{F}_{\geq 3}$. \par
To prove $\ovl{U}_2=\ovl{F}_2$ assume otherwise that $\ovl{U}_2\subsetneq \ovl{F}_2$. Because $gS\subseteq S_{\geq 3}$, we can and will identify $S_{\leq 2}=\ovl{S_{\leq 2}}$. Also we note that since $[\bfx]_2=p+p^{\sigma^3}$ is effective, $B_2\subseteq \ovl{S}_2$. Now, by the Riemann-Roch Theorem (specifically Corollary~\ref{RR to E}) $\dim_\Bbbk B_2=4$, whilst $\dim_\Bbbk \ovl{U}_2=3$ by Lemma~\ref{Ro 4.1} and (\ref{X_i=U_i}). Thus by (\ref{S(p-p1+p2) with bars eq 1}), it must be the case that $\ovl{F_2}=B_2$. In other words, $F_2=\{ x\in S_2\,|\; \ovl{x}\in B_2\}$. In which case $F$ contains the ring $U'=\Bbbk\langle X_1, F_2, X_3\rangle$. But this is exactly $U'$ from Example~\ref{original S(p-p1+p2)}. Since $F$ is $g$-divisible we would then have $F\supseteq \widehat{U'}$. However, in Example~\ref{original S(p-p1+p2)}(2) we will show that $\widehat{U'}=S$. This would mean $F\supseteq S$, which certainly contradicts Lemma~\ref{F is blowup}(2). Thus we must have $F_2=U_2$, or equivalently $\ovl{F}_2=\ovl{U}_2$. \par
Finally we show $\ovl{U}_1=\ovl{F}_1$. Again, since $\dim_\Bbbk B_1=2$ and $\dim_\Bbbk\ovl{U}_1=1$, if $\ovl{U}_1\neq \ovl{F}_1$, then $\ovl{F}_1=B_1$. But in this case, $\ovl{F}_2\supseteq B_1^2=B_2$ by Theorem~\ref{B properties}(4). This would contradict $\ovl{U}_2=\ovl{F}_2\subsetneq B_2$, hence $\ovl{U}_1=\ovl{F}_1$.
\end{proof}

Finally we can conclude $U$ is a virtual blowup.

\begin{prop}\label{S(p-p1+p2) main prop}
Let $U$ and $\bfx$ be as in Theorem~\ref{S(p-p1+p2)}. Then $U$ is a virtual blowup of $S$ at $\bfx$. More specifically, retaining notation from Notation \ref{virtual blowup notation},
$$U=H=\widehat{H}=F.$$
\end{prop}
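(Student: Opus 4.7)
The plan is to establish the reverse inclusion $F \subseteq U$, which will collapse the chain $U \subseteq H \subseteq \widehat{H} \subseteq F$ from Lemma~\ref{F is blowup} to a string of equalities. The essential ingredients are already in place: Lemma~\ref{S(p-p1+p2) with bars} shows $\ovl{U} = \ovl{F}$, Lemma~\ref{F is blowup} shows that $F$ is a $g$-divisible virtual blowup of $S$ at $\bfx$, and it is immediate from the definition of $X_3$ that $g \in X_3 \subseteq U$.

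Given these, I would argue by induction on degree. Take a homogeneous $f \in F_d$. For $d \in \{0,1,2\}$, $gS_{(g)} \subseteq (S_{(g)})_{\geq 3}$ forces $F_d \to \ovl{F}_d$ and $U_d \to \ovl{U}_d$ to be injective, so $\ovl{U}_d = \ovl{F}_d$ immediately yields $f \in U_d$. For $d \geq 3$, choose $u \in U_d$ with $\ovl{u} = \ovl{f}$, which exists since $\ovl{U}_d = \ovl{F}_d$. Then $f - u \in F \cap gS_{(g)} = gF$, using $g$-divisibility of $F$, so $f - u = gy$ for some $y \in F_{d-3}$. By the inductive hypothesis $y \in U_{d-3}$, and since $g \in U$ with $U$ a ring, $gy \in U_d$. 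Hence $f = u + gy \in U_d$, completing the induction.

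This establishes $F \subseteq U$, and therefore $U = H = \widehat{H} = F$. Combined with Lemma~\ref{F is blowup}, $U$ is a virtual blowup of $S$ at $\bfx$ in the sense of Definition~\ref{virtual blowup}. The hard work has already been done: the main obstacle was in showing $\ovl{U} = \ovl{F}$ (Lemma~\ref{S(p-p1+p2) with bars}), which required understanding both $\ovl{U}$ explicitly (Lemma~\ref{ovlU}) and ruling out, via Example~\ref{original S(p-p1+p2)}, the possibility that $\ovl{F}_2$ could be strictly larger than $\ovl{U}_2$. Once $\ovl{U} = \ovl{F}$ is established, the $g$-divisibility of $F$ together with $g \in U$ is precisely what allows the inductive lift from the factor to $F$ itself.
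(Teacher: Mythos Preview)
Your proof is correct and follows essentially the same inductive lifting argument as the paper: use $\ovl{U}=\ovl{F}$ together with $g$-divisibility to climb degree by degree. The only cosmetic difference is that the paper first runs the induction against $\widehat{U}$ to show $U$ is $g$-divisible and then finishes with a Hilbert series comparison, whereas you apply the induction directly against $F$ (using that $F$ is already known to be $g$-divisible from Lemma~\ref{F is blowup}), which is slightly more streamlined.
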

\begin{proof}
By Lemma~\ref{F is blowup}, $U\subseteq H\subseteq\widehat{H}\subseteq F$. So we need to show $U=F$. We first prove $U$ is $g$-divisible. Set $W=\widehat{U}$. Since $U\subseteq F$ and $F$ is $g$-divisible by Lemma~\ref{F is blowup}, $U\subseteq W\subseteq F$. Therefore $\ovl{U}=\ovl{W}=\ovl{F}$ by Lemma~\ref{S(p-p1+p2) with bars}. As $gS\subseteq S_{\geq 3}$, this shows $U_{\leq 2}=W_{\leq 2}$. Proceeding by induction, let $n\geq3$ and assume $W_{<n}=U_{<n}$. Let $w\in W_n$, say with $wg^k\in U$. Since $\ovl{U}=\ovl{W}$, certainly $W\subseteq U+gS$, and so there exists $u\in U_n$ and $s\in S_{n-3}$ such that $w=u+sg$. We then have $sg^{k+1}+ug^k=wg^k\in U$, and so $sg^{k+1}=wg^k-ug^k \in U$ also. Hence $s\in W$. Moreover, since $\deg(w)=\deg(sg)$, we have $\deg(s)<\deg(w)=n$. By induction, $s\in U$. Therefore $w=u+gs\in U$ also, which proves $U=W$. \par

Now we have two $g$-divisible rings $U\subseteq F$, with $\ovl{U}=\ovl{F}$. Comparing Hilbert series we have
$$h_U(t)=h_{\ovl{U}}(t)/(1-t^3)=h_{\ovl{F}}(t)/(1-t^3)=h_F(t).$$
Since $U\subseteq F$, this forces $U=F$.
\end{proof}

As mentioned after Theorem~\ref{S(p-p1+p2)}, the definition of $X_2$ is surprising. Indeed the author had initially thought that we should have $U=\Bbbk\langle X_1,X_2',X_3\rangle$, where this time $X_2'=\{x\in S_2\,|\; \ovl{x}\in H^0(E,\LL_2(-[\bfx]_2))\}$. We now study this second ring. We also note that Example~\ref{original S(p-p1+p2)} completes the missing step in Lemma~\ref{S(p-p1+p2) with bars}.

\begin{example}\label{original S(p-p1+p2)} Let $U$ and $\bfx$ be as in Theorem~\ref{S(p-p1+p2)}.
Set $U'=\Bbbk\langle X'_1,X'_2,X'_3\rangle$, where $X'_i=\{x\in S_i\,|\;\ovl{x}\in H^0(E,\LL_i(-[\bfx]_i))\}$ for $i=1,2,3$. Then
\begin{enumerate}[(1)]
\item $U\subseteq U'$;
\item $\widehat{U'}=S$;
\item $U'$ is neither left nor right noetherian.
\end{enumerate}
\end{example}
\begin{proof}
(1). This is obvious from the definitions of $U$ and $U'$.\par
(2). Here we use Notation~\ref{p^sigma=p_1}. Identifying $\ovl{S_{\leq 2}}$ and $S_{\leq 2}$; $X'_2=S(p_3)_1S(p_{-1})_1$ by Lemma~\ref{Ro3.1 applied}(1), and is a 4-dimensional  $\Bbbk$-vector space by the Riemann-Roch Theorem (Corollary~\ref{RR to E}). We first prove that $g\in S(p_3)_1S(p_{-1})_1S(p_3)_1$, and so that
\begin{equation}\label{original S(p-p1+p2) eq}(X'_2)^2=S(p_3)_1S(p_{-1})_1S(p_3)_1S(p_{-1})_1\supseteq gS(p_{-1})_1.\end{equation}
Let $Y=S(p_{-1})_1S(p_3)_1$. Write $S(p_3)_1=v_1\Bbbk+v_2\Bbbk$. Then $S(p_3)_1Y=v_1Y+v_2Y$, and therefore
\begin{equation}\label{Ro 4.6 trick}
 \dim_\Bbbk S(p_3)_1Y=2\dim_\Bbbk Y-\dim_\Bbbk(v_1Y\cap v_2Y).
\end{equation}
Now, applying \cite[Lemma~4.2]{Ro} with $q=p_3=\sigma^{-3}(p)$, we can choose a basis of $S(p_5)_1=w_1\Bbbk+w_2\Bbbk$ such that $v_1w_1+v_2w_2=0$ and $v_1S\cap v_2S=v_1w_1S=v_2w_2S$. In which case
\begin{equation}\label{Ro 4.6 Z}
v_1Y\cap v_2Y=\{v_1w_1s\,|\; w_1s,w_2s\in Y\}=v_1w_1Z,\; \text{ where } Z=\{s\in S_1\;|\,S(p_5)_1s\subseteq Y\}.
\end{equation}
In particular $\dim_\Bbbk(v_1Y\cap v_2Y)=\dim_\Bbbk Z$. We compute $\ovl{Z}$ which, as $gS\subseteq S_{\geq 3}$, can be identified with $Z$. Take a section $\ovl{s}\in\ovl{S}_1=H^0(E,\LL)$, say $\ovl{s}$ vanishes at the effective divisor $\bfd$. Then $\ovl{S(p_5)}_1\ovl{s}$ will consist of global sections of $\LL_2$ vanishing at $p_5+\bfd^\sigma$. On the other hand $\ovl{Y}$ consists of global section of $\LL_2$ vanishing at $p_{-1}+p_4$ by Lemma~\ref{Ro3.1 applied}(1). In which case $\ovl{s}\in Z$ if and only if $\ovl{S(p_5)_1}\ovl{s}$ consists of global sections of $\LL_2$ vanishing at $p_{-1}+p_4$; that is, if and only if $p_5+\bfd^\sigma\geq p_{-1}+p_4$; or equivalently $p_{-2}+p_3\leq \bfd$. Thus $\ovl{Z}=H^0(E,\LL(-p_{-2}-p_3))$. In particular, $\dim_\Bbbk Z=\dim_\Bbbk\ovl{Z}=1$ by Riemann-Roch (Corollary~\ref{RR to E}). This means $\dim_\Bbbk S(p_3)_1Y=7$ by (\ref{Ro 4.6 trick}) and Lemma~\ref{Ro 4.1}. On the other hand $\ovl{S(p_3)_1Y}=H^0(E,\LL_3(-p-p_3-p_5))$, which is 6 dimensional by the Riemann-Roch Theorem. We hence must have $g\in S(p_3)Y$. This shows (\ref{original S(p-p1+p2) eq}). \par

We similarly show $g\in S(p_{-1})_1S(p_3)_1S(p_{-1})_1$. The same argument used to show (\ref{Ro 4.6 trick}) and (\ref{Ro 4.6 Z}) gives
\begin{equation}\label{Ro 4.6 trick2} \dim_\Bbbk S(p_{-1})_1S(p_3)_1S(p_{-1})_1=2\dim_\Bbbk Y-\dim_\Bbbk Z,\end{equation}
where this time $Y=S(p_3)_1S(p_{-1})_1$ and $Z=\{s\in S_1\;|\,S(p_1)_1s\subseteq Y\}$. As before we compute $\ovl{Z}$. By Lemma~\ref{Ro3.1 applied}(1), $\ovl{Y}=H^0(E,\LL_2(-p_3-p))$, whereas from the definition $\ovl{S(p_1)_1}=H^0(E,\LL(-p_1))$. Thus $\ovl{S(p_1)}\ovl{s}\subseteq Y$ if and only if $\ovl{s}$ vanishes at $p_2+p_{-1}$; that is, if and only if $\ovl{s}\in H^0(E,\LL(-p_2-p_{-1}))$. Therefore $\ovl{Z}=H^0(E,\LL(-p_2-p_{-1}))$; this is 1-dimensional by the Riemann-Roch Theorem. On the other hand $\dim_\Bbbk Y=4$, by Lemma~\ref{Ro 4.1}(2), and hence $\dim_\Bbbk S(p_{-1})_1S(p_3)_1S(p_{-1})_1=7$ from (\ref{Ro 4.6 trick2}). Since $\dim_\Bbbk\ovl{S(p_{-1})_1S(p_3)_1S(p_{-1})}=6$ by Lemma~\ref{Ro3.1 applied}(2) and the Riemann-Roch, we are forced to have $g\in S(p_{-1})_1S(p_3)_1S(p_{-1})$. In particular $gS(p_3)_1\subseteq (X'_2)^2$. Hence, with (\ref{original S(p-p1+p2) eq}),
$$ (X'_2)^2\supseteq g(S(p_{-1})_1+S(p_3))_1=gS_1.$$
It follows that $S_1\subseteq \widehat{U'}$. But $S=\Bbbk\langle S_1\rangle$; hence $\widehat{U'}=S$. \par

(3). By (2) and Proposition~\ref{C to C hat}, there exists an $k\geq 0$ such that $g^kS\subseteq U'$. Hence, if $U'$ is right noetherian, then $S_{U'}$ would be finitely generated. In which case, $\ovl{S}_{\ovl{U'}}$ would also be finitely generated. Put $\NN=\LL(-\bfx)$, by part (1) and Lemma~\ref{ovlU} we have $\ovl{U}\subseteq\ovl{U'}\subseteq B(E,\NN,\sigma)\ehd \ovl{U}.$ Hence also $\ovl{U'}\ehd B(E,\NN,\sigma)$. Thus by Corollary~\ref{nc serre2} if $\ovl{S}_{\ovl{U'}}$, then $\ovl{S}_n=H^0(E,\OO_E(\mbf{u})\otimes \NN_n)$ for some divisor $\mbf{u}$ and for all $n\gg 0$. It would then follow from the Riemann-Roch Theorem (Corollary~\ref{RR to E}) that $\dim_\Bbbk \ovl{S}_n=n+\deg(\mbf{u})$. However $\ovl{S}_n=H^0(E,\LL_n)$, and hence $\dim_\Bbbk\ovl{S}_n=3n$ for all $n$. This would give a contradiction, and therefore $U'$ cannot be right noetherian. A symmetric argument shows $U'$ is not left noetherian.
\end{proof}

Next, we show that we cannot expect nice homological properties to hold for virtual blowups. The following lemma is proved within the proof of \cite[Example~10.4]{RSS}.

\begin{lemma}\label{RSS 10.4}
Let $B$ be a cg domain with $\GKdim (B)=2$. Suppose that $B$ is Auslander-Gorenstein and Cohen-Macaulay.   Let $V$ is be a graded subalgebra of $B$ such that $0<\dim_\Bbbk(B/V)<\infty$. Then
\begin{enumerate}[(1)]
\item $\Ext_V^2(\Bbbk,V)\neq 0.$
\item $V$ is neither Auslander-Gorenstein nor AS-Gorenstein nor Cohen-Macaulay.\qed
\end{enumerate}
\end{lemma}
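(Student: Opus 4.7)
The plan is to handle part~(1) via a change-of-rings argument relating $\Ext^*_V(\Bbbk_V,-)$ to $\Ext^*_B(-,B)$, and to handle part~(2) by producing an explicit nonzero class in $\Ext^1_V(\Bbbk_V,V)$ which, combined with~(1), immediately rules out both Cohen-Macaulay and AS-Gorenstein properties. The hardest piece will be part~(1); part~(2) reduces to a bookkeeping exercise once (1) is established.

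For part~(1), the starting point is that, since $\dim_\Bbbk(B/V)<\infty$, $B$ is a finitely generated right (and left) $V$-module, and the ideal $J:=\{v\in V\mid vB\subseteq V\}$ is a two-sided ideal of $B$ sitting inside $V$ with both $V/J$ and $B/J$ finite-dimensional. Because $B$ is Auslander-Gorenstein and Cohen-Macaulay with $\GKdim B=2$, every nonzero finite-dimensional right $B$-module $N$ has $\Ext_B^i(N,B)=0$ for $i\neq 2$ and $\Ext_B^2(N,B)\neq 0$. I would then use the tensor-hom adjunction for the $(B,V)$-bimodule $B$ to produce a Grothendieck spectral sequence
\[ E_2^{p,q}=\Ext_B^p\bigl(\Tor_q^V(\Bbbk_V,B),B\bigr)\;\Longrightarrow\;\Ext_V^{p+q}(\Bbbk_V,B). \]
Each $\Tor_q^V(\Bbbk_V,B)$ is a subquotient of $B/V_{\geq 1}B$, hence finite-dimensional, so the $E_2$-page is concentrated on the row $p=2$ and the spectral sequence degenerates. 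In particular $\Ext_V^2(\Bbbk_V,B)\cong\Ext_B^2(\Bbbk_V\otimes_V B,B)\neq 0$. Feeding this into the long exact sequence of $\Ext_V^*(\Bbbk_V,-)$ on $0\to V\to B\to B/V\to 0$ and tracking $\Ext_V^i(\Bbbk_V,B/V)$ through a composition series of the finite-dimensional module $B/V$ yields $\Ext_V^2(\Bbbk_V,V)\neq 0$.

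For part~(2), I would first construct a nonzero class in $\Ext^1_V(\Bbbk_V,V)$ explicitly: pick a homogeneous $x\in B\setminus V$ of maximal degree (possible since $\dim_\Bbbk(B/V)<\infty$); by maximality, $xB_{\geq 1}\subseteq B_{\geq \deg x+1}\subseteq V$, so $xV_{\geq 1}\subseteq V$ and $x$ defines an element of $\Hom_V(V_{\geq 1},V)$. The long exact sequence from $0\to V_{\geq 1}\to V\to\Bbbk_V\to 0$ together with $\Hom_V(\Bbbk_V,V)=0$ (as $V$ is a graded domain of positive $\GK$) identifies $\Ext^1_V(\Bbbk_V,V)\cong\Hom_V(V_{\geq 1},V)/V$, which contains the nonzero class $[x]$. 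Hence $j_V(\Bbbk_V)\leq 1$, and combined with part~(1): $V$ fails to be Cohen-Macaulay because CM would force $j_V(\Bbbk_V)+\GKdim\Bbbk=\GKdim V=2$; $V$ fails to be AS-Gorenstein because $\Ext_V^*(\Bbbk_V,V)$ is nonzero in both degrees $1$ and $2$, whereas AS-Gorenstein demands concentration in a single degree. For the failure of Auslander-Gorenstein I would exhibit a left $V$-submodule $N\subseteq\Ext^1_V(\Bbbk_V,V)$ (e.g.\ the cyclic module $V\cdot[x]$, which is finite-dimensional since $Vx+V\subseteq B$) for which the Auslander condition $j_V(N)\geq 1$ fails, by producing a nonzero $V$-linear map from a subquotient of $N$ into $V$.

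The main obstacle is part~(1): justifying the spectral sequence rigorously (constructing $\Tor_*^V$ from a projective resolution of $\Bbbk_V$, checking finite-dimensionality of the Tor groups, and verifying degeneration) is the bulk of the technical work. A more elementary alternative avoids the spectral sequence by using the common ideal $J$: comparing the long exact sequences obtained from $0\to J\to V\to V/J\to 0$ and $0\to J\to B\to B/J\to 0$ after applying $\Hom_V(\Bbbk_V,-)$, and using that $V/J$, $B/J$, $B/V$ are all finite-dimensional, one transfers the nonvanishing of $\Ext_B^2(\Bbbk_V,B)$ to $\Ext_V^2(\Bbbk_V,V)$ through a finite number of connecting-map analyses. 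For part~(2) the subtlest step will be the verification that Auslander-Gorenstein fails in a manner independent of the already-established failure of AS-Gorenstein and Cohen-Macaulay; the element $[x]$ and its cyclic $V$-orbit should provide the required witness, but the Auslander condition must be checked carefully.
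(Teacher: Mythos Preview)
The paper does not give its own proof of this lemma: the statement is followed by a \qed and the preceding sentence attributes it to the proof of \cite[Example~10.4]{RSS}. So there is nothing in-paper to compare against directly; I will instead assess your proposal on its merits.

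Your strategy for part~(1) is sound, but one justification is wrong. The assertion that each $\Tor_q^V(\Bbbk_V,B)$ is a subquotient of $B/V_{\geq 1}B$ holds only for $q=0$; for $q\geq 1$ there is no such containment. The finite-dimensionality you need follows instead from the facts that $V$ is noetherian (since $V\ehd B$ and $B$ is noetherian) and $_VB$ is finitely generated, so $_VB$ admits a resolution by finitely generated free left $V$-modules, whence $\Tor_q^V(\Bbbk_V,B)$ is a subquotient of a finite-dimensional space for every $q$. With that correction the spectral sequence degenerates as you claim and the long exact sequence argument goes through.

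For part~(2) your construction of a nonzero class in $\Ext^1_V(\Bbbk_V,V)$ is correct, and the deductions that $V$ is not Cohen--Macaulay (since $j_V(\Bbbk)\leq 1<2$) and not AS-Gorenstein (since $\Ext^i_V(\Bbbk,V)\neq 0$ for two distinct $i$) are fine. However, your Auslander--Gorenstein argument is aimed at the wrong level. You propose to take a finite-dimensional left submodule $N\subseteq\Ext^1_V(\Bbbk_V,V)$ and show $j_V(N)<1$, i.e.\ $\Hom_V(N,V)\neq 0$; but $V$ is an infinite-dimensional domain, so a finite-dimensional module cannot map nontrivially into it, and this will never work. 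The fix is to look one degree higher: take a simple left $V$-submodule $N\subseteq\Ext^2_V(\Bbbk_V,V)$ (nonzero by part~(1)); any such $N$ is a shift of $_V\Bbbk$. The left-handed version of your $\Ext^1$ argument (using $V_{\geq 1}x\subseteq V$ for the same $x$) gives $\Ext^1_V({}_V\Bbbk,V)\neq 0$, so $j_V(N)\leq 1<2$, and the Auslander condition fails for $\Bbbk_V$ at level $2$.
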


The proof of Corollary~\ref{S(p-p1+p2) bad homologically}(1) also essentially comes from the proof of \cite[Example~10.4]{RSS}. This time we include the proof.

\begin{cor}\label{S(p-p1+p2) bad homologically}
 Let $U$ and $\bfx$ be as in Theorem~\ref{S(p-p1+p2)}.
\begin{enumerate}[(1)]
\item $U$ is neither Auslander-Gorenstein nor AS Gorenstein nor Cohen-Macaulay;
\item $U$ has infinite injective dimension.
\end{enumerate}
\end{cor}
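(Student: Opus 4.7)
The plan is to apply Lemma~\ref{RSS 10.4} to the subalgebra $\ovl{U} \subseteq B := B(E,\LL(-\bfx),\sigma)$ and then transfer the resulting failures back to $U$ using the central regular element $g$. By Lemma~\ref{ovlU}, $\ovl{U}$ and $B$ coincide in degrees $\geq 3$. Since $\deg\LL(-\bfx) = 3 - 1 = 2$, Corollary~\ref{RR to E} gives $\dim_\Bbbk B_1 = 2$ and $\dim_\Bbbk B_2 = 4$; on the other hand, by construction $\dim_\Bbbk\ovl{X_1} = 1$, while $\dim_\Bbbk\ovl{X_2} = \dim_\Bbbk\ovl{S(p)_1}\,\ovl{S(p^{\sigma^2})_1} = 3$ by Lemma~\ref{Ro 4.1}(2). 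Hence $\dim_\Bbbk(B/\ovl{U}) = 2$, which is strictly between $0$ and $\infty$. Since $B$ is a cg domain of GK-dimension $2$ that is Auslander-Gorenstein and Cohen-Macaulay by Theorem~\ref{B properties}(3), Lemma~\ref{RSS 10.4} applies to the inclusion $\ovl{U} \subseteq B$ and yields both $\Ext_{\ovl{U}}^{2}(\Bbbk,\ovl{U}) \neq 0$ and the fact that $\ovl{U}$ is neither Auslander-Gorenstein, AS-Gorenstein, nor Cohen-Macaulay.

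Part~(1) follows by descent through $g$. Because $U$ is $g$-divisible by Proposition~\ref{S(p-p1+p2) main prop}, we have $\ovl{U} = U/gU$ with $g \in U_3$ central and regular. Each of the three properties Auslander-Gorenstein, AS-Gorenstein, and Cohen-Macaulay is inherited by the quotient of a cg noetherian algebra by a central regular homogeneous element (with an appropriate shift in dimension); this is the standard descent used (in the opposite direction) in the proof of Proposition~\ref{RSS2 2.4}. Hence if $U$ satisfied any of these properties, $\ovl{U}$ would as well, contradicting the conclusion of the previous paragraph.

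For part~(2), the Koszul-type resolution $0 \to U(-3) \xrightarrow{\cdot g} U \to \ovl{U} \to 0$ gives $\Ext_U^q(\ovl{U},U) = 0$ for $q \neq 1$ and $\Ext_U^1(\ovl{U},U) \cong \ovl{U}(3)$, so the change of rings spectral sequence collapses to
\[
\Ext_U^{i}(\Bbbk,U) \;\cong\; \Ext_{\ovl{U}}^{i-1}(\Bbbk,\ovl{U})(3) \quad\text{for all } i \geq 1.
\]
Assume for contradiction that $\mathrm{id}(U)$ is finite. Since $U$ satisfies $\chi$ by Corollary~\ref{RSS 8.12}, $\Ext_U^{i}(\Bbbk,U) = 0$ for all $i \gg 0$, and the displayed isomorphism then forces $\mathrm{id}(\ovl{U}) < \infty$. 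Moreover, $B$ enjoys $\chi$ on both sides and has finite cohomological dimension by Theorem~\ref{B properties}(2) and \cite[Lemma~2.2]{Ro}, and the inclusion $\ovl{U} \subseteq B$ is module-finite on both sides because $\dim_\Bbbk B/\ovl{U} < \infty$; hence $\ovl{U}$ inherits both properties by \cite[Proposition~8.7]{AZ.ncps}. A cg noetherian $\Bbbk$-algebra with $\chi$ on both sides and finite injective dimension is AS-Gorenstein (this follows from Van den Bergh's existence theorem for balanced dualizing complexes \cite{VdB.dualizing}, since the dualizing complex is then concentrated in one degree and reduces in the connected graded case to a shift of the regular bimodule). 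Applied to $\ovl{U}$, this contradicts the first paragraph. The delicate point in this outline is the final invocation of the AS-Gorenstein criterion; everything else is routine dimension counting and change of rings bookkeeping.
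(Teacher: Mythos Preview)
Your approach to part~(1) is essentially the paper's and is correct. You verify directly that $\ovl{U}\subsetneq B$ with $\dim_\Bbbk(B/\ovl{U})=2$; the paper reaches the same conclusion slightly differently, first arguing $\ovl{F}\subseteq B$ via Lemma~\ref{CM lemma}(2) and then using $B_1\not\subseteq\ovl{S}_1$ (since $\bfx$ is not effective) to see the inclusion is strict. Both then transfer the failures from $\ovl{U}$ to $U$ via the central regular element $g$; the paper cites \cite[Theorem~5.10]{Lev} for this explicitly. One small slip: you invoke $\chi$ to get $\Ext_U^i(\Bbbk,U)=0$ for $i\gg0$, but $\chi$ only yields finite-dimensionality; the vanishing you want comes directly from the hypothesis $\mathrm{id}(U)<\infty$.

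For part~(2) the arguments diverge substantially, and your route has a genuine gap at exactly the point you flag. The claim that a cg noetherian algebra with $\chi$ on both sides, finite cohomological dimension, and finite injective dimension is AS-Gorenstein is not a direct consequence of Van den Bergh's existence theorem \cite{VdB.dualizing}; that theorem produces a balanced dualizing complex from $\chi$ and finite local-cohomological dimension, but going from there to ``the dualizing complex is a shift of the regular bimodule'' under the additional hypothesis $\mathrm{id}<\infty$ requires further argument (identifying $R$ itself as a dualizing complex up to shift and then invoking uniqueness) that you have not supplied and that is not in the paper's reference list. The paper instead gives a short, entirely elementary computation that avoids this issue: with $V=\ovl{U}$, one shows $\Ext_V^n(B,V)=0$ for $n\geq1$ by change of rings (\cite[Corollary~10.65]{Rot}), deduces $\Ext_V^n(B/V,V)=0$ for $n\geq2$ from the long exact sequence of $0\to V\to B\to B/V\to0$, and then uses the filtration $0\to\Bbbk\to B/V\to\Bbbk\to0$ (available precisely because $\dim_\Bbbk(B/V)=2$) to obtain $\Ext_V^n(\Bbbk,V)\cong\Ext_V^{n+1}(\Bbbk,V)$ for all $n\geq2$. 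Combined with $\Ext_V^2(\Bbbk,V)\neq0$ from Lemma~\ref{RSS 10.4}(1), this gives $\Ext_V^n(\Bbbk,V)\neq0$ for every $n\geq2$, and the Rees isomorphism lifts this to $U$. This argument is self-contained and is what you should use.
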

\begin{proof}
(1). Let $V=\ovl{U}$ and $B=B(E,\LL(-\bfx),\sigma)$. Note that by Theorem~\ref{B properties}(3), $B$ is Auslander-Gorenstein and Cohen-Macaulay. By Theorem~\ref{S(p-p1+p2)}, $U$ is a virtual blowup at $\bfx$. Hence $V\ehd B$; while $V\subseteq B$ by Lemma~\ref{CM lemma}(2). We claim $V\neq B$. Since $V\subseteq \ovl{S}$, it is enough to prove $B\not\subseteq \ovl{S}$. Now because $\deg\LL=3$ and  $\deg\LL(-\bfx)=2$, both $\LL$ and $\LL(-\bfx)$ are very ample by \cite[Corollary~IV.3.2]{Ha}. Therefore  both $\LL$ and $\LL(-\bfx)$ are generated by their respective global sections. Therefore, if $B_1\subseteq \ovl{S}_1$ (i.e. $H^0(E,\LL(-\bfx))\subseteq H^0(E,\LL)$) were true, then $\LL(-\bfx)\subseteq \LL$. Because $\bfx$ is not effective this is impossible. Thus we have
\begin{equation}\label{V subsetehd B} V\subsetneq B\;\text{ and } V\ehd B.\end{equation}\par

Now $U$ is $g$-divisible by Proposition~\ref{S(p-p1+p2) main prop}. Therefore by \cite[Theorem~5.10]{Lev}, $U$ is Auslander-Gorenstein if and only if $V\cong U/gU$ is Auslander-Gorenstein; with the same statement holding with either AS-Gorenstein or Cohen-Macaulay replacing Auslander-Gorenstein. Since $B$ is Auslander-Gorenstein and Cohen-Macaulay, Lemma~\ref{RSS 10.4}(2) applies. This shows $V$, and thus $U$, is neither Auslander-Gorenstein nor AS Gorenstein nor Cohen-Macaulay.\par

(2). It is enough to prove $\Ext_U^n(\Bbbk,U)\neq 0$ for infinitely many $n$. Since $U$ is $g$-divisible by Proposition~\ref{S(p-p1+p2) main prop}, the Rees-Lemma (for a precise statement see \cite[Proposition~3.4(b)]{Lev}) says $\Ext_V^{n}(\Bbbk,V)=\Ext_{U}^{n+1}(\Bbbk, U)$ where $V=\ovl{U}\cong U/gU$. It hence is suffices to prove $\Ext_V^n(\Bbbk,V)\neq 0$ for infinitely many $n$. \par
Set $B(E,\LL(-\bfx),\sigma)$. From (\ref{V subsetehd B}) we have $V\subsetneq B$ and $V\ehd B$. We first claim $\Ext_V^n(B,V)=0$ for all $n\geq1$. Applying \cite[Corollary~10.65]{Rot}, with their $(A,B,C,R,S)$ replaced with our $(B, B, V, V, B)$, we get
\begin{equation}\label{Rot 10.65 applied}
\Ext_V^n(B, V)=\Ext_V^n(B\otimes_B B, V)=\Ext_B^n(B, I) \, \text{ for } I=\Hom_V(B,V).
\end{equation}
As $B_B$ is obviously projective, $\Ext_B^n(B, I)=0$ for all $n\geq 1$. Thus (\ref{Rot 10.65 applied}) proves the claim.  \par

Set $L=B/V$, then $L$ is nonzero and finite dimensional. Consider the following short exact sequence of right $V$-modules, $0\to V\to B\to L\to 0$. Apply $\Hom_V(-,V)$ and look at the long exact sequence. Using the claim, one sees that
\begin{equation}\label{Ext(L,V)}\Ext_V^{n}(L,V)=0 \;\text{ for all }n\geq 2.\end{equation}\par

By Lemma~\ref{S(p-p1+p2) with bars} and its proof, $\dim_\Bbbk L=2$. Hence, if $x\in L\setminus\{0\}$ is homogenous of maximal degree, then $\dim_\Bbbk(xV)=\dim_\Bbbk (L/xV)=1$. We therefore have an exact sequence $0\to \Bbbk\to L \to \Bbbk\to 0$. Applying $\Hom_V(-,V)$ again and looking at the long exact sequence we get
\begin{equation}\label{S(p-p1+p2) long exact} \cdots \longrightarrow \Ext_V^2(L,V)\longrightarrow \Ext_V^2(\Bbbk,V)\longrightarrow\Ext_V^3(\Bbbk,V)\longrightarrow \Ext_V^3(L,V)\longrightarrow \cdots. \end{equation}
Thus $\Ext_V^2(\Bbbk,V)\cong\Ext_V^3(\Bbbk,V)$. Now from Lemma~\ref{RSS 10.4}(1) we already have that $\Ext_V^2(\Bbbk,V)\neq 0$, hence $\Ext_V^3(\Bbbk,V)\neq 0$ also. Looking further along the long exact sequence (\ref{S(p-p1+p2) long exact}) and applying induction, we obtain $\Ext_V^n(\Bbbk,V)\neq0$ for all $n\geq 2$.
\end{proof}

\subsection{Other examples}

Let $p,q\in E$. Recall from Definition~\ref{S(p+q)} that we define $S(p+q)$ to be generated by elements from degrees 1, 2 and 3. After Definition~\ref{S(p+q)} it is remarked  that we do not know whether the elements from degree 3 are necessary. That is, can $S(p+q)$ ever be generated in degrees 1 and 2? The answer to this is currently unknown. The following example shows that for at least some choices of $p$ and $q$, degrees 1, 2 and 3 are all necessary.

\begin{example}\label{S(p+p1)}
Let $p\in E$ and consider $R=S(p+p^{\sigma})$. Write
\begin{equation*}\label{S(p+p1) gen in 3} R=\Bbbk\langle V_1, V_2, V_3\rangle\;
\text{ for }V_i=\{x\in S_i\,|\; \ovl{x}\in H^0(E,\LL_i(-[p+p^{\sigma}]_i))\}\end{equation*}
as in Definition~\ref{S(p+q)}. Then $R$ is generated in degrees 1, 2 and 3 and no fewer.
\end{example}

\begin{proof}
For this proof Notation~\ref{p^sigma=p_1} is in force and we identify $S_{\leq 2}=\ovl{S}_{\leq 2}$. Also we note that from definition we have
\begin{equation}\label{ovlVi of S(p+p1)}
\begin{array}{l}
\ovl{V_1}=H^0(E,\LL(-p-p_1)),\\ \ovl{V_2}=H^0(E,\LL_2(-p-2p_1-p_2)),\\ \ovl{V_3}=H^0(E,\LL_3(-p-2p_1-2p_2-p_3)).
\end{array}
\end{equation}

Finally note that $R_i=V_i$ for $i=1,2,3$ by Lemma~\ref{S(p+q)_i=V_i}. \par

It is obvious that $R$ can be generated in degrees 1, 2 and 3. Since $R_1=V_1$ and $\dim_\Bbbk V_1=\dim_\Bbbk \ovl{V_1}=1$ by the Riemann-Roch (Corollary~\ref{RR to E}), $\Bbbk\langle R_1\rangle\cong \Bbbk[t]$. So certainly $R$ cannot be generated in degree 1. To show $R$ is not generated in degrees 1 and 2  we must show
\begin{equation}\label{k<V1V2>_3}\Bbbk\langle V_1,V_2\rangle_3=V_1^3+V_1V_2+V_2V_1\neq V_3.\end{equation}
What we first show is $V_1V_2,V_2V_1\subseteq X=S(p)_1S(p_2)_1S_1$. Now by (\ref{ovlVi of S(p+p1)}) and Lemma~\ref{Ro 4.1}(1), $\ovl{V_2}\subseteq H^0(E,\LL_2(-p_2))=\ovl{S(p_2)_1S_1}$.
Hence $V_2\subseteq S(p_2)_1S_1$. Since clearly $V_1=S(p+p_1)_1\subseteq S(p)_1$, it follows that $V_1V_2\subseteq X$. By Lemma~\ref{Ro 4.1}(1), $X=S(p)_1S_1S(p_1)_1$ also. We can similarly show $V_2\subseteq S(p)_1S_1$ and $V_1\subseteq S(p_1)_1$. This gives $V_2V_1\subseteq X$, and then  $V_1V_2+V_2V_1\subseteq X$ follows. By \cite[Lemma~4.6(1)]{Ro}, $g\notin X$ and thus (since $g\in V_3$ by definition) $V_1V_2+V_2V_1\neq V_3$. Finally, it is clear that
$$\ovl{V_1}^2\subseteq H^0(E,\LL_2(-p-2p_1-p_2))=\ovl{V_2}.$$
Hence $V_1^2\subseteq V_2$, and $V_1^3\subseteq V_1V_2+V_2V_1$. Thus (\ref{k<V1V2>_3}) holds.
\end{proof}

Our final example shows that the assumption $\ovl{U}\neq\Bbbk$ in our classification of maximal $S$-orders is necessary. In \cite[Example~10.8]{RSS}, Rogalski, Sierra and Stafford show that when $Q_\gr(U)=Q_\gr(S)^{(3)}$ silly examples can occur. Their example can be modified to work for orders in $S$.\par
For this example, we recall the notion of a semi-graded homomorphism from Definition~\ref{semi-graded hom}.

\begin{example}\label{RSS 10.8}
Choose $a,b,c\in S_1$ that generate $S$ as an $\Bbbk$-algebra. Define the subalgebra $S^g=\Bbbk\langle ga,gb,gc\rangle$ of $S^{(4)}$ and set $U=S^g\langle g\rangle$. Then:
\begin{enumerate}[(1)]
\item $\ovl{U}=\Bbbk$;
\item There are semi-graded isomorphisms $S^g\cong S$ and $S[x]/(x^4-g)\cong U$. Also $U^{(4)}=S^g$ holds;
\item $U$ is noetherian and $gU$ is a completely prime ideal of $U$ such that, up to semi-graded isomorphism, $U/gU\cong S/gS$;
\item $\widehat{U}=S$, but $S$ is not finitely generated on either side as a $U$-module;
\item $U$ is a maximal order with $Q_\gr(U)=Q_\gr(S)$.
\end{enumerate}
\end{example}
\begin{proof}
(1). This is obvious. \par
(2). The semi-graded homomorphism  $\varphi:S\to S$, $s\mapsto g^is$ for $s\in S_i$ clearly has image $S^g$. It is injective as $S$ is a domain. Define $\psi:S[x]\to S$ defined by $\psi(s)=g^is$ for $s\in S_i$ and $\psi(x)=g$. Then $\psi$ has $\mathrm{im}(\psi)=U$ and $\ker(\psi)=(x^4-g)$. One may view $\psi$ as graded of degree $4$ if one is willing to accept $S[x]$ as $(\N+\frac{3}{4}\N)$-graded and $\deg x=\frac{3}{4}$. To see $S^g=U^{(4)}$, note that clearly $g^4\in S^g$. Moreover, since $\deg(g)=3$ and $4$ are coprime we have
$U^{(4)}=(S^g\langle g\rangle)^{(4)}=(S^g\langle g^4\rangle)^{(4)}=S^g$. \par

(3). The fact the $U$ is noetherian follows directly from  $U\cong S[x]/(x^4-g)$. In this isomorphism $x$ gets identified with $g$, we therefore have
$$U/gU\cong S[x]/(x,x^4-g)\cong S/gS.$$\par

(4). If $s\in S_n$, then $g^ns\in S^g\subseteq U$, and so $s\in \widehat{U}$. Hence $\widehat{U}=S$. If $S_U$ (or $_US$) was finitely generated then $\ovl{S}_{\ovl{U}}$ (respectively $_{\ovl{U}}\ovl{S}$)  would be finitely generated. This is impossible because $\ovl{U}=\Bbbk$.  \par

(5). Since $g\in U$, we have that $g^{-1}ga=a\in Q_\gr(U)$; similarly the other two generators of $S$, $b,c\in Q_\gr(U)$. Thus indeed $Q_\gr(U)=Q_\gr(S)$.\par
Identify $U= S[x]/(x^4-g)$. We shift the grading given in part (2) so that $\deg x=3$ and $\deg (S_1)=4$. With this new grading we have that $U^{(4)}=S$ is a maximal order. Suppose $V$ is an equivalent order containing $U$. By Lemma~\ref{equiv orders go up}, $U^{(4)}$ and $V^{(4)}$ are equivalent orders. By (2), $U^{(4)}=S$ is a maximal order, and hence $U^{(4)}=V^{(4)}$. Take a homogenous element $v\in V$, say $\deg(v)=d$. If 4 divides $d$, then we already know $v\in U$, so suppose $4$ does not divide $d$.  As 3 and 4 are coprime, we can choose $n$ with $1\leq n\leq 3$ and such that $d+3n$ is divisible by 4. In which case, $vx^n\in V^{(4)}=U^{(4)}=S$. Hence we can write $v=sx^{-n}$ for some $s\in S$. Also we have $S=U^{(4)}=V^{(4)}\ni v^4=(sx^{-n})^4=s^4g^{-n}$. This implies $s^4\in gS$. But $gS$ is a completely prime ideal, hence $s\in gS$; say $s=gt$ for some $t\in S=U^{(4)}$. Putting these together, $v=sx^{-n}=gtx^{-n}=tx^{4-n}$. But since $n\leq 3$, we have $v=tx^{4-n}\in U$. Thus $V=U$, and thus $U$ is a maximal order.
\end{proof}

\section{Proof of Lemma~\ref{Grassmannian}}

In this Appendix we present the proof of Lemma~\ref{Grassmannian} which was omitted. We certainly do not claim originality of the statement and proof, however we were unable to find an appropriate reference.\par
Recall that we are writing $\Gr(m,V)$\index[n]{grmv@$\Gr(m,V)$}\index{Grassmannian} for the grassmannian of $m$-dimensional subspaces of a fixed $n$-dimensional $\Bbbk$-vector space $V$. It is well know that $\Gr(m,V)$ is a projective variety and has a standard open cover of copies of $\mathbb{A}^{(n-m)m}$ (see for example \cite[Chapters 11.3, 11.5]{Hassett}). The standard affine cover is given as follows. First fix a basis of $V$ and let $W$ be an $m$ dimensional subspace. Then form an $m\times n$ matrix $A$ which has as rows the coordinates of a basis $W$. Next one row reduces $A$ into a matrix which has the $m$ columns $(1,0,\dots,0)^T$, \dots, $(0,\dots,0,1)^T$ in positions $1\leq s_1<\dots<s_m\leq n$ respectively. Finally one identifies the remaining matrix entries with $\mathbb{A}^{(n-m)m}$.

\begin{lemma}[Lemma~\ref{Grassmannian}]\label{grass appendix}
Let $V_2$ and $V_3$ be a $4$-dimensional and $7$-dimensional $\Bbbk$-vector space respectively, and set
$$\Omega_2= \{(W_1,W_2)\in \Gr(3,V_2)^2\,|\; \dim_\Bbbk (W_1\cap W_2)=2\}$$
and
\begin{equation*}\Omega_3=\{(W_1,W_2,W_3)\in \Gr(6,V_3)^3\,| \dim_\Bbbk (W_1\cap W_2\cap W_3)=4\}.\end{equation*}
Then $\Omega_2$ and $\Omega_3$ are Zariski open subsets of $ \Gr(3,V_2)^2$ and $\Gr(6,V_3)^3$ respectively; and the maps $\psi_2:\Omega_2\to \Gr(2,V_2)$ and  $\psi_3:\Omega_3\to \Gr(4,V_3)$, given by
\begin{equation*}\psi_2:(W_1,W_2)\longmapsto W_1\cap W_2  \;\text{ and }\; \psi_3:(W_1,W_2,W_3)\longmapsto W_1\cap W_2\cap W_3\end{equation*}
are morphisms of varieties.
\end{lemma}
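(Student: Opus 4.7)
The plan is to exploit the duality isomorphism $\Gr(k,V) \xrightarrow{\sim} \Gr(n-k, V^*)$, $W \mapsto W^0$ (the annihilator), to reduce both claims to easy statements about configurations of points in projective space. Since $3$-dimensional subspaces of the $4$-dimensional space $V_2$ are hyperplanes, the annihilator map gives $\Gr(3,V_2) \cong \bbP(V_2^*) \cong \bbP^3$; similarly $\Gr(6,V_3) \cong \bbP^6$. Under this duality intersections convert to sums, $(W_1\cap\cdots\cap W_r)^0 = W_1^0 + \cdots + W_r^0$, so after composing with the analogous duality on the targets ($\Gr(2,V_2^*) \cong \Gr(2,V_2)$ and $\Gr(3,V_3^*) \cong \Gr(4,V_3)$) the maps $\psi_2$ and $\psi_3$ are identified respectively with the classical constructions ``line through two distinct points of $\bbP^3$'' and ``plane spanned by three non-collinear points of $\bbP^6$''.

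With this setup, openness is essentially immediate. The condition $\dim(W_1\cap W_2)=2$ on $\Gr(3,V_2)^2$ translates into $[W_1^0] \neq [W_2^0]$ as points of $\bbP^3$, so that $\Omega_2$ is the complement of the diagonal. For $\Omega_3$, a standard dimension count shows $\dim(W_1\cap W_2\cap W_3) \geq 4$ always, with equality iff the three points $[W_i^0]$ of $\bbP^6$ are not collinear; this is the complement of the closed subvariety cut out by the $3\times 3$ minors of the $3 \times 7$ matrix of homogeneous coordinates, hence open.

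The morphism property I would then check via Pl\"ucker coordinates. Given $k$ points of $\bbP(V^*)$ whose affine cones are linearly independent, the Pl\"ucker coordinates in $\Gr(k,V^*) \hookrightarrow \bbP(\wedge^k V^*)$ of their span are the $k\times k$ minors of the $k \times \dim V$ coordinate matrix, which are polynomial in the input coordinates. Since non-collinearity (respectively distinctness) means some such minor is non-zero, the span map is a bona fide morphism on the open locus where it is defined. The annihilator isomorphism $\Gr(j,V^*) \cong \Gr(\dim V - j, V)$ is linear on Pl\"ucker coordinates, hence also a morphism; composing gives $\psi_2$ and $\psi_3$.

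The main obstacle, strictly speaking, is only the bookkeeping needed to verify these identifications rigorously: namely that the map I have described via Pl\"ucker coordinates really does agree with $\psi_2$ and $\psi_3$ after unwinding the duality. If one wishes to avoid invoking duality at all, the alternative is to work directly in standard affine charts of the Grassmannians: choose a basis so that $W_1, W_2$ (and $W_3$) lie in a common affine chart, parametrise them by matrices in reduced row-echelon form, and solve for their intersection by Gaussian elimination. The coordinates of $W_1\cap W_2$ in a standard affine chart of $\Gr(2,V_2)$ (resp.\ $W_1\cap W_2\cap W_3$ in a chart of $\Gr(4,V_3)$) then come out as rational expressions which are regular precisely on $\Omega_2$ (resp.\ $\Omega_3$). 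This is routine but the index-bookkeeping is tedious, which is presumably why the author relegates the proof to the appendix.
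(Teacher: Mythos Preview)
Your proposal is correct and follows essentially the same strategy as the paper: both pass to the dual Grassmannian, identifying $\Gr(6,V_3)$ with $\Gr(1,V_3)$ so that the intersection map becomes the span map, and both observe that $\Omega_2$ is the complement of the diagonal while $\Omega_3$ corresponds to non-collinear triples (detected by nonvanishing of some $3\times 3$ minor).

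The one difference worth noting is in how the morphism property of the span map is verified. You propose using Pl\"ucker coordinates: the Pl\"ucker coordinates of the span of three lines are the $3\times 3$ minors of the $3\times 7$ coordinate matrix, manifestly polynomial in the inputs. The paper instead takes what you call the ``alternative'' route: it works in the standard affine charts of $\Gr(3,V)$, choosing three linearly independent columns to form an invertible $3\times 3$ matrix $A$, left-multiplying by $A^{-1}$ to put the matrix in reduced form, and reading off the remaining entries as rational functions regular on the open set where those columns are independent. Your Pl\"ucker argument is arguably cleaner and avoids the chart-by-chart gluing; the paper's version is more hands-on but equivalent. Either way the content is the same.
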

\begin{proof}
Since $\Omega_2=\Gr(3,V_2)^2\setminus \Delta$, where $\Delta$ is the diagonal of $\Gr(3,V_2)^2$, $\Omega_2$ is open. We will not prove $\psi_2$ is a morphism since the proof is both similar to, and easier than, that of the corresponding proof for $\psi_3$. Thus from here we concentrate on $V=V_3$, $\Omega=\Omega_3$ and $\psi=\psi_3$.\par
Fix a basis of $V$ and consider the image $\Omega^*$ of $\Omega$ under the isomorphism taking $\Gr(6,V)^3$ to the dual $(\Gr(6,V)^*)^3=\Gr(1,V)^3$ (see \cite[Proposition~11.18]{Hassett}). Under this identification $$\Omega^*=\{(w_1\Bbbk,w_2\Bbbk,w_3\Bbbk)\in\Gr(1,V)^3\,|\; w_1,w_2,w_3 \text{ are linearly independent}\}.$$
To see $\Omega^*$ is open we identify $\Gr(1,V)=\bbP(V)=\bbP^6$. Write $w_i\Bbbk=[a_{i0}:\dots :a_{i6}]$. Then $(w_1\Bbbk,w_2\Bbbk,w_3\Bbbk)\notin \Omega^*$ (i.e. $w_1,w_2,w_3$ are linearly dependent) if and only if the determinants of all $3\times 3$ minors of the matrix $(a_{ij})$ vanish. In other words, an element is not in $\Omega^*$ if and only it is a zero of a finite set of homogenous equations. So $\Omega^*$ is open and therefore $\Omega$ is also open. \par
To prove $\psi$ is a morphism we first observe $\psi$ commutes with taking the dual, that is, the following diagram commutes.

\begin{equation*}\label{omega grass duals}
\xymatrix{
\Omega \ar[r]^{\psi} \ar[d]_{\cong} & \Gr(4,V)  \\
\Omega^* \ar[r]^{\psi^*} & \Gr(3,V) \ar[u]^{\cong} \\
}
\end{equation*}
where the isomorphisms in  the dual maps $\Omega\overset{\simeq}\longrightarrow\Omega^*$ and $\Gr(3,V)\overset{\simeq}\longrightarrow\Gr(4,V)$, and
$$\psi^*:(w_1\Bbbk,w_2\Bbbk,w_3\Bbbk) \mapsto \mathrm{span}(w_1,w_2,w_3).$$
Hence to prove $\psi$ is a morphism, it is enough to prove $\psi^*$ is a morphism. Identify $\Gr(1,V)=\bbP(V)=\bbP^6$ and let
$$\omega=([a_0:\dots:a_6],[b_0:\dots :b_6],[c_0:\dots:c_6])\in \Omega^*\subseteq\bbP^6\times\bbP^6\times\bbP^6 .$$
Then
$$\psi^*(\omega)=\mathrm{rowspace}\left( \begin{array}{ccc} a_0 &  \dots & a_6\\ b_0 &  \dots & b_6 \\ c_0 & \dots & c_6 \end{array} \right).$$
As $\omega\in\Omega^*$, the rows are linearly independent, therefore we can choose linearly independent columns: $\left(\begin{array}{ccc}a_i\\b_i\\c_i\end{array}\right)$,$\left(\begin{array}{ccc}a_j\\b_j\\c_j \end{array}\right)$ and $\left(\begin{array}{ccc}a_k\\b_k\\c_k\end{array}\right)$, with $1\leq i<j<k\leq 6$. Put $A=\left(\begin{array}{ccc}a_i&a_j&a_k\\b_i&b_j&b_k\\c_i&c_j&c_k\end{array}\right),$
then also,
$$\psi^*(\omega)=\mathrm{rowspace}\left(A^{-1}\left(\begin{array}{cccc} a_0 &  \dots & a_6\\ b_0 & \dots & b_6 \\ c_0 & \dots & c_6 \end{array} \right)\right)$$
which realises $\psi^*(\omega)$ inside one of the standard $(\mathbb{A}^4)^3$ covering $\Gr(3,V)$. Explicitly, for a fixed $i<j<k$ as above, and $\{i',j',k',\ell'\}=\{1,\dots,6\}\setminus \{i,j,k\}$ with $i'<j'<k'<\ell'$, the rational map $\psi^*:\Omega^*\dashrightarrow (\mathbb{A}^4)^3$ is given by the rows of the matrix
$$A^{-1}\left(\begin{array}{cccc}a_{i'}&a_{j'}&a_{k'}&a_{\ell'}\\b_{i'}&b_{j'}&b_{k'}&b_{\ell'}\\c_{i'}&c_{j'}&c_{k'}&c_{\ell'}\end{array}\right).$$
Each element of this matrix is of the form $\frac{h}{f}$, where $h$ and $f$ are homogenous polynomials in the $\{a_r,b_s,c_t\}$ of degree $3$. Hence this does define a rational map on $\Omega^*$. The domain of definition of this rational map is the open set $\Theta_{ijk}$ of $\Omega^*$ consisting of the points where $\left(\begin{array}{ccc}a_i\\b_i\\c_i\end{array}\right)$,$\left(\begin{array}{ccc}a_j\\b_j\\c_j \end{array}\right)$ and $\left(\begin{array}{ccc}a_k\\b_k\\c_k\end{array}\right)$ are linearly independent. Varying $1\leq i<j<k\leq 6$, these open sets cover $\Omega^*$ and the maps $\psi^*|_{\Theta_{ijk}}$ all glue because they are globally defined. Thus we get $\psi^*:\Omega^*\to \Gr(3,V)$ as a well defined morphism of varieties. As noted earlier this implies $\psi$ is a morphism of varieties.
\end{proof}

The proof of Lemma~\ref{Grassmannian} certainly works for the general statement with arbitrary dimensions of $V$ and $\Omega$. For sake of notation we only proved the statement we required.

\newpage

\printindex[n]
\printindex
\newpage

\bibliographystyle{plain}
\bibliography{Bib}
%\setcitestyle{square}

\end{document}